\title{A convenient category of parametrized spectra}
\author{Cary Malkiewich}
\address{Department of Mathematics, Binghamton University}
\email{malkiewich@math.binghamton.edu}
\newcommand{\beforesubsection}{\vspace{1em}}
\newcommand{\aftersubsection}{}
\def\l@section{\@tocline{1}{0pt}{1pc}{}{}}
\def\l@subsection{\@tocline{2}{0pt}{1pc}{4.6em}{}}
\def\l@subsubsection{\@tocline{3}{0pt}{1pc}{7.6em}{}}
\renewcommand{\tocsection}[3]{%
	\indentlabel{\@ifnotempty{#2}{\makebox[2.3em][l]{%
				\ignorespaces#1 #2.\hfill}}}#3}
\renewcommand{\tocsubsection}[3]{%
	\indentlabel{\@ifnotempty{#2}{\hspace*{2.3em}\makebox[2.3em][l]{%
				\ignorespaces#1 #2.\hfill}}}#3}
\renewcommand{\tocsubsubsection}[3]{%
	\indentlabel{\@ifnotempty{#2}{\hspace*{4.6em}\makebox[3em][l]{%
				\ignorespaces#1 #2.\hfill}}}#3}
\definecolor{darkgreen}{rgb}{0,0.30,0} 
\definecolor{darkred}{rgb}{0.75,0,0}
\definecolor{darkblue}{rgb}{0,0,0.6} 
\renewcommand*{\backref}[1]{}
\renewcommand*{\backrefalt}[4]{({%
    \ifcase #1 Not cited.%
          \or On p.~#2%
          \else On pp.~#2%
    \fi%
    })}
\def\makeautorefname#1#2{\expandafter\def\csname#1autorefname\endcsname{#2}}
\newtheorem{thm}{Theorem}[subsection]
\newtheorem{cor}{Corollary}[subsection]
\newtheorem{lem}{Lemma}[subsection]
\newtheorem{prop}{Proposition}[subsection]
\theoremstyle{definition}
\newtheorem{df}{Definition}[subsection]
\newtheorem{ex}{Example}[subsection]
\newtheorem{warn}{Warning}[subsection]
\newtheorem{rmk}{Remark}[subsection]
\newtheorem{notn}{Notation}[subsection]
\numberwithin{equation}{subsection}
\numberwithin{figure}{subsection}
\let\c@cor=\c@thm
\let\c@prop=\c@thm
\let\c@lem=\c@thm
\let\c@df=\c@thm
\let\c@ex=\c@thm
\let\c@warn=\c@thm
\let\c@rmk=\c@thm
\let\c@notn=\c@thm
\let\c@equation\c@thm
\let\c@figure\c@thm
\let\c@table\c@thm
\newcommand{\sC}{\mathscr{C}}
\newcommand{\bA}{\mathbf{A}}
\newcommand{\bC}{\mathbf{C}}
\newcommand{\bD}{\mathbf{D}}
\newcommand{\bH}{\mathbf{H}}
\newcommand{\bS}{\mathbf{S}}
\newcommand{\bT}{\mathbf{T}}
\newcommand{\D}{\mathbb D}
\renewcommand{\L}{\mathbb L}
\newcommand{\M}{\mathbb M}
\newcommand{\N}{\mathbb N}
\newcommand{\R}{\mathbb R}
\newcommand{\Z}{\mathbb Z}
\newcommand{\Sph}{\mathbb S}
\newcommand{\mc}{\mathcal}
\newcommand{\cat}[1]{\textup{\textbf{{#1}}}}
\newcommand{\ra}{\longrightarrow}
\newcommand{\simar}{\overset\sim\longrightarrow}
\newcommand{\Hom}{\textup{Hom}}
\newcommand{\Map}{\textup{Map}}
\newcommand{\id}{\textup{id}}
\newcommand{\colim}{\textup{colim}\,}
\newcommand{\hocolim}{\textup{hocolim}\,}
\newcommand{\Cyl}{\textup{Cyl}}
\newcommand{\sma}{\wedge}
\newcommand{\barsmash}{\,\overline\wedge\,}
\newcommand{\barsma}[1]{\,\overline\wedge_{#1}\,}
\DeclareMathOperator{\barmap}{\underline{\smash{\textup{Map}}}}
\DeclareMathOperator{\barF}{\underline{\smash{F}}}
\newcommand{\sk}{\textup{Sk}}
\newcommand{\sh}{\textup{sh}}
\DeclareMathOperator{\ho}{\textup{Ho}}
\newcommand{\Ex}{\mathcal{E}x}
\newcommand{\bcr}[3]{\left[{#1}\xrightarrow{#2}{#3}\right]}
\newcommand{\shad}[1]{{\ensuremath{\hspace{1mm}\makebox[-1mm]{$\langle$}\makebox[0mm]{$\langle$}\hspace{1mm}{#1}\makebox[1mm]{$\rangle$}\makebox[0mm]{$\rangle$}}}}
\DeclareMathOperator{\ob}{\textup{ob}}
\newcommand{\Osp}{\mathcal{OS}}
\newcommand{\Psp}{\mathcal{PS}}
\newcommand{\adj}{\dashv}
\newcommand{\Th}{\textup{Th}}
\newcommand{\non}{\textup{non}}
\newcommand{\po}[2]{\ar@{}@<{#2}>[rd]|({#1})*\txt{\Large $\ulcorner$}}
\newcommand{\pb}[2]{\ar@{}@<{#2}>[rd]|({#1})*\txt{\Large $\lrcorner$}}
\begin{document}

\maketitle

\begin{abstract}
	We describe a point-set category of parametrized orthogonal spectra, a model structure on this category, and a separate, more geometric class of cofibrant-and-fibrant objects. The structures we describe are ``convenient'' in that they are preserved by the most common operations. They allow us to reduce sophisticated statements about the homotopy category to straightforward claims at the point-set level.
	
	We use this framework to give a construction of the bicategory of parametrized spectra, one that is far more direct than earlier approaches. This gives a clean bridge between the concrete index theory pioneered by Dold, and the formal bicategorical theory developed by May and Sigurdsson, Ponto, and Shulman.

	This article is a condensation of the more expository preprint ``Parametrized spectra, a low-tech approach.''
\end{abstract}

\setcounter{tocdepth}{2}
\tableofcontents

\parskip 2ex

\section{Introduction}

Parametrized spectra are an essential part of the foundations of homotopy theory. They were first introduced in order to systematically study the Becker-Gottlieb transfer \cite{becker1975transfer,becker1976transfer,clapp1984homotopy,crabb_james}, but they also occur naturally in fixed-point theory and intersection theory \cite{ponto_asterisque,klein_williams}, twisted $K$-theory \cite{atiyah_segal,abg_ktheory,fht_1,hebestreit2019homotopical}, differential cohomology \cite{braunack2018rational}, computations with Thom spectra, e.g. \cite{mahowald1977new,units_rigid,hahn2018eilenberg,klang2018factorization}, the functor calculus of Goodwillie \cite{calc3}, the algebraic $K$-theory of spaces of Waldhausen \cite{waldhausen_1,waldhausen_2}, symplectic geometry and string topology \cite{kragh_parametrized_nearby}, and the index theory of Dwyer, Weiss, and Williams \cite{dww,klein_williams_bundle}.

Unfortunately, parametrized spectra have always presented challenges that go significantly beyond those encountered with ordinary spectra. This is especially true if one is interested in simultaneously developing a point-set category and a homotopy category (or $\infty$-category) of parametrized spectra, and keeping a close relationship between the two. For applications to manifolds and to fixed-point theory, the link between these two is essential.

Let us describe two of the issues in detail. First, parametrized spectra over varying base spaces form both a symmetric monoidal bifibration (SMBF), and a bicategory. So they have an external smash product $\barsmash$, pullbacks $f^*$, pushforwards $f_!$, composition products $\odot$, and so on. Each of these structures involves composing left adjoints and right adjoints with each other, sometimes three at a time (left, then right, then left). This presents difficulties for any theoretical framework attempting to capture this structure on both the point-set level and in the homotopy category.

Second, the most obvious model structure for parametrized orthogonal spectra, the ``$q$-model structure,'' has run into considerable difficulty in its development. Through the work of May and Sigurdsson, culminating in the volume \cite{ms}, a workaround is introduced that has become standard. Rather than prove that the expected ``$q$-model structure,'' May and Sigurdsson produce a ``$qf$-model structure'' with the expected homotopical properties, at the expense of added complexity. Other model structures have arisen since then, but none capture the simplicity and efficiency of the $q$-model structure -- it is the most obvious adaptation of the standard stable model structure from \cite{mmss} and \cite{mandell2002equivariant} to parametrized spectra.

The purpose of this paper is to address these two problems.

\begin{thm}\label{thm:intro_q}(\autoref{thm:stable_model_structure})
	The $q$-model structure on parametrized orthogonal spectra exists.
\end{thm}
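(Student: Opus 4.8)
The plan is to build the $q$-model structure in two stages, following the template of \cite{mmss} for diagram spectra, and then to isolate the single step at which the classical argument broke down for May and Sigurdsson.

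\emph{Stage 1: the level model structure.} First I would give the category of parametrized orthogonal spectra over a fixed base $B$ a level $q$-model structure, in which a map is a weak equivalence, respectively a fibration, exactly when it is one at every spectrum level in the $q$-model structure on parametrized spaces over $B$ (fiberwise weak equivalences and fiberwise Serre fibrations, with cofibrations generated cellularly by disk inclusions $S^{n-1}\hookrightarrow D^n$ regarded as maps over $B$). Since parametrized orthogonal spectra are continuous diagrams on the standard topological indexing category for orthogonal spectra, the generating cofibrations and generating acyclic cofibrations are the maps $F_V i$, with $F_V$ left adjoint to evaluation at $V$ and $i$ a generating (acyclic) cofibration of parametrized spaces; the model axioms then follow formally from those for parametrized spaces together with a small object argument. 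The non-formal inputs are (a) the $q$-model structure on parametrized spaces itself, which I would set up directly --- here already the small object argument needs care, since $S^{n-1}\times B$ is not compact when $B$ is not, so it must be run over a regular cardinal chosen large relative to $B$ --- and (b), crucially for Stage 2, the observations that every $q$-cofibration of parametrized spaces is a spacewise closed $h$-cofibration, that the corresponding cell complexes are built fiberwise, and that restriction $b^*$ to a fiber therefore commutes with the cobase changes and transfinite composites that occur in them and sends $q$-cofibrations to honest relative CW inclusions.

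\emph{Stage 2: stabilization.} Next I would declare a map $X\to Y$ to be a stable equivalence when $b^*X\to b^*Y$ is a $\pi_*$-isomorphism of orthogonal spectra for every point $b\in B$ --- a condition that by the last observation needs no fibrant replacement on level $q$-cofibrant objects --- and then build the stable model structure via the standard recognition theorem, with cofibrations the level $q$-cofibrations and weak equivalences the stable equivalences. The inputs are: (i) stable equivalences satisfy the 2-out-of-3 axiom and are closed under retracts (formal); (ii) a level acyclic fibration is a stable equivalence, and conversely a stable equivalence that is a level fibration is a level acyclic fibration --- both checked fiberwise against the known facts for orthogonal spectra; (iii) a set $K$ of generating stable acyclic cofibrations, obtained by adjoining to the level generating acyclic cofibrations the pushout--products $k_{V,W}\,\square\, i$ of mapping-cylinder factorizations of the fiberwise stabilization maps $\lambda_{V,W}$ of \cite{mmss} with the generating cofibrations $i$ of parametrized spaces; and (iv) the decisive lemma that every transfinite composite of cobase changes of maps in $K$ is at once a level $q$-cofibration and a stable equivalence. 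Granting (i)--(iv), the recognition theorem produces the model structure, whose fibrant objects are exactly the level-fibrant fiberwise $\Omega$-spectra over $B$.

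\emph{The main obstacle.} The hard part is (iv), and this is precisely where the construction of the $q$-model structure stalled in \cite{ms}. Proving (iv) amounts to a gluing lemma --- in a pushout of parametrized spectra with one leg a level $q$-cofibration and the others stable equivalences, the remaining leg is again a stable equivalence --- together with the facts that $\pi_*^B$ commutes with the relevant transfinite composites and that each $\lambda_{V,W}$ is a parametrized stable equivalence. Over a point each of these is classical, but over a general base they are delicate, because the fibers of a colimit of parametrized spectra need not be the colimit of the fibers, and because level $q$-cofibrant parametrized spectra need not be fiberwise fibrant --- exactly the difficulty that drove \cite{ms} to replace the $q$-cofibrations with the narrower $qf$-cofibrations. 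My way around it is the fiberwise picture from Stage 1: because the cells are attached fiberwise and are fiberwise $h$-cofibrations, restriction $b^*$ commutes with all of the pushouts and transfinite composites inside a $K$-cell complex, and the fibers of such a complex are honest CW orthogonal spectra, on which $\pi_*$ is already a homotopy invariant; so the entire \cite{mmss} verification of (iv) can be transported along $b^*$ fiber by fiber without ever leaving the point-set category. The same reduction-to-fibers principle then handles the remaining structural work --- identifying the cofibrant--fibrant objects with a concrete geometric class, and verifying the pushout--product axiom for the fiberwise smash $\sma_B$ together with the corresponding Quillen properties of the external smash $\barsmash$ and the base-change functors --- since on fibers one is back in the well-understood setting of $h$-cofibrant orthogonal spectra.
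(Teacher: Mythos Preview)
Your outline is right --- level model structure, then stabilize by adjoining the $k_{V,W}\,\square\,i$ cells --- and you correctly identify showing that $J$-cell complexes are stable equivalences as the step where \cite{ms} retreated to $qf$-cofibrations. But your fix for that step has a gap: the claim that $b^*$ sends $q$-cofibrations to relative CW inclusions, so that fibers of $q$-cofibrant spectra are CW and underived fiber $\pi_*$ is already homotopy invariant, is false. A generating cell $S^{n-1}_{+B}\to D^n_{+B}$ comes from an \emph{arbitrary} map $D^n\to B$; its fiber over $b$ is the inclusion of the preimages of $b$, which can be any compact pair, not a disc pair. The paper's own counterexample (just before \autoref{stable_equivalence}) is the suspension spectrum of $\{0\}_{+I}\to I_{+I}$ over $I$: a level equivalence of $q$-cofibrant objects on which $\pi_{0,1/2}$ is $0\to\Z$. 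So if your stable equivalences are underived fiberwise $\pi_*$-isomorphisms, they do not contain the level equivalences and your condition (ii) already fails; if they are the derived version, your fiberwise transport of the \cite{mmss} argument along $b^*$ breaks because $b^*$ of a $q$-cofibrant object is not cofibrant in any useful sense.

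The paper's actual mechanism is a \emph{monoidal} fibrant replacement $P=(p_1)_!(p_0)^*$ built from $B^I$, satisfying $PX\barsmash PY\cong P(X\barsmash Y)$ (\autoref{prop:P_strong_monoidal}) and hence $PC_Bf\cong C_BPf$. Since $P$ makes level $h$-cofibrant spectra level $h$-fibrant without changing their level homotopy type, the derived fiber homotopy groups of the cofiber $C_Bf$ are the ordinary fiber homotopy groups of $C_BPQf$, and the fiberwise long exact sequence goes through (\autoref{lem:LES}). Left properness for stable equivalences along level $h$-cofibrations follows (\autoref{cor:spectra_left_proper}), and that is precisely the missing input needed to show each map in $J$ is a stable equivalence (\autoref{j_equiv}). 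Your fiberwise-reduction instinct is correct, but it only becomes valid after applying $P$; the monoidality of $P$ --- which lets it commute past the cone --- is the new idea that closes the gap.
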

Furthermore this model structure has all of the expected homotopical behavior -- in the language of \cite{ms}, it is ``well-grounded.'' Note that \cite{hebestreit_sagave_schlichtkrull} uses a more formal approach to establish a similar model structure for parametrized symmetric spectra. We expect that their approach could be adapted to give a different proof of \autoref{thm:intro_q}.

This model structure is essential to our work, but by itself, it is not convenient enough. When composing left and right adjoints, one often needs a more geometric class of ``cofibrant and fibrant objects,'' such that the adjoints preserve the cofibrant/fibrant objects and the equivalences between them.
\begin{thm}\label{thm:intro_cof_fib}
	There is a notion of ``cofibrant'' and ``fibrant'' for parametrized orthogonal spectra with the following properties.
	\begin{itemize}
		\item Every spectrum $X$ is equivalent by a zig-zag
		\[ X \overset\sim\leftarrow QX \overset\sim\to PQX \]
		to a spectrum $PQX$ that is cofibrant and fibrant.
		\item Every cofibrant spectrum $X$ has a natural cofibrant-and-fibrant replacement $PX$. Furthermore $P$ commutes with the external smash product, $PX \barsmash PY \cong P(X \barsmash Y)$.
		\item The external smash product $\barsmash$ preserves cofibrant spectra, equivalences of cofibrant spectra, and spectra that are both cofibrant and fibrant.
		\item For a map $f\colon A \to B$, the pullback $f^*$ preserves cofibrant spectra, fibrant spectra, and equivalences of fibrant spectra.
		\item For a map $f\colon A \to B$, the pushforward $f_!$ preserves cofibrant spectra and equivalences between them. It also preserves fibrant spectra if $f$ is a Hurewicz fibration.
	\end{itemize}
\end{thm}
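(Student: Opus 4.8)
\emph{Proof plan.}
I would define a spectrum to be \emph{cofibrant} if it is $q$-cofibrant in the sense of \autoref{thm:stable_model_structure} (equivalently, a retract of a cell spectrum), and \emph{fibrant} if it is levelwise geometric: each space $X(V)$ is an \emph{ex-fibration} over the base in the sense of \cite{ms} (well-sectioned, with fiberwise homotopy lifting relative to the section), compatibly with the structure maps. This ``fibrant'' condition deliberately omits the $\Omega$-spectrum condition -- stable fibrancy is the job of the $q$-model structure, whereas this more geometric condition is what the point-set adjoints can preserve. With these definitions the assertions about cofibrant spectra are essentially formal, and the real content is the construction of the geometric fibrant replacement $P$.

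First I would dispatch the cofibrant statements. The pullback $f^*$ has both a left adjoint $f_!$ and a right adjoint $f_*$, so it preserves all colimits; it also carries the generating cofibrations $F_V(-)$ of the $q$-model structure over $B$ to generating cofibrations over $A$ (via $f^*F_V\cong F_V f^*$), so it preserves cell spectra and hence cofibrant spectra. The pushforward $f_!$ is left Quillen, being left adjoint to the right Quillen functor $f^*$, and $\barsmash$ is a left Quillen bifunctor, so both preserve cofibrant spectra; and by Ken Brown's lemma each of $f^*$, $f_!$ and $-\barsmash Y$ preserves equivalences between cofibrant spectra. That $\barsmash$ also preserves the class of cofibrant-and-fibrant spectra reduces, after writing $X\barsmash Y = p_A^*X \wedge_{A\times B} p_B^*Y$, to the geometric facts that a pullback of an ex-fibration is an ex-fibration and that a fiberwise smash product of ex-fibrations is an ex-fibration (both in \cite{ms}), together with the stability of ex-fibrations under the (cellular, $h$-cofibration-indexed) colimits used to assemble the levels of a smash product of orthogonal spectra.

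Next, the fibrant statements. The pullback $f^*$ preserves ex-fibrations essentially by definition, and it preserves \emph{all} stable equivalences -- because it commutes with $\Omega$-spectrum replacement, which is built from fiberwise loop spaces and filtered homotopy colimits, both of which commute with pullback -- so in particular it preserves fibrant spectra and equivalences of fibrant spectra. For the pushforward, the key input is the geometric lemma that $f_!$ takes ex-fibrations over $A$ to ex-fibrations over $B$ whenever $f$ is a Hurewicz fibration; to promote this to spectra one uses that $f_!$ commutes with fiberwise suspension $\Sigma^W_{(-)}$, which holds because $S^W_A\cong f^*S^W_B$ and $f_!$ obeys the projection formula $f_!(f^*(-)\wedge_A -)\cong (-)\wedge_B f_!(-)$, so that the structure maps of $f_!X$ are simply $f_!$ applied to those of $X$.

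The main obstacle is constructing $P$. The plan is a levelwise mapping-path-space-type replacement of $X(V)\to B$ by an ex-fibration, obtained by adjoining paths in $B$ but only away from the section, so that well-sectionedness -- hence cofibrancy -- is not disturbed; one then wants this point-set functor to be simultaneously \emph{(i)} valued in genuine ex-fibrations, \emph{(ii)} a preserver of cofibrant spectra, and \emph{(iii)} strong symmetric monoidal for $\barsmash$. Condition \emph{(iii)} is the delicate one, and I expect reconciling it with \emph{(i)} and \emph{(ii)} -- producing a \emph{single} explicit replacement that is at once strictly monoidal, valued in honest ex-fibrations, and cofibrancy-preserving -- to be the technical heart of the argument. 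In its favor: a path in $A\times B$ is precisely a pair of paths, one in $A$ and one in $B$, so a based-path construction commutes on the nose with external products and, away from the sections, with the external smash; the work is entirely in the bookkeeping near the sections and in checking that the monoidal unit (the sphere spectrum over a point, already an ex-fibration) is respected. Note that \emph{(iii)} also forces $P$ to commute with $\Sigma^W_{(-)}$, since $P$ fixes the already-ex-fibrant fiberwise spheres $S^W_B$; this is exactly what equips $PX$ with canonical structure maps and makes $P$ a well-defined endofunctor of spectra, with $QX\to PQX$ a levelwise homotopy equivalence. Once such a $P$ is in hand, the remaining assertions follow immediately: for cofibrant $X$ the natural map $X\to PX$ exhibits $PX$ as a cofibrant-and-fibrant replacement with $P(X\barsmash Y)\cong PX\barsmash PY$, and for arbitrary $X$ the zig-zag $X\overset\sim\leftarrow QX\overset\sim\to PQX$ is the composite of a $q$-cofibrant replacement $QX\to X$ with $QX\to PQX$.
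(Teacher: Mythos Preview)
Your proposal has a genuine gap at the point where you claim that $f^*$ preserves cofibrant spectra. With your choice of ``cofibrant'' $=$ $q$-cofibrant, this fails: the pullback $f^*$ applied to a generating cell $F_k(S^{n-1}_{+B}\to D^n_{+B})$ yields $F_k\bigl((S^{n-1}\times_B A)_{+A}\to (D^n\times_B A)_{+A}\bigr)$, and the fiber product $D^n\times_B A$ is in general not a cell complex over $A$, so this is not a $q$-cofibration. Your supporting argument that $f^*$ preserves all colimits because it has a right adjoint $f_*$ is also not available in general; the paper only constructs $f_*$ when $f$ is a fiber bundle with cell-complex base (\autoref{prop:sheafy_pushforward}), and indeed $f^*$ is only left Quillen under restrictive hypotheses on $f$ (\autoref{lem:other_stable_adjunction}). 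This is precisely why the paper takes ``cofibrant'' to mean \emph{freely $f$-cofibrant}: $f$-cofibrations, unlike $q$-cofibrations, are preserved by arbitrary pullbacks (\autoref{lem:f_star_preserves}), and this propagates to free $f$-cofibrations of spectra (\autoref{lem:spectrum_f_preserves}).

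A second gap is your claim that $f^*$ preserves \emph{all} stable equivalences. It does not even preserve all level equivalences: pullback to a fiber can change the weak homotopy type unless the spaces are fibrant over the base. The paper only obtains preservation of stable equivalences between level $q$-fibrant spectra (\autoref{prop:stably_derived}), which is exactly what the ``fibrant'' hypothesis in the theorem statement is for. Your sketch of $P$ is in the right spirit; the paper's construction is simply $PX = (p_1)_!(p_0)^*X$ using the evaluation maps $p_0,p_1\colon B^I\to B$, and the monoidality $PX\barsmash PY\cong P(X\barsmash Y)$ then drops out of the commutation of $\barsmash$ with $f_!$ and $f^*$ together with $(A\times B)^I\cong A^I\times B^I$, with no delicate bookkeeping near the section required.
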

The two notions are called ``freely $f$-cofibrant'' and ``level $h$-fibrant,'' see \autoref{sec:convenient_definitions}. This theorem is a collection of results from Sections \ref{sec:spectra} and \ref{sec:stable}, the two most significant being \autoref{prop:spectra_pushout_product} and \autoref{prop:stably_derived}. This is the most technically involved part of the paper, as it relies on both the $q$-model structure of \autoref{thm:stable_model_structure} and the work contained in \S\ref{sec:reedy}.

\autoref{thm:intro_cof_fib} is a significant improvement on the existing literature. The analogous class of ``excellent prespectra'' from \cite{ms} requires a longer zig-zag of equivalences with weaker monoidal properties, see \cite[13.5.2]{ms}. As a result, most work on homotopical operations on spectra (such as $\barsmash$ and $\odot$) has needed to rely on a formally black-boxed version of the category of parametrized spectra, and not the relatively simple point-set category. By contrast, \autoref{thm:intro_cof_fib} makes it possible to work with the homotopical versions of $\barsmash$ and $\odot$ directly from their point-set versions.

As an application, we use \autoref{thm:intro_cof_fib} to give a new construction of the homotopy bicategory of parametrized spectra $\Ex$ (\autoref{thm:four_bicategories_of_spectra}). Namely, we build a point-set version of $\Ex$, restrict to the cofibrant-fibrant spectra, and observe that the bicategory operations preserve both the cofibrant-fibrant spectra and the equivalences between them. Therefore they define a coherent set of operations on the homotopy category as well.

This definition of $\Ex$ is significantly shorter and simpler than existing definitions. It allows us to directly express abstract bicategorical traces as explicit point-set collapse maps, without having to pass through chain complexes and simplicial homology. This is a drastic simplification of the existing theory, and is especially important because of applications to fixed-point theory and algebraic $K$-theory, see for instance \cite{ponto_asterisque,ponto_shulman_indexed,ponto_shulman_general,ponto_shulman_mult,lind2019transfer,campbell_ponto,mp1}.

The same technique also gives a clean description of the homotopy category of all spectra $\ho\Osp$ as a symmetric monoidal bifibration (SMBF), see \autoref{thm:spectra_SMBF}. Equivalently, it gives a strong symmetric monoidal structure to the pullback functors $f^*$, in a way that is coherent along compositions of pullbacks. This construction of $\ho\Osp$ is in fact a broader result, because the bicategory $\Ex$ can be recovered from it by a formal argument, see \cite{shulman_framed_monoidal,mp1}.

The following rigidity result is helpful for the construction of $\Ex$ and $\ho\Osp$. The argument is a generalization of the one used in the author's thesis \cite[1.2, 3.17]{malkiewich_cyclotomic_dx}.
\begin{thm}\label{thm:intro_rigidity}(\autoref{thm:spectra_rigidity})
	Any functor of the form $f_!g^*(X_1 \barsmash \ldots \barsmash X_n)$ is rigid, i.e. the only automorphism is the identity, provided that the product map $(f,g)$ is injective.
\end{thm}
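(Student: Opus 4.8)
The plan is to reduce the statement to a combinatorial rigidity claim about the indexing data of an orthogonal spectrum, and then to run an induction on the "smash degree" $n$ together with an analysis of the levels. First I would observe that an automorphism $\phi$ of the functor $f_!g^*(X_1 \barsmash \ldots \barsmash X_n)$ is, by naturality, determined by what it does on each level: at spectrum level $\underline{V} = (V_1,\ldots,V_n)$ the external smash product is $X_1(V_1) \barsmash \ldots \barsmash X_n(V_n)$, a fiberwise smash product of based spaces over the product base, and $f_!g^*$ is a left Kan extension along $g$ followed by pushforward along $f$. So $\phi$ restricts to a natural automorphism of the total space of $f_!g^*$ applied to this fiberwise smash. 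The key point is that on the total space, $f_!g^*$ of a fiberwise based space $E \to B$ is (up to the section) the pushout $f_! (g^* E)$, whose underlying space is $E \times_{B} A$ glued to the section along $A$; when $(f,g)$ is injective this "$\times_B A$" operation is injective on points, so a natural transformation is forced to be the identity on the image of $E$, hence everywhere.

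The main structural input is that naturality in the variables $X_i$ is extremely restrictive. Concretely, each $X_i$ ranges over all parametrized orthogonal spectra over its base; plugging in free spectra $F_{V_i} S^0_{C_i}$ (the representable objects, i.e. shifted suspension spectra of a point over a test base $C_i$) makes $X_1 \barsmash \ldots \barsmash X_n$ into a representable object, and a natural transformation out of a functor on a category with enough representables is determined by its value on the representables via the (co)Yoneda lemma. So I would first check rigidity when every $X_i$ is free of this form — here the total space of $f_!g^* (F_{\underline{V}} S^0_{\underline{C}})$ is an explicit space built from $A$, $C_1,\ldots,C_n$, the $V_i$'s, and the structure maps $f$, $g$, and one verifies directly that the injectivity of $(f,g)$ forces any natural self-map of this explicit functor (natural in $\underline V$, $\underline C$, and the maps into the base) to be the identity. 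Then a density/left-Kan-extension argument promotes this to all $X_i$: every parametrized spectrum is a colimit of free ones, $f_!$, $g^*$ and $\barsmash$ each commute with the relevant colimits (being left adjoints in the appropriate variable), and a natural transformation that is the identity on a generating set of representables and is compatible with colimits is the identity.

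I would organize the induction as follows. The base case $n = 1$ is exactly the rigidity statement for $f_! g^* X$ with $(f,g)$ injective; this is essentially the thesis argument \cite[1.2, 3.17]{malkiewich_cyclotomic_dx} re-examined in this setting, so I'd either cite it or reprove it by the Yoneda reduction above. For the inductive step, I'd use that $X_1 \barsmash (X_2 \barsmash \ldots \barsmash X_n)$ is again an external smash of two spectra over a product base, and that pulling back $(f,g)$ along the projection to one factor of the product preserves injectivity; the orthogonal-spectrum bookkeeping (the $\Sigma_{V_1} \times \cdots \times \Sigma_{V_n}$-equivariance and the bonding maps) then just comes along for the ride, since an automorphism of the functor must in particular be an automorphism at each level and commute with the bonding maps, and rigidity at each level already pins it down. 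The step I expect to be the real obstacle is the point-set verification that, for the explicit free-spectrum total spaces, injectivity of the product map $(f,g)\colon (\text{source}) \to B \times (\text{something})$ is exactly the hypothesis needed to kill all natural automorphisms — in particular handling the basepoint/section identifications carefully, since $f_!$ of a sectioned space collapses a copy of the base and one must make sure the natural transformation can't do anything nontrivial on the collapsed part either. Everything else (the Yoneda reduction, the colimit/density argument, the induction on $n$) is formal once that core computation is in place.
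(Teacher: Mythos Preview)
Your reduction to free/representable objects is the right instinct and matches the paper's strategy, but two things need correction. First, your density argument appeals to ``$g^*$ commutes with the relevant colimits (being a left adjoint)'', which is false: $g^*$ is a \emph{right} adjoint, and in (CGWH) it only preserves certain colimits. The paper avoids this entirely. Rather than writing arbitrary spectra as colimits of free ones, it probes pointwise: for spaces, the test tuples are $(*_{+C_1},\ldots,*_{+C_n})$ with the free point over some chosen $c_i$; their smash is a single free point over $(c_1,\ldots,c_n)$, $g^*$ gives the fiber $g^{-1}(c_1,\ldots,c_n)_{+A}$, and injectivity of $(f,g)$ says exactly that $f$ is injective on this fiber, so after $f_!$ every fiber over $B$ is $\emptyset_+$ or $*_+$ --- no room for a nontrivial automorphism. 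Then for an arbitrary tuple one lifts any point $x$ of $f_!g^*(X_1\barsmash\cdots\barsmash X_n)$ to $(x_1,\ldots,x_n)$, obtaining maps $*_{+C_i}\to X_i$, and naturality forces $\eta(x)=x$. No induction on $n$ is used.

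The more serious gap is at the spectrum level. You write ``one verifies directly that injectivity of $(f,g)$ forces any natural self-map \ldots\ to be the identity'' on free spectra, but this is precisely where the content lives and you have not supplied it. After reducing (via the free/evaluation adjunction) to tuples $(F_{V_1}*_{+C_1},\ldots,F_{V_n}*_{+C_n})$, over a point of $B$ the automorphism becomes a self-map of $F_{V_1\oplus\cdots\oplus V_n}S^0$. The space-level argument only tells you this self-map agrees with the identity of $F_0 S^0$ along each point of $S^{V_1\oplus\cdots\oplus V_n}$. The decisive observation --- which you do not mention --- is that the orthogonal group acts \emph{faithfully} on the sphere, so a self-map of $F_V S^0$ compatible with all such point-inclusions must be the identity. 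Without this, nothing rules out the large orthogonal symmetry carried by the levels $\mathscr J(V,W)$ of a free spectrum; ``the injectivity of $(f,g)$'' alone does not touch that symmetry.
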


When a functor $F$ is rigid in this sense, any diagram of natural isomorphisms between functors isomorphic to $F$, automatically commutes. As a result, \autoref{thm:intro_rigidity} greatly simplifies the verification of the coherence axioms for $\Ex$ and $\ho\Osp$ on the point-set level.

Finally, the ``convenient'' notions of cofibrant and fibrant from \autoref{thm:intro_cof_fib} do not fit into a model structure. So we can use them to derive the operations $\barsmash$, $f^*$ and $f_!$, but the commutation of the derived functors cannot be proven by restricting to cofibrant-fibrant objects and using Whitehead's theorem. Instead, we use a new framework for passing an isomorphism of composites of point-set functors to an isomorphism of their derived functors, in a way that respects composition:
\begin{prop}\label{thm:intro_deformable}(\autoref{prop:passing_natural_trans_to_derived_functors})
	Any isomorphism
	\[ F_n \circ \ldots \circ F_1 \cong G_k \circ \ldots \circ G_1 \]
	between composites of left- and right-deformable functors induces in a canonical way an equivalence between the composites of their derived functors, provided the two lists satisfy a certain condition (see \autoref{sec:composing_comparing}).
\end{prop}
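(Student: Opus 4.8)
The plan is to argue entirely inside the calculus of deformable functors and deformations recalled in \autoref{sec:composing_comparing}. Recall that a left-deformable functor $F$ comes equipped with a left deformation --- a full subcategory $\mathcal{C}_F$ and an endofunctor $Q_F$ with a natural weak equivalence $Q_F \to \mathrm{id}$ landing in $\mathcal{C}_F$, on which $F$ preserves weak equivalences --- so that $\mathbb{L}F$ is represented at the point-set level by $F \circ Q_F$, and dually for right-deformable functors and $\mathbb{R}G$. A composite $F_n \circ \cdots \circ F_1$ of left- and right-deformable functors becomes deformable once one fixes a \emph{deformation of the list}: a compatible chain of intermediate deformations threaded through the composite, yielding a canonical comparison weak equivalence between the undecorated composite, precomposed with the outermost deformation, and the decorated composite $F_n Q_n F_{n-1} \cdots Q_2 F_1 Q_1$, which visibly descends to the homotopy category and represents there both the composite of the individual derived functors and the derived functor of $F_n \circ \cdots \circ F_1$. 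The condition imposed on the two lists in \autoref{sec:composing_comparing} is precisely what lets one choose deformations of the two lists that agree on inputs, i.e.\ that share a common outermost deformation $W$ of the source (together with whatever downstream compatibility the condition stipulates); this is the formal shadow of the convenient cofibrant-and-fibrant objects of \autoref{thm:intro_cof_fib}, which are good inputs for every operation at once.

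The engine is the one-functor case, which I would isolate as a lemma: if functors $H, H'$ both send weak equivalences between objects of a full subcategory $\mathcal{E}$ to weak equivalences, $W$ is a deformation of the source onto $\mathcal{E}$, and $\phi \colon H \cong H'$ is a natural isomorphism, then $\phi W \colon HW \Rightarrow H'W$ is a natural isomorphism of homotopical functors; splicing it between the structure equivalences $\mathbb{L}H \simeq HW$ and $H'W \simeq \mathbb{L}H'$ (appropriately handed) produces a canonical isomorphism of derived functors. This assignment sends $\mathrm{id}_H$ to $\mathrm{id}$ and $\psi \circ \phi$ to $(\mathbb{L}\psi) \circ (\mathbb{L}\phi)$, because at the point-set level $(\psi \circ \phi)W$ is literally $(\psi W) \circ (\phi W)$; and it is independent of the choice of $W$, since any two deformations onto suitable subcategories are joined by a zig-zag on which both $H$ and $H'$ preserve weak equivalences, so the two resulting isomorphisms agree. (This is the same principle that underlies the uniqueness of derived functors.)

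I would then bootstrap to the composites. Fix, using the condition, deformations of the lists $(F_i)$ and $(G_j)$ with common outermost deformation $W$. The deformable-composite theorem gives canonical weak equivalences from each decorated composite --- which represents the corresponding composite of derived functors --- to the undecorated composite precomposed with $W$; on the subcategory that $W$ lands in, the hypothesis $\phi$ furnishes an honest isomorphism $F_n \circ \cdots \circ F_1 \circ W \cong G_k \circ \cdots \circ G_1 \circ W$. Splicing these three pieces together yields the desired equivalence of composites of derived functors. Naturality in $\phi$ and independence of all choices reduce, stage by stage along the two lists, to the one-functor lemma together with the uniqueness of a deformation of a list up to compatible zig-zag; none of this modifies $\phi$, which is only ever whiskered with $W$, and that is what makes the construction canonical and compatible with vertical composition of natural isomorphisms.

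The hard part will be the middle step: verifying, under the stated condition, that a single deformation $W$ (with the required downstream data) deforms both lists, and that the comparison maps from decorated to undecorated composites remain weak equivalences in the presence of the ``left-then-right-then-left'' pattern. Concretely, when a right-deformable $G_j$ follows a left-deformable $G_{j-1}$, the object fed to $G_j$ is only known to lie in the image of $Q_{G_{j-1}}$, not yet in the right-deformation retract of $G_j$, so one must exploit the extra structure encoded in the condition --- the analogue of the assertion in \autoref{thm:intro_cof_fib} that $f^*$ and $\barsmash$ preserve cofibrant objects and $f_!$ preserves cofibrant objects and the equivalences between them --- to insert the intermediate deformation there and still check that the comparison is a weak equivalence. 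Once this bookkeeping along the composite is set up correctly, the remainder of the argument is purely formal.
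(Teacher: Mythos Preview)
Your plan is correct in outline but significantly overcomplicates what the paper actually does, and your ``hard part'' is a phantom created by misreading the condition.

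The paper's coherent deformability condition (\autoref{coherently_deformable}) does not ask for a chain of individual deformations $Q_i$ threaded through the composite to produce a decorated string $F_n Q_n F_{n-1}\cdots F_1 Q_1$. It simply posits full subcategories $\cat A_0 \subseteq \cat C_0,\ldots,\cat A_{n-1}\subseteq \cat C_{n-1}$ such that $F_i(\cat A_{i-1})\subseteq \cat A_i$ and on each $\cat A_{i-1}$ the canonical comparison $\D F_i \leftrightarrow F_i$ is already an equivalence (together with the same data for the $G_j$, sharing $\cat A_0$). Given this, the paper's proof is two sentences: on $\cat A_0$ one has the literal chain
\[
\D F_n\circ\cdots\circ \D F_1 \ \simeq\ F_n\circ\cdots\circ F_1 \ \overset{\eta}{\longrightarrow}\ G_k\circ\cdots\circ G_1 \ \simeq\ \D G_k\circ\cdots\circ \D G_1,
\]
peeling off one $\D$ at a time using $F_i(\cat A_{i-1})\subseteq \cat A_i$; and since restriction of homotopy functors from $\cat C_0$ to $\cat A_0$ is an equivalence of categories (the given replacement onto $\cat A_0$ furnishes the inverse), this recipe extends uniquely to all of $\cat C_0$. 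Independence of $\cat A_0$ is by taking unions.

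So there is no ``bookkeeping along the composite'' to be done, and no intermediate deformation needs to be inserted when a right-deformable functor follows a left-deformable one: the hypothesis already says the undecorated $F_i$ agrees with $\D F_i$ on the image of the previous stage. Your decorated-composite machinery (closer to DHKS \S42 or \cite{shulman_comparing}) would also get there, but it is more apparatus than the statement requires, and the worry you flag is exactly what the hypothesis is designed to eliminate.
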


The proof of this is almost trivial -- it is more of a perspective than a result. Still, the perspective is a powerful and flexible one. It does not require any additional assumptions about which of the functors are left-deformable and which are right-deformable, as in \cite{shulman_comparing} or in the usual framework of left and right Quillen functors. This makes it ideal for the current work, in which we frequently encounter left-then-right-then-left deformable functors. The most technical result we prove with this perspective is that passing to homotopy categories preserves categorical bifibrations under certain assumptions (\autoref{prop:deform_bifibration}).

\textbf{Outline.}
Section \ref{sec:spaces} recalls preliminaries on spaces and sets up the symmetric monoidal bifibration (SMBF) $\mc R$ of all retractive spaces over all base spaces. Section \ref{sec:derived} introduces the framework for deriving composites of functors and proves \autoref{thm:intro_deformable}. Section \ref{sec:spectra} introduces parametrized spectra, level equivalences, and the convenient notions of cofibrant and fibrant, and performs much of the technical work needed for \autoref{thm:intro_cof_fib}. Section \ref{sec:stable} defines stable equivalences, proves that the $q$-model structure exists (\autoref{thm:intro_q}), then finishes the proof of  \autoref{thm:intro_cof_fib}. Section \ref{sec:structures} gives the new construction of the symmetric monoidal bifibration $\ho\Osp$, the bicategory $\Ex$, and the fiberwise versions of these over a base space $B$. We also indicate applications to the Reidemeister trace $R(f)$, but this topic is treated in much more depth in the companion expository paper \cite{malkiewich2019parametrized}. Finally, in \autoref{sec:G} we indicate the changes needed to make the definitions and proofs equivariant with respect to a compact Lie group $G$.

\textbf{Acknowledgements.}
The author would like to thank Fabian Hebestreit, John Lind, Peter May, Kate Ponto, Steffen Sagave, and Michael Shulman for many conversations about parametrized spectra over the years. He thanks the Hausdorff Institute in Bonn for their hospitality in July of 2017 when he started working on the $q$-model structure, and the Max Planck Institute in Bonn for their hospitality in the spring and fall of 2018, during which most of the material in this paper was first written. He would like to thank David Carchedi, Madgalena Kedziorek, Inbar Klang, Mona Merling, Sean Tilson, and Sarah Yeakel for personal support and encouragement while the first version of this paper was being written. Finally, he would like to thank Thomas Brazelton, Thomas Blom, and Steffen Sagave for their comments on earlier versions of this paper. The author is supported by NSF grants DMS-2005524 and DMS-2052923.

\section{Parametrized spaces}\label{sec:spaces}

In this first section, we recall the definitions and results on parametrized spaces that we will build on later when we define parametrized spectra. Most of this is known, though the rigidity theorem of \autoref{prop:spaces_rigidity} seems to be new.

\subsection{Basic definitions and lemmas}\aftersubsection

Let $B$ be a topological space. A \textbf{retractive space}\index{retractive space} or ``ex-space'' over $B$ consists of a space $X$, called the total space, along with an inclusion and a projection map
\[ B \overset{i}\ra X \overset{p}\ra B \]
that compose to the identity of $B$. It is helpful to think of $X$ as a family of spaces $X_b = p^{-1}(b)$, each of which has a basepoint $i(b) \in X_b$. For this reason we often call $i$ the \textbf{basepoint section}\index{basepoint section} of $X$.

\begin{notn}
If $X \to B$ has no basepoint section, we define $X_{+B}$ to be the disjoint union $X \amalg B$, with projection given by $p$ and $\id_B$, and with the canonical inclusion of $B$.
\end{notn}

As usual, $B$ and $X$ are not plain-vanilla topological spaces. Recall that $X$ is \textbf{compactly generated} when its closed sets are determined by their preimages in every compact Hausdorff space $K$ mapping into $X$. Furthermore $X$ is \textbf{weak Hausdorff} if the image of every such $K \to X$ is closed. See \cite[App A]{lewis_appendix}, \cite{strickland2009category}, and \cite[App A]{schwede_global} for basic properties of such spaces. In this paper we adopt the following two conventions.
\begin{itemize}
	\item \textbf{(CGWH)}\index{CGWH} Every base space $B$ and total space $X$ is compactly generated weak Hausdorff.\index{(CGWH)}
	\item \textbf{(CG)}\index{CG} Every base space $B$ is compactly generated weak Hausdorff, but the total space $X$ only has to be compactly generated.\index{(CG)}
\end{itemize}
Most of the theory is exactly the same in both cases. At the points where it is different, we will use the tags (CGWH) and (CG) to distinguish what happens in each case. The paper can therefore be read using either convention (they give Quillen equivalent model categories at the end). Note that \cite{ms} is written in (CG).

Retractive spaces over $B$ form a category $\mc R(B)$. The morphisms or ``ex-maps'' from $(B,X,i_X,p_X)$ to $(B,Y,i_Y,p_Y)$ are the continuous maps $f\colon X \to Y$ commuting with the inclusion maps $i$ and projection maps $p$. In other words, $f \circ i_X = i_Y$ and $p_Y \circ f = p_X$, or the following diagram commutes.\index{retractive space!morphisms of}
\[ \xymatrix @R=1.2em{
	& B \ar[ld]_-{i_X} \ar[rd]^-{i_Y} & \\
	X \ar[rr]^-f \ar[rd]_-{p_X} && Y \ar[ld]^-{p_Y} \\
	& B &
} \]

The category $\mc R(B)$ has a zero object, and all colimits and limits. Note that the colimits do not commute with the functor that forgets $B$. For instance, the coproduct of $X$ and $Y$ is the pushout $X \cup_B Y$, not the disjoint union. Similarly, the product is the fiber product $X \times_B Y$. Pushouts and pullbacks do commute with the forgetful functor, as do all colimits and limits over diagrams whose indexing categories are connected.

\begin{df}
A map $f\colon X \to Y$ of retractive spaces over $B$ is:
\begin{itemize}
	\item a \textbf{weak equivalence}\index{weak equivalence} (or $q$-equivalence) if the map of total spaces $X \to Y$ is a weak homotopy equivalence.
	\item a \textbf{Hurewicz fibration} or \textbf{$h$-fibration}\index{$h$-fibration} if $X \to Y$ has the homotopy lifting property, in other words the projection map $X^I \to X \times_Y Y^I$ has a section.
	\item a \textbf{Serre fibration} or \textbf{$q$-fibration}\index{$q$-fibration} if $X \to Y$ satisfies the usual lifting property with respect to cylinders on discs:
	\[ \xymatrix @R=1.7em{
		D^n \times \{0\} \ar[r] \ar[d] & X \ar[d] \\
		D^n \times I \ar[r] \ar@{-->}[ur] & Y
	}\]
	\item a \textbf{Quillen cofibration} or \textbf{$q$-cofibration}\index{$q$-cofibration} if $X \to Y$ is a retract of a relative cell complex.
	\item a \textbf{closed inclusion} or \textbf{$i$-cofibration}\index{$i$-cofibration} if $X \to Y$ is a homeomorphism onto its image, which is closed in $Y$. (In (CGWH), the basepoint section is always a closed inclusion, so every space is $i$-cofibrant.)
	\item an \textbf{$h$-cofibration}\index{$h$-cofibration} if it is a closed inclusion, and $X \to Y$ has the homotopy extension property. In other words, the inclusion
\[ (X \times I) \cup_{(X \times \{0\})} (Y \times \{0\}) \to (Y \times I) \]
has a retract. (In (CGWH) we are allowed to drop ``closed inclusion'' from the definition because it follows from the homotopy extension property.)
	\item an \textbf{$f$-cofibration}\index{$f$-cofibration} if it is a closed inclusion and has the fiberwise homotopy extension property.\footnote{In \cite{ms} these are called $\bar{f}$-cofibrations; in \cite{crabb_james} they are called closed fibrewise cofibrations.} This means the inclusion
\[ (X \times I) \cup_{(X \times \{0\})} (Y \times \{0\}) \to (Y \times I) \]
has a retract \emph{that respects} the projection to $B$.
\end{itemize}
We say that a retractive space is $h$-fibrant if the map to the zero object is an $h$-fibration, and similarly for the other notions of fibrant and cofibrant.
\end{df}

As usual, the map $X \to Y$ is an $h$-cofibration iff we can identify $X$ as a closed subspace of $Y$ and give $(Y,X)$ the structure of an NDR-pair (see e.g. \cite[\S 6.4]{concise}). This is a function $u: Y \to [0,1]$ with $X = u^{-1}(0)$, and a homotopy $h\colon Y \times I \to Y$, constant on $X$, from the identity map of $Y$ to a map that retracts the open neighborhood $U = u^{-1}([0,1))$ back onto $X$. Similarly, the map is an $f$-cofibration iff we can find an NDR-pair structure on $(Y,X)$ such that the homotopy $h$ respects the projection to $B$.

The following chart summarizes all of the classes of maps we have defined.
\[ \xymatrix @R=.5em @C=2em{
	h\textup{-fibrations} \ar@{=>}[r] & q\textup{-fibrations} & \textup{weak equivalences} \\
	f\textup{-cofibrations} \ar@{=>}[r] & h\textup{-cofibrations} \ar@{=>}[r] & \textup{closed inclusions} \\
	& q\textup{-cofibrations} \ar@{=>}[u] &
} \]

\begin{ex}\label{ex:fiberwise_thom_space}
	Let $V \to B$ be an $n$-dimensional real vector bundle with an inner product, and $S(V)$ and $D(V)$ its associated unit sphere and disc bundles. The \textbf{Thom space} $\Th(V)$ is the quotient $D(V)/S(V)$. The \textbf{fiberwise Thom space}\index{fiberwise!Thom space $\Th_B(V)$} $\Th_B(V)$ is the fiberwise quotient, i.e. the pushout
	\[ \xymatrix @R=2em{
		S(V) \ar[r] \ar[d] \po{.65}{-1ex} & D(V) \ar[d] \\
		B \ar[r] & \Th_B(V).
	} \]
	So $\Th_B(V)$ is a sphere bundle over $B$ with a section, and is always $f$-cofibrant and $h$-fibrant.
\end{ex}


We collect some standard results here for future reference.

\begin{prop}\label{prop:q}
	 The weak homotopy equivalences, Quillen cofibrations, and Serre fibrations make $\mc R(B)$ into a model category. It is proper and cofibrantly generated by the maps
	 \[ \begin{array}{ccrll}
	I &=& \{ \ S^{n-1}_{+B} \to D^n_{+B} & : n \geq 0, & D^n \to B \ \} \\
	J &=& \{ \ D^n_{+B} \to (D^n \times I)_{+B} &: n \geq 0, & (D^n \times I) \to B \ \}.
	\end{array} \]
\end{prop}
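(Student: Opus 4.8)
The plan is to verify the hypotheses of a standard recognition theorem for cofibrantly generated model categories — for instance the version in Hovey's book or the one in \cite{mmss} — applied to the category $\mc R(B)$ with the proposed generating sets $I$ and $J$. Since $\mc R(B)$ has all limits and colimits (noted above), and the weak equivalences clearly satisfy two-out-of-three and are closed under retracts, the substantive work is to identify the $I$-cofibrations and $J$-cofibrations and their ``injective'' counterparts. First I would observe that $\mc R(B)$ is the category of objects in $\mathbf{Top}_*$-like spaces under-and-over $B$; the free-forgetful adjunction $(-)_{+B} \colon \mathbf{Top}/B \rightleftarrows \mc R(B)$ lets us transport the usual generating sets $\{S^{n-1}\to D^n\}$ and $\{D^n \to D^n\times I\}$ for spaces. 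Concretely, a map in $\mc R(B)$ is $J$-injective (has the RLP against $J$) iff its underlying map of total spaces is a Serre fibration, because lifting a square with $D^n_{+B}\to (D^n\times I)_{+B}$ on the left is, by adjunction, exactly lifting the corresponding square of total spaces with $D^n\to D^n\times I$ on the left; similarly a map is $(I\text{-inj})$ iff its total space map is an acyclic Serre fibration, using that a map of spaces is an acyclic Serre fibration iff it has the RLP against all $S^{n-1}\to D^n$.

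Next I would run the small object argument: the domains $S^{n-1}_{+B}$ and $D^n_{+B}$ are compact, hence small (sequentially small) relative to the closed inclusions in $\mc R(B)$, exactly as in the non-parametrized case — compactness of $D^n$ and $S^{n-1}$ is inherited, and transfinite composites of pushouts of the generating maps are built from closed inclusions in $\mathbf{Top}$, so colimits are computed on total spaces and smallness goes through. This gives functorial factorizations $(I\text{-cof}, I\text{-inj})$ and $(J\text{-cof}, J\text{-inj})$. It then remains to check the two ``overlap'' conditions: that $I\text{-inj} \subseteq J\text{-inj}$ (immediate, since acyclic Serre fibrations are Serre fibrations) and, the real point, that $J\text{-cof} \subseteq I\text{-cof}\cap \mathcal W$ with equality $I\text{-cof}\cap\mathcal W = J\text{-cof}$ after accounting for retracts. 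The inclusion $J\text{-cof}\subseteq \mathcal W$ follows because each generator $D^n_{+B}\to(D^n\times I)_{+B}$ is a deformation retract inclusion on total spaces, these are preserved by pushout and transfinite composition (again since these colimits are formed in $\mathbf{Top}$ along closed inclusions), and deformation retract inclusions are in particular weak equivalences. For the reverse inclusion I would use the retract argument: given $f \in I\text{-cof}\cap\mathcal W$, factor $f = p\circ j$ with $j\in J\text{-cof}$ and $p\in J\text{-inj}$; then $p$ is a Serre fibration which is also a weak equivalence (by two-out-of-three, since $f$ and $j$ are), hence $p\in I\text{-inj}$, so $f$ has the LLP against $p$ and is a retract of $j$. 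This establishes the model structure.

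Finally I would record cofibrant generation (which is built into the construction), left and right properness, and the identification $q\text{-cofibration} = I\text{-cof (up to retract)} = $ retract of a relative cell complex, matching the definition given in the excerpt. Properness I would deduce in the standard way: right properness because every object's total space has the homotopy type we need and pullbacks of Serre fibrations preserve weak equivalences of total spaces (pullback in $\mc R(B)$ is computed on total spaces as a fiber product over $B$, hence as an ordinary pullback of spaces, so the classical right properness of $\mathbf{Top}$ applies fiberwise); left properness because $q$-cofibrations are in particular $h$-cofibrations (hence closed inclusions with the homotopy extension property, per the chart), and pushouts along such inclusions preserve weak equivalences by the gluing lemma for spaces — which again applies because the relevant pushout is computed on total spaces.

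The main obstacle I expect is not any single hard theorem but the bookkeeping that all the relevant colimits in $\mc R(B)$ — pushouts and transfinite composites of the generating maps — are created by the functor to $\mathbf{Top}$ (this is flagged in the text: connected-diagram colimits commute with the forgetful functor, and pushouts along the generators are such), together with checking smallness of the compact domains relative to closed inclusions in the (CG) or (CGWH) setting. Once that is in place, every step reduces cleanly to the corresponding classical fact about topological spaces; the parametrization over $B$ adds essentially no homotopical content because $B$ sits inside every object as a retract and the generators are all ``free over $B$'' on the standard cells.
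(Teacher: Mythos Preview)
The paper does not actually prove this proposition: it is stated as a standard result and followed only by the one-line remark that it holds in both (CGWH) and (CG), with the forgetful functor a right Quillen equivalence. Your argument is correct and is precisely the standard verification one would give---indeed, the paper later carries out an entirely analogous (but harder) verification for the stable model structure on spectra in \autoref{thm:stable_model_structure}, checking Hovey's conditions via the free/forgetful adjunction, compact-domain smallness against closed inclusions, and the retract argument, exactly as you outline here. One small refinement: for left properness you appeal to the fact that $q$-cofibrations are $h$-cofibrations, which is true but is itself a lemma (it appears in the paper as an application of \autoref{prop:free_implies_level} in the spectrum-level analogue); at the space level it follows because a retract of a relative cell complex is an NDR-pair, which you might want to say explicitly.
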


This is true in both (CGWH) and (CG), and the forgetful functor (CGWH) $\to$ (CG) is a right Quillen equivalence.

\begin{prop}\label{prop:technical_cofibrations}\hfill
	\begin{enumerate}
		\item Each notion of ``cofibration'' in the category $\mc R(B)$ is closed under pushout, transfinite compositions (therefore also coproducts), and retracts.
		\item (Gluing lemma) Pushouts in $\mc R(B)$ along an $h$-cofibration are homotopical. (Equivalences of pushout diagrams go to equivalences of pushouts.)
		\item (Colimit lemma) Sequential colimits in $\mc R(B)$ along $h$-cofibrations are homotopical.
	\end{enumerate}
\end{prop}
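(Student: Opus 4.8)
The plan is to reduce everything to the underlying category of spaces, where these statements are classical, using the two observations about which colimits commute with the forgetful functor $\mc R(B) \to \cat{Top}$. For part (1), fix one of the five notions of cofibration. Closure under retracts is formal in each case (a retract of a map with a lifting/retraction property again has that property). For closure under pushout: since pushouts in $\mc R(B)$ commute with the forgetful functor, a pushout of $f\colon X \to Y$ along $g\colon X \to Z$ has underlying map the pushout of the underlying maps of spaces. For $i$-cofibrations (closed inclusions), pushout of a closed inclusion along any map is a closed inclusion in CGWH-spaces, which handles that case; for $h$- and $f$-cofibrations one transports the NDR-pair structure (respectively the fiberwise NDR-pair structure) along the pushout square, exactly as in the classical argument for $\cat{Top}$ (see e.g. \cite[\S 6.4]{concise}); for $q$-cofibrations one uses that they are retracts of relative cell complexes and that pushouts of relative cell complexes are relative cell complexes. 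For transfinite compositions one argues the same way: a transfinite composite in $\mc R(B)$ has underlying transfinite composite of spaces (colimits over connected — here ordinal — diagrams commute with the forgetful functor), and in each case the relevant structure passes to the colimit; for $h$- and $f$-cofibrations one assembles an NDR-pair (resp. fiberwise NDR-pair) structure on the composite out of the structures on the stages by the standard telescoping formula, and closedness of the inclusion uses weak Hausdorffness. Coproducts are the special case of transfinite composition starting from the zero object, or directly a pushout.

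For part (2), the gluing lemma: given a map of pushout squares in $\mc R(B)$ in which one leg of each square is an $h$-cofibration and the three comparison maps on the other corners are $q$-equivalences (weak homotopy equivalences of total spaces), we must show the induced map of pushouts is a $q$-equivalence. Again the pushouts are computed on total spaces, and an $h$-cofibration in $\mc R(B)$ has underlying map an $h$-cofibration (closed Hurewicz cofibration) of spaces, so this is precisely the classical gluing lemma for the Strøm/Hurewicz structure on $\cat{Top}$ applied to total spaces (see \cite[\S 6.4]{concise} or the proof that $\cat{Top}$ is left proper with respect to $h$-cofibrations).

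For part (3), the colimit lemma: a sequential colimit along $h$-cofibrations in $\mc R(B)$ is again computed on total spaces, and it is classical that a sequential colimit of weak homotopy equivalences along closed Hurewicz cofibrations is a weak homotopy equivalence (one may reduce to homotopy groups, which commute with such colimits since each compact simplex or sphere lands in a finite stage up to homotopy). So this too follows from the corresponding statement in $\cat{Top}$.

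The main obstacle is the bookkeeping for the $f$-cofibration case of part (1): one must check that the NDR representative and its homotopy can be chosen to respect the projection to $B$ through pushouts and transfinite composites. This is not deep, but it requires being careful that the standard telescoping and transport constructions for NDR-pairs are natural enough to be performed fiberwise, which is where I would spend the bulk of the effort; everything else is a direct appeal to the classical results for spaces via the forgetful functor.
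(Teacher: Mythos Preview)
The paper does not actually give a proof of this proposition; it is introduced with ``We collect some standard results here for future reference'' and stated without argument. Your approach is correct and is exactly the standard one: reduce to the underlying category of spaces via the forgetful functor $\mc R(B) \to \cat{Top}$ (which preserves pushouts and sequential colimits since these are connected diagrams), and then invoke the classical statements for $\cat{Top}$.

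Your self-identified obstacle, the fiberwise bookkeeping for $f$-cofibrations in part (1), is indeed the only place requiring real care, and your assessment is right: the usual transport and telescoping formulas for NDR data are given by explicit homotopies and functions that are automatically compatible with any fixed projection to $B$, so the fiberwise versions go through verbatim. One minor simplification: for $q$-cofibrations, rather than manipulating retracts of relative cell complexes directly, it is cleaner to use their characterization by the left lifting property against acyclic $q$-fibrations, since maps defined by a left lifting property are formally closed under pushouts, transfinite compositions, and retracts.
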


The dual of this proposition also holds with $h$-fibrations or with $q$-fibrations.

\begin{cor}\label{cofibration_of_pushouts}
	For each notion of ``cofibration,'' a map of pushout diagrams
	\[ \xymatrix @R=1.7em{
			Y \ar[d] & X \ar[l] \ar[r] \ar[d] & Z \ar[d] \\
			Y' & X' \ar[l] \ar[r] & Z',
		} \]
	induces a cofibration on the pushouts, provided that both $Z \to Z'$ and $Y \cup_X X' \to Y'$ are cofibrations.
\end{cor}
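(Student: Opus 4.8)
The plan is to reduce the statement to a two-step factorization of the map on pushouts, using the fact (from \autoref{prop:technical_cofibrations}(1)) that each notion of cofibration is closed under pushout and composition.

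First I would set up notation. Write $P = Y \cup_X Z$ and $P' = Y' \cup_X' Z'$ for the two pushouts, so the map of diagrams induces $P \to P'$. The idea is to factor this map as $P \to P'' \to P'$, where $P''$ is an intermediate pushout that ``uses the new $Z'$ but the old $Y$ and $X$.'' Concretely, set $P'' = Y \cup_X Z'$, formed from the span $Y \leftarrow X \to Z \to Z'$. There is an evident map $P \to P''$, since $P$ is the pushout of $Y \leftarrow X \to Z$ and $P''$ receives compatible maps from $Y$ and $Z$ (the latter through $Z \to Z'$). And there is an evident map $P'' \to P'$, since $P''$ is the pushout of $Y \leftarrow X \to Z'$ and $P'$ receives compatible maps from $Y$ (through $Y \to Y'$) and $Z'$.

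Next I would identify each of the two maps as a pushout of one of the hypothesized cofibrations. For $P \to P''$: this map is the pushout of $Z \to Z'$ along the map $Z \to P$. Indeed, $P'' = Y \cup_X Z'$ can be rewritten, since colimits commute, as the pushout of $P \leftarrow Z \to Z'$ (push out $Z \to Z'$ along $Z \to P$); one checks this by comparing universal properties, both objects being the colimit of $Y \leftarrow X \to Z \to Z'$. Hence $P \to P''$ is a cofibration because $Z \to Z'$ is. For $P'' \to P'$: here I claim $P'' \to P'$ is the pushout of $Y \cup_X X' \to Y'$ along the map $Y \cup_X X' \to P''$. To see the relevant map exists, note $P'' = Y \cup_X Z' $ receives a map from $X'$ via $X' \to Z'$, hence a map from $Y \cup_X X'$; and $P' = Y' \cup_X' Z'$ is the pushout of $Y' \leftarrow Y \cup_X X' \to P''$ — again a comparison of universal properties, both being the colimit of the diagram $Y' \leftarrow X' \to Z'$ expanded through $X$. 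So $P'' \to P'$ is a cofibration because $Y \cup_X X' \to Y'$ is. Composing, $P \to P'$ is a cofibration.

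The main obstacle I anticipate is purely bookkeeping: verifying the two ``colimit rearrangement'' identities, i.e. that $P'' \cong P \cup_Z Z'$ and $P' \cong P'' \cup_{(Y \cup_X X')} Y'$. These are instances of the standard fact that a colimit over a diagram can be computed by first taking colimits over subdiagrams (a Fubini/pasting argument for pushouts), so there is no real content, but one must be careful that the maps into the intermediate objects are the intended ones so that the resulting factorization of $P \to P'$ is the correct map. Once those identities are in hand, closure of cofibrations under pushout and composition finishes the argument immediately, and the same proof applies verbatim to every notion of cofibration in the list, since each is closed under pushout and composition.
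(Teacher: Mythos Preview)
Your proposal is correct and is exactly the argument the paper has in mind: the corollary is stated without proof as an immediate consequence of \autoref{prop:technical_cofibrations}(1), and your two-step factorization through $P'' = Y \cup_X Z'$, exhibiting each step as a pushout of one of the two hypothesized cofibrations, is the standard way to extract it.
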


\begin{lem}(\cite[\S 6.4]{concise}, \cite[Thm 6]{strom_2})\label{lem:product_ndr}
	If $A \subseteq X$ and $A' \subseteq X'$ are closed NDR-pairs then the pushout-product map
	\[ \xymatrix{ (A \times X') \cup_{A \times A'} (X \times A) \ar[r] & X \times X' } \]
	also has the structure of a closed NDR-pair. The same is true for fiberwise closed NDR-pairs over $B$, or for just closed inclusions.
\end{lem}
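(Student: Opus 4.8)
The plan is to prove the pushout-product statement for NDR-pairs by explicitly combining the given NDR-pair data into an NDR-pair structure on the target. Since the claim has three parallel versions (ordinary closed NDR-pairs, fiberwise closed NDR-pairs over $B$, and just closed inclusions), and since the last two follow from the first by either (i) checking that the constructed homotopies respect the projection to $B$ whenever the input ones do, or (ii) discarding the homotopy data and just verifying that a pushout-product of closed inclusions is a closed inclusion, the essential content is the classical statement: if $(X,A)$ and $(X',A')$ are closed NDR-pairs, then so is the pair
\[ \bigl(X \times X',\ (A \times X') \cup_{A \times A'} (X \times A')\bigr). \]
This is exactly \cite[\S 6.4]{concise} and \cite[Thm 6]{strom_2}, so strictly speaking one could simply cite it; but since the excerpt presents it as a lemma to be proved, I would reproduce the standard construction.

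\textbf{First} I would recall the input data: representing functions $u\colon X \to [0,1]$ with $A = u^{-1}(0)$ and $u'\colon X' \to [0,1]$ with $A' = u'^{-1}(0)$, together with homotopies $h\colon X \times I \to X$ rel $A$ from $\id_X$ to a map deformation-retracting $U = u^{-1}([0,1))$ onto $A$, and similarly $h'\colon X' \times I \to X'$ rel $A'$. Write $P = (A \times X') \cup_{A \times A'} (X \times A')$ for the pushout-product source, which is a closed subspace of $X \times X'$. \textbf{Next} I would exhibit the representing function for $P$ as
\[ v(x,x') = \min\{u(x),\ u'(x')\}, \]
so that $v^{-1}(0) = \{(x,x') : u(x)=0 \text{ or } u'(x')=0\} = (A \times X') \cup (X \times A') = P$, using that the subspaces $A\times X'$ and $X\times A'$ of $X\times X'$ are both closed and that their union, formed inside $X\times X'$, is exactly the pushout $(A \times X') \cup_{A \times A'} (X \times A')$ (this identification is routine but worth stating, and in the compactly generated setting requires only that $A\hookrightarrow X$, $A'\hookrightarrow X'$ are closed inclusions). \textbf{Then} I would write down the deforming homotopy on $X \times X'$ by interpolating $h$ and $h'$ with a modulation that is governed by which coordinate is ``closer to the subspace,'' i.e. by the relative sizes of $u(x)$ and $u'(x')$; the standard recipe (see Str\o m or May) is, roughly, to run $h$ in the $X$-coordinate with a time parameter scaled by a function of $u'(x')/\max\{u(x),u'(x')\}$ and $h'$ in the $X'$-coordinate symmetrically, with a careful convention on the locus $\{u(x)=u'(x')=0\}$ so that the homotopy is continuous there and constant on $P$. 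One checks: (a) continuity, the only delicate point being along the diagonal locus where the two coordinates tie and where one or both of $u,u'$ vanish; (b) the homotopy is stationary on $P$; (c) at time $1$ it carries the open neighborhood $v^{-1}([0,1))$ into $P$.

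\textbf{The main obstacle} is exactly the continuity check for the deforming homotopy along the set where $u(x) = u'(x')$, and in particular its limiting behavior as $(x,x')$ approaches $P$ from outside: one must choose the interpolation so that the scaling factors extend continuously (typically by setting the relevant ratio to a fixed value, e.g. $1$, when both $u(x)$ and $u'(x')$ are $0$), and then verify that the formula glues to a well-defined continuous map on all of $X\times X'$ — this is where the CG/CGWH conventions and the closedness of $A, A'$ get used, since continuity is tested against maps from compact Hausdorff spaces. \textbf{Finally}, for the fiberwise version I would observe that if $h$ and $h'$ are each constant in a coordinate that maps to $B$ — more precisely, when $X, X'$ are retractive over $B$ with $h, h'$ fiberwise — then every formula above is built coordinatewise from $h$ and $h'$ and hence the resulting homotopy on $X \times_B X'$ (note the pushout-product is formed fiberwise, so all smash/product operations are over $B$) again respects the projection to $B$; and for the ``just closed inclusions'' version one simply notes that a pushout-product of closed inclusions of compactly generated spaces is again a closed inclusion, which is immediate from the description of $P$ as the union of two closed subspaces of $X \times X'$ (equivalently, discard the $u$'s and $h$'s and keep only the point-set statement).
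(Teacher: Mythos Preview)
Your proposal is correct and follows exactly the classical argument that the paper cites: the paper gives no proof of its own for this lemma, simply invoking \cite[\S 6.4]{concise} and \cite[Thm 6]{strom_2}, and your write-up reproduces that standard construction (the $\min$ of the representing functions, the time-scaled combination of the two homotopies, and the observation that the fiberwise and closed-inclusion variants follow formally). There is nothing to compare.
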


\begin{lem}\label{lem:dold}
	If $X \to Y$ is an $f$-cofibration of $h$-fibrant spaces over $B$, then the path-lifting function for $Y$ can be chosen so that it restricts to a path-lifting function of $X$.
\end{lem}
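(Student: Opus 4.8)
The plan is to rephrase the assertion as a relative homotopy lifting problem for the $h$-fibration $p_Y\colon Y\to B$. Since $X$ is $h$-fibrant, fix a path-lifting function $\Lambda_X\colon X\times_B B^I\to X^I$ and let $\lambda_X\colon (X\times_B B^I)\times I\to X$ be its adjoint, so that $\lambda_X(x,\gamma,0)=x$ and $p_X\lambda_X(x,\gamma,t)=\gamma(t)$. We wish to build a map $\lambda_Y\colon (Y\times_B B^I)\times I\to Y$ with $\lambda_Y(y,\gamma,0)=y$, with $p_Y\lambda_Y(y,\gamma,t)=\gamma(t)$, and restricting to $\lambda_X$ on $(X\times_B B^I)\times I$; its adjoint $\Lambda_Y$ is then a path-lifting function for $Y$ that carries $X\times_B B^I$ into $X^I$ and there agrees with $\Lambda_X$. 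The first and third conditions already determine $\lambda_Y$ on the subspace
\[ j\colon\ \big((Y\times_B B^I)\times\{0\}\big)\,\cup\,\big((X\times_B B^I)\times I\big)\ \hookrightarrow\ (Y\times_B B^I)\times I \]
(these prescriptions are compatible on the overlap, as $\lambda_X(x,\gamma,0)=x$), and the remaining condition asks that the extension over $j$ lift the evaluation map $(y,\gamma,t)\mapsto\gamma(t)$ along $p_Y$. So it is enough to show that $p_Y$ has the right lifting property against $j$.

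For this I would appeal to the standard fact that a Hurewicz fibration has the right lifting property against any $h$-cofibration that is also a homotopy equivalence (these are the acyclic cofibrations of Str\o m's model structure). That $j$ is a homotopy equivalence is formal: both its domain and its codomain deformation retract onto $(Y\times_B B^I)\times\{0\}$ by linearly collapsing the $I$-coordinate, and this homotopy preserves the domain since there the $I$-coordinate is supported over $X\times_B B^I$, so $j$ is a homotopy equivalence by two-out-of-three. That $j$ is an $h$-cofibration follows from \autoref{lem:product_ndr} once we know that $X\times_B B^I\hookrightarrow Y\times_B B^I$ is one; and this inclusion is the base change of the $f$-cofibration $X\to Y$ along the $h$-fibration $\mathrm{ev}_0\colon B^I\to B$, so its defining fiberwise retraction base-changes and it is again an $f$-cofibration, in particular an $h$-cofibration. (Alternatively, it is the pullback of the $h$-cofibration $X\to Y$ along the $h$-fibration $Y\times_B B^I\to Y$, and $h$-cofibrations pull back along $h$-fibrations.)

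Granting these points the lift $\lambda_Y$ exists, and passing back to adjoints yields the path-lifting function for $Y$ claimed by the lemma. The one step carrying real content is the cofibration property of $X\times_B B^I\hookrightarrow Y\times_B B^I$; this is where the $f$-cofibration hypothesis is convenient, since an $f$-cofibration is given by a \emph{fiberwise} retraction that is visibly preserved under the base change along $B^I\to B$. I expect that base-change step to be the main thing to pin down carefully; the adjunction bookkeeping and the lifting property of Hurewicz fibrations are routine.
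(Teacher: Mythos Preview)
Your argument is correct. The paper states this lemma without proof (the label \texttt{lem:dold} suggests it is a classical observation in the spirit of Dold's work on fibrations), so there is nothing to compare against; your write-up supplies exactly the kind of proof one would expect. The key steps---that $X\times_B B^I\hookrightarrow Y\times_B B^I$ is an $h$-cofibration because $f$-cofibrations are preserved by pullback (\autoref{lem:f_star_preserves}), that $j$ is then an acyclic $h$-cofibration by \autoref{lem:product_ndr} and the evident deformation retraction, and that Hurewicz fibrations lift against such maps by Str{\o}m's theorem---are all sound and already available in the paper's toolkit.
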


\begin{prop}[Clapp]\cite[Prop 1.3]{clapp1981duality}\label{prop:clapp}
	Given a pushout square of spaces over $B$
	\[ \xymatrix @R=1.7em{
		X \ar[r]^-i \ar[d]^-f \po{.65}{-1ex} & Y \ar[d] \\
		Z \ar[r] & Y \cup_X Z,
	} \]
	if $X$, $Y$, and $Z$ are all $h$-fibrant and $i$ is an $f$-cofibration then the pushout $Y \cup_X Z$ is also $h$-fibrant.
\end{prop}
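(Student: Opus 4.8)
The plan is to build a path-lifting function for $\pi\colon W:=Y\cup_X Z\to B$, i.e.\ a section of the map $W^I\to W\times_B B^I$ sending a path $\sigma$ to $(\sigma(0),\pi\circ\sigma)$; the existence of such a section is equivalent to $\pi$ being an $h$-fibration. First I would assemble the data the hypotheses provide. Since $i\colon X\to Y$ is an $f$-cofibration between $h$-fibrant spaces, \autoref{lem:dold} gives a path-lifting function $s_Y$ for $Y\to B$ whose restriction to $X\times_B B^I$ is a path-lifting function $s_X$ for $X\to B$; and $h$-fibrancy of $Z$ gives a path-lifting function $s_Z$ for $Z\to B$. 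Because $i$ is an $f$-cofibration, the pair $(Y,X)$ carries a fiberwise NDR structure $(u,h)$ over $B$, with $X=u^{-1}(0)$ and $h\colon Y\times I\to Y$ a homotopy over $B$ from $\id_Y$ that is stationary on $X$ and whose time-one map retracts the fiberwise neighbourhood $U:=u^{-1}([0,1))$ into $X$. Pushing this structure out along $f$ --- which is legitimate since $f$-cofibrations are preserved by pushout, part~(1) of \autoref{prop:technical_cofibrations}, so that $Z\to W$ is again an $f$-cofibration --- produces a fiberwise NDR structure $(\bar u,\bar h)$ on $(W,Z)$ with $\bar u|_Z\equiv 0$, $\bar u|_Y=u$, $\bar h|_{Z\times I}$ the projection, $\bar h|_{Y\times I}$ equal to $h$, so that $Z=\bar u^{-1}(0)$ and $\bar h_1$ retracts $\bar U:=\bar u^{-1}([0,1))=Z\cup_X U$ into $Z$.

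Next I would glue $s_Y$, $s_Z$ and $\bar h$ into a single path-lifting function $s_W$ on $W$, with $\bar u$ as the interpolation parameter. On the closed set $\{\bar u\geq\tfrac12\}$, which lies in $Y\setminus X$, one lifts purely with $s_Y$ (composed with $Y^I\to W^I$). On $\{\bar u\leq\tfrac12\}\subseteq\bar U$, one forms the lift of $\gamma$ starting at $w$ by following the $s_Y$-lift (or, when $w\in Z$, the $s_Z$-lift) while simultaneously applying the deformation $\bar h$, the amount of deformation being $0$ when $\bar u(w)=\tfrac12$ and full --- ``retract all the way into $Z$ and then continue with $s_Z$'' --- when $\bar u(w)=0$; since $\bar h$ lies over $\id_B$, each resulting path still projects to $\gamma$, so $s_W$ is a genuine section of $W^I\to W\times_B B^I$. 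Continuity is then checked against the closed cover $\{\bar u\leq\tfrac12\}\cup\{\bar u\geq\tfrac12\}$ of $W\times_B B^I$: the two recipes agree on $\{\bar u=\tfrac12\}$ by construction, and continuity on the small piece uses that $W\times_B B^I=(Y\times_B B^I)\cup_{X\times_B B^I}(Z\times_B B^I)$ is again a pushout along a closed inclusion --- such pushouts being stable under the pullback $-\times_B B^I$ in compactly generated spaces --- together with \autoref{prop:technical_cofibrations} and the pushout-product behaviour of NDR pairs from \autoref{lem:product_ndr}.

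The main obstacle is precisely this gluing and its continuity, and the core difficulty inside it is that $f\colon X\to Z$ need not be injective. As a consequence, $Z\to W$ is a closed inclusion but $Y\to W$ need not be injective, and the two lifts $s_Y|_X=s_X$ and $s_Z$ cannot be arranged to coincide literally after being mapped into $W^I$. The role of $\bar h$ is exactly to absorb this discrepancy: near $Z$ one first flows into $Z$ and only then applies $s_Z$, so what must be verified is that this flow can be packaged into a continuous formula, that it reduces to $s_Y$ along $\{\bar u=\tfrac12\}$ and to $s_Z$ along $Z=\{\bar u=0\}$, and that the glued map is continuous where the pieces overlap. This is the fiberwise-over-$B$ refinement of Dold's classical theorem that a Hurewicz fibration is glued from Hurewicz fibrations along a cofibration.
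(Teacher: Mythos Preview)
The paper does not prove this proposition; it is stated with a citation to Clapp \cite[Prop.~1.3]{clapp1981duality} and no argument is supplied. So your proposal must be judged on its own.

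Your outline has the right ingredients --- the compatible lifting function $s_Y$ from \autoref{lem:dold}, the pushed-out NDR data $(\bar u,\bar h)$, and a halo-based gluing --- and you have correctly located the central difficulty. But the mechanism you propose to resolve it does not work. You assert that ``the role of $\bar h$ is exactly to absorb this discrepancy'' between the two lifts $f\circ s_Y|_X$ and $s_Z\circ(f\times\id)$ on $X\times_B B^I$. It cannot: both of these already land in $Z^I$, and $\bar h$ is by construction the identity on $Z$, so applying any amount of $\bar h$ to either of them does nothing. Concretely, as $y\in Y\setminus X$ approaches a point $x\in X$, your formula on the $Y$-side limits to $f\circ s_X(x,\gamma)$, while on the $Z$-side you have prescribed $s_Z(f(x),\gamma)$; these are two genuinely different lifts of $\gamma$ starting at $f(x)$, and no fiberwise deformation fixing $Z$ can interpolate between them. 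There is a secondary problem as well: the recipe ``first flow into $Z$ along $\bar h$, then continue with $s_Z$'' does not lift $\gamma$, because the $\bar h$-segment lies entirely over $\gamma(0)$, so the concatenation projects to the constant path followed by $\gamma$ rather than to $\gamma$. Your sentence ``since $\bar h$ lies over $\id_B$, each resulting path still projects to $\gamma$'' is therefore not justified for this concatenated path.

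What is actually needed, in addition to $(\bar u,\bar h)$, is a vertical homotopy between $f\circ s_X$ and $s_Z\circ(f\times\id)$ as sections over $X\times_B B^I$ of the acyclic Hurewicz fibration $Z^I\to Z\times_B B^I$; such a homotopy exists because any two path-lifting functions for a Hurewicz fibration are vertically homotopic. The delicate part of Clapp's argument is weaving this extra homotopy together with the NDR collar so that the resulting global formula is simultaneously continuous on the pushout $W\times_B B^I\cong (Y\times_B B^I)\cup_{X\times_B B^I}(Z\times_B B^I)$ and a genuine lift of $\gamma$ at every stage.
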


\begin{lem}[Heath and Kamps]\cite[1.3]{heath_kamps}\label{lem:heath_kamps}
	An $h$-cofibration $A \to X$ between $h$-fibrant spaces over $B$ is an $f$-cofibration.
\end{lem}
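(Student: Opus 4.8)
The plan is to reduce the statement to the non-fiberwise case, which is the classical Heath--Kamps result, by carefully tracking the projection to $B$ through the standard constructions. Recall that an $h$-cofibration $A \to X$ means we have an NDR-pair structure on $(X,A)$: a function $u\colon X \to [0,1]$ with $A = u^{-1}(0)$ and a homotopy $h\colon X \times I \to X$ rel $A$, from $\id_X$ to a map retracting the neighborhood $U = u^{-1}([0,1))$ onto $A$. To upgrade this to an $f$-cofibration, I need to modify $h$ so that it respects the projection $p_X\colon X \to B$, i.e.\ so that $p_X \circ h_t = p_X$ for all $t$. I would \emph{keep the same} function $u$ (it already has the right zero set), so the only work is in the homotopy.

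First I would set up the data. Since $X$ is $h$-fibrant, the map $p_X\colon X \to B$ is a Hurewicz fibration, so it admits a path-lifting function $\Gamma\colon X \times_B B^I \to X^I$. The existing homotopy $h$ is \emph{not} fiberwise, but its composite with $p_X$ gives a homotopy $p_X \circ h\colon X \times I \to B$ starting at $p_X$; think of this as a map $X \times I \to B$, or adjointly a map $X \to B^I$ sending $x$ to the path $t \mapsto p_X(h(x,t))$. Now I would feed the \emph{reverse} of this path into the path-lifting function $\Gamma$, starting from the point $x \in X$, to produce a new homotopy $\widetilde h\colon X \times I \to X$ with $\widetilde h_0 = \id_X$ and $p_X \circ \widetilde h_t = p_X$ for all $t$ — i.e.\ a fiberwise homotopy — but which no longer necessarily retracts $U$ onto $A$; instead it slides each point $x$ along the fiber over $p_X(x)$. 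The point is that $\widetilde h$ now lives \emph{over} $B$, and the composite $h$ followed by $\widetilde h$ (suitably reparametrized) lands, at the endpoint, in the fiber $X_{p_X(x)}$.

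The key step is then to combine the two homotopies to get a single fiberwise homotopy rel $A$ from $\id_X$ to a retraction of $U$ onto $A$. Here is where I would invoke \autoref{lem:dold}: because $A \to X$ is an $f$-cofibration... — wait, that is the conclusion, not a hypothesis. Instead, the cleaner route is: the classical Heath--Kamps theorem (as in \cite[1.3]{heath_kamps}) is precisely the statement that for a Hurewicz fibration $p_X$, an $h$-cofibration can be deformed through fiber-preserving maps to a fiberwise NDR structure, using the path-lifting function to correct the non-fiberwise homotopy. So I would essentially transcribe that argument into the retractive setting: the correction produced by $\Gamma$ above, concatenated with the original $h$ and reparametrized by a partition-of-unity-type function built from $u$, yields a homotopy $H\colon X \times I \to X$ that is (i) constant on $A$, (ii) starts at $\id_X$, (iii) at time $1$ retracts $U$ into $A$, and (iv) satisfies $p_X \circ H_t = p_X$. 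The verification of (i)--(iii) is routine bookkeeping with the reparametrization, using that $u$ vanishes exactly on $A$; the verification of (iv) is immediate from the construction of the correction term via $\Gamma$. Finally, $A \to X$ is a closed inclusion since $h$-cofibrations are (by the chart), and in (CGWH) this is automatic anyway; together with the fiberwise NDR structure $(u, H)$ this exhibits $A \to X$ as an $f$-cofibration.

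The main obstacle I anticipate is the reparametrization step: one must interleave the original (non-fiberwise) contraction $h$ with the fiberwise correction from $\Gamma$ so that the result is simultaneously fiberwise, rel $A$, and still a deformation retraction of the neighborhood $U$ — the three requirements pull in slightly different directions, and getting a single formula (built from $u$ and a standard "eventually $1$" cutoff) that satisfies all three at once is the delicate part. Everything else is either quoted (the classical Heath--Kamps argument, the existence of $\Gamma$ from $h$-fibrancy) or formal (closedness of the inclusion, behaviour of $p_X$).
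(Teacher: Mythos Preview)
The paper does not give its own proof of this lemma --- it simply cites \cite[1.3]{heath_kamps} and moves on --- so there is no in-paper argument to compare against. That said, your sketch is worth commenting on.

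Your outline has the right shape (correct the non-fiberwise NDR homotopy $h$ using a path-lifting function $\Gamma$ for the fibration $p_X$), but there is a genuine gap: you never use the hypothesis that $A$ is $h$-fibrant over $B$, and without it condition (iii) fails. Concretely, for $x \in U$ your corrected endpoint $H_1(x)$ is obtained by starting at $h_1(x) \in A$ and lifting, via $\Gamma$, a path in $B$ back to $X$. Nothing forces that lift to stay in $A$, so $H_1(U) \subseteq A$ need not hold. A reparametrization governed by $u$ cannot repair this: you must run $h$ all the way to time $1$ to land in $A$, and it is precisely the subsequent $\Gamma$-correction that may carry you out again. (You also need $\Gamma$ normalized --- constant paths lift to constant paths --- to get $H_0 = \id$ and $H_t|_A = \id_A$; that part is standard but should be said.)

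The missing ingredient is exactly where the fibrancy of $A$ enters: since $p_A\colon A \to B$ is itself a Hurewicz fibration and $A \to X$ is an $h$-cofibration, one can choose a path-lifting function for $X$ that \emph{restricts} to one for $A$ (cf.\ the statement of \autoref{lem:dold}; the version needed here requires only the $h$-cofibration together with both fibrations, so there is no circularity). With such a compatible $\Gamma$, lifts starting in $A$ stay in $A$, the corrected homotopy satisfies $H_1(U) \subseteq A$, and your argument goes through. This is essentially how the Heath--Kamps proof runs.
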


\begin{lem}\label{lem:closed_inclusion_equalizer}
	(CGWH) The map $f\colon X \to Y$ in $\mc R(B)$ is a closed inclusion iff it is the equalizer of some pair of maps $Y \rightrightarrows Z$.\footnote{In (CG), the equalizer of two maps is only an inclusion. Ordinary inclusions are not as well-behaved as closed inclusions, for instance inclusions into $X$ are not preserved by pushouts along their intersection.}
\end{lem}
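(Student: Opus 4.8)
The plan is to prove the two implications separately, using the observation recorded above that limits in $\mc R(B)$ over connected indexing categories are created by the forgetful functor to spaces. In particular this applies to equalizers, so throughout it suffices to work with equalizers of underlying total spaces and then reinstate the (unique compatible) retractive structure.

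\emph{An equalizer is a closed inclusion.} Suppose $f\colon X \to Y$ is the equalizer of a pair $g,h\colon Y \rightrightarrows Z$ in $\mc R(B)$. On total spaces this equalizer is the pullback of the diagonal $\Delta_Z\colon Z \to Z\times Z$ along $(g,h)\colon Y \to Z\times Z$ — or, since $g$ and $h$ are maps over $B$, equivalently the pullback of the fiberwise diagonal $Z \to Z\times_B Z$. In (CGWH) the space $Z$ is weak Hausdorff, so $\Delta_Z$ is a closed inclusion; closed inclusions are stable under pullback (the forgetful functor preserves pullbacks, preimages of closed sets are closed, and the total space inherits the subspace topology), so $f$ is a closed inclusion in $\mc R(B)$.

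\emph{A closed inclusion is an equalizer.} Conversely, given a closed inclusion $f\colon X \to Y$ in $\mc R(B)$, I would let $Z$ be the pushout $Y\cup_X Y$ of the span $Y \xleftarrow{f} X \xrightarrow{f} Y$, equipped with the projection and section induced from those of $Y$, and take $\iota_1,\iota_2\colon Y \rightrightarrows Y\cup_X Y$ to be the two canonical maps; these are morphisms of $\mc R(B)$ (using that $f$ is). I claim $f$ is the equalizer of $\iota_1$ and $\iota_2$. On underlying sets the total space of $Y\cup_X Y$ is $(Y\setminus X)\sqcup X\sqcup(Y\setminus X)$, and $\iota_1(y)=\iota_2(y)$ exactly when $y\in X$, so the equalizer of total spaces is $X$ with its subspace topology from $Y$; since $f$ is a closed inclusion this is the given topology on $X$. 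As equalizers in $\mc R(B)$ are created on total spaces, $f$ is the equalizer of $\iota_1$ and $\iota_2$ there.

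The only step that is more than bookkeeping, and hence the main obstacle, is checking that $Y\cup_X Y$ is genuinely an object of $\mc R(B)$: in (CGWH) its total space must be weak Hausdorff, which is the standard fact that a pushout of compactly generated weak Hausdorff spaces along a closed inclusion is again compactly generated weak Hausdorff (see the references in \S\ref{sec:spaces}). The remaining points — identifying the equalizer of $g$ and $h$ with a pullback of a diagonal, and confirming that $\iota_1,\iota_2$ commute with the section and projection to $B$ — are routine. I would close by noting, in line with the footnote, why the argument is special to (CGWH): in (CG) the diagonal of $Z$ is only an inclusion, so equalizers of total spaces are only inclusions rather than closed inclusions.
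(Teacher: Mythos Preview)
Your proof is correct. The paper states this lemma without proof as one of several ``standard results'' collected for reference, so there is no argument in the paper to compare against; your approach via the diagonal pullback for one direction and the cokernel pair $Y\cup_X Y$ for the other is the standard one and is carried out cleanly.
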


\autoref{prop:clapp} is a technical cornerstone for us. It says that fibrations can be glued together to form new fibrations. And by \autoref{lem:heath_kamps}, it also applied with ``$f$-cofibration'' replaced by ``$h$-cofibration.'' Of course, the set of maps to which the result applies has not been enlarged, it is merely easier to check that a given map is in that class.

\beforesubsection
\subsection{Base-change functors $f_!$, $f^*$, and $f_*$}\label{sec:base_change}\aftersubsection

If $f\colon A \to B$ is any map of spaces, for each retractive space $X$ over $B$ we can form the pullback $f^*X = A \times_B X$ and fit it into a diagram as indicated:
\[ \xymatrix @R=1.7em @C=3em{
	A \ar@{-->}[d] \ar@/_2em/[dd]_-{\id} \ar[r]^-f & B \ar[d]^-{i_X} \\
	f^*X \ar[r] \ar[d] \pb{.2}{0ex} & X \ar[d]^-{p_X} \\
	A \ar[r]^-f & B
} \]
This makes $f^*X$ into a retractive space over $A$. It is not hard to check this defines a functor, the \textbf{pullback functor}\index{pullback $f^*$} $f^*\colon \mc R(B) \to \mc R(A)$.
\begin{lem}\label{lem:f_star_preserves}
	The pullback functor $f^*$ preserves \vspace{-.5em}
	\begin{itemize}
		\item $h$-fibrations and $q$-fibrations,
		\item weak equivalences between $q$-fibrant (or $h$-fibrant) spaces,
		\item $f$-cofibrations, and closed inclusions.
	\end{itemize}
	Moreover if $f$ itself is a Hurewicz fibration ($h$-fibration), then $f^*$ preserves $h$-cofibrations, and all weak equivalences.
\end{lem}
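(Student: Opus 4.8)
The plan is to realize $f^{*}$ on morphisms as a base change and then quote standard stability‑under‑pullback facts. Given $g\colon X\to Y$ in $\mc R(B)$, the relation $p_{X}=p_{Y}\circ g$ gives a natural isomorphism $f^{*}X\cong f^{*}Y\times_{Y}X$ under which $f^{*}g$ becomes the pullback of $g$ along $\mathrm{pr}_{Y}\colon f^{*}Y\to Y$; dually, $f^{*}X\to A$ is the pullback of $p_{X}$ along $f$. Since $h$-fibrations and $q$-fibrations (the classes defined by homotopy lifting against all spaces, resp.\ against cylinders on discs) are closed under arbitrary pullback, $f^{*}$ preserves both. Closed inclusions are preserved because the base change of a closed subspace along a continuous map is again a closed subspace: its underlying set is the preimage of a closed set, and a closed subspace of a compactly generated space is compactly generated, so the subspace topology agrees with the pullback topology. (In (CGWH) one can alternatively combine \autoref{lem:closed_inclusion_equalizer} with the fact that the right adjoint $f^{*}$ preserves equalizers.)

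For $f$-cofibrations I would use the fibrewise NDR description: if $(u\colon Y\to I,\ h\colon Y\times I\to Y)$ exhibits $(Y,X)$ as a closed NDR pair whose deformation $h$ lies over $B$, then $\tilde u=u\circ\mathrm{pr}_{Y}$ together with $\tilde h((a,y),t)=(a,h(y,t))$ does the same for $(f^{*}Y,f^{*}X)$ over $A$ — the formula makes sense precisely because $h$ lies over $B$, so $p_{Y}h(y,t)=p_{Y}(y)=f(a)$ — and $f^{*}g$ is a closed inclusion by the previous paragraph. For weak equivalences between $q$-fibrant objects (the $h$-fibrant case being subsumed, since $h$-fibrations are $q$-fibrations), I would observe that $f^{*}$ is right Quillen for the $q$-model structures of \autoref{prop:q}: equivalently, its left adjoint $f_{!}$ is left Quillen, which holds because $f_{!}$ carries the generating cofibrations $I$ and acyclic cofibrations $J$ of \autoref{prop:q} to maps of the same form. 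Ken Brown's lemma then gives that $f^{*}$ preserves weak equivalences between fibrant objects. (Alternatively, argue fibrewise: $f^{*}X\to A$ and $f^{*}Y\to A$ are $q$-fibrations, the fibre of $f^{*}g$ over $a$ is $g_{f(a)}$, a weak equivalence between $q$-fibrations over $B$ is a fibrewise weak equivalence by the five lemma on the long exact sequences of the fibrations, and a fibrewise weak equivalence between $q$-fibrations over $A$ is a weak equivalence of total spaces.) Finally, if $f$ is an $h$-fibration then $f^{*}Y\to Y$ is a base change of $f$, hence an $h$-fibration, so for any weak equivalence $g$ the map $f^{*}g$ — being the pullback of $g$ along $f^{*}Y\to Y$ — is a weak equivalence, since the pullback of a weak equivalence along an $h$-fibration is a weak equivalence (compare \autoref{prop:technical_cofibrations}); thus $f^{*}$ preserves all weak equivalences in that case.

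The one genuinely non‑formal clause, and the place where the hypothesis on $f$ does real work, is that $f^{*}$ preserves $h$-cofibrations when $f$ is an $h$-fibration. Here $(Y,X)$ is still an NDR pair, but the deformation $h$ need not lie over $B$, so the recipe above breaks: the track $s\mapsto p_{Y}h(y,s)$ moves in $B$, and to define the $A$-coordinate of the pulled‑back deformation one must lift that track through $f$. I would fix a regular path‑lifting function $\Lambda\colon A\times_{B}B^{I}\to A^{I}$ for $f$, one sending constant paths to constant paths (such exist for Hurewicz fibrations, see e.g.\ \cite{concise}), and set $\tilde u=u\circ\mathrm{pr}_{Y}$ together with
\[ \tilde h\big((a,y),t\big)=\Big(\Lambda\big(a,\ s\mapsto p_{Y}h(y,s)\big)(t),\ h(y,t)\Big). \]
The $A$-coordinate is well defined because $p_{Y}h(y,0)=p_{Y}(y)=f(a)$, and $f$ of it equals $p_{Y}h(y,t)$, so $\tilde h$ lands in $A\times_{B}Y$; at $t=0$ it is the identity; and on $f^{*}X$ the track is constant, so regularity of $\Lambda$ makes $\tilde h$ constant there. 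Since $h_{1}$ carries $u^{-1}[0,1)$ into $X$, the map $\tilde h_{1}$ carries $\tilde u^{-1}[0,1)$ into $f^{*}X$. Hence $(f^{*}Y,f^{*}X)$ is an NDR pair and $f^{*}g$ is an $h$-cofibration.

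The main obstacle is thus this last point — producing the regular lifting function and verifying that the pulled‑back deformation remains constant on the subspace $f^{*}X$ (plus, depending on the level of detail one wants, a small amount of care about continuity in the compactly generated setting). Everything else is formal base‑change bookkeeping, once the identification $f^{*}g=\mathrm{pr}_{Y}^{*}g$ is recorded.
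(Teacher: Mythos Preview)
Your proof is correct and matches the paper's approach. The paper's own proof is a single sentence: it declares most points to be straightforward exercises, cites \cite[A.10.1]{lewis_appendix} for closed inclusions, and cites \cite[Thm 12]{strom_2} for the preservation of $h$-cofibrations when $f$ is an $h$-fibration; your explicit regular path-lifting construction in the last paragraph is precisely the content of Str{\o}m's theorem, so you have unpacked exactly the reference the paper invokes.
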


\begin{proof}
	Most of these are straightforward exercises. The part with closed inclusions uses \cite[A.10.1]{lewis_appendix} and the part with $h$-cofibrations uses \cite[Thm 12]{strom_2}.
\end{proof}

Continuing to let $f\colon A \to B$ be any map of spaces, for each retractive space $X$ over $A$ we can form the pushout $f_!X = X \cup_A B$ and fit it into a diagram as indicated:
\[ \xymatrix @R=1.7em @C=3em{
	A \ar[d]_-{i_X} \ar[r]^-f  \po{.8}{-0.2ex} & B \ar[d] \ar@/^2em/[dd]^-{\id} \\
	X \ar[r] \ar[d]_-{p_X} & f_! X \ar@{-->}[d] \\
	A \ar[r]^-f & B
} \]
This makes $f_!X$ into a retractive space over $B$, and defines the \textbf{pushforward functor}\index{pushforward $f_{^^21}$} $f_!\colon \mc R(A) \to \mc R(B)$.
\begin{rmk}
	These pushouts and pullbacks commute with the forgetful functor to sets, in both (CG) and (CGWH).
\end{rmk}

\begin{lem}\label{lem:f_shriek_preserves}
	The pushforward functor $f_!$ preserves\vspace{-.5em}
	\begin{itemize}
		\item $f$-cofibrations, $h$-cofibrations, closed inclusions, and
		\item weak equivalences between $h$-cofibrant (or $f$-cofibrant) spaces.
	\end{itemize}
	Moreover if $f$ itself is a Hurewicz fibration ($h$-fibration), then $f_!$ preserves spaces that are both $h$-fibrant and $f$-cofibrant.
\end{lem}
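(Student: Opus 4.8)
The plan is to use that $f_!X = X\cup_A B$ is a pushout and to reduce each clause to a result already available. For a map $g\colon X\to Y$ in $\mc R(A)$, the map $f_!g$ is induced on pushouts by the morphism of cospans
\[ (X \xleftarrow{i_X} A \xrightarrow{f} B) \longrightarrow (Y \xleftarrow{i_Y} A \xrightarrow{f} B) \]
with components $g$, $\id_A$, $\id_B$. For the three cofibration clauses I would apply \autoref{cofibration_of_pushouts} (or, for $f$-cofibrations, its evident analogue for spaces over $B$) to this morphism of pushout diagrams: its hypotheses ask that $g$ and the ``corner'' map $B\cup_A A\to B$ be cofibrations of the relevant kind, and the latter is the identity, so all that is required is that $g$ itself be such a cofibration, which is the hypothesis. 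For $f$-cofibrations one also uses that an $f$-cofibration over $A$ is in particular one over $B$, since composing its fiberwise retracting homotopy with $f$ shows the homotopy respects the projection to $B$ as well.

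For a weak equivalence $g\colon X\to Y$ between $h$-cofibrant spaces over $A$ — hence also between $f$-cofibrant ones, since $f$-cofibrant implies $h$-cofibrant — the point is that the pushout defining $f_!$ is a homotopy pushout. Indeed, $h$-cofibrancy of $X$ and $Y$ over $A$ means exactly that the basepoint sections $i_X\colon A\to X$ and $i_Y\colon A\to Y$ are $h$-cofibrations of total spaces, so $f_!g$ is induced on pushouts by a morphism of cospans in which the common leg out of $A$ is an $h$-cofibration and the three vertical maps $g$, $\id_A$, $\id_B$ are all weak equivalences. The gluing lemma (\autoref{prop:technical_cofibrations}(2), applied to total spaces) then shows $f_!g$ is a weak equivalence. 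I expect this to be the step needing the most care: the pushout is not homotopy-invariant without a cofibrancy hypothesis, so one must check that $h$-cofibrancy over $A$ really does give $h$-cofibrations of spaces.

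For the final clause, suppose $f$ is an $h$-fibration and $X$ is both $h$-fibrant and $f$-cofibrant over $A$. That $f_!X$ is $f$-cofibrant over $B$ is immediate from the first clause: $f_!$ sends the $f$-cofibration $i_X\colon A\to X$ to an $f$-cofibration $B\to f_!X$, and the latter is exactly the basepoint section of $f_!X$. For $h$-fibrancy I would apply Clapp's \autoref{prop:clapp} to the pushout square with corners $A$, $X$, $B$, and $f_!X = X\cup_A B$, regarded as spaces over $B$ via $f$, $f\circ p_X$, and $\id_B$. Its hypotheses all hold: $i_X$ is an $f$-cofibration over $B$ as above; $A$ is $h$-fibrant over $B$ precisely because $f$ is an $h$-fibration; $X$ is $h$-fibrant over $B$ because $X\xrightarrow{p_X}A\xrightarrow{f}B$ is a composite of Hurewicz fibrations; and $B$ is trivially $h$-fibrant over itself. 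Clapp's proposition then gives that $f_!X$ is $h$-fibrant over $B$. The remaining verifications are routine once one matches up which retractive space plays which role in the square.
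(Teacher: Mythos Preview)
Your proposal is correct and takes essentially the same approach as the paper, which simply says ``Again this is straightforward. The last claim follows from \autoref{prop:clapp}.'' Your argument is exactly the expected unpacking of that terse proof: the cofibration clauses via \autoref{cofibration_of_pushouts}, the weak-equivalence clause via the gluing lemma using that $h$-cofibrancy of $X$ over $A$ means $i_X\colon A\to X$ is an $h$-cofibration of spaces, and the final clause via \autoref{prop:clapp} after checking that $A$, $X$, $B$ are all $h$-fibrant over $B$ and $i_X$ is an $f$-cofibration over $B$.
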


\begin{proof}
	Again this is straightforward. The last claim follows from \autoref{prop:clapp}.
\end{proof}

\begin{ex}\label{ex:thom_base_change}
	For any vector bundle $V \to B$ the ordinary Thom space $\Th(V)$ is the pushforward of the fiberwise Thom space $\Th_B(V)$\index{fiberwise!Thom space} (see \autoref{ex:fiberwise_thom_space}),
	\[ r_!\Th_B(V) \cong \Th(V). \]
\end{ex}

The functors $f_!$ and $f^*$ are adjoint, because maps $f_!X \to Y$ and $X \to f^*Y$ naturally correspond to commuting diagrams of the following form.
\begin{equation}\label{eq:adjoint}
	\xymatrix @R=1.7em @C=3em{
		A \ar[d]_-{i_X} \ar[r]^-f & B \ar[d]^-{i_Y} \\
		X \ar[d]_-{p_X} \ar[r] & Y \ar[d]^-{p_Y} \\
		A \ar[r]^-f & B
	}
\end{equation}
This leads us to define a larger category $\mc R$ of all retractive spaces over all base spaces, as in \cite{hebestreit_sagave_schlichtkrull}.

\begin{df}\label{def:all_ret_spaces}
	The category $\mc R$\index{$\mc R$} has as its objects the pairs $(X,A)$, where $A$ is a space and $X \in \mc R(A)$. The morphisms $(X,A) \to (Y,B)$ are diagrams of the form \eqref{eq:adjoint}.
\end{df}

The forgetful functor to (CGWH) topological spaces
\[ \xymatrix{ \mc R \ar[r] & \cat{Top} } \]
is an example of a bifibration of categories.

\begin{df}\label{def:bifibration} (e.g. \cite{shulman_framed_monoidal})
	A \textbf{bifibration}\index{bifibration} is a functor $\Phi\colon \mc C \to \bS$ such that
\begin{itemize}
	\item For every arrow $f\colon A \to B$ in $\bS$ and object $X$ in the fiber category $\mc C^B = \Phi^{-1}(B)$, there is a \textbf{cartesian arrow}\index{cartesian/co-cartesian arrow} $f^*X \to X$ in $\mc C$ with the universal property illustrated below.
	\[ \xymatrix @R=1em{
		Y \ar@{~>}[dd]^-\Phi \ar[rrd] \ar@{-->}[rd]_-{\exists !} & & \\
		& f^*X \ar[r] \ar@{~>}[d]^-\Phi & X \ar@{~>}[d]^-\Phi \\
		\Phi(Y) \ar[r] & A \ar[r]^-f & B
	} \]
	\item For every $f\colon A \to B$ and $X$ in $\mc C^A = \Phi^{-1}(A)$, there is a \textbf{co-cartesian arrow} $X \to f_!X$ with the universal property illustrated below.
	\[ \xymatrix @R=1em{
		& & Y \ar@{~>}[dd]^-\Phi \\
		X \ar[r] \ar[rru] \ar@{~>}[d]^-\Phi & f_!X \ar@{~>}[d]^-\Phi \ar@{-->}[ru]_-{\exists !} & \\
		A \ar[r]^-f & B \ar[r] & \Phi(Y)
	} \]
	\item Optionally, there is a class of commuting squares in $\bS$ satisfying the Beck-Chevalley condition (see \autoref{prop:beck_chevalley_spaces}).
\end{itemize}
\end{df}

In any bifibration, these universal properties allow us to define pushforward functors $f_!$ and pullback functors $f^*$, and these are always adjoint to each other, as we have seen for $\mc R$.

\begin{prop}\label{spaces_quillen_adj}
	The functors $(f_!,f^*)$ form a Quillen adjunction between $\mc R(A)$ and $\mc R(B)$. If $f$ is a weak homotopy equivalence then it is a Quillen equivalence.
\end{prop}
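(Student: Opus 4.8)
The plan is to verify the two defining conditions of a Quillen adjunction for $(f_!, f^*)$, and then upgrade to a Quillen equivalence when $f$ is a weak homotopy equivalence. Since $\mc R(A)$ and $\mc R(B)$ carry the $q$-model structures of \autoref{prop:q}, which are cofibrantly generated by the explicit sets $I$ and $J$, the cleanest route is to check that the \emph{right} adjoint $f^*$ preserves fibrations and acyclic fibrations; by the standard recognition criterion this is equivalent to $(f_!, f^*)$ being a Quillen adjunction. Both of these are immediate from \autoref{lem:f_star_preserves}: $f^*$ preserves $q$-fibrations, and it preserves weak equivalences between $q$-fibrant spaces, hence in particular acyclic $q$-fibrations (apply it to the map and use two-out-of-three, or simply note $f^*$ of an acyclic fibration is again a fibration that is a weak equivalence). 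Alternatively, and perhaps more in keeping with the generators, one checks that $f_!$ sends $I$ and $J$ into $q$-cofibrations and acyclic $q$-cofibrations respectively: but $f_!$ applied to $S^{n-1}_{+A} \to D^n_{+A}$ with a chosen map $D^n \to A$ is, after composing $D^n \to A \xrightarrow{f} B$, exactly a generator of the form $S^{n-1}_{+B} \to D^n_{+B}$ in $I$ for $B$ — so $f_!I \subseteq I$ and $f_!J \subseteq J$ on the nose, which is even stronger than needed. Either argument dispatches the first sentence.

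For the second sentence, suppose $f\colon A \to B$ is a weak homotopy equivalence; I want $(f_!, f^*)$ to be a Quillen equivalence. The criterion to verify is: for every $q$-cofibrant $X \in \mc R(A)$ and every $q$-fibrant $Y \in \mc R(B)$, a map $f_!X \to Y$ is a weak equivalence iff its adjunct $X \to f^*Y$ is. Unwinding the adjunction \eqref{eq:adjoint}, the adjunct $X \to f^*Y = A \times_B Y$ is a weak equivalence of total spaces iff the composite $X \to A \times_B Y \to Y$ together with the projection data behaves well; the key geometric input is that when $Y \to B$ is a $q$-fibration and $f$ is a weak equivalence, the pullback square
\[ \xymatrix @R=1.7em{ f^*Y \ar[r] \ar[d] & Y \ar[d] \\ A \ar[r]^-f & B } \]
has $f^*Y \to Y$ a weak equivalence (a Serre fibration pulled back along a weak equivalence). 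So $X \to f^*Y$ is a weak equivalence iff $X \to Y$ (the total-space map underlying the counit composite) is. On the other side, the unit $X \to f^* f_! X$ for $X$ $q$-cofibrant: since $q$-cofibrant objects are built from the cells in $I$, and $f_!$ commutes with the relevant pushouts and sequential colimits (all of which are along $h$-cofibrations by \autoref{prop:technical_cofibrations}, so homotopical), it suffices to check the unit is a weak equivalence on cells $D^n_{+A}$, where it reduces to the statement that $A \to f^*f_! (D^n_{+A}) $ corresponds to the base change of $f$, again a weak equivalence since $f$ is. Chasing these through two-out-of-three gives the equivalence of the two conditions.

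The main obstacle I anticipate is the Quillen-equivalence half, specifically controlling the derived unit $X \to f^* f_! X$ on general $q$-cofibrant $X$ rather than just on cells: one must know that $f_!$ of a $q$-cofibrant object is again suitably cofibrant (true, since $f_!$ preserves $q$-cofibrations — it sends generators to generators, as noted, or one cites that left Quillen functors preserve cofibrant objects), and that forming $f^* f_! X$ commutes with the colimit presentation up to weak equivalence. This is where the gluing and colimit lemmas of \autoref{prop:technical_cofibrations}, together with \autoref{lem:f_star_preserves} (preservation of weak equivalences between $q$-fibrant spaces, after fibrantly replacing), do the real work. Once the statement is reduced to cells it is essentially the assertion that $f$ itself is a weak equivalence, so there is nothing left to prove; I would organize the write-up to isolate exactly that reduction. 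A subtlety worth a remark: in the (CG) convention one should be mildly careful that the pullback $f^*Y$ is formed in compactly generated spaces, but since these pullbacks commute with the forgetful functor to sets (as noted after the definition of $f_!$), the weak-equivalence claims are unaffected.
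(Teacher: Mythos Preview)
Your argument for the Quillen adjunction is fine and matches the paper, which simply points to \autoref{lem:f_star_preserves} and \autoref{lem:f_shriek_preserves}.

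For the Quillen equivalence, however, your approach has a genuine gap. You correctly observe that $f^*Y \to Y$ is a weak equivalence of total spaces when $Y$ is $q$-fibrant (pullback of a weak equivalence along a fibration). But your attempt to control the \emph{strict} unit $X \to f^*f_!X$ by a cell induction fails already at the base case. For a cell $D^n_{+A}$ with structure map $p\colon D^n \to A$, one computes $f^*f_!(D^n_{+A}) \cong (A \times_B D^n)_{+A}$, and the unit is induced by $d \mapsto (p(d),d)$. This is a weak equivalence iff $A \times_B D^n$ is contractible, i.e.\ iff the strict fiber $f^{-1}(f(p(d)))$ has the right homotopy type. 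But $f$ is only a weak equivalence, not a fibration: take $A = [0,1] \vee [0,1]$, $B = [0,1]$, $f$ the fold map, and $D^0 \to A$ hitting the tip of one interval; then $A \times_B D^0 = f^{-1}(1)$ is two points, and the unit is not a weak equivalence. So the cell-by-cell reduction does not work without first fibrantly replacing $f_!X$, at which point your inductive bookkeeping breaks down.

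The paper's route avoids this entirely by observing the symmetric fact you omitted: the cocartesian arrow $X \to f_!X$ is itself a weak equivalence of total spaces when $X$ is $q$-cofibrant. Indeed $f_!X = X \cup_A B$, and since $X$ is $q$-cofibrant the basepoint section $i_X\colon A \to X$ is a $q$-cofibration of spaces; pushing the weak equivalence $f\colon A \to B$ along this cofibration gives a weak equivalence $X \to f_!X$ by left properness. With both horizontals in the square
\[ \xymatrix @R=1.7em{
	X \ar[d] \ar[r]^-\sim & f_!X \ar[d] \\
	f^*Y \ar[r]_-\sim & Y
} \]
now weak equivalences, the left vertical is one iff the right vertical is, which is exactly the Quillen equivalence criterion. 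No cell induction, no derived-unit analysis, and no issue with $f^*$ on non-fibrant objects.
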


\begin{proof}
	The Quillen conditions are checked above. If $f$ is a weak equivalence, $X \in \mc R(A)$ is $q$-cofibrant and $Y \in \mc R(B)$ is $q$-fibrant, then the following square in $\mc R$ has both horizontal maps weak equivalences.
	\[ \xymatrix @R=1.7em{
		X \ar[d] \ar[r]^-\sim & f_!X \ar[d] \\
		f^*Y \ar[r]_-\sim & Y
	} \]
	Therefore the left-hand map is an equivalence iff the right-hand map is, so $(f_! \adj f^*)$ is a Quillen equivalence.
\end{proof}

\begin{rmk}\label{integral_spaces}
	There is an \textbf{integral model structure}\index{integral model structure!for spaces} on $\mc R$, obtained by combining together the Quillen model structures on the fiber categories $\mc R(B)$, see \cite{harpaz_prasma_grothendieck,cagne2017bifibrations,hebestreit_sagave_schlichtkrull}. There are actually two of these, one where we take the weak equivalences in the base category to be the usual equivalences, and another where we take the weak equivalences in the base to be only the isomorphisms. In this paper we will focus on the case of isomorphisms in the base, since it will permit us to take homotopy categories of each fiber category $\ho \mc R(B)$ separately and still get a bifibration (\autoref{spaces_are_a_bifibration}).
\end{rmk}
	
Since $(f_! \adj f^*)$ is an adjoint pair, $f_!$ preserves all colimits and $f^*$ preserves all limits. In addition, $f^*$ preserves many of the colimits we actually care about.
\begin{prop}\label{prop:f_star_colimits}\hfill
	\vspace{-.5em}
	
	\begin{itemize}
		\item (CGWH) $f^*$ preserves coproducts, pushouts along a closed inclusion, sequential colimits along closed inclusions, and quotients by fiberwise actions of compact Hausdorff topological groups.
		\item (CG) $f^*$ preserves all colimits.
	\end{itemize}
\end{prop}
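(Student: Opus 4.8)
The plan is to strip off the retractive-space structure, reduce to a statement about the pullback functor between slice categories of spaces, settle the (CG) case with a soft adjoint argument, and then handle the (CGWH) case one colimit type at a time.

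\textbf{Reduction to slices.} First I would note that the total space of $f^*X$ is the pullback $A\times_B X$, with section and projection induced in the evident way, so $f^*$ is the restriction to retractive spaces of the ordinary pullback functor between slice categories of spaces. Colimits in $\mc R(B)$ are built out of colimits in the slice category over $B$: the forgetful functor from $\mc R(B)$ to the slice category creates connected colimits, $f^*$ sends the zero object $B$ of $\mc R(B)$ to the zero object $A$ of $\mc R(A)$, and the coproduct (fiberwise wedge) in $\mc R(B)$ is a wide pushout along the basepoint sections, i.e.\ a connected colimit in the slice. Since the forgetful functor from the slice category to the category of spaces in turn creates all colimits, it is enough to understand how the pullback functor $A\times_B(-)$ interacts with colimits of total spaces -- reading ``spaces'' as $\mathbf{CG}$ or $\mathbf{CGWH}$ according to the convention in force.

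\textbf{The (CG) case.} In $\mathbf{CG}$ every slice is cartesian closed and each pullback functor $\mathbf{CG}/B\to\mathbf{CG}/A$ admits a right adjoint -- this is exactly the point at which the weak Hausdorff hypothesis must be dropped; see \cite{ms}. Hence $f^*$ preserves all colimits of total spaces, and together with the reduction above (using also that $f^*$ preserves the terminal object, so that it really does induce the pullback functor on pointed objects) this shows $f^*\colon\mc R(B)\to\mc R(A)$ preserves all colimits.

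\textbf{The (CGWH) case.} Now $f^*$ need not be a left adjoint, so I would argue one colimit type at a time, the key point being that each of the four listed colimits, when formed in $\mathbf{CG}$, is already weak Hausdorff and therefore agrees with the colimit formed in $\mathbf{CGWH}$. This is immediate for disjoint unions; it is standard (\cite[App.~A]{lewis_appendix}) for pushouts along a closed inclusion and for sequential colimits along closed inclusions; and for the quotient $X/G$ of a weak Hausdorff space by a compact Hausdorff group it follows because the orbit map is closed, so the orbit relation is closed in $X\times X$ and $X/G$ is weak Hausdorff (\cite{strickland2009category}). Moreover $f^*$ sends a diagram of any of these shapes to a diagram of the same shape -- it preserves closed inclusions by \autoref{lem:f_star_preserves} and carries a fiberwise $G$-action to a fiberwise $G$-action -- and $f^*$ of a weak Hausdorff space is again weak Hausdorff, being a limit. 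So for such a diagram $D$ the $\mathbf{CG}$- and $\mathbf{CGWH}$-colimits coincide for both $D$ and $f^*D$, and the (CG) case then gives $f^*(\colim D)\cong\colim(f^*D)$ on the nose. Feeding this back through the reduction -- and handling the coproduct in $\mc R(B)$ as a fiberwise wedge, once more glued along closed sections and hence computed identically in $\mathbf{CG}$ and $\mathbf{CGWH}$ -- yields all the assertions.

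The hard part will be the weak Hausdorff bookkeeping in this last step: one must be confident that each of the four colimit types is robust enough to be computed identically in $\mathbf{CG}$ and $\mathbf{CGWH}$, for both the original diagram and its image under $f^*$. Of these, the compact Hausdorff group quotient is the most delicate, since it rests on closedness of the orbit map rather than on the good behavior of closed inclusions; the other three are routine once the slice-category reduction is in place.
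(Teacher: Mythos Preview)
Your argument is correct and follows essentially the same strategy as the paper: reduce from retractive spaces to slice categories, handle (CG) by the existence of a right adjoint to $f^*$ (the paper phrases this as ``analyzing the underlying sets'' with a reference to \cite[10.3]{lewis_appendix}, but the content is the same), and handle (CGWH) by checking that each of the four listed colimits is computed identically in $\mathbf{CG}$ and $\mathbf{CGWH}$, so that the (CG) result applies. Your treatment is more explicit than the paper's terse proof, particularly in spelling out the weak Hausdorff check for the group-quotient case and in tracking how the retractive coproduct becomes a pushout along closed sections.
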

\begin{proof}
	The (CG) statement follows by analyzing the underlying sets, see also \cite[10.3]{lewis_appendix}. In (CGWH) it suffices to prove this statement in the category of spaces over $B$, then pass to retractive spaces (where the colimits change, by a pushout along $\amalg B \to B$). For the first step, the colimits listed above as constructed in weak Hausdorff spaces agree with the colimit in ordinary (or $k$-)spaces \cite{strickland2009category}, and the result is still weak Hausdorff, so it follows that they commute with $f^*$. For the second step, the pushout has one leg a closed inclusion because every (CGWH) retractive space is $i$-cofibrant. Therefore this follows from the first step.
\end{proof}

\begin{prop}\label{prop:beck_chevalley_spaces}\cite[2.2.11]{ms}
	$f_!$ and $f^*$ satisfy the Beck-Chevalley condition along every pullback square. That is, for each pullback square as below, the canonical natural transformation $f_!g^* \Rightarrow q^*p_!$ is an isomorphism.\index{Beck-Chevalley isomorphism}
	\begin{equation*}\label{diagonal_square}
	\xymatrix@R=1em @C=1em{
		& A \ar[ld]_-g \ar[rd]^-f & \\
		B \ar[rd]_-p && C \ar[ld]^-q \\
		& D &
	}
	\end{equation*}
\end{prop}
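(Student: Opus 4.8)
The plan is to rewrite $q^*(p_! X)$ by commuting the pullback $q^*$ past the pushout that defines $p_!$, and then to recognize the result as $f_!(g^* X)$ using that $A$ is itself a pullback. Fix $X \in \mc R(B)$; the canonical natural transformation evaluated at $X$ is a map $f_!(g^* X) \to q^*(p_! X)$ in $\mc R(C)$, and we must show it is an isomorphism. First I would observe that $q^*$ preserves the pushout $p_! X = X \cup_B D$: in (CG) this is immediate from \autoref{prop:f_star_colimits}, and in (CGWH) it also follows from \autoref{prop:f_star_colimits}, since $X \cup_B D$ is a pushout along the basepoint section $i_X \colon B \to X$, which is a closed inclusion (every ex-space is $i$-cofibrant). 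Thus $q^*(p_! X) \cong q^*(X) \cup_{q^*(B)} q^*(D)$, where the pullbacks on the right are now taken over $D$: explicitly $q^*(D) = C$, $q^*(B) = C \times_D B = A$, and $q^*(X) = C \times_D X$, the total space $X$ being viewed over $D$ via $p \circ p_X$.

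Next I would identify this pushout with $f_!(g^* X)$. Since $A = B \times_D C$, the iterated-pullback identity supplies a canonical isomorphism $q^*(X) = C \times_D X \cong A \times_B X = g^*(X)$ over $A$, and one checks that under this identification the two legs $q^*(B) \to q^*(X)$ and $q^*(B) \to q^*(D)$ of the pushout diagram become, respectively, the basepoint section $A \to g^*(X)$ and the map $f \colon A \to C$. Hence $q^*(X) \cup_{q^*(B)} q^*(D) \cong g^*(X) \cup_A C = f_!(g^* X)$. It then remains to verify that the composite isomorphism $f_!(g^* X) \cong q^*(p_! X)$ is the canonical transformation of the statement; since that transformation is the mate of the evident identification $g^* p^* \cong f^* q^*$ under the adjunctions $(p_!, p^*)$ and $(f_!, f^*)$, this is a short diagram chase, which I would carry out by transposing both maps back along $f_! \dashv f^*$ and comparing them there.

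The only real content lies in the first step, that $q^*$ does not disturb the pushout defining $p_! X$; the remainder is the standard ``pullbacks compose'' bookkeeping already packaged into the equation $A = B \times_D C$, which on underlying sets is simply the fact that $q^* \colon \cat{Set}/D \to \cat{Set}/C$ is a right adjoint and so preserves pushouts. Accordingly I expect the main obstacle to be essentially notational: keeping straight the three base spaces $B$, $C$, $D$ and the two ways in which each intermediate space is regarded as parametrized.
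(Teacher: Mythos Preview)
The paper does not give its own proof of this proposition; it simply cites \cite[2.2.11]{ms}. Your argument is correct and is essentially the standard one: pull the defining pushout $p_!X = X \cup_B D$ back along $q$, use the pasting law for pullbacks to identify $C \times_D X \cong A \times_B X = g^*X$, and recognize the result as $f_!g^*X$.

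One small point of precision: \autoref{prop:f_star_colimits} as stated concerns colimits in the retractive category $\mc R(D)$, whereas the pushout you are commuting past $q^*$ is really a pushout in the slice category $\cat{Top}/D$ (the objects $B$ and $X$ in that diagram carry no section from $D$). What you actually need is the slice-category statement that $-\times_D C$ preserves pushouts along closed inclusions in (CGWH), and all pushouts in (CG). This is exactly the first step in the paper's \emph{proof} of \autoref{prop:f_star_colimits}, so the fact is available, but it is that intermediate step rather than the proposition itself that you are invoking.
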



There is a third way to change base spaces. If $X \in \mc R(A)$, its \textbf{space of sections}\index{sections $\Gamma_B(-)$} $\Gamma_A(X)$ consists of all the maps $s\colon A \to X$ such that the composite $p_X \circ s\colon A \to X \to A$ is the identity. This is a based space $\Gamma_A(X) \in \mc R(*)$, with basepoint given by $i_X$.

More generally, along any fiber bundle $f\colon A \to B$, we can send a parametrized space $X \in \mc R(A)$ to the space $f_*X \in \mc R(B)$ whose fiber over $b \in B$ is the space of sections of $X$ over the subspace $f^{-1}(b)$ of $A$. The details of this are in the next proposition.

\begin{prop}\label{prop:sheafy_pushforward}
	If $f\colon A \to B$ is a fiber bundle and $B$ is a cell complex then $f^*$ has a right adjoint $f_*$.\index{sheafy pushforward $f_*$}
\end{prop}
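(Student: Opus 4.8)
The plan is to construct $f_*X$ explicitly as a space of sections, then verify the adjunction $\Hom_{\mc R(A)}(f^*Y, X) \cong \Hom_{\mc R(B)}(Y, f_*X)$ by hand. First I would fix a trivializing open cover $\{U_\alpha\}$ of $B$ with fiber $F$, so that $f^{-1}(U_\alpha) \cong U_\alpha \times F$. Over each $U_\alpha$ the desired right adjoint is forced: since $f^*$ restricted to $\mc R(U_\alpha)$ is (up to the trivialization) the functor $Y \mapsto Y \times F$ with the fiberwise structure, its right adjoint must be the fiberwise mapping space, sending $X \in \mc R(U_\alpha \times F)$ to the retractive space over $U_\alpha$ whose fiber at $u$ is $\Map_F\!\big(\{u\}\times F \hookrightarrow f^{-1}(U_\alpha),\, X\big)$, the space of sections of $X$ over the slice. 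Concretely $(f_*X)_b = \Gamma_{f^{-1}(b)}(X|_{f^{-1}(b)})$, topologized as a subspace of $\Map(f^{-1}(b), X)$, and one assembles these fiberwise section spaces into a single space over $B$ using the compact-open topologies and the local trivializations; the basepoint section of $X$ supplies the basepoint section of $f_*X$, and the projection is evaluation-of-fiber. The main point here is that this local description is independent of the trivialization and glues: a transition function $U_{\alpha\beta} \to \Homeo(F)$ acts on section spaces in a way compatible with restriction, so the locally defined $f_*X$ patch together to a genuine retractive space over $B$. This is exactly where the fiber bundle hypothesis (rather than merely a map) is used.

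Next I would set up the adjunction natural transformations. The counit $f^*f_*X \to X$ is fiberwise evaluation: a point of $f^*f_*X$ over $a \in A$ is a pair $(a, s)$ with $s$ a section of $X$ over $f^{-1}(f(a))$, and we send it to $s(a) \in X$; continuity follows from continuity of evaluation on compactly generated mapping spaces together with the local trivializations. The unit $Y \to f_*f^*Y$ over $b \in B$ sends $y$ (in the fiber $Y_b$) to the section of $f^*Y = A\times_B Y$ over $f^{-1}(b)$ given by $a \mapsto (a,y)$; this is visibly continuous and a section, and respects basepoints. Then I would check the triangle identities, which reduce to the tautology that evaluating the constant-family section at a point returns the original point. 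Equivalently — and this is cleaner to write — one directly exhibits the natural bijection: an ex-map $\phi\colon f^*Y \to X$ over $A$ is the same data as, for each $b$, a map $Y_b \to (f_*X)_b$ sending $y$ to the section $a \mapsto \phi(a,y)$, and conversely; continuity in the two directions matches because $f$ is a quotient map (being a fiber bundle over a cell complex, hence locally a projection) and the compactly generated mapping-space adjunction $\Map(K \times L, Z) \cong \Map(K, \Map(L,Z))$ holds for the slices. I would phrase the argument in spaces over $B$ first and then transfer to retractive spaces, as in the proof of \autoref{prop:f_star_colimits}, since the sections picture is transparent there.

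The hard part will be the point-set topology: showing that the fiberwise section spaces genuinely assemble into a compactly generated (weak Hausdorff, in the (CGWH) case) space over $B$, and that this assembly is functorial and independent of choices. Over a single trivializing chart this is the standard fact that $\Map(F, -)$ is right adjoint to $- \times F$ in compactly generated spaces, but globally one must check the transition maps are continuous and that the glued topology is the correct (initial) one making $f_*X \to B$ continuous with the right universal property; the cell-complex hypothesis on $B$ is what lets us build $f_*X$ by induction over skeleta, gluing along the already-constructed pieces over subcomplexes, so that at no stage do we leave the category of CGWH (resp.\ CG) spaces. Once the object $f_*X$ is in hand with its topology, the adjunction itself is essentially formal. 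I would also remark that, just as $f^*$ preserves the relevant limits and $f_!$ all colimits, here $f_*$ automatically preserves all limits, and that one expects $(f^*, f_*)$ to be a Quillen adjunction for the $q$-model structures — but since the proposition as stated only asserts the existence of the adjoint, I would defer any homotopical statements.
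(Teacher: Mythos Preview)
Your approach is essentially the same as the paper's: construct $(f_*X)$ locally over a trivializing cover as a fiberwise section/mapping space, glue via the transition data of the bundle, and then establish the adjunction (which the paper in fact leaves as an exercise). The one place your emphasis differs is in how the cell-complex hypothesis on $B$ enters: you propose to build $f_*X$ by induction over skeleta to stay inside CGWH, whereas the paper first glues the local pieces via the cocycle condition (which is elementary in (CG)) and then invokes the cell structure only afterwards, to verify weak Hausdorffness of the resulting space---by choosing the cover so that each closed cell lies in a single chart and arguing that any map from a compact Hausdorff $K$ lands in finitely many cells, hence its image is closed because each $(f_*X)_\alpha$ is already weak Hausdorff. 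Both routes work; the paper's separates the gluing from the Hausdorff check a bit more cleanly.
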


\begin{proof}
	Cover $B$ by neighborhoods $U_\alpha \subseteq B$ on which the bundle $A \to B$ is trivial, and pick fiber-preserving homeomorphisms $\phi_\alpha\colon U_\alpha \times F_\alpha \cong f^{-1}(U_\alpha)$. For each $U_\alpha$, define $(f_*X)_\alpha$ as the pullback
	\[ \xymatrix @R=1.7em{
		(f_*X)_\alpha \ar[d] \ar[r] \pb{.25}{-0.5ex} & \Map(F_\alpha,X) \ar[d]^-{\Map(\id,p_X)} \\
		U_\alpha \ar[r] & \Map(F_\alpha,A).
	} \]
	For any pair $\alpha, \beta$, let $U_{\alpha\beta} = U_\alpha \cap U_\beta$ and let $(f_*X)_{\alpha\beta}$ and $(f_*X)_{\beta\alpha}$ denote the pullbacks
	\[ \xymatrix @R=1.7em{
		(f_*X)_{\alpha\beta} \ar[d] \ar[r] \pb{.25}{-0.5ex} & (f_* X)_\alpha \ar[d] \\
		U_{\alpha\beta} \ar[r] & U_\alpha
	}
	\quad
	\xymatrix @R=1.7em{
		(f_*X)_{\beta\alpha} \ar[d] \ar[r] \pb{.25}{-0.5ex} & (f_* X)_\beta \ar[d] \\
		U_{\alpha\beta} \ar[r] & U_\beta.
	}
	\]
	We form a homeomorphism $h_{\alpha\beta}\colon (f_*X)_{\alpha\beta} \cong (f_*X)_{\beta\alpha}$ as follows. Take the adjoint of the composite
	\[ \xymatrix @R=1.7em{
		U_{\alpha\beta} \times F_\alpha \ar[r]^-{\phi_\beta^{-1} \circ \phi_\alpha}_-\cong & U_{\alpha\beta} \times F_\beta \ar[r] & F_\beta
	} \]
	to get a function $\tau\colon U_{\alpha\beta} \to \Map(F_\alpha,F_\beta)$. Then form the map of products
	\[ \xymatrix @R=1.7em{
		U_{\alpha\beta} \times \Map(F_\beta,X) \ar[r]^-{(1,\tau) \times 1} &
		U_{\alpha\beta} \times \Map(F_\alpha,F_\beta) \times \Map(F_\beta,X) \ar[r]^-{1 \times \circ} &
		U_{\alpha\beta} \times \Map(F_\alpha,X).
	} \]
	This respects the subspaces
	\[ \xymatrix @R=1.7em{
		U_{\alpha\beta} \times_{\Map(F_\beta,A)} \Map(F_\beta,X) \ar[r] &
		U_{\alpha\beta} \times_{\Map(F_\alpha,A)} \Map(F_\alpha,X)
	} \]
	that define the pullbacks, and hence give a map $(f_*X)_{\beta\alpha} \to (f_*X)_{\alpha\beta}$. The same recipe in reverse gives the inverse of this map, hence it is a homeomorphism.
	
	The cocycle condition for the trivializations $\phi_\alpha$ imply that for any triple $\alpha, \beta, \gamma$ the composite $h_{\gamma\alpha} \circ h_{\beta\gamma} \circ h_{\alpha\beta}$ is the identity wherever it is defined. Hence the spaces $(f_* X)_\alpha$ glue together to give a single space $f_* X$ over $B$ whose restriction to $U_\alpha$ is canonically homeomorphic to $(f_* X)_\alpha$; this is elementary to argue in (CG).

	To establish the same claim in (CGWH), we need to check that this construction produces a weak Hausdorff space. For this we use the assumption that $B$ is a cell complex and choose the original cover so that every cell $e_i$ is contained in a $U_{\alpha(i)}$. Then any map into $f_*X$ from a compact Hausdorff space $K$ has image in $B$ landing in a finite complex, hence it suffices to prove that the image of $K$ when restricted to each closed cell $e_i$ in the base gives a closed subset of $(f_*X)_{e_i}$. This follows because each of the spaces $(f_*X)_\alpha$ is weak Hausdorff, being a pullback of weak Hausdorff spaces.
	
	The adjunction $(f^* \adj f_*)$ is left as a straightforward exercise.
\end{proof}

As a result $f^*$ preserves all colimits so long as $f$ is a fiber bundle with base a cell complex. It is a classical fact that every map is weakly equivalent to such a fiber bundle. Therefore $f^*$ is always a left adjoint on homotopy categories. It also commutes with homotopy colimits up to equivalence, even though in (CGWH) it doesn't always strictly commute with strict colimits.

\begin{prop}\label{prop:sheafy_pushforward_preserves}
	The functor $f_*$ of \autoref{prop:sheafy_pushforward} preserves $h$-fibrations. If the fibers of $f$ are cell complexes, then $f_*$ preserves $q$-fibrations and weak equivalences between $q$-fibrant spaces.
\end{prop}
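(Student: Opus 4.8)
The plan is to reduce everything to statements about the local pieces $(f_*X)_\alpha$ constructed in the proof of \autoref{prop:sheafy_pushforward}, and then glue. Recall that each $(f_*X)_\alpha$ is a pullback of $\Map(F_\alpha, X) \to \Map(F_\alpha, A)$ along the trivialization $U_\alpha \to \Map(F_\alpha, A)$, and that $f_*X$ is obtained by gluing these along the homeomorphisms $h_{\alpha\beta}$.

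First I would treat $h$-fibrations. Since $(f^* \adj f_*)$ is an adjunction and $f^*$ preserves the generating trivial $h$-cofibrations (the inclusions $Y_{+A} \to (Y\times I)_{+A}$ type maps, or more conceptually, $f^*$ preserves $h$-cofibrations and weak equivalences when... ) --- actually the cleanest route is the mapping-cocylinder characterization: a map is an $h$-fibration iff the projection $X^I \to X \times_Y Y^I$ has a section. One checks that $\Map(F_\alpha, -)$ preserves $h$-fibrations (cotensoring with a space sends Hurewicz fibrations to Hurewicz fibrations), that pullback along $U_\alpha \to \Map(F_\alpha,A)$ preserves $h$-fibrations by \autoref{lem:f_star_preserves}, and hence each $(f_*X)_\alpha \to (f_*Y)_\alpha$ is an $h$-fibration over $U_\alpha$. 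Being an $h$-fibration is a local condition on the base in the sense that a map that is an $h$-fibration over each member of a numerable open cover is an $h$-fibration (this is the classical theorem of Hurewicz/Dold on local fibrations), so $f_*X \to f_*Y$ is an $h$-fibration. No hypothesis on the fibers of $f$ is needed here beyond their being a fiber bundle, matching the statement.

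Next, assume the fibers $F_\alpha$ are cell complexes. To see $f_*$ preserves $q$-fibrations, repeat the argument with ``$q$-fibration'' in place of ``$h$-fibration'': cotensoring with a CW complex $F_\alpha$ preserves Serre fibrations (this is where we need $F_\alpha$ to be a cell complex --- it lets us build the required lifts cell-by-cell against the generating trivial cofibrations $D^n \to D^n\times I$, using that $F_\alpha \times D^n \to F_\alpha \times (D^n\times I)$ is again a relative cell complex), pullback preserves $q$-fibrations by \autoref{lem:f_star_preserves}, and $q$-fibrations are also local on the base. For weak equivalences between $q$-fibrant objects: if $X \to Y$ is a weak equivalence of $q$-fibrant spaces over $A$, then $\Map(F_\alpha, X) \to \Map(F_\alpha, Y)$ is a weak equivalence since $X \to Y$ is a weak equivalence between $q$-fibrant spaces and $F_\alpha$ is a CW complex (one can see this via the Serre fibration $X^{F_\alpha} \to Y^{F_\alpha} \times_{\ldots}$ comparison, or by a $\pi_*$-argument over each cell of $F_\alpha$); then the pullback defining $(f_*X)_\alpha$ is a pullback along a map to $\Map(F_\alpha,A)$ of the weak equivalence between the $q$-fibrations $\Map(F_\alpha,X), \Map(F_\alpha,Y)$ over $\Map(F_\alpha,A)$, so $(f_*X)_\alpha \to (f_*Y)_\alpha$ is a weak equivalence by the second bullet of \autoref{lem:f_star_preserves}; finally a map which is a weak equivalence over each member of an open cover of the base, all of whose relevant spaces are $q$-fibrant, is a weak equivalence --- this follows from the Mayer--Vietoris / descent property for Serre fibrations, or directly by gluing homotopy groups.

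The main obstacle is the gluing step --- making sure the locally-defined properties genuinely assemble. For the fibration statements this is the classical local-to-global theorem for Hurewicz and Serre fibrations over a numerable cover, so the only thing to verify is that the cover $\{U_\alpha\}$ can be taken numerable, which is automatic since $B$ is a cell complex (hence paracompact). For the weak-equivalence statement the subtlety is that one really does need the $q$-fibrant hypothesis to descend the equivalence along the cover: without it, ``weak equivalence over each $U_\alpha$'' does not imply ``weak equivalence,'' but with it one gets compatible long exact sequences of homotopy groups over the cover and a Mayer--Vietoris comparison forces the global statement. I would spell this descent argument out carefully, and everything else is routine.
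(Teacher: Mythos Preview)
Your approach is correct but considerably more elaborate than the paper's. The paper uses the adjunction $(f^* \adj f_*)$ directly: a lifting problem for $f_*X \to f_*Y$ against some map $K \to L$ over $B$ is, by adjunction, the same as a lifting problem for $X \to Y$ against $f^*K \to f^*L$ over $A$. For $h$-fibrations one takes $K \to L$ to be $K \times \{0\} \to K \times I$; then $f^*K \times \{0\} \to f^*K \times I$ is again of this form and the lift exists because $X \to Y$ is an $h$-fibration. For $q$-fibrations one takes $K \to L$ to be $D^n \times \{0\} \to D^n \times I$; since the fibers of $f$ are cell complexes and $D^n$ is contractible, $f^*D^n$ is a cell complex and the transported map is an acyclic $q$-cofibration. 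The weak-equivalence claim then follows from Ken Brown's lemma. That is the entire proof.

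What is amusing is that you actually began writing this argument (``Since $(f^* \adj f_*)$ is an adjunction and $f^*$ preserves the generating trivial $h$-cofibrations\ldots'') before abandoning it. Your local-pieces-plus-gluing argument works, but it imports the Hurewicz/Dold local-to-global theorem for fibrations and a descent argument for weak equivalences, both of which are heavier than what is needed. The adjunction trick buys you a two-line proof with no gluing and no appeal to the explicit construction of $f_*$; your route buys you nothing extra here, though it would be the right instinct in a setting where the right adjoint was built by hand and no clean adjunction was available.
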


\begin{proof}
	This is by a standard argument that rearranges one lifting condition to another. For instance, to prove the first part we re-arrange a lift in the square below on the right to a lift on the left. Note that in both squares any lift will automatically be a map over $A$ or $B$.
	\[ \xymatrix @R=1.7em @C=3em{
		f^*K \times \{0\} \ar[r] \ar[d] & X \ar[d] \\
		f^*K \times I \ar@{-->}[ur] \ar[r] & Y
	}
	\qquad
	\xymatrix @R=1.7em @C=3em{
		K \times \{0\} \ar[r] \ar[d] & f_*X \ar[d] \\
		K \times I \ar@{-->}[ur] \ar[r] & f_*Y
	}
	 \]
	The second part uses a similar argument along with Ken Brown's lemma \cite[1.1.12]{hovey_model_cats}.
\end{proof}

\begin{ex}\label{ex:other_adjunction_spaces}
	As a result, when $f$ is a fiber bundle with fiber a cell complex, the adjunction $(f^* \adj f_*)$ is Quillen. If $f$ is a weak equivalence, then $(f^* \adj f_*)$ is a Quillen equivalence, because $\L f^* \simeq \R f^*$ gives an equivalence of homotopy categories by \autoref{spaces_quillen_adj}.
\end{ex}

\begin{rmk}
	Under the (CG) assumptions, $f^*$ always has a right adjoint $f_*$ \cite[2.1]{ms}. However, $f_*$ does not seem to be right-deformable in the sense of \autoref{sec:derived_functors_intro} or \cite{dhks}. This makes the more general $f_*$ difficult to use for homotopy theory.
\end{rmk}


\subsection{External smash products $\barsmash$}\label{sec:external_smash}\aftersubsection

\begin{df}\label{def_external_smash}
If $X$ is a retractive space over $A$ and $Y$ is a retractive space over $B$, the \textbf{external smash product}\index{external smash product $X \barsmash Y$!of retractive spaces} $X \barsmash Y$ is a retractive space over $A \times B$ whose fiber over $(a,b) \in A \times B$ is the smash product of the fibers, $X_a \sma Y_b$. It is defined by the following pushout square.
\begin{equation}\label{external_smash}
\xymatrix @R=2em{
	(X \times B) \cup_{A \times B} (A \times Y) \ar[r] \ar[d] \po{.8}{+.5ex} & X \times Y \ar[d] \\
	A \times B \ar[r] & X \barsmash Y }
\end{equation}
The left vertical map arises from $p_X$ and $p_Y$, and the top horizontal arises from $i_X$ and $i_Y$. This is a retractive space over $A \times B$ in the evident way. Once again, the above pushout is preserved by the forgetful functor to sets in both (CG) and (CGWH).
\end{df}

We usually consider this as a functor $\barsmash\colon \mc R(A) \times \mc R(B) \to \mc R(A \times B)$ for each pair of base spaces $A$ and $B$. However, it also defines a functor $\barsmash\colon \mc R \times \mc R \to \mc R$ on the entire category of retractive spaces $\mc R$ from \autoref{def:all_ret_spaces}.
\begin{lem}\label{prop:spaces_sm}
	The external smash product makes $\mc R$ into a symmetric monoidal category with unit the retractive space $S^0$ over $*$.
\end{lem}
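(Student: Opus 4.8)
The plan is to verify the axioms of a symmetric monoidal category for $(\mc R, \barsmash, S^0)$ by reducing everything to the standard smash product of based spaces, fiber by fiber. The key observation is that for each pair of base spaces $A, B$ the pushout defining $X \barsmash Y$ commutes with the forgetful functor to sets, and in fact the fiber of $X \barsmash Y$ over $(a,b)$ is canonically $X_a \sma Y_b$. So all of the structure maps we need to build will be assembled from the corresponding structure maps for $\sma$ on based spaces, and all of the coherence diagrams will commute because they commute fiberwise.

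First I would construct the associator. Given $X \in \mc R(A)$, $Y \in \mc R(B)$, $Z \in \mc R(C)$, both $(X \barsmash Y) \barsmash Z$ and $X \barsmash (Y \barsmash Z)$ are retractive spaces over $A \times B \times C$ (using the evident identification of $(A\times B)\times C$ with $A \times (B \times C)$), and on the fiber over $(a,b,c)$ both are $(X_a \sma Y_b) \sma Z_c$ respectively $X_a \sma (Y_b \sma Z_c)$. To produce an actual continuous ex-map, I would note that each iterated external smash product is itself a pushout of products of the total spaces $X, Y, Z$ and the base, and the classical associativity homeomorphism for $\times$, together with the universal property of the pushouts, produces a natural homeomorphism $\alpha_{X,Y,Z}\colon (X \barsmash Y) \barsmash Z \xrightarrow{\cong} X \barsmash (Y \barsmash Z)$ over $A \times B \times C$. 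The unit isomorphisms are handled the same way: $S^0 \barsmash Y$ is a retractive space over $* \times B \cong B$, and the pushout defining it collapses to $Y$ by the classical $S^0 \sma (-) \cong \id$ (more precisely, one checks $S^0 \barsmash Y = ((S^0 \times B)\cup_{* \times B}(* \times Y)) $-pushout reduces to $Y$). The symmetry $\gamma_{X,Y}\colon X \barsmash Y \xrightarrow{\cong} Y \barsmash X$ over $A \times B \cong B \times A$ comes from swapping factors in $X \times Y \cong Y \times X$ and in the pushout data.

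The remaining work is to check the triangle identity, the pentagon, and the hexagon axioms, plus naturality of $\alpha$, $\gamma$, and the unit maps in all variables. Naturality follows immediately because every structure map was defined from the universal property of a pushout of products, and products and pushouts are functorial. For the coherence diagrams, I would invoke the fact that all the maps involved are homeomorphisms of spaces over a fixed base, so it suffices to check commutativity after forgetting the base and the topology — i.e., on underlying sets — and since on the fiber over each point of the base the diagrams are exactly the pentagon, hexagon, and triangle for the smash product of based sets (equivalently for $\sma$ on $\cat{Top}$), which are classical, the diagrams commute. Alternatively one can observe that $\mc R$ maps to $\mc R(*) = \cat{Top}_*$ on each fiber and reduce to coherence there.

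The main obstacle is purely bookkeeping: making the identifications of base spaces $(A\times B)\times C = A\times(B\times C)$, $*\times B = B$, $A \times B = B \times A$ precise, and checking that the pushout squares defining the iterated smash products are genuinely preserved under these identifications and under the forgetful functor — which is exactly what \autoref{def_external_smash} asserts, in both (CG) and (CGWH). Once that is set up, there is no homotopy theory here at all; the statement is formal, and the only subtlety is that in (CGWH) one must know the relevant colimits of weak Hausdorff spaces agree with the colimits of $k$-spaces, which was already recorded in the discussion around \autoref{prop:f_star_colimits}. So I would state the proof as: "Each structure isomorphism is built from the corresponding one for $\times$ via the universal property of the defining pushouts; naturality is automatic; and each coherence diagram is a homeomorphism over a fixed base, hence may be checked on underlying sets, where it reduces to the known coherence for $\sma$."
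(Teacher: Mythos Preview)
Your proposal is correct. The paper does not give an explicit proof of this lemma; it is stated as essentially self-evident, so your direct verification---building $\alpha$, $\lambda$, $\rho$, $\gamma$ from the universal property of the defining pushouts and checking the coherence diagrams on underlying sets fiber by fiber---is exactly the kind of argument one would expect here, and it works.

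That said, the paper's broader methodology, developed just after this lemma in \S2.5, offers a different route worth noting. The rigidity theorem (\autoref{prop:spaces_rigidity}) shows that any functor of the form $f_!g^*(X_1 \barsmash \cdots \barsmash X_n)$ with $(f,g)$ injective has no nontrivial natural automorphisms. In particular, every iterated external smash product is rigid, so once one knows that \emph{some} associator, unitor, and symmetry isomorphisms exist (which is immediate from the universal properties, as you argue), they are automatically unique, and every coherence diagram commutes for free---there is no pentagon or hexagon to check by hand. The paper uses this rigidity approach explicitly for the internal smash product $\sma_B$, for spectra, and in the equivariant setting (see the sentence preceding the second lemma labeled \texttt{prop:spaces\_sm} in \S\ref{sec:internal}, and the footnote in \S6.1). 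Your approach is more elementary and self-contained; the rigidity approach is slicker and scales better to the later, more intricate cases where direct fiberwise verification becomes unwieldy.
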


In the special case where we take $X_{+A} = X \amalg A$, with $X$ an unbased space over $A$, the external smash product $X_{+A} \barsmash Y$ is called a ``half-smash product'' and its total space is given more simply as the pushout
\begin{equation}\label{external_half_smash}
\xymatrix @R=1.7em{
	X \times B \ar[r] \ar[d]  \po{.65}{-.5ex} & X \times Y \ar[d] \\
	A \times B \ar[r] & X_{+A} \barsmash Y. }
\end{equation}
In the further special case of $X_{+A} \barsmash Y_{+B}$, we just get the disjoint union $(X \times Y)_{+(A \times B)}$.

\begin{ex}\label{ex:fiberwise_suspension}
	Taking $A = *$ and $X = S^1$, we get the definition of the \textbf{fiberwise reduced suspension}\index{fiberwise!reduced suspension $\Sigma_B$} functor on spaces $Y$ over $B$:
	\[ \Sigma_B Y := S^1 \barsmash Y \]
\end{ex}

\begin{ex}\label{ex:thom_smash_product}
	If we take two vector bundles $V$ and $W$ over $A$ and $B$, respectively, the fiberwise Thom space $\Th_{A \times B}(V \times W)$ can be expressed by the formula
	\[ \Th_{A \times B}(V \times W) \cong \Th_A(V) \barsmash \Th_B(W). \]
	Combining with the previous example, we get
	\[ \Th_B(\R^n \times W) \cong S^n \barsmash \Th_B(W) = \Sigma^n_B \Th_B(W). \]
	This is the fiberwise variant of the isomorphism $\Th(\R^n \times W) \cong \Sigma^n \Th(W)$.
\end{ex}

\begin{ex}
	If $X$ is an unbased space over $B$ with projection $p\colon X \to B$, and $Y$ is a retractive space over $B$, we can take the half-smash product \eqref{external_half_smash} and pull back along $\Delta_B$, which preserves the pushout in both (CG) and (CGWH). This gives the square
	\begin{equation*}
	\xymatrix @R=1.7em{
		X \times_B B \ar[r] \ar[d] & X \times_B Y \ar[d] \\
		B \times_B B \ar[r] & X_{+B} \sma_B Y }
	\end{equation*}
	and hence an isomorphism
	\begin{equation}\label{half_smash_internal}
		X_{+B} \sma_B Y \cong p_!p^* Y.
	\end{equation}
\end{ex}

To describe how $\barsmash$ interacts with cofibrations, fibrations, and weak equivalences, we must recall some basic facts about pushout-products. Suppose $\mc C$, $\mc D$, and $\mc E$ are categories with all small colimits, and
$\otimes\colon \mc C \times \mc D \to \mc E$
is a colimit-preserving functor. The \textbf{pushout-product}\index{pushout-product $\square$} of a map $f: K \to X$ in $\mc C$ and $g: L \to Y$ in $\mc D$ is the map in $\mc E$ from the pushout of the first three terms in the following square to the final vertex.
\[ \xymatrix @R=2em @C=3em{
	K \otimes L \ar[r]^-{\id_K \otimes g} \ar[d]_-{f \otimes \id_L} & K \otimes Y \ar[d]^-{f \otimes \id_Y} \\
	X \otimes L \ar[r]_-{\id_X \otimes g} & X \otimes Y
}\]
\[ f \square g\colon X \otimes L \cup_{K \otimes L} K \otimes Y \to X \otimes Y \]

Let $a\mc C$ be the category of arrows, whose objects are morphisms of $\mc C$ and whose morphisms are commuting squares. We say a morphism in $a\mc C$ is a ``pushout morphism'' if this square is a pushout square in $\mc C$. The following is a straightforward exercise.
\begin{lem}\label{pushout_product_lemmas}\hfill
	\vspace{-1em}
	
	\begin{itemize}
		\item The operation $- \square -$ defines a functor $a\mc C \times a\mc D \to a\mc E$.
		\item For fixed $g \in a\mc D$, $- \square g$ sends pushout morphisms to pushout morphisms.
		\item The pushout-product $(f' \circ f) \square g$ is a composition of $f' \square g$ and a pushout of $f \square g$.
	\end{itemize}
\end{lem}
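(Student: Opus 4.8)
The plan is to check the three bullets directly from the universal properties of pushouts, treating the pushout-product as a colimit in a suitable functor category. For the first bullet, observe that a pair $(f,g)$ of arrows in $\mc C$ and $\mc D$ determines a commuting square in $\mc E$, i.e. an object of the category of squares in $\mc E$; taking the pushout of the ``upper-left corner'' (the span $X\otimes L\leftarrow K\otimes L\to K\otimes Y$) is a functor from squares to arrows, since a morphism of squares induces a morphism of the spans and hence of their pushouts, compatibly with the map to $X\otimes Y$. The assignment $(f,g)\mapsto f\square g$ is the composite of $-\otimes-$ on arrow categories (well-defined because $\otimes$ preserves colimits, so it is functorial on arrows) with this ``pushout of the corner'' functor, hence itself a functor $a\mc C\times a\mc D\to a\mc E$. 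A morphism $(f,g)\to(f',g')$ in $a\mc C\times a\mc D$ is a pair of commuting squares, and chasing through the construction gives the induced commuting square $f\square g\to f'\square g'$; functoriality (identities, composites) is immediate since every step is functorial.

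For the second bullet, fix $g\colon L\to Y$ in $\mc D$ and let $f\colon K\to X$, $f'\colon K'\to X'$ be related by a pushout morphism, i.e. a pushout square
\[ \xymatrix @R=1.7em{ K \ar[r] \ar[d]_-f & K' \ar[d]^-{f'} \\ X \ar[r] & X' } \]
in $\mc C$. I want to show the induced square $f\square g\to f'\square g$ is a pushout in $a\mc E$, equivalently that both the square on domains and the (trivial, since it is $X\otimes Y\to X'\otimes Y$) square on codomains are pushouts in $\mc E$. The codomain statement is just that $-\otimes Y$ preserves the pushout, which holds because $\otimes$ is colimit-preserving. For the domains, the domain of $f\square g$ is $X\otimes L\cup_{K\otimes L}K\otimes Y$, and one assembles the relevant diagram as an iterated pushout: applying $-\otimes L$ and $-\otimes Y$ to the pushout square above (each preserving pushouts) and then gluing, the colimit of the resulting diagram over $K,K',X,X'$ with the $K\otimes Y$, $K'\otimes Y$ identifications is computed by the Fubini/interchange theorem for colimits, and yields exactly that $\mathrm{dom}(f'\square g)$ is the pushout of $\mathrm{dom}(f\square g)\leftarrow K\otimes Y\to\cdots$; more plainly, $X'\otimes L\cup_{K'\otimes L}K'\otimes Y\cong X'\otimes L\cup_{X\otimes L}\big(X\otimes L\cup_{K\otimes L}K\otimes Y\big)$ by pasting of pushouts, which is the desired statement.

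For the third bullet, given $f\colon K\to X$, $f'\colon X\to X'$ in $\mc C$ and $g\colon L\to Y$ in $\mc D$, I decompose the big square for $(f'\circ f)$ against $g$ into the two squares for $f$ and for $f'$ stacked vertically, and run the pasting law for pushouts one level up. The domain of $(f'f)\square g$ is $X'\otimes L\cup_{K\otimes L}K\otimes Y$; inserting the intermediate object, $X'\otimes L\cup_{K\otimes L}K\otimes Y\cong X'\otimes L\cup_{X\otimes L}\big(X\otimes L\cup_{K\otimes L}K\otimes Y\big)$. The inner pushout is $\mathrm{dom}(f\square g)$, and pushing it out along $X\otimes L\to X'\otimes L$ exhibits $(f'f)\square g$ as the composite of $f\square g\colon \mathrm{dom}(f\square g)\to X\otimes Y$ followed by the map $X\otimes Y\to X'\otimes Y$, where the latter is obtained from $\mathrm{dom}(f\square g)\to X\otimes Y$ by pushout along $X\otimes L\to X'\otimes L$ — i.e. a pushout of $f'\square g$ along the pushout morphism produced by the first part of the lemma applied to $-\square g$. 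Assembling the identifications gives the claimed factorization.

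The routine-but-only-obstacle here is bookkeeping the iterated pushouts so that the interchange of colimits is applied legitimately; everything reduces to the pasting law for pushout squares together with the fact that $\otimes$ preserves colimits in each variable, so no genuinely new input is needed beyond organizing the diagrams carefully.
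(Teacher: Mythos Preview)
Your overall strategy (pasting of pushouts plus the fact that $\otimes$ preserves colimits in each variable) is the right one, and bullet one is fine. But bullets two and three contain genuine errors in the bookkeeping, not just sloppiness.

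\textbf{Bullet two.} First, the phrase ``equivalently that both the square on domains and the square on codomains are pushouts'' is a misreading of the definition: a \emph{pushout morphism} in $a\mc E$ is a single commuting square in $\mc E$ that happens to be a pushout, not a square in $a\mc E$ computed levelwise. What you must show is that
\[
X'\otimes Y \;\cong\; \mathrm{dom}(f'\square g)\;\cup_{\mathrm{dom}(f\square g)}\;(X\otimes Y).
\]
Second, and more seriously, the isomorphism you assert,
\[
\mathrm{dom}(f'\square g)\;\cong\; X'\otimes L \,\cup_{X\otimes L}\, \mathrm{dom}(f\square g),
\]
is false. Take $K$ initial in $\mc C$ (so $X'\cong X\sqcup K'$): the left side is $X\otimes L \sqcup K'\otimes Y$, while your right side collapses to $X'\otimes L = X\otimes L \sqcup K'\otimes L$. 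The correct identity uses the other corner of the cube:
\[
\mathrm{dom}(f'\square g)\;\cong\; K'\otimes Y \,\cup_{K\otimes Y}\, \mathrm{dom}(f\square g),
\]
which one gets from $X'\otimes L \cong K'\otimes L \cup_{K\otimes L} X\otimes L$ and pasting. From there the desired pushout square follows by one more paste, using that the $Y$-face $X'\otimes Y \cong K'\otimes Y \cup_{K\otimes Y} X\otimes Y$ is a pushout.

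\textbf{Bullet three.} Here the domain identity you write, $\mathrm{dom}((f'f)\square g)\cong X'\otimes L \cup_{X\otimes L}\mathrm{dom}(f\square g)$, \emph{is} correct (note the setup is different from bullet two: now $f'\colon X\to X'$). But your factorization is garbled. The map $(f'f)\square g$ does not factor through $X\otimes Y$; its domain is not $\mathrm{dom}(f\square g)$. The correct factorization passes through $\mathrm{dom}(f'\square g)=X'\otimes L\cup_{X\otimes L} X\otimes Y$:
\[
\mathrm{dom}((f'f)\square g)\;\xrightarrow{\ h\ }\;\mathrm{dom}(f'\square g)\;\xrightarrow{\ f'\square g\ }\;X'\otimes Y,
\]
where $h$ is the pushout of $f\square g$ along $\mathrm{dom}(f\square g)\to X'\otimes L\cup_{X\otimes L}\mathrm{dom}(f\square g)$. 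In particular your final clause ``i.e.\ a pushout of $f'\square g$'' should read ``a pushout of $f\square g$,'' matching the statement of the lemma.
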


This last point also generalizes to countable and transfinite compositions, as in \cite[2.1.1]{hovey_model_cats}. For the countable case, $f: X^{(0)} \to X^{(\infty)}$ is the composition of the maps $f_{m-1}: X^{(m-1)} \to X^{(m)}$ in $\mc C$, meaning that $X^{(\infty)}$ is identified with the colimit of the $X^{(m)}$, and $g: L \to Y$ is an arrow of $\mc D$. Then the pushout-product
\[ f \square g : (X^{(\infty)} \otimes L) \cup_{X^{(0)} \otimes L} (X^{(0)} \otimes Y) \to X^{(\infty)} \otimes Y \]
is a countable composition of pushouts of the maps $f_{m-1} \square g$.

If $\mc C$, $\mc D$, and $\mc E$ are model categories, we say that $\otimes$ is a \textbf{left Quillen bifunctor}\index{left Quillen bifunctor} if
	\begin{itemize}
		\item it is a left adjoint in each slot,
		\item for each cofibration $f$ in $\mc C$ and cofibration $g$ in $\mc D$, $f \square g$ is a cofibration, and
		\item if in addition one of $f$ or $g$ is a weak equivalence then $f \square g$ is a weak equivalence.
	\end{itemize} 
As a consequence, $X \otimes -$ is left Quillen whenever $X$ is cofibrant in $\cat C$, and similarly $- \otimes Y$ is left Quillen if $Y$ is cofibrant in $\cat D$. More informally, $X \otimes Y$ preserves equivalences when $X$ and $Y$ are both cofibrant. A standard application of \autoref{pushout_product_lemmas} is the following.\index{model category!monoidal}
\begin{prop}\label{prop:quillen_bifunctor_defn}
	If $\mc C$ and $\mc D$ are cofibrantly generated, then for $\otimes$ to be a Quillen bifunctor it suffices that
	\begin{itemize}
		\item it is a left adjoint in each slot,
		\item if $f,g \in I$ then $f \square g$ is a cofibration, and
		\item if $f \in I, g \in J$ or $f \in J, g \in I$ then $f \square g$ is an acyclic cofibration.
	\end{itemize} 
\end{prop}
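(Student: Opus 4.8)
The plan is to reduce the general statement about arbitrary cofibrations and acyclic cofibrations to the specified behavior on the generating sets $I$ and $J$, using the standard cellular induction encapsulated in \autoref{pushout_product_lemmas}. First I would recall that since $\mc C$ and $\mc D$ are cofibrantly generated, every cofibration is a retract of a (transfinite) relative $I$-cell complex, and every acyclic cofibration is a retract of a (transfinite) relative $J$-cell complex. Since $- \square g$ is a functor on arrow categories (first bullet of \autoref{pushout_product_lemmas}), it preserves retracts; so it suffices to handle the case where $f$ and $g$ are relative cell complexes.

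The core of the argument is the closure properties of cofibrations and acyclic cofibrations under pushout and transfinite composition in the model category $\mc E$. Fix $g \in I$ (resp.\ $g \in J$) and let $f$ be a transfinite composite of pushouts of maps in $I$ (resp.\ a map that is handled the same way). By the third bullet of \autoref{pushout_product_lemmas} and its transfinite generalization (the displayed extension after the lemma, following \cite[2.1.1]{hovey_model_cats}), $f \square g$ is a transfinite composition of pushouts of the maps $f_i \square g$ where each $f_i \in I$. By hypothesis each $f_i \square g$ is a cofibration (resp.\ an acyclic cofibration when one of $f_i, g$ lies in $J$), hence so is the transfinite composite. This proves: $f \square g$ is a cofibration whenever $f$ is a cofibration and $g \in I$; and $f \square g$ is an acyclic cofibration whenever $f$ is a cofibration and $g \in J$, or $f$ is an acyclic cofibration and $g \in I$. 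Now fix an arbitrary cofibration $f$ in $\mc C$; by the case just proved, $f \square g$ is a cofibration for every $g \in I$, so running the same cellular induction in the \emph{second} variable (again using \autoref{pushout_product_lemmas}, now with $f$ fixed) shows $f \square g$ is a cofibration for every cofibration $g$ in $\mc D$. The acyclic cases are handled identically: if $f$ is a cofibration and $g$ is an acyclic cofibration, write $g$ as a retract of a relative $J$-cell complex and use that $f \square -$ sends pushouts of maps in $J$ to acyclic cofibrations (by the previous paragraph), then close under transfinite composition and retracts; symmetrically for $f$ acyclic and $g$ a cofibration. The left-adjoint hypothesis is exactly the first bullet of the definition of left Quillen bifunctor, so all three conditions are verified.

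The only point requiring care---and the main obstacle---is the bookkeeping in the transfinite induction: one must check that the colimit defining $X^{(\infty)} \otimes L$, and hence the identification of $f \square g$ with a transfinite composite of pushouts of the $f_i \square g$, is valid, which relies on $\otimes$ preserving the relevant colimits (it is a left adjoint in each slot, hence preserves all colimits in each slot separately) and on the domains of the maps in $I$ and $J$ being small relative to the respective cell complexes. This smallness is part of the hypothesis that $\mc C$ and $\mc D$ are cofibrantly generated, so it is available. Everything else is a routine application of \autoref{pushout_product_lemmas} together with the closure properties of the classes of cofibrations and acyclic cofibrations in a model category.
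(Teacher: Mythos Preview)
Your proposal is correct and is precisely the ``standard application of \autoref{pushout_product_lemmas}'' that the paper alludes to without spelling out; the paper gives no further proof. One small point: the smallness hypothesis you invoke at the end is not actually needed for the argument. The identification of $f \square g$ as a transfinite composition of pushouts of the $f_i \square g$ follows purely from $\otimes$ being colimit-preserving in each variable (which you already used), and the fact that every cofibration is a retract of a relative $I$-cell complex is part of the definition of cofibrantly generated (it is a consequence of the small object argument, but you are consuming that fact, not reproving it). So you can drop the last clause about smallness without loss.
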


\begin{lem}\label{ex:smashing_spaces_is_left_Quillen}
	The external smash product $\barsmash\colon \mc R(A) \times \mc R(B) \to \mc R(A \times B)$ is a left Quillen bifunctor. It therefore preserves equivalences when both inputs are $q$-cofibrant.
\end{lem}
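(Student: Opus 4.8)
The plan is to apply \autoref{prop:quillen_bifunctor_defn}. Since $\mc R(A)$ and $\mc R(B)$ are cofibrantly generated (\autoref{prop:q}), it suffices to verify the three bulleted conditions there. The first, that $\barsmash$ is a left adjoint in each slot, follows because $\barsmash$ preserves colimits in each variable separately: this is visible from the defining pushout square \eqref{external_smash}, since $\times$ preserves colimits in each slot on spaces over a fixed base (after passing through the forgetful functor, using that pushouts commute with it in both (CG) and (CGWH)), and pushouts commute with colimits. So the real content is the pushout-product computations on the generating (acyclic) cofibrations.

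First I would compute the pushout-product of two generating cofibrations. The generators of $\mc R(A)$ have the form $(S^{m-1}_{+A'} \to D^m_{+A'})$ pulled back to $A$ along a fixed map $D^m \to A$, and similarly the generators of $\mc R(B)$ have the form $(S^{n-1}_{+B'} \to D^n_{+B'})$ for a fixed map $D^n \to B$; here I am using that a generator ``$S^{m-1}_{+A} \to D^m_{+A}$ over a map $D^m\to A$'' is the image under the appropriate pushforward of the standard generator over $D^m$ itself, or more simply that its total-space behavior is controlled by the unbased pair $(D^m, S^{m-1})$. The key point is that for half-smash products of spaces of the form $Z_{+A}$, the external smash product reduces to the honest product, as in \eqref{external_half_smash}: $D^m_{+A} \barsmash D^n_{+B}$ has total space $(D^m \times D^n)_{+(A\times B)}$, and more generally the pushout-product of these two generators is, on total spaces, the based map $(D^m\times D^n, \partial(D^m\times D^n))_{+(A\times B)}$ built from the standard pushout-product of $(D^m,S^{m-1})$ with $(D^n,S^{n-1})$. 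That standard pushout-product is a relative CW inclusion (it is the inclusion of a subcomplex of $D^{m+n}$, up to homeomorphism of pairs), hence the result is a $q$-cofibration in $\mc R(A\times B)$ — this handles the second bullet.

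For the third bullet I would run the same computation with one of the two generators replaced by a generating acyclic cofibration $D^m_{+A'} \to (D^m\times I)_{+A'}$. On total spaces this is the based map coming from the standard pushout-product of $(D^m\times I, D^m\times\{0\})$ with $(D^n, S^{n-1})$ (or the reversed version). Since the pair $(D^m\times I, D^m\times\{0\})$ is a deformation retract pair, its pushout-product with any relative CW pair is again an acyclic $q$-cofibration — either by the standard fact that the pushout-product of an acyclic cofibration with a cofibration in a monoidal model category of spaces is an acyclic cofibration, or concretely because $D^m\times I\times D^n$ deformation retracts onto the relevant subcomplex. So $f\square g$ is an acyclic $q$-cofibration, as required. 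The main obstacle, such as it is, is purely bookkeeping: carefully matching the external-smash pushout \eqref{external_smash} against the ordinary pushout-product of spaces, keeping track of the base spaces and the disjoint basepoints, and confirming that the reduction to the unparametrized pushout-product of discs is legitimate in both (CG) and (CGWH); once that identification is in place, the needed facts about pushout-products of discs are entirely classical. The final sentence, that $\barsmash$ preserves equivalences between $q$-cofibrant objects, then follows formally from the left Quillen bifunctor property together with Ken Brown's lemma, exactly as in the remark preceding \autoref{prop:quillen_bifunctor_defn}.
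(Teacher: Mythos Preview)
Your proposal is correct and takes essentially the same approach as the paper: the paper's proof is the single line ``Simply check the conditions of \autoref{prop:quillen_bifunctor_defn}. The right adjoint is constructed in the next section,'' and you have filled in exactly those checks. The only minor discrepancy is that you argue for the left-adjoint condition via colimit preservation, whereas the paper points forward to the explicit right adjoint $\barmap_B$ constructed in \S\ref{sec:external_smash}; colimit preservation alone does not give a right adjoint without an appeal to an adjoint functor theorem, so the paper's route is the cleaner justification here.
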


\begin{proof}
	Simply check the conditions of \autoref{prop:quillen_bifunctor_defn}. The right adjoint is constructed in the next section.
\end{proof}

To build our ``convenient'' notions of cofibrant and fibrant spectra, we will need to know that $\barsmash$ plays well with $h$- and $f$-cofibrations as well.

\begin{prop}\label{prop:h_cofibrations_pushout_product}\hfill
	\vspace{-1em}
	
	\begin{enumerate}
		\item Let $f: K \to X$ and $g: L \to Y$ be $h$-cofibrations of retractive spaces over $A$ and $B$, respectively. Then $f \square g$, constructed using $\barsmash$, is an $h$-cofibration.
		\item The same is true for $f$-cofibrations or closed inclusions.
		\item If $X$ and $Y$ are $h$-cofibrant and $h$-fibrant then $X \barsmash Y$ is $h$-fibrant.
		\item If $X$ is an $h$-cofibrant space over $A$ and $g: Y \to Y'$ is a weak equivalence of $h$-cofibrant spaces over $B$ then $\id_X \barsmash g$ is a weak equivalence.
	\end{enumerate}
\end{prop}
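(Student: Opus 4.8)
The plan is to reduce every part to the analogous statement for the \emph{external Cartesian product} $\overline{\times}\colon\mc R(A)\times\mc R(B)\to\mc R(A\times B)$, which sends $(X,Y)$ to the retractive space over $A\times B$ with total space $X\times Y$, section $i_X\times i_Y$, and projection $p_X\times p_Y$. Two elementary facts drive the argument. First, $\overline{\times}$ is assembled from pullbacks: writing $\pi_A,\pi_B$ for the two projections off of $A\times B$, there is a natural isomorphism $X\overline{\times}Y\cong\pi_A^*X\times_{A\times B}\pi_B^*Y$. Second, by the defining pushout \eqref{external_smash} the external smash product is a fiberwise collapse of $\overline{\times}$: setting $W:=\pi_A^*X\cup_{A\times B}\pi_B^*Y$ (this is the pushout-product $i_X\square i_Y$ computed with $\overline{\times}$, with total space $X\times B\cup_{A\times B}A\times Y$), the space $X\barsmash Y$ is the pushout of $A\times B\leftarrow W\to X\overline{\times}Y$. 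Every pushout appearing below is along a closed inclusion, so these manipulations are valid in both (CG) and (CGWH).

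For parts (1) and (2), given $f\colon K\to X$ over $A$ and $g\colon L\to Y$ over $B$, a diagram chase with the defining squares shows that the $\barsmash$-pushout-product $f\square g$ is itself a pushout of the $\overline{\times}$-pushout-product $f\square_{\overline{\times}}g$, so it suffices to treat the latter. On total spaces $f\square_{\overline{\times}}g$ is the ordinary pushout-product $X\times L\cup_{K\times L}K\times Y\to X\times Y$, which is an $h$-cofibration, resp.\ a closed inclusion, whenever $f$ and $g$ are, by \autoref{lem:product_ndr}. For the $f$-cofibration case I would instead write $f\square_{\overline{\times}}g=(\pi_A^*f)\square(\pi_B^*g)$ over the base $A\times B$, note that $\pi_A^*f$ and $\pi_B^*g$ are $f$-cofibrations by \autoref{lem:f_star_preserves}, and apply the fiberwise version of \autoref{lem:product_ndr}. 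Since each class of cofibrations is closed under pushout (\autoref{prop:technical_cofibrations}(1)), $f\square g$ inherits the property.

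For part (3) I would apply \autoref{prop:clapp} to the pushout square $A\times B\leftarrow W\to X\overline{\times}Y$. The corner $A\times B$ is the zero object and hence $h$-fibrant, and $X\overline{\times}Y=\pi_A^*X\times_{A\times B}\pi_B^*Y$ is $h$-fibrant because $\pi_A^*$ and $\pi_B^*$ preserve $h$-fibrations (\autoref{lem:f_star_preserves}) and a pullback of $h$-fibrations is an $h$-fibration. This is the step where both the cofibrancy and the fibrancy of $X$ and $Y$ are used: by \autoref{lem:heath_kamps} the sections $i_X$ and $i_Y$ are $f$-cofibrations, so $\pi_A^*i_X$ and $\pi_B^*i_Y$ are $f$-cofibrations over $A\times B$; a first application of \autoref{prop:clapp} then makes $W=\pi_A^*X\cup_{A\times B}\pi_B^*Y$ an $h$-fibrant space, and \autoref{lem:product_ndr} makes $W\to X\overline{\times}Y=(\pi_A^*i_X)\square(\pi_B^*i_Y)$ an $f$-cofibration. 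A second application of \autoref{prop:clapp} gives that $X\barsmash Y$ is $h$-fibrant. This assembly of \autoref{prop:clapp} with itself and with \autoref{lem:heath_kamps} is the technical heart of the proposition, and the main obstacle: without the joint cofibrant-and-fibrant hypothesis the sections are not $f$-cofibrations, Clapp does not apply, and the fiberwise collapse need not preserve $h$-fibrancy.

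For part (4) I would compare the pushout presentations of $X\barsmash Y$ and $X\barsmash Y'$. The map $X\overline{\times}Y\to X\overline{\times}Y'$ is $\id_X\times g$ on total spaces, hence a weak equivalence, and the map $W\to W'$ between the wedges is a weak equivalence by the gluing lemma (\autoref{prop:technical_cofibrations}(2)), since it is a map of pushout spans along the $h$-cofibration $\pi_A^*i_X$ — here using that $\pi_A$ is an $h$-fibration, so $\pi_A^*$ preserves $h$-cofibrations (\autoref{lem:f_star_preserves}). Both presentations have the leg $W\to X\overline{\times}Y$, resp.\ $W'\to X\overline{\times}Y'$, which is an $h$-cofibration by \autoref{lem:product_ndr}, being the pushout-product of the $h$-cofibrations $i_X$ and $i_Y$, resp.\ $i_{Y'}$, using that $X,Y,Y'$ are $h$-cofibrant. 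A final application of the gluing lemma then shows that $\id_X\barsmash g$ is a weak equivalence.
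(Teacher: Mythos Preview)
Your proof is correct and follows essentially the same route as the paper's: reduce parts (1)--(2) to the Cartesian pushout-product via \autoref{lem:product_ndr}, and handle (3)--(4) by analyzing the defining pushout square \eqref{external_smash} with \autoref{prop:clapp} and the gluing lemma. Your presentation is in fact slightly more explicit than the paper's in two places: you spell out the use of \autoref{lem:heath_kamps} to upgrade the sections to $f$-cofibrations before invoking Clapp, and you phrase the reduction in (1)--(2) as ``$f\square g$ is a pushout of $f\square_{\overline{\times}}g$'' rather than via \autoref{cofibration_of_pushouts}, but these are equivalent formulations of the same observation.
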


Note that the third of these also appears in \cite[8.2.4]{ms} and \cite[3.6]{may1975classifying}.

\begin{proof}
	For the first two claims, we compare universal properties and write the desired pushout-product as the pushout of the map of spans
	\[ \xymatrix @R=1.7em{
		A \times B \ar@{=}[d] & (X \times B) \cup_{A \times B} (A \times Y) \ar[l] \ar[r] \ar@{=}[d] & (X \times L) \cup_{K \times L} (K \times Y) \ar[d] \\
		A \times B & (X \times B) \cup_{A \times B} (A \times Y) \ar[l] \ar[r] & X \times Y
	} \]
	By \autoref{cofibration_of_pushouts}, the claim reduces to the fact that $f \square g$ computed with the Cartesian product is a cofibration, which follows from \autoref{lem:product_ndr}.
	
	For the third claim, in the square \eqref{external_smash}, the top-left term is a pushout along $h$-cofibrations of spaces that are fibrant over $A \times B$. By \autoref{prop:clapp} and its extension with $h$-cofibrations, this top-left term of \eqref{external_smash} is therefore fibrant. The top map of \eqref{external_smash} itself is also an $h$-cofibration, and the other terms are clearly fibrant over $A \times B$, so again their pushout $X \barsmash Y$ is also fibrant.
	For the final claim, the same analysis shows that for a weak equivalence $Y \to Y'$ we get a weak equivalence on the top-left term of the square \eqref{external_smash}, as well as on the other two terms, and therefore gives a weak equivalence on the pushouts $X \barsmash Y \to X \barsmash Y'$.
\end{proof}

\begin{rmk} As a corollary, $\Sigma_B(-)$ preserves $f$-cofibrations, $h$-cofibrations, and weak equivalences between $h$-cofibrant spaces.
\end{rmk}

\begin{df}\label{space_cofiber}
	For any map $f\colon X \to Y$ in $\mc R(B)$, the mapping cone or \textbf{uncorrected homotopy cofiber}\index{uncorrected homotopy cofiber}\index{mapping cone} $C_B f$ is the pushout
	\[ C_B f = X \barsmash I \cup_{X \barsmash S^0} Y \]
	where $S^0 \to I$ is regarded as a map of based spaces (retractive spaces over $*$).
\end{df}

Over each point $b \in B$, this is the usual reduced mapping cone. As in the non-parametrized case, $C_B f$ doesn't always have the homotopy type we want. It does if $X$ is $h$-cofibrant, using \autoref{prop:h_cofibrations_pushout_product}, or if $f$ is an $h$-cofibration. In these cases we call it the \textbf{homotopy cofiber}\index{homotopy!cofiber}.

\subsection{External mapping spaces $\barmap$}\label{sec:external_smash}\aftersubsection

The external smash product has a right adjoint in each variable, the \textbf{external mapping space}\index{external!mapping space $\barmap_B(Y,Z)$} $\barmap_B(Y,Z)$. For $Z \in \mc R(A \times B)$ and $Y \in \mc R(B)$, this is defined as the following pullback.
\begin{equation}\label{external_map}
	\xymatrix @R=2em{
		\barmap_B(Y,Z) \ar[r] \ar[d] \pb{.2}{-1ex} & \Map(Y,Z) \ar[d] \\
		A \times \{{*}\} \times \{{*}\} \ar[r] & \Map(Y,A) \times \Map(Y,B) \times \Map(B,Z)
	}
\end{equation}
Here $\Map$ refers to the usual space of unbased maps, the right-vertical composes with the projections $Z \to A$, $Z \to B$, and the section $B \to Y$, and the bottom-horizontal assigns to each $a \in A$ the constant map $Y \to \{a\}$, the projection $Y \to B$, and the section $B \cong \{a\} \times B \to Z$.

When $Y$ is nonempty the bottom horizontal map is split,\footnote{When $Y$ is empty, so is $B$, and $\barmap_B(Y,Z) = \barmap_{\emptyset}(\emptyset,Z) = A$.} hence in (CGWH) both horizontal maps are closed inclusions, while in (CG) they are just inclusions. So $\barmap_B(Y,Z)$ is precisely the subspace of all maps $Y \to Z$ that fit into three commuting diagrams
\[ \xymatrix @R=1.7em{
	Y \ar[d] \ar[r] & Z \ar[d] \\
	\{{*}\} \ar[r]^-a & A
}
\qquad
\xymatrix @R=1.7em{
	Y \ar[dr] \ar[r] & Z \ar[d] \\
	 & B
}
\qquad
\xymatrix @R=1.7em{
B \ar[d] \ar[r]^-{a \times \id} & A \times B \ar[d] \\
Y \ar[r] & Z,
}
\]
or equivalently into one commuting diagram as follows, for some $a \in A$.
\[ \xymatrix @C=3em @R=1.7em{
	B \ar[d] \ar[r]^-{a \times \id} & A \times B \ar[d] \\
	Y \ar[d] \ar[r] & Z \ar[d] \\
	B \ar[r]^-{a \times \id} & A \times B
}
\]

Therefore $\barmap_B(Y,Z)$ is a retractive space over $A$ whose fiber over $a \in A$ is the space of maps of retractive spaces from $Y$ into $Z_a$ over $B$. The usual rules define adjunctions
\[ \xymatrix @R=0.3em{ X \barsmash Y \to Z \quad \textup{ over }A \times B \quad \longleftrightarrow \quad X \to \barmap_B(Y,Z) \quad \textup{ over }A, \\
X \barsmash Y \to Z \quad \textup{ over }A \times B \quad \longleftrightarrow \quad Y \to \barmap_A(X,Z) \quad \textup{ over }B.
} \]

\begin{ex}\label{ex:fiberwise_loops}\hfill
	\vspace{-1em}
	
	\begin{itemize}
		\item Setting $A = *$, the based space $\barmap_B(Y,Z)$ is just the space of maps $Y \to Z$ that agree with the inclusions and projections to $B$, in other words the space of maps of retractive spaces from $Y$ to $Z$. This gives an enrichment of the category $\mc R(B)$ in based spaces, in other words it gives a natural topology on the sets $\mc R(B)(Y,Z)$.\index{retractive space!enrichment}
		\item Setting $A = *$ and $X = S^1$ defines the \textbf{fiberwise based loops}\index{fiberwise!based loops $\Omega_B$} of $Y \in \mc R(B)$,
		\[ \Omega_B Y := \barmap_*(S^1,Y). \]
		Every point of $\Omega_B Y$ is a map $S^1 \to Y$ that lands entirely in one fiber $Y_b$, sending the basepoint of $S^1$ to the basepoint of $Y_b$. We also get an adjunction between $\Sigma_B$ and $\Omega_B$ that agrees with the usual one over every point of $B$.
		\item Setting $A = *$ and $Y = B_{+B}$, we get the space of sections,
		\[ \barmap_B(B_{+B},Z) \cong \Gamma_B(Z). \]
		More generally, if $Y = E_{+B}$ for some space $E$ with projection $p\colon E \to B$, juggling the above adjunctions and identifications gives
		\begin{equation}\label{half_map_sections}
			\barmap_B(E_{+B},Z) \cong \Gamma_E(p^* Z).
		\end{equation}
	\end{itemize}
\end{ex}

To describe how $\barmap$ interacts with cofibrations, fibrations, and weak equivalences, we recall the definition of pullback-homs, the duals of pushout-products. As before, start with categories $\mc C$, $\mc D$, and $\mc E$, but now assume that $\mc C$ has all small limits. Consider two functors
\[ \otimes\colon \mc C \times \mc D \to \mc E, \qquad
\Hom_{\mc D}\colon \mc D^{op} \times \mc E \to \mc C \]
and an adjunction between $- \otimes d$ and $\Hom_{\mc D}(d,-)$ for every $d \in \mc D$. For every $g: L \to Y$ in $\mc D$ and $h\colon M \to Z$ in $\mc E$, the \textbf{pullback-hom}\index{pullback-hom} $\Hom_\square(g,h)$ is the map in $\mc C$ that takes the first vertex of the square below to the pullback of the other three vertices:
\[ \xymatrix @R=2em{
	\Hom_{\mc D}(Y,M) \ar[r] \ar[d] & \Hom_{\mc D}(L,M) \ar[d] \\
	\Hom_{\mc D}(Y,Z) \ar[r] & \Hom_{\mc D}(L,Z)
} \]

\[ \Hom_\square(g,h)\colon \Hom_{\mc D}(Y,M) \to \Hom_{\mc D}(L,M) \times_{\Hom_{\mc D}(L,Z)} \Hom_{\mc D}(Y,Z). \]

This can be visualized as the space of all choices of diagonal in the square below, mapping to the space of all choices of two horizontal maps making the square commute.
\[ \xymatrix @R=1.7em @C=3em{
	L \ar@{-->}[r] \ar[d]_-g & M \ar[d]^-h \\
	Y \ar@{-->}[r] \ar@{-->}[ur] & Z
} \]

\begin{lem}\label{lem:pullback_hom_adjunction}
	For a fixed arrow $g \in a\mc D$, the functors $- \square g$ and $\Hom_\square(g,-)$ form an adjunction between $a\mc C$ and $a\mc E$.
\end{lem}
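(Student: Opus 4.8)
The plan is to prove \autoref{lem:pullback_hom_adjunction} by a purely formal argument: exhibit a natural bijection between morphisms $f \square g \to h$ in $a\mc E$ and morphisms $f \to \Hom_\square(g,h)$ in $a\mc C$, for a fixed arrow $g \in a\mc D$ and arbitrary $f \in a\mc C$, $h \in a\mc E$. The strategy is to unwind each side into a commuting diagram built from the object-level adjunction between $- \otimes d$ and $\Hom_{\mc D}(d,-)$, and to check that the two unwound descriptions coincide.

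First I would write $f\colon K \to X$, $g\colon L \to Y$, $h\colon M \to Z$, and recall that $f \square g$ is the map $X \otimes L \cup_{K \otimes L} K \otimes Y \to X \otimes Y$ while $\Hom_\square(g,h)$ is the map $\Hom_{\mc D}(Y,M) \to \Hom_{\mc D}(L,M) \times_{\Hom_{\mc D}(L,Z)} \Hom_{\mc D}(Y,Z)$. A morphism $f\square g \to h$ in $a\mc E$ is a commuting square, i.e. a pair of maps (from the domain pushout to $M$, and from $X \otimes Y$ to $Z$) compatible over $h$. Using the universal property of the pushout, the map out of $X \otimes L \cup_{K \otimes L} K \otimes Y$ is the data of maps $X \otimes L \to M$ and $K \otimes Y \to M$ agreeing on $K \otimes L$; together with $X \otimes Y \to Z$ and the two commutativity constraints against $h$ and against the pushout legs, one finds the data is exactly: maps $X \otimes Y \to Z$, $X \otimes L \to M$, $K \otimes Y \to M$ subject to compatibility. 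Dually, a morphism $f \to \Hom_\square(g,h)$ is a commuting square whose bottom map $X \to \Hom_{\mc D}(L,M) \times_{\Hom_{\mc D}(L,Z)} \Hom_{\mc D}(Y,Z)$ unwinds, by the universal property of the pullback, into maps $X \to \Hom_{\mc D}(L,M)$ and $X \to \Hom_{\mc D}(Y,Z)$ agreeing in $\Hom_{\mc D}(L,Z)$, together with $K \to \Hom_{\mc D}(Y,M)$ and compatibility constraints.

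Now I apply the given adjunctions $\mc E(X \otimes d, e) \cong \mc C(X, \Hom_{\mc D}(d,e))$ with $d = L$ or $d = Y$ and $e = M$ or $e = Z$, naturally in all variables. Under these bijections the three pieces $X \otimes Y \to Z$, $X \otimes L \to M$, $K \otimes Y \to M$ correspond respectively to $X \to \Hom_{\mc D}(Y,Z)$, $X \to \Hom_{\mc D}(L,M)$, $K \to \Hom_{\mc D}(Y,M)$; naturality of the adjunction in the $\mc D$-variable (applied to $g\colon L \to Y$) and in the $\mc E$-variable (applied to $h\colon M \to Z$) translates the compatibility conditions on one side precisely into those on the other. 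Assembling, a morphism $f \square g \to h$ in $a\mc E$ is the same data as a morphism $f \to \Hom_\square(g,h)$ in $a\mc C$, and the correspondence is manifestly natural in $f$ and $h$. (One can phrase this even more slickly: for fixed $g$, both $- \square g$ and $\Hom_\square(g,-)$ are built by the same Fubini-style manipulation of the two-variable adjunction restricted along the three ``corner'' inclusions, so the adjunction is inherited; but the direct diagram-chase above is transparent enough that no abstraction is needed.)

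The main obstacle — really the only point requiring care — is bookkeeping: making sure that the commutativity constraints packaged into the two squares (the one witnessing a morphism in $a\mc E$ and the one witnessing a morphism in $a\mc C$) match up correctly under the triple application of the object-level adjunction, rather than, say, producing the transpose of the intended compatibility. This is handled by invoking naturality of the adjunction isomorphism in each of its three slots (the $\mc C$-slot for $f$, the $\mc D^{\op}$-slot for $g$, the $\mc E$-slot for $h$), which is routine but should be stated. Everything else is formal nonsense about pushouts, pullbacks, and hom-set bijections, so the proof can be given in a few lines with the remark that it is ``a straightforward diagram chase,'' perhaps pointing the reader to \cite[4.2.2]{hovey_model_cats} or the analogous statement for pushout-products.
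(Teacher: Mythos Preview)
Your proposal is correct and follows essentially the same approach as the paper: both unwind a morphism $f \to \Hom_\square(g,h)$ in $a\mc C$ and a morphism $f \square g \to h$ in $a\mc E$ into the same collection of three maps (via the universal properties of the pullback and pushout respectively), and match them up using the object-level adjunctions $- \otimes d \adj \Hom_{\mc D}(d,-)$. The paper's version is terser, simply displaying the two commuting squares and asserting that filling one is equivalent to filling the other, with naturality left as ``straightforward''; you have written out the bookkeeping more explicitly, but there is no substantive difference.
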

\begin{proof}
	A choice of dotted maps making the square
	\[ \xymatrix @R=1.7em{
		K \ar@{-->}[r] \ar[d]^-f & \Hom_{\mc D}(Y,M) \ar[d]^-{\Hom_\square(g,h)} \\
		X \ar@{-->}[r] & \Hom_{\mc D}(L,M) \times_{\Hom_{\mc D}(L,Z)} \Hom_{\mc D}(Y,Z)
	} \]
	commute in $\mc C$ can be rearranged into a choice of three dotted maps making the following diagram commute in $\mc E$.
	\[ \xymatrix @R=1.7em{
		X \otimes L \cup_{K \otimes L} K \otimes Y \ar@{-->}[r] \ar[d]^-{f \square g} & M \ar[d]^-{h} \\
		X \otimes Y \ar@{-->}[r] & Z
	} \]
	It is straightforward to check these identifications are natural.
\end{proof}

The following is a standard consquence of \autoref{ex:smashing_spaces_is_left_Quillen}.
\begin{lem}\label{ex:homming_spaces_is_right_Quillen}
	The external mapping space $\barmap_B(Y,Z)$ is $q$-fibrant and preserves weak equivalences, whenever $Y$ is $q$-cofibrant and $Z$ is $q$-fibrant.
\end{lem}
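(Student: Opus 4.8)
The plan is to deduce this from \autoref{ex:smashing_spaces_is_left_Quillen} by dualizing along the pullback-hom adjunction of \autoref{lem:pullback_hom_adjunction} (with $\mc C = \mc R(A)$, $\mc D = \mc R(B)$, $\mc E = \mc R(A \times B)$, $\otimes = \barsmash$, and $\Hom_{\mc D} = \barmap_B$). The key point to record first is the ``right Quillen bifunctor'' statement: for every $q$-cofibration $g\colon L \to Y$ in $\mc R(B)$ and every $q$-fibration $h\colon M \to N$ in $\mc R(A \times B)$, the pullback-hom $\Hom_\square(g,h)$ is a $q$-fibration in $\mc R(A)$, and is moreover an acyclic $q$-fibration whenever $g$ or $h$ is a weak equivalence. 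By \autoref{lem:pullback_hom_adjunction}, a lifting problem for a map $f$ against $\Hom_\square(g,h)$ transposes to a lifting problem for $h$ against the pushout-product $f \square g$; since $\barsmash$ is a left Quillen bifunctor, $f \square g$ is a $q$-cofibration, acyclic if $f$ is, so the required lift exists. (Equivalently, one may invoke the standard equivalence between two-variable left and right Quillen adjunctions.)

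For $q$-fibrancy, apply this with $g\colon 0_B \to Y$ and $h\colon Z \to 0_{A \times B}$, where $0_{(-)}$ denotes the relevant zero object. The map $g$ is a $q$-cofibration because $Y$ is $q$-cofibrant, and $h$ is a $q$-fibration because $Z$ is $q$-fibrant. Since $\barmap_B(0_B,-)$ is the constant functor at $0_A$, the pullback-hom $\Hom_\square(g,h)$ is identified with the canonical map $\barmap_B(Y,Z) \to 0_A$, which is therefore a $q$-fibration; that is, $\barmap_B(Y,Z)$ is $q$-fibrant.

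For homotopy invariance I would treat the two variables separately and combine them with the usual $2$-out-of-$3$ argument. In the $Z$-variable: for fixed $q$-cofibrant $Y$, the functor $\barmap_B(Y,-)$ is right adjoint to $-\barsmash Y$, which is left Quillen because $Y$ is $q$-cofibrant; hence $\barmap_B(Y,-)$ is right Quillen and preserves weak equivalences between $q$-fibrant objects by Ken Brown's lemma. In the $Y$-variable: setting $h\colon Z \to 0_{A \times B}$ in the pullback-hom statement shows that, for $Z$ $q$-fibrant, the contravariant functor $\barmap_B(-,Z)$ sends acyclic $q$-cofibrations between $q$-cofibrant objects to acyclic $q$-fibrations, hence to weak equivalences, so Ken Brown's lemma upgrades this to all weak equivalences between $q$-cofibrant objects.

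I do not anticipate a genuine obstacle here; the whole argument is a formal consequence of \autoref{ex:smashing_spaces_is_left_Quillen}. The only points needing care are keeping the variances straight when applying Ken Brown's lemma in the $Y$-variable (it is being applied to a contravariant functor) and checking the harmless identifications $\barmap_B(0_B,-)\cong 0_A$ and $\barmap_B(-,0_{A \times B})\cong 0_A$ used to collapse the pullback-homs, both of which are immediate from the definition \eqref{external_map}.
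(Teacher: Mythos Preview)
Your proposal is correct and is essentially the same approach as the paper, which simply remarks that the lemma is a ``standard consequence'' of \autoref{ex:smashing_spaces_is_left_Quillen} without spelling out details. You have written out precisely the standard dualization via \autoref{lem:pullback_hom_adjunction} together with Ken Brown's lemma that the paper leaves implicit.
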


We also need to understand how the external mapping space interacts with $h$- and $f$-cofibrations.

\begin{prop}\label{h_fibrations_pullback_hom}\hfill
	\vspace{-1em}
	
	\begin{itemize}
		\item Let $g: L \to Y$ be an $h$-cofibration over $B$ and $h: M \to Z$ be an $h$-fibration over $A \times B$. Then $\Hom_\square(g,h)$, constructed using $\barmap_B$, is an $h$-fibration.
		\item (CGWH) If $h\colon M \to Z$ is a closed inclusion then so is $\barmap_B(Y,h)$.
		\item (CGWH) If $Y$ is compact and $h\colon M \to Z$ is an $f$-cofibration then $\barmap_B(Y,h)$ is an $f$-cofibration.
	\end{itemize}
\end{prop}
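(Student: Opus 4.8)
The plan is to prove the three bullets by the same basic device that underlies \autoref{prop:h_cofibrations_pushout_product} and \autoref{lem:pullback_hom_adjunction}: reduce each statement about $\barmap_B$ to the corresponding well-known statement about the ordinary unbased mapping space $\Map(-,-)$, using the fact that $\barmap_B(Y,Z)$ is cut out of $\Map(Y,Z)$ by the pullback square \eqref{external_map}, together with \autoref{cofibration_of_pushouts}'s dual for fibrations (the last line of \autoref{prop:technical_cofibrations}) to pass the conclusion through that pullback.

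First, for the first bullet, I would use the adjunction of \autoref{lem:pullback_hom_adjunction}: $\Hom_\square(g,h)$ is an $h$-fibration precisely when it has the right lifting property against all $h$-cofibrations $f$, equivalently when $f \square g$ (formed with $\barsmash$) has the left lifting property against $h$; by \autoref{prop:h_cofibrations_pushout_product}(1), $f\square g$ is an $h$-cofibration whenever $f$ and $g$ are, so the lifting holds against the $h$-fibration $h$. One should phrase this using a pointset criterion for $h$-fibrations (RLP against the inclusions $M \times \{0\} \to M \times I$ over the base, which are $h$-cofibrations), so the argument is a clean two-step juggle rather than a calculation. Alternatively, and perhaps more in the spirit of the section, one can argue directly: the square \eqref{external_map} exhibits $\barmap_B(Y,Z)$ as a pullback, $\Map(g,h)$ is an $h$-fibration by the classical pullback-hom property of $\Map$ over a point (with NDR structures, cf. \cite[\S 6.4]{concise}), the bottom and right maps of \eqref{external_map} are $h$-fibrations built from projections and sections, and then \autoref{prop:clapp}'s dual for $h$-fibrations (Clapp-type gluing) shows the induced map on pullbacks is an $h$-fibration. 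I would present the adjunction argument as the main one since it is shortest.

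For the second bullet, in (CGWH) a closed inclusion is detected fiberwise/by the equalizer criterion of \autoref{lem:closed_inclusion_equalizer}: $\barmap_B(Y,h)$ sits inside $\Map(Y,M) \to \Map(Y,Z)$, which is a closed inclusion because $\Map(Y,-)$ preserves closed inclusions in (CGWH) (a map into a mapping space is continuous iff its adjoint is, and $M$ closed in $Z$ forces the relevant set to be closed), and the pullback \eqref{external_map} of a closed inclusion along any map is a closed inclusion by \cite[A.10.1]{lewis_appendix} (the same citation used in \autoref{lem:f_star_preserves}). For the third bullet, I would produce an explicit fiberwise NDR structure on $(\barmap_B(Y,Z), \barmap_B(Y,M))$ from a fiberwise NDR structure on $(Z,M)$: given $u\colon Z \to [0,1]$ and a fiberwise deformation $h_t$, define $\tilde u$ on $\barmap_B(Y,Z)$ by $\tilde u(\varphi) = \sup_{y \in Y} u(\varphi(y))$ and $\tilde h_t(\varphi) = h_t \circ \varphi$; compactness of $Y$ guarantees $\tilde u$ is continuous and that $\tilde u(\varphi) = 0$ iff $\varphi$ factors through $M$, while $h_t$ respecting the projection to $A \times B$ ensures $\tilde h_t$ respects the projection to $A$, so we land inside $\barmap_B(Y,-)$ throughout. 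This is the standard ``mapping space of an NDR-pair out of a compact source is again an NDR-pair'' argument, adapted fiberwise.

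The main obstacle is the third bullet: one must check carefully that the $\sup$-formula for $\tilde u$ is genuinely continuous (here compactness of $Y$ and the tube lemma are essential, and this is exactly why the hypothesis ``$Y$ compact'' appears) and that the deformation $\tilde h_t$ stays inside the subspace $\barmap_B(Y,Z)$ of \eqref{external_map} — i.e.\ that applying $h_t$ pointwise preserves the three commuting-diagram conditions, which works precisely because $h_t$ is a homotopy over $A \times B$ fixing $M$ (hence fixing the section $B \to M \subseteq Z$). Everything else is a formal consequence of \autoref{prop:h_cofibrations_pushout_product}, \autoref{lem:pullback_hom_adjunction}, \autoref{lem:closed_inclusion_equalizer}, and the behavior of pullbacks along \eqref{external_map}.
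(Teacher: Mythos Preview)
Your approach to the first bullet has a genuine gap. You write that ``$\Hom_\square(g,h)$ is an $h$-fibration precisely when it has the right lifting property against all $h$-cofibrations $f$'' and then conclude that since $f\square g$ is an $h$-cofibration, ``the lifting holds against the $h$-fibration $h$.'' But $h$-fibrations do \emph{not} have the RLP against arbitrary $h$-cofibrations---only against those that are also homotopy equivalences (this is Str{\o}m's theorem, \cite[Thm~3]{strom_1}). Knowing that $f\square g$ is an $h$-cofibration, which is all \autoref{prop:h_cofibrations_pushout_product}(1) gives you, is not enough. The paper's proof supplies the missing step: when $f$ is the cylinder inclusion $K\to K\times I$, the pushout-product $f\square g$ is not only an $h$-cofibration but also a homotopy equivalence (since the target deformation-retracts onto one of the summands of the source), and \emph{then} Str{\o}m applies. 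Your parenthetical about ``phrasing this using a pointset criterion'' gestures toward the right test maps but never supplies this acyclicity observation.

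Your second and third bullets are correct and close in spirit to the paper, though the execution differs. For the second bullet the paper gives a one-liner: $\barmap_B(Y,-)$ is a right adjoint, hence preserves equalizers, so by \autoref{lem:closed_inclusion_equalizer} it preserves closed inclusions. For the third bullet you use the NDR-pair description with $\tilde u(\varphi)=\sup_{y\in Y}u(\varphi(y))$ and $\tilde h_t=h_t\circ(-)$; the paper instead works directly with the retract characterization, building a retract of $\Map(Y,Z)\times I$ onto $(\Map(Y,M)\times I)\cup(\Map(Y,Z)\times 0)$ by going up to $\Map(Y,Z\times I)$, applying $\Map(Y,-)$ to the given retract, and coming back down via a $\min$ (rather than $\sup$) over $Y$ of the $I$-coordinate. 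Both approaches need compactness of $Y$ for exactly the reason you identify---continuity of the extremum---and both must check compatibility with the three diagrams defining $\barmap_B$; the arguments are essentially dual to one another.
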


As a consequence, $\Omega_B(-)$ preserves $h$-fibrations and weak equivalences between $h$-fibrant spaces. Curiously, it also preserves $f$-cofibrations, at least in (CGWH).

\begin{proof}
	For the first claim, examine the proof of \autoref{lem:pullback_hom_adjunction}. A dotted map lifting the bottom square corresponds to a dotted map lifting the top square. Taking $X = K \times I$, we observe that $K \to K \times I$ is an $h$-cofibration and the left-vertical in the last square is clearly a homotopy equivalence. Therefore we get a lift by \cite[Thm 3]{strom_1}, hence the original right-vertical $\Hom_\square(g,h)$ was an $h$-fibration.
	
	The second claim follows from \autoref{lem:closed_inclusion_equalizer} because $\barmap_B(Y,-)$ is a right adjoint and therefore preserves equalizers. For the third claim, given a retract
	\[ \xymatrix{
		M \times I \cup_{M \times 0} Z \times 0 \ar[r] & Z \times I \ar@/_1em/[l]
	} \]
	over $A \times B$, form the following square.
	\[ \xymatrix{
		\Map(Y,M \times I \cup_{M \times 0} Z \times 0) \ar[r] \ar@/_/[d] & \Map(Y,Z \times I) \ar@/^/[d] \ar@/_/[l] \\
		(\Map(Y,M) \times I) \cup_{\Map(Y,M) \times 0} (\Map(Y,Z) \times 0) \ar[r] \ar[u] & \Map(Y,Z) \times I \ar[u]
	} \]
	The left-pointing map is $\Map(Y,-)$ applied to our retract. The maps pointing to the right are closed inclusions, so we regard the left-hand column as subspaces of the right-hand column, and define all the vertical maps by focusing on the right-hand column and checking our rule preserves the given subspaces. We define the upward vertical map by assembly, sending $(f,t)$ to the function $(y \mapsto (f(y),t))$. We define the first factor of the downward vertical map by projecting $Z \times I \to Z$. The second factor takes the minimum over $Y$ of the $I$-coordinate of $f$. It is straightforward to check directly that this is continuous.
	
	We can now retract the lower-right term onto the lower-left term by mapping up, left, and then down.\footnote{The author learned this argument from Irakli Patchkoria, who in turn learned it from Thomas Nikolaus and Wolfgang L\" uck.} It is straightforward to check that this retraction respects the three conditions that define the closed subspaces $\barmap_B(Y,M) \subseteq \Map(Y,M)$ and $\barmap_B(Y,Z) \subseteq \Map(Y,Z)$, and also the projection map from these closed subspaces to $A$.
\end{proof}

\begin{df}\label{space_fiber}
	The \textbf{uncorrected homotopy fiber}\index{uncorrected homotopy fiber} $F_B f$ is the pullback
	\[ F_B f = X \times_{\barmap_*(S^0,Y)} \barmap_*(I,Y). \]
	where as in \autoref{space_cofiber}, $S^0 \to I$ is regarded as a map of based spaces (retractive spaces over $*$). Over each point of $B$, this construction is the usual homotopy fiber. By \autoref{ex:homming_spaces_is_right_Quillen}, this preserves equivalences as soon as $Y$ is $q$-fibrant or $X \to Y$ is an $q$-fibration; in these cases we call it the \textbf{homotopy fiber}\index{homotopy!fiber}.
\end{df}
	\textbf{Warning:} The uncorrected homotopy fiber does not always preserve equivalences. Take $X = B = I$ and $Y$ to be the cone on a circle, with projection to $B$ by the cone coordinate, joined to an extra copy of $B$ along a single point of the base circle. Since $Y$ is contractible, the (corrected) homotopy fiber of $X \to Y$ is contractible, but the uncorrected homotopy fiber has total space equivalent to $\Z$.

\beforesubsection
\subsection{The symmetric monoidal bifibration (SMBF) $\mc R$}\aftersubsection

Recall that a \textbf{cartesian arrow} in the category $\mc R$ of retractive spaces is a morphism of the form $f^*X \to X$ from a pullback to the original space. Dually, a \textbf{cocartesian arrow} is a morphism of the form $X \to f_! X$.
\begin{lem}\cite[2.5.8]{ms}\label{lem:external_smash_and_base_change}
	The external smash product $\barsmash$ preserves cartesian arrows, and also preserves cocartesian arrows.
\end{lem}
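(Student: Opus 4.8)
The plan is to verify both claims directly from the universal properties, using the fact that $\barsmash$ preserves colimits in each variable separately (which is built into the definition via the pushout \eqref{external_smash}) and the explicit point-set formulas for $f^*$ and $f_!$.

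First I would treat the cocartesian case, which is the easier half. A cocartesian arrow is a map of the form $X \to f_!X$ over $f\colon A \to B$, where $f_!X = X \cup_A B$. Given a second cocartesian arrow $Y \to g_!Y$ over $g\colon C \to D$, I want to show that $X \barsmash Y \to (f_!X) \barsmash (g_!Y)$ is again cocartesian, i.e. is the cocartesian arrow over $f \times g\colon A \times C \to B \times D$. Since $- \barsmash Y$ is a left adjoint (\autoref{sec:external_smash}), it preserves the pushout defining $f_!X$, and likewise $(f_!X) \barsmash -$ preserves the pushout defining $g_!Y$; composing, $X \barsmash Y \to (f_!X) \barsmash (g_!Y)$ is obtained by applying $\barsmash$ to the two pushout squares for $f_!$ and $g_!$. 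One then checks that the resulting square exhibits $(f_!X)\barsmash(g_!Y)$ as the pushout of $X\barsmash Y$ along $A\times C \to B\times D$ — the point being that the base space of $X\barsmash Y$ is $A\times C$, the base space of the target is $B\times D$, and the map of base spaces is exactly $f\times g$. This identifies the map as the canonical cocartesian arrow. (Alternatively, and perhaps more cleanly, one invokes \autoref{prop:spaces_sm}: $\barsmash$ makes $\mc R$ symmetric monoidal over $\cat{Top}$ with its cartesian product, and the cocartesian arrows are exactly the colimit-universal arrows among those covering a fixed map of base spaces; a colimit-preserving monoidal structure over a cartesian base sends such arrows to such arrows.)

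For the cartesian case, a cartesian arrow is $f^*X \to X$ over $f\colon A\to B$, with $f^*X = A\times_B X$. Here $\barsmash$ is not a right adjoint, so I cannot simply quote limit-preservation; instead I would argue on underlying sets and topologies. The key input is the Beck–Chevalley isomorphism $f_!g^* \cong q^*p_!$ of \autoref{prop:beck_chevalley_spaces}, together with the observation that $X\barsmash Y$ can be built as an iterated pushforward-and-pullback: writing $\pi_A\colon A\times B\to A$, $\pi_B\colon A\times B\to B$ for the projections, one has $X\barsmash Y \cong \pi_A^* X \sma_{A\times B} \pi_B^* Y$, and $\sma_{A\times B}$ (the fiberwise smash) is in turn expressible via pushforwards along diagonals as in \eqref{half_smash_internal}. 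Given cartesian arrows $f^*X\to X$ and $g^*Y\to Y$, the claim that $(f^*X)\barsmash(g^*Y)\to X\barsmash Y$ is the cartesian arrow over $f\times g$ then follows by a diagram chase: pull back the pushout square \eqref{external_smash} for $X\barsmash Y$ along $f\times g\colon A\times C\to B\times D$, using \autoref{prop:f_star_colimits} to see that $(f\times g)^*$ preserves that pushout (it is a pushout along a closed inclusion, by the remark that every retractive space is $i$-cofibrant, so this works in both (CG) and (CGWH)), and check termwise that $(f\times g)^*(X\times Y)\cong (f^*X)\times(g^*Y)$, $(f\times g)^*((X\times D)\cup_{B\times D}(B\times Y)) \cong (f^*X\times C)\cup_{A\times C}(A\times g^*Y)$, and $(f\times g)^*(B\times D)\cong A\times C$, all of which are immediate from the pullback-square description of $f^*$ and the compatibility of fiber products with products.

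The main obstacle is the cartesian case, precisely because $\barsmash$ is only a left adjoint: one cannot appeal to a formal "limit-preserving functor preserves cartesian arrows" principle, and must instead check by hand that $(f\times g)^*$ commutes with the particular colimit \eqref{external_smash} and that the termwise identifications assemble correctly. The cleanest route is to reduce to the statement "$f^*$ on one side preserves cartesian arrows on the other side," i.e. to prove the bifunctor claim one variable at a time, fixing $Y$ and showing $- \barsmash Y$ sends $f^*X\to X$ to the cartesian arrow over $f\times \id$; then do the symmetric thing in the second variable and compose. For each single-variable statement the work is exactly the pullback-stability of the pushout \eqref{external_smash} under $f^*$, which \autoref{prop:f_star_colimits} handles. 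I expect the whole argument to be a short verification once this reduction is made, so the write-up can simply be: reduce to one variable, apply \autoref{prop:f_star_colimits} to commute $(f\times\id)^*$ past the pushout, identify the three pulled-back terms, and conclude; then handle cocartesian arrows by the dual (and easier) argument using that $-\barsmash Y$ is a left adjoint.
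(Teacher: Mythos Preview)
The paper does not actually supply a proof of this lemma; it simply cites \cite[2.5.8]{ms} and moves on to the consequences \eqref{eq:external_smash_and_base_change}. So there is nothing to compare against on the paper's side.

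Your argument is correct and gives a self-contained proof. The cocartesian half is exactly as you say: $-\barsmash Y$ and $X\barsmash -$ are left adjoints, hence preserve the pushouts defining $f_!$ and $g_!$, and one checks the resulting composite pushout is the one for $(f\times g)_!$. For the cartesian half, the one-variable reduction followed by pulling back the defining pushout \eqref{external_smash} along $f\times\id$ is the right move; the only point requiring care is that $(f\times\id)^*$ really does preserve that pushout, and \autoref{prop:f_star_colimits} handles this (in (CGWH) because the top horizontal of \eqref{external_smash} is a closed inclusion, being the pushout-product of the basepoint sections $i_X$ and $i_Y$ via \autoref{lem:product_ndr}; in (CG) unconditionally). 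The termwise identifications $(f\times\id)^*(X\times Y)\cong f^*X\times Y$ etc.\ are immediate. You could streamline the write-up by dropping the detour through Beck--Chevalley and the internal smash formula $\pi_A^*X\sma_{A\times B}\pi_B^*Y$; the direct argument via \autoref{prop:f_star_colimits} that you settle on at the end is shorter and uses only what is already established in the paper.
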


This implies that external smash products commute with pullbacks and with pushforwards. To be specific, given the data of
\[ \xymatrix @R=.2em{
	X \in \mc R(A), & f\colon A \to A', & W \in \mc R(A'), \\
	Y \in \mc R(B), & g\colon B \to B', & Z \in \mc R(B'),
} \]
there are canonical isomorphisms
\begin{equation}\label{eq:external_smash_and_base_change}
\begin{array}{c}
f_!X \barsmash g_!Y \cong (f \times g)_!(X \barsmash Y) \\
f^*W \barsmash g^*Z \cong (f \times g)^*(W \barsmash Z).
\end{array}
\end{equation}

We have now given the category $\mc R$ from \autoref{def:all_ret_spaces} three operations, $\barsmash$, $f^*$, and $f_!$. These operations always commute with each other, because of \autoref{lem:external_smash_and_base_change} and \autoref{prop:beck_chevalley_spaces}. A category with this calculus of three operations is called a \textbf{symmetric monoidal bifibration (SMBF)}\index{symmetric monoidal!bifibration}\index{SMBF}.

\begin{df}\label{df:SMBF}(\cite{shulman_framed_monoidal})
An SMBF consists of
\begin{itemize}
	\item a bifibration $\Phi\colon \mc C \to \bS$ (\autoref{def:bifibration}),
	\item a symmetric monoidal structure $(\mc C,\boxtimes,I)$, and 
	\item a cartesian monoidal structure $(\bS,\times,*)$,
\end{itemize}
such that $\Phi$ is strict symmetric monoidal (preserving the products and coherent isomorphisms on the nose), and $\boxtimes$ preserves cartesian and also cocartesian arrows.
\end{df}

\begin{cor}\label{prop:spaces_SMBF}
	The category $\mc R$ of all retractive spaces, with the symmetric monoidal structure from \autoref{prop:spaces_sm}, is a symmetric monoidal bifibration.
\end{cor}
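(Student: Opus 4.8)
The plan is simply to assemble the pieces already in place and check the three structural conditions of \autoref{df:SMBF}. We already have a bifibration $\Phi\colon \mc R \to \cat{Top}$ from the discussion following \autoref{def:all_ret_spaces}: the cartesian arrows are the maps $f^*X \to X$ and the cocartesian arrows are the maps $X \to f_!X$, with their universal properties following from the adjoint correspondence \eqref{eq:adjoint}. On the monoidal side, $(\mc R,\barsmash,S^0)$ is symmetric monoidal by \autoref{prop:spaces_sm} (which in particular records that $\barsmash$ is a functor $\mc R \times \mc R \to \mc R$), and $(\cat{Top},\times,*)$ is cartesian monoidal in the standard way.

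First I would check that $\Phi$ is \emph{strict} symmetric monoidal. By construction $X \barsmash Y$ is a retractive space over $A \times B$ whenever $X \in \mc R(A)$ and $Y \in \mc R(B)$, so $\Phi(X \barsmash Y) = \Phi(X) \times \Phi(Y)$ on the nose, and $\Phi(S^0) = *$. The associativity, unit, and symmetry isomorphisms for $\barsmash$ are built fiberwise from the corresponding isomorphisms $X_a \sma (Y_b \sma Z_c) \cong (X_a \sma Y_b) \sma Z_c$, etc., of based spaces; over the base spaces they restrict to precisely the associativity, unit, and symmetry isomorphisms of the cartesian product in $\cat{Top}$. Hence $\Phi$ carries the coherence data of $\barsmash$ to that of $\times$ strictly, as required.

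Next, $\barsmash$ preserves cartesian and cocartesian arrows: this is exactly \autoref{lem:external_smash_and_base_change}, together with the canonical isomorphisms \eqref{eq:external_smash_and_base_change}. Concretely, a pair of cartesian arrows $f^*W \to W$ and $g^*Z \to Z$ smashes to $f^*W \barsmash g^*Z \cong (f\times g)^*(W \barsmash Z) \to W \barsmash Z$, which is cartesian over $f \times g$; dually a pair of cocartesian arrows smashes to a cocartesian arrow over $f \times g$. This verifies the last clause of \autoref{df:SMBF}. If one wishes to record the optional Beck--Chevalley data mentioned in \autoref{def:bifibration}, one takes the class of all pullback squares in $\cat{Top}$ and invokes \autoref{prop:beck_chevalley_spaces}.

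I do not expect any genuine obstacle. The only point requiring a moment's care is the strict monoidality of $\Phi$ — one must make sure the coherence isomorphisms of $\barsmash$ really lie over those of $\times$, rather than over some other self-map of $A \times B \times C$ — but this is immediate from the fiberwise description of those isomorphisms. Everything else is a direct citation.
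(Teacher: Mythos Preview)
Your proposal is correct and matches the paper's approach: the corollary is stated without proof in the paper because it follows immediately from \autoref{lem:external_smash_and_base_change} (preservation of (co)cartesian arrows) together with the already-established bifibration and symmetric monoidal structures, and your write-up simply makes those citations explicit. The only extra detail you add is the verification that $\Phi$ is strict symmetric monoidal, which the paper leaves implicit; your fiberwise argument for this is fine.
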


The isomorphisms \eqref{eq:external_smash_and_base_change}, and the Beck-Chevalley isomorphism of \autoref{prop:beck_chevalley_spaces}, allow us to commute pullbacks, pushforwards, and smash products with each other up to isomorphism. In order to calculate traces, we need to have control over the isomorphism thus produced. The simplest way to do this accounting is to recognize that the isomorphism thus produced is not just canonical, but unique.

\begin{prop}[Rigidity]\label{prop:spaces_rigidity}
	Suppose $n \geq 0$ and we have maps of spaces
	\[ \xymatrix{ B & \ar[l]_-f A \ar[r]^-g & C_1 \times \ldots \times C_n } \]
	such that $(f,g)\colon A \to B \times C_1 \times \ldots \times C_n$ is injective.
	
	Then any functor $\mc R(C_1) \times \ldots \times \mc R(C_n) \to \mc R(B)$ isomorphic to
	\[ \Phi\colon (X_1,\ldots,X_n) \leadsto f_!g^*(X_1 \barsmash \ldots \barsmash X_n) \]
	is in fact uniquely isomorphic to $\Phi$. In other words, $\Phi$ is \textbf{rigid}.\index{rigidity!for spaces}
\end{prop}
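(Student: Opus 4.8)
The plan is to reduce the whole statement to the single claim that the only natural \emph{automorphism} of $\Phi$ is the identity. Indeed, if $\Psi$ is isomorphic to $\Phi$, then conjugation by a chosen isomorphism identifies $\mathrm{Aut}(\Psi)$ with $\mathrm{Aut}(\Phi)$, so once the latter is trivial any two isomorphisms $\Psi\to\Phi$ differ by an automorphism of $\Phi$ and hence agree. So I would fix a natural automorphism $\eta$ of $\Phi$ and show $\eta_{(X_1,\ldots,X_n)}=\id$ for every object $(X_1,\ldots,X_n)$. Since $\eta_{(X_1,\ldots,X_n)}$ is a morphism in $\mc R(B)$ it automatically fixes the basepoint section of $\Phi(X_1,\ldots,X_n)$, so it is enough to show it fixes every point of the total space lying off the section.

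The next step is to describe those points concretely. Unwinding the pushout defining $\barsmash$, the pullback $g^*$, and the pushout $f_!$ — each of which commutes with the forgetful functor to sets — a point of $\Phi(X_1,\ldots,X_n)=f_!g^*(X_1\barsmash\cdots\barsmash X_n)$ off the section is exactly a tuple $(a;x_1,\ldots,x_n)$, where $a\in A$, $x_i$ is a point of the fiber $(X_i)_{g_i(a)}$ off its section, and the point lies over $f(a)\in B$ (writing $g=(g_1,\ldots,g_n)$). Given such a point, set $c_i=g_i(a)$ and let $F_i\in\mc R(C_i)$ be the free retractive space on the point $c_i$ (total space $*\amalg C_i$, with the extra point projecting to $c_i$); there is a map $\phi_i\colon F_i\to X_i$ of retractive spaces over $C_i$ sending the extra point to $x_i$. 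Using the identity $U_{+C_i}\barsmash V_{+C_j}\cong (U\times V)_{+(C_i\times C_j)}$ and again that $g^*$ and $f_!$ preserve underlying sets, I would compute
\[ \Phi(F_1,\ldots,F_n)\ \cong\ \bigl(g^{-1}(c_1,\ldots,c_n)\bigr)_{+B}, \]
the retractive space over $B$ whose off-section points are exactly $g^{-1}(c_1,\ldots,c_n)\subseteq A$, projected to $B$ via $f$. In particular $a$ is one of these points, and $\Phi(\phi_1,\ldots,\phi_n)$ carries $a$ to $(a;x_1,\ldots,x_n)$.

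Now the hypothesis enters. Since $(f,g)$ is injective and $g$ is constant on $g^{-1}(c_1,\ldots,c_n)$, the map $f$ is injective on $g^{-1}(c_1,\ldots,c_n)$; hence any self-map of $\bigl(g^{-1}(c_1,\ldots,c_n)\bigr)_{+B}$ over $B$ is forced to be the identity on off-section points, and (being over $B$) also on the section. Therefore the automorphism $\eta_{\Phi(F_1,\ldots,F_n)}$ is the identity, so it fixes $a$. Naturality of $\eta$ along $(\phi_1,\ldots,\phi_n)$ then gives $\eta_{(X_1,\ldots,X_n)}(a;x_1,\ldots,x_n)=(a;x_1,\ldots,x_n)$. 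As this point was arbitrary, $\eta_{(X_1,\ldots,X_n)}$ fixes every point and is the identity map, which completes the argument.

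The routine but genuinely necessary part — and the place I'd expect to have to be careful — is the point-set bookkeeping of the second paragraph: checking that the total space of $f_!g^*(X_1\barsmash\cdots\barsmash X_n)$, and that of $\Phi(F_1,\ldots,F_n)$, really are what I claimed, as spaces and not merely as sets, so that ``off-section point'' genuinely means a tuple $(a;x_1,\ldots,x_n)$. This is where one leans on the repeatedly-quoted fact that $\barsmash$, $g^*$ and $f_!$ commute with the forgetful functor to sets, and on the (CG)/(CGWH) conventions ensuring the relevant subspaces and pushouts stay in the category. The conceptual core — probing $\Phi$ by the free objects $F_i$ and using that $\bigl(g^{-1}(c_1,\ldots,c_n)\bigr)_{+B}$ has no nontrivial self-maps over $B$ — is short once that bookkeeping is in hand.
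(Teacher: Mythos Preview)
Your proof is correct and follows essentially the same approach as the paper: reduce to showing $\Phi$ has no nontrivial automorphisms, probe $\Phi$ on the free retractive spaces $*_{+C_i}$ (your $F_i$), observe that $\Phi(F_1,\ldots,F_n)\cong g^{-1}(c_1,\ldots,c_n)_{+B}$ has fibers with at most one non-basepoint and hence no nontrivial automorphisms, and then use naturality along the maps $\phi_i$ to conclude that $\eta$ fixes every point. One small wording issue: your claim that ``any self-map'' of $g^{-1}(c_1,\ldots,c_n)_{+B}$ over $B$ is the identity is not quite right---a non-invertible self-map could collapse an off-section point to the section---but since $\eta$ is an automorphism this does not affect the argument.
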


\begin{proof}
	It suffices to prove that $\Phi$ has a unique natural automorphism. Given any natural automorphism $\eta\colon \Phi \Rightarrow \Phi$, examine $\eta$ on an $n$-tuple of spaces of the form $(*_{+C_1},\ldots,*_{+C_n})$ where the free points map to $(c_1,\ldots,c_n) \in C_1 \times \ldots \times C_n$. The external smash product of these spaces is $*_{+(C_1 \times ... \times C_n)}$, with the free point mapping to $(c_1,\ldots,c_n)$. Pulling back gives $g^{-1}(c_1,\ldots,c_n)_{+A}$, which is a single fiber of $g$ regarded as a space over $A$, with a disjoint copy of $A$. Pushing forward gives a space over $B$. By assumption, every fiber of this space is either $*_+$ or $\emptyset_+$. Therefore the automorphism of this space provided by $\eta$ must be the identity map.
	
	Now suppose $(X_1,\ldots,X_n)$ is any other collection of retractive spaces in $\mc R(C_1) \times \ldots \times \mc R(C_n)$, and $x$ is some point in the total space of $f_!g^*(X_1 \barsmash \ldots \barsmash X_n)$. We will calculate the action of $\eta$ on $x$. Of course if $x$ is in the basepoint section then this action is automatically trivial, so we'll assume we aren't in that case. Then $x$ can be lifted along the following zig-zag of maps of total spaces, because the first backwards (vertical) map is surjective away from the basepoint section, and the second one is surjective everywhere.
	\[ \xymatrix @R=1.2em{
		& X_1 \times \ldots \times X_n \ar[d] \\
		g^*(X_1 \barsmash \ldots \barsmash X_n) \ar[d] \ar[r] & X_1 \barsmash \ldots \barsmash X_n \\
		f_!g^*(X_1 \barsmash \ldots \barsmash X_n) &
	} \]
	The resulting lift $(x_1,\ldots,x_n)$ corresponds to an $n$-tuple of maps $m_i\colon *_{+C_i} \to X_i$ over $C_i$. The natural transformation $\eta$ assigns to this $n$-tuple the commuting square
	\[ \xymatrix @C=6em{
		f_!g^*(*_{+C_1} \barsmash \ldots \barsmash *_{+C_n}) \ar[r]^-{\eta = \id} \ar[d]^-{f_!g^*(m_1 \barsmash \ldots \barsmash m_n)} & f_!g^*(*_{+C_1} \barsmash \ldots \barsmash *_{+C_n}) \ar[d]^-{f_!g^*(m_1 \barsmash \ldots \barsmash m_n)} \\
		f_!g^*(X_1 \barsmash \ldots \barsmash X_n) \ar[r]^-\eta & f_!g^*(X_1 \barsmash \ldots \barsmash X_n).
	} \]
	By construction, the vertical map $f_!g^*(m_1 \barsmash \ldots \barsmash m_n)$ contains the chosen point $x$ in its image.\footnote{The non-obvious part of that claim is that some point $z \in g^*(*_{+C_1} \ldots)$ is sent to our chosen lift $y$ of $x$ living in $g^*(X_1 \ldots)$. To find $z$, you have project $y$ to $A$, giving $a \in g^{-1}(c_1,\ldots)$. Then take $z$ to be the unique nontrivial point of $g^*(*_{+C_1} \ldots)$ lying over $a$. The universal property of the pullback then tells you $z$ is mapped to $y$, so the image of $z$ in $f_!g^*(*_{+C_1} \ldots)$ is sent to $x$.} Since the top horizontal map is the identity, we conclude the bottom horizontal map must send $x$ to $x$. But $x$ was arbitrary, therefore $\eta$ acts as the identity.
\end{proof}

\begin{rmk}
	The same conclusion applies if we restrict the domain of $\Phi$ to any full subcategory that contains all the $n$-tuples of spaces of the form $(*_{+C_1},\ldots,*_{+C_n})$.
\end{rmk}

\begin{cor}
	\begin{itemize}
		\item The functor
		\[ \Phi\colon (X_1,\ldots,X_n) \leadsto h^*k_!(X_1 \barsmash \ldots \barsmash X_n) \]
		is also rigid, with no restrictions on $h$ and $k$.
		\item The adjunction between the base-change functors $(f_! \adj f^*)$ is unique. Any model for $f_!$ and $f^*$ comes with one and only one adjunction between them.
		\item The symmetric monoidal structure on $\mc R$ constructed in \autoref{prop:spaces_sm} is unique. Once we fix $\barsmash$, there is only one possible choice for the associator, unitor, and symmetry isomorphisms.
	\end{itemize}
\end{cor}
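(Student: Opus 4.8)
The plan is to obtain all three items from the rigidity theorem \autoref{prop:spaces_rigidity}, using only that rigidity depends on a functor through its isomorphism class, together with the Beck--Chevalley isomorphism \autoref{prop:beck_chevalley_spaces}.

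For the first item I would write $k\colon C_1 \times \ldots \times C_n \to D$ and $h\colon E \to D$ for the maps along which $k_!$ and $h^*$ are formed, and take the pullback of $h$ and $k$, obtaining a space $P$ with projections $f\colon P \to E$ and $g\colon P \to C_1 \times \ldots \times C_n$. By \autoref{prop:beck_chevalley_spaces} there is a natural isomorphism $h^*k_! \cong f_!g^*$, and $(f,g)\colon P \to E \times C_1 \times \ldots \times C_n$ is the inclusion of the fiber product into the product, hence injective. So \autoref{prop:spaces_rigidity} shows that $(X_1,\ldots,X_n)\leadsto f_!g^*(X_1 \barsmash \ldots \barsmash X_n)$ is rigid, and therefore so is the isomorphic functor $\Phi$.

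For the second item I would set $n = 1$, $k = \id$, $h = f$ in the first item, so that $f^*$ is rigid. Two adjunctions $(f_! \adj f^*)$ with the same underlying functors give two natural bijections $\mc R(B)(f_!X,Y) \cong \mc R(A)(X,f^*Y)$; composing one with the inverse of the other and applying the Yoneda lemma in $X$, then naturality in $Y$, expresses their discrepancy as a natural automorphism of $f^*$, which rigidity forces to be the identity. Since any functors isomorphic to $f_!$ and $f^*$ are again rigid, the same argument proves uniqueness of the adjunction for an arbitrary model.

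For the third item I would use that $\mc R \to \cat{Top}$ is a symmetric monoidal bifibration (\autoref{prop:spaces_SMBF}), so the associativity, unit, and symmetry isomorphisms of $\mc R$ lie over the fixed coherence isomorphisms of the cartesian structure on $\cat{Top}$; by the universal property of cartesian arrows each of them is then the same data as a natural isomorphism \emph{inside a single fiber category} --- for the associator, an isomorphism $(X \barsmash Y) \barsmash Z \cong a^*(X \barsmash (Y \barsmash Z))$ over $(A\times B)\times C$; for the left unitor, an isomorphism $S^0 \barsmash X \cong \ell^*X$ over $* \times A$; and similarly for the right unitor and for the symmetry, which involves the swap $\tau^*$. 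Pushing the pullbacks past the external smash with \autoref{lem:external_smash_and_base_change}, and using the canonical relabelling isomorphisms (such as $S^0 \barsmash X \cong \ell_!X$) that absorb the parenthesization and the unit on the base, I would present each side of each such constraint as a functor of the form handled in the first item --- hence rigid --- and then use that between two functors each isomorphic to a given rigid functor there is exactly one isomorphism. The only step that needs any real care is this last reduction: checking that after the Beck--Chevalley and smash-commutation rewrites each side genuinely has the shape $f_!g^*(X_1 \barsmash \ldots \barsmash X_n)$ with $(f,g)$ injective, and in particular that the unit $S^0$ causes no trouble because $S^0 \barsmash X$ is isomorphic to the relabelling $\ell_!X$ and so falls under the $n = 1$ case.
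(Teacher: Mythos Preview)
Your proofs of the first two items are essentially identical to the paper's: form the pullback square and invoke Beck--Chevalley for the first, then specialize to $f^*$ and use that two adjunctions on the same functors differ by an automorphism of the right adjoint.

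For the third item the paper takes a shorter route. It simply observes that every iterated external smash product (any parenthesization, with or without the unit inserted) is a functor isomorphic to one of the form in \autoref{prop:spaces_rigidity} with $f = g = \id$, hence rigid; since a rigid functor admits at most one isomorphism to any other functor, the associator, unitors, and symmetry are forced. Your approach --- rewriting each constraint as an isomorphism inside a single fiber via cartesian arrows, then massaging both sides into the shape $f_!g^*(X_1 \barsmash \ldots \barsmash X_n)$ using \autoref{lem:external_smash_and_base_change} --- is correct but works harder than necessary: rigidity only asks that the functor be \emph{isomorphic} to one of that form, and such isomorphisms already exist by \autoref{prop:spaces_sm}. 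One small point: your appeal to \autoref{prop:spaces_SMBF} only tells you that the \emph{existing} coherence isomorphisms lie over those of $\cat{Top}$; for an arbitrary candidate associator you still need the (easy) observation that its image under $\mc R \to \cat{Top}$ is a natural isomorphism between triple cartesian products, hence equals the standard one by the universal property of products.
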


\begin{proof}
	\begin{itemize}
		\item Define $f$ and $g$ by pulling back $h$ and $k$. The resulting pullback square of spaces is also a pullback square on the underlying sets, hence $f$ is injective on every fiber of $g$. Along the Beck-Chevalley isomorphism, $\Phi$ is isomorphic to the functor in \autoref{prop:spaces_rigidity}, and is therefore also rigid.
		\item It is standard that any two adjunctions are isomorphic along the identity of $f_!$ and some automorphism of $f^*$. But $f^*$ has no nontrivial automorphisms, hence any two adjunctions must be equal.
		\item All composites of the smash product are rigid, and we know that they are isomorphic, so the isomorphisms must be unique.
	\end{itemize}
\end{proof}

\beforesubsection
\subsection{The symmetric monoidal category $\mc R(B)$}\label{sec:internal}\aftersubsection

Specializing from all retractive spaces $\mc R$ back to the category $\mc R(B)$ of retractive spaces over a fixed base space $B$, we make $\mc R(B)$ into a symmetric monoidal category using the \textbf{internal smash product}\index{internal smash product $X \sma_B Y$}. This is defined by
\[ \sma_B\colon \mc R(B) \times \mc R(B) \to \mc R(B),
\qquad X \sma_B Y = \Delta_B^*(X \barsmash Y), \]
where $\Delta_B\colon B \to B \times B$ is the diagonal map. Applying $\Delta_B^*$ to the definition of the external smash product gives the pushout diagram
\begin{equation}\label{eq:internal_smash}
\xymatrix @R=1.7em{
	X \cup_B Y \ar[d] \ar[r] & X \times_B Y \ar[d] \\
	B \ar[r] & X \sma_B Y.
}
\end{equation}
Informally, $X \sma_B Y$ is a space over $B$ whose fiber over $b$ is the smash product $X_b \sma Y_b$.

By \autoref{prop:spaces_rigidity}, the internal smash product $\sma_B$ is a rigid functor. The commutation of pullbacks $f^*$ with $\barsmash$ allows us to verify that it is associative, unital, and commutative up to some natural isomorphisms. It follows immediately that those natural isomorphisms are unique and coherent, giving:
\begin{lem}\label{prop:spaces_sm}
	The internal smash product makes $\mc R(B)$ into a symmetric monoidal category with unit the retractive space
\[ S^0_B := S^0 \times B \cong r_B^* S^0. \]
\end{lem}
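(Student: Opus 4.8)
The plan is to transport the symmetric monoidal structure on $(\mc R,\barsmash,S^0)$ (\autoref{prop:spaces_SMBF}) along the pullback functor $\Delta_B^*$, using only the compatibility \eqref{eq:external_smash_and_base_change} of $f^*$ with $\barsmash$, and then to let rigidity (\autoref{prop:spaces_rigidity}) supply the coherence axioms for free. So the proof has just two ingredients: construct \emph{some} associator, unitor, and symmetry for $\sma_B$ out of those already present for $\barsmash$; then observe that every functor occurring in the coherence diagrams is rigid, so the diagrams commute automatically and the structure isomorphisms are moreover the unique ones of their shape.

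For the structure isomorphisms I would argue as follows. By the definition $\sma_B = \Delta_B^*(-\barsmash-)$ and the pullback half of \eqref{eq:external_smash_and_base_change}, both $(X\sma_B Y)\sma_B Z$ and $X\sma_B(Y\sma_B Z)$ are canonically isomorphic to $(\Delta_B^{(n)})^*(X\barsmash Y\barsmash Z)$ with $n=3$, where $\Delta_B^{(n)}\colon B\to B^{\times n}$ is the iterated diagonal; this uses that $\Delta_B^{(3)}$ factors as $(\Delta_B\times\id_B)\circ\Delta_B$ and as $(\id_B\times\Delta_B)\circ\Delta_B$ together with the associator of $\barsmash$ on $\mc R$. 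Composing these two isomorphisms gives the associator for $\sma_B$. In the same way the symmetry of $\barsmash$ gives the symmetry of $\sma_B$, and the identity $S^0_B=r_B^*S^0$ together with the unit isomorphism $S^0\barsmash W\cong W$ on $\mc R$ gives the unitors, where one also notes that $(r_B\times\id_B)\circ\Delta_B\colon B\to B^{\times 2}\to{*}\times B\cong B$ is the identity, so $S^0_B\sma_B X\cong X$ naturally.

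The one real point is the coherence axioms. Each vertex of the pentagon, triangle, and hexagon for $\sma_B$ is, possibly after permuting the inputs, a functor of the form $(X_1,\ldots,X_n)\leadsto(\Delta_B^{(n)})^*(X_1\barsmash\cdots\barsmash X_n)$. This is exactly the shape treated in \autoref{prop:spaces_rigidity}, with $A=B$, $f=\id_B$ (so $f_!=\id$), $C_1=\cdots=C_n=B$, and $g=\Delta_B^{(n)}$: the pair $(f,g)\colon B\to B\times B^{\times n}$ is injective because its first coordinate already determines the point. Hence every such functor is rigid, i.e. has no non-identity natural automorphism, and precomposing with a permutation of inputs does not change this. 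Going around any loop in one of the coherence diagrams therefore produces a natural automorphism of such a functor, which must be the identity; so all of these diagrams commute, and by the same token the structure isomorphisms above are the unique natural isomorphisms of their shape, which is what one wants for the later trace computations.

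I expect the write-up to be essentially bookkeeping rather than to contain a genuine obstacle; the only thing to be careful about is to build the identifications ``bracketed $\sma_B$-product $\cong (\Delta_B^{(n)})^*(\barsmash\text{-product})$'' purely out of \eqref{eq:external_smash_and_base_change} and the monoidal structure of $\barsmash$, so that they are maps between functors of the rigid form and the rigidity argument applies verbatim. (No hypothesis that $\Delta_B^*$ preserves colimits is needed, since everything is phrased in terms of the already-constructed functor $\sma_B$ and natural isomorphisms between its iterates.) In short, the main ``obstacle'' is the temptation to re-verify coherence by hand; rigidity makes that unnecessary.
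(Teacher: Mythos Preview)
Your proposal is correct and takes essentially the same approach as the paper: the paper's argument (stated just before the lemma) is precisely that the commutation of $f^*$ with $\barsmash$ furnishes the structure isomorphisms, and that rigidity (\autoref{prop:spaces_rigidity}) then forces uniqueness and coherence. You have simply unpacked the details---the factorizations of $\Delta_B^{(n)}$ and the verification that $(f,g)=(\id_B,\Delta_B^{(n)})$ satisfies the injectivity hypothesis---which the paper leaves implicit.
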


The internal smash product has right adjoints only if we adopt (CG) assumptions \cite{ms}.

The categories $\mc R(B)$, together with the products $\sma_B$ and pullbacks $f^*$, form another structure called an \textbf{indexed symmetric monoidal category with coproducts}\index{indexed category}. This means that pullbacks $f^*$ are strong symmetric monoidal and satisfy a projection formula, see \cite{ms,shulman_framed_monoidal}. This structure is formally equivalent to the statement that $\mc R$ is an SMBF, so we do not lose any generality if we phrase the results in terms of SMBFs.

The internal smash product $\sma_B$ is easier to contemplate than $\barsmash$, but it does not interact as well with cofibrations, fibrations, and weak equivalences.
\begin{cor}\label{cor:internal_smash_properties}\hfill
	\vspace{-1em}
	
	\begin{itemize}
		\item Pushout-products of $f$-cofibrations defined using $\sma_B$ are $f$-cofibrations.
		\item $\sma_B$ preserves spaces that are both $f$-cofibrant and $h$-fibrant, and also weak equivalences between such spaces.
		\item More generally, $\sma_B$ preserves equivalences when both spaces are $f$-cofibrant and at least one of them is $q$-fibrant.
	\end{itemize}
\end{cor}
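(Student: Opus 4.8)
The plan is to deduce everything from the two descriptions of $\sma_B$: the definition $X\sma_B Y=\Delta_B^*(X\barsmash Y)$, and the pushout presentation \eqref{eq:internal_smash} exhibiting $X\sma_B Y$ as the pushout of $B\leftarrow X\cup_B Y\to X\times_B Y$. The point of the latter is that its left-hand leg $X\cup_B Y\to X\times_B Y$ is precisely the pushout-product of the basepoint sections $i_X\colon B\to X$ and $i_Y\colon B\to Y$ formed with the fiber product $\times_B$; hence, whenever $X$ and $Y$ are $f$-cofibrant, this leg is a fiberwise closed NDR-pair, i.e. an $f$-cofibration, by the fiberwise case of \autoref{lem:product_ndr}. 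Essentially everything hinges on this observation.

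For the first bullet and the $f$-cofibrancy part of the second: the $\sma_B$-pushout-product of two $f$-cofibrations is $\Delta_B^*$ of the corresponding $\barsmash$-pushout-product, since $\Delta_B^*$ commutes with the pushout defining its source (in (CGWH) because one leg of that span, namely $\id_K\barsmash g\colon K\barsmash L\to K\barsmash Y$, is the $\barsmash$-pushout-product of the basepoint section of $K$ with $g$, hence a closed inclusion by \autoref{prop:h_cofibrations_pushout_product}, using that basepoint sections are closed inclusions; in (CG) because $\Delta_B^*$ then preserves all colimits, \autoref{prop:f_star_colimits}). As the $\barsmash$-pushout-product of $f$-cofibrations is an $f$-cofibration (\autoref{prop:h_cofibrations_pushout_product}) and $\Delta_B^*$ preserves $f$-cofibrations (\autoref{lem:f_star_preserves}), the first bullet follows, and taking the inputs to be identities shows $X\sma_B Y$ is $f$-cofibrant whenever $X,Y$ are. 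For the $h$-fibrancy half of the second bullet, if $X,Y$ are in addition $h$-fibrant then, being $h$-cofibrant as well, $X\barsmash Y$ is $h$-fibrant by \autoref{prop:h_cofibrations_pushout_product}, so $X\sma_B Y=\Delta_B^*(X\barsmash Y)$ is $h$-fibrant by \autoref{lem:f_star_preserves}; alternatively one applies \autoref{prop:clapp} directly to \eqref{eq:internal_smash}, using that $X\cup_B Y$ is $h$-fibrant (a further application of \autoref{prop:clapp} to $X\leftarrow B\to Y$) and that $X\times_B Y$ is $h$-fibrant.

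For the homotopy-invariance statements in the second and third bullets I would argue through \eqref{eq:internal_smash} rather than through $\Delta_B^*$, since $X\barsmash Y$ need not be $h$-fibrant once $Y$ is only $f$-cofibrant. Given weak equivalences $X\to X'$ and $Y\to Y'$ between $f$-cofibrant spaces, with, say, $X$ and $X'$ also $q$-fibrant (the other case is symmetric): the gluing lemma (\autoref{prop:technical_cofibrations}), applied to $X\leftarrow B\to Y$ with its $h$-cofibrant legs, gives a weak equivalence $X\cup_B Y\to X'\cup_B Y'$; the dual of the gluing lemma --- pullbacks along $q$-fibrations are homotopical --- gives a weak equivalence $X\times_B Y\to X'\times_B Y'$; and the legs $X\cup_B Y\to X\times_B Y$, $X'\cup_B Y'\to X'\times_B Y'$ are $h$-cofibrations by the observation above. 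A final application of the gluing lemma, now to the pushout \eqref{eq:internal_smash} along this $h$-cofibration, produces a weak equivalence $X\sma_B Y\to X'\sma_B Y'$. The invariance clause of the second bullet is then the special case in which both sides are $h$-fibrant, hence $q$-fibrant.

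The main obstacle is the middle step of the last paragraph: recognizing that the $q$-fibrancy hypothesis on one input is exactly what is needed, because it makes $X\times_B Y$ compute the homotopy pullback of $X\to B\leftarrow Y$, so that the dual gluing lemma applies; without it, $X\times_B Y$ fails to be homotopy-invariant in $Y$ (cf. the warning after \autoref{space_fiber}). The only other thing to watch is the (CGWH)/(CG) bookkeeping when commuting $\Delta_B^*$ past pushouts in the first part, which is dispatched uniformly by \autoref{prop:f_star_colimits}.
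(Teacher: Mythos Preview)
Your proof is correct and follows essentially the same route as the paper: reduce the first two bullets to \autoref{prop:h_cofibrations_pushout_product} via $\Delta_B^*$, and handle the third bullet by analyzing the pushout square \eqref{eq:internal_smash}, using that $X\cup_B Y\to X\times_B Y$ is an $f$-cofibration so the square is a homotopy pushout, then checking each vertex preserves equivalences. You are more careful than the paper about one point it leaves implicit, namely why $\Delta_B^*$ commutes with the pushout forming the $\sma_B$-pushout-product (your appeal to \autoref{prop:f_star_colimits} via the closed-inclusion leg), and you correctly observe that the equivalence clause of the second bullet is subsumed by the third.
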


\begin{proof}
	The first two points follow from \autoref{lem:f_star_preserves} and \autoref{prop:h_cofibrations_pushout_product}. For the last point, assume $X$ and $Y$ are $f$-cofibrant, and $X$ is $q$-fibrant. Since pullbacks and pushout-products preserve $f$-cofibrations, $X \cup_B Y \to X \times_B Y$ is an $f$-cofibration of spaces over $B$, so \eqref{eq:internal_smash} is a homotopy pushout square. The term $X \cup_B Y$ preserves equivalences because $X$ and $Y$ are cofibrant, and $X \times_B Y$ preserves equivalences because it is a pullback along a $q$-fibration. Therefore $X \sma_B Y$ preserves equivalences under these assumptions.
\end{proof}

\begin{rmk}\label{fixed_may_sigurdsson}
	The earlier work \cite{ms} regards $\sma_B$ as the most fundamental smash product, and defines $\barsmash$ in terms of it. This follows the philosophy of \cite{clapp1981duality,crabb_james} that every construction and definition should be a fiberwise version of the classical construction. However, it is better to regard the external smash product $\barsmash$ as the more fundamental notion, because \autoref{prop:h_cofibrations_pushout_product} has both weaker assumptions and stronger conclusions than \autoref{cor:internal_smash_properties}.
\end{rmk}

We briefly mention the \textbf{external smash product rel $B$}\index{external!smash rel $B$ $X \barsma{B} Y$}. When $X \in \mc R(D)$ and $Y \in \mc R(E)$, and $D$ and $E$ are equipped with maps to $B$, this relative external smash product is defined as
\[ \xymatrix @R=0.3em{
	X \barsma{B} Y := \Delta_{D,E}^*(X \barsmash Y), \\
	\Delta_{D,E}\colon D \times_B E \to D \times E.
} \]
It is essentially the external smash product, but carried out separately over every point of $B$. This makes the category of retractive spaces $X$ over spaces $E$ over $B$ into a symmetric monoidal category. The symmetric monoidal category $\mc R(B)$ that we constructed above is contained inside, as the subcategory on which $E \to B$ is the identity map of $B$. See also \autoref{sec:Osp_B}.

\beforesubsection
\subsection{Monoidal fibrant replacement $P$}\aftersubsection

We highlight a construction that is key to our approach to both \autoref{thm:intro_q} and \autoref{thm:intro_cof_fib}. It is a fibrant replacement functor on $\mc R$ that is strong symmetric monoidal with respect to $\barsmash$.

For each real number $t \in [0,1]$, let $p_t\colon B^I \to B$ be the map that evaluates a path in $B$ at the point $t$. We define the \textbf{monoidal fibrant replacement functor}\index{fibrant replacement!$P$ for spaces}
\[ P\colon \mc R(B) \to \mc R(B) \]
by the formula $PX = (p_1)_!(p_0)^*X$. More explicitly, when $X$ is a retractive space over $B$, $PX$ is the retractive space whose total space is the pushout
\[ \xymatrix @R=1.7em @C=6em{
	B^I \ar[r]^-{i_X \times_B \ \id_{B^I}} \ar[d]_-{p_1} \po{.8}{.5ex} & X \times_B B^I \ar[d] \\
	B \ar[r] & PX.
} \]
We also get a natural map $X \to PX$ of retractive spaces over $B$, by including $X$ into $X \times_B B^I$ along the inclusion of constant paths into $B^I$.

\begin{prop}\label{prop:px_properties}\hfill
	\vspace{-1em}
	
	\begin{itemize}
		\item $P$ preserves all $f$-cofibrations and $h$-cofibrations.
		\item If $X$ is $h$-cofibrant then $PX$ is $h$-fibrant and $X \to PX$ is a weak equivalence.
	\end{itemize}
\end{prop}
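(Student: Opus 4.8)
The plan is to factor $P=(p_1)_!\circ(p_0)^*$ and run each piece through the preservation lemmas already established, then read off the homotopical statements from \autoref{prop:clapp} and the gluing lemma. Throughout one uses that $p_0,p_1\colon B^I\to B$ are the usual path–space evaluations, hence Hurewicz fibrations, and that $p_1$ (like $p_0$) is a homotopy equivalence.

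For the first bullet, since $p_0$ is an $h$-fibration, \autoref{lem:f_star_preserves} says $(p_0)^*$ preserves $f$-cofibrations and $h$-cofibrations; and \autoref{lem:f_shriek_preserves} says $(p_1)_!$ preserves $f$-cofibrations and $h$-cofibrations. Composing, $P$ preserves both classes. (No cofibrancy hypothesis on $X$ enters here, and the cited lemmas hold in both (CG) and (CGWH).)

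For the weak equivalence in the second bullet, first identify the total space of $(p_0)^*X=X\times_B B^I$, equipped with the structure map $(x,\gamma)\mapsto\gamma(1)$ it acquires after applying $(p_1)_!$, as the mapping path space of $p_X\colon X\to B$. Then the inclusion of constant paths $X\to(p_0)^*X$ is the inclusion of a strong deformation retract, hence a homotopy equivalence, and the endpoint map $(p_0)^*X\to B$ is a Hurewicz fibration. Next, because $p_0$ is an $h$-fibration, the map $B^I=(p_0)^*B\to(p_0)^*X$ — the pullback of the section $i_X$, which is an $h$-cofibration once $X$ is $h$-cofibrant — is an $h$-cofibration (\autoref{lem:f_star_preserves}). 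By construction the total space of $PX$ is the pushout of $(p_0)^*X\leftarrow B^I\xrightarrow{p_1}B$; this is a pushout along an $h$-cofibration, hence a homotopy pushout (the reasoning of \autoref{prop:technical_cofibrations}, applied to total spaces), and since $p_1$ is a homotopy equivalence the canonical map $(p_0)^*X\to PX$ is a weak equivalence. The natural map $X\to PX$ factors as $X\to(p_0)^*X\to PX$, a composite of a homotopy equivalence and a weak equivalence, so it is a weak equivalence.

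For $h$-fibrancy, note that all three corners $B^I$ (via $p_1$), $(p_0)^*X$ (via the endpoint map), and $B$ (via $\id$) are $h$-fibrant over $B$, as just observed. Hence the $h$-cofibration $B^I\to(p_0)^*X$ lies between $h$-fibrant spaces over $B$, so by \autoref{lem:heath_kamps} it is an $f$-cofibration over $B$. Applying \autoref{prop:clapp} to the defining pushout square for $PX$ then shows $PX$ is $h$-fibrant over $B$, i.e.\ $PX$ is $h$-fibrant. The only point that needs genuine care rather than bookkeeping is the identification of the endpoint map $(p_0)^*X\to B$ as the mapping path fibration and the verification that the pushout defining $PX$ is a square \emph{over} $B$ with these structure maps; once that is pinned down, Clapp's lemma and the gluing lemma supply everything, and at no stage is $X$ assumed to be fibrant.
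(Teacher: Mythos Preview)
Your proof is correct and follows essentially the same route as the paper's: use \autoref{lem:f_star_preserves} and \autoref{lem:f_shriek_preserves} for the first bullet, then for the second bullet identify the top horizontal of the defining pushout square as an $h$-cofibration (via $(p_0)^*$ applied to $i_X$), use the gluing lemma with $p_1$ a weak equivalence to get $X\times_B B^I\to PX$ an equivalence, and apply Clapp (via Heath--Kamps) to the three $h$-fibrant corners for fibrancy. You are simply more explicit than the paper about two points it leaves implicit: the identification of $(p_0)^*X\to B$ with the mapping-path fibration, and the Heath--Kamps upgrade from $h$-cofibration to $f$-cofibration before invoking \autoref{prop:clapp}.
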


As a result of this and \autoref{lem:heath_kamps}, every $h$-cofibration $X \to Y$ of $h$-cofibrant spaces gets ``upgraded'' by $P$ to an $f$-cofibration $PX \to PY$ of $f$-cofibrant spaces.

\begin{proof}
\autoref{lem:f_star_preserves} and \autoref{lem:f_shriek_preserves} tell us that the $f$-cofibrations and $h$-cofibrations both survive the journey through the definition of $P$. If $X$ is $h$-cofibrant, \autoref{lem:f_star_preserves} tells us that the top horizontal in the above square is an $h$-cofibration. Since the left vertical is a weak equivalence, therefore so is the right vertical. The first three vertices are all $h$-fibrations over $B$, so by \autoref{prop:clapp} the fourth vertex is also a fibration over $B$.
\end{proof}

In total, each space $X \in \mc R(B)$ can be replaced by an equivalent $q$-cofibrant space $QX$ using the Quillen model structure from \autoref{prop:q}, and this in turn can be replaced by an equivalent space $PQX$ that is both $f$-cofibrant and $h$-fibrant.

\begin{prop}\label{prop:P_strong_monoidal}\cite[4.15]{malkiewich2017coassembly}
	There is a canonical homeomorphism $PX \barsmash PY \cong P(X \barsmash Y)$ of functors $\mc R(A) \times \mc R(B) \to \mc R(A \times B)$.
\end{prop}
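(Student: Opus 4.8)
The plan is to unwind both sides directly from the definition $PX = (p_1)_!(p_0)^*X$ and compare them using the commutation isomorphisms for $\barsmash$ with $f_!$ and $f^*$ from \autoref{lem:external_smash_and_base_change}. First I would fix base spaces $A$ and $B$ and write $p_t^A\colon A^I \to A$, $p_t^B\colon B^I \to B$, and $p_t^{A\times B}\colon (A\times B)^I \to A\times B$ for the evaluation-at-$t$ maps. The key point is that the natural homeomorphism $(A\times B)^I \cong A^I \times B^I$ identifies $p_t^{A\times B}$ with $p_t^A \times p_t^B$. So computing the right-hand side,
\[ P(X \barsmash Y) = (p_1^{A\times B})_!(p_0^{A\times B})^*(X \barsmash Y) \cong (p_1^A \times p_1^B)_!(p_0^A \times p_0^B)^*(X \barsmash Y). \]

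Next I would apply the two isomorphisms of \eqref{eq:external_smash_and_base_change}. Using $f^*W \barsmash g^*Z \cong (f\times g)^*(W \barsmash Z)$ with $f = p_0^A$, $g = p_0^B$, $W = X$, $Z = Y$ gives $(p_0^A \times p_0^B)^*(X \barsmash Y) \cong (p_0^A)^*X \barsmash (p_0^B)^*Y$; then using $f_!X' \barsmash g_!Y' \cong (f\times g)_!(X' \barsmash Y')$ with $f = p_1^A$, $g = p_1^B$, $X' = (p_0^A)^*X$, $Y' = (p_0^B)^*Y$ converts $(p_1^A \times p_1^B)_!\big((p_0^A)^*X \barsmash (p_0^B)^*Y\big)$ into $(p_1^A)_!(p_0^A)^*X \barsmash (p_1^B)_!(p_0^B)^*Y = PX \barsmash PY$. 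Stringing these together, together with the canonical homeomorphism $(A\times B)^I \cong A^I\times B^I$, yields the desired homeomorphism $PX \barsmash PY \cong P(X \barsmash Y)$. Naturality in $X$ and $Y$ is immediate, since every isomorphism invoked is natural.

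The only subtle point — and the step I expect to be the main obstacle — is checking that this isomorphism is the ``correct'' one, i.e. that it is compatible with the natural maps $X \to PX$, $Y\to PY$, $X\barsmash Y\to P(X\barsmash Y)$ induced by inclusion of constant paths, so that $P$ with this structure really is a monoidal functor and not just levelwise isomorphic to one. This reduces to the elementary observation that under $(A\times B)^I \cong A^I\times B^I$ the constant-path inclusion $A\times B \to (A\times B)^I$ corresponds to the product of the constant-path inclusions, together with tracking that the Beck–Chevalley and distributivity isomorphisms of \eqref{eq:external_smash_and_base_change} are the canonical ones. By the rigidity statement \autoref{prop:spaces_rigidity} (both sides are of the form $f_!g^*(X_1\barsmash X_2)$ with $(f,g)$ injective, since the evaluation maps are sections up to homotopy and in particular the relevant product map is injective on underlying sets), any such diagram of natural isomorphisms automatically commutes, so no delicate diagram chase is actually needed; one only has to exhibit the homeomorphism, which is what the computation above does. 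For the detailed point-set verification one can alternatively cite \cite[4.15]{malkiewich2017coassembly}.
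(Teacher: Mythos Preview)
Your construction of the isomorphism is correct and is exactly what the paper does: compose the isomorphisms of \autoref{lem:external_smash_and_base_change} across the identification $(A\times B)^I \cong A^I\times B^I$.

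There is, however, a genuine gap in your canonicity argument. You invoke \autoref{prop:spaces_rigidity} by asserting that both sides are of the form $f_!g^*(X_1\barsmash X_2)$ with $(f,g)$ injective. But here $f = p_1^{A\times B}$ and $g = p_0^{A\times B}$, so the relevant map is $(p_1,p_0)\colon (A\times B)^I \to (A\times B)\times(A\times B)$, and this is \emph{not} injective: distinct paths can share both endpoints. The phrase ``the evaluation maps are sections up to homotopy'' does not help, since rigidity requires honest injectivity on underlying sets.

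The paper's proof handles canonicity differently. Its primary route is simply to specify what ``canonical'' means---compatibility with the identification $F(A_{+A},B_{+B})\cong (A^I\times B^I)_{+(A\times B)}$---and trace through the diagrams. Its alternative rigidity argument does not apply \autoref{prop:spaces_rigidity} to $P$ directly; instead it treats $A^I$ and $B^I$ as additional \emph{variables}, observes that the candidate self-isomorphism of $(A^I\times B^I)_{+(A\times B)}$ is natural in those variables, and concludes it must be the identity. Your compatibility check with the constant-path inclusions is on the right track and can be completed by hand, but the rigidity shortcut you wrote down does not go through as stated.
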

\begin{proof}
	By ``canonical'' we mean that both functors come with an identification $F(A_{+A},B_{+B}) \cong (A^I \times B^I)_{+(A \times B)}$, and the map respects that identification. We define the canonical isomorphism by composing the isomorphisms of \autoref{lem:external_smash_and_base_change}, and tracing through the diagrams to check it respects the above identifications.
	
	Alternatively, we prove these maps are also natural with respect to replacing $A^I$ and $B^I$ with other spaces over $A$ and $B$, in which case the resulting self-isomorphism $(A^I \times B^I)_{+(A \times B)}$ is natural in $A^I$ and $B^I$ and so must be the identity by a rigidity argument.
\end{proof}

\begin{ex}
	The canonical homeomorphism from \autoref{prop:P_strong_monoidal} and the unique homeomorphism $P(S^0) \cong S^0$ make $P$ into a strong symmetric monoidal functor.
\end{ex}

Recall the homotopy cofiber construction from \autoref{space_cofiber}.

\begin{lem}\label{commute_cone_with_P}
	There is a canonical isomorphism $PC_B f \cong C_B Pf$.
\end{lem}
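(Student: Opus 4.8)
The plan is to unwind both sides into pushouts constructed from the basic operations $P = (p_1)_! (p_0)^*$ and $C_B(-) = (-) \barsmash I \cup_{(-) \barsmash S^0} (-)$, and then assemble the desired isomorphism from the already-established commutation isomorphisms: $P$ is strong symmetric monoidal for $\barsmash$ (\autoref{prop:P_strong_monoidal}), $P = (p_1)_!(p_0)^*$ is built from base-change functors which commute with $\barsmash$ (\autoref{lem:external_smash_and_base_change}, i.e.\ \eqref{eq:external_smash_and_base_change}), and $f_!$, being a left adjoint, commutes with the pushout defining $C_B(-)$. Since $C_B f$ is itself defined as a pushout of the diagram $X \barsmash I \leftarrow X \barsmash S^0 \to Y$ over $B$ (with $S^0 \to I$ a map of based spaces), the strategy is to apply $P$ to this pushout square and identify the result with the pushout square defining $C_B Pf$.

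First I would observe that $P$, being a composite $(p_1)_! (p_0)^*$ of base-change functors, preserves pushouts: $(p_0)^*$ preserves the relevant pushouts (it preserves pushouts along closed inclusions by \autoref{prop:f_star_colimits}, and $X \barsmash S^0 \to X \barsmash I$ is a closed inclusion by \autoref{prop:h_cofibrations_pushout_product}(2), being a pushout-product of the closed inclusion $S^0 \to I$ with the identity; even more simply, in (CG), $f^*$ preserves all colimits), while $(p_1)_!$ is a left adjoint and preserves all colimits. Hence $P C_B f$ is the pushout of the diagram
\[ P(X \barsmash I) \leftarrow P(X \barsmash S^0) \to P(Y). \]
Next I would use \autoref{prop:P_strong_monoidal} to identify $P(X \barsmash Z) \cong PX \barsmash PZ$ for the based spaces $Z = S^0, I$ over $*$; crucially, since $S^0$ and $I$ are over the point, the monoidal fibrant replacement $P$ on $\mc R(*)$ sends $S^0$ to $S^0$ and $I$ to (something homotopy equivalent to, but I want exactly) $I$ again — in fact $P$ of a based space $Z$ over $*$ is $(p_1)_!(p_0)^* Z$ where now $p_t \colon *^I \to *$ is the identity, so $P Z \cong Z$ canonically on $\mc R(*)$. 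Therefore $P(X \barsmash S^0) \cong PX \barsmash S^0$ and $P(X \barsmash I) \cong PX \barsmash I$ compatibly, and the displayed pushout diagram becomes exactly
\[ PX \barsmash I \leftarrow PX \barsmash S^0 \to PY, \]
whose pushout over $B$ is by definition $C_B(Pf)$.

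The main obstacle I anticipate is bookkeeping the naturality and compatibility of these identifications — specifically, checking that the isomorphism $P(X \barsmash S^0) \cong PX \barsmash S^0$ produced by \autoref{prop:P_strong_monoidal} is genuinely compatible with the map $S^0 \to I$ (so that the two legs of the span match up), and that the triangle identities / coherence for the symmetric monoidal structure on $P$ make the whole pushout diagram commute on the nose rather than merely up to a further isomorphism. This is exactly the kind of verification that the rigidity results of Section 2 are designed to trivialize: the functors $X \leadsto PX \barsmash Z \cong (p_1)_!(p_0)^*(X \barsmash Z)$ are of the form $f_! g^* (X \barsmash Z)$ with $(f,g)$ injective on fibers (the projection $X \times_B B^I \times Z \to B$ has the requisite injectivity after pulling back along the relevant diagonals), hence rigid by \autoref{prop:spaces_rigidity}, so any diagram of natural isomorphisms between such functors automatically commutes. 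I would therefore close the argument by noting that both $PC_B f$ and $C_B Pf$ are pushouts of rigid functors along maps of rigid functors, the two spans are abstractly isomorphic by the construction above, and hence the induced isomorphism on pushouts is the unique one and is natural in $f$. The remaining details are routine diagram chases.
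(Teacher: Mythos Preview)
Your proposal is correct and takes essentially the same approach as the paper: use that $(p_1)_!$ preserves all colimits and $(p_0)^*$ preserves the relevant pushout (via \autoref{prop:f_star_colimits}, justified in (CGWH) by the closed-inclusion hypothesis), then apply the strong monoidal structure of $P$ (\autoref{prop:P_strong_monoidal}) together with the triviality of $P$ on $\mc R(*)$. One small slip: in your parenthetical, $X \barsmash S^0 \to X \barsmash I$ is not the pushout-product of $S^0 \to I$ with the identity (that would be the identity map) but rather with the zero map $B \to X$, which is a closed inclusion in (CGWH) since every retractive space there is $i$-cofibrant; with that correction your citation of \autoref{prop:h_cofibrations_pushout_product}(2) goes through.
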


\begin{proof}
	This follows from \autoref{prop:P_strong_monoidal} and commuting the pullback $(p_0)^*$ with the pushout in the definition of $C_B$. To see that these commute, recall that in (CG) the pullback functor commutes with all colimits, and in (CGWH) the same is true for the colimit forming $C_B f$ because $B \to X$ and $S^0 \to I$ are closed inclusions.
\end{proof}

\begin{rmk}
	Our definition of $P$ comes from \cite{malkiewich2017coassembly}, and has different properties than the fibrant replacement $L$ defined in \cite[8.3.1]{ms}, which resembles the classical fibrant replacement $RX = X \times_B B^I$ but using Moore paths. The disadvantage of our choice is that we only have a fibrant replacement when the input space is $h$-cofibrant. The advantage of our choice is that $P$ commutes with smash products and therefore with homotopy cofibers. This leads to major simplifications when we set up the bicategory of parametrized spectra.
\end{rmk}

\section{Derived functors}\label{sec:derived}

In this section we recall how derived functors are unique, not just in model categories, but in homotopical categories. We give a new framework for relating composites of derived functors (\autoref{prop:passing_natural_trans_to_derived_functors}), and use it pass from bifibrations with weak equivalences to bifibrations of homotopy categories (\autoref{prop:deform_bifibration}). Our main example is $\ho\mc R$, the bifibration of retractive spaces whose fiber over $B$ is the \emph{homotopy category} $\ho\mc R(B)$ of retractive spaces over $B$. Finally, we prove in \autoref{spaces_are_a_bifibration} that $\ho\mc R$ is a symmetric monoidal bifibration (SMBF). In other words, we show how to derive the operations $\barsmash$, $f^*$, and $f_!$, while preserving all of the relationships between them.

\beforesubsection
\subsection{Uniqueness of derived functors}\label{sec:derived_functors_intro}\aftersubsection

Let $\bC$ be a category with a class of weak equivalences $W$. We assume at a minimum that $W$ is closed under 2 out of 3: if $f$ and $g$ are composable morphisms in $\bC$, and two of the three morphisms $f$, $g$, and $g \circ f$ are in $W$, then so is the third. Let $\ho\bC = \bC[W^{-1}]$ be the category in which the weak equivalences are inverted. This is guaranteed to exist modulo set-theory issues by \cite[1.1]{gabriel_zisman}. We may take the objects of $\ho \cat C$ to be the objects of $\cat C$, and we do so throughout this paper.

Let $\bC$ and $\bD$ be two such categories with weak equivalences. A functor $F\colon \bC \to \bD$ is \textbf{homotopical}\index{homotopical functor} $F\colon \cat C \to \cat D$ if it sends every weak equivalence to a weak equivalence. By the universal property of $\ho \cat C$, any homotopical functor $F$ will induce in a canonical way a functor $\ho\bC \to \ho\bD$. By abuse of notation we also refer to this as $F$, or ``$F$ on the homotopy category.''

When $F$ fails to be homotopical, we seek to replace it by a derived functor. Recall from \cite[40.2]{dhks} that $F\colon \cat C \to \cat D$ is \textbf{left-deformable}\index{left-deformable!functor} if we can find a full subcategory $\cat A \subset \cat C$ on which $F$ preserves weak equivalences, a ``cofibrant replacement'' functor $Q\colon \bC \to \bC$ whose image lands in $\bA$, and a natural weak equivalence $q\colon QX \overset\sim\to X$. In this case $F$ has a \textbf{left-derived functor}\index{derived functor}, defined as
\[ \L F = F \circ Q. \]
The left-derived functor satisfies two universal properties, which imply that it does not actually depend on the choice of retraction functor $Q$.

\begin{prop}\label{prop:derived_functors_universal_property}\cite[6.4]{riehl_basic}, \cite[41.1, 41.2]{dhks}
	If one such $\cat A$ and $Q$ exist, then $\ho\L F$ is terminal among all those functors $\ho\cat C \to \ho\cat D$ admitting a map to $F$ as functors $\cat C \to \ho\cat D$. Likewise $\L F$ is terminal in comma category of homotopy functors mapping to $F$, with the natural weak equivalences inverted.
\end{prop}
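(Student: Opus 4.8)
The plan is to verify both universal properties directly from the formula $\L F = F\circ Q$, using only the ingredients of left-deformability: $Q$ is homotopical, $q\colon Q\Rightarrow\id$ is a natural weak equivalence, and $F$ is homotopical on $\cat{A}$.

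First I would note that $\L F=FQ$ is homotopical, being the composite of the homotopical functor $Q$ with $F|_{\cat{A}}$, so it descends to a functor $\ho\L F\colon\ho\cat{C}\to\ho\cat{D}$. Post-composing the natural transformation $Fq\colon FQ\Rightarrow F$ with the localization $\cat{D}\to\ho\cat{D}$ produces a canonical $\gamma$ exhibiting $\ho\L F$ — viewed as a functor $\cat{C}\to\ho\cat{D}$ by precomposition with the localization $\cat{C}\to\ho\cat{C}$ — as an object of the comma category of functors mapping to $F$. Terminality is what remains.

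Given another pair $(G,\beta)$ with $G\colon\ho\cat{C}\to\ho\cat{D}$ and $\beta\colon G\Rightarrow F$ as functors $\cat{C}\to\ho\cat{D}$, the comparison transformation $\phi\colon G\Rightarrow\ho\L F$ is forced by the data. Since $q_X$ is a weak equivalence, $G(q_X)\colon G(QX)\to G(X)$ is invertible in $\ho\cat{D}$, and I would set $\phi_X:=\beta_{QX}\circ G(q_X)^{-1}$. Naturality of $\phi$ is a routine chase combining the naturalities of $\beta$ and $q$; the compatibility $\gamma\circ\phi=\beta$ is immediate from naturality of $\beta$ applied to the arrow $q_X\colon QX\to X$, because $\gamma_X$ is by construction the image in $\ho\cat{D}$ of $Fq_X$. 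For uniqueness, observe that when $X\in\cat{A}$ both $QX$ and $X$ lie in $\cat{A}$, so $\gamma_X=Fq_X$ is a weak equivalence and hence invertible in $\ho\cat{D}$; thus any $\phi'$ with $\gamma\circ\phi'=\beta$ is determined by $\phi'_X=\gamma_X^{-1}\circ\beta_X$ on all objects of the form $QX$, and then naturality of $\phi'$ along the now-invertible $q_X$ pins down $\phi'_X$ for arbitrary $X$. Comparing with the same computation for $\phi$ gives $\phi'=\phi$.

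The second assertion — terminality of $\L F$ in the comma category of \emph{homotopical} functors over $F$, after inverting the natural weak equivalences — is proved by the identical argument. The only new point is that $\gamma_{QX}$ must be made invertible, and this is precisely why one inverts the natural weak equivalences in that comma category. I do not anticipate a genuine obstacle: the content is purely formal, flowing from the universal property of the localization together with left-deformability. The one thing that requires care throughout is the bookkeeping of when a morphism is a weak equivalence in $\cat{D}$ as opposed to merely an isomorphism in $\ho\cat{D}$, and where objects do or do not lie in the deformation subcategory $\cat{A}$.
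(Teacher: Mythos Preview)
The paper does not give its own proof of this proposition; it is stated with citations to \cite[6.4]{riehl_basic} and \cite[41.1, 41.2]{dhks} and then used as a black box. Your direct verification is correct and is essentially the standard argument found in those references: build the comparison $\phi_X=\beta_{QX}\circ G(q_X)^{-1}$, check $\gamma\circ\phi=\beta$ from naturality of $\beta$ at $q_X$, and deduce uniqueness by first pinning down $\phi'$ on objects of $\cat A$ (where $\gamma$ is invertible) and then transporting along the isomorphism $q_X$ in $\ho\cat C$. Your treatment of the second universal property is terse but accurate; spelled out, the morphism in the localized comma category is the zig-zag $G \xleftarrow{Gq} GQ \xrightarrow{\beta Q} FQ$, with $Gq$ a natural weak equivalence since $G$ is homotopical.
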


So, in either of these two settings, any two models of $\L F$ are canonically isomorphic. In fact, any map between them must be equal to the canonical isomorphism, as soon as we know that the map commutes with the map back to $F$.

Similarly, if there is a functor $R\colon \bC \to \bC$ whose image lands in $\bA$, and a natural weak equivalence \emph{from} the identity $r\colon X \overset\sim\to RX$, we say that $f$ is \textbf{right-deformable}\index{right-deformable!functor} and define the \textbf{right derived functor}\index{derived functor} to be
\[ \R F = F \circ R. \]
This functor, and its image in the homotopy category, satisfy the duals of the above universal properties, so they are also unique up to canonical isomorphism.

\begin{ex}
	Any left Quillen functor is left-deformable, and any right Quillen functor is right-deformable. However, because of \autoref{prop:derived_functors_universal_property}, we don't have to use the cofibrant or fibrant replacement coming from the model structure. This is particularly important when we start composing these derived functors with each other.
\end{ex}

\begin{rmk}
	If $F$ has neither a left nor a right derived functor, it may have a ``middle derived functor'' $\mathbb M F$\index{middle-deformable}\index{derived functor} obtained by applying a sequence of left and right deformation retracts. For instance, the internal smash product $\sma_B$ is middle-deformable. Unfortunately, middle-derived functors are not unique in general, they depend explicitly on which retractions we choose.
\end{rmk}

\begin{warn}\label{warn:changing_equivs}
	If we change the class of weak equivalences $W$, the derived functors $\L F$ or $\R F$ can change. For instance let $G$ be a finite group, $\cat C$ the category of $G$-spaces, and $F(X) = X^G$, the $G$-fixed points of $X$. If we derive $F$ using the maps inducing equivalences on all fixed-point subspaces $X^H \overset\sim\to Y^H$, we just get $X^G$ again. But if we derive $F$ using the maps inducing equivalences on the underlying space $X \overset\sim\to Y$, we get the homotopy fixed points $X^{hG}$.
\end{warn}

\beforesubsection
\subsection{Composing and comparing derived functors}\label{sec:composing_comparing}\aftersubsection

We need to lift the isomorphisms that commute $f_!$, $f^*$, and $\barsmash$ to weak equivalences that commute the derived functors $\L f_!$, $\R f^*$, and $\barsmash^{\L}$, in a canonical way. We describe a new theoretical framework for doing this, since the approach of \cite{ms} relies on choosing a class of cofibrant-fibrant objects, but we don't wish to have the theory depend on such choices. We take motivation from \cite{shulman_comparing} and adapt the idea to accommodate longer strings of functors.

Suppose we have a composable list of functors of homotopical categories
\begin{equation}\label{composable_functors}
\xymatrix{
	\cat C \ar@{=}[r] & \cat C_0 \ar[r]^-{F_1} & \cat C_1 \ar[r]^-{F_2} & \ldots \ar[r]^-{F_n} & \cat C_n \ar@{=}[r] & \cat C'
}
\end{equation}
and each functor $F_i$ is left or right deformable.\footnote{If $F_i$ is both left and right deformable and the canonical map $\L F_i \to F_i \to \R F_i$ is \emph{not} an equivalence, fix a choice of whether to look at $\L F_i$ or $\R F_i$. If the canonical map is an equivalence (for instance if $F_i$ is already a homotopy functor) then the choice does not matter for the discussion that follows.} Let $\D F_i$ refer to the left or right derived functor of $F_i$, and $\D F_i \leftrightarrow F_i$ the canonical map in the homotopy category of functors.

Our essential task is to take an isomorphism between two such composites, and deduce that the composites of the derived functors $\D F_n \circ \ldots \circ \D F_2 \circ \D F_1$ are also equivalent. This is not possible in general, even if all the functors are left-deformable. For instance, the colimit functor is a composite of a coproduct and a coequalizer:
\[ \colim \cong \textup{coeq} \circ \amalg. \]
But this isomorphism of point-set functors does not become an equivalence on the composites of left-derived functors:
\[ \L \colim \not\simeq \L(\textup{coeq}) \circ \L(\amalg). \]
In other words, the homotopy colimit is not the homotopy coequalizer of the coproduct.

Therefore, we must impose a condition on the functors in \eqref{composable_functors} if we want some control over the composite of their derived functors.

\begin{df}\label{coherently_deformable}
We say the list of composable functors in \eqref{composable_functors} is \textbf{coherently deformable}\index{coherently deformable} if there exists a full subcategory $\cat A_{i-1} \subseteq \cat C_{i-1}$ for each $1 \leq i \leq n$ such that
\begin{itemize}
	\item $\cat A_{i-1}$ contains the image of some composite of cofibrant and fibrant replacement functors, i.e. functors with a weak equivalence to or from the identity, on $\cat C_{i-1}$,
	\item on $\cat A_{i-1}$, the canonical map $\D F_i \leftrightarrow F_i$ is an equivalence,
	\item and $F_i(\cat A_{i-1}) \subseteq \cat A_i$.
\end{itemize}
Given another such list
\begin{equation}\label{composable_functors_2}
\xymatrix{
	\cat C \ar@{=}[r] & \cat D_0 \ar[r]^-{G_1} & \cat D_1 \ar[r]^-{G_2} & \ldots \ar[r]^-{G_k} & \cat D_k \ar@{=}[r] & \cat C',
}
\end{equation}
we say that \textbf{the pair \eqref{composable_functors}, \eqref{composable_functors_2} is coherently deformable} if there exists some coherent deformation for each list, with the same choice of subcategory $\cat A_0 \subseteq \cat C$.
\end{df}

\begin{prop}[Canonical recipe for deriving a natural transformation]\label{prop:passing_natural_trans_to_derived_functors}\hfill
	Given a pair of coherently deformable composites as above, for each natural transformation
	\[ \eta\colon F_n \circ \ldots \circ F_1 \Rightarrow G_k \circ \ldots \circ G_1 \]
	there is a unique morphism in the homotopy category of homotopy functors, or of functors on the homotopy category,
	\[ \D \eta\colon \D F_n \circ \ldots \circ \D F_1 \Rightarrow \D G_k \circ \ldots \circ \D G_1 \]
	that when restricted to $\cat A_0$ agrees with
	\begin{align*}
	& \D F_n \circ \ldots \circ \D F_2 \circ \D F_1 \\
	\simeq\ & \D F_n \circ \ldots \circ \D F_2 \circ F_1 \\
	\simeq\ & \D F_n \circ \ldots \circ F_2 \circ F_1 \\
	\simeq\ & F_n \circ \ldots \circ F_2 \circ F_1 \\
	\overset\eta\to\ & G_k \circ \ldots \circ G_2 \circ G_1 \\
	\simeq\ & \D G_k \circ \ldots \circ G_2 \circ G_1 \\
	\simeq\ & \D G_k \circ \ldots \circ \D G_2 \circ G_1 \\
	\simeq\ & \D G_k \circ \ldots \circ \D G_2 \circ \D G_1.
	\end{align*}
	Furthermore $\D \eta$ does not depend on the choice of $\cat A_0$.
\end{prop}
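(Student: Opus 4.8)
The plan is to reduce everything to the standard fact that a full subcategory containing the image of a homotopical deformation retraction has an equivalent homotopy category; once that is in hand, existence, uniqueness, and independence are all formal consequences of the universal property of localization. First I would record that each $\D F_i$ is itself a homotopical functor $\cat C_{i-1} \to \cat C_i$ — it is $F_i$ precomposed with a replacement functor that is homotopical and lands in the full subcategory on which $F_i$ preserves weak equivalences — so the composites $\D F_\bullet := \D F_n \circ \cdots \circ \D F_1$ and $\D G_\bullet := \D G_k \circ \cdots \circ \D G_1$ are homotopical functors $\cat C \to \cat C'$ and descend to functors $\ho\cat C \to \ho\cat C'$. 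A morphism between them ``in the homotopy category of homotopy functors, or of functors on the homotopy category'' is exactly a natural transformation of these induced functors. Now coherent deformability supplies a composite $Q_0$ of cofibrant and fibrant replacement functors on $\cat C$, each homotopical with a natural weak equivalence to or from the identity, whose image lies in $\cat A_0$; this exhibits $\cat A_0$ as a homotopical deformation retract of $\cat C$, so by the theory of deformations \cite{dhks,riehl_basic} the inclusion induces an equivalence $\ho\cat A_0 \xrightarrow{\ \simeq\ } \ho\cat C$. Since $\D F_\bullet$ and $\D G_\bullet$ are homotopical their restrictions to $\cat A_0$ factor through $\ho\cat A_0$, and restriction along this equivalence identifies natural transformations $\D F_\bullet \Rightarrow \D G_\bullet$ on $\ho\cat C$ bijectively with natural transformations of the restricted functors on $\cat A_0$. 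This yields existence and uniqueness of $\D\eta$ at once, provided its restriction to $\cat A_0$ is specified.

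Next I would verify that the displayed zig-zag really does define a natural transformation $\D F_\bullet|_{\cat A_0} \Rightarrow \D G_\bullet|_{\cat A_0}$ valued in $\ho\cat C'$. Each ``$\simeq$'' step is a whiskering of one of the canonical comparison maps $\D F_i \leftrightarrow F_i$ or $G_j \leftrightarrow \D G_j$, by homotopical functors on one side and by constant precomposition on the other. Coherent deformability guarantees $F_{i-1} \circ \cdots \circ F_1(\cat A_0) \subseteq \cat A_{i-1}$ and that $\D F_i \leftrightarrow F_i$ is a weak equivalence on $\cat A_{i-1}$ (and likewise for the $G$'s), so after restriction to $\cat A_0$ every such step is an isomorphism in $\ho\cat C'$; the one remaining step is $\eta$, which is natural. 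Composing the whole string gives the required natural transformation, which by the previous paragraph extends uniquely to $\D\eta$ on $\ho\cat C$.

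Finally, for independence of the choice of $\cat A_0$, I would pass to the intrinsically defined full subcategory $\cat N \subseteq \cat C$ of objects at which every comparison map occurring in the zig-zag is already invertible in $\ho\cat C'$ at the (functorially determined) object where it is evaluated. By the bookkeeping just described, any admissible $\cat A_0$ is contained in $\cat N$; the zig-zag defines a natural transformation on all of $\cat N$, restricting to the displayed zig-zag on every admissible $\cat A_0$; and $\cat N$ still contains the image of $Q_0$, so the extension argument of the previous paragraph applies verbatim with base $\cat N$. The resulting extension restricts to the zig-zag on $\cat A_0$, hence by uniqueness equals the $\D\eta$ produced from $\cat A_0$; the same holds for any other admissible choice, so they all coincide.

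The one genuinely substantive point is the equivalence $\ho\cat A_0 \simeq \ho\cat C$ above — specifically its fully faithful direction, where the zig-zag of natural weak equivalences relating $\id_{\cat C}$ to $\iota Q_0$ runs through the intermediate replacement functors, whose images need not lie in $\cat A_0$. This is exactly what the Dwyer--Hirschhorn--Kan--Smith machinery of (left, right, and composite) deformations is designed to handle; with that black-boxed, the remainder is pure bookkeeping with the universal property of localization — which is why the authors describe the result as ``more of a perspective than a result.''
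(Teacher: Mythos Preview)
Your argument for existence and uniqueness is essentially identical to the paper's: both hinge on the fact that restriction from $\cat C_0$ to $\cat A_0$ induces an equivalence on the homotopy category of homotopy functors (the paper states this in one sentence, you unpack it via $\ho\cat A_0 \simeq \ho\cat C$), so that the displayed zig-zag on $\cat A_0$ determines a unique $\D\eta$ on all of $\cat C_0$.

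For independence of $\cat A_0$, the paper takes a shorter route than your intrinsic-subcategory $\cat N$. It simply observes that if $\cat A_0$ and $\cat A_0'$ are two admissible choices, then so is their union $\cat A_0 \cup \cat A_0'$ (unioning the intermediate subcategories at every level preserves all three conditions in \autoref{coherently_deformable}), and the $\D\eta$ computed on the union obviously restricts to the $\D\eta$ computed on either piece. Your construction of $\cat N$ is correct and has the mild conceptual advantage of producing a single canonical ambient subcategory, but the union trick avoids having to check that $\cat N$ is closed under the $F_i$ and $G_j$ (which you do not actually need, but a reader might wonder about) and makes the independence statement a two-line observation.
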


\begin{proof}
	Restricting from $\cat C_0$ to $\cat A_0$ induces an equivalence on the homotopy category of homotopy functors to $\cat D$. This is true because the given cofibrant and fibrant replacements provide the inverse equivalence. Therefore the above recipe on $\cat A_0$ determines a unique morphism $\D \eta$ of functors on all of $\cat C_0$.
	
	If $\cat A_0$ and $\cat A_0'$ are two possible subcategories arising in the coherent deformability condition, then so is the union $\cat A_0 \cup \cat A_0'$. Furthermore, $\D \eta$ as calculated on $\cat A_0 \cup \cat A_0'$ clearly agrees with $\D\eta$ as calculated on $\cat A_0$ or $\cat A_0'$. So $\D\eta$ does not depend on the choice of $\cat A_0$.
\end{proof}

\begin{rmk}
	If $\eta$ is an equivalence on some (therefore on all) choices of $\cat A_0$, $\D\eta$ is an isomorphism. This recipe also respects horizontal and vertical compositions of natural tranformations, whenever all of the terms are defined. (So it is a partially-defined 2-functor of partially-defined 2-categories.)
\end{rmk}

\begin{ex}\label{ex:derived_external_smash_and_base_change}
	Given maps $f: A \to A'$ and $g: B \to B'$, the two composites
	\[ \xymatrix{
		\mc R(A) \times \mc R(B) \ar[r]^-\barsmash \ar[d]_-{f_! \times g_!} & \mc R(A \times B) \ar[d]^-{(f \times g)_!} \\
		\mc R(A') \times \mc R(B') \ar[r]^-\barsmash & \mc R(A' \times B')
	} \]
	are coherently deformable, using for example the $h$-cofibrant spaces for all four categories. Therefore the canonical interchange isomorphism from \autoref{lem:external_smash_and_base_change} induces an equivalence of derived functors that is canonical in the homotopy category,
	\[ \L f_!X \barsmash^{\L} \L g_!Y \simeq \L (f \times g)_!(X \barsmash^{\L} Y). \]
	Similarly the two composites
	\[ \xymatrix{
		\mc R(A') \times \mc R(B') \ar[r]^-\barsmash \ar[d]_-{f^* \times g^*} & \mc R(A' \times B') \ar[d]^-{(f \times g)^*} \\
		\mc R(A) \times \mc R(B) \ar[r]^-\barsmash & \mc R(A \times B)
	} \]
	are coherently deformable, using for example the $f$-cofibrant and $h$-fibrant spaces for all four categories. We therefore also get a canonical equivalence
	\[ \R f^*X \barsmash^{\L} \R g^*Y \simeq \R(f \times g)^*(X \barsmash^{\L} Y). \]
	Both of these equivalences pass to canonical natural isomorphisms of functors on the homotopy categories $\ho\mc R(-)$, where they agree with the maps defined in \cite[9.4.1]{ms}.
\end{ex}

\begin{ex}\label{ex:derived_beck_chevalley}
	Given a commuting square of unbased spaces and ``rotated'' square of functors
	\[ \xymatrix@R=1em @C=1em{
		& A \ar[ld]_-g \ar[rd]^-f & \\
		B \ar[rd]_-p && C \ar[ld]^-q \\
		& D &
	} \qquad
	\xymatrix@R=1em @C=-.5em{
		& \mc R(A) \ar@{<-}[ld]_-{g^*} \ar[rd]^-{f_!} & \\
		\mc R(B) \ar[rd]_-{p_!} && \mc R(C) \ar@{<-}[ld]^-{q^*} \\
		& \mc R(D) &
	} \]
	if either $p$ or $q$ is an $h$-fibration then the functors are coherently deformable. The Beck-Chevalley isomorphism of \autoref{prop:beck_chevalley_spaces} therefore gives an equivalence of derived functors (cf. \cite[9.4.6]{ms})
	\[ \L f_! \circ \R g^* \simeq \R q^* \circ \L p_!. \]
\end{ex}

\begin{ex}
	Consider the category whose objects are diagrams of the form
	\[ \xymatrix{ X_0 \ar[r] & X_1 \ar[r] & X_2 \ar[r] & \ldots } \]
	where each map is a closed inclusion, each space $X_i$ has a $\Z/2$-action, and the maps are $\Z/2$-equivariant. There are two operations we can perform, ``colimit'' and ``$\Z/2$-fixed points,'' that commute with each other. We capture this in the commuting square:
	\[ \xymatrix @C=4em{
		\cat{Top}^{\N \times \Z/2} \ar[r]^-{\underset{\N}\colim} \ar[d]_-{\underset{\Z/2}\lim} &
		\cat{Top}^{\Z/2} \ar[d]^-{\underset{\Z/2}\lim} \\
		\cat{Top}^{\N} \ar[r]_-{\underset{\N}\colim} &
		\cat{Top}
	} \]
	The colimits can be left-deformed and the limits can be right-deformed, but the square of functors is not coherently deformable.
	
	To prove this, we take $X_n = \mathbb{RP}^n$ to be the $n$-skeleton of $\mathbb{RP}^\infty$ with the trivial $\Z/2$-action. The two composites of derived functors give
	\[ \hocolim \left((\mathbb{RP}^n)^{h\Z/2}\right) \quad \textup{and} \quad \left( \hocolim \mathbb{RP}^n \right)^{h\Z/2}. \]
	These are not weakly equivalent,\footnote{The framework of \cite{shulman_comparing} gives us a map between them, but the map is not an equivalence.} because by the Sullivan conjecture \cite{miller_sullivan},
	\[ \hocolim \Map(\mathbb{RP}^\infty,\mathbb{RP}^n) \simeq \hocolim * \simeq * \not\simeq \Map(\mathbb{RP}^\infty,\mathbb{RP}^\infty). \]
	Therefore the point-set isomorphism commuting the colimit with the fixed points does not pass to the derived functors, which implies that the functors are not coherently deformable.
\end{ex}

The ``coherently deformable'' condition can be strengthened.
\begin{df}\label{coherently_left_deformable}
Given coherently deformable functors as in \autoref{coherently_deformable}, if each functor $F_i$ is left-deformable and each subcategory $\cat A_{i-1}$ contains the image of a single cofibrant replacement $Q_{i-1}$, we say the functors are \textbf{coherently left deformable}\index{coherently left/right deformable}. A coherently right deformable list is defined similarly.
\end{df}

This stronger condition is called ``left deformability of a pair'' in \cite[42.3]{dhks}. It implies the following almost immediately.
\begin{lem}\cite[42.4]{dhks}
	If the list of functors \eqref{composable_functors} is coherently left deformable, then the composite functor $F_n \circ \ldots \circ F_1$ is also left-deformable and the canonical map
	\[ \xymatrix{ \L F_n \circ \ldots \circ \L F_1 \ar[r]^-\sim & \L(F_n \circ \ldots \circ F_1) } \]
	is an equivalence. The dual statement holds for coherently right-deformable functors: the composite is also right-deformable and we get a canonical equivalence
	\[ \xymatrix{ \R(F_n \circ \ldots \circ F_1) \ar[r]^-\sim & \R F_n \circ \ldots \circ \R F_1. }  \]
\end{lem}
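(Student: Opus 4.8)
The plan is to turn the ``coherently left deformable'' hypothesis into an ordinary left deformation of the composite, then assemble the comparison equivalence by collapsing the intermediate cofibrant replacements one at a time, and finally match the resulting map with the canonical comparison using the universal property of $\L$. Note that the canonical map is only defined once we know the composite is left-deformable, so the left-deformability statement must come first.

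First I would check that $F := F_n \circ \ldots \circ F_1$ is homotopical on $\cat A_0$: if $x \xrightarrow{\sim} y$ in $\cat A_0$, then since $F_1$ is homotopical on $\cat A_0$ and $F_1(\cat A_0) \subseteq \cat A_1$, the map $F_1 x \to F_1 y$ is a weak equivalence between objects of $\cat A_1$; iterating, $F_i \circ \ldots \circ F_1$ carries weak equivalences in $\cat A_0$ to weak equivalences in $\cat A_i$, and the case $i = n$ is the claim. Since $Q_0$ is a cofibrant replacement landing in $\cat A_0$, the pair $(\cat A_0, Q_0)$ is a left deformation for $F$, so $F$ is left-deformable and $\L F \simeq F \circ Q_0$. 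I would also record that every cofibrant replacement functor $Q_i$ is automatically homotopical (apply 2-out-of-3 to the naturality square of $q_i$), so that all of the composites below honestly descend to homotopy categories; the same observation makes $(\cat A_{i-1}, Q_{i-1})$ a left deformation for $F_i$ individually, hence $\L F_i \simeq F_i \circ Q_{i-1}$ and
\[ \L F_n \circ \ldots \circ \L F_1 \;\simeq\; F_n \circ Q_{n-1} \circ F_{n-1} \circ Q_{n-2} \circ \ldots \circ F_1 \circ Q_0 \]
as functors on homotopy categories.

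Next I would build the comparison by a telescope that deletes the inner $Q_j$ one at a time. For $1 \le j \le n-1$ set $W_j := F_j Q_{j-1} F_{j-1} \cdots F_1 Q_0$. The chain $Q_0 X \in \cat A_0$, then $F_1 Q_0 X \in \cat A_1$, then $Q_1 F_1 Q_0 X \in \cat A_1$, and so on, shows that $W_j X$ and $Q_j W_j X$ both lie in $\cat A_j$, so $q_j \colon Q_j W_j X \xrightarrow{\sim} W_j X$ is a weak equivalence in $\cat A_j$. Applying $F_{j+1}$, then $F_{j+2}$, \ldots, then $F_n$ — each homotopical on the $\cat A$ it sees and carrying it into the next one — turns this into a natural weak equivalence $F_n \cdots F_{j+1} Q_j W_j \xrightarrow{\sim} F_n \cdots F_{j+1} W_j$, i.e.\ one step that deletes $Q_j$. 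Splicing these for $j = n-1, n-2, \ldots, 1$ produces a natural weak equivalence $F_n Q_{n-1} \cdots F_1 Q_0 \xrightarrow{\sim} F_n \cdots F_1 Q_0 = F \circ Q_0 \simeq \L F$, with naturality inherited from that of the $q_j$.

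Finally I would identify this equivalence with the canonical comparison $\L F_n \circ \ldots \circ \L F_1 \Rightarrow \L F$: by \autoref{prop:derived_functors_universal_property} the target is terminal among homotopy-category functors equipped with a map to $F$, so it suffices to check that postcomposing the telescope with the structure map $\L F \Rightarrow F$ recovers the tautological map $\L F_n \circ \ldots \circ \L F_1 \Rightarrow F_n \circ \ldots \circ F_1$ assembled from the $\L F_i \simeq F_i \circ Q_{i-1} \Rightarrow F_i$; since both are built entirely out of the $q_i$, this is a routine diagram chase. The dual statement is obtained by running the whole argument in $\cat C^{\op}$, replacing each $Q_i$ by a fibrant replacement $R_i$, each $q_i$ by a natural weak equivalence $\id \xrightarrow{\sim} R_i$, and reversing all the arrows (now inserting rather than deleting the $R_j$), giving $\R(F_n \circ \ldots \circ F_1) \xrightarrow{\sim} \R F_n \circ \ldots \circ \R F_1$. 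I expect the only genuinely delicate point to be this last identification, together with the bit of care needed to pass between strict functors and their homotopy-category incarnations; everything else is bookkeeping with the inclusions $F_i(\cat A_{i-1}) \subseteq \cat A_i$.
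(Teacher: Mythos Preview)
Your proof is correct and is essentially a careful unpacking of the argument the paper leaves implicit: the paper does not actually prove this lemma, it merely cites \cite[42.4]{dhks} and remarks that it follows ``almost immediately'' from the definition of coherently left deformable. Your telescope argument---showing $F$ is homotopical on $\cat A_0$ by iterating $F_i(\cat A_{i-1}) \subseteq \cat A_i$, then deleting the inner $Q_j$ one at a time using that each $q_j$ is a weak equivalence inside $\cat A_j$---is exactly the content of that citation, and your use of the universal property of $\L F$ to identify the resulting map with the canonical one is the right way to pin it down.
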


\begin{ex}
	Any string of left Quillen functors is coherently left-deformable, and any string of right Quillen functors is coherently right-deformable. This is why composition respects passage to derived functors ``most of the time.''
\end{ex}

\begin{rmk}
	In \autoref{coherently_deformable}, if the two lists of functors \eqref{composable_functors}, \eqref{composable_functors_2} are both coherently left-deformable, it is no longer necessary to ask for a common subcategory $\cat A_0 \subseteq \cat C$. We get the natural transformation in \autoref{prop:passing_natural_trans_to_derived_functors} from the simpler fact that any natural transformation $\eta\colon F = F_n \circ \ldots \circ F_1 \Rightarrow G_k \circ \ldots \circ G_1 = G$ induces a canonical map of left-derived functors $\L F \Rightarrow \L G$.
\end{rmk}

\beforesubsection
\subsection{Deriving bifibrations; the SMBF $\ho\mc R$}\label{sec:deriving_SMBFs}\aftersubsection

We are now ready to pass the symmetric monoidal bifibration structure on $\mc R$ to a similar structure on $\ho\mc R$. In other words, to derive the operations $\barsmash$, $f^*$, and $f_!$ while preserving all of the relationships between them. We begin by handling the symmetric monoidal structure on its own.

Suppose $(\mc C,\otimes,I)$ is a symmetric monoidal category with a class of weak equivalences. We say that the symmetric monoidal structure is \textbf{left-deformable}\index{left-deformable!symmetric monoidal category} if there is a full subcategory $\cat A$ of ``cofibrant'' objects, containing the image of a cofibrant replacement functor $Q$ in $\mc C$, such that
\begin{enumerate}
	\item[(SM1)] $\otimes$ preserves cofibrant objects and weak equivalences between them,
	\item[(SM2)] and $I$ is cofibrant.\footnote{This is equivalent to the weaker condition that $QI \otimes QX \to I \otimes QX$ is an equivalence for every $X$. (Given this weaker condition, just enlarge $\cat A$ to include $I$. Note that condition (SM1) is preserved.)}
\end{enumerate}

\begin{ex}
	The symmetric monoidal category $(\mc R,\barsmash,S^0)$ is left-deformable, using for instance the $h$-cofibrant spaces and the whiskering functor $W$.
\end{ex}

\begin{prop}\label{prop:deform_sym_mon_cat}
	If $(\mc C,\otimes,I)$ is left-deformable then there is a canonical symmetric monoidal structure on $\ho\mc C$ whose product is $\otimes^{\L}$ and whose unit is $I$.
\end{prop}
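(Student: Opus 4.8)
The plan is to obtain $\otimes^{\L}$ by left-deriving $\otimes$, and then to transport the associativity, unit, and symmetry isomorphisms of $(\mc C,\otimes,I)$ to $\ho\mc C$ using the deriving recipe of \autoref{sec:composing_comparing}. First I would set up $\otimes^{\L}$ itself. The category $\mc C\times\mc C$ is homotopical with the pairwise weak equivalences, $Q\times Q$ is a cofibrant replacement on it landing in $\cat A\times\cat A$, and by (SM1) the functor $\otimes$ preserves weak equivalences between objects of $\cat A\times\cat A$; hence $\otimes$ is left-deformable and $\otimes^{\L}:=\otimes\circ(Q\times Q)$ is homotopical, inducing a functor $\otimes^{\L}\colon\ho\mc C\times\ho\mc C\cong\ho(\mc C\times\mc C)\to\ho\mc C$. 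The unit is $I$, which we may assume lies in $\cat A$ by the footnote to (SM2).

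Next I would produce the coherence isomorphisms. For the associator, consider the two functors $\mc C^{3}\to\mc C$, namely $(X,Y,Z)\mapsto(X\otimes Y)\otimes Z$ and $(X,Y,Z)\mapsto X\otimes(Y\otimes Z)$, each expressed as a composite of left-deformable functors built from $\otimes$. Because (SM1) says $\otimes$ carries $\cat A\times\cat A$ into $\cat A$, taking every intermediate subcategory to be the appropriate power of $\cat A$ exhibits this pair as coherently (left) deformable with common input subcategory $\cat A^{3}$. The point-set associator $a$ therefore induces, by \autoref{prop:passing_natural_trans_to_derived_functors}, a canonical isomorphism $\D a$ of derived functors; since the lists are coherently left deformable, \cite[42.4]{dhks} identifies the derived functor of each triple-tensor composite with the corresponding iteration of $\otimes^{\L}$, so $\D a\colon (X\otimes^{\L}Y)\otimes^{\L}Z\cong X\otimes^{\L}(Y\otimes^{\L}Z)$ in $\ho\mc C$. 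The unitors and the symmetry are handled identically: for the left unitor one compares the functors $X\mapsto I\otimes X$ and $\id_{\mc C}$, both homotopical on $\cat A$ since $I\in\cat A$; for the symmetry one compares $\otimes$ with $\otimes$ precomposed with the swap on $\mc C\times\mc C$. In each case the point-set isomorphism descends to a canonical isomorphism of the corresponding $\otimes^{\L}$-expressions, and these isomorphisms are independent of the auxiliary choice of $\cat A$ by the last clause of \autoref{prop:passing_natural_trans_to_derived_functors}.

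Finally I would verify the pentagon, triangle, and hexagon. Each is a diagram of natural isomorphisms among iterated-$\otimes$ functors $\mc C^{k}\to\mc C$ that commutes already at the point-set level; because $\otimes$ preserves cofibrant objects, all of these functors are simultaneously coherently deformable using powers of the \emph{same} subcategory $\cat A$. The deriving recipe of \autoref{prop:passing_natural_trans_to_derived_functors} respects vertical and horizontal composition of natural transformations (the Remark following it), i.e.\ it behaves as a partially-defined $2$-functor on the coherently deformable data; applying it to the point-set coherence diagrams yields the coherence diagrams for $\D a$ and the derived unitors and symmetry on $\ho\mc C$, so $(\ho\mc C,\otimes^{\L},I)$ is a symmetric monoidal category, canonical in that nothing depended on the choice of $Q$ or $\cat A$. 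The main obstacle is bookkeeping rather than conceptual: one must check that a single choice of $\cat A$ (and its powers) serves as common deformation data for every functor occurring in every coherence diagram at once, so that the $2$-functoriality of the deriving recipe can be invoked uniformly. This is precisely what the ``preserves cofibrant objects'' clause of (SM1) provides; without it the individually derived isomorphisms would not be known to assemble into commuting diagrams.
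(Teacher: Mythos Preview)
Your proposal is correct and follows essentially the same approach as the paper: both derive the associator, unitors, and symmetry by applying \autoref{prop:passing_natural_trans_to_derived_functors} with the single subcategory $\cat A$ (and its powers) serving as the common deformation data, and both deduce the coherence axioms from the fact that this recipe respects composition of natural transformations. Your write-up simply unpacks in more detail what the paper compresses into a few lines.
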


\begin{proof}
	This is well-known but we can give a proof by repeated application of \autoref{prop:passing_natural_trans_to_derived_functors}. The isomorphisms $\alpha,\lambda,\rho,\gamma$ ascend to the homotopy category because in each case the lists of functors are coherently left-deformable, using the subcategory $\cat A$ in every case. They are coherent because they were coherent in $\mc C$, and the recipe in \autoref{prop:passing_natural_trans_to_derived_functors} respects composition of natural transformations and identity natural transformations.
\end{proof}

\begin{cor}
	Suppose $\mc C$ is both a model category and a symmetric monoidal category, the tensor product $\otimes$ is a left Quillen bifunctor, and the unit $I$ is cofibrant. Then $\ho\mc C$ has a canonical left-derived symmetric monoidal structure.
\end{cor}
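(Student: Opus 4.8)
The plan is to deduce this directly from \autoref{prop:deform_sym_mon_cat}, by checking that the hypotheses of a left Quillen bifunctor on a model category imply that the symmetric monoidal structure on $\mc C$ is left-deformable in the sense of conditions (SM1) and (SM2). First I would take $\cat A \subseteq \mc C$ to be the full subcategory of cofibrant objects, with $Q$ a functorial cofibrant replacement functor, which exists since $\mc C$ is a model category. Condition (SM2), that $I$ is cofibrant, holds by hypothesis.

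The only real content is in verifying (SM1). To see that $\otimes$ preserves cofibrant objects, note that a left Quillen bifunctor is a left adjoint in each slot, hence preserves colimits in each slot, so $\emptyset \otimes Y \cong \emptyset \cong X \otimes \emptyset$. Applying the pushout-product axiom to the generating cofibrations $\emptyset \to X$ and $\emptyset \to Y$ for cofibrant $X$, $Y$, the pushout-product map is simply $\emptyset \to X \otimes Y$, which is therefore a cofibration; hence $X \otimes Y$ is cofibrant. To see that $\otimes$ preserves weak equivalences between cofibrant objects, fix a cofibrant $Y$; then $(-) \otimes Y$ is left Quillen (first bullet in the definition of left Quillen bifunctor), so by Ken Brown's lemma \cite[1.1.12]{hovey_model_cats} it sends weak equivalences between cofibrant objects to weak equivalences, and symmetrically for $X \otimes (-)$ with $X$ cofibrant. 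Given weak equivalences $X \simar X'$ and $Y \simar Y'$ between cofibrant objects, factor $X \otimes Y \to X' \otimes Y'$ as $X \otimes Y \to X' \otimes Y \to X' \otimes Y'$; all objects appearing are cofibrant by the previous step, so both maps are weak equivalences, and hence so is the composite.

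With (SM1) and (SM2) established, \autoref{prop:deform_sym_mon_cat} applies verbatim and produces the left-derived symmetric monoidal structure on $\ho\mc C$ with product $\otimes^{\L}$ and unit $I$. I do not expect a genuine obstacle here: the argument is a routine unwinding of definitions, and the only step requiring a nontrivial input is the appeal to Ken Brown's lemma inside (SM1).
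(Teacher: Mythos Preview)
Your proposal is correct and takes essentially the same approach as the paper: the corollary is stated without proof immediately after \autoref{prop:deform_sym_mon_cat}, and your argument supplies exactly the routine verification of (SM1) and (SM2) needed to apply that proposition. One minor quibble: the parenthetical ``first bullet in the definition of left Quillen bifunctor'' is slightly misleading, since being a left adjoint alone does not make $(-)\otimes Y$ left Quillen; rather, it is the pushout-product axiom (applied with $\emptyset\to Y$) that shows $(-)\otimes Y$ preserves (acyclic) cofibrations, as the paper itself notes just after the definition.
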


Similarly, if $\mc C$ is a bifibration over $\bS$ and each fiber $\mc C^A$ has a class of weak equivalences, then we can form $\ho\mc C$ by inverting the weak equivalences in every fiber category $\mc C^A$. The universal property gives a functor $\ho\mc C \to \bS$, and we may ask whether this is a bifibration as well.

We say that $\mc C$ is \textbf{bi-deformable}\index{deformable!bifibration} if the following three conditions hold.
\begin{enumerate}
\item[(BF3)] The pushforwards are coherently left-deformable: there exists a simultaneous choice of cofibrant replacement $Q_A$ on each fiber category $\mc C^A$ under which the pushforwards $f_!$ preserve cofibrant objects and weak equivalences between them.
\item[(BF4)] The pullbacks are coherently right-deformable: there exists a simultaneous choice of fibrant replacement $R_A$ on each fiber category $\mc C^A$ under which the pullbacks $f^*$ preserve fibrant objects and weak equivalences between them.
\item[(BF5)] We choose a class of \textbf{homotopy Beck-Chevalley}\index{homotopy!Beck-Chevalley square} squares. Each one must satisfy the Beck-Chevalley condition in $\mc C$, and in addition, the two lists of functors in the Beck-Chevalley isomorphism must be coherently deformable.
\end{enumerate}

\begin{thm}\label{prop:deform_bifibration}
	If the bifibration $\mc C \to \bS$ is bi-deformable, then $\ho\mc C \to \bS$ is a bifibration as well. The fiber categories are canonically isomorphic to the homotopy categories of the fibers,
	\[ (\ho\mc C)^A \cong \ho(\mc C^A). \]
	Finally, each of the chosen homotopy Beck-Chevalley squares has the Beck-Chevalley condition in $\ho\mc C$. 
\end{thm}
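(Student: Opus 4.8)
The plan is to construct $\ho\mc C$ as the localization $\mc C[W^{-1}]$ at the class $W$ of \emph{fiberwise} weak equivalences (maps lying over an identity of $\bS$ that are weak equivalences in the relevant fiber). Since $\Phi$ carries $W$ to identities it descends to a functor $\bar\Phi\colon \ho\mc C \to \bS$. The (co)cartesian arrows over $f\colon A \to B$ are the \emph{derived} ones: for $X \in \mc C^B$ the candidate cartesian arrow is the zig-zag $f^*R_B X \to R_B X \overset\sim\leftarrow X$, read as a single morphism $\R f^* X \to X$ in $\ho\mc C$, and for $Y \in \mc C^A$ the candidate cocartesian arrow is $Y \overset\sim\leftarrow Q_A Y \to f_! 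Q_A Y$, read as $Y \to \L f_! Y$. These make sense because $\R f^*$ and $\L f_!$ are well-defined on homotopy categories by (BF4), (BF3) and the uniqueness of derived functors (\autoref{prop:derived_functors_universal_property}), and the point-set adjunction $(f_! \adj f^*)$ descends to $(\L f_! \adj \R f^*)$ between the fiber homotopy categories by the standard deformable-adjunction argument, using that $f_!$ preserves weak equivalences between cofibrant objects and $f^*$ between fibrant ones.

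The technical core is one hom-set identification: writing $\ho\mc C(Y,X)_g$ for the morphisms $Y \to X$ of $\ho\mc C$ lying over $g\colon A' \to B$ (with $Y \in \mc C^{A'}$, $X \in \mc C^B$), I claim
\[ \ho\mc C(Y,X)_g \;\cong\; \ho(\mc C^{A'})\big(Y,\, \R g^*X\big) \;\cong\; \ho(\mc C^B)\big(\L g_! Y,\, X\big). \]
The second isomorphism is the derived adjunction. For the first, reduce to the case that $Y$ is $Q_{A'}$-cofibrant and $X$ is $R_B$-fibrant (this changes neither side up to canonical isomorphism), then put an arbitrary zig-zag into a normal form: using cartesian arrows, every forward map $U \to V$ over $h$ in the zig-zag factors as a fiber map $U \to h^*V$ followed by a cartesian map $h^*V \to V$, and one slides all the cartesian base-change maps to the right-hand end of the zig-zag. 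The slides are exactly what (BF4) licenses --- $g^*$ preserves fibrant objects and weak equivalences between them, so it commutes past the backward fiberwise equivalences, and it commutes with the forward fiber maps on the nose --- so the morphism ends up represented by a zig-zag inside $\mc C^{A'}$ from $Y$ to $g^* R_B X$, followed by the cartesian arrow $g^* R_B X \to R_B X$ and the canonical iso $R_B X \cong X$; two normal forms agree in $\ho\mc C$ exactly when their $\mc C^{A'}$-parts agree in $\ho(\mc C^{A'})$. Taking $g = \id_A$ gives $(\ho\mc C)^A \cong \ho(\mc C^A)$, and for general $g$ the isomorphism is precisely the universal property making $\R g^*X \to X$ a cartesian arrow (dually, $Y \to \L g_! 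Y$ is cocartesian, as one also sees directly from $(\L g_! \adj \R g^*)$). Pseudofunctoriality of $f \mapsto \R f^*$, needed for the chosen (co)cartesian arrows to be coherent, follows from \autoref{prop:passing_natural_trans_to_derived_functors}: the point-set coherence isomorphisms $(gf)^* \cong f^*g^*$ are between composites of right-deformable pullbacks that are coherently deformable with common subcategory the fibrant objects, so they descend to coherent isomorphisms $\R(gf)^* \cong \R f^* \circ \R g^*$.

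The Beck--Chevalley clause is then immediate: by (BF5) each chosen square was set up so that both composites $f_! g^*$ and $q^* p_!$ lie in coherently deformable lists, so \autoref{prop:passing_natural_trans_to_derived_functors} turns the point-set Beck--Chevalley isomorphism of $\mc C$ into a canonical isomorphism $\L f_! \circ \R g^* \cong \R q^* \circ \L p_!$, which is the Beck--Chevalley isomorphism for that square in $\ho\mc C$.

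I expect the main obstacle to be the rectification argument in the hom-set computation: one must check that an arbitrary zig-zag in $\mc C$ can be straightened using the bifibration structure \emph{without ever leaving} the full subcategories of cofibrant/fibrant objects on which the base-change functors are homotopical, and that the resulting normal form is well-defined on the localization. Once that is in place, everything else is bookkeeping with \autoref{prop:passing_natural_trans_to_derived_functors} and standard facts about deformable adjunctions.
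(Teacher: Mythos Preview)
Your overall strategy is the same as the paper's: identify the hom-sets of $\ho\mc C$ over $g$ with $\ho(\mc C^{A'})(Y,\R g^*X)$, and read off from this both the fiber categories and the (co)cartesian arrows. The paper, however, does not attempt the zig-zag straightening directly. Instead it builds an auxiliary fibration $\bH \to \bS$ by a Grothendieck construction on the pseudofunctor $A \mapsto \ho(\mc C^A)$, $f \mapsto f^*R_B$, so that the hom-sets of $\bH$ are \emph{by definition} morphisms $Y \to f^*R_B X$ in $\ho(\mc C^{A'})$. It then writes down an explicit functor $\mc C \to \bH$, checks it inverts $W$, and proves $\ho\mc C \to \bH$ is an isomorphism by producing an explicit inverse. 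This sidesteps exactly the difficulty you flag: one never has to normalize an arbitrary zig-zag whose intermediate objects are not fibrant. Your sliding argument, as written, requires $h^*$ applied to backward equivalences $Z_{i+1} \overset\sim\leftarrow Z_{i+2}$ to remain equivalences, but nothing forces the intermediate $Z_j$ to lie in the fibrant subcategory, so this step is not justified as stated.

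There is also a genuine gap in your Beck--Chevalley paragraph. You obtain from \autoref{prop:passing_natural_trans_to_derived_functors} an isomorphism $\L f_! \circ \R g^* \cong \R q^* \circ \L p_!$ and assert that this ``is the Beck--Chevalley isomorphism for that square in $\ho\mc C$.'' But the Beck--Chevalley map in a bifibration is a specific composite built from the derived units and counits of the adjunctions $(\L f_! \adj \R f^*)$ and $(\L p_! \adj \R p^*)$ in $\ho\mc C$, and there is no a priori reason it should coincide with the map produced by deriving the point-set isomorphism. The paper devotes a separate argument to this: it restricts to inputs in the common subcategory $\cat A_0$ from (BF5), writes out both maps explicitly, and checks they agree via a large commutative diagram (their Figure~\ref{fig:derive_BC}). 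Without this comparison, you have shown only that \emph{some} natural isomorphism exists, not that the canonical Beck--Chevalley map in $\ho\mc C$ is invertible.
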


\begin{proof}
	The first step is to show that $\ho\sC$ is a fibration with the desired fiber categories. For this, it suffices to define a second fibration $\bH \to \bS$ and an isomorphism of categories $\ho\sC\to \bH$ over $\bS$,
	such that the induced dotted map in the diagram
	\[\xymatrix @R=1.7em{\sC^A \ar[r]\ar[d]& (\ho\sC)^A \ar[r]&\bH^A\\\ho(\sC^A) \ar@{-->}[urr]}\]
	is an isomorphism for all $A$ in $\bS$.

	We build the fibration $\bH$ by a Grothendieck construction. For each $A \in \bS$ we take the homotopy category $\ho(\mc C^A)$. For each map $f\colon A \to B$ we take the derived pullback functor $f^*R_B\colon \ho(\mc C^B) \to \ho(\mc C^A)$. We extend this to an indexed category\index{indexed category} by picking composition isomorphisms and unit isomorphisms between these derived functors (see e.g. \cite[B1.2]{johnstone2002sketches}). We get them by right-deriving the same isomorphisms for the strict pullback functors in $\sC$, in other words using the maps
	\[ f^*R_Bg^*R_C \overset\sim\leftarrow f^*g^*R_C \cong (g \circ f)^*R_C \]
	\[ \id_A^* R_A \overset\sim\leftarrow \id_A^* \cong \id_{\sC^A}. \]
	These isomorphisms have to satisfy certain coherence conditions, but they are the same conditions the strict pullbacks already satisfy, so the coherence passes automatically to the derived pullbacks $\R f^*$ by the universal property of right-derived functors. By the general theory of Grothendieck constructions, $\bH \to \bS$ is a fibration.
	
	Concretely, the objects of $\bH$ are the objects of $\sC$, and a morphism from $X$ to $Y$ over $f\colon A \to B$ is a zig-zag in $\sC^A$ from $X$ to $f^*R_B Y$. The composition of morphisms in $\bH$ is given by
	\[ \xymatrix @R=1em{
		X \ar@{<->}[r] & f^*R_BY \ar@{<->}[r] & f^*R_Bg^*R_CZ \ar@{<-}[d]^-\sim \\
		&& f^*g^*R_CZ  \ar@{<-}[d]^-\cong \\
		&& (g \circ f)^*R_CZ.
	} \]
	The identity morphism at $X$ is $X \cong \id^* X \overset\sim\to \id^* R_A X$.
	
	Define $\sC \to \bH$ by taking each map $X \to Y$ over $f$, represented by a vertical map $X \to f^*Y$, to the composite $X \to f^*Y \to f^*R_BY$. Chasing diagrams, we check
	\begin{itemize}
		\item this respects composition and units, hence is a functor,
		\item it sends weak equivalences to isomorphisms, hence defines a functor $\ho\sC \to \bH$,
		\item the resulting map $\ho(\sC^A) \to (\ho\sC)^A \to \bH^A = \ho(\sC^A)$ is the identity,
		\item $\bH$ is generated by morphisms of the form $X \to X' \overset\sim\to \id^*R_A X'$, $X \overset\sim\leftarrow X' \overset\sim\to \id^*R_A X'$, and $f^*R_B Y = f^*R_B Y$, and 
		\item these are each in the image of $\ho\sC$, hence $\ho\sC \to \bH$ is surjective.
	\end{itemize}
	
	Then we map $\bH \to \ho\sC$ by sending a zig-zag $X \leftrightarrow f^*R_B Y$ to the zig-zag of maps of $\sC$, $X \leftrightarrow f^*R_B Y \to R_B Y \leftarrow Y$. We check
	\begin{itemize}
		\item this respects the equivalence relation that defines the fibers of $\bH$, hence is well-defined,
		\item this defines a functor $\bH \to \ho\sC$, and 
		\item the composite $\sC \to \bH \to \ho\sC$ is equal to the canonical inclusion.
	\end{itemize}
	Hence $\ho \sC \to \bH$ is also injective, and is therefore an isomorphism of categories.
	
	We deduce that there is a cartesian arrow in $\ho\mc C$ of the form $f^*RX \to RX \overset\sim\leftarrow X$ for each $X$ and $f$, identifying $f^*R$ with the pullback functor for $\ho\mc C$. Dualizing everything, $\ho\mc C$ has co-cartesian arrows of the form $X \overset\sim\leftarrow QX \to f_!QX$ that identify $f_!Q$ with the pushforward functor for $\ho\mc C$.
		
	Therefore each derived pushforward $f_!Q$ forms an adjunction with the derived pullback $f^*R$. The unit is the unique map $X \to f^*Rf_!QX$ lying over the map from each of these two terms to $f_!QX$ (one of which is co-cartesian and the other of which is cartesian):
	\[ \xymatrix @R=1.7em{
		X & \ar[l]_-\sim QX \ar@{-->}[d] \ar[rd] & \\
		& f^*f_!QX \ar@{-->}[d] \ar[r] & f_!QX \ar[d]^-\sim \\
		& f^*Rf_!QX \ar[r] & Rf_!QX
	} \]
	The dotted lines are filled in by the universal property of cartesian arrows in $\mc C$. Therefore the unit is the zig-zag $X \overset\sim\leftarrow QX \to f^*f_!QX \to f^*Rf_!QX$, and further the $Q$s can be dropped if $X$ is cofibrant. Similarly the counit is $f_!Qf^*RY \to f_!f^*RY \to RY \overset\sim\leftarrow Y$, and the $R$s can be dropped if $Y$ is fibrant.
	
	For each homotopy Beck-Chevalley square, the Beck-Chevalley map in $\mc C$ is an isomorphism by assumption. Because the functors are coherently deformable, as in \autoref{ex:derived_beck_chevalley} this gives an isomorphism of their derived functors on the homotopy category. It remains to show this isomorphism agrees with the Beck-Chevalley map that is constructed out of the adjunctions on the homotopy category. We check this by restricting to inputs in the common category on which $p_!$ and $g^*$ are derived, and inspecting \autoref{fig:derive_BC}.
\begin{figure}
\centering
\[ \resizebox{\textwidth}{!}{\xymatrix @C=-1em{
		&& q^*p_! \ar[rrr]^-r &&& q^*Rp_! &&& q^*Rp_!Q \ar[lll]_-q^-\sim \\
		&&& f_!g^*q^*p_! \ar@{-}[dd]^-\cong \ar[ul]_-\epsilon \ar[rrr]^-r &&& f_!g^*q^*Rp_! \ar@{-}[dd]^-\cong \ar[ul]_-\epsilon &&& f_!g^*q^*Rp_!Q \ar[ul]_-\epsilon \ar[lll]_-q^-\sim \\
		&&&&&&& f_!Qg^*q^*Rp_! \ar@{-}[dd]^-\cong \ar[ul]_-q &&& f_!Qg^*q^*Rp_!Q \ar@{-}[dd]^-\cong \ar[ul]_-q \ar[rd]^-r_-\sim \ar[lll]_-q^-\sim \\
		f_!g^* \ar[rrr]^-\eta &&& f_!g^*q^*p_! \ar[rrr]^-r &&& f_!g^*q^*Rp_! &&& && f_!Qg^*Rq^*Rp_!Q \\
		& f_!Qg^* \ar[rrr]^-\eta \ar[ul]_-q \ar[rd]^-r_-\sim &&& f_!Qg^*q^*p_! \ar[ul]_-q \ar[rd]^-r \ar[rrr]^-r &&& f_!Qg^*q^*Rp_! \ar[ul]_-q \ar[rd]^-r_-\sim &&& f_!Qg^*q^*Rp_!Q  \ar[rd]^-r_-\sim \ar[lll]_-q^-\sim \\
		&& f_!Qg^*R \ar[rrr]^-\eta &&& f_!Qg^*Rq^*p_! \ar[rrr]^-r &&& f_!Qg^*Rq^*Rp_! &&& f_!Qg^*Rq^*Rp_!Q \ar[lll]_-q^-\sim \\
	}} \]
\caption{Identifying the derived Beck-Chevalley map.}\label{fig:derive_BC}
\end{figure}
\end{proof}

\begin{rmk}\label{expand_beck_chevalley}
	By standard properties of Beck-Chevalley isomorphisms, the Beck-Chevalley condition is preserved any time we replace a square in $\bS$ by a ``weakly equivalent'' square, so long as for each ``weak equivalence'' the functors $\L f_!$ and $\R f^*$ induce equivalences on the fiber categories $\ho\mc C^A$. This allows us to further expand the class of squares for which $\ho\mc C$ satisfies the Beck-Chevalley condition.
\end{rmk}

The following results fall out of the proof of the previous theorem.\index{homotopy!(co-)cartesian arrow}
\begin{prop}\label{prop:homotopy_cartesian_arrows}(cf. \cite{harpaz_prasma_grothendieck})
	An arrow is cartesian in $\ho\mc C$ iff it is isomorphic to a cartesian arrow in $\mc C$ whose target is fibrant. A cartesian arrow $f^*Y \to Y$ in $\mc C$ is cartesian in $\ho\mc C$ iff the map $f^*Y \to f^*RY$ is an isomorphism in $\ho(\mc C^A)$. The dual statements apply to the co-cartesian arrows.
\end{prop}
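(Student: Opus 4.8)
The plan is to deduce both equivalences directly from the explicit description of the (co)cartesian arrows of $\ho\mc C$ that emerged in the proof of \autoref{prop:deform_bifibration}, together with two standard facts about fibrations: an arrow isomorphic, in the arrow category, to a cartesian arrow is itself cartesian; and cartesian lifts of a fixed base morphism are unique up to a unique fibrewise isomorphism, so in particular an isomorphism between the targets of two cartesian lifts of the same base morphism is covered by a unique fibrewise isomorphism of the lifts. Recall that in that proof the pullback functor of $\ho\mc C$ was identified with $f^*R$, with the cartesian arrow over $f$ having target $X$ represented by the zig-zag $f^*RX \to RX \overset\sim\leftarrow X$ (viewed as a morphism $f^*RX \to X$ in $\ho\mc C$), and dually the cocartesian arrow with source $X$ by $X \overset\sim\leftarrow QX \to f_!QX$.

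First I would prove the auxiliary claim that a cartesian arrow $f^*Y \to Y$ of $\mc C$ with $Y$ fibrant is cartesian in $\ho\mc C$. Apply the naturality square for cartesian arrows to the fibrant-replacement map $r_Y\colon Y \overset\sim\to RY$: since $Y$ and $RY$ are both fibrant, $r_Y$ is a weak equivalence between fibrant objects, so by (BF4) the induced map $f^*r_Y\colon f^*Y \to f^*RY$ is also a weak equivalence. Hence both verticals of the square become isomorphisms in $\ho\mc C$, and $f^*Y \to Y$ factors in $\ho\mc C$ as the isomorphism $f^*r_Y$ followed by the standard cartesian arrow $f^*RY \to RY \overset\sim\leftarrow Y$; it is therefore cartesian, and by the first standard fact so is anything isomorphic to it in the arrow category of $\ho\mc C$. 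This gives the ``if'' half of the first ``iff''. For the ``only if'' half, given a cartesian arrow $\phi\colon Z \to W$ of $\ho\mc C$ over some $f\colon A \to B$, the uniqueness of cartesian lifts identifies it, up to isomorphism of arrows, with the standard cartesian arrow $f^*RW \to RW \overset\sim\leftarrow W$; and that arrow is in turn isomorphic in the arrow category (along $r_W$) to the genuine $\mc C$-cartesian arrow $f^*RW \to RW$, whose target $RW$ is fibrant. So $\phi$ is isomorphic in $\ho\mc C$ to a cartesian arrow of $\mc C$ with fibrant target.

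For the second ``iff'' I would reuse the naturality square of the cartesian arrows over $f$ against $r_Y\colon Y \to RY$, now for an arbitrary $Y$. The map $r_Y$ is always an isomorphism in $\ho\mc C$, and $f^*RY \to RY$ is cartesian in $\ho\mc C$ by the auxiliary claim (its target $RY$ is fibrant), so a direct chase with the universal property of the cartesian arrow $f^*RY \to RY$ shows that $f^*Y \to Y$ is cartesian in $\ho\mc C$ if and only if the comparison map $f^*r_Y\colon f^*Y \to f^*RY$ is an isomorphism in $\ho\mc C$. Since this map lies over $\id_A$, being an isomorphism in $\ho\mc C$ is equivalent to being an isomorphism in the fibre $(\ho\mc C)^A \cong \ho(\mc C^A)$, which is exactly the asserted condition. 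The dual statements for cocartesian arrows follow by running the whole argument for $\mc C^{\op} \to \bS^{\op}$: there cartesian and cocartesian arrows trade places, as do fibrant and cofibrant objects and the functors $R$ and $Q$, and the proof of \autoref{prop:deform_bifibration} supplies the standard cocartesian arrow $X \overset\sim\leftarrow QX \to f_!QX$ and condition (BF3) in place of (BF4).

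I do not expect a genuine obstacle here; this is bookkeeping downstream of \autoref{prop:deform_bifibration}. The one point that needs care is the placement of the fibrancy hypothesis: it is used precisely to invoke (BF4), so that $f^*$ carries the weak equivalence $r_Y$ between fibrant objects to a weak equivalence. Without it, $f^*r_Y$ need not be an equivalence, and the second ``iff'' is exactly the statement that detects when it is. The other thing to keep straight throughout is the difference between being an isomorphism in $\ho\mc C$ and being an isomorphism in the fibre $\ho(\mc C^A)$ — the two agree only for arrows lying over an identity, which is the case relevant at the end of the second ``iff''.
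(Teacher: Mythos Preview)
Your proposal is correct and follows exactly the approach the paper intends: the paper gives no separate proof, merely remarking that the result ``falls out of the proof of the previous theorem,'' and you have made that explicit by reading off the standard cartesian arrow $f^*RX \to RX \overset\sim\leftarrow X$ from that proof and comparing an arbitrary $\mc C$-cartesian arrow to it via the naturality square for $r_Y$. The factorization $\bigl(f^*Y \to Y\bigr) = \bigl(f^*r_Y\bigr) \circ \bigl(\text{standard cartesian arrow}\bigr)$ in $\ho\mc C$ is the heart of both equivalences, and your use of (BF4) for the fibrancy hypothesis and of uniqueness of cartesian lifts for the converse direction is precisely the bookkeeping the paper leaves to the reader.
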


A particularly important special case is that of a bifibration $\mc C$ over the walking arrow $\bullet \to \bullet$. This is the same thing as an adjunction between two categories.

\begin{cor}(cf. \cite[43.2]{dhks}, \cite[6.4.13]{riehl_basic})
	If $(F \adj G)$ is an adjunction in which $F$ is left deformable and $G$ is right deformable, then the derived functors form an adjunction $(\L F \adj \R G)$ in a canonical way.\index{deformable!adjunction}
\end{cor}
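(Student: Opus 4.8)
The plan is to recognize this as the special case of \autoref{prop:deform_bifibration} in which the base category $\bS$ is the walking arrow $\bullet \to \bullet$, so that essentially all of the work has already been carried out.

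First I would recall that an adjunction $(F \adj G)$ with $F\colon \mc C \to \mc D$ and $G\colon \mc D \to \mc C$ is precisely the data of a bifibration $\Phi\colon \mc E \to (\bullet \to \bullet)$: the fiber over the source is $\mc C$, the fiber over the target is $\mc D$, the pullback along the arrow is $G$, its left adjoint (the pushforward) is $F$, and the pushforward--pullback adjunction that any bifibration carries is the given $(F \adj G)$. There is no Beck--Chevalley data to specify, since the only commuting squares in $\bullet \to \bullet$ are degenerate. Equip each of the two fiber categories with its given class of weak equivalences.

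Next I would check that $\mc E$ is bi-deformable. Condition (BF5) is vacuous: take the empty class of homotopy Beck--Chevalley squares. For (BF3), the only nonidentity pushforward is $F$, and left-deformability of $F$ supplies a full subcategory $\cat A \subseteq \mc C$ together with a cofibrant replacement $Q_{\mc C}$ landing in $\cat A$ on which $F$ preserves weak equivalences; taking $Q_{\mc D} = \id_{\mc D}$ (a cofibrant replacement, with the identity natural equivalence) makes ``$F$ preserves cofibrant objects'' hold automatically, so the pushforwards are coherently left-deformable. Dually, (BF4) follows from right-deformability of $G$ with $R_{\mc C} = \id_{\mc C}$. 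Hence \autoref{prop:deform_bifibration} applies: $\ho\mc E \to (\bullet \to \bullet)$ is a bifibration with fiber categories $\ho\mc C$ and $\ho\mc D$, which is exactly an adjunction between these homotopy categories. Tracing the identifications in the proof of that theorem, the derived pushforward is $FQ_{\mc C} = \L F$ and the derived pullback is $GR_{\mc D} = \R G$, so the resulting adjunction is $(\L F \adj \R G)$; its unit and counit are the explicit zig-zags
\[ X \overset\sim\leftarrow Q_{\mc C}X \to GFQ_{\mc C}X \to GR_{\mc D}FQ_{\mc C}X, \qquad FQ_{\mc C}GR_{\mc D}Y \to FGR_{\mc D}Y \to R_{\mc D}Y \overset\sim\leftarrow Y, \]
recovering the classical formulas of \cite[43.2]{dhks}; canonicity is part of the conclusion of \autoref{prop:deform_bifibration}.

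I do not expect a genuine obstacle, since all the content lives in \autoref{prop:deform_bifibration}. The only point that needs attention is the bookkeeping that matches ``$F$ left-deformable, $G$ right-deformable'' in the sense of \autoref{sec:derived_functors_intro} with the coherent left/right deformability of the single nontrivial pushforward and pullback required by (BF3) and (BF4) -- handled, as above, by placing identity replacements on the opposite fiber, and using that \autoref{sec:derived_functors_intro} already allows the replacements deriving $F$ and $G$ to be chosen independently of one another.
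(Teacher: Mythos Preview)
Your proposal is correct and takes essentially the same approach as the paper: the corollary is stated immediately after the remark that a bifibration over the walking arrow $\bullet \to \bullet$ is the same thing as an adjunction, and is meant to be read as the special case of \autoref{prop:deform_bifibration} in exactly the way you describe. Your verification of (BF3)--(BF5) and the identification of the resulting unit and counit with the explicit zig-zags displayed in the proof of \autoref{prop:deform_bifibration} are spot on.
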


In particular, this gives the usual construction that passes from a Quillen adjunction to the corresponding adjuction on the homotopy category. This sort of canonical approach also appears in \cite{hinich2018so}.

Now suppose that $\mc C$ is a symmetric monoidal bifibration over $\bS$, and that each fiber category $\mc C^A$ has a class of weak equivalences. Suppose also that $\mc C$ has a left deformation $Q$ for $\boxtimes$ that is fiberwise over $\bS$, meaning that each map $QX \simar X$ lies over an identity map, and that $\boxtimes$ preserves the equivalences in some full subcategory of each fiber $\mc A^A \subseteq \mc C^A$ containing the image of $Q$.

The five conditions (SM1), (SM2), (BF3), (BF4), (BF5) are enough to make $\ho\mc C$ a bifibration with a symmetric monoidal structure. It remains to make the derived tensor product $\boxtimes^{\L}$ preserve cartesian and cocartesian arrows in $\ho\mc C$. For this we need the following final two conditions.
\begin{enumerate}
	\item[(SMBF6)] Suppose that $\boxtimes$ and \emph{all} of the pushforwards $f_!$ are coherently deformable, meaning there is a subcategory $\mc A$ containing a finite composite of fiberwise cofibrant and fibrant replacements in $\mc C$ (``fiberwise'' means they lie over the identity functor of $\bS$), preserved by $\boxtimes$ and each $f_!$, on which both functors are equivalent to their derived functors,
	\item[(SMBF7)] and the same condition for $\boxtimes$ and all of the pullbacks $f^*$ (the category $\mc A$ can be different).
\end{enumerate}
These are not necessarily stronger than (BF4) and (BF5), because for instance there does not have to be a single fibrant replacement functor that deforms the pullbacks and $\boxtimes$. We are allowed to use a composite of cofibrant and fibrant replacements.\index{deformable!SMBF}
\begin{prop}\label{prop:deform_SMBF}
	Under these assumptions, $\boxtimes^{\L}$ preserves cartesian and cocartesian arrows in $\ho\mc C$.
\end{prop}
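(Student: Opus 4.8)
The plan is to reduce the claim to the characterization of (co-)cartesian arrows in $\ho\mc C$ given by \autoref{prop:homotopy_cartesian_arrows}, together with the canonical comparison of derived composites supplied by \autoref{prop:passing_natural_trans_to_derived_functors}. I argue the cartesian case; the cocartesian case is formally dual, using (SMBF6) in place of (SMBF7) and the cocartesian half of \autoref{prop:homotopy_cartesian_arrows}. Since $\boxtimes^{\L}$ induces a functor on arrow categories, and cartesianness of an arrow is invariant under isomorphism of arrows, by \autoref{prop:homotopy_cartesian_arrows} it suffices to take cartesian arrows of $\mc C$ of the form $\phi\colon f^*Y \to Y$ over $f\colon A \to B$ and $\phi'\colon g^*Z \to Z$ over $g\colon A' \to B'$ with $Y$ and $Z$ fibrant, and to show that $\phi \boxtimes^{\L} \phi'$ is cartesian in $\ho\mc C$. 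Note that by (BF4) the objects $f^*Y$ and $g^*Z$ are then fibrant, so $\phi$ and $\phi'$ already represent cartesian arrows of $\ho\mc C$.

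First I would produce a derived interchange equivalence. Because $\mc C$ is an SMBF, $\boxtimes$ preserves cartesian arrows (\autoref{df:SMBF}); concretely this means the canonical comparison map
\[ f^*Y \boxtimes g^*Z \ \longrightarrow\ (f\times g)^*(Y \boxtimes Z) \]
is an isomorphism in $\mc C^{A \times A'}$, and it sits in a commuting triangle with the cartesian arrow $\phi \boxtimes \phi'\colon f^*Y \boxtimes g^*Z \to Y \boxtimes Z$ and the cartesian arrow $(f\times g)^*(Y \boxtimes Z) \to Y \boxtimes Z$. Now apply \autoref{prop:passing_natural_trans_to_derived_functors} to the two composable lists $\bigl(\,f^*\times g^* \text{ then } \boxtimes\,\bigr)$ and $\bigl(\,\boxtimes \text{ then } (f\times g)^*\,\bigr)$ of functors out of $\mc C^B \times \mc C^{B'}$, using the coherent deformation furnished by (SMBF7): a full subcategory $\mc A \subseteq \mc C$, preserved by $\boxtimes$ and by every pullback, containing the image of a finite composite of fiberwise cofibrant and fibrant replacements, on which $\boxtimes$ and the pullbacks agree with their derived functors. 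The common starting subcategory of $\mc C^B \times \mc C^{B'}$ is $(\mc A \cap \mc C^B) \times (\mc A \cap \mc C^{B'})$; both lists carry it into $\mc A$, which is where the hypotheses of (SMBF7) apply, and one checks (routinely, from (SMBF7) and (BF4)) that the coherent deformability conditions of \autoref{coherently_deformable} hold. This yields a canonical equivalence, vertical over $A \times A'$,
\[ \R f^* Y \boxtimes^{\L} \R g^* Z \ \simeq\ \R(f\times g)^*\bigl(Y \boxtimes^{\L} Z\bigr). \]

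The heart of the argument is then to promote this to an isomorphism of \emph{arrows} over $f \times g$. The triangle above expresses $\phi \boxtimes \phi'$ on the point-set level as the composite of the interchange isomorphism with the cartesian arrow $(f\times g)^*(Y \boxtimes Z) \to Y \boxtimes Z$. Since the recipe of \autoref{prop:passing_natural_trans_to_derived_functors} respects vertical and horizontal composition of natural transformations, this triangle descends to a commuting triangle in $\ho\mc C$ exhibiting $\phi \boxtimes^{\L} \phi'$ as the composite of the displayed vertical equivalence with the canonical cartesian arrow $\R(f\times g)^*(Y \boxtimes^{\L} Z) \to Y \boxtimes^{\L} Z$ of the bifibration $\ho\mc C$ from \autoref{prop:deform_bifibration}. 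A composite of a vertical isomorphism with a cartesian arrow is cartesian, so $\phi \boxtimes^{\L} \phi'$ is cartesian in $\ho\mc C$. Dualizing, with $f_!$ and $g_!$ replacing the pullbacks, the isomorphism $f_!X \boxtimes g_!X' \cong (f\times g)_!(X \boxtimes X')$, and the deformation from (SMBF6), handles the cocartesian arrows.

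I expect the main obstacle to be the bookkeeping in the last two paragraphs: verifying that the derived interchange equivalence really is compatible with the pullback structure of $\ho\mc C$, so that cartesianness transports along it. This is not conceptually deep, but it forces one to be careful about which composable lists of functors and which common deformation subcategories are in play, and to use that \autoref{prop:passing_natural_trans_to_derived_functors} behaves as a (partially defined) $2$-functor; one cannot take the shortcut of citing ``coherently left-deformable'', since (SMBF6) and (SMBF7) only provide a composite of cofibrant \emph{and} fibrant replacements, not a single one-sided deformation.
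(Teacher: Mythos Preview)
Your outline is correct and matches the paper's \emph{second} proof: derive the interchange isomorphism and verify that it is compatible with the cartesian structure, so that $\phi \boxtimes^{\L} \phi'$ is isomorphic as an arrow to the canonical cartesian arrow $\R(f\times g)^*(Y\boxtimes^{\L} Z) \to Y\boxtimes^{\L} Z$. The compatibility you flag as ``the main obstacle'' is indeed the work: \autoref{prop:passing_natural_trans_to_derived_functors} concerns natural transformations between composites of functors on homotopical categories, not individual arrows lying over non-identity maps in $\bS$, so one cannot simply invoke its 2-functoriality to push the point-set triangle down. The paper carries this out with an explicit commuting diagram, tracing the cofibrant replacement $Q$ (used to model $\boxtimes^{\L}$) and the fibrant replacement $R$ (used for $\R(f\times g)^*$) around the triangle until the two routes visibly agree.

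The paper also gives a shorter first proof that sidesteps the interchange entirely. Every cocartesian arrow in $\ho\mc C$ is isomorphic to one with source in the subcategory $\mc A$ of (SMBF6), since $\mc A$ contains the image of a composite of replacements. On such inputs $\boxtimes^{\L}\simeq\boxtimes$, so $\boxtimes^{\L}$ of two cocartesian arrows is represented by their point-set $\boxtimes$, which is cocartesian in $\mc C$ because $\mc C$ is an SMBF; the tensor of the sources remains in $\mc A$, so $(f\times g)_!$ is already derived there, and by \autoref{prop:homotopy_cartesian_arrows} the arrow is cocartesian in $\ho\mc C$. The cartesian case is dual. This is cleaner, but does not yield the corollary---used later in the paper---that the interchange isomorphism coming from universal properties in $\ho\mc C$ agrees with the one obtained by deriving the point-set interchange via \autoref{prop:passing_natural_trans_to_derived_functors}. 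That extra content is precisely what the second proof, and your approach, establishes.
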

\begin{proof}
	We give two proofs. By \autoref{prop:homotopy_cartesian_arrows}, a co-cartesian arrow in $\ho\mc C$ is isomorphic to a co-cartesian arrow of the form $Q_A X \to f_!Q_A X$, where $Q_A$ is the left deformation used for $f_!$. Taking $Y \in \mc A^A$ isomorphic to $X$ in $\ho\mc C^A$, we get a zig-zag of weak equivalences $Y \overset\sim\leftarrow Q_A Y \ldots Q_A X$ in $\mc C^A$, all of which are preserved by $f_!$. Therefore without loss of generality each of our co-cartesian arrows has source in $\mc A$. Since $\boxtimes$ preserves equivalences on $\mc A$, we have $\boxtimes^{\L} \simeq \boxtimes$ on this subcategory, hence $\boxtimes^{\L}$ of our two co-cartesian arrows is equivalent to $\boxtimes$ of them, which is co-cartesian in $\mc C$. The tensor product of the sources is still in $\mc A$, hence $(f\times g)_!Q_{A \times B} \simeq (f\times g)_!$ on this source, so this co-cartesian arrow is also co-cartesian in $\ho\mc C$. The proof for cartesian arrows is dual.
	
	The second proof is more explicit, but establishes the helpful corollary that the canonical isomorphisms
\begin{align*}
\L f_!X \boxtimes^{\L} \L g_!Y &\simeq \L(f \times g)_!(X \boxtimes^{\L} Y), \\
\R f^*X \boxtimes^{\L} \R g^*Y &\simeq \R(f \times g)^*(X \boxtimes^{\L} Y)
\end{align*}
	arising from the universal property of co/cartesian arrows in $\ho\mc C$ agree with the isomorphisms produced by applying \autoref{prop:passing_natural_trans_to_derived_functors} to the corresponding isomorphisms in $\mc C$. For the co-cartesian arrows, we assume $X$ and $Y$ are in $\mc A$ and form the following diagram in which the top row is the canonical co-cartesian arrow in $\ho\mc C$ for $X \boxtimes^{\L} Y$, and the left-hand column is $\boxtimes^{\L}$ of the canonical co-cartesian arrows for $X$ and $Y$. The bottom-right route is the isomorphism arising from \autoref{prop:passing_natural_trans_to_derived_functors}. All maps marked $\sim$ or $\cong$ lie over identity maps in $\bS$, and all others lie over $f \times g$. The commutativity of the diagram establishes that the isomorphism from \autoref{prop:passing_natural_trans_to_derived_functors} extends to an isomorphism of co-cartesian arrows, hence agrees with the canonical isomorphism arising from $\ho\mc C$.
	\[ \xymatrix @R=1.7em{
		QX \boxtimes QY \ar[rd]^-\sim & Q_{A \times B}(Q X \boxtimes Q Y) \ar[l]_-\sim \ar[d]^-\sim \ar[r] & (f \times g)_! Q_{A \times B}(Q X \boxtimes Q Y) \ar[d]^-\sim \\
		QQ_A X \boxtimes QQ_B Y \ar[u]_-\sim \ar[d] \ar[r]^-\sim & X \boxtimes Y \ar[d] \ar[r] & (f \times g)_! (X \boxtimes Y) \ar@{<->}[ld]^-\cong \\
		Qf_!Q_A X \boxtimes Qg_!Q_B Y \ar[r]^-\sim & f_!X \boxtimes g_!Y
	} \]
	For the pullbacks the argument is exactly the same, but the commuting diagram is as follows, with the two cartesian arrows we wish to compare running along the bottom and right-hand sides.
	\[ \xymatrix @R=1.7em @C=3em{
		f^*X \boxtimes g^*Y \ar@{<->}[d]^-\cong \ar[rd] & Qf^*X \boxtimes Qg^*Y \ar[l]_-\sim \ar[r]^-\sim \ar[rddd] & Qf^*R_A X \boxtimes Qg^*R_B Y \ar[dd] \\
		(f \times g)^* (X \boxtimes Y) \ar[d]^-\sim \ar[r] & X \boxtimes Y \ar[d]^-\sim & \\
		(f \times g)^* R_{A \times B}(X \boxtimes Y) \ar[r] & R_{A \times B}(X \boxtimes Y) & QR_A X \boxtimes QR_B Y \\
		(f \times g)^* R_{A \times B}(QX \boxtimes QY) \ar[u]_-\sim \ar[r] & R_{A \times B}(QX \boxtimes QY) \ar[u]_-\sim & QX \boxtimes QY \ar[u]_-\sim \ar[l]_-\sim \ar[luu]_-\sim
	} \]
\end{proof}

\begin{cor}\label{spaces_are_a_bifibration}
	The homotopy category of all retractive spaces $\ho\mc R$ is a symmetric monoidal bifibration over $\cat{Top}$ with Beck-Chevalley on the homotopy pullback squares.
\end{cor}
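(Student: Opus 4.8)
The plan is to deduce this by feeding the point-set SMBF $\mc R \to \cat{Top}$ of \autoref{prop:spaces_SMBF} into the deformation machinery of \autoref{sec:deriving_SMBFs}. So the work consists of verifying the seven hypotheses (SM1), (SM2), (BF3), (BF4), (BF5), (SMBF6), (SMBF7) and then quoting \autoref{prop:deform_sym_mon_cat}, \autoref{prop:deform_bifibration}, and \autoref{prop:deform_SMBF} in sequence. Throughout I would take ``cofibrant'' to mean $h$-cofibrant (replacement: the whiskering functor, or $Q$ from \autoref{prop:q}) and ``fibrant'' to mean $q$-fibrant (replacement: the functorial one from the cofibrantly generated model structure of \autoref{prop:q}).

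First I would dispatch the cofibrant-side conditions. (SM1) and (SM2) are exactly the assertion that $(\mc R,\barsmash,S^0)$ is left-deformable, which follows from \autoref{prop:h_cofibrations_pushout_product} (parts (1) and (4), using symmetry and 2-out-of-3 to pass from the one-variable statement to the two-variable one) together with the fact that $S^0$ over $*$ is $h$-cofibrant; this produces the symmetric monoidal structure $(\ho\mc R,\barsmash^{\L},S^0)$ via \autoref{prop:deform_sym_mon_cat}. Both (BF3) and (SMBF6) then reduce to the statement that $f_!$ and $\barsmash$ preserve $h$-cofibrant spaces and weak equivalences between them, which is \autoref{lem:f_shriek_preserves} and \autoref{prop:h_cofibrations_pushout_product} once more, with the single common replacement functor on each fiber serving as the required deformation.

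Next the fibrant-side and mixed conditions. (BF4) is immediate from \autoref{lem:f_star_preserves}: each $f^*$ preserves $q$-fibrant spaces and weak equivalences between them, with the functorial $q$-fibrant replacement as the common deformation. For (SMBF7) I expect to need a two-stage deformation rather than one fibrant replacement: take in each fiber the subcategory of spaces that are simultaneously $f$-cofibrant and $h$-fibrant; the composite of whiskering with $P$ lands there by \autoref{prop:px_properties} (and the remark after it) and comes with a natural zig-zag of weak equivalences to the identity, $\barsmash$ preserves this class by \autoref{prop:h_cofibrations_pushout_product}(2)--(3) (the $h$-fibrant part via \autoref{prop:clapp}), and each $f^*$ preserves it by \autoref{lem:f_star_preserves}, so on it both $\barsmash$ and the $f^*$ agree with their derived functors. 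For (BF5) I would declare the homotopy Beck--Chevalley squares to be the pullback squares in which one of the bottom legs $p$, $q$ is an $h$-fibration (hence so is its pullback along the other side, by \autoref{lem:f_star_preserves}); each satisfies Beck--Chevalley in $\mc R$ by \autoref{prop:beck_chevalley_spaces}, and coherent deformability of the two composites $f_!g^*$ and $q^*p_!$ on the common subcategory of $h$-cofibrant-and-$h$-fibrant spaces holds because, when $q$ is an $h$-fibration, $q^*$ preserves \emph{all} weak equivalences while $g^*$ preserves $h$-cofibrations and lands where $f_!$ is correctly derived, and $p_!$ lands in $h$-cofibrant spaces where it is correctly derived. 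This is the content of \autoref{ex:derived_beck_chevalley}, now applied uniformly across all maps.

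With all seven conditions in hand, \autoref{prop:deform_bifibration} makes $\ho\mc R \to \cat{Top}$ a bifibration with $(\ho\mc R)^B \cong \ho\mc R(B)$ and Beck--Chevalley on the chosen squares, \autoref{prop:deform_SMBF} makes $\barsmash^{\L}$ preserve cartesian and cocartesian arrows, and strict symmetric monoidality of the projection is inherited from $\mc R$ (the objects and the base are unchanged); this gives the SMBF. The last step is to enlarge the Beck--Chevalley class to \emph{all} homotopy pullback squares: every such square is weakly equivalent to one of the chosen squares, and $\L w_!$, $\R w^*$ are equivalences of homotopy categories for $w$ a weak equivalence by \autoref{spaces_quillen_adj}, so \autoref{expand_beck_chevalley} applies. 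I expect the genuinely delicate points to be (BF5) and (SMBF7): in each one must choose a class of objects that is at once cofibrant enough for the $f_!$'s and $\barsmash$, fibrant enough for the $f^*$'s, and closed under all the functors in play simultaneously — the enabling facts being that $\barsmash$ of $f$-cofibrant-and-$h$-fibrant spaces is again such, and that pullback along an $h$-fibration is fully homotopical.
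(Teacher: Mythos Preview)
Your proposal is correct and takes essentially the same approach as the paper's proof, which simply says to verify the seven conditions (SM1), (SM2), (BF3), (BF4), (BF5), (SMBF6), (SMBF7) using the earlier results on $\barsmash$, $f_!$, $f^*$, $f$-cofibrations, $h$-fibrations, and weak equivalences, and then apply \autoref{expand_beck_chevalley}. You have filled in exactly the details the paper leaves implicit, including the correct choice of the $f$-cofibrant-and-$h$-fibrant subcategory for (SMBF7) (matching \autoref{ex:derived_external_smash_and_base_change}) and the restriction to squares where one leg is an $h$-fibration for (BF5) (matching \autoref{ex:derived_beck_chevalley}).
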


\begin{proof}
	We verify the seven conditions (SM1), (SM2), (BF3), (BF4), (BF5), (SMBF6), and (SMBF7) using all of the earlier results concerning $\barsmash$, $f_!$, $f^*$, $f$-cofibrations, $h$-fibrations, and weak equivalences. This gives an SMBF with Beck-Chevalley for strict pullbacks along fibrations. By \autoref{expand_beck_chevalley}, and the fact that every homotopy pullback square is weakly equivalent to a strict pullback along a fibration, we therefore get Beck-Chevalley for all homotopy pullback squares.
\end{proof}

\subsection{Pulling back an SMBF}

In this short section we recall from \cite[12.8]{shulman_framed_monoidal} a method for pulling back symmetric monoidal structures along a map of base categories $F\colon \bT \to \bS$, and give conditions under which the pullback of an SMBF is again an SMBF.

Suppose that $\Phi\colon \sC \to \bS$ is any symmetric monoidal bifibration (SMBF), with product $\boxtimes$. Let $F\colon \bT \to \bS$ be any functor from another category $\bT$ that has all finite products.

\begin{df}
The pullback category $F^*\sC$ has as objects the pairs $(X,a)$ with $X \in \ob\sC$, $a \in \ob\bT$, and $F(a) = \Phi(X)$. Similarly, the morphisms are morphisms in $\sC$ and $\bT$ lying over the same morphism in $\bS$.
\end{df}

Given any two pairs $(X,a)$ and $(X',b)$ in the pullback category $F^*\sC$, we can take the product $X \boxtimes X'$ in $\sC$, and then choose a cartesian arrow in $\sC$ of the form
\[ \xymatrix @R=1.7em{ X \otimes X' \ar@{~>}[d] \ar[r] & X \boxtimes X' \ar@{~>}[d] \\
	F(a \times b) \ar[r] & F(a) \times F(b).
} \]
This defines a new object $X \otimes X' \in \sC$. Similarly, we can define an object $I_\otimes$ using a cartesian arrow
\[ \xymatrix @R=1.7em{ I_\otimes \ar@{~>}[d] \ar[r] & I_\boxtimes \ar@{~>}[d] \\
	F({*}_\bT) \ar[r] & {*}_\bS
} \]
where $*$ denotes a terminal object, or empty product. These constructions define a tensor product and unit on the pullback category $F^*\sC$, and it is straightforward to use the universal properties of cartesian arrows to give associator, unitor, and symmetry isomorphisms making $F^*\sC$ into a symmetric monoidal category.

We give conditions under which this pulled-back symmetric monoidal structure makes $F^*\sC$ into an SMBF, for a chosen class of Beck-Chevalley squares in $\bT$. Suppose that
\begin{itemize}
	\item $F$ preserves Beck-Chevalley squares, and
	\item for any two maps $A \to A'$, $B \to B'$ in $\bT$ the square
	\[ \xymatrix @R=1.7em{
		F(A \times B) \ar[d] \ar[r] & F(A) \times F(B) \ar[d] \\
		F(A' \times B') \ar[r] & F(A') \times F(B')
	} \]
	is Beck-Chevalley in $\bS$.
\end{itemize}

\begin{prop}\label{pullback_SMBF}
	Under these assumptions, for any SMBF $\sC$ over $\bS$, the pullback $F^*\sC$ is an SMBF over $\bT$.
\end{prop}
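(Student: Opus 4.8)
The plan is to check the four ingredients of \autoref{df:SMBF} one at a time; after the construction of $F^*\sC$ recalled above, three of them are essentially already in hand, and the real content is a single compatibility.

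First I would note that $F^*\sC \to \bT$ is a bifibration, by the standard fact that a bifibration pulls back along any functor. Given $f\colon a\to b$ in $\bT$ and $(Y,b)\in F^*\sC$, a cartesian arrow over $f$ is $(F(f)^*Y,a)\to(Y,b)$, built from a cartesian arrow $F(f)^*Y\to Y$ of $\sC$ over $F(f)$; its universal property follows from that of the cartesian arrow in $\sC$ together with the fact that a morphism of $F^*\sC$ is merely a pair of morphisms of $\sC$ and $\bT$ lying over a common morphism of $\bS$, and dually for co-cartesian arrows. In particular the pullback and pushforward functors of $F^*\sC$ over $f$ are the $\sC$-functors $F(f)^*$ and $F(f)_!$, so a commuting square of $\bT$ is Beck--Chevalley in $F^*\sC$ exactly when its $F$-image is Beck--Chevalley in $\sC$; the first hypothesis on $F$ then guarantees that the chosen squares of $\bT$ are Beck--Chevalley in $F^*\sC$. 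Next, the symmetric monoidal structure $(F^*\sC,\otimes,I_\otimes)$ and the cartesian monoidal structure $(\bT,\times,*)$ have been described above, and the projection $\Phi'\colon F^*\sC\to\bT$ is strict symmetric monoidal by construction: $\Phi'\big((X,a)\otimes(X',b)\big)=a\times b$, $\Phi'(I_\otimes)=*_\bT$, and the associator, unitor and symmetry of $F^*\sC$ — defined from those of $\sC$ and $\bS$ by the universal property of cartesian arrows — lie over the canonical isomorphisms of $(\bT,\times)$, using the coherence of the comparison maps $F(a\times b)\to F(a)\times F(b)$ (i.e. the canonical lax monoidality of $F$).

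It then remains to verify that $\otimes$ preserves cartesian and co-cartesian arrows. For cartesian arrows this should be purely formal: if $(X,a)=f^*(W,a')$ and $(X',b)=g^*(W',b')$, then, using that $\boxtimes$ preserves cartesian arrows in $\sC$, that pullbacks in $\sC$ compose, and the naturality of the comparison maps, one identifies $X\otimes X'$ with $F(f\times g)^*(W\otimes W')$ compatibly with the structure maps — no Beck--Chevalley input is needed. The substantial point is preservation of co-cartesian arrows, and this is where the second hypothesis on $F$ is used. Let $\phi\colon(X,a)\to(Y,b)$ and $\phi'\colon(X',a')\to(Y',b')$ be co-cartesian over $f\colon a\to b$ and $f'\colon a'\to b'$. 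Since $\boxtimes$ preserves co-cartesian arrows, $\phi\boxtimes\phi'\colon X\boxtimes X'\to Y\boxtimes Y'$ is co-cartesian over $F(f)\times F(f')$ in $\sC$. By construction $X\otimes X'=u^*(X\boxtimes X')$ and $Y\otimes Y'=v^*(Y\boxtimes Y')$ for the comparison maps $u\colon F(a\times a')\to F(a)\times F(a')$ and $v\colon F(b\times b')\to F(b)\times F(b')$, and the commuting square
\[ \xymatrix @R=1.7em{
	F(a\times a') \ar[r]^-u \ar[d]_-{F(f\times f')} & F(a)\times F(a') \ar[d]^-{F(f)\times F(f')} \\
	F(b\times b') \ar[r]_-v & F(b)\times F(b')
} \]
is one of those declared Beck--Chevalley in $\sC$ by the second hypothesis. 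Its mate $F(f\times f')_!\,u^*\Rightarrow v^*\,(F(f)\times F(f'))_!$ is therefore an isomorphism, and evaluating at $X\boxtimes X'$ yields $F(f\times f')_!(X\otimes X')\cong Y\otimes Y'$.

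The one step that needs genuine care — and which I expect to be the main obstacle — is to match this Beck--Chevalley isomorphism with the morphism $\phi\otimes\phi'$ of $F^*\sC$: concretely, to show that the composite of the co-cartesian arrow $X\otimes X'\to F(f\times f')_!(X\otimes X')$, the mate isomorphism, and the cartesian arrow $Y\otimes Y'\to Y\boxtimes Y'$ coincides with the composite of the cartesian arrow $X\otimes X'\to X\boxtimes X'$ with $\phi\boxtimes\phi'$. This is the standard compatibility of a Beck--Chevalley mate with the (co-)cartesian arrows (an unwinding of the mate via the triangle identities), so it should be a diagram chase rather than anything deep. Granting it, the uniqueness clause in the universal property of the cartesian arrow $Y\otimes Y'\to Y\boxtimes Y'$ forces $\phi\otimes\phi'$ to equal the composite of a co-cartesian arrow with an isomorphism, hence to be co-cartesian. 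Everything else in the argument is bookkeeping with the universal properties of (co-)cartesian arrows, so once this compatibility is pinned down all the conditions of \autoref{df:SMBF} are verified and $F^*\sC$ is an SMBF over $\bT$.
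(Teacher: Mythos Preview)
Your proposal is correct and follows essentially the same approach as the paper: both reduce the substantive content to preservation of co-cartesian arrows, both invoke the Beck--Chevalley isomorphism for the square $F(a\times a')\to F(a)\times F(a')$, and both recognize that the crux is identifying the map induced by $\otimes$ with the Beck--Chevalley mate. The paper carries out precisely the diagram chase you anticipate (your ``main obstacle''): it characterizes the relevant map by a uniqueness property and then verifies that the Beck--Chevalley map satisfies that property via an explicit subdivided diagram of cartesian/co-cartesian arrows and (co)units. Your instinct that this step is ``a diagram chase rather than anything deep'' matches the paper's treatment, and your observation that cartesian arrows need no Beck--Chevalley input is also what the paper finds (``the proof for cartesian arrows is much shorter'').
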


\begin{proof}	  
	We encounter no difficulty in checking that $F^*\sC$ is a bifibration with (co)cartesian arrows those arrows that are (co)cartesian in $\sC$, and that $F^*\sC$ has Beck-Chevalley for the chosen class of squares in $\bT$. It remains to check that the tensor product $\otimes$ preserves (co)cartesian arrows in $F^*\sC$.
	
	For the cocartesian arrows, it suffices to check that in the square of objects in $\sC$ depicted on the left, lying over the square in $\bS$ on the right, the left vertical arrow is cocartesian.
	\[ \xymatrix{
		X \otimes Y \ar@{-->}[d] \ar[r] & X \boxtimes Y \ar[d] \\
		\phi_!X \otimes \gamma_!Y \ar[r] & \phi_!X \boxtimes \gamma_!Y
	}
	\qquad
	\xymatrix{
		F(A \times B) \ar[d]_-g \ar[r]^-f & F(A) \times F(B) \ar[d]^-k \\
		F(A' \times B') \ar[r]_-h & F(A') \times F(B').
	} \]
	To do this, replace the left-hand square by the isomorphic square
	\[ \xymatrix @R=1.7em{
		f^*(X \boxtimes Y) \ar@{-->}[d] \ar[r] & X \boxtimes Y \ar[d] \\
		h^*k_!(X \boxtimes Y) \ar[r] & k_!(X \boxtimes Y).
	} \]
	and show that the induced map $g_!f^*(X \boxtimes Y) \to h^*k_!(X \boxtimes Y)$ is an isomorphism. As one might expect, this turns out to be precisely the Beck-Chevalley map. It is defined by the universal property of cartesian arrows, which re-arranges into the statement that it is the unique map making this square commute, where $Z = X \boxtimes Y$.
	\[ \xymatrix @R=1.7em{
		f^*Z \ar[d] \ar[r] & Z \ar[r] & k_!Z \\
		g_!f^*Z \ar@{-->}[rr] && h^*k_!Z \ar[u]
	} \]
	We prove that the Beck-Chevalley map has this defining property by subdividing this square as follows.
	\[ \xymatrix @R=1.7em{
		f^*Z \ar[r]^-{cart} \ar[rd]^-{cocart} \ar[d]^-{cocart} & Z \ar[r]^-{cocart} & k_!Z & \\
		g_!f^*Z \ar[dd]^-{coev} \ar[rd]^-{cocart} & f_!f^*Z \ar[u]^-{ev} \ar[rd]^-{cocart} & & h^*k_!Z \ar[ul]^-{cart} \\
		& h_!g_!f^*Z \ar[r]^-\cong & k_!f_!f^*Z \ar[uu]^-{ev} & \\
		h^*h_!g_!f^*Z \ar[ur]^-{cart} \ar[rrr]^-\cong &&& h^*k_!f_!f^*Z \ar[ul]^-{cart} \ar[uu]^-{ev}
	} \]
	This finishes the proof that the tensor preserves cocartesian arrows. The proof for cartesian arrows is much shorter.
\end{proof}

\section{Parametrized spectra}\label{sec:spectra}

In this section we recall the definition of parametrized spectra and the smash product, but we don't define the stable equivalences between them, only the level equivalences. We prove several technical preliminaries before moving on to the stable equivalences in the next section.

We also define the convenient notions of ``freely $f$-cofibrant'' and ``level $h$-fibrant,'' and prove that they interact well with the level equivalences. The most involved and technical part of this is \autoref{prop:spectra_pushout_product}, which requires all of \S\ref{sec:reedy} for its proof.

\beforesubsection
\subsection{Basic definitions and examples}\aftersubsection

A parametrized spectrum is nothing more than a diagram spectrum in the sense of \cite{mmss}, except that the based spaces in the diagram are replaced by retractive spaces in $\mc R(B)$, and we use the external smash product when defining the bonding maps.

Define $S^n$ to be the one-point compactification of $\R^n$, regarded as a retractive space over $*$. For $X \in \mc R(B)$ we regard $\Sigma_B X := S^1 \barsmash X$ as a space over $B$, not $* \times B$.

\begin{df}\hfill
	
	\vspace{-1em}
	\begin{itemize}
		\item A \textbf{(parametrized) sequential spectrum} or \textbf{prespectrum} over $B$ is a sequence of retractive spaces
		\[ X_n \in \mc R(B), \qquad n \geq 0, \]
		together with bonding maps $\sigma\colon \Sigma_B X_n \to X_{1+n}$.\index{sequential spectra $\Psp(B)$}
		\item A \textbf{(parametrized) orthogonal spectrum} over $B$ is a sequential spectrum with a continuous action of the orthogonal group $O(n)$ on $X_n$ through maps of retractive spaces, such that the composite
		\[ \sigma^p\colon S^p \barsmash X_q \to S^{p-1} \barsmash X_{1+q} \to \ldots \to S^{1} \barsmash X_{(p-1)+q} \to X_{p + q} \]
		is $O(p) \times O(q)$-equivariant. A symmetric spectrum is defined similarly but with the symmetric group $\Sigma_n$ instead of $O(n)$.\index{orthogonal spectra!category $\Osp(B)$}
	\end{itemize}
\end{df}

\begin{rmk}
	We use the term \textbf{spectra} any time our discussion applies equally well to sequential spectra and to orthogonal spectra. We avoid symmetric spectra in this document because they require an additional discussion of semistability, see \cite{schwede_symmetric_spectra,braunack2018rational,hebestreit_sagave_schlichtkrull}.
\end{rmk}

Because external smash commutes with pullback, for any parametrized spectrum $X$ over $B$ and $b \in B$, the fibers $X_b$ form a (non-parametrized) spectrum, which we call the \textbf{fiber spectrum}\index{fiber spectrum} at $b$.

The most basic parametrized spectra are \textbf{fiberwise suspension spectra}\index{fiberwise!suspension spectrum} \index{fiberwise!suspension spectrum} $\Sigma^\infty_B X$ for $X$ a retractive space over $B$. We define these by taking external smash product of $X$ with the $n$-sphere $S^n$ over $*$ for every value of $n$. If we instead have a space of the form $X_{+B} = X \amalg B$ we denote this suspension spectrum $\Sigma^\infty_{+B} X$.

If $X$ is a parametrized spectrum, the bonding maps have adjoints
\[ X_n \to \Omega_B X_{1+n} := \barmap_*(S^1,X_{1+n}). \]
Furthermore the loop space functor will be derived if $X_{1+n} \to B$ is a fibration. We say $X$ is a \textbf{(weak) $\Omega$-spectrum}\index{$\Omega$-spectrum} if the composite
\[ X_n \to \Omega_B X_{1+n} \to \R\Omega_B X_{1+n} \]
is a weak equivalence.

Recall from \cite{mmss} that the topological category $\mathscr J$\index{orthogonal spectra!indexing category $\mathscr J$} has objects the finite-dimensional subspaces $V$ of some fixed inner product space isomorphic to $\R^\infty$. For each pair $V$ and $W$, let $O(V,W)$ be the space of all linear isometric embeddings of $V$ into $W$. This has a vector bundle whose fiber consists of the orthogonal complement of $V$ in $W$, and we let $\mathscr J(V,W)$ be the Thom space of this vector bundle. More concretely, we have the formula
\[ \mathscr J(\R^n,\R^{m+n}) \cong O(m+n)_+ \sma_{O(m)} S^m. \]
The composition in $\mathscr J$ composes linear embeddings and adds points in their orthogonal complements.

\begin{df}
	A parametrized $\mathscr J$-space is an enriched $\mathscr J$-diagram in $\mc R(B)$. Concretely, this means for each object $V$ an object $X(V) \in \mc R(B)$, and for each pair $V,W$ a map in $\mc R(B)$
	\[ \mathscr J(V,W) \barsmash X(V) \to X(W) \]
	such that all of the following ``associativity'' and ``unit'' diagrams commute:
	\[ \xymatrix @R=1.7em{
		\mathscr J(V,W) \barsmash \mathscr J(U,V) \barsmash X(U) \ar[r] \ar[d] & \mathscr J(V,W) \barsmash X(V) \ar[d] \\
		\mathscr J(U,W) \barsmash X(U) \ar[r] & X(W)
	}
	\qquad
	\xymatrix @R=1.7em{
		X(V) \ar[r] \ar@{=}[dr] & \mathscr J(V,V) \barsmash X(V) \ar[d] \\
		& X(V).
	}
	\]
\end{df}

\begin{lem}\label{change_universe_equivalence}
	Restricting to $V = \R^n$ gives an equivalence of categories from parametrized $\mathscr J$-spaces to parametrized orthogonal spectra. 
\end{lem}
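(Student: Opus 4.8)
The plan is to exhibit the equivalence as an instance of the standard comparison between diagram spectra indexed by $\mathscr{J}$ and sequential spectra with orthogonal group actions, carried out one base space at a time. First I would recall that for each fixed $B$, the category $\mc R(B)$ is tensored over based spaces via the internal half-smash $K \sma (-) := \Delta_B^*(K \barsmash -)$ for $K$ a based space over $*$, and that with this tensoring the definition of a parametrized $\mathscr{J}$-space over $B$ is literally the definition of a $\mathscr{J}$-diagram in the enriched sense of \cite{mmss}, while a parametrized orthogonal spectrum over $B$ is a sequential object in $\mc R(B)$ with compatible $O(n)$-actions. So the statement for $\mc R(B)$ follows from the corresponding statement in \cite{mmss} once we observe that the relevant enriched-category formalism goes through verbatim in any bicomplete category tensored over based spaces, which $\mc R(B)$ is.

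Next I would make the restriction functor explicit: given a parametrized $\mathscr{J}$-space $X$, set $X_n = X(\R^n)$, equip it with the $O(n)$-action coming from $O(n) = \mathscr{J}(\R^n,\R^n) \cap \{\text{isometries}\}$ acting through the structure maps $\mathscr{J}(\R^n,\R^n) \barsmash X(\R^n) \to X(\R^n)$, and take the bonding map $\Sigma_B X_n = S^1 \barsmash X(\R^n) \to X(\R^{1+n})$ to be the structure map associated to the standard inclusion $\R^n \hookrightarrow \R^{1+n}$ (whose Thom space receives $S^1$). One checks from the associativity and unit diagrams that the iterated bonding maps $\sigma^p$ are $O(p)\times O(q)$-equivariant, so this does land in parametrized orthogonal spectra. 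For the inverse, I would use the left Kan extension along $\{\R^n\} \hookrightarrow \mathscr{J}$ in the enriched sense, exactly as in \cite{mmss}: given the sequential data one reconstructs $X(V)$ for general $V$ by choosing an isometry $V \cong \R^n$ and descending along the $O(n)$-action, and the Thom-space formula $\mathscr{J}(\R^n,\R^{m+n}) \cong O(m+n)_+ \sma_{O(m)} S^m$ recorded in the excerpt is precisely what makes this reconstruction well-defined and functorial.

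Finally I would verify that these two functors are mutually inverse up to natural isomorphism, and then promote the fiberwise statement to the global one. The composite (restrict then Kan extend) is naturally isomorphic to the identity on $\mathscr{J}$-spaces because left Kan extension along a full and faithful enriched functor is a section up to iso — and $\{\R^n\} \hookrightarrow \mathscr{J}$ is essentially surjective and full in the enriched sense — while the other composite is the identity on the nose on the underlying sequences since we have not changed $X(\R^n)$ or its $O(n)$-action. All of this is compatible with the base-change functors $f^*$ (since $f^*$ commutes with $\barsmash$ and with all the relevant colimits, by \autoref{lem:external_smash_and_base_change} and \autoref{prop:f_star_colimits}), so the equivalences assemble into an equivalence over varying $B$ as well.

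I expect the main obstacle to be bookkeeping rather than anything deep: one must be careful that the enriched left Kan extension actually exists and is computed by the expected coend in $\mc R(B)$ — this needs $\mc R(B)$ to be bicomplete and the tensoring over based spaces to commute with the colimits forming the coend, which holds but should be stated — and one must check the $O(p)\times O(q)$-equivariance condition matches exactly the data encoded by a $\mathscr{J}$-diagram, i.e. that no information is lost or gained in the passage. Since \cite{mmss} proves precisely this in the non-parametrized case and every step of their argument is purely formal in the ambient tensored category, the cleanest write-up simply cites \cite{mmss} for the fiberwise equivalence and adds the short remark that the construction is natural in $B$.
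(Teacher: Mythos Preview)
Your proposal is correct and takes essentially the same approach as the paper: the paper's entire proof is the single sentence ``Identical to the proof in the non-parametrized case \cite{mmss},'' and your write-up is a more detailed unpacking of exactly that observation, noting that the argument is formal in any bicomplete category tensored over based spaces. Your added remarks about naturality in $B$ and compatibility with base change go slightly beyond what the lemma asserts, but they are harmless and correct.
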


\begin{proof} Identical to the proof in the non-parametrized case \cite{mmss}. \end{proof}

Let $\Osp(B)$ denote the category of parametrized $\mathscr J$-spaces. In light of the previous lemma, it is harmless to call this the \textbf{category of orthogonal spectra over $B$}\index{orthogonal spectra!category $\Osp(B)$}. Similarly let $\Psp(B)$ denote the \textbf{category of sequential spectra (prespectra) over $B$}.\index{sequential spectra $\Psp(B)$}

A colimit of a diagram of spectra is computed by taking the colimit in $\mc R(B)$ on each spectrum level separately, and then using the canonical commutation of $\Sigma_B(-)$ with colimits to define the bonding maps. The same applies to limits and $\Omega_B(-)$.\index{colimit}\index{limit}

Since spectra are diagrams, we can construct free spectra as free diagrams. Given $V$ and a retractive space $A$, define the \textbf{free orthogonal spectrum}\index{free spectrum $F_V A$} $F_V A$ to be the parametrized $\mathscr J$-space given by the formula
\[ (F_V A)(W) := \mathscr J(V,W) \barsmash A. \]
This is the left adjoint of the functor that sends a $\mathscr J$-space $X$ to $X(V)$. When $V = \R^n$ we call this orthogonal spectrum $F_n A$.
The analogous construction in sequential spectra gives the \textbf{free sequential spectrum} $F_n A$, which at spectrum level 0 to $(n-1)$ is the zero object of $\mc R(B)$, and at level $(n+k)$ is the $k$-fold fiberwise suspension $\Sigma_B^k A = S^k \barsmash A$.

In both cases, $F_0 = \Sigma^\infty$ is the suspension spectrum functor
\[ (\Sigma^\infty A)(W) = \Sigma^W_B A = S^W \barsmash A. \]

\begin{ex}\label{ex:thom_spectra}
	We may represent a virtual bundle $\xi$ over $B$ by a vector bundle $V$ and integer $n$, and define the \textbf{fiberwise Thom spectrum} of $\xi$ to be
	\[ \Th_B(\xi) := F_n \Th_B(V). \]\index{fiberwise!Thom spectrum $\Th_B(\xi)$}
	This depends on the choice of $V$ and $n$, an also on whether we take $F_n$ in orthogonal spectra or sequential spectra. But \autoref{ex:equivalent_thom_spectra} and \autoref{prop:pre_equiv_to_orth} below imply that different choices give stably equivalent results.
\end{ex}

\begin{df}\label{all_spectra}
	Let $\Osp$\index{$\Osp$} denote the \textbf{category of orthogonal spectra} over all base spaces. The objects are pairs $(B,X)$ where $B$ is a (CGWH) space and $X \in \Osp(B)$. The maps are defined as in \eqref{eq:adjoint} at each spectrum level, commuting with the action of $\mathscr J$.\index{orthogonal spectra!category $\Osp$} This is a bifibration, see \autoref{base_change_spectra}.
\end{df}

\beforesubsection
\subsection{Level equivalences, level fibrations, free cofibrations}\label{sec:convenient_definitions}\aftersubsection

\begin{df}\label{level_fibration}
Let $X$ and $Y$ be spectra over $B$.
\begin{itemize}
	\item A \textbf{level equivalence}\index{level equivalence} is a map of spectra $X \to Y$ inducing a weak equivalence on each level $X(V) \to Y(V)$.
	\item A \textbf{level $h$-fibration}\index{$h$-fibration!level} is a map which is a $h$-fibration at every level, and similarly for $q$-fibrations\index{$q$-fibration!level}.
	\item A \textbf{level $f$-cofibration}\index{$f$-cofibration!level}\index{$h$-cofibration!level}\index{$i$-cofibration!level} is an $f$-cofibration at every level, and similarly for other kinds of cofibrations.
\end{itemize}
Since $X(V) \cong X(\R^n)$ when $n = \dim V$, it is enough to check these conditions for $V = \R^n$.
\end{df}

We will need a slightly stronger kind of cofibration.

\begin{df}\label{freely_f_cofibrant}
	Consider all maps of spectra over $B$ of the form $F_n(K \to X)$, where $n\geq 0$ and $K \to X$ is an $f$-cofibration of $f$-cofibrant retractive spaces over $B$. The class of \textbf{free $f$-cofibrations}\index{$f$-cofibration!free} is the smallest class of maps of spectra containing the above, and closed under pushouts, transfinite compositions, and retracts. In particular, any countable composition of pushouts of coproducts of maps of the form $F_n(K \to X)$ is a free cofibration.
\end{df}

The \textbf{free $q$-cofibrations}, \textbf{free $h$-cofibrations}, and \textbf{free $i$-cofibrations} are defined similarly\index{$q$-cofibration!free}\index{$h$-cofibration!free}\index{$i$-cofibration!free}. For $q$-cofibrations, this class is generated by free cells $F_k(S^{n-1}_{+B}) \to F_k(D^n_{+B})$. When a spectrum is cofibrant in this sense, we say it is \textbf{freely cofibrant}.\footnote{The more obvious term ``free cofibrant'' turns out to be confusing because spectra that are cofibrant in this sense do not have to be free, only built out of free spectra in a certain way.}

\begin{lem}\label{prop:free_implies_level}
	Every free cofibration is a level cofibration.
\end{lem}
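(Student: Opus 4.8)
The plan is to reduce the statement to the generating maps together with a levelwise computation. First I would record that the level $f$-cofibrations (and likewise the level $h$-, $i$-, and $q$-cofibrations) are closed under pushouts, transfinite compositions, and retracts: each is a condition checked one spectrum level at a time, colimits of spectra over $B$ are formed levelwise, retracts of arrows of spectra are levelwise retracts, and each corresponding notion of cofibration in $\mc R(B)$ has these closure properties by \autoref{prop:technical_cofibrations}(1). Since the free cofibrations form the smallest class containing the generators $F_n(K \to X)$ and closed under these three operations, it suffices to show that each generator is a level cofibration of the corresponding type.

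Next I would unwind the free spectrum functor at a level. For orthogonal spectra, from $(F_n A)(\R^m) \cong \mathscr{J}(\R^n,\R^m) \barsmash A$ the map $F_n(K \to X)$ at level $\R^m$ is $\id_{\mathscr{J}(\R^n,\R^m)} \barsmash (K \to X)$; for sequential spectra it is $\id_{S^k} \barsmash (K \to X)$ at level $n+k$ and the identity of the zero object below level $n$. In either case, since smashing with the zero object of $\mc R(\ast)$ produces the zero object of $\mc R(B)$, each nontrivial level is exactly the pushout-product, formed with respect to $\barsmash$, of a basepoint inclusion $\ast \to E$ --- with $E = \mathscr{J}(\R^n,\R^m)$ or $E = S^k$ --- against the map $K \to X$.

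Now I would invoke \autoref{prop:h_cofibrations_pushout_product}. For the free $f$-, $h$-, and $i$-cofibrations the factor $K \to X$ is by hypothesis an $f$-cofibration (resp.\ an $h$-cofibration, a closed inclusion), so the only thing left to check is that $\ast \to E$ is an $f$-cofibration over $\ast$ --- equivalently, a closed inclusion with the homotopy extension property, i.e.\ that $E$ is well-based. Both $S^k$ and $\mathscr{J}(\R^n,\R^m)$ are based CW complexes --- the latter is the Thom space of a vector bundle over the compact manifold of linear isometric embeddings $\R^n \hookrightarrow \R^m$, or a point when $m < n$ --- so this holds, and parts (1)--(2) of \autoref{prop:h_cofibrations_pushout_product} make the pushout-product an $f$-cofibration (resp.\ $h$-cofibration, closed inclusion) over $B$. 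For the free $q$-cofibrations, whose generators are $F_k(S^{n-1}_{+B} \to D^n_{+B})$, the same computation presents the level map as the pushout-product of $\ast \to E$ with the generating $q$-cofibration $S^{n-1}_{+B} \to D^n_{+B}$ of $\mc R(B)$; since $E$ is a CW complex this first map is a $q$-cofibration of based spaces, and \autoref{ex:smashing_spaces_is_left_Quillen} (that $\barsmash$ is a left Quillen bifunctor) yields that the pushout-product is a $q$-cofibration over $B$. The only real content is the pushout-product identification of the second paragraph and the well-basedness of the spaces $\mathscr{J}(V,W)$; the main obstacle, such as it is, is just setting up that identification correctly, after which the argument is formal.
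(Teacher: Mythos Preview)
Your proof is correct and follows essentially the same route as the paper's. The paper's argument is terser---it simply says that at level $k$ the map is ``the identity of $\mathscr J(\R^n,\R^k)$ smashed with $K\to X$'' and then cites \autoref{prop:h_cofibrations_pushout_product} and \autoref{ex:smashing_spaces_is_left_Quillen}---while you unpack this further by writing $\id_E\barsmash(K\to X)$ as the pushout-product $(\ast\to E)\,\square\,(K\to X)$ and recording explicitly that $E$ is well-based and that the level cofibrations have the needed closure properties; these are exactly the implicit steps behind the paper's one-line proof.
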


\begin{proof}
	It suffices to prove that $F_n(K \to X)$ is a level cofibration. At level $k$ it is the identity of $\mathscr J(\R^n,\R^k)$ or $\mathscr N(\R^n,\R^k)$ smashed with $K \to X$, which is a cofibration by \autoref{prop:h_cofibrations_pushout_product} (for $f$, $h$, or $i$-cofibrations) or \autoref{ex:smashing_spaces_is_left_Quillen} (for $q$-cofibrations).
\end{proof}

\begin{lem}
	The free spectrum functor $F_n$ sends cofibrant spaces to freely cofibrant spectra. It sends $h$-cofibrant, $h$-fibrant spaces to level $h$-fibrant spectra.
\end{lem}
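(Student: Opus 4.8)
The plan is to prove the two claims separately, in each case unwinding the relevant definition and invoking a result already in hand. Throughout I use that $F_n$ is left adjoint to evaluation at $\R^n$, so it preserves all colimits --- in particular pushouts, coproducts, and transfinite compositions --- as well as retracts and initial objects.

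\emph{$F_n$ preserves cofibrancy.} A retractive space $A$ over $B$ is $f$-cofibrant (respectively $h$- or $i$-cofibrant) exactly when its basepoint section $B \to A$ is an $f$-cofibration (respectively $h$- or $i$-cofibration) of cofibrant spaces, the zero object $B$ being trivially cofibrant in each of these senses. Since $F_n B$ is the zero spectrum, $F_n(B \to A)$ is then one of the generating maps of \autoref{freely_f_cofibrant}, so $F_n A$ is freely cofibrant. For $q$-cofibrations one argues instead that $B \to A$ is a retract of a relative $I$-cell complex (with $I$ as in \autoref{prop:q}); applying $F_n$ and noting that $F_n$ carries each generating cell $S^{m-1}_{+B} \to D^m_{+B}$ to a free cell $F_n(S^{m-1}_{+B}) \to F_n(D^m_{+B})$, we find that $F_n B \to F_n A$ is a retract of a relative free-cell complex, i.e.\ $F_n A$ is freely $q$-cofibrant.

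\emph{$F_n$ of an $h$-cofibrant, $h$-fibrant space is level $h$-fibrant.} Level $h$-fibrancy is a levelwise condition, and it suffices to check it at each $\R^m$, where $(F_n A)(\R^m) = \mathscr J(\R^n,\R^m) \barsmash A$ in the orthogonal case, and $S^{m-n} \barsmash A$ (or the zero object if $m < n$) in the sequential case. The crucial point is that the ``sphere factor'' $\mathscr J(\R^n,\R^m)$ --- a Thom space of a vector bundle over the compact Stiefel manifold of linear isometric embeddings $\R^n \hookrightarrow \R^m$, hence a well-pointed compact CW complex, and just the one-point space when $m < n$ --- is $h$-cofibrant over $*$, and it is $h$-fibrant over $*$ because the homotopy lifting property against a point is witnessed by a projection. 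Since $A$ is assumed $h$-cofibrant and $h$-fibrant, \autoref{prop:h_cofibrations_pushout_product}(3) shows that $\mathscr J(\R^n,\R^m) \barsmash A$ is $h$-fibrant over $* \times B \cong B$; the zero object is $h$-fibrant as well, so every level of $F_n A$ is $h$-fibrant. The sequential case is identical, with $S^{m-n}$ playing the role of $\mathscr J(\R^n,\R^m)$.

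The only ingredient that is not purely formal is the verification that the based spaces $\mathscr J(\R^n,\R^m)$ (and the spheres $S^k$) are both $h$-cofibrant and $h$-fibrant over $*$; granted this, the second claim is an immediate application of \autoref{prop:h_cofibrations_pushout_product}(3), and the first is bookkeeping with the colimit- and retract-preservation of $F_n$. I expect the write-up to be short.
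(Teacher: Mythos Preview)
Your proposal is correct and follows essentially the same approach as the paper: the paper's proof is the single sentence ``Also follows directly from \autoref{prop:h_cofibrations_pushout_product},'' which is exactly your second argument, while the first claim is implicitly immediate from the definition of freely cofibrant (as you spell out). Your write-up is just a more detailed version of the same reasoning.
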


\begin{proof}
	Also follows directly from \autoref{prop:h_cofibrations_pushout_product}.
\end{proof}

\begin{prop}[Level model structure]\label{prop:level_model_structure}
	The level equivalences, free $q$-cofibrations, and level $q$-fibrations define a proper model structure on spectra (sequential spectra or orthogonal spectra). It is cofibrantly generated by
	\[ \begin{array}{ccrll}
	I &=& \{ \ F_k\left[ S^{n-1}_{+B} \to D^n_{+B} \right] & : n,k \geq 0, & D^n \to B \ \} \\
	J &=& \{ \ F_k\left[ D^n_{+B} \to (D^n \times I)_{+B} \right] &: n,k \geq 0, & (D^n \times I) \to B \ \}.
	\end{array} \]
\end{prop}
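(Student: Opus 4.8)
The plan is to produce this model structure by lifting the Quillen model structure on $\mc R(B)$ (\autoref{prop:q}) along the levelwise evaluation functors, using the standard recognition theorem for cofibrantly generated model categories \cite[2.1.19]{hovey_model_cats}; this is the parametrized analogue of the construction in \cite{mmss}, and the argument is identical for sequential and for orthogonal spectra. First I would set up the adjunction: for each $k \geq 0$ the evaluation functor $\mathrm{Ev}_k\colon \Osp(B) \to \mc R(B)$, $X \mapsto X(\R^k)$, has left adjoint $F_k$, and assembling these gives an adjunction between $\Osp(B)$ and the product $\prod_{k \geq 0} \mc R(B)$ with its product model structure, whose left adjoint sends $\{A_k\}_k$ to $\coprod_k F_k A_k$. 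The proposed sets $I$ and $J$ are exactly the images under this left adjoint of the generating (acyclic) cofibrations of the product model structure, i.e. $F_k$ applied to the generating sets $I, J$ of \autoref{prop:q}. In the sequential case the only change is that $\mathscr J(\R^k,\R^m)$ is replaced by $\mathscr N(\R^k,\R^m)$, a sphere.

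Next I would verify the hypotheses of the recognition theorem with $W$ the level equivalences, which is clearly closed under retracts and $2$-out-of-$3$, and with $\Osp(B)$ complete and cocomplete since limits and colimits are computed levelwise in $\mc R(B)$. Smallness is immediate: the domains of $I$ and $J$ are $F_k$ of compact retractive spaces, and since $F_k$ preserves colimits these are small relative to level closed inclusions, hence in particular relative to $I$-cell and $J$-cell complexes, which are level cofibrations by \autoref{prop:free_implies_level}. By the adjunction, a map $p$ has the right lifting property against $F_k(i)$ iff $\mathrm{Ev}_k(p)$ does against $i$; since the generators of \autoref{prop:q} detect the acyclic $q$-fibrations, respectively the $q$-fibrations, of $\mc R(B)$, this identifies $\mathrm{inj}(I)$ with the level acyclic $q$-fibrations and $\mathrm{inj}(J)$ with the level $q$-fibrations, whence $W \cap \mathrm{inj}(J) = \mathrm{inj}(I)$. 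The remaining and essential condition is that every relative $J$-cell complex is a level equivalence: at level $m$, the map $F_k[D^n_{+B} \to (D^n \times I)_{+B}]$ is the smash of $\mathscr J(\R^k,\R^m)$ with an acyclic $q$-cofibration of $\mc R(B)$, and $\mathscr J(\R^k,\R^m)$ is the Thom space of a smooth bundle over a Stiefel manifold, hence a finite CW complex and in particular $q$-cofibrant, so by \autoref{ex:smashing_spaces_is_left_Quillen} this is a level acyclic $q$-cofibration; since colimits of spectra are levelwise and acyclic cofibrations in $\mc R(B)$ are closed under pushout and transfinite composition (being a left lifting class), the same holds for spectra. The recognition theorem then yields the model structure, with cofibrations $\mathrm{cof}(I)$, which by \autoref{freely_f_cofibrant} are precisely the free $q$-cofibrations, and fibrations $\mathrm{inj}(J)$, the level $q$-fibrations.

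Properness I would check one level at a time: left properness follows from left properness of $\mc R(B)$ (\autoref{prop:q}) together with the facts that pushouts of spectra are levelwise and free $q$-cofibrations are level $q$-cofibrations (\autoref{prop:free_implies_level}); right properness is dual, using that limits and pullbacks of spectra are levelwise.

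The main obstacle is the last recognition condition, that relative $J$-cell complexes are level equivalences — equivalently, that $J$ really generates the acyclic cofibrations. Everything there reduces to $\mathscr J(\R^k,\R^m)$ (or $\mathscr N(\R^k,\R^m)$) being $q$-cofibrant, plus \autoref{ex:smashing_spaces_is_left_Quillen}, together with the closure of level acyclic $q$-cofibrations under the cellular constructions; this is routine, but it is where the genuine input lies.
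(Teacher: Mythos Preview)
Your proposal is correct and follows essentially the same route as the paper: the paper's proof is a one-line reference to the standard recognition argument \cite[2.1.19]{hovey_model_cats} (and to \cite[12.1.7]{ms}), which is exactly what you spell out in detail. The only minor divergence is in the left properness step: you argue via left properness of $\mc R(B)$ along level $q$-cofibrations, whereas the paper observes that free $q$-cofibrations are in fact level $h$-cofibrations (via \autoref{prop:free_implies_level}) and invokes the gluing lemma for $h$-cofibrations; both arguments are valid, and the paper's version has the mild advantage of establishing left properness along the larger class of level $h$-cofibrations, which is used again later.
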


\begin{proof}
	A standard argument verifies the conditions laid out in \cite[2.1.19]{hovey_model_cats}, see also \cite[12.1.7]{ms} or the proof of \autoref{thm:stable_model_structure} below. By \autoref{prop:free_implies_level} for $h$-cofibrations, the cofibrations in this model structure are always level $h$-cofibrations, therefore it is left proper. Right properness is also immediate.
\end{proof}

\begin{rmk}
	Everything in this subsection works equally well in (CGWH) and (CG), including \autoref{prop:level_model_structure}. We therefore have two distinct model categories of sequential spectra and two distinct model categories of orthogonal spectra. In each case, the forgetful functor (CGWH) $\to$ (CG) is a right Quillen equivalence.
\end{rmk}

The above model structure gives us a cofibrant replacement functor $Q$\index{cofibrant replacement}, and a fibrant replacement functor $R^{lv}$\index{fibrant replacement!$R^{lv}$ for spectra}. In principle these are only defined on each fiber category $\Osp(B)$ or $\Psp(B)$ separately, but an examination of the small-object argument reveals that they actually define functors on the category of spectra over all bases $\Osp$ or $\Psp$ (\autoref{all_spectra}), that preserve each fiber category.

We can also generalize the monoidal fibrant replacement functor $P$ to spectra. If $X$ is a spectrum, we define $PX$ by applying $P$ to every level $X_n$\index{fibrant replacement!$P$ for spectra} and using the canonical homeomorphisms to define the bonding maps
\[ \mathscr J(V,W) \barsmash PX(V) \cong P(\mathscr J(V,W) \barsmash X(V)) \to PX(W) \]
or more simply in sequential spectra
\[ S^n \barsmash PX_m \cong P(S^n \barsmash X_m) \to PX_{n+m}. \]
Again, this defines a functor on each fiber category $\Osp(B)$ or $\Psp(B)$, or on the entire category $\Osp$ or $\Psp$ of spectra over all base spaces.

\begin{prop}\label{prop:spectrum_px} \hfill
	\vspace{-1em}
	
	\begin{itemize}
		\item	If $X$ is level $h$-cofibrant, then $PX$ is level $h$-fibrant and $X \to PX$ is a level equivalence.
		\item $P$ preserves free/level $f$-cofibrations and free/level $h$-cofibrations.
		\item If the spectra are all level $h$-cofibrant, $P$ turns free/level $h$-cofibrations into free/level $f$-cofibrations.
	\end{itemize}
\end{prop}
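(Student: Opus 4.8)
The plan is to reduce everything to the space-level statements in \autoref{prop:px_properties} and the ``upgrade'' remark following it, together with the monoidality of $P$ from \autoref{prop:P_strong_monoidal}, keeping in mind that $PX$ on spectra is defined by applying the space-level $P$ to each $X(V)$ and using the canonical homeomorphisms to form the bonding maps. First I would dispatch the first bullet and the ``level'' halves of the other two, which are purely levelwise: a map of spectra that is a level equivalence, a level $f$-cofibration, or a level $h$-cofibration is sent by $P$ to a map with the same property at every level, by \autoref{prop:px_properties}; and if $X$ is level $h$-cofibrant then each $PX(V)$ is $h$-fibrant and $X(V)\to PX(V)$ is an equivalence, which is the first bullet. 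For the ``level'' case of the third bullet, a level $h$-cofibration $X\to Y$ of level $h$-cofibrant spectra is sent to a map that is, at each level, an $h$-cofibration of $h$-fibrant spaces, hence an $f$-cofibration by \autoref{lem:heath_kamps}.

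Next, for the ``free'' cases I would establish the commutation $P\circ F_V\cong F_V\circ P$. Applying \autoref{prop:P_strong_monoidal} with one base a point, and using that $P$ is the identity on retractive spaces over a point (since $(*)^I=*$), gives a natural homeomorphism $P(\mathscr J(V,W)\barsmash A)\cong \mathscr J(V,W)\barsmash PA$; these are precisely the homeomorphisms used to build $PX$, so a short diagram chase with the $\mathscr J$-action yields $P(F_V A)\cong F_V(PA)$ naturally in $A$. Consequently $P$ sends a generating free $f$-cofibration $F_V(K\to X)$, with $K\to X$ an $f$-cofibration of $f$-cofibrant spaces, to $F_V(PK\to PX)$, which is again of this form: $PK\to PX$ is an $f$-cofibration by \autoref{prop:px_properties}, and applying $P$ to the sections (and using that $P$ preserves the zero object) shows $PK$ and $PX$ are $f$-cofibrant. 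The same holds verbatim for $h$-cofibrations.

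It then remains to see that $P=(p_1)_!(p_0)^*$ commutes with the colimits in a cellular presentation of a free cofibration: $(p_1)_!$ is a left adjoint, and by \autoref{prop:f_star_colimits} the pullback $(p_0)^*$ preserves coproducts, pushouts along closed inclusions, and (transfinite) compositions of closed inclusions. This suffices because free cofibrations are level $h$-cofibrations (\autoref{prop:free_implies_level}), hence levelwise closed inclusions, so every pushout and composition occurring in such a presentation is of the required form (under (CG) there is nothing to check). Since the free $f$- and free $h$-cofibrations are closed under pushout, transfinite composition, and retract, $P$ preserves each class, giving the second bullet.

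Finally, for the ``free'' case of the third bullet I would rerun this bookkeeping but invoke the upgrade: when $K\to X$ is an $h$-cofibration of $h$-cofibrant spaces, $PK\to PX$ is an $h$-cofibration of $h$-fibrant spaces, hence an $f$-cofibration of $f$-cofibrant spaces by \autoref{lem:heath_kamps}, so $P$ carries a generating free $h$-cofibration to a generating free $f$-cofibration. Because the source of a free $h$-cofibration is assumed level $h$-cofibrant, every intermediate spectrum in its cellular presentation is again level $h$-cofibrant (being a level $h$-cofibration out of a level $h$-cofibrant spectrum), so the colimit-commutation applies at each stage, and $P$ of the whole map is built from generating free $f$-cofibrations by pushout, transfinite composition, and retract. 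I expect the only real subtlety to be this colimit-commutation step in (CGWH): one has to notice that the cellular presentation of a free cofibration uses levelwise closed inclusions, so that \autoref{prop:f_star_colimits} applies to $(p_0)^*$; everything else is a direct transcription of the space-level results.
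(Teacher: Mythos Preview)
Your proposal is correct and follows essentially the same approach as the paper: reduce the level statements and the first bullet directly to \autoref{prop:px_properties}, use the monoidality of $P$ to identify $P(F_n(K\to X))\cong F_n(PK\to PX)$, and check that $P=(p_1)_!(p_0)^*$ commutes with the relevant colimits because free cofibrations are level closed inclusions (\autoref{prop:free_implies_level}, \autoref{prop:f_star_colimits}). Your treatment is slightly more detailed than the paper's---in particular your explicit tracking of the level $h$-cofibrancy of intermediate stages and the (CGWH) subtlety---but the argument is the same.
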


\begin{proof}
	\autoref{prop:px_properties} implies everything except for the statements about free cofibrations. For that part, we observe that $P$ preserves retracts because it is a functor, and preserves pushouts and transfinite compositions when they are taken along free cofibrations, because free cofibrations are level cofibrations and therefore level closed inclusions, and the pullback and pushforward that together form $P$ preserve those colimits by \autoref{lem:f_shriek_preserves} and \autoref{prop:f_star_colimits}. This reduces our work to checking that $P$ sends $F_n(K \to X)$ to a free $f$-cofibration if $K \to X$ is an $h$-cofibration between $h$-cofibrant retractive spaces. Since $P$ is monoidal we get an identification
	\[ P(F_n(K \to X)) \cong F_n(PK \to PX), \]
	and $PK \to PX$ is an $f$-cofibration of $f$-cofibrant spaces by \autoref{prop:px_properties} again.
\end{proof}

In summary, any spectrum $X$ can be replaced by a freely $f$-cofibrant level $h$-fibrant spectrum $PQX$, and there is a zig-zag of level equivalences $X \overset\sim\leftarrow QX \overset\sim\to PQX$.

\beforesubsection
\subsection{Base change functors $f_!$, $f^*$, and $f_*$}\label{base_change_spectra}\aftersubsection

We only define these for orthogonal spectra $\Osp(B)$, but the constructions and proofs work equally well for sequential spectra $\Psp(B)$.

Given $f\colon A \to B$, we define the functor $f^*\colon \Osp(B) \to \Osp(A)$ by pulling back every level of a spectrum $X$ over $B$ back to $A$, and defining the bonding maps using the canonical commutation of pullback and smash product from \autoref{lem:external_smash_and_base_change}:
\[ \mathscr J(V,W) \barsmash f^*X(V) \cong f^*(\mathscr J(V,W) \barsmash X(V)) \to f^*X(W) \]
The pushforward is defined in exactly the same way.\index{pushforward $f_{^^21}$} The adjunction $(f_! \adj f^*)$ on each level commutes with the action of $\mathscr J(V,W)$ and therefore defines an adjunction $(f_! \adj f^*)$ as functors between $\Osp(A)$ and $\Osp(B)$.\index{pullback $f^*$}

Alternatively, if we let $\Osp$\index{$\Osp$} be the category of spectra over all base spaces, from \autoref{all_spectra}, we observe that the forgetful functor $\Osp \to \cat{Top}$ is a bifibration. We then define $f^*$ and $f_!$ by taking the the cartesian, respectively cocartesian arrows in $\Osp$, as in \autoref{prop:spaces_SMBF}.

\autoref{lem:f_star_preserves} and \autoref{lem:f_shriek_preserves} give sufficient conditions under which $f^*$ and $f_!$ preserve level equivalences, level fibrations, and level cofibrations. Since $f_!(F_n K) \cong F_n f_!K$ and $f^*(F_n K) \cong F_n f^*K$ by \autoref{lem:external_smash_and_base_change} again, we also get that $f_!$ preserves all free cofibrations, and $f^*$ preserves the free $f$-cofibrations.\footnote{See the proof of \autoref{prop:spectrum_px} for how to show that $f^*$ preserves the compositions and pushouts that make up a general free cofibration.}  Collecting this together:
\begin{lem}\label{lem:spectrum_f_preserves}
	$f^*\colon \Osp(B) \to \Osp(A)$ preserves
	\begin{itemize}
		\item level $h$-fibrations and $q$-fibrations,
		\item level equivalences between level $q$-fibrant (or $h$-fibrant) spectra,
		\item free or level $f$-cofibrations, and free or level closed inclusions.
	\end{itemize}
	$f_!\colon \Osp(A) \to \Osp(B)$ preserves
	\begin{itemize}
		\item free or level $f$-cofibrations, $q$-cofibrations, $h$-cofibrations, or closed inclusions, and
		\item level equivalences between level $h$-cofibrant spectra.
	\end{itemize}
	Moreover if $f$ itself is a Hurewicz fibration ($h$-fibration), then $f^*$ preserves free or level $h$-cofibrations, and all level equivalences, while $f_!$ preserves spectra that are both level $h$-fibrant and level $h$-cofibrant.
\end{lem}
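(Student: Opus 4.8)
The plan is to derive all of the stated preservation properties of $f^*$ and $f_!$ from the corresponding facts about parametrized spaces, levelwise, using the observation that all of the constructions defining spectra (bonding maps, free spectra, colimits) commute with the space-level base change functors via the canonical isomorphisms of \autoref{lem:external_smash_and_base_change}. Since a level equivalence, level $h$-fibration, or level ($f$-, $h$-, $i$-)cofibration of spectra is by definition the corresponding kind of map on each level $X(\R^n)$, and since $f^*$ and $f_!$ act levelwise, each of these statements follows immediately from \autoref{lem:f_star_preserves} and \autoref{lem:f_shriek_preserves} applied level by level. For instance, $f^*$ preserves level $h$-fibrations because $f^*$ preserves $h$-fibrations of spaces; $f^*$ preserves level equivalences between level $h$-fibrant spectra because $f^*$ preserves weak equivalences between $h$-fibrant spaces; and $f_!$ preserves level equivalences between level $h$-cofibrant spectra by the dual statement. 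The ``moreover'' clause about $f$ being an $h$-fibration is handled the same way, invoking the corresponding ``moreover'' clauses of \autoref{lem:f_star_preserves} and \autoref{lem:f_shriek_preserves}.

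The one genuinely new ingredient is the behavior on \emph{free} cofibrations, which are not defined levelwise but rather as the closure of the maps $F_n(K \to X)$ under pushouts, transfinite compositions, and retracts. Here the key computation is $f_!(F_n A) \cong F_n(f_! A)$ and $f^*(F_n A) \cong F_n(f^* A)$, which hold because $F_n A$ is built levelwise out of external smash products $\mathscr J(\R^n,\R^k) \barsmash A$, and $f_!$, $f^*$ commute with such external smash products by \autoref{lem:external_smash_and_base_change} (the space $\mathscr J(\R^n,\R^k)$ lives over $*$, so smashing with it is the special case of an external smash with a fixed based space, which is preserved by both base-change functors). Applying this to the arrow $K \to X$ shows $f_!(F_n(K \to X)) \cong F_n(f_! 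K \to f_! X)$ and similarly for $f^*$; since $f_!$ preserves $f$-cofibrations of $f$-cofibrant spaces (and likewise for $q$-, $h$-, $i$-cofibrations, and $f^*$ preserves $f$-cofibrations, with $h$-cofibrations when $f$ is an $h$-fibration), the generating maps are sent to generating maps of the appropriate free-cofibration class.

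It then remains to check that $f_!$ and $f^*$ preserve the colimit operations — pushouts, transfinite compositions, retracts — used to build a general free cofibration from the generators. Retracts are trivial since $f_!$ and $f^*$ are functors. Since $f_!$ is a left adjoint it preserves all colimits, so it commutes with the pushouts and transfinite compositions in question. The functor $f^*$ is only a right adjoint, so here I would argue as in the footnote and the proof of \autoref{prop:spectrum_px}: a free cofibration is in particular a level closed inclusion, and the pushouts and sequential colimits defining it are therefore taken along closed inclusions, which $f^*$ preserves by \autoref{prop:f_star_colimits} (in (CGWH); in (CG) $f^*$ preserves all colimits, so this is automatic). Hence $f^*$ sends the free $f$-cofibrations (and free closed inclusions, and free $h$-cofibrations when $f$ is an $h$-fibration) to maps of the same type. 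The main obstacle, such as it is, is precisely this last point — keeping track of the (CG) versus (CGWH) distinction when commuting $f^*$ past the colimits in the definition of a free cofibration — but it is entirely routine given \autoref{prop:f_star_colimits}, and everything else is a levelwise bookkeeping exercise.
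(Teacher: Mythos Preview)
Your proposal is correct and matches the paper's approach essentially verbatim: the paper also reduces the level statements to \autoref{lem:f_star_preserves} and \autoref{lem:f_shriek_preserves}, handles free cofibrations via the isomorphisms $f_!(F_n K)\cong F_n f_!K$ and $f^*(F_n K)\cong F_n f^*K$ coming from \autoref{lem:external_smash_and_base_change}, and (in a footnote pointing to the proof of \autoref{prop:spectrum_px}) deals with $f^*$ on the colimits by noting that free cofibrations are level closed inclusions so that \autoref{prop:f_star_colimits} applies.
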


In particular, $(f_! \adj f^*)$ is a Quillen pair for the level model structure, which is a Quillen equivalence when $f$ is a weak equivalence.

Finally, when $f$ is a fiber bundle with fiber a cell complex, we construct $f_*$ in spectra by applying the construction from \autoref{prop:sheafy_pushforward} on each level, and using the canonical commutation with $\Omega_B(-)$. We get a Quillen adjunction $(f^* \adj f_*)$ in the level model structure, which is a Quillen equivalence when $f$ is a weak equivalence.

\beforesubsection
\subsection{External smash products and mapping spaces}\label{sec:smash_with_space}\aftersubsection

For each retractive space $K \in \mc R(A)$ and spectrum $X \in \Psp(B)$ or $\Osp(B)$, we define the spectrum $K \barsmash X$ to have $n$th level $K \barsmash X_n$. We use the symmetric monoidal structure of $\barsmash$ to define the bonding maps.\index{external smash product $X \barsmash Y$!of a spectrum and a space} This gives functors
\[ \barsmash\colon \mc R(A) \times \Psp(B) \to \Psp(A \times B), \qquad \barsmash\colon \mc R(A) \times \Osp(B) \to \Osp(A \times B) \]
that are unique up to unique isomorphism.

\begin{ex}
	Every free spectrum $F_n K$ over $B$ is canonically isomorphic to the smash product of the space $K \in \mc R(B)$ with the free spectrum $F_n S^0 \in \Psp(*)$ or $\Osp(*)$.
\end{ex}

\begin{ex}
	We define the reduced suspension functor $\Sigma_B$ on spectra over $B$ to be smash product with the space $S^1 \in \mc R(*)$.\index{fiberwise!reduced suspension $\Sigma_B$}
\end{ex}

Smashing with a space has a right adjoint in each variable. For formal reasons, these right adjoints are also be uniquely defined up to unique isomorphism, and the choice of adjunction is also unique.

If we fix the space $K \in \mc R(A)$, the right adjoint of $K \barsmash -$ is the \textbf{external mapping spectrum}\index{external!function spectrum $\barF_B(Y,Z)$} functor
\[ \barF_A(K,-)\colon \Osp(A\times B) \to \Osp(B) \quad \textup{or} \quad \barF_A(K,-)\colon \Psp(A\times B) \to \Psp(B) \]
that applies $\barmap_A(K,-)$ at each spectrum level. (We use the notation $\barF$ because the output is a spectrum, and reserve $\barmap$ for when the output is a space.) The adjunctions from \autoref{sec:external_smash} pass in a unique way to an adjunction 
\[ K \barsmash X \to Y \quad \textup{ over }A \times B \quad \longleftrightarrow \quad X \to \barF_A(K,Y) \quad \textup{ over }B. \]

\begin{ex}
	We define the based loops functor $\Omega_B$ on spectra over $B$ by taking external maps out of the space $S^1 \in \mc R(*)$.\index{fiberwise!based loops $\Omega_B$}
\end{ex}

If we fix the spectrum $X \in \Osp(B)$, the right adjoint of $- \barsmash X$ sends the spectrum $Y$ over $A \times B$ to the \textbf{external mapping space}\index{external!mapping space $\barmap_B(Y,Z)$} $\barmap_B(X,Y)$. This is defined from the space-level mapping space as the equalizer in $\mc R(A)$
\[ \barmap_B(X,Y) \to \prod_n \barmap_B(X_n,Y_n) \rightrightarrows \prod_{m,n} \barmap_B(X_m \barsmash \mathscr J(\R^m,\R^n),Y_n) \]
where the two parallel maps either
\begin{itemize}
	\item pre-compose the map $X_n \to Y_n$ with $X_m \barsmash \mathscr J(\R^m,\R^n) \to X_n$, or
	\item post-compose $X_m \to Y_m$ with $Y_m \barsmash \mathscr J(\R^m,\R^n) \to Y_n$.
\end{itemize}
In other words, for each $a \in A$ a collection of maps $X_n \to (Y_a)_n$ in $\mc R(B)$ lands in the equalizer subspace iff they give a map of orthogonal spectra $X \to Y_a$ over $B$.

In sequential spectra, we similarly get the equalizer
\[ \barmap_B(X,Y) \to \prod_n \barmap_B(X_n,Y_n) \rightrightarrows \prod_{m,n} \barmap_B(X_m \barsmash S^{n-m},Y_n) \]
and in both cases the above definition helps us define an adjunction
\[ K \barsmash X \to Y \quad \textup{ over }A \times B \quad \longleftrightarrow \quad K \to \barmap_B(X,Y) \quad \textup{ over }A. \]

\begin{rmk}
	Setting $A = *$, this gives an enrichment of the categories $\Osp(B)$ and $\Psp(B)$ in based spaces. In other words it endows each set of maps of spectra over $B$, $\Osp(B)(X,Y)$ or $\Psp(B)(X,Y)$, with a topology.
\end{rmk}


Next we describe the smash product of two orthogonal spectra $X \in \Osp(A)$ and $Y \in \Osp(B)$. For each pair of objects $V$ and $V'$ in $\mathscr J$, we take the external smash product space
\[ X(V) \barsmash Y(V') \in \mc R(A \times B). \]
These form a diagram over the category $\mathscr J \sma \mathscr J$ whose morphisms are smash products of the morphism spaces
\[ (\mathscr J \sma \mathscr J)((V,V'),(W,W')) = \mathscr J(V,W) \sma \mathscr J(V',W'). \]
In other words, they form a \textbf{bispectrum} over $A \times B$. The direct sum of representations, plus some choice of embedding $\mc U \oplus \mc U \to \mc U$, gives a direct sum functor 
\[ \oplus\colon \mathscr J \sma \mathscr J \to \mathscr J. \]
\begin{df}\label{spectra_external_smash}\cite[11.4.10]{ms}
The \textbf{external smash product spectrum}
\[ X \barsmash Y \in \Osp(A \times B) \]
\index{external smash product $X \barsmash Y$!of parametrized spectra} is defined to be the left Kan extension of the bispectrum $X(-) \barsmash Y(-)$ along $\oplus$.\footnote{Unfortunately, the term ``external smash product'' is also used in \cite{mmss} to differentiate the bispectrum $\{X(V) \sma Y(V')\}$ from the spectrum $X \sma Y$. These are two different uses of the word ``external.'' The product $X \barsmash Y$ we define in this section is internal over $\mathscr J$ but external over the base spaces. In other words it produces a spectrum (not a bispectrum) but the spectrum is over $A \times B$ (not just $A$). See also \cite[11.1.7]{ms}.}
\end{df}

The following lemma has the same proof as the non-parametrized case.
\begin{lem}\label{barsmash_free}
	There is a natural isomorphism for retractive spaces $X$ and $Y$
	\[ F_V X \barsmash F_W Y \cong F_{V \oplus W} (X \barsmash Y), \]
	and a natural isomorphism for retractive spaces $X$ and parametrized spectra $Z$
	\[ (\Sigma^\infty X) \barsmash Z \cong X \barsmash Z. \]
\end{lem}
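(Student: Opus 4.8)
The plan is to follow the non-parametrized argument of \cite{mmss} essentially verbatim; the only new feature is that all smash products are now external over the base spaces, so each space-level identity has to be traced through the correct base. Throughout I will freely use that the external smash of a based space over the point with anything is the old smash, together with the symmetry and associativity of $\barsmash$ on $\mc R$ (the symmetric monoidal bifibration structure, \autoref{prop:spaces_SMBF}).

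For the first isomorphism I would argue by Yoneda. Recall that $F_V$ is left adjoint to evaluation at $V$, so for any enriched $\mathscr J$-diagram $G$ in retractive spaces, maps of $\mathscr J$-diagrams $F_V A \to G$ correspond naturally to maps $A \to G(V)$. On the other hand, \autoref{spectra_external_smash} presents $X \barsmash Y$ as a left Kan extension along $\oplus$, so by the defining adjunction of left Kan extension a map $X \barsmash Y \to Z$ over $A \times B$ is the same as a map of bispectra $X(-) \barsmash Y(-) \to Z(-\oplus-)$. Reassociating the two point-based factors to the front, $(F_VX)(U)\barsmash(F_WY)(U') \cong \big(\mathscr J(V,U)\sma\mathscr J(W,U')\big)\barsmash(X\barsmash Y)$, so a map $F_VX \barsmash F_WY \to Z$ is a map of bispectra
\[ \big(\mathscr J(V,-)\sma\mathscr J(W,-)\big)\barsmash(X\barsmash Y) \;\longrightarrow\; Z(-\oplus-), \]
and applying the free-spectrum adjunction once in each of the two variables identifies this with a map $X\barsmash Y \to Z(V\oplus W)$, i.e.\ with a map $F_{V\oplus W}(X\barsmash Y) \to Z$. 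These bijections are natural in $Z$, so Yoneda yields the asserted natural isomorphism. (Equivalently one can compute the Kan-extension coend directly and invoke the enriched co-Yoneda lemma twice; this is where the permutative structure of $(\mathscr J,\oplus,0)$ enters, via $\int^{(U,U')}\mathscr J(U\oplus U',Z)\sma\mathscr J(V,U)\sma\mathscr J(W,U') \cong \mathscr J(V\oplus W,Z)$.)

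For the second isomorphism I would reduce to the first. Since $F_0 = \Sigma^\infty$, since every spectrum has the tautological density presentation $Z \cong \int^{W} F_W(Z(W))$ as a coend of free spectra (a colimit, hence computed levelwise), and since $-\barsmash Z$ and $(\Sigma^\infty X)\barsmash-$ are left adjoints (the external mapping-space functors being their right adjoints) and so commute with coends, we obtain
\[ (\Sigma^\infty X)\barsmash Z \;\cong\; \int^{W}\big(F_0X \barsmash F_W Z(W)\big) \;\cong\; \int^{W} F_{W}\big(X\barsmash Z(W)\big), \]
using the first isomorphism and $0\oplus W = W$. This last coend is again levelwise, and at level $U$ it is $\int^{W}\mathscr J(W,U)\barsmash X\barsmash Z(W) \cong X\barsmash\big(\int^W \mathscr J(W,U)\barsmash Z(W)\big) \cong X\barsmash Z(U) = (X\barsmash Z)(U)$, since $X\barsmash-$ is a left adjoint. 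Hence $(\Sigma^\infty X)\barsmash Z \cong X\barsmash Z$, and naturality is visible from the constructions.

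I do not expect a genuine obstacle here: the content is bookkeeping of variances and base spaces. The single point that must be checked is that all the coends and colimits in sight commute with the relevant smash products, but this is automatic because every smash functor used is a left adjoint in each variable (on spaces by \autoref{ex:smashing_spaces_is_left_Quillen}, and on spectra by the Kan-extension construction together with the external mapping-space/function-spectrum right adjoints constructed above). Consequently the argument is identical in the (CG) and (CGWH) settings.
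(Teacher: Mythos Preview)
Your proposal is correct and is essentially an explicit unrolling of what the paper defers to \cite{mmss}: the paper's proof is a single sentence saying the argument is identical to the non-parametrized case, and what you have written is precisely that standard Yoneda/co-Yoneda argument carried out with the external $\barsmash$ in place of the ordinary smash.
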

We will soon see that these isomorphisms are not just canonical, but unique.

The external smash product of spectra also has a right adjoint in each variable. The proof of this follows the same formal recipe found in \cite[\S 21]{mmss}. To describe the resulting right adjoint let $Y \in \Osp(B)$ and $Z \in \Osp(A \times B)$. Recall that the \textbf{shift functor} $\sh^n$ simply re-indexes the levels of $Z$,
\[ (\sh^n Z)_m = Z_{m+n}. \]
Pulling $Z$ back along the direct sum map $\oplus$ gives a bispectrum whose $n$th level is the orthogonal spectrum $\sh^n Z$. Since this is an orthogonal spectrum of orthogonal spectra, we can apply $\barmap_B(Y,-)$ and get just an orthogonal spectrum, which we call $\barF_B(Y,Z)$:\index{external!function spectrum $\barF_B(Y,Z)$}
\[ \barF_B(Y,Z)_n = \barmap_B(Y,\sh^n Z). \]
The formal argument cited above gives an adjunction
\[ X \barsmash Y \to Z \quad \textup{ over }A \times B \quad \longleftrightarrow \quad X \to \barF_B(Y,Z) \quad \textup{ over }A. \]
As an important special case, the right adjoint of $- \barsmash F_n S^0$ is the shift functor $\sh^n$.

We finish this subsection by describing how external smashes of spectra interact with level equivalences, level fibrations, and free cofibrations.
\begin{thm}\label{prop:spectra_pushout_product}\hfill
	\vspace{-1em}
	
	\begin{enumerate}
		\item Let $f: K \to X$ and $g: L \to Y$ be free $h$-cofibrations of spectra over $A$ and $B$, respectively. Then $f \square g$, constructed using $\barsmash$, is a free $h$-cofibration. The same is true for free $f$-cofibrations, free $q$-cofibrations, and free closed inclusions.
		\item If $X$ and $Y$ are freely $h$-cofibrant and level $h$-fibrant then $X \barsmash Y$ is level $h$-fibrant.
		\item If $X$ is freely $h$-cofibrant and $g: Y \to Y'$ is a level equivalence of freely $h$-cofibrant spectra then $\id_X \barsmash g$ is a level equivalence.
	\end{enumerate}
\end{thm}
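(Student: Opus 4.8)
The plan is to prove the three claims in order, with (1) following from a routine cellular pushout-product argument and (2)--(3) reduced to the skeletal filtration of the external smash product developed in \S\ref{sec:reedy}. For (1), I would use the standard fact that the pushout-product of a map lying in the class generated (under pushouts, transfinite compositions, and retracts) by a set $\mathcal{I}$ with a map lying in the class generated by a set $\mathcal{J}$ lies in the class generated by $\{i \square j : i \in \mathcal{I},\ j \in \mathcal{J}\}$ --- the transfinite extension of \autoref{pushout_product_lemmas}. It therefore suffices to compute $F_V(i) \square F_W(j)$ for generating free cofibrations, where $i$ and $j$ are $h$-cofibrations (resp.\ $f$-cofibrations, closed inclusions, $q$-cofibrations) between retractive spaces of the corresponding cofibrancy type. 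Since $F_{V\oplus W}$ is a left adjoint, \autoref{barsmash_free} gives $F_V(i) \square F_W(j) \cong F_{V\oplus W}(i \square j)$, where $i \square j$ is now the pushout-product taken with the external smash product of \emph{spaces}; by \autoref{prop:h_cofibrations_pushout_product}(1)--(2) (or \autoref{ex:smashing_spaces_is_left_Quillen} in the $q$-case) this is a cofibration of the correct kind, its target is of the correct cofibrancy type because $\barsmash$ preserves cofibrant spaces (the same proposition, applied to the basepoint sections), and its source is a pushout along such a cofibration of cofibrant spaces. Hence $F_{V\oplus W}(i \square j)$ is again a generating free cofibration of the desired kind.

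For (2), I would invoke the skeletal filtration $* = (X \barsmash Y)^{(0)} \to (X \barsmash Y)^{(1)} \to \cdots$ with colimit $X \barsmash Y$, established in \S\ref{sec:reedy}, in which each stage $(X \barsmash Y)^{(n-1)} \to (X \barsmash Y)^{(n)}$ is, at every spectrum level, a pushout along a map assembled from pushout-products of the latching maps of $X$ and $Y$ smashed with the morphism Thom spaces $\mathscr{J}(-,-)$. The essential input is that a freely $h$-cofibrant spectrum has $h$-cofibration latching maps --- the $h$-analogue of Reedy cofibrancy, and part of the content of \S\ref{sec:reedy}. Granting this, part (1) applied at the space level together with \autoref{prop:h_cofibrations_pushout_product}(1) shows the attaching maps are level $h$-cofibrations, while the corners involved remain level $h$-fibrant by repeated use of \autoref{prop:clapp} and \autoref{prop:h_cofibrations_pushout_product}(3), drawing on the level $h$-fibrancy of $X$ and $Y$ and on the fact that the $\mathscr{J}(V,W)$ are $f$-cofibrant and, lying over a point, $h$-fibrant. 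Then \autoref{prop:clapp} --- extended to $h$-cofibrations via \autoref{lem:heath_kamps} --- keeps each stage level $h$-fibrant, and the sequential-colimit statement of \S\ref{sec:reedy} (the $h$-fibration dual of \autoref{prop:technical_cofibrations}) keeps the colimit level $h$-fibrant.

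For (3), I would run this same filtration for $X \barsmash Y$ and for $X \barsmash Y'$ and compare. The map $\id_X \barsmash g$ is then filtered, and on the successive pushout squares the data differ only in the $Y$-direction: the relevant spaces are smash products of $\mathscr{J}$-Thom spaces with latching pieces of $X$ and with latching pieces of $Y$ versus $Y'$, all of which are $h$-cofibrant, so $g$ induces level equivalences between them by \autoref{prop:h_cofibrations_pushout_product}(4). The gluing lemma \autoref{prop:technical_cofibrations}(2) then propagates level equivalences through each pushout in the filtration (the attaching maps are level $h$-cofibrations by part (1)), and the colimit lemma \autoref{prop:technical_cofibrations}(3) propagates it to the colimit; hence $\id_X \barsmash g$ is a level equivalence.

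The main obstacle is everything packaged into \S\ref{sec:reedy}: constructing the skeletal filtration of the \emph{parametrized} external smash product, identifying its successive quotients in terms of latching objects, and proving that freely $h$-cofibrant spectra have $h$-cofibration latching maps, along with the behavior of latching data under $\barsmash$. Once that Reedy-type bookkeeping is in place, (2) and (3) are formal consequences of Clapp's theorem and the gluing and colimit lemmas, while (1) does not require it at all.
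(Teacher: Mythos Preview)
Your proof of (1) matches the paper's exactly. For (2) and (3) you correctly identify that the substance lies in \S\ref{sec:reedy}, and your strategy---induct through skeleta, applying Clapp's theorem and the gluing/colimit lemmas---is the paper's strategy too. The organization differs in two ways. First, the paper introduces the broader class of \emph{semifree} cofibrations and proves the stronger \autoref{prop:reedy_pushout_product}, from which (2) and (3) are immediate; rather than positing a single filtration of $X\barsmash Y$, it runs a double induction on $\sk^m$ of each factor, using the identification $\mc G_m K \barsmash \mc G_n L \cong \mc G_{m+n}(O(m+n)_+ \barsmash_{O(m)\times O(n)} K \barsmash L)$ (\autoref{smash_two_semi_free}) at each step. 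The technical ingredient your sketch leaves implicit is \autoref{smashing_with_free_complex}: that $(\mathscr J(\R^n,\R^m)\barsmash -)_{O(n)}$ preserves cofibrations, $h$-fibrancy, and weak equivalences on $h$-cofibrant inputs. This is where the free $O(n)$-cell structure of $\mathscr J(\R^n,\R^m)$ enters and is what makes the orbit quotients in the semifree levels behave; your appeal to \autoref{prop:h_cofibrations_pushout_product}(3) alone does not cover this. Second, for (3) the paper does not compare filtrations directly but first handles the case where $g$ is a semifree acyclic cofibration (using \autoref{cor:latching_objects_derived} to see that the relative latching maps are equivalences) and then runs a Ken Brown argument to reach arbitrary level equivalences between semifreely cofibrant spectra. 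Your direct filtration comparison would also work once that corollary is available, but Ken Brown is cleaner because it reduces to the acyclic-cofibration case, where the pushout-product is visibly an acyclic cofibration.
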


\begin{proof}
	At the moment we can only prove the first claim. Start by assuming that $f$ and $g$ are free spectra on $h$-cofibrations. Then by \autoref{barsmash_free}, their pushout-product is a free spectrum on a pushout-product of $h$-cofibrations, which is an $h$-cofibration by \autoref{prop:h_cofibrations_pushout_product}, so $f \square g$ is a free $h$-cofibration.
	
	Now examine the class of pairs of maps $f,g$ for which $f \square g$ is a free $h$-cofibration. It contains the free spectra on the $h$-cofibrations. By \autoref{pushout_product_lemmas}, it is closed under pushouts. By \autoref{pushout_product_lemmas}, it is closed under transfinite composition. It is clearly closed under retracts, just because $\square$ is a bifunctor. Therefore it contains all pairs of free $h$-cofibrations.
\end{proof}

The final two claims of \autoref{prop:spectra_pushout_product} are a trivial corollary of \autoref{prop:reedy_pushout_product} below, which we prove after introducing another kind of cofibration that is ``semifree'' instead of free. The payoff for this is the ability to commute $\barsmash$ and $f^*$ on the homotopy category. Note that a slightly weaker form of these two claims appears in \cite[Prop 4.27 and 4.31]{malkiewich2017coassembly}. Also, a different proof of (3) appears in \cite[Lem 9.7]{mp1}, and a more ad-hoc argument that $\barsmash$ and $f^*$ commute in the homotopy category can be found in \cite[Thm 9.9]{mp1}.

\beforesubsection
\subsection{The SMBF $\Osp$}\aftersubsection

Recall from \autoref{all_spectra} that $\Osp$ is the category of all orthogonal spectra over all base spaces. We have defined the base-change functors and the external smash product already, so to make this a symmetric monoidal bifibration (SMBF), it remains to show that the external smash product commutes with pullbacks and pushforwards.

The universal property of pushforward and pullback give canonical maps
\[ \xymatrix @R=0.3em{
	(f \times g)_!(X \barsmash Y) \ar[r] & f_!X \barsmash g_!Y \\
	f^*W \barsmash g^*Z \ar[r] & (f \times g)^*(W \barsmash Z).
} \]
\begin{prop}\label{prop:spectra_external_smash_and_base_change}(cf. \cite[11.4.1]{ms}, \cite{shulman_framed_monoidal})\hfill
	\vspace{-1em}
	
	\begin{itemize}
		\item (CG) These maps are isomorphisms.
		\item (CGWH) The first map is an isomorphism. The second is an isomorphism when $X'$ and $Y'$ are freely $i$-cofibrant.\footnote{It is apparently enough if only one of them is semifreely $i$-cofibrant in the sense of the next section, and the other is arbitrary, but the proof is longer.}
	\end{itemize}
\end{prop}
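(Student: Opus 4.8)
The plan is to reduce everything to the space-level statement \autoref{lem:external_smash_and_base_change} together with the fact that, after pulling back along the direct sum functor $\oplus$, all of our constructions are built out of left Kan extensions, which in turn are built out of (iterated) smash products and colimits over $\mathscr J \sma \mathscr J$. First I would dispatch the pushforward claim. Since $f_!$ and $g_!$ are left adjoints and $\barsmash$ of spectra is a left Kan extension followed by levelwise $\barsmash$ of spaces, everything in sight commutes with colimits, so the comparison map $(f\times g)_!(X\barsmash Y)\to f_!X\barsmash g_!Y$ is built by applying $(f\times g)_!$ and the space-level comparison maps level-by-level to the bispectrum $X(-)\barsmash Y(-)$ and then left Kan extending; since the space-level maps are isomorphisms by \autoref{lem:external_smash_and_base_change} (the cocartesian half), and left Kan extension (a colimit) preserves levelwise isomorphisms, the spectrum-level map is an isomorphism. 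This works in both (CG) and (CGWH) with no cofibrancy hypotheses, exactly as on the space level.

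Next I would handle the pullback claim, which is the subtle one. In (CG), $f^*$ and $g^*$ preserve all colimits by \autoref{prop:f_star_colimits}, so the identical argument goes through: the comparison map $f^*W\barsmash g^*Z\to(f\times g)^*(W\barsmash Z)$ is obtained levelwise from the space-level comparison maps, which are isomorphisms by \autoref{lem:external_smash_and_base_change} (the cartesian half), and then left Kan extended, and the left Kan extension is a colimit which $(f\times g)^*$ preserves. In (CGWH), the issue is that $f^*$ only preserves the colimits listed in \autoref{prop:f_star_colimits} — coproducts, pushouts along closed inclusions, sequential colimits along closed inclusions — so we cannot freely commute $f^*$ past the left Kan extension over $\mathscr J\sma\mathscr J$. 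The strategy here is to observe that when $W$ (resp.\ $Z$) is freely $i$-cofibrant, the bispectrum $W(-)\barsmash Z(-)$ is, levelwise, a retract of an iterated pushout/sequential colimit of free cells $F_V(S^{k-1}_{+A})\barsmash F_{V'}(S^{\ell-1}_{+B})\to \ldots$, all of whose structure maps are closed inclusions (level $i$-cofibrations, using \autoref{prop:free_implies_level} and \autoref{prop:h_cofibrations_pushout_product}(2)); and the left Kan extension over $\oplus$ of such a free bispectrum is, by \autoref{barsmash_free}, just a wedge of free spectra $F_{V\oplus V'}(S^{k-1}_{+A}\barsmash S^{\ell-1}_{+B})=F_{V\oplus V'}((S^{k-1}\times S^{\ell-1})_{+(A\times B)})$, whose level-wise colimit presentation is exactly of the type \autoref{prop:f_star_colimits} handles. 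So $f^*$ (and $g^*$, and $(f\times g)^*$) do commute with the relevant colimits in this case, and the space-level isomorphisms of \autoref{lem:external_smash_and_base_change} assemble to the desired isomorphism.

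Concretely the key steps are: (1) recall that $X\barsmash Y$ is computed levelwise as a coend/colimit, and the comparison maps are induced levelwise by the space-level ones; (2) for the pushforward, invoke colimit-preservation of $f_!,g_!$ directly, no hypotheses needed; (3) for the pullback in (CG), invoke \autoref{prop:f_star_colimits}(CG); (4) for the pullback in (CGWH), use free $i$-cofibrancy to present $W$ and $Z$ through free cells so that $W(-)\barsmash Z(-)$ is levelwise a retract of a cellular diagram of closed inclusions, use \autoref{barsmash_free} to identify the left Kan extension of such with a wedge of free spectra, and then check $f^*$ commutes with that particular colimit using \autoref{prop:f_star_colimits}(CGWH); combine with the space-level isomorphisms from \autoref{lem:external_smash_and_base_change}. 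The main obstacle is step (4): making precise that it suffices to check the colimit-commutation on the cellular building blocks of a freely $i$-cofibrant spectrum, and that these building blocks are exactly the half-smashes of spheres whose colimit presentations are of the shape \autoref{prop:f_star_colimits} controls. Everything else is a direct transcription of the space-level proof. (The footnoted strengthening — one side semifreely $i$-cofibrant, the other arbitrary — would require the machinery of \S\ref{sec:reedy} and I would defer it, as the author does.)
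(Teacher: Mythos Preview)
Your proposal is correct and follows essentially the same route as the paper: dispatch the pushforward by colimit-preservation of $f_!$ and left Kan extension; in (CG) do the same for $f^*$; in (CGWH) verify the pullback comparison first on smashes of free spectra via \autoref{barsmash_free}, then extend to all freely $i$-cofibrant inputs using that $f^*$ preserves pushouts and sequential colimits along closed inclusions (\autoref{prop:f_star_colimits}). The paper phrases the (CGWH) step as a clean closure/induction argument---the class of pairs for which the map is an isomorphism contains free spectra and is closed under pushouts along level closed inclusions, sequential colimits along such, and retracts---which is a tidier way to say what you sketched in step~(4).
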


\begin{proof}
	For the first map, this reduces to the fact that $f_!$ commutes with left Kan extensions and the space-level smash products. For the second map, it is an isomorphism when $X' = F_n K$ and $Y' = F_m L$ because $f^*$ commutes with free spectra. In (CG), $f^*$ preserves all colimits so the conclusion follows. In (CGWH), $f^*$ preserves pushouts along level closed inclusions. So if we take a pushout square along a level closed inclusion
	\[ \xymatrix @R=1.7em{
		X_1 \ar[r] \ar[d] & X_2 \ar[d] \\
		X_3 \ar[r] & X_4
	} \]
	and the claim is true for the first three vertices and $Y'$, then it is true for $X_4$ and $Y'$. (The desired map is a pushout of three isomorphisms and is therefore an isomorphism.) By a similar argument, if $X'$ is a colimit of a sequence of closed inclusions $X_i \to X_{i+1}$ and the claim is true for each $X_i$ and $Y'$ then it is true for $X'$ and $Y'$. Therefore the class of spectra for which it is true must include all freely $i$-cofibrant $X'$ and $Y'$.
\end{proof}

The rigidity theorem \autoref{prop:spaces_rigidity} generalizes to spectra, but it's a good idea to make the statement a little stronger to accommodate the fact that $f^*$ and $\barsmash$ don't always commute in (CGWH).

\begin{thm}[Rigidity]\label{thm:spectra_rigidity}
	Suppose $n \geq 0$ and we have maps of spaces
	\[ B \overset{f}\leftarrow A \overset{g}\to C_1 \times \ldots \times C_n \]
	such that $(f,g)\colon A \to B \times C_1 \times \ldots \times C_n$ is injective.
	
	Then any functor $\Osp(C_1) \times \ldots \times \Osp(C_n) \to \Osp(B)$ isomorphic to
	\[ \Phi\colon (X_1,\ldots,X_n) \leadsto f_!g^*(X_1 \barsmash \ldots \barsmash X_n) \]
	is in fact uniquely isomorphic to $\Phi$.
	
	More generally, on any full subcategory of $\Osp(C_1) \times \ldots \times \Osp(C_n)$ containing all $n$-tuples of the form $(F_{V_1} *_{+C_1},\ldots, F_{V_n} *_{+C_n})$, any functor isomorphic to $\Phi$ is uniquely isomorphic to $\Phi$.\index{rigidity!for spectra}
\end{thm}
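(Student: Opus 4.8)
The plan is to adapt the proof of \autoref{prop:spaces_rigidity}, bootstrapping from the space-level rigidity by means of the free-spectrum adjunctions $F_V \dashv \mathrm{ev}_V$ (evaluation at $V$). As there, it suffices to show that $\Phi$, restricted to the given full subcategory, admits only the identity natural automorphism; uniqueness of the isomorphism to any functor $\Psi$ with $\Psi \cong \Phi$ then follows formally by composing with an inverse. So fix a natural automorphism $\eta$ of $\Phi$ on the subcategory. I would first pin down $\eta$ on the distinguished free-spectra tuples, and then deduce the general case by a point-chase.

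\emph{Step 1: the free-spectra tuples.} Fix free-point locations $c_i \in C_i$, set $V = V_1 \oplus \cdots \oplus V_n$, and let $E = g^{-1}(c_1,\ldots,c_n) \subseteq A$ with its map $f|_E\colon E \to B$, which is injective because $(f,g)$ is. Using the iterated isomorphism $F_{V_1}A_1 \barsmash \cdots \barsmash F_{V_n}A_n \cong F_V(A_1 \barsmash \cdots \barsmash A_n)$ from \autoref{barsmash_free}, the commutations $f^*F_V \cong F_V f^*$ and $f_!F_V \cong F_V f_!$ from \autoref{lem:external_smash_and_base_change}, and the space-level identification $f_!g^*(*_{+(C_1 \times \cdots \times C_n)}) \cong E_{+B}$ from the proof of \autoref{prop:spaces_rigidity}, one obtains a natural isomorphism $\Phi(F_{V_1}*_{+C_1},\ldots,F_{V_n}*_{+C_n}) \cong F_V(E_{+B})$. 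When every $V_i = 0$ this is $\Sigma^\infty(E_{+B})$, whose endomorphism monoid is $\mc R(B)(E_{+B},E_{+B})$ via $\Sigma^\infty \dashv \mathrm{ev}_0$; since $f|_E$ is injective, $E_{+B}$ has no nonidentity automorphism over $B$, so $\eta$ is already forced to be the identity on every $\Phi(F_0*_{+C_1},\ldots,F_0*_{+C_n})$.

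\emph{Step 2: all free-spectra tuples, and the general case.} Via $F_V \dashv \mathrm{ev}_V$, the automorphism $\theta := \eta_{(F_{V_1}*_{+C_1},\ldots)}$ of $F_V(E_{+B})$ corresponds to a map $E_{+B} \to \mathscr J(V,V) \barsmash E_{+B} = (O(V) \times E)_{+B}$ over $B$, and injectivity of $f|_E$ forces it to have the form $e \mapsto (\phi(e), e)$ for a continuous $\phi\colon E \to O(V)$ --- the only alternative, partially landing in the basepoint section, being excluded by naturality against the maps $\psi_0$ below. For $v_i \in V_i$, let $\psi_{v_i}\colon F_{V_i}*_{+C_i} \to F_0*_{+C_i}$ be the map of free spectra classified by the point $v_i \in S^{V_i} = \mathscr J(0,V_i)$; using \autoref{barsmash_free} once more, $\Phi(\psi_{v_1},\ldots,\psi_{v_n})$ is the map $F_V(E_{+B}) \to \Sigma^\infty(E_{+B})$ classified by $(v_1,\ldots,v_n) \in S^V$. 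Chasing the naturality square of $\eta$ against $(\psi_{v_1},\ldots,\psi_{v_n})$ and using that $\eta$ is the identity on the target by Step 1 gives $\phi(e)(v) = v$ for all $e \in E$ and all $v \in V$; hence $\phi \equiv \mathrm{id}_V$ and $\theta = \mathrm{id}$. Finally, for an arbitrary tuple $(X_1,\ldots,X_n)$ in the subcategory, a spectrum level $W$, and a non-basepoint $x \in \Phi(X_1,\ldots,X_n)(W)$, I would lift $x$ along the (co)cartesian arrows of $\Phi$ and then along the coend defining $X_1 \barsmash \cdots \barsmash X_n$ to produce objects $W_i$, a point $\psi \in \mathscr J(W_1 \oplus \cdots \oplus W_n, W)$, and non-basepoints $x_i \in X_i(W_i)$ over points $c_i \in C_i$. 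Each $x_i$ classifies a map $m_i\colon F_{W_i}*_{+C_i} \to X_i$, and exactly as in \autoref{prop:spaces_rigidity} the point $x$ lies in the image of $\Phi(m_1,\ldots,m_n)(W)$; since $\eta$ commutes with $\Phi(m_1,\ldots,m_n)$ and is the identity on $\Phi(F_{W_1}*_{+C_1},\ldots,F_{W_n}*_{+C_n})$, it fixes $x$, and as $x$ and $W$ were arbitrary, $\eta_{(X_1,\ldots,X_n)} = \mathrm{id}$.

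I expect the main obstacle to be the bookkeeping in Step 2: correctly computing the composites of maps of free spectra through the $F_V \dashv \mathrm{ev}_V$ adjunctions so that the naturality square genuinely forces $\phi \equiv \mathrm{id}_V$ (this is where the nonbasepoint points of the spheres $S^{V_i}$, not merely the groups $O(V_i)$, are essential --- much as they rigidify $F_V S^0$ against its residual $O(V)$-action), together with the parallel bookkeeping needed to lift a point through the left Kan extension defining $X_1 \barsmash \cdots \barsmash X_n$ in the last step. Everything else is a level-by-level transcription of the argument for \autoref{prop:spaces_rigidity}.
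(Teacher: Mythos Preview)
Your proposal is correct and follows essentially the same route as the paper's proof: both reduce to showing $\eta$ is the identity on tuples of free spectra $(F_{V_1}*_{+C_1},\ldots,F_{V_n}*_{+C_n})$, handle the suspension-spectrum case $V_i = 0$ first via the space-level rigidity, and then pin down the general free case by using naturality against the maps $F_{V_i}*_{+C_i} \to F_0*_{+C_i}$ classified by points of $S^{V_i}$ together with the faithfulness of the $O(V)$-action on $S^V$. Your version is somewhat more explicit (working globally with a map $\phi\colon E \to O(V)$ rather than fiber-by-fiber, and spelling out the point-lifting through the Kan extension), but the ideas are identical; the paper's proof is terser and cites \cite[3.17]{malkiewich_cyclotomic_dx} for the free-spectrum step you carry out by hand.
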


\begin{proof}
	Let $\eta$ be an automorphism of $\Phi$. Using the adjunction between free spectra and evaluation, the proof of \autoref{prop:spaces_rigidity} shows that $\eta$ is determined by its value on every tuple of free spectra of the form $(F_{V_1} *_{+C_1},\ldots, F_{V_n} *_{+C_n})$. It is therefore enough to show that $\eta$ is the identity on these spectra. By \autoref{prop:spaces_rigidity} again, $\eta$ must be trivial on the suspension spectra of any $n$-tuple of spaces of the form $(*_{+C_1},\ldots,*_{+C_n})$ where the free points map to $(c_1,\ldots,c_n) \in C_1 \times \ldots \times C_n$. We argue the same for the free spectra $(F_{V_1} *_{+C_1},\ldots, F_{V_n} *_{+C_n})$ using the argument from \cite[3.17]{malkiewich_cyclotomic_dx}: the action of $\eta$ over any point of $B$ is a self-map of a spectrum of the form $F_{V_1 + \ldots + V_n} S^0$ which agrees with the identity map of $F_0S^0$ along any choice of point in $S^{V_1 + \ldots + V_n}$. Since the orthogonal group acts faithfully on the sphere, this is enough to conclude that $\eta$ acts as the identity on this tuple of free spectra. This finishes the proof.
\end{proof}

\begin{rmk}
	The same rigidity theorem holds if some of the factors in the smash product are spaces. It also holds in sequential spectra, provided at most one of the factors in the smash product is a spectrum, and the rest are spaces.
\end{rmk}

The same corollaries follow as before, except that in (CGWH) we restrict the domain to freely $i$-cofibrant spectra any time we need to switch $f^*$ with $\barsmash$ to get an isomorphism. For instance:
\begin{itemize}
	\item The interchange transformations of \autoref{prop:spectra_external_smash_and_base_change} are unique on the subcategory of freely $i$-cofibrant spectra. This implies that, as natural transformations, they are unique on the entire category.
	\item The category $\Osp$ of all parametrized spectra over all parametrized spaces is a symmetric monoidal category.
	\item In (CG), $\Osp$ is a symmetric monoidal bifibration. In both (CG) and (CGWH), the full subcategory $\Osp'$ of freely $i$-cofibrant spectra is a symmetric monoidal bifibration.
	\item In (CG), $\Osp(B)$ is a symmetric monoidal category under the internal smash product $\Delta_B^*(- \barsmash -)$. In (CG) and (CGWH), the subcategory $\Osp(B)'$ is a symmetric monoidal category.
\end{itemize}

\begin{rmk}\label{rmk:general_rigidity}
	The proof of \autoref{thm:spectra_rigidity} actually establishes a stronger statement: if we modify $\Phi$ by taking the smash products after the pullbacks, then any natural transformation $\Phi \Rightarrow \Theta$ that is an isomorphism on free spectra, is the unique natural transformation with that property. See \cite[\S 3.3]{malkiewich_cyclotomic_dx} for similar statements to this one.
\end{rmk}

\begin{cor}\hfill
	\vspace{-1em}
	
	\begin{itemize}
		\item (CG) The functor $P\colon \Osp \to \Osp$ has a strong symmetric monoidal structure.
		\item (CGWH) The functor $P\colon \Osp \to \Osp$ has a lax symmetric monoidal structure that is an isomorphism on freely $i$-cofibrant spectra.
	\end{itemize}
\end{cor}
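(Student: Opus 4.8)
The plan is to build the (lax) structure map level by level out of \autoref{prop:P_strong_monoidal}, and then argue separately that it is an isomorphism in (CG) and on $\Osp'$ in (CGWH). Recall that $X \barsmash Y$ is the left Kan extension along $\oplus\colon \mathscr J \sma \mathscr J \to \mathscr J$ of the bispectrum $\{X(V)\barsmash Y(V')\}$, and that $P$ acts on spectra level-wise by $(p_1)_!(p_0)^*$. By \autoref{prop:P_strong_monoidal} there is for each pair $(V,V')$ a canonical homeomorphism $PX(V)\barsmash PY(V') \cong P(X(V)\barsmash Y(V'))$, natural in $X$ and $Y$ and --- because $P$ is strong symmetric monoidal on retractive spaces --- compatible with the $(\mathscr J\sma\mathscr J)$-action; hence these assemble into an isomorphism of bispectra. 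Applying the left Kan extension along $\oplus$ to both sides and precomposing with the canonical comparison map $\operatorname{Lan}_\oplus(P\circ D) \to P(\operatorname{Lan}_\oplus D)$ (which exists for any functor $P$ and any colimit) produces a natural transformation
\[
PX \barsmash PY \ \xrightarrow{\ \cong\ }\ \operatorname{Lan}_\oplus\{P(X(V)\barsmash Y(V'))\}\ \longrightarrow\ P\big(\operatorname{Lan}_\oplus\{X(V)\barsmash Y(V')\}\big) = P(X\barsmash Y).
\]
Together with the identity map $\Sph = P\Sph$ over $\ast$ (here $P$ is literally the identity functor, since $\ast^I = \ast$) this is the candidate lax structure.

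For the coherence axioms I would follow the second proof of \autoref{prop:P_strong_monoidal}: replace the path spaces $A^I$, $B^I$ by arbitrary spaces $E \to A$, $E' \to B$, so that $P$ is replaced by the functor $q_!p^*$ attached to a span, and observe that the comparison map above is then natural in $E$ and $E'$. All of the functors appearing in the associativity, unit, and symmetry diagrams are now composites of base-change functors and external smash products in the sense of \autoref{thm:spectra_rigidity} (in the strengthened form of \autoref{rmk:general_rigidity}), so the relevant self-maps are forced to be the identity on free spectra, and hence the coherence diagrams commute. Alternatively one can chase the diagrams directly from the ``canonical''-ness clause of \autoref{prop:P_strong_monoidal} and naturality of Kan extensions; either way this step is routine but bookkeeping-heavy.

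It remains to decide when the structure map is an isomorphism. In (CG), $(p_0)^*$ preserves all colimits by \autoref{prop:f_star_colimits} and $(p_1)_!$ is a left adjoint, so $P$ preserves all colimits; in particular the comparison map $\operatorname{Lan}_\oplus(P\circ D) \to P(\operatorname{Lan}_\oplus D)$ is an isomorphism, giving the strong symmetric monoidal structure. In (CGWH), the map is first an isomorphism on free spectra: by \autoref{barsmash_free} and the identity $P(F_V K)\cong F_V(PK)$ (strong monoidality of $P$ on spaces, as used in the proof of \autoref{prop:spectrum_px}), both $P(F_V K)\barsmash P(F_W L)$ and $P(F_V K \barsmash F_W L)$ are canonically $F_{V\oplus W}(PK\barsmash PL)$ and the map is the identity. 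Now one runs the induction of \autoref{prop:spectra_external_smash_and_base_change}: fixing a free spectrum $Y$, the class of $X$ for which the structure map is an isomorphism contains the free spectra and is closed under pushouts along free (hence level, by \autoref{prop:free_implies_level}) closed inclusions and sequential colimits along such, because $-\barsmash Y$ is a left adjoint, $P$ preserves such pushouts, sequential colimits, and level closed inclusions in (CGWH) (again \autoref{prop:f_star_colimits}), and $\barsmash$ sends free closed inclusions to level closed inclusions (\autoref{prop:spectra_pushout_product}(1)); hence it contains all freely $i$-cofibrant $X$. A second, symmetric induction --- legitimate because the structure map is symmetric by the coherence step --- extends this to all freely $i$-cofibrant $Y$, so the lax structure restricts to an isomorphism on $\Osp'$.

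The main obstacle I expect is not conceptual but organizational: checking that the level-wise comparison map respects the bispectrum structure maps, that it is compatible with the coherence isomorphisms (equivalently, setting up the generalized-rigidity argument cleanly for spectra rather than spaces), and keeping track of which colimits in the definition of $\barsmash$ are along level closed inclusions so that the (CGWH) reduction to free spectra actually goes through.
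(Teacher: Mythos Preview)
Your proposal is correct, and your coherence argument via the generalized rigidity of \autoref{rmk:general_rigidity} is exactly what the paper does. The construction of the structure map, however, takes a more roundabout path than necessary.

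The paper does not rebuild the map level by level from \autoref{prop:P_strong_monoidal} and then pass through a Kan-extension comparison. Instead it writes $P=(p_1)_!(p_0)^*$ and directly composes the two spectrum-level interchange maps of \autoref{prop:spectra_external_smash_and_base_change}:
\[
PX\barsmash PY \;\cong\; (p_1\times p_1)_!\big((p_0)^*X\barsmash (p_0)^*Y\big)\;\longrightarrow\;(p_1\times p_1)_!(p_0\times p_0)^*(X\barsmash Y)\;=\;P(X\barsmash Y).
\]
The point is that \autoref{prop:spectra_external_smash_and_base_change} already tells you exactly when the second arrow is an isomorphism---always in (CG), and on freely $i$-cofibrant spectra in (CGWH)---so there is no need to rerun the induction you sketch in your final paragraph. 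Your approach recovers the same map (the two agree on free spectra, hence everywhere by rigidity), but it duplicates work that was packaged once and for all in \autoref{prop:spectra_external_smash_and_base_change}. The trade-off: your construction makes the level-by-level origin of the map transparent, while the paper's construction makes the (CGWH) isomorphism condition immediate and avoids the bookkeeping you flag as the ``main obstacle.''
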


\begin{proof}
	The isomorphism $\Sph \cong P\Sph$ is unique. There is a canonical map $PX \barsmash PY \to P(X \barsmash Y)$ over the identity of $A \times B$, by the above and the fact that $P = (p_1)_!(p_0)^*$, which is an isomorphism (CG) always (CGWH) on freely $i$-cofibrant spectra. As in \autoref{prop:P_strong_monoidal}, by \autoref{rmk:general_rigidity} we can keep track of which natural transformation is the right one by restricting attention to $X = \Sigma^\infty_{+A} A$ and $Y = \Sigma^\infty_{+B} B$, where it gives a map $A^I \times B^I \to (A \times B)^I$. Using this principle it is straightforward to check the associator, unitor, and symmetry coherences.
\end{proof}

\beforesubsection
\subsection{Skeleta and semifree cofibrations}\label{sec:reedy}\aftersubsection

In this subsection we finish the proof of \autoref{prop:spectra_pushout_product}. We cannot proceed using only free cofibrations, so we pass to a more general notion of semifree cofibrations. We then utilize a close relationship between semifree cofibrations and the skeleta of an orthogonal spectrum.

The \textbf{semifree spectrum}\index{semifree spectrum} $\mc G_n K$ on an $O(n)$-equivariant retractive space $K$ is defined to be the parametrized $\mathscr J$-space
\[ \mathscr J(\R^n,-) \barsmash_{O(n)} K. \]
This gives the left adjoint to the operation that takes a parametrized orthogonal spectrum $X$ to $X_n$ as an $O(n)$-equivariant retractive space.

\begin{prop}\label{smash_two_semi_free}
	Let $K$ be a retractive $O(m)$-space over $A$, and $L$ a retractive $O(n)$-space over $B$. Then there is a natural isomorphism
	\[ \mc G_m K \barsmash \mc G_n L \cong \mc G_{m+n} (O(m+n)_+ \barsmash_{O(m) \times O(n)} K \barsmash L). \]
\end{prop}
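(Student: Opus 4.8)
The plan is to reduce the statement to a Kan extension / adjunction calculation analogous to the proof of \autoref{barsmash_free}, exploiting that both sides of the claimed isomorphism are left adjoints. Concretely, $\mc G_m K$ and $\mc G_n L$ are by definition left adjoints to evaluation at level $m$ and $n$, and the external smash product $\barsmash$ of spectra is defined (\autoref{spectra_external_smash}) as a left Kan extension along the direct sum functor $\oplus\colon \mathscr J \sma \mathscr J \to \mathscr J$. So $\mc G_m K \barsmash \mc G_n L$ is the left Kan extension along $\oplus$ of the bispectrum $(V,V') \leadsto (\mc G_m K)(V) \barsmash (\mc G_n L)(V')$, which by definition of the semifree spectra is $(\mathscr J(\R^m,V) \barsmash_{O(m)} K) \barsmash (\mathscr J(\R^n,V') \barsmash_{O(n)} L)$. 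Rearranging the smash products and the two quotients (legitimate since $\barsmash$ preserves colimits in each variable and commutes with the group quotients, which are themselves colimits over the relevant one-object groupoids), this bispectrum is $(\mathscr J(\R^m,V) \barsmash \mathscr J(\R^n,V')) \barsmash_{O(m) \times O(n)} (K \barsmash L)$.

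The next step is to identify the left Kan extension along $\oplus$ of this bispectrum. I would use the standard fact from the non-parametrized theory (as in \cite[Ch. 1, Ch. 21]{mmss}) that left Kan extension along $\oplus$ of the represented bispectrum $\mathscr J(\R^m,-) \barsmash \mathscr J(\R^n,-)$ is $\mathscr J(\R^m \oplus \R^n, -) = \mathscr J(\R^{m+n},-)$, together with the $O(m)\times O(n)$-equivariance (with respect to the inclusion $O(m)\times O(n)\hookrightarrow O(m+n)$ acting on the target through $\oplus$). Since $\barsmash_{O(m)\times O(n)}(K\barsmash L)$ is a colimit and left Kan extensions commute with colimits, this gives
\[ \mc G_m K \barsmash \mc G_n L \;\cong\; \mathscr J(\R^{m+n},-) \barsmash_{O(m)\times O(n)} (K \barsmash L). \]
Finally one rewrites the $O(m)\times O(n)$-coend as an $O(m+n)$-coend by the standard induction identity
\[ \mathscr J(\R^{m+n},-) \barsmash_{O(m)\times O(n)} (K\barsmash L) \;\cong\; \mathscr J(\R^{m+n},-) \barsmash_{O(m+n)} \bigl(O(m+n)_+ \barsmash_{O(m)\times O(n)} (K\barsmash L)\bigr), \]
which is exactly $\mc G_{m+n}\bigl(O(m+n)_+ \barsmash_{O(m)\times O(n)} K \barsmash L\bigr)$. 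Naturality in $K$ and $L$ is automatic because every step is a composite of adjunctions and canonical colimit manipulations.

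Alternatively — and this is probably the cleanest writeup — I would prove it purely by a Yoneda / adjunction argument: for any spectrum $Z \in \Osp(A\times B)$, maps out of the left-hand side are, by the defining adjunctions for $\barsmash$ (via $\barF$, \autoref{spectra_external_smash}) and for $\mc G_m, \mc G_n$, identified with $O(m)\times O(n)$-equivariant ex-maps $K \barsmash L \to (\sh^{m}\!\!\sh^{n} Z)$-type data, and maps out of the right-hand side unwind to the same thing; one then checks the identification is natural in $Z$ and invokes Yoneda. The main obstacle I anticipate is bookkeeping the equivariance: making sure the $O(m+n)$-action that appears after applying $\oplus$ really does restrict to the given $O(m)\times O(n)$-action on $\mathscr J(\R^m,-)\barsmash \mathscr J(\R^n,-)$, and that the induced/coinduced identities (the passage between $O(m)\times O(n)$-coends and $O(m+n)$-coends) are applied with the correct variance. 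All the point-set colimit interchanges are harmless here because $\barsmash$ of spaces preserves colimits in each variable (\autoref{def_external_smash}) and the group actions in question are through ex-maps, so no (CGWH)-versus-(CG) subtleties intrude; the proof can therefore honestly be dispatched with ``the same argument as in the non-parametrized case \cite{mmss}, using that $\barsmash$ preserves colimits in each variable.''
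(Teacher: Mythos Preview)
Your proposal is correct and matches the paper's argument closely. The paper takes exactly your ``alternative'' route: after writing the underlying bispectrum as $(\mathscr J(\R^m,-)\sma\mathscr J(\R^n,-))\barsmash_{O(m)\times O(n)}(K\barsmash L)$, it observes that the left-hand side is therefore a left adjoint from $O(m)\times O(n)$-spaces over $A\times B$ to spectra applied to $K\barsmash L$, and then simply checks that the right adjoint of each side sends a spectrum $X$ to $X_{m+n}$ with the restricted $O(m)\times O(n)$-action. Your first, more explicit Kan-extension-plus-induction computation is also fine and amounts to unwinding the same adjunctions by hand.
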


\begin{proof}
	As in the non-fiberwise case (cf \cite[I.5.14]{schwede_symmetric_spectra}, \cite[2.3.4]{brun_dundas_stolz}), we look at the $\mathscr J \sma \mathscr J$-space defining the smash product on the left-hand side. At bilevel $(m+m',n+n')$ it is
	\[ (\mathscr J(\R^m,\R^{m+m'}) \sma \mathscr J(\R^n,\R^{n+n'})) \barsmash_{O(m) \times O(n)} (K \barsmash L). \]
	Since the left-hand side of the statement in the proposition is defined as a left Kan extension of the diagram just above, it is a left adjoint functor from $O(m) \times O(n)$-spaces over $A \times B$ to spectra over $A \times B$, applied to $K \barsmash L$. To make this agree with the right-hand side, we simply verify that both right adjoints send the spectrum $X$ over $A \times B$ to $X_{m+n}$ considered as a retractive $O(m) \times O(n)$-space.
\end{proof}

Given an orthogonal spectrum $X$ over $B$, its \textbf{$n$-skeleton}\index{skeleton $\sk^n X$} is the orthogonal spectrum over $B$
\[ \sk^n X := (i_n)_!(i_n)^* X, \]
where $i_n$ is the inclusion of the full subcategory of $\mathscr J$ on the objects $\R^0$ through $\R^n$, $(i_n)^*$ is the restriction to this subcategory, and $(i_n)_!$ is the enriched left Kan extension. Writing the definition out gives the presentation
\[ \bigvee_{i,j \leq n} \mathscr J(\R^j,-) \barsmash \mathscr J(\R^i,\R^j) \barsmash X_i \rightrightarrows
\bigvee_{i \leq n} \mathscr J(\R^i,-) \barsmash X_i \to \sk^n X. \]
The $n$-skeleton also has a ``semifree'' presentation
\[ \bigvee_{i,j \leq n} \mathscr J(\R^j,-) \barsmash_{O(j)} \mathscr J(\R^i,\R^j) \barsmash_{O(i)} X_i \rightrightarrows
\bigvee_{i \leq n} \mathscr J(\R^i,-) \barsmash_{O(i)} X_i \to \sk^n X. \]
By a standard categorical argument, $\sk^{n-1} X$ admits a map to $X$ which is an isomorphism levels 0 through $n-1$. At level $n$ this map becomes an $O(n)$-equivariant map $L_n X \to X_n$ over $B$; we refer to it as the $n$th \textbf{latching map}\index{latching map} of $X$. (We define the 0th latching map to be $B \to X_0$.)

\begin{ex}
	Let $X = \mc G_n K$. Then $\sk^m X$ is the zero object for $m < n$ and $\mc G_n K$ for $m \geq n$. By comparing universal properties, it has a single nontrivial latching map, which is at level $n$ and is just the inclusion of the zero object $B \to K$.
\end{ex}

More generally, any map of spectra $X \to Y$ over $B$ has a sequence of relative skeleta, defined as pushouts of the skeleta of $Y$ with $X$ along the skeleta of $X$:
\[ \xymatrix @R=1.7em{
	\sk^n X \ar[r] \ar[d] & \sk^n Y \ar[d] \\
	X \ar[r] & \sk^n (X \to Y) } \]
The map from $\sk^{n-1}(X \to Y)$ to $Y$ on level $n$ can then be written
\[ L_n Y \cup_{L_n X} X_n \to Y_n, \]
and we call this the $n$th relative latching map of $X \to Y$. The following two lemmas are straightforward comparisons of universal properties.

\begin{prop}\label{prop:skeleta_pushout}
	For each spectrum $X$ over $B$, the latching map $L_n X \to X_n$ fits into a natural pushout square
	\[ \xymatrix @R=1.7em{
		\mc G_n L_n X \ar[r] \ar[d] & \mc G_n X_n \ar[d] \\
		\sk^{n-1} X \ar[r] & \sk^n X. } \]
\end{prop}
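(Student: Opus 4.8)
The plan is to prove the square in \autoref{prop:skeleta_pushout} by comparing universal properties, exactly as the remark before the statement advertises. The right-hand vertical $\mc G_n X_n \to \sk^n X$ is the counit of the adjunction $(\mc G_n \adj (-)_n)$ applied after noting that $\sk^n X$ agrees with $X$ in levels $\leq n$; more precisely, the ``semifree presentation'' of $\sk^n X$ displayed just above the statement exhibits $\sk^n X$ as the coequalizer of a reflexive pair whose target is $\bigvee_{i \leq n} \mc G_i X_i$. First I would split this coequalizer: the part of the wedge with $i \leq n-1$, together with the relations among those terms, builds $\sk^{n-1}X$; the only new generator is $\mc G_n X_n$, and the only new relations are the ones identifying, for each $i < n$, the image of $\mathscr J(\R^n,-)\barsma{O(n)}\mathscr J(\R^i,\R^n)\barsma{O(i)}X_i$ inside $\mc G_n X_n$ with its image inside $\sk^{n-1}X$. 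That shared subobject is, essentially by definition, $\mc G_n L_n X$: the latching object $L_n X$ is the level-$n$ part of $\sk^{n-1}X$, and $\mc G_n$ applied to it contributes exactly the collection of relations just described. This identifies $\sk^n X$ as the pushout of $\sk^{n-1}X \leftarrow \mc G_n L_n X \to \mc G_n X_n$.

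Concretely I would verify the pushout property by mapping out. A map $\sk^n X \to Z$ of spectra over $B$ is the same as a map $\sk^n X \to Z$ of enriched diagrams restricted to the objects $\R^0,\ldots,\R^n$ of $\mathscr J$ — since $(i_n)_!$ is left adjoint to restriction $(i_n)^*$, and $\sk^n X = (i_n)_!(i_n)^*X$, such a map is the same as a map $(i_n)^*X \to (i_n)^*Z$. A map $\sk^{n-1}X \to Z$ is likewise a map $(i_{n-1})^*X \to (i_{n-1})^*Z$, and a map $\mc G_n X_n \to Z$ is an $O(n)$-map $X_n \to Z_n$ over $B$. So the data of a compatible pair $(\sk^{n-1}X \to Z,\ \mc G_n X_n \to Z)$ agreeing on $\mc G_n L_n X$ is the data of a map $(i_{n-1})^*X \to (i_{n-1})^*Z$ together with an $O(n)$-map $X_n \to Z_n$ over $B$ that is compatible with the already-specified behavior on $L_n X = (L_n Z \text{ is built from levels} <n)$ — which is precisely the data of a map $(i_n)^*X \to (i_n)^*Z$, i.e. a map $\sk^n X \to Z$. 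Naturality in $X$ is automatic since every construction in sight ($\mc G_n$, $(i_k)_!(i_k)^*$, latching objects) is functorial.

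The only real point requiring care — and the step I expect to be the main obstacle — is pinning down that the map $\mc G_n L_n X \to \mc G_n X_n$ appearing as the top edge really is $\mc G_n$ applied to the latching map $L_n X \to X_n$, and that the map $\mc G_n L_n X \to \sk^{n-1}X$ is the adjoint of the identification of $L_n X$ with the $n$th level of $\sk^{n-1}X$. This is a matter of chasing the coequalizer presentation carefully: one must check that the subobject of $\mc G_n X_n$ cut out by the $i<n$ relations is exactly the image of $\mc G_n$ applied to the colimit $L_n X = \colim_{i<n}\big(\mathscr J(\R^i,\R^n)\barsma{O(i)}X_i\big)$ (over the relevant indexing category), using that $\mc G_n = \mathscr J(\R^n,-)\barsma{O(n)}(-)$ is a left adjoint and hence commutes with this colimit, and that the external smash products in play are distributive over the pushouts and wedges, which holds because $\barsmash$ is a left adjoint in each variable (the right adjoints were constructed in \S\ref{sec:smash_with_space}). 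Once these identifications are made, commutativity of the square and the pushout property are formal.
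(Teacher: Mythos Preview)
Your proposal is correct and matches the paper's approach: the paper does not give a proof, merely stating that this and the companion proposition are ``straightforward comparisons of universal properties,'' which is exactly the mapping-out argument you have written out in detail. Your identification of maps $\sk^n X \to Z$ with maps $(i_n)^*X \to (i_n)^*Z$, and the subsequent decomposition into a map on levels $\leq n-1$ together with a compatible $O(n)$-map at level $n$, is the intended verification.
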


\begin{prop}\label{prop:map_skeleta_pushout}
	For each map of spectra $X \to Y$ over $B$, the relative latching map $L_n Y \cup_{L_n X} X_n \to Y_n$ fits into a natural pushout square
	\[ \xymatrix @R=1.7em @C=3em{
		\mc G_n (L_n Y \cup_{L_n X} X_n) \ar[r] \ar[d] & \mc G_n Y_n \ar[d] \\
		\sk^{n-1} (X \to Y) \ar[r] & \sk^n (X \to Y). } \]
\end{prop}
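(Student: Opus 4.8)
The plan is to deduce the statement from \autoref{prop:skeleta_pushout} by a purely formal diagram chase, exactly as in the non-parametrized setting (cf. \cite[I.5.14]{schwede_symmetric_spectra}, \cite[2.3.4]{brun_dundas_stolz}). Recall three ingredients. First, by definition $\sk^n(X \to Y)$ is the pushout of $X \leftarrow \sk^n X \to \sk^n Y$, and the comparison maps $\sk^{n-1}(-) \Rightarrow \sk^n(-)$ and $L_n(-) \Rightarrow (-)_n$ are natural in the spectrum. Second, $\mc G_n$ is a left adjoint (to the functor sending a spectrum to its $n$th level as an $O(n)$-space), so it preserves all colimits; in particular $\mc G_n(L_n Y \cup_{L_n X} X_n) \cong \mc G_n L_n Y \cup_{\mc G_n L_n X} \mc G_n X_n$. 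Third, \autoref{prop:skeleta_pushout}, applied naturally to $X$ and to $Y$, presents $\sk^n X$ as $\sk^{n-1}X \cup_{\mc G_n L_n X} \mc G_n X_n$ and $\sk^n Y$ as $\sk^{n-1}Y \cup_{\mc G_n L_n Y} \mc G_n Y_n$.

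Substituting these two presentations into the pushout defining $\sk^n(X\to Y)$ exhibits $\sk^n(X\to Y)$ as the colimit of a single diagram on the seven objects $X$, $\sk^{n-1}X$, $\sk^{n-1}Y$, $\mc G_n L_n X$, $\mc G_n X_n$, $\mc G_n L_n Y$, $\mc G_n Y_n$. One then reassociates this colimit: first take the pushout of $X \leftarrow \sk^{n-1}X \to \sk^{n-1}Y$, which is $\sk^{n-1}(X\to Y)$; next take the pushout of $\mc G_n X_n \leftarrow \mc G_n L_n X \to \mc G_n L_n Y$, which by the second ingredient above is $\mc G_n(L_n Y \cup_{L_n X} X_n)$; the remaining pushout is then precisely the square in the statement, with $\mc G_n$ applied to the relative latching map $L_n Y \cup_{L_n X} X_n \to Y_n$. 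The map $\sk^{n-1}(X\to Y) \to \sk^n(X\to Y)$ thus produced agrees with the canonical one, and the whole square is natural in $X\to Y$ because every construction used is.

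There is no homotopy theory here; the only real work is in the middle step, namely checking that the comparison maps in the seven-object diagram are mutually compatible so that the reassociation is legitimate. This is the main (but routine) obstacle: it amounts to the naturality of the square of \autoref{prop:skeleta_pushout} in the spectrum, the naturality of the latching maps and skeletal inclusions with respect to $f\colon X\to Y$, and the observation that the structure maps $\mc G_n X_n \to X$ and $\mc G_n L_n X \to \sk^{n-1}X$ are the adjoints of identity maps, so that the relevant triangles commute by construction. Since all of the functors in sight ($\sk^k$, $\mc G_n$, $L_n$, $(-)_n$, together with the base-change functors hidden inside them) are enriched over based spaces and defined by the same (co)limit formulas as in the classical case, the verification is identical to the non-parametrized one. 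Alternatively, one may check the universal property of the claimed pushout directly, unwinding a map out of $\sk^n(X\to Y)$ into a spectrum $Z$ as a map $X\to Z$ together with a compatible family of $O(k)$-equivariant maps $Y_k \to Z_k$ for $k\le n$ extending it, and matching this with a map out of $\sk^{n-1}(X\to Y)$ together with an $O(n)$-map $Y_n \to Z_n$ agreeing on $L_n Y \cup_{L_n X} X_n$; this is the route used for \autoref{prop:skeleta_pushout} itself.
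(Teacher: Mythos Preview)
Your proposal is correct. The paper does not give a detailed proof here; it simply introduces \autoref{prop:skeleta_pushout} and \autoref{prop:map_skeleta_pushout} together as ``straightforward comparisons of universal properties,'' and your argument (either the colimit reassociation or the direct check of the universal property you sketch at the end) is exactly the kind of verification the paper has in mind.
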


\begin{df}
	A map of spectra $X \to Y$ over $B$ is a \textbf{semifree} or \textbf{Reedy $h$-cofibration}\index{$h$-cofibration!semifree} if each relative latching map $L_n Y \cup_{L_n X} X_n \to Y_n$ is an $h$-cofibration. The notions of semifree $f$-cofibration and semifree closed inclusion are defined similarly.\index{$f$-cofibration!semifree}\index{$i$-cofibration!semifree}\footnote{Note that ``flat cofibrations'' of orthogonal spectra (see e.g. \cite{schwede_global}) are the same thing as semifree $q$-cofibrations. To be precise, the latching map has to be a relative $O(n)$-cell complex at spectrum level $n$.}
\end{df}

We emphasize that in this and the previous definition, the latching map is $O(n)$-equivariant, but that action is ignored when determining if the map is a cofibration. This means our proofs below will be more general than the ones done in \cite[\S 4]{malkiewich2017coassembly}.\footnote{However when we develop the equivariant theory in \autoref{sec:G_reedy}, we have to stick to homotopy extension properties that respect the $G \times O(n)$-action.}

\begin{rmk}\label{rmk:reedy_prespectra}
	The notion of a semifree cofibration makes sense for sequential spectra, but it turns out to be the same thing as a free cofibration. The presence of the $O(n)$-action is what makes the two notions different in orthogonal spectra.
\end{rmk}

To simplify our language a bit, in the rest of this section we will use the word ``cofibration'' when making a statement that is equally true for $f$-cofibrations, $h$-cofibrations, or closed inclusions.

\begin{prop}\label{reedy_preserved}
	Semifree cofibrations are closed under pushouts, transfinite compositions (therefore also coproducts), and retracts.
\end{prop}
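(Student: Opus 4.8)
The plan is to reduce everything to the analogous closure properties for the relative latching maps, which are maps of \emph{spaces} (well, $O(n)$-spaces over $B$), and then invoke \autoref{prop:technical_cofibrations}(1), which says that $f$-cofibrations, $h$-cofibrations, and closed inclusions of retractive spaces are each closed under pushouts, transfinite compositions, and retracts. The key point is that the relative-latching-map construction is \emph{compatible} with these three colimit operations, so that a colimit of semifree cofibrations has relative latching maps that are the corresponding colimits of the individual relative latching maps.

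First I would fix notation: for a map $X \to Y$ write $\ell_n(X \to Y)\colon L_n Y \cup_{L_n X} X_n \to Y_n$ for its $n$th relative latching map. The essential lemma is that $\ell_n$ commutes with pushouts, transfinite compositions, and retracts in the arrow category. For retracts this is immediate since $\ell_n$ is a functor on the arrow category of spectra (built from $(i_n)_!(i_n)^*$, pushouts, and evaluation, all functorial). For pushouts, suppose $X \to Y$ is the pushout of $A \to B$ along a map $A \to X$, where $A \to B$ is a semifree cofibration. Since $L_n(-)$ is a left adjoint on each fiber category (it is $(i_{n-1})_!(i_{n-1})^*$ followed by evaluation, a composite of left adjoints), it preserves pushouts, and evaluation $(-)_n$ preserves pushouts; a short diagram chase with iterated pushouts then shows $\ell_n(X \to Y)$ is the pushout of $\ell_n(A \to B)$ along the map induced on the pushout corners. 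Hence it is an $f$-/$h$-/$i$-cofibration by \autoref{prop:technical_cofibrations}(1) applied level-wise over $B$. The transfinite composition case is the same argument run through a colimit: $L_n$ and $(-)_n$ both preserve the relevant filtered colimits (they are left adjoints, and the colimits in question are along level cofibrations, which are in particular closed inclusions, so no set-theoretic or point-set subtleties arise), so $\ell_n$ of a transfinite composite of semifree cofibrations is a transfinite composite of the individual $\ell_n$'s, composed with pushouts of them — again a cofibration by \autoref{prop:technical_cofibrations}(1). Coproducts follow as the transfinite composition of the partial coproducts, or directly since $\ell_n$ preserves coproducts.

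Concretely I would organize the proof as three short paragraphs, one per closure property, each opening by identifying how the relative latching map of the colimit decomposes, then citing \autoref{prop:technical_cofibrations}(1) (understood as a statement about $O(n)$-equivariant retractive spaces over $B$, where the $O(n)$-action is simply carried along and ignored for the purpose of checking the cofibration condition, exactly as emphasized in the remark preceding the statement). One can also phrase all three uniformly by noting that $\{$semifree cofibrations$\}$ is the preimage, under the functor $(X \to Y) \mapsto \{\ell_n(X \to Y)\}_n$ to the arrow category of $\prod_n \mc R(B)$, of the class $\{$level cofibrations$\}$, and that this functor sends pushouts, transfinite compositions, and retracts of semifree cofibrations to the corresponding operations applied to level cofibrations — which form such a closed class by \autoref{prop:technical_cofibrations}(1).

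I expect the main obstacle to be the pushout case: verifying cleanly that $\ell_n$ of a pushout of spectra is itself a pushout of arrows. This is the step where one must be a little careful about which functors preserve which colimits, since $L_n$ involves a left Kan extension $(i_{n-1})_!$ that one wants to commute past the pushout (fine, it is a left adjoint), and one must check the pushout corner $L_n Y \cup_{L_n X} X_n$ transforms correctly — essentially a pasting-of-pushouts diagram chase. Everything else is formal. I would present this chase at the level of "by \autoref{prop:skeleta_pushout} and the fact that $L_n$ and evaluation preserve pushouts, $\ell_n$ of the pushout square is the pushout of $\ell_n$ of the original square" rather than writing out the nine-object diagram, since it is routine.
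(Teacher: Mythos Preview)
Your approach is correct and coincides with the paper's in spirit: both reduce to showing that the relative latching map of the constructed spectrum is obtained from the relative latching maps of the inputs by the same operation (pushout, composition, retract), then invoke the space-level closure properties of \autoref{prop:technical_cofibrations}(1).

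The execution differs slightly. You argue abstractly that $L_n$ and $(-)_n$ are colimit-preserving (being composites of left adjoints), hence $\ell_n$ sends a pushout of arrows to a pushout of arrows. The paper instead writes out the explicit iterated-pushout diagrams in each case. For pushouts the two are equivalent. For compositions, the paper is a bit more explicit about the key point you gesture at with ``composed with pushouts of them'': $\ell_n(X \to Z)$ is not literally $\ell_n(Y \to Z)\circ \ell_n(X \to Y)$, but rather the composite of a \emph{cobase change} of $\ell_n(X \to Y)$ followed by $\ell_n(Y \to Z)$. The paper then reuses exactly this composition factorization at each successor stage of a transfinite composite. Your sketch is correct but would benefit from stating this factorization precisely rather than leaving it implicit; otherwise the transfinite-composition paragraph risks reading as though $\ell_n$ is strictly composition-preserving, which it is not.
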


\begin{proof}
	The proofs are very similar to those in \autoref{pushout_product_lemmas}, but formally a little different.
	
	If $X \to X \cup_K M$ is a pushout of the semifree cofibration $K \to M$, then its latching map
	\[ L_n (X \cup_K M) \cup_{L_n X} X_n \to X_n \cup_{K_n} M_n \]
	rearranges as
	\[ L_n M \cup_{L_n K} X_n \to M_n \cup_{K_n} X_n \]
	This is a pushout of the original latching map $L_n M \cup_{L_n K} K_n \to M_n$ by the diagram in which every square is a pushout
	\[ \xymatrix @R=1.7em @!C{
		L_n K \ar[r] \ar[d] & L_n M \ar[d] \\
		K_n \ar[r] \ar[d] & L_n M \cup_{L_n K} K_n \ar[r] \ar[d] & M_n \ar[d] \\
		X_n \ar[r] & L_n M \cup_{L_n K} X_n \ar[r] & M_n \cup_{K_n} X_n
	} \]
	and is therefore a cofibration. So the pushout $X \to X \cup_K M$ is a semifree cofibration.
	
	For compositions, suppose $X \to Y \to Z$ are both semifree cofibrations. In the following diagram, every square is a pushout:
	\[ \xymatrix @R=1.7em @!C=6em{
		L_n X \ar[r] \ar[d] & X_n \ar[d] \ar[rd] & \\
		L_n Y \ar[r] \ar[d] & L_n Y \cup_{L_n X} X_n \ar[r]^-{c} \ar[d] & Y_n \ar[d] \ar[rd] \\
		L_n Z \ar[r] & L_n Z \cup_{L_n X} X_n \ar[r]^-{c} & L_n Z \cup_{L_n Y} Y_n \ar[r]^-{c} & Z_n
	}\]
	Therefore the maps labeled $c$ are cofibrations; two of them by assumption, and the last because it is a pushout of a cofibration. It follows that the latching map $L_n Z \cup_{L_n X} X_n \to Z_n$ is also a cofibration.
	
	If $X^{(0)} \to X^{(\alpha)}$ is the transfinite composition of the semifree cofibrations $X^{(\beta)} \to X^{(\beta + 1)}$, then the latching map for $X^{(0)} \to X^{(\alpha)}$ is the transfinite composition of the maps
	\[ L_n X^{(\alpha)} \cup_{L_n X^{(\beta)}} X^{(\beta)}_n \to L_n X^{(\alpha)} \cup_{L_n X^{(\beta + 1)}} X^{(\beta + 1)}_n. \]
	However, each of these is a cofibration by the argument we used above for compositions, with $X = X^{(\beta)}$, $Y = X^{(\beta + 1)}$, and $Z = X^{(\alpha)}$. Therefore their composition
	\[ L_n X^{(\alpha)} \cup_{L_n X^{(0)}} X^{(0)}_n \to X^{(\alpha)}_n \]
	is a cofibration as well.
	
	Finally, the latching map construction preserves retracts, and a retract of a cofibration is a cofibration, so a retract of a semifree cofibration is a semifree cofibration.
\end{proof}

\begin{cor}
	The class of semifree cofibrations is the smallest class containing the semifree spectra on cofibrations of spaces, and closed under pushouts, transfinite compositions, and retracts. In particular, every free cofibration is a semifree cofibration.
\end{cor}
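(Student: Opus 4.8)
The plan is to identify the class of semifree cofibrations with the smallest class $\sC$ of maps of spectra over $B$ that contains every semifree spectrum $\mc G_n(K \to X)$ on a cofibration $K \to X$ of retractive spaces over $B$ and is closed under pushouts, transfinite compositions, and retracts. One inclusion is almost immediate: by \autoref{reedy_preserved} the semifree cofibrations are already closed under pushouts, transfinite compositions, and retracts, so it suffices to check that each generator $\mc G_n(K \to X)$ is a semifree cofibration. This is the same universal-property comparison as in the example computing $\sk^m \mc G_n K$ above: in levels below $n$ the relative latching maps of $\mc G_n(K \to X)$ are identities of the zero object, in level $n$ the relative latching map is $K \to X$ itself, and in levels above $n$ it is an isomorphism, so all of them are cofibrations and $\mc G_n(K \to X)$ is a semifree cofibration. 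Hence $\sC$ is contained in the class of semifree cofibrations.

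For the reverse inclusion I would run the skeletal filtration. Let $X \to Y$ be a semifree cofibration. The relative skeleta form a sequence $X = \sk^{-1}(X \to Y) \to \sk^0(X \to Y) \to \sk^1(X \to Y) \to \cdots$, where the $(-1)$-st term is $X$ because $\sk^{-1}X$ is the zero object, and this sequence has colimit $Y$ because $\sk^n Y \to Y$ is an isomorphism through level $n$. By \autoref{prop:map_skeleta_pushout}, each inclusion $\sk^{n-1}(X \to Y) \to \sk^n(X \to Y)$ is a pushout of $\mc G_n$ applied to the $n$-th relative latching map $L_n Y \cup_{L_n X} X_n \to Y_n$, and this relative latching map is a cofibration of retractive spaces exactly because $X \to Y$ is assumed to be a semifree cofibration (its $O(n)$-equivariance is irrelevant here). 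Thus every inclusion in the sequence lies in $\sC$, and therefore so does their countable composition $X \to Y$. Combined with the previous paragraph, the two classes coincide.

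For the ``in particular'' clause, recall the canonical identification $F_n A \cong \mc G_n(O(n)_+ \barsmash A)$, which follows from the defining universal properties of $F_n$ and $\mc G_n$ together with the fact that $O(n)_+ \barsmash -$ is left adjoint to forgetting the $O(n)$-action. Consequently a generating free cofibration $F_n(K \to X)$ is isomorphic to the semifree spectrum $\mc G_n(O(n)_+ \barsmash (K \to X))$ on the cofibration $O(n)_+ \barsmash (K \to X)$, which is a cofibration by \autoref{prop:h_cofibrations_pushout_product}; hence it lies in $\sC$. Since $\sC$ is closed under pushouts, transfinite compositions, and retracts, it contains all free cofibrations. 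Every individual step here is a routine diagram chase with universal properties; the only point that takes any care is the bookkeeping in the reverse inclusion, namely confirming that $Y$ really is the colimit of the relative skeleta and that each successive inclusion is exactly the pushout asserted by \autoref{prop:map_skeleta_pushout}, and even that is handed to us by the results already in place.
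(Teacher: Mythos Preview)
Your proof is correct and follows exactly the argument the paper leaves implicit: one inclusion from \autoref{reedy_preserved} plus the latching-map computation for $\mc G_n(K\to X)$, the other from the skeletal filtration via \autoref{prop:map_skeleta_pushout}, and the free case via $F_n A \cong \mc G_n(O(n)_+ \barsmash A)$. The paper states this as a corollary without writing out a proof, and your write-up supplies precisely the intended details.
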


For the next set of results, we need to establish conditions under which smashing and quotienting out an $O(n)$-action will preserve cofibrations, fibrant objects, and weak equivalences. Let $O$ be a compact Lie group, such as $O(n)$ for $n \geq 0$. A retractive $O$-equivariant space is a retractive space $Y$ over $B$ with a fiber-preserving $O$-action. Furthermore, when we have an $O$-equivariant map of retractive spaces $X \to Y$, we say it is an $f$-cofibration or $h$-cofibration if this is true after forgetting the $O$-action.

\begin{prop}\label{smashing_with_free_complex}
	Under the conventions above:
	\begin{enumerate}
		\item If $f: K \to L$ is a free $O$-cell complex of based spaces and $g: X \to Y$ is an $O$-equivariant map of retractive $B$-spaces that is a cofibration, then $f \square g$ constructed by $(- \barsmash -)_{O}$ is a cofibration.
		\item If $L$ is any finite free based $O$-cell complex, and $X$ is any $O$-equivariant retractive space that is both $h$-cofibrant and $h$-fibrant, then $(L \barsmash X)_O$ is $h$-fibrant.
		\item If $L$ is any finite free based $O$-cell complex, and $g: X \to Y$ is any $O$-equivariant map of retractive $h$-cofibrant spaces over $B$ that is nonequivariantly a weak equivalence, then $(\id_L \barsmash g)_{O}$ is a weak equivalence.
	\end{enumerate}
\end{prop}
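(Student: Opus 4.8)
The plan is to reduce all three statements, by cellular induction on the free $O$-cell complex, to the space-level results \autoref{prop:h_cofibrations_pushout_product}, \autoref{prop:clapp}, \autoref{lem:heath_kamps}, and the gluing lemma \autoref{prop:technical_cofibrations}. The key tool is the \textbf{shearing isomorphism}: for any retractive $O$-space $X$ over $B$ the diagonal $O$-action on $O_+ \barsmash X$ is free, and the quotient is naturally $(O_+ \barsmash X)_O \cong X$; hence, for an unbased space $Z$ carrying the trivial $O$-action, associativity of $\barsmash$ together with the fact that $(-)_O$ is a left adjoint gives $\big((O \times Z)_+ \barsmash X\big)_O \cong Z_+ \barsmash X$. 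Since $\barsmash$ preserves colimits in each variable and $(-)_O$ is a left adjoint, the functor $(-\barsmash-)_O$ preserves colimits in each variable, so \autoref{pushout_product_lemmas} and its transfinite extension apply to it.

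For (1), write $f\colon K \to L$ as a transfinite composite of pushouts of coproducts of the generating free cells $\big((O\times S^{n-1})_+ \to (O\times D^n)_+\big)$. By the pushout-product lemma, $f\,\square\,g$ formed with $(-\barsmash-)_O$ is then a transfinite composite of pushouts of coproducts of the maps $\big((O\times S^{n-1})_+ \to (O\times D^n)_+\big)\,\square\,g$. The shearing isomorphism identifies each of these, naturally in $g$, with the pushout-product $(S^{n-1}_+ \to D^n_+)\,\square\,g$ formed with $\barsmash$ over $* \times B$. Since $S^{n-1}_+ \to D^n_+$ is a based $q$-cofibration, hence an $h$-, $f$-, or $i$-cofibration of based spaces, \autoref{prop:h_cofibrations_pushout_product}(1)--(2) makes this pushout-product a cofibration of whichever of the three types is at issue, and then \autoref{prop:technical_cofibrations}(1) (closure under pushouts, coproducts, and transfinite composites) completes the argument.

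For (2), induct on the number of cells of $L$; when $L$ has no cells, $(L\barsmash X)_O = B$, which is $h$-fibrant. For the inductive step write $L' = L \cup_{(O\times S^{n-1})_+}(O\times D^n)_+$; since $(-)_O$ commutes with this pushout and with $\barsmash$, the shearing isomorphism gives
\[ (L'\barsmash X)_O \;\cong\; (L\barsmash X)_O \,\cup_{\,S^{n-1}_+\barsmash X\,}\, \big(D^n_+\barsmash X\big). \]
Now $(L\barsmash X)_O$ is $h$-fibrant by induction; $S^{n-1}_+$ and $D^n_+$ are each $h$-cofibrant and $h$-fibrant over $*$, so \autoref{prop:h_cofibrations_pushout_product}(3) (using that $X$ is $h$-cofibrant and $h$-fibrant) shows $S^{n-1}_+\barsmash X$ and $D^n_+\barsmash X$ are $h$-fibrant; and $S^{n-1}_+\barsmash X \to D^n_+\barsmash X$, being the pushout-product of $S^{n-1}_+ \to D^n_+$ with the section $B \to X$, is an $h$-cofibration by \autoref{prop:h_cofibrations_pushout_product}(1), hence an $f$-cofibration between $h$-fibrant spaces by \autoref{lem:heath_kamps}. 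Clapp's \autoref{prop:clapp} then shows the displayed pushout is $h$-fibrant, completing the induction.

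For (3), induct on the cells of $L$ again; the base case $L = *$ is trivial since both sides are $B$. In the inductive step, after the identifications used in (2), the map $(\id_{L'}\barsmash g)_O$ becomes a map of pushout squares
\[ \big[(L\barsmash X)_O \leftarrow S^{n-1}_+\barsmash X \to D^n_+\barsmash X\big] \longrightarrow \big[(L\barsmash Y)_O \leftarrow S^{n-1}_+\barsmash Y \to D^n_+\barsmash Y\big] \]
in which the horizontal leg $S^{n-1}_+\barsmash X \to D^n_+\barsmash X$ (and its analogue for $Y$) is an $h$-cofibration by (2); the left vertical map is a weak equivalence by the inductive hypothesis and the other two by \autoref{prop:h_cofibrations_pushout_product}(4) (using that $g$ is a weak equivalence of $h$-cofibrant spaces and $S^{n-1}_+, D^n_+$ are $h$-cofibrant). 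The gluing lemma \autoref{prop:technical_cofibrations}(2) then shows $(\id_{L'}\barsmash g)_O$ is a weak equivalence. Throughout, the real obstacle is bookkeeping rather than a conceptual difficulty: one must check that the $O$-quotient commutes with every colimit in sight and that the shearing isomorphisms are natural enough to be compatible with the cell-attaching maps and with $g$, after which (1)--(3) are precisely the space-level \autoref{prop:h_cofibrations_pushout_product}, \autoref{prop:clapp}/\autoref{lem:heath_kamps}, and the gluing lemma run through the induction. This is also the one place both hypotheses on $X$ in (2) are genuinely used: $h$-cofibrancy to upgrade the attaching leg to an $f$-cofibration, and $h$-fibrancy to make the cell terms $h$-fibrant so Clapp applies.
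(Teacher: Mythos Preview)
Your proof is correct and follows essentially the same approach as the paper: both use the shearing isomorphism $((O \times D)_+ \barsmash X)_O \cong D_+ \barsmash X$ to reduce each generating free cell to the space-level results of \autoref{prop:h_cofibrations_pushout_product}, then induct over the cells using \autoref{pushout_product_lemmas}, \autoref{prop:clapp}, and the gluing lemma. One small slip: in your argument for (3), the reference ``by (2)'' for why $S^{n-1}_+\barsmash X \to D^n_+\barsmash X$ is an $h$-cofibration should point to (1) or to \autoref{prop:h_cofibrations_pushout_product}, not to the fibrancy statement (2).
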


\begin{proof}
	The key ingredient for the first claim is that there is an isomorphism
	\[ ((O \times D)_+ \barsmash Y)_{O} \cong (D_+ \barsmash Y) \]
	of retractive spaces over $B$ without $O$-actions (so we forget the action on the right), which is natural in the unbased space $D$ and the retractive space $Y \in \mc R(B)$. This follows from the pushout square \eqref{external_half_smash}.
	Therefore these claims hold in the special case where  $f$ is a single free based $O$-cell,
	\[ (O \times S^{d-1})_+ \to (O \times D^d)_+, \]
	by reduction to \autoref{prop:h_cofibrations_pushout_product}. But the case of general $f$ immediately follows because $- \square g$ preserves pushouts and compositions in the sense described in \autoref{pushout_product_lemmas}.
	
	For the third statement we induct on the skeleta of $L$. If $L$ is obtained from $K$ by pushout along a single cell
	\[ \xymatrix @R=1.7em{
		(O \times S^{d-1})_+ \ar[r] \ar[d] & (O \times D^d)_+ \ar[d] \\
		K \ar[r] & L
	} \]
	Then $g$ gives a map of two pushout squares of the form
	\[ \xymatrix @R=1.7em{
		S^{d-1}_+ \barsmash (-) \ar[r] \ar[d] & D^d_+ \barsmash (-) \ar[d] \\
		(K \barsmash (-))_O \ar[r] & (L \barsmash (-))_O.
	} \]
	The horizontal maps are cofibrations by the first part of this proposition, so each of these squares is a homotopy pushout square. The map $g$ induces equivalences on the top-left and top-right terms by \autoref{prop:h_cofibrations_pushout_product}, and on the bottom-left term by inductive hypothesis, and therefore induces an equivalence on the bottom-right term. By induction $(\id_{L'} \barsmash f)_O$ is an equivalence for all skeleta $L'$ of $L$, and therefore for $L$ itself.
	
	The second statement is proven by the same induction, and we only get one pushout square of the above form. The first three terms are $h$-fibrations and the top horizontal map is an $h$- (therefore $f$-)cofibration, so by \autoref{prop:clapp} the last term is an $h$-fibration.
\end{proof}

\begin{lem}
	$\mathscr J(\R^n,\R^{m+n})$ is a based free $O(n)$-cell complex.
\end{lem}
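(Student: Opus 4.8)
The plan is to use the concrete description
\[
\mathscr J(\R^n,\R^{m+n})\;\cong\;O(m+n)_+\sma_{O(m)}S^m
\]
from the discussion preceding \autoref{barsmash_free}, and to build a free based $O(n)$-cell structure on the right-hand side from a cell structure on the compact Lie group $O(m+n)$. First I would pin down the relevant $O(n)$-action. Writing $\R^{m+n}=\R^n\oplus\R^m$, the subgroups $O(n)\oplus\id_{\R^m}$ and $\id_{\R^n}\oplus O(m)$ of $O(m+n)$ commute, the displayed smash product is formed along the \emph{left} (post-composition) action of the second subgroup, and precomposition by $O(n)=O(\R^n)$ corresponds under the isomorphism to \emph{right} multiplication by the first subgroup $O(n)\oplus\id_{\R^m}$ on $O(m+n)$, acting trivially on the $S^m=S^{\R^m}$ factor. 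This right action is free on $O(m+n)$ (equivalently: the $O(n)$-action on linear isometric embeddings $\R^n\to\R^{m+n}$ is free because such embeddings are injective). It commutes with the left $O(m)$-action used to form the quotient, so $O(m)\times O(n)$ acts freely on $O(m+n)$. (The degenerate case $m=0$ reads ``$O(n)_+$ is a free based $O(n)$-cell complex,'' which is clear.)

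Next I would equip $O(m+n)$ with a \emph{finite free} $(O(m)\times O(n))$-CW structure. This exists because $O(m+n)$ is a compact smooth manifold with a free smooth action of the compact Lie group $O(m)\times O(n)$: triangulate the compact quotient manifold $O(m+n)/(O(m)\times O(n))$ finitely and pull the triangulation back along the principal bundle projection (alternatively, invoke Illman's equivariant triangulation theorem). Concretely this presents $O(m+n)_+$ as built from finitely many based cells $(O(m)\times O(n)\times D^k)_+$ by $(O(m)\times O(n))$-equivariant pushouts along the boundary inclusions $(O(m)\times O(n)\times S^{k-1})_+\to(O(m)\times O(n)\times D^k)_+$. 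Smashing the resulting skeletal filtration over $O(m)$ with $S^m$ produces a filtration of $\mathscr J(\R^n,\R^{m+n})$ in which passage from one stage to the next is an $O(n)$-equivariant pushout along a wedge of maps
\[
(O(n)\times S^{k-1})_+\sma S^m\;\longrightarrow\;(O(n)\times D^k)_+\sma S^m ,
\]
using the identification $(O(m)\times O(n)\times D^k)_+\sma_{O(m)}S^m\cong(O(n)\times D^k)_+\sma S^m$. Since $(O(n)\times D^k)_+\sma S^m\cong O(n)_+\sma(D^k_+\sma S^m)$ and $D^k_+\sma S^m\cong(D^k\times D^m)/(D^k\times\partial D^m)$ is a based finite CW complex, each such stage attaches finitely many free based $O(n)$-cells. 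Iterating over the finitely many cells of $O(m+n)$ exhibits $\mathscr J(\R^n,\R^{m+n})$ as a finite free based $O(n)$-cell complex.

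The one point requiring care — the main (mild) obstacle — is the identification $(O(m)\times O(n)\times D^k)_+\sma_{O(m)}S^m\cong(O(n)\times D^k)_+\sma S^m$ compatibly with the residual $O(n)$-action: this is exactly the half-smash isomorphism $((O\times D)_+\barsmash Y)_O\cong D_+\barsmash Y$ recorded in the proof of \autoref{smashing_with_free_complex}, applied with $O=O(m)$, $D=O(n)\times D^k$, and $Y=S^m$, together with the observation that $O(n)$ acts only on the factor $D$. As in the remark preceding \autoref{smashing_with_free_complex}, the cell structure we choose is only required to be nonequivariant over $O(m)$, which is all that is used here. Once this identification is in place, the rest is bookkeeping over the finite filtration, and the claim follows.
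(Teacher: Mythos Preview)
Your proof is correct and rests on the same ingredient as the paper's: Illman's equivariant triangulation theorem. The paper's proof is the single sentence ``This is a consequence of Illman's triangulation theorem,'' leaving the reader to fill in that the Thom space in question arises from a compact smooth manifold with free $O(n)$-action. Your argument makes this explicit by working with the formula $\mathscr J(\R^n,\R^{m+n})\cong O(m+n)_+\sma_{O(m)}S^m$, triangulating $O(m+n)$ as a free $O(m)\times O(n)$-complex, and then pushing the cell structure through the quotient by $O(m)$. This is a valid and more hands-on unpacking of the same idea; the extra bookkeeping you do (identifying how each free $O(m)\times O(n)$-cell becomes free $O(n)$-cells after smashing over $O(m)$) is not needed for the paper's purposes but does no harm.
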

\begin{proof}
	This is a consequence of Illman's triangulation theorem \cite{illman}.
\end{proof}

Now we can show that semifree cofibrations are level cofibrations, generalizing \autoref{prop:free_implies_level}.
\begin{prop}\label{reedy_implies_skeleta_level}
	If the map of spectra $X \to Y$ is a semifree cofibration then each map of skeleta $\sk^n X \to \sk^n Y$ is a level cofibration.
\end{prop}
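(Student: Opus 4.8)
The plan is to induct on $n$, taking $\sk^{-1}$ to be the zero spectrum so that the base case is vacuous. Before starting the induction I would record the one nontrivial input about semifree spectra: \emph{$\mc G_n$ sends an $O(n)$-equivariant cofibration $K \to L$ of retractive spaces over $B$ to a level cofibration of spectra over $B$.} This is checked level by level: at level $k < n$ the space $\mathscr J(\R^n,\R^k)$ is a point, so $\mc G_n(K \to L)$ is the identity of the zero object there, while at level $k \geq n$ it is $\id_{\mathscr J(\R^n,\R^k)} \barsmash_{O(n)} (K \to L)$, which is a cofibration by \autoref{smashing_with_free_complex}(1) together with the fact (used just above) that $\mathscr J(\R^n,\R^k)$ is a based free $O(n)$-cell complex. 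Beyond this I would only use that $\mc G_n$ is a left adjoint (so it preserves pushouts), that colimits of spectra over $B$ are formed levelwise in $\mc R(B)$, and the skeletal pushout square of \autoref{prop:skeleta_pushout}.

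For the inductive step, suppose inductively that $\sk^{n-1}X \to \sk^{n-1}Y$ is a level cofibration. The key move is to factor $\sk^n X \to \sk^n Y$ through an auxiliary spectrum $V$, defined as the pushout of the span $\sk^{n-1}Y \longleftarrow \mc G_n L_n X \longrightarrow \mc G_n X_n$, in which the right leg is $\mc G_n$ of the $n$th latching map $L_n X \to X_n$ and the left leg is the counit $\mc G_n L_n X \to \sk^{n-1}X$ followed by $\sk^{n-1}X \to \sk^{n-1}Y$. To see that $\sk^n X \to V$ is a level cofibration, note that \autoref{prop:skeleta_pushout} presents $\sk^n X$ as the pushout of $\sk^{n-1}X \longleftarrow \mc G_n L_n X \longrightarrow \mc G_n X_n$ with the \emph{same} right leg and the counit as left leg; comparing this with the span defining $V$ gives a map of pushout spans whose components on the apex $\mc G_n L_n X$ and on $\mc G_n X_n$ are both identities and whose remaining component is $\sk^{n-1}X \to \sk^{n-1}Y$. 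Applying \autoref{cofibration_of_pushouts} one spectrum level at a time then yields that $\sk^n X \to V$ is a level cofibration: its two hypotheses reduce here to the facts that $\mc G_n X_n \to \mc G_n X_n$ is the identity and that $\sk^{n-1}X \cup_{\mc G_n L_n X} \mc G_n L_n X = \sk^{n-1}X \to \sk^{n-1}Y$ is a level cofibration by induction.

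Next I would show $V \to \sk^n Y$ is a level cofibration. Set $R := \mc G_n(L_n Y \cup_{L_n X} X_n) \cong \mc G_n L_n Y \cup_{\mc G_n L_n X} \mc G_n X_n$, using that $\mc G_n$ preserves pushouts. Two applications of the pasting law for pushout squares --- first absorbing the map $\mc G_n L_n X \to \mc G_n L_n Y$ into the construction of $V$, which re-expresses $V$ as the pushout of $\sk^{n-1}Y \longleftarrow \mc G_n L_n Y \longrightarrow R$; then absorbing $R \to \mc G_n Y_n$ and comparing with \autoref{prop:skeleta_pushout} for $Y$ --- identify $V \to \sk^n Y$ as a pushout of $R \to \mc G_n Y_n$. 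But $R \to \mc G_n Y_n$ is $\mc G_n$ of the $n$th relative latching map $L_n Y \cup_{L_n X} X_n \to Y_n$, which is an $O(n)$-equivariant cofibration exactly because $X \to Y$ is a semifree cofibration; so by the preliminary observation $R \to \mc G_n Y_n$, hence its pushout $V \to \sk^n Y$, is a level cofibration. Composing, $\sk^n X \to V \to \sk^n Y$ is a level cofibration, which completes the induction.

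The step I expect to be the crux, and the reason for the detour through $V$ rather than feeding the skeletal pushout squares of $X$ and $Y$ straight into \autoref{cofibration_of_pushouts}, is that for a general semifree cofibration neither the level maps $X_n \to Y_n$ nor the absolute latching maps $L_n X \to L_n Y$ need be cofibrations --- only the relative latching maps are. The passage through $V$ is engineered precisely so that the ``difference'' data handed to \autoref{cofibration_of_pushouts} consists only of identity maps and (through $R$) the relative latching maps, both of which are controlled by hypothesis.
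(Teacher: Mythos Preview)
Your proof is correct and is essentially the paper's argument made more explicit: the paper applies \autoref{cofibration_of_pushouts} directly to the map of skeletal pushout squares (at each spectrum level $m$, splitting into $m<n$ and $m\geq n$), whereas you unroll that corollary by naming the intermediate spectrum $V$ and handling the two halves of the factorization by hand via pushout pasting. Your $V$ is, level by level, exactly the intermediate object implicit in the proof of \autoref{cofibration_of_pushouts}, and the two conditions you verify---the inductive hypothesis on $\sk^{n-1}$ and $\mc G_n$ of the relative latching map---are precisely the two conditions the paper checks.
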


\begin{proof}
	We induct on $n$. The $0$-skeleta $\sk^0 X \to \sk^0 Y$ are just suspension spectra of the map of spaces $X_0 \to Y_0$, which is a cofibration by assumption (it is the 0th relative latching map). Since $S^m \barsmash -$ preserves cofibrations by \autoref{prop:h_cofibrations_pushout_product}, this gives a cofibration on each spectrum level $m$.
	
	For the inductive step, we focus on spectrum level $m$ for some $m \geq 0$. When $m < n$ we get the square in which the horizontal maps are homeomorphisms,
	\[ \xymatrix @R=1.7em{
		(\sk^{n-1} X)_m \ar[r] \ar[d] & (\sk^n X)_m \ar[d] \\
		(\sk^{n-1} Y)_m \ar[r] & (\sk^n Y)_m.
	} \]
	The left-hand vertical is a cofibration by inductive hypothesis. Therefore the right-hand vertical is a cofibration as well. When $m \geq n$, two applications of \autoref{prop:skeleta_pushout} gives us two pushout squares
	\[ \resizebox{\textwidth}{!}{$
		\xymatrix{
		\mathscr J(\R^n,\R^m) \barsmash_{O(n)} L_n X \ar[d] \ar[r] & \mathscr J(\R^n,\R^m) \barsmash_{O(n)} X_n \ar[d] \\
		(\sk^{n-1} X)_m \ar[r] & (\sk^n X)_m }
	\quad \ra \quad
	\xymatrix{
		\mathscr J(\R^n,\R^m) \barsmash_{O(n)} L_n Y \ar[d] \ar[r] &\mathscr J(\R^n,\R^m) \barsmash_{O(n)}  Y_n \ar[d] \\
		(\sk^{n-1} Y)_m \ar[r] & (\sk^n Y)_m }
	$}
	\]
	Applying \autoref{cofibration_of_pushouts} with the lower-left corners as $B$ and the upper-right corners as $C$, for the map of pushouts to be a cofibration, it suffices that the following two maps are cofibrations.
	\[ (\sk^{n-1} X)_m \ra (\sk^{n-1} Y)_m \]
	\[ \mathscr J(\R^n,\R^m) \barsmash_{O(n)} (L_n Y \cup_{L_n X} X_n \to Y_n) \]
	The first is by inductive hypothesis and the second is by \autoref{smashing_with_free_complex}. This completes the induction.
\end{proof}

\begin{cor}\label{reedy_implies_level}
	Every semifree cofibration is a level cofibration.
\end{cor}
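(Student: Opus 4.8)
The plan is to deduce this immediately from \autoref{reedy_implies_skeleta_level}, exploiting the fact that the skeletal filtration stabilizes each spectrum level. Fix a semifree cofibration $X \to Y$ over $B$ and a spectrum level $m \geq 0$. By the defining property of the skeleta recalled above, the canonical map $\sk^m X \to X$ is an isomorphism on levels $0$ through $m$, and likewise $\sk^m Y \to Y$ is an isomorphism on levels $0$ through $m$. These isomorphisms are natural, so under them the map $X_m \to Y_m$ is identified with the level-$m$ component $(\sk^m X)_m \to (\sk^m Y)_m$ of the induced map of $m$-skeleta.

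Given this identification, the first step is to invoke \autoref{reedy_implies_skeleta_level} with $n = m$: since $X \to Y$ is a semifree cofibration, the map $\sk^m X \to \sk^m Y$ is a level cofibration, and in particular its component at level $m$ is a cofibration of retractive spaces over $B$ (an $h$-cofibration, $f$-cofibration, or closed inclusion, according to the variant under consideration, following the convention fixed just before the statement). Composing with the identifications of the previous paragraph shows $X_m \to Y_m$ is a cofibration. Since $m$ was arbitrary, $X \to Y$ is a level cofibration, which is the claim.

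I expect essentially no obstacle here, since all the substantive work is carried out in \autoref{reedy_implies_skeleta_level}; the only point requiring (routine) care is the bookkeeping that $(\sk^n X)_m \cong X_m$ naturally for $n \geq m$, together with the observation that the $m$-skeleton of a map and the original map agree at level $m$ because both are computed from the same map of enriched $\mathscr J$-diagrams by the same (left Kan extension) recipe. If one preferred to avoid even this, an alternative route would be to write $X \to Y$ at each level $m$ as the countable composite of the level maps $(\sk^{n-1}(X \to Y))_m \to (\sk^n(X \to Y))_m$, each of which is a level cofibration by \autoref{prop:map_skeleta_pushout} and \autoref{smashing_with_free_complex} (as a pushout of $\mc G_n$ applied to the relative latching map), and then apply closure of cofibrations under transfinite composition from \autoref{prop:technical_cofibrations}; but the first argument is shorter.
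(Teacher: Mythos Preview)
Your proof is correct and is exactly the deduction the paper intends: the corollary is stated without proof immediately after \autoref{reedy_implies_skeleta_level}, and your argument---identifying $X_m \to Y_m$ with $(\sk^m X)_m \to (\sk^m Y)_m$ via the natural isomorphism $\sk^m(-)_m \cong (-)_m$ and then invoking the proposition---is the one-line reason it is a corollary. Your alternative route via the relative skeletal filtration also works but, as you note, is unnecessary here.
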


The remaining results concern level equivalences and fibrations.

\begin{prop}
	A level equivalence of semifreely $h$-cofibrant spectra $f: X \simar X'$ gives a level equivalence of skeleta $\sk^n X \to \sk^n X'$ for all $n \geq 0$.
\end{prop}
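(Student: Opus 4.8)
The plan is to run an induction on $n$ that parallels the proof of \autoref{reedy_implies_skeleta_level}: the skeletal pushout squares of \autoref{prop:skeleta_pushout} reduce the comparison of $\sk^n X$ with $\sk^n X'$ to a level-by-level application of the gluing lemma \autoref{prop:technical_cofibrations}(2). Throughout, since colimits of spectra are computed one spectrum level at a time, every claim will be checked at a fixed level $m$ in $\mc R(B)$.

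First I would unpack the hypothesis. Applying \autoref{reedy_implies_skeleta_level} to the semifree $h$-cofibration $0 \to X$ shows every skeleton $\sk^k X$ is level $h$-cofibrant, hence each $X_k$ is $h$-cofibrant; since $L_k X = (\sk^{k-1} X)_k$, the latching space $L_k X$ is $h$-cofibrant too, and $L_k X \to X_k$ is an $h$-cofibration by hypothesis. The same applies to $X'$, and --- the key point for the induction --- the map $L_k X \to L_k X'$ is precisely the level-$k$ component of $\sk^{k-1} X \to \sk^{k-1} X'$.

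The base case $n = 0$ is immediate: $\sk^0 X = \Sigma^\infty X_0$, and at level $m$ the map $\sk^0 X \to \sk^0 X'$ is $S^m \barsmash (X_0 \to X_0')$, a weak equivalence by \autoref{prop:h_cofibrations_pushout_product}(4) since $X_0 \to X_0'$ is a weak equivalence of $h$-cofibrant spaces. For the inductive step, assume $\sk^{n-1} X \to \sk^{n-1} X'$ is a level equivalence; the map $f$ induces a map between the pushout squares of \autoref{prop:skeleta_pushout} for $X$ and for $X'$. At levels $m < n$ these squares degenerate (the terms $\mc G_n L_n X$ and $\mc G_n X_n$ are the zero object at such levels), so $(\sk^n X)_m = (\sk^{n-1} X)_m$ and the claim follows from the inductive hypothesis. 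For $m \geq n$, I would verify two things. (i) The leg $\mc G_n L_n X \to \mc G_n X_n$ is a level $h$-cofibration: at level $m$ it is $\mathscr J(\R^n,\R^m) \barsmash_{O(n)} (L_n X \to X_n)$, which is an $h$-cofibration because $\mathscr J(\R^n,\R^m)$ is a compact --- hence finite --- based free $O(n)$-cell complex and \autoref{smashing_with_free_complex}(1) applies (to the relative cell complex $* \to \mathscr J(\R^n,\R^m)$ and the $h$-cofibration $L_n X \to X_n$); the same holds for $X'$. (ii) The three ``input'' maps are level equivalences: $\sk^{n-1} X \to \sk^{n-1} X'$ by the inductive hypothesis, and at level $m$ the maps $\mc G_n L_n X \to \mc G_n L_n X'$ and $\mc G_n X_n \to \mc G_n X_n'$ are $\mathscr J(\R^n,\R^m) \barsmash_{O(n)} (-)$ applied respectively to $L_n X \to L_n X'$ (the level-$n$ component of the inductive hypothesis) and to $X_n \to X_n'$, which are weak equivalences of $h$-cofibrant $O(n)$-spaces, so \autoref{smashing_with_free_complex}(3) applies. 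The gluing lemma then yields that $\sk^n X \to \sk^n X'$ is a weak equivalence at level $m$, closing the induction.

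The only genuinely delicate bookkeeping point --- and the one I would be most careful about --- is the identification $L_n X = (\sk^{n-1} X)_n$ that threads the inductive hypothesis through the step: it is exactly what supplies the weak equivalence $L_n X \to L_n X'$ demanded by \autoref{smashing_with_free_complex}(3). Everything else is routine given the machinery built earlier in \S\ref{sec:reedy}, so I do not expect any substantial obstacle.
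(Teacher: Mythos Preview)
Your proof is correct and follows essentially the same approach as the paper: induction on $n$ using the skeletal pushout square of \autoref{prop:skeleta_pushout}, with \autoref{smashing_with_free_complex} controlling the semifree terms and the gluing lemma assembling the pushout. The paper is slightly terser (it does not separate out the $m<n$ case or dwell on the identification $L_n X = (\sk^{n-1}X)_n$), but the logical content is the same.
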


\begin{proof}
	Induction on $n$. Again the $0$-skeleton $\sk^0 X$ is the suspension spectrum $\Sigma^\infty X_0$. Since both $X_0$ and $X'_0$ are $h$-cofibrant the map between their suspension spectra is an equivalence on every level by \autoref{prop:h_cofibrations_pushout_product}.
	
	For the inductive step we again use the pushout square of \autoref{prop:skeleta_pushout}, and prove the claim for each spectrum level $m \geq n$, assuming the claim is true for $(n-1)$ and for every value of $m$.
	
	In particular, we can assume that the map of spaces $L_n X \to L_n X'$ is a weak equivalence, since this is the statement of the result for $(n-1)$ at spectrum level $n$. We also know that the spaces $L_n X$, $L_n X'$, $X_n$, and $X_n'$ are $h$-cofibrant by \autoref{reedy_implies_skeleta_level}. Combining these observations with \autoref{smashing_with_free_complex}, we conclude that $f$ induces an equivalence on the top-left and top-right terms of the pushout square
	\[ \xymatrix @R=1.7em{
		\mathscr J(\R^n,\R^m) \barsmash_{O(n)} L_n X \ar[d] \ar[r] & \mathscr J(\R^n,\R^m) \barsmash_{O(n)} X_n \ar[d] \\
		(\sk^{n-1} X)_m \ar[r] & (\sk^n X)_m } \]
	Again by \autoref{smashing_with_free_complex} the top horizontal is an $h$-cofibration, and the inductive hypothesis tells us that $f$ induces an equivalence on the bottom-left term. We conclude that $f$ gives an equivalence on $(\sk^n X)_m$, completing the induction.
\end{proof}

\begin{cor}\label{cor:latching_objects_derived}
	A level equivalence of semifreely $h$-cofibrant spectra $X \simar X'$ gives an equivalence of the latching objects $L_n X \simar L_n X'$. Therefore the relative latching map $L_n X' \cup_{L_n X} X_n \to X_n'$ is a weak equivalence for every $n \geq 0$.
\end{cor}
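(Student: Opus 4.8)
The plan is to read off both claims from the preceding proposition together with the gluing lemma, with no real new input. The key observation is the identification $L_n X = (\sk^{n-1}X)_n$: the canonical map $\sk^{n-1}X \to X$ is an isomorphism at spectrum levels $0$ through $n-1$ and becomes precisely the $n$th latching map $L_n X \to X_n$ at level $n$.

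For the statement about latching objects, I would apply the preceding proposition with $n$ replaced by $n-1$ (legitimate since that index is still $\geq 0$ when $n \geq 1$): it gives that $\sk^{n-1}X \to \sk^{n-1}X'$ is a level equivalence, and evaluating at spectrum level $n$ yields exactly the map $L_n X \to L_n X'$, which is therefore a weak equivalence. The case $n = 0$ is handled directly, since $L_0 X = B = L_0 X'$ and the map is the identity.

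For the relative latching map, first note that since $X$ and $X'$ are semifreely $h$-cofibrant, the latching maps $L_n X \to X_n$ and $L_n X' \to X_n'$ are $h$-cofibrations: these are the $n$th relative latching maps of $0 \to X$ and $0 \to X'$ (here $L_n 0 = B$, so the relative latching map of $0 \to X$ at level $n$ reduces to $L_n X \to X_n$). Then consider the commuting square in $\mc R(B)$
\[ \xymatrix @R=1.7em{
	L_n X \ar[r] \ar[d]_-\sim & X_n \ar[d]^-\sim \\
	L_n X' \ar[r] & X_n'
} \]
whose horizontal maps are $h$-cofibrations, whose left vertical is a weak equivalence by the first part of the corollary, and whose right vertical is a weak equivalence by hypothesis. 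I would then apply the gluing lemma (\autoref{prop:technical_cofibrations}(2)) to the induced map of pushout diagrams
\[ \bigl(\,L_n X' \leftarrow L_n X \to X_n\,\bigr) \longrightarrow \bigl(\,L_n X' \xleftarrow{\ \id\ } L_n X' \to X_n'\,\bigr), \]
each taken along the $h$-cofibration in its right leg: the map on pushouts $L_n X' \cup_{L_n X} X_n \to X_n'$ is then a weak equivalence, and this map is exactly the relative latching map.

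I do not expect a serious obstacle: the entire homotopical content lives in the preceding proposition, and what remains is bookkeeping. The only points requiring care are the index shift (reading $L_n$ off from $\sk^{n-1}$), checking that ``semifreely $h$-cofibrant'' does force each latching map $L_n X \to X_n$ to be an $h$-cofibration, and recognizing the comparison map produced by the gluing lemma as literally the relative latching map.
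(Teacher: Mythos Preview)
Your proof is correct and matches the paper's intended argument: the paper states this as a corollary without proof, and your extraction of $L_n X = (\sk^{n-1}X)_n$ from the preceding proposition (indeed, the proof of that proposition already uses this identification in its inductive step), together with the gluing lemma for the second claim, is exactly the implicit reasoning.
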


\begin{prop}
	If $X$ is semifreely $h$-cofibrant and level $h$-fibrant then the each skeleton $\sk^n X$ is level $h$-fibrant.
\end{prop}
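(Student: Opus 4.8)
The plan is to prove the statement by induction on $n$, taking as base case $\sk^{-1}X=*$, which is trivially level $h$-fibrant (equivalently, the case $n=0$ can be checked by hand: $(\sk^0X)_m=S^m\barsmash X_0$, and $X_0$ is $h$-cofibrant by \autoref{reedy_implies_level} and $h$-fibrant by hypothesis, so $S^m\barsmash X_0$ is $h$-fibrant by \autoref{prop:h_cofibrations_pushout_product}). At spectrum level $m$, \autoref{prop:skeleta_pushout} exhibits $(\sk^nX)_m$ as the pushout of the span
\[ (\sk^{n-1}X)_m \;\longleftarrow\; \mathscr J(\R^n,\R^m)\barsmash_{O(n)}L_nX \;\longrightarrow\; \mathscr J(\R^n,\R^m)\barsmash_{O(n)}X_n, \]
where for $m<n$ the right-hand arrow is an isomorphism and there is nothing to check. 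The engine of the argument is \autoref{prop:clapp} — a pushout of $h$-fibrant spaces along an $f$-cofibration is $h$-fibrant — together with \autoref{lem:heath_kamps}, which upgrades an $h$-cofibration of $h$-fibrant spaces to an $f$-cofibration.

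So assume inductively that $\sk^{n-1}X$ is level $h$-fibrant, and fix $m\ge n$. First I would check that the left leg of the span, namely $\mathscr J(\R^n,\R^m)\barsmash_{O(n)}(L_nX\to X_n)$, is an $h$-cofibration: the $n$th relative latching map $L_nX\to X_n$ is an $h$-cofibration because $X$ is semifreely $h$-cofibrant, and $\mathscr J(\R^n,\R^m)$ is a \emph{finite} based free $O(n)$-cell complex (it is a based free $O(n)$-cell complex by the lemma above via Illman's theorem, and it is compact, hence finite), so \autoref{smashing_with_free_complex}(1) applies. Next I would verify the three corner spaces are $h$-fibrant: $(\sk^{n-1}X)_m$ by the inductive hypothesis; $\mathscr J(\R^n,\R^m)\barsmash_{O(n)}X_n$ by \autoref{smashing_with_free_complex}(2), using that $X_n$ is $h$-cofibrant (\autoref{reedy_implies_level}) and $h$-fibrant (hypothesis); and $\mathscr J(\R^n,\R^m)\barsmash_{O(n)}L_nX$ by \autoref{smashing_with_free_complex}(2) again, using that $L_nX=(\sk^{n-1}X)_n$ is $h$-cofibrant (\autoref{reedy_implies_skeleta_level} applied to $*\to X$, which makes $\sk^{n-1}X$ level $h$-cofibrant) and $h$-fibrant (inductive hypothesis). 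The top map is then an $h$-cofibration between $h$-fibrant spaces, hence an $f$-cofibration by \autoref{lem:heath_kamps}, so \autoref{prop:clapp} yields that $(\sk^nX)_m$ is $h$-fibrant. As $m$ was arbitrary, $\sk^nX$ is level $h$-fibrant, completing the induction.

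The step I expect to be the real pressure point — more a structural feature than an obstacle — is establishing $h$-fibrancy of the top-left corner $\mathscr J(\R^n,\R^m)\barsmash_{O(n)}L_nX$: the relevant gluing input \autoref{smashing_with_free_complex}(2) demands that the smashed $O(n)$-space be simultaneously $h$-cofibrant \emph{and} $h$-fibrant, so one must already know $L_nX$ is $h$-fibrant before one can conclude $(\sk^nX)_m$ is. This is precisely why the claim has to be organized skeleton-by-skeleton up the latching filtration and why the induction on $n$ is the right bookkeeping. Everything else — identifying the relative latching maps, the cofibration statements, the final application of Clapp — is a routine assembly of \autoref{reedy_implies_level}, \autoref{reedy_implies_skeleta_level}, \autoref{smashing_with_free_complex}, \autoref{lem:heath_kamps}, and \autoref{prop:clapp}.
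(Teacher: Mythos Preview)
Your proof is correct and follows essentially the same route as the paper: induction on $n$, using the pushout square of \autoref{prop:skeleta_pushout} together with \autoref{smashing_with_free_complex} to verify the three corners are $h$-fibrant and the top map is an $h$-cofibration, then appealing to \autoref{prop:clapp}. Your explicit invocation of \autoref{lem:heath_kamps} to upgrade the $h$-cofibration to an $f$-cofibration before applying Clapp makes precise what the paper leaves implicit (it remarks after \autoref{prop:clapp} that the result extends to $h$-cofibrations via Heath--Kamps), and your identification of the ``pressure point''---that fibrancy of $L_nX$ must come from the inductive hypothesis---is exactly the structural observation driving the induction.
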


\begin{proof}
	The same induction on $n$ as above. The levels $S^n \barsmash X_0$ of the 0-skeleton are $h$-fibrant by \autoref{prop:h_cofibrations_pushout_product}. For the inductive step we get to assume that $L_n X$ is $h$-fibrant, and so both $L_n X$ and $X_n$ are both $h$-cofibrant and $h$-fibrant. Combining these observations with \autoref{smashing_with_free_complex}, we conclude that the first three vertices in the square
	\[ \xymatrix @R=1.7em{
		\mathscr J(\R^n,\R^m) \barsmash_{O(n)} L_n X \ar[d] \ar[r] & \mathscr J(\R^n,\R^m) \barsmash_{O(n)} X_n \ar[d] \\
		(\sk^{n-1} X)_m \ar[r] & (\sk^n X)_m } \]
	are $h$-fibrant, and the top horizontal is an $h$-cofibration. By \autoref{prop:clapp} again the last vertex is also $h$-fibrant.
\end{proof}

\begin{cor}\label{cor:latching_objects_fibrant}
	If $X$ is semifreely $h$-cofibrant and level $h$-fibrant then each latching space $L_n X$ is $h$-fibrant.
\end{cor}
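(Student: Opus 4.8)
The plan is to reduce this immediately to the previous proposition by identifying $L_n X$ as a level of a skeleton of $X$. First I would recall the defining property of the skeleton recorded just above \autoref{prop:skeleta_pushout}: the canonical map $\sk^{n-1} X \to X$ is an isomorphism on levels $0$ through $n-1$, and at level $n$ it is precisely the $n$th latching map $L_n X \to X_n$. In particular there is a natural identification $L_n X \cong (\sk^{n-1} X)_n$, so the claim becomes the statement that the $n$th level of $\sk^{n-1} X$ is $h$-fibrant.

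Next I would apply the preceding proposition (that a semifreely $h$-cofibrant, level $h$-fibrant spectrum has level $h$-fibrant skeleta) to $X$ itself: since $X$ is semifreely $h$-cofibrant and level $h$-fibrant by hypothesis, $\sk^{n-1} X$ is level $h$-fibrant for every $n \geq 1$. Hence $(\sk^{n-1} X)_n \cong L_n X$ is $h$-fibrant, as desired. For the base case $n = 0$, the $0$th latching map is by convention $B \to X_0$, so $L_0 X = B$ is the zero object of $\mc R(B)$, which is trivially $h$-fibrant.

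I do not expect any real obstacle here; the content is entirely carried by the previous proposition, and the only point requiring a line of justification is the identification $L_n X \cong (\sk^{n-1} X)_n$, which is the standard comparison of universal properties already invoked in the discussion of latching maps.
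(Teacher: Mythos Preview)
Your proposal is correct and is exactly the intended argument: the paper states this as an immediate corollary of the preceding proposition that each $\sk^n X$ is level $h$-fibrant, via the identification $L_n X \cong (\sk^{n-1} X)_n$ already recorded in the discussion of latching maps.
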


The climax of this section is the following extension of \autoref{prop:spectra_pushout_product}.
\begin{thm}\label{prop:reedy_pushout_product}
	Let $f: X \to Y$ and $g: X' \to Y'$ be maps of orthogonal spectra over $A$ and $B$, respectively. Form $f \square g$ using the external smash product of spectra.
	\begin{enumerate}
		\item If $f$ and $g$ are semifree $h$-cofibrations, so is their pushout-product $f \square g$. The same applies to semifree $f$-cofibrations and semifree closed inclusions.
		\item If $Y$ and $Y'$ are both semifreely $h$-cofibrant and level $h$-fibrant then so is $Y \barsmash Y'$.
		\item If $f$ is a semifree $h$-cofibration, $g$ is a level equivalence, and all four spectra are semifreely $h$-cofibrant, then $f \square g$ is a level equivalence.
	\end{enumerate}
\end{thm}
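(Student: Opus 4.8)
The plan is to follow the proof of \autoref{prop:spectra_pushout_product} essentially line by line, replacing ``free'' by ``semifree'' throughout and \autoref{barsmash_free} by \autoref{smash_two_semi_free}. For part (1), I would first invoke the corollary of \autoref{reedy_preserved}: every semifree $h$-cofibration is generated, under pushouts, transfinite compositions, and retracts, by the maps $\mc G_m(j)$ with $j$ an $h$-cofibration of retractive $O(m)$-spaces. Since $-\square g$ preserves each of these operations for fixed $g$ (\autoref{pushout_product_lemmas} and its transfinite generalization), the usual two-variable bootstrap reduces everything to showing that $\mc G_m(j)\square\mc G_n(j')$ is a semifree $h$-cofibration. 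Because $\mc G_m(-)$, $\mc G_n(-)$, and $\barsmash$ all preserve colimits, \autoref{smash_two_semi_free} identifies this pushout-product with $\mc G_{m+n}$ applied to $O(m+n)_+\barsmash_{O(m)\times O(n)}(j\square j')$, where the inner pushout-product uses $\barsmash$ of spaces. Now $j\square j'$ is an $h$-cofibration of spaces by \autoref{prop:h_cofibrations_pushout_product}; the functor $O(m+n)_+\barsmash_{O(m)\times O(n)}(-)$ preserves $h$-cofibrations by \autoref{smashing_with_free_complex}(1) (applied to $*\to O(m+n)_+$), using that $O(m+n)$ is a finite free based $(O(m)\times O(n))$-cell complex by \cite{illman}; and $\mc G_{m+n}$ of an $h$-cofibration of spaces is a semifree $h$-cofibration by a direct comparison of latching maps. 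The cases of $f$-cofibrations and closed inclusions go identically.

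For parts (2) and (3), note first that taking $X\to Y$ and $X'\to Y'$ to be the maps out of the zero object, part (1) already gives that $Y\barsmash Y'$ is semifreely $h$-cofibrant (hence level $h$-cofibrant, by \autoref{reedy_implies_level}); the same holds for all the intermediate smash products below. For the homotopical claims I would filter both inputs by their skeleta. The external smash is a left adjoint in each variable, so it commutes with the skeletal colimits, and $\sk^p(-)\barsmash\sk^q(-)$ stabilizes at each spectrum level once $p$ and $q$ are large; it therefore suffices to prove the corresponding statements for $\sk^p\barsmash\sk^q$ by induction on $(p,q)$. Smashing the (relative) skeletal presentations of \autoref{prop:skeleta_pushout} and \autoref{prop:map_skeleta_pushout} with $\sk^q(-)$, and symmetrically, expresses $\sk^p\barsmash\sk^q$ as a pushout along a level $h$-cofibration of spectra covered by the inductive hypothesis, together with a ``purely semifree'' term $\mc G_m K\barsmash\mc G_n L$. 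In the purely semifree case \autoref{smash_two_semi_free} rewrites this as $\mc G_{m+n}$ of $O(m+n)_+\barsmash_{O(m)\times O(n)}(K\barsmash L)$; when $K,L$ are $h$-cofibrant and $h$-fibrant, $K\barsmash L$ is as well by \autoref{prop:h_cofibrations_pushout_product}, the free quotient stays $h$-fibrant by \autoref{smashing_with_free_complex}(2), and $\mc G_{m+n}$ of an $h$-cofibrant, $h$-fibrant space is level $h$-fibrant because each of its levels has the form $(\mathscr J(\R^{m+n},\R^k)\barsmash(-))_{O(m+n)}$, to which \autoref{smashing_with_free_complex}(2) applies; the parallel computation with \autoref{smashing_with_free_complex}(3) handles level equivalences. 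The latching objects $L_m Y$, $L_n Y'$ feeding the induction are $h$-cofibrant by \autoref{reedy_implies_skeleta_level}, $h$-fibrant by \autoref{cor:latching_objects_fibrant}, and receive level equivalences from those of $X',Y'$ by \autoref{cor:latching_objects_derived}. Finally the gluing step: level $h$-fibrancy propagates across the pushouts by \autoref{prop:clapp} together with \autoref{lem:heath_kamps}, while level equivalences propagate by the gluing lemma \autoref{prop:technical_cofibrations}, in both cases applied one spectrum level at a time and using \autoref{cofibration_of_pushouts} to recognize the relevant maps of pushout diagrams. The base cases $p=q=0$ collapse via \autoref{barsmash_free} to $Y_0\barsmash Y'_0$ (respectively to $X_0\barsmash(X'_0\to Y'_0)$), handled directly by \autoref{prop:h_cofibrations_pushout_product}.

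I expect the main obstacle to be the organization of parts (2) and (3) rather than any single computation: the homotopical content is invisible from the generating-cofibration description, so one must actually run the two-variable skeletal induction while carrying along, at every stage, the $h$-cofibrancy and $h$-fibrancy of all latching objects and intermediate smash products, so that \autoref{prop:clapp}, \autoref{prop:technical_cofibrations}, and \autoref{smashing_with_free_complex} are legitimately applicable. Everything else — chiefly \autoref{smash_two_semi_free} and the free $(O(m)\times O(n))$-cell structure of $O(m+n)$ and of the morphism spaces of $\mathscr J$ — is already available.
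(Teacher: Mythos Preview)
Your approach to parts (1) and (2) is correct and essentially matches the paper. For (1) the paper reduces via \autoref{prop:map_skeleta_pushout} rather than invoking the corollary of \autoref{reedy_preserved}, but this is the same decomposition; the identification of $\mc G_m(\phi)\square\mc G_n(\gamma)$ via \autoref{smash_two_semi_free} and the appeal to \autoref{smashing_with_free_complex} and \autoref{prop:h_cofibrations_pushout_product} are exactly what the paper does. For (2) the paper also runs a skeletal induction on both factors, just stated more tersely.

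Part (3) is where the paper takes a genuinely different route. Instead of a two-variable skeletal induction, the paper first handles the special case where $g$ is simultaneously a semifree $h$-cofibration and a level equivalence: then \autoref{prop:map_skeleta_pushout} together with \autoref{cor:latching_objects_derived} present $g$ as a composition of pushouts of maps $\mc G_n(\gamma)$ with each $\gamma$ an acyclic $h$-cofibration, so $f\square g$ is built from maps that are level $h$-cofibrations and level equivalences. To pass to an arbitrary level equivalence $g$, the paper adapts Ken Brown's lemma: factor $X'\vee Y'\to Y'$ as a free $q$-cofibration followed by a level equivalence using \autoref{prop:level_model_structure}, and run 2-out-of-3. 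Your direct induction also works, but be aware that your description is slightly imprecise: when you increment $p$, the new terms in $\sk^p Y\barsmash\sk^q Y'$ are of the form $\mc G_p K\barsmash\sk^q Y'$, not $\mc G_p K\barsmash\mc G_q L$, so you must further filter the second factor before \autoref{smash_two_semi_free} applies. This forces a genuinely nested induction (or a preliminary lemma that $\mc G_m K\barsmash(-)$ preserves level equivalences of semifreely $h$-cofibrant spectra). The Ken Brown argument sidesteps this bookkeeping entirely, at the cost of invoking the level model structure; your approach is more self-contained but heavier on the induction.
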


\begin{proof}
	Using \autoref{prop:map_skeleta_pushout} and \autoref{pushout_product_lemmas}, (1) reduces to the case where the maps $f$ and $g$ are of the form
	\[ f: \mc G_m(K \to L), \qquad g: \mc G_n(K' \to L'), \]
	where $\phi: K \to L$ and $\gamma: K' \to L'$ are cofibrations of ex-spaces over $A$ and $B$, respectively. By \autoref{smash_two_semi_free} and the fact that the semifree spectrum functor preserves colimits we get
	\[ f \square g \cong \mc G_{m+n}(O(m+n)_+ \barsmash_{O(m) \times O(n)} (\phi \square \gamma)) \]
	Therefore it suffices to prove that $O(m+n)_+ \sma_{O(m) \times O(n)} (\phi \square \gamma)$ is a $h$-cofibration. This follows from one application of \autoref{smashing_with_free_complex} and one of \autoref{prop:h_cofibrations_pushout_product}.
	
	For (2), it suffices to prove this when $Y$ is equal to its $m$-skeleton and $Y'$ is equal to its $m'$-skeleton, for any value of $m$ and $m'$. When both are equal to 0 it is clearly true. Incrementing $m$ or $m'$ requires us to replace one of the two terms by a pushout along a map of the form $\mc G_{m+1}(K \to L)$ where $K$ and $L$ are both $h$-cofibrant and $h$-fibrant (by \autoref{cor:latching_objects_fibrant}), and $K \to L$ is an $h$-cofibration. By \autoref{smashing_with_free_complex} we know that each level of $\mc G_{m+1} K$ and $\mc G_{m+1} L$ is $h$-fibrant, therefore at each spectrum level we get a square of the form found in \autoref{prop:clapp}, therefore the final term is also $h$-fibrant.
	
	For (3) assume first that $f$ and $g$ are in the special form described in (1) above, with $\gamma\colon K' \to L'$ a weak equivalence. Then $f \square g$ is a semifree spectrum on a weak equivalence, and is therefore a level equivalence by \autoref{smashing_with_free_complex}. Next, if $g$ is a semifree cofibration and level equivalence, it factors as a sequence of pushouts of maps of the previous form, by \autoref{prop:map_skeleta_pushout} and \autoref{cor:latching_objects_derived}. Therefore $f \square g$ is a composition of pushouts of maps that are level $h$-cofibrations and weak equivalences, so $f \square g$ is also a level $h$-cofibration and weak equivalence. By the same argument, we may also allow $f$ to be a general semifree $h$-cofibration.
	
	Finally we adapt the proof of Ken Brown's lemma. For fixed $f$, the class of maps $g$ between semifreely cofibrant spectra such that $f \square g$ is a level equivalence includes the acyclic cofibrations (i.e. semifree cofibrations that are level equivalences) and is closed under 2-out-of-3. Given an arbitrary level equivalence $X \to Y$ of semifreely cofibrant spectra, we factor the obvious map $X \vee Y \to Y$ into a cofibration $X \vee Y \to Z$ followed by a level equivalence $Z \to Y$, using \autoref{prop:level_model_structure}. Then the inclusions of $X$ and $Y$ into $Z$ are semifree cofibrations and level equivalences, so they are in the class. The composite $Y \to Z \to Y$ is the identity, which is in the class, and so by 2 out of 3 the projection $Z \to Y$ is in the class. Then $X \to Z \to Y$ is a composite of two maps in the class and is therefore in the class as well. This finishes the proof.

\end{proof}

\begin{rmk}
	(3) can be strengthened: If $f$ is a semifree $h$-cofibration, $g$ is a level equivalence, $X$, $Y$, and $X'$ are semifreely $h$-cofibrant, and $Y'$ is level $h$-cofibrant, then $f \square g$ is a level equivalence. However the proof of this strengthening is more complicated.
\end{rmk}


\section{Stable equivalences}\label{sec:stable}

In this section, we recall the definition of stable equivalences from \cite{ms}, giving the homotopy category $\ho\Osp(B)$ of spectra over $B$ and the homotopy cateory $\ho\Osp$ of spectra over all base spaces. We prove the existence of the ``$q$-model structure'' in \autoref{thm:stable_model_structure}. We then use it to derive the operations $\barsmash$, $f^*$, and $f_!$ with respect to the stable equivalences, making the homotopy category $\ho\Osp$ into a symmetric monoidal bifibration (SMBF). As explained in the introduction, this is the key structure that will give us the bicategory and all of the applications to fixed-point theory in later sections.

The results of this section use the convenient class of cofibrant and fibrant objects from \autoref{thm:intro_cof_fib} in a critical way. The replacement $PQX$ is needed to prove left properness in \autoref{cor:spectra_left_proper}, which supplies the missing ingredient for the proof of the $q$-model structure (\autoref{j_equiv}). After that, we construct the SMBF structure on $\ho\Osp$ in \autoref{thm:spectra_SMBF} by restricting to the convenient cofibrant and fibrant objects, and using the $q$-model structure to show that equivalences between them are preserved.

\beforesubsection
\subsection{Stable homotopy groups}\aftersubsection

If $X$ is a parametrized spectrum over $B$, its (stable) homotopy groups\index{homotopy!groups} are defined to be the stable homotopy groups of the fiber spectra $X_b$, for $b \in B$ and $n \in \Z$:
\[ \pi_{n,b}(X) := \pi_n(X_b) = \underset{k}\colim \pi_{k+n}((X_b)_n). \]

These homotopy groups do not preserve level equivalences. For instance the suspension spectrum of $\{0\}_{+I} \to I_{+I}$ over $I$ is a level equivalence but not an isomorphism on $\pi_{0,1/2}$. However, the homotopy groups are right-deformable, using the level $q$-fibrant spectra and the level fibrant replacement functor $R^{lv}$ from \autoref{prop:level_model_structure}. We therefore get right-derived homotopy groups, cf. \cite[12.3.4]{ms}:
\[ \R\pi_{n,b}(X) \cong \pi_{n,b}(R^{lv}X) \cong \pi_{n,b}(PQX). \]
\begin{df}\label{stable_equivalence}
A map $X \to Y$ is a \textbf{stable equivalence}\index{stable equivalence} if it induces isomorphisms on the derived homotopy groups for all $b \in B$ and $n \in \Z$. Informally, it is a map that induces isomorphisms on $\pi_*$ after we make the levels of our spectra into fibrations.
\end{df}

The class of stable equivalences is generated under 2-out-of-3 by the level equivalences and the maps of level $h$-fibrant spectra (or more generally level $q$-fibrant spectra) inducing isomorphisms on the homotopy groups of each fiber. We can see this directly from the fact that every map $X \to Y$ fits into the following diagrams.
\begin{equation}\label{eq:stable_equivalence_decomposition}
\xymatrix @R=1.7em{
	X \ar[d] \ar[r]^-\sim & R^{lv}X \ar[d] \\
	Y \ar[r]^-\sim & R^{lv}Y
}
\qquad
\xymatrix @R=1.7em{
	X \ar[d] & \ar[l]_-\sim QX \ar[d] \ar[r]^-\sim & PQX \ar[d] \\
	Y & \ar[l]_-\sim QY \ar[r]^-\sim & PQY.
}
\end{equation}

\begin{ex}\label{ex:equivalent_thom_spectra} \hfill
	\vspace{-1em}
	
	\begin{itemize}
		\item If $X \to B$ is a weak Hausdorff fibration then the map from the fiberwise suspension spectrum of $X$ to its classical fibrant replacement
		\[ \Sigma^\infty_{+B} X \to Q_B(\Sigma^\infty_{+B} X) = \underset{k}\colim \Omega^k_B \Sigma^{k}_B \Sigma^\infty_{+B} X \]
		is a stable equivalence. This is because both spectra are fibrations at every level, and on each fiber this statement is a classical fact in stable homotopy theory.
		\item For $X \in \mc R(B)$ there is a standard map
		\[ F_{n+1} \Sigma_B X \to F_n X. \]
		As long as $X$ is an $h$-cofibrant space, this standard map is a stable equivalence. The proof is simple: apply $P$ to make the levels fibrations. Since $P$ commutes with $\Sigma_B$ and $F_n$, this now follows from the same statement for non-parametrized spectra.
		\item By the previous example and \autoref{ex:thom_smash_product}, we have for any vector bundle $W$ over $B$ a stable equivalence
		\[ F_{m+n} \Th_B(\R^m \times W) \overset\sim\to F_n \Th_B(W). \]
		Therefore different models for the Thom spectrum of a virtual bundle $\xi$ (\autoref{ex:thom_spectra}) are stably equivalent.
	\end{itemize}
\end{ex}

\begin{lem}\label{ex:smash_with_cell_complex_is_derived}
	The following operations preserve stable equivalences:
	\begin{itemize}
		\item External smash products $K \barsmash X$, if $K$ is an $h$-cofibrant retractive space and $X$ is a level $h$-cofibrant spectrum.
		\item External mapping spectra $\barF_*(K,X)$, if $K$ is a finite cell complex over $*$ and $X$ is a level $q$-fibrant spectrum.
	\end{itemize}
\end{lem}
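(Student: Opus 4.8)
The strategy is to reduce each statement to its non-parametrized counterpart, exploiting the description $\R\pi_{n,b}(-)\cong\pi_{n,b}(R^{lv}(-))\cong\pi_{n,b}(PQ(-))$ of the derived homotopy groups. For a functor $\Phi$ on spectra, the class of maps $g$ with $\Phi g$ a stable equivalence is closed under $2$-out-of-$3$ (since $\Phi$ is a functor and the stable equivalences are), so by the decomposition \eqref{eq:stable_equivalence_decomposition} it suffices to check that $\Phi$ sends (i) level equivalences between the relevant (co)fibrant spectra to stable equivalences, and (ii) $\pi_*$-isomorphism maps between level $h$-fibrant such spectra to stable equivalences.

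For $K\barsmash-$ with $K\in\mc R(A)$ an $h$-cofibrant retractive space: given a stable equivalence $f\colon X\to X'$ of level $h$-cofibrant spectra over $B$, smash the zig-zags $X\overset{\sim}{\leftarrow}QX\overset{\sim}{\to}PQX$ and $X'\overset{\sim}{\leftarrow}QX'\overset{\sim}{\to}PQX'$ with $K$. All six spectra are level $h$-cofibrant and the connecting maps are level equivalences between level $h$-cofibrant spectra, so by \autoref{prop:h_cofibrations_pushout_product}(4) applied at each level $K\barsmash-$ sends them to level equivalences; by $2$-out-of-$3$ it is then enough to treat $K\barsmash PQf$, where the source and target are now both level $h$-cofibrant \emph{and} level $h$-fibrant and $PQf$ is a $\pi_*$-isomorphism on every fiber. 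Replacing $K$ by $PK$ — the natural map $K\barsmash Y\to PK\barsmash Y$ is a level equivalence by \autoref{prop:h_cofibrations_pushout_product}(4), and $PK$ is $h$-cofibrant and $h$-fibrant by \autoref{prop:px_properties} — we may further assume $K$ is $h$-fibrant. Then $K\barsmash Y$ is level $h$-fibrant by \autoref{prop:h_cofibrations_pushout_product}(3), so $\R\pi_{n,(a,b)}(K\barsmash Y)=\pi_{n,(a,b)}(K\barsmash Y)$, and since $\barsmash$ with a space commutes with pullbacks (\autoref{lem:external_smash_and_base_change}, \autoref{prop:f_star_colimits}) the fiber spectrum of $K\barsmash Y$ over $(a,b)$ is $K_a\sma Y_b$. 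We are thus reduced to the classical fact that smashing a well-based space with a $\pi_*$-isomorphism of levelwise well-based spectra is a $\pi_*$-isomorphism; this follows by choosing a CW approximation of $K_a$ (which becomes a level equivalence after $-\barsmash Y$, again by \autoref{prop:h_cofibrations_pushout_product}(4)) and inducting over skeleta, using that $S^k\sma-$ preserves stable homotopy together with the gluing and colimit lemmas for spectra.

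For $\barF_*(K,-)$ with $K$ a finite CW complex and $X$ level $q$-fibrant: a stable equivalence $g\colon X\to X'$ of level $q$-fibrant spectra already induces a $\pi_*$-isomorphism on each fiber, so no zig-zag is needed. Since $K$ is $q$-cofibrant, $\barF_*(K,X)_n=\barmap_*(K,X_n)$ is $q$-fibrant over $B$ by \autoref{ex:homming_spaces_is_right_Quillen}; hence $\barF_*(K,X)$ is level $q$-fibrant, so its derived homotopy groups over $b$ are the honest homotopy groups of its fiber spectrum, which is the function spectrum $F(K,X_b)$ because $\barmap_*(K,-)$ commutes with pullbacks. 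The claim then follows from the standard fact that, $K$ being finite, $F(K,-)$ preserves $\pi_*$-isomorphisms of spectra: $K$ is built from finitely many cells, $F(S^k,-)$ is a shifted finite iterated loop functor, and loops, finite products, and fiber sequences all preserve $\pi_*$-isomorphisms.

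The main obstacle is the second case of the first bullet: identifying the derived homotopy groups of $K\barsmash Y$ with the honest homotopy groups of the fibre spectra $K_a\sma Y_b$ (forcing the $PK$ replacement and the use of \autoref{prop:h_cofibrations_pushout_product}(3)), and then handling a general $h$-cofibrant $K_a$ rather than a CW complex. The mapping-spectrum statement is comparatively routine, since the $q$-fibrancy hypothesis already makes the relevant objects fibrant on the nose.
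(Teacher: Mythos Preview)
Your proof is correct and follows essentially the same strategy as the paper's: check that the operation preserves level equivalences on the relevant subcategory, use the zig-zag \eqref{eq:stable_equivalence_decomposition} to reduce to level-fibrant spectra, then identify fibers and cite the non-parametrized result from \cite[7.4]{mmss}. You are in fact more careful than the paper at one point: the paper's terse phrase ``once the spectra are fibrant, it suffices to show the map is a $\pi_*$ isomorphism on each fiber'' elides the fact that $K\barsmash X$ need not be level $h$-fibrant when only $X$ is, and your explicit replacement of $K$ by $PK$ (or equivalently an appeal to $P(K\barsmash X)\cong PK\barsmash PX$) is exactly what is needed to justify that step.
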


\begin{proof}
	The general method here and for the following results is to first check these operations preserve level equivalences. Using \eqref{eq:stable_equivalence_decomposition}, this reduces to the case where the spectra are fibrant. But once the spectra are fibrant, it suffices to show the map is a $\pi_*$ isomorphism on each fiber, and this is the non-parametrized case already handled in \cite[7.4(i,vi)]{mmss}.
\end{proof}

\begin{prop}(cf. \cite[12.4.3]{ms}, \cite[7.4]{mmss})\label{prop:coproduct_colimit_stable_equivalences}
	The following operations commute with derived homotopy groups, and therefore preserve stable equivalences:
	\begin{itemize}
		\item finite products $\prod_{i=1}^n X^i$ of level $q$-fibrant spectra $X^i$, 
		\item arbitrary coproducts $\bigcup_\alpha X^\alpha$ of level $h$-cofibrant spectra $X^\alpha$, and
		\item sequential colimits $\colim_i X^i$ along level $h$-cofibrations $X^i \to X^{i+1}$.\footnote{In (CGWH) these assumptions can be weakened. The maps of the colimit system only have to be level closed inclusions. In the coproduct, the $h$-cofibrant assumption can be dropped when $B = *$ but probably not in general.}
	\end{itemize}
\end{prop}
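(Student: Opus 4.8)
The plan is to prove the stronger statement that each of the three operations commutes with the derived homotopy groups $\R\pi_{n,b}$; preservation of stable equivalences is then automatic, since finite product, direct sum, and filtered colimit of abelian groups are exact and hence carry the $\R\pi_{n,b}$-isomorphisms induced by a stable equivalence of inputs again to isomorphisms. In each case I would use the two-step recipe from the proof of \autoref{ex:smash_with_cell_complex_is_derived}: first verify the operation preserves level equivalences under the stated hypotheses, then pass to level $q$-fibrant models, restrict to fiber spectra over $b$, and invoke the non-parametrized computations of \cite[7.4]{mmss}.

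\textbf{Finite products.} A fiber product $X^1 \times_B \cdots \times_B X^n$ of level $q$-fibrant spectra is again level $q$-fibrant, and a finite product of $q$-fibrations preserves level equivalences, so no cofibrant replacement is needed here. For any level $q$-fibrant spectrum $Z$ one has $\R\pi_{n,b}(Z) = \pi_{n,b}(Z)$, since $Z \to R^{lv}Z$ is a level equivalence between level $q$-fibrant spectra and $b^*$ preserves weak equivalences of $q$-fibrant spaces (\autoref{lem:f_star_preserves}). Being a pullback functor, $b^*$ preserves finite limits, so the fiber spectrum of $\prod_i X^i$ at $b$ is $\prod_i X^i_b$; combined with the fact that $\pi_n$ of a finite product of ordinary spectra is the product of the $\pi_n$'s, this gives a natural isomorphism $\R\pi_{n,b}(\prod_i X^i) \cong \prod_i \R\pi_{n,b}(X^i)$.

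\textbf{Sequential colimits.} That $\colim_i$ along level $h$-cofibrations preserves level equivalences is the colimit lemma \autoref{prop:technical_cofibrations}(3) applied at each spectrum level. To compute derived homotopy groups I would first replace the sequence $(X^i)$ by a level-equivalent Reedy-cofibrant diagram $(\hat X^i)$ in the level model structure on $\N$-diagrams; then each $\hat X^i$ is freely $q$-cofibrant and each $\hat X^i \to \hat X^{i+1}$ is a free $q$-cofibration. Applying $P$ and using \autoref{prop:spectrum_px}, the maps $P\hat X^i \to P\hat X^{i+1}$ become level $f$-cofibrations between spectra that are level $f$-cofibrant and level $h$-fibrant, while $P\hat X^i \overset\sim\leftarrow \hat X^i \overset\sim\to X^i$ is a zig-zag of level equivalences along level $h$-cofibrations, so $\colim_i P\hat X^i$ is level equivalent to $\colim_i X^i$ by the colimit lemma. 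Furthermore $\colim_i P\hat X^i$ is level $h$-fibrant: by \autoref{lem:dold} one can choose path-lifting functions on the spaces $(P\hat X^i)_k$ over $B$ that restrict compatibly along the $f$-cofibrations, and since these maps are closed inclusions and $I$ is compact they assemble into a path-lifting function on the colimit. Hence $\R\pi_{n,b}(\colim_i X^i) = \pi_{n,b}(\colim_i P\hat X^i)$; now $b^*$ commutes with this colimit (\autoref{prop:f_star_colimits}), and $\pi_n$ of a sequential colimit of spectra along levelwise $h$-cofibrations is the colimit of the $\pi_n$'s \cite[7.4]{mmss}, which yields $\R\pi_{n,b}(\colim_i X^i) \cong \colim_i \R\pi_{n,b}(X^i)$.

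\textbf{Coproducts.} An arbitrary coproduct $\bigcup_\alpha X^\alpha$ of level $h$-cofibrant spectra is the filtered colimit of its finite sub-coproducts, and the transition maps of that colimit are pushouts of the level $h$-cofibrations $0 \to X^\alpha$, hence again level $h$-cofibrations; so the sequential-colimit case just proved (or, after well-ordering the index set, its transfinite analogue) together with the colimit lemma reduces the statement to the case of a finite coproduct. A finite coproduct $X^1 \cup_B \cdots \cup_B X^n$ is an iterated pushout along level $h$-cofibrations, so it preserves level equivalences by the gluing lemma \autoref{prop:technical_cofibrations}(2); replacing each $X^\alpha$ by $PQX^\alpha$ (which is freely, hence level, $h$-cofibrant, and also level $f$-cofibrant and level $h$-fibrant) and iterating \autoref{prop:clapp} shows such a finite coproduct is level $h$-fibrant. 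Then its $\R\pi_{n,b}$ equals its $\pi_{n,b}$, $b^*$ commutes with coproducts (\autoref{prop:f_star_colimits}), and $\pi_n$ of a wedge of spectra is the direct sum of the $\pi_n$'s; feeding this through the colimit case gives $\R\pi_{n,b}(\bigcup_\alpha X^\alpha) \cong \bigoplus_\alpha \R\pi_{n,b}(X^\alpha)$. The main obstacle I anticipate is the bookkeeping in the sequential-colimit step: one must arrange a level-equivalent sequence whose transition maps are level $f$-cofibrations between level $h$-fibrant terms so that the colimit stays level $h$-fibrant, which is exactly where the Reedy cofibrant replacement of the diagram, the upgrading properties of $P$ (\autoref{prop:spectrum_px}), and the compatible path-lifting argument built on \autoref{lem:dold} all enter.
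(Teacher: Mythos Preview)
Your overall strategy matches the paper's: verify preservation of level equivalences, pass to level-fibrant models, restrict to fibers, and cite the non-parametrized results. The paper's proof is a single sentence, and the one ingredient you are missing is the observation that makes it so short: the monoidal fibrant replacement $P$ \emph{commutes} with coproducts and with sequential colimits along level closed inclusions. This is because $P = (p_1)_!(p_0)^*$ levelwise, $(p_1)_!$ is a left adjoint, and $(p_0)^*$ preserves these particular colimits by \autoref{prop:f_star_colimits}. Once you have Reedy-replaced your sequence by $\hat X^i$, this gives
\[
\colim_i P\hat X^i \;\cong\; P\bigl(\colim_i \hat X^i\bigr),
\]
and the right-hand side is level $h$-fibrant by \autoref{prop:spectrum_px} because $\colim_i \hat X^i$ is level $h$-cofibrant. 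This replaces your entire compatible-path-lifting argument. The same observation handles arbitrary coproducts directly, without the reduction to finite coproducts: $\bigcup_\alpha PQX^\alpha \cong P\bigl(\bigcup_\alpha QX^\alpha\bigr)$ is level $h$-fibrant.

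Your path-lifting argument also has a small gap as written. \autoref{lem:dold} says that for an $f$-cofibration $X \to Y$ of $h$-fibrant spaces, \emph{some} path-lifting function on $Y$ restricts to one on $X$; it does not say that a \emph{given} path-lifting function on $X$ extends to $Y$. The latter is what your induction needs at each step. The extension version is true (and is essentially how \autoref{prop:clapp} is proved), but you would need to state and use it. Since the commutation of $P$ with the colimit makes the whole detour unnecessary, this is the cleaner fix.
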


\begin{proof}
	The method is the same as in the previous lemma, only for the last two points we also have to observe that the monoidal fibrant replacement functor $P$ commutes with the coproduct or colimit.
\end{proof}

\begin{rmk}\label{level_stable_same_derived}
	\autoref{ex:smash_with_cell_complex_is_derived} implies that the left-derived functor of $K \barsmash -$ is the same whether we use the level equivalences or the stable equivalences to derive the functor. This is because the same cofibrant replacement ($Q$ from \autoref{prop:level_model_structure}) can be used in both cases. This happens more generally whenever a level cofibrant or fibrant replacement is enough to preserve stable equivalences, so it also applies to the other left- and right-derived functors in the previous two results.
	
	The only cases in this paper where this fails are $f_*$ from \autoref{prop:sheafy_pushforward} when the fibers are non-compact, $\barF_*(K,-)$ when $K$ is infinite, and homotopy limits when the indexing category does not have a compact nerve (this includes infinite products). In these cases one has to take a stable fibrant replacement from \autoref{thm:stable_model_structure} in order to derive with respect to the stable equivalences.
\end{rmk}

\begin{prop}\label{parametrized_stability}
	The following natural transformations induce (via \autoref{prop:passing_natural_trans_to_derived_functors}) stable equivalences between the composites of derived functors. (See also \autoref{fiber_shift_of_cofiber}.)
	\[ X \to \Omega_B\Sigma_B X, \qquad \Sigma_B\Omega_B X \to X, \quad \textup{ and } \quad X \cup_B Y \to X \times_B Y. \]
\end{prop}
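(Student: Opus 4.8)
The plan is to handle all three transformations by the two-step method used in \autoref{ex:smash_with_cell_complex_is_derived} and \autoref{prop:coproduct_colimit_stable_equivalences}: first exhibit coherent deformability so that \autoref{prop:passing_natural_trans_to_derived_functors} produces a well-defined map $\D\eta$ of composites of derived functors, and then check that $\D\eta$ is a stable equivalence by restricting to a subcategory of spectra that are already level $h$-fibrant -- so that derived homotopy groups are the honest fiberwise homotopy groups $\pi_n(X_b)$ -- and invoking the corresponding non-parametrized facts from \cite{mmss}.

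For $X \to \Omega_B\Sigma_B X$ and $\Sigma_B\Omega_B X \to X$, recall that $\Sigma_B = S^1 \barsmash(-)$ is left-deformable and $\Omega_B = \barmap_*(S^1,-)$ is right-deformable. I would take $\mathcal A_0$ to be the spectra that are simultaneously freely $f$-cofibrant and level $h$-fibrant (the image of $PQ$). On $\mathcal A_0$ both functors are homotopical, and $\mathcal A_0$ is preserved by each: $\Sigma_B$ keeps a spectrum freely $f$-cofibrant (since $S^1$ is $f$-cofibrant over $*$, using that $\barsmash$ preserves free $f$-cofibrations), and, since $S^1$ is $h$-cofibrant and $h$-fibrant over $*$, keeps it level $h$-fibrant by \autoref{prop:h_cofibrations_pushout_product}(3) applied levelwise; $\Omega_B$ keeps a spectrum level $h$-fibrant, and, since $S^1$ is compact, keeps it freely $f$-cofibrant by the third bullet of \autoref{h_fibrations_pullback_hom} (in (CG) this last point requires a minor modification of the subcategory). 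Thus the relevant pairs of lists -- $(\mathrm{id})$ versus $(\Sigma_B,\Omega_B)$, and $(\Omega_B,\Sigma_B)$ versus $(\mathrm{id})$ -- are coherently deformable with common subcategory $\mathcal A_0$. On $\mathcal A_0$ the recipe of \autoref{prop:passing_natural_trans_to_derived_functors} reduces $\D\eta$ to the point-set map $\eta$; every spectrum occurring there is level $h$-fibrant, so its derived homotopy groups are $\pi_n(X_b)$, and over each $b$ the maps become the classical stable equivalences $E \to \Omega\Sigma E$ and $\Sigma\Omega E \to E$ of non-parametrized spectra.

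For $X \cup_B Y \to X \times_B Y$, take $\mathcal A_0 \subseteq \Osp(B)\times\Osp(B)$ to be the pairs of level $h$-cofibrant, level $h$-fibrant spectra (the image of $(PQ,PQ)$). On these, the fiberwise coproduct $\cup_B$ and the fiberwise product $\times_B$ are each homotopical by \autoref{prop:coproduct_colimit_stable_equivalences} (finite coproducts of level $h$-cofibrant spectra, finite products of level $q$-fibrant spectra), so the pair $(\cup_B)$, $(\times_B)$ is coherently deformable and \autoref{prop:passing_natural_trans_to_derived_functors} applies. On $\mathcal A_0$ the product $X \times_B Y$ is again level $h$-fibrant (a fiber product of level $h$-fibrations over $B$ is a level $h$-fibration), while $X \cup_B Y$ is level $h$-cofibrant; and since $P$ commutes with this fiberwise pushout, $P(X \cup_B Y) \cong PX \cup_B PY$ with $PX$, $PY$ both $f$-cofibrant and level $h$-fibrant (\autoref{prop:spectrum_px}), Clapp's theorem (\autoref{prop:clapp}) shows $P(X \cup_B Y)$ is level $h$-fibrant. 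Hence the derived homotopy groups of both sides are fiberwise homotopy groups, and over each $b$ the map is the classical stable equivalence $E \vee E' \to E \times E'$.

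The main obstacle is the bookkeeping in the coherent-deformability step: pinning down a single subcategory containing the image of the convenient replacement $PQ$ on which all the functors involved are simultaneously homotopical and which is preserved by the relevant operations. The one genuinely delicate input is that $\Omega_B$ preserves $f$-cofibrations and $h$-fibrations (\autoref{h_fibrations_pullback_hom}), which is what keeps $\mathcal A_0$ stable under $\Omega_B$; reconciling the (CG) and (CGWH) conventions there, and checking that $P$ commutes with the fiberwise wedge in (CGWH) (which uses that the basepoint sections are closed inclusions), also take a little care. Once the subcategory is fixed, the identification of $\D\eta$ with the point-set map on $\mathcal A_0$ and the reduction to the non-parametrized statements are routine.
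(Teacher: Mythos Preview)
Your approach is essentially the same as the paper's: restrict to spectra that are simultaneously level $f$-cofibrant and level $h$-fibrant, observe that on this subcategory all the functors $\Sigma_B$, $\Omega_B$, $\cup_B$, $\times_B$ agree with their derived versions and all outputs remain level $h$-fibrant, and then check the maps are $\pi_*$-isomorphisms on each fiber using \cite[7.4]{mmss}. One small overclaim: $\Omega_B$ does not obviously preserve \emph{freely} $f$-cofibrant spectra, only \emph{level} $f$-cofibrant ones (the third bullet of \autoref{h_fibrations_pullback_hom} is a levelwise statement); but this is harmless, since level $h$-cofibrancy is all that $\Sigma_B$ needs to be derived, and the paper itself only uses ``level $f$-cofibrant, level $h$-fibrant'' here. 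Your detour through $P$ for $X \cup_B Y$ is also unnecessary: once $X$ and $Y$ are level $h$-cofibrant and level $h$-fibrant, \autoref{lem:heath_kamps} makes the basepoint sections $f$-cofibrations, so \autoref{prop:clapp} gives $X \cup_B Y$ level $h$-fibrant directly.
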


\begin{proof}
	Since this is a statement about the derived functors, without loss of generality $X$ and $Y$ are level $f$-cofibrant, level $h$-fibrant, and weak Hausdorff. Then the spectra above are level $h$-fibrant, and the functors $\Sigma_B$, $\Omega_B$, $\cup_B$ and $\times_B$ are equivalent to their derived versions, by repeated application of \autoref{prop:h_cofibrations_pushout_product} and \autoref{h_fibrations_pullback_hom}. So it suffices to see in this case that each of the maps is a $\pi_*$-isomorphism on each fiber separately. This is done in \cite[7.4(i,i',ii)]{mmss}.
\end{proof}

\beforesubsection
\subsection{Cofibers, fibers, pushouts and pullbacks}\aftersubsection

Let $f\colon X \to Y$ be a map of spectra over $B$. Define the \textbf{mapping cone} (or uncorrected homotopy cofiber)\index{uncorrected homotopy cofiber}\index{mapping cone} of $f$ by the formula
\[ C_Bf = (X \barsmash I) \cup_{X \barsmash S^0} Y. \]
As in \autoref{space_cofiber}, on each fiber this gives the usual mapping cone $Cf_b = (X_b \sma I) \cup_{X_b} Y_b$. As in \autoref{commute_cone_with_P}, we have an isomorphism of spectra $PC_B f \cong C_B Pf$. By the discussion in \autoref{space_cofiber}, the mapping cone preserves level equivalences when $f$ is a level $h$-cofibration or both $X$ and $Y$ are level $h$-cofibrant.

The \textbf{homotopy cofiber}\index{homotopy!cofiber} the left-derived functor of the mapping cone, as a functor from maps of spectra to spectra, using the level equivalences:
\[ \L C_B f = C_B(Qf). \]
Note that the map to the strict cofiber $C_B f \to Y \cup_X B$ induces an equivalence of left-derived functors, so we could also think of the homotopy cofiber as left-derived from the cofiber.

\begin{lem}\label{lem:LES}
	There is a natural long exact sequence
	\[ \xymatrix @C=3em{
		\ldots \ar[r] & \R\pi_{n,b}(X) \ar[r]^-{f_*} & \R\pi_{n,b}(Y) \ar[r] & \R\pi_{n,b}(\L C_B f) \ar[r] & \R\pi_{n-1,b}(X) \ar[r] & \ldots
	}. \]
	Therefore $\L C_B f = C_BQf$ preserves stable equivalences.
\end{lem}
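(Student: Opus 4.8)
The plan is to reduce everything to the classical non-parametrized long exact sequence of a mapping cone, applied fiberwise, after making the spectra level $h$-fibrant. First I would reduce to the case where $f$ is already a level $h$-cofibration, since the homotopy cofiber is defined via $Qf$ and every map factors (up to level equivalence) as a free $q$-cofibration followed by a level equivalence; replacing $f$ by $Qf$ does not change the derived homotopy groups of $X$, $Y$, or of $C_B(Qf)$, and since $Qf$ is a free $q$-cofibration it is in particular a level $h$-cofibration by \autoref{prop:free_implies_level}, so $C_B(Qf)$ is the honest homotopy cofiber. So without loss of generality $f\colon X \to Y$ is a level $h$-cofibration, and moreover we may further replace $X$ and $Y$ by level $h$-fibrant spectra: apply the monoidal fibrant replacement $P$ (legitimate once $X$, $Y$ are level $h$-cofibrant, by \autoref{prop:spectrum_px}), using $PC_B f \cong C_B Pf$ from the discussion after \autoref{space_cofiber} together with \autoref{commute_cone_with_P} applied levelwise, so that the map $X \to PX$, $Y \to PY$, $C_B f \to PC_B f = C_B Pf$ are all level equivalences and hence stable equivalences, hence isomorphisms on $\R\pi_{*,b}$.

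Once $X$ and $Y$ are level $h$-fibrant and $f$ is a level $h$-cofibration, the mapping cone $C_B f$ is also level $h$-fibrant: at each spectrum level this follows from the gluing/Clapp machinery, since $C_B f$ is built as a pushout along an $h$-cofibration of $h$-fibrant spaces (\autoref{prop:clapp}, together with \autoref{prop:h_cofibrations_pushout_product} for the fact that $X_n \barsmash I$ is $h$-fibrant). Because all three spectra are level $h$-fibrant, their derived homotopy groups $\R\pi_{n,b}$ coincide with the ordinary homotopy groups $\pi_{n,b}$, which are the stable homotopy groups of the fiber spectra $X_b$, $Y_b$, $(C_B f)_b$. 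Now $C_B f$ is formed fiberwise: $(C_B f)_b = (X_b \sma I) \cup_{X_b} Y_b = Cf_b$, the ordinary mapping cone of $f_b\colon X_b \to Y_b$ in non-parametrized spectra, and $f_b$ is a levelwise closed inclusion of spaces since $f$ is a level $h$-cofibration (pullback to $b$ preserves closed inclusions by \autoref{lem:f_star_preserves}, and more precisely preserves the $h$-cofibration structure on each fiber — or simply note $Cf_b$ has the homotopy type of the homotopy cofiber since $f_b$ is an $h$-cofibration). Therefore the classical cofiber long exact sequence for $f_b$ in the stable homotopy category of spectra gives, for each fixed $b \in B$, the long exact sequence
\[ \ldots \to \pi_n(X_b) \to \pi_n(Y_b) \to \pi_n(Cf_b) \to \pi_{n-1}(X_b) \to \ldots \]
which is exactly the asserted sequence after the identifications $\pi_{n,b}(X) = \R\pi_{n,b}(X)$ etc. Naturality in $f$ (among maps of spectra over $B$) is inherited from naturality of the classical construction, once one checks the replacements $Qf$ and $P$ are functorial in the arrow category — both are.

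The last clause, that $\L C_B f = C_B Q f$ preserves stable equivalences, then follows formally: given a stable equivalence $X \to X'$, $Y \to Y'$ forming a map of arrows $f \to f'$, the induced map of long exact sequences has isomorphisms on the $X$- and $Y$-columns by definition of stable equivalence, so by the five lemma it is an isomorphism on $\R\pi_{*,b}(\L C_B f) \to \R\pi_{*,b}(\L C_B f')$ for every $b$ and every degree, i.e. $\L C_B f \to \L C_B f'$ is a stable equivalence. I expect the main obstacle to be purely bookkeeping: verifying carefully that after replacing $f$ by $Qf$ and then applying $P$, the resulting mapping cone really is level $h$-fibrant and its fibers really are the classical mapping cones of the fiber maps — i.e. that all the point-set cofibrancy/fibrancy hypotheses needed to invoke \autoref{prop:clapp}, \autoref{prop:h_cofibrations_pushout_product}, and \autoref{commute_cone_with_P} are in place at every spectrum level. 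There is no conceptual difficulty once the reduction to fibrant, cofibrant data is made; everything else is the classical fact cited from \cite{mmss} applied one fiber at a time.
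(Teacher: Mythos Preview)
Your proposal is correct and follows essentially the same approach as the paper: replace $f$ by $Qf$, apply $P$, use $PC_B Qf \cong C_B PQf$, and then read off the classical cofiber long exact sequence on each fiber spectrum. The only minor difference is that you spell out, via \autoref{prop:clapp} and \autoref{prop:h_cofibrations_pushout_product}, why $C_B PQf$ is itself level $h$-fibrant (so that its underived fiber homotopy groups already compute $\R\pi_{*,b}(\L C_B f)$), whereas the paper simply applies one more $PQ$ to $C_B Qf$ and compares along the functorial zig-zag $C_B PQf \cong PC_B Qf \overset\sim\leftarrow PQC_B Qf$; both arguments arrive at the same identification.
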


\begin{proof}	
	We take the usual the non-parametrized long exact sequence for the homotopy groups of the fibers of $PQX \to PQY \to C_B PQf$, then identify the homotopy groups of $C_B PQf$ with those of $PQC_B Qf$ along the functorial string of maps
	\[ \xymatrix{
		C_B PQf \ar@{<->}[r]^-\cong & PC_B Qf & \ar[l]_-\sim PQC_B Qf
	} \]
	inducing isomorphisms on the (underived) homotopy groups of every fiber spectrum. As usual, it is not necessary to worry whether all possible reasonable choices of isomorphism give the same map. We only need to know that some functorial long exact sequence exists. 
\end{proof}

As a result $C_B$ preserves \emph{stable} equivalences when $f$ is a level $h$-cofibration or both $X$ and $Y$ are level $h$-cofibrant. As in \autoref{level_stable_same_derived}, we can also conclude that $\L C_B f = C_BQf$ agrees with the left-derived functor of $C_B f$ using the \emph{stable} equivalences.

\begin{cor}[Left-properness]\label{cor:spectra_left_proper}
	In a strict pushout square of spectra over $B$
	\[ \xymatrix @R=1.7em{
		X \ar[r]^-i \ar[d]^-f & Y \ar[d] \\
		Z \ar[r] & Y \cup_X Z
	} \]
	if $f\colon X \to Z$ is a stable equivalence and either $i$ or $f$ is a level $h$-cofibration then $Y \to Y \cup_X Z$ is also a stable equivalence.
\end{cor}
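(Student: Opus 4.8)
The plan is to replace the given square by a stably equivalent one built entirely out of freely cofibrant spectra, in which the maps $X\to Y$ and $X\to Z$ have been turned into honest cofibrations, and then to run the long exact sequence of \autoref{lem:LES}. Throughout, ``cofibration'' means a free $q$-cofibration, i.e.\ a cofibration in the level model structure (\autoref{prop:level_model_structure}); by \autoref{prop:free_implies_level} these are in particular level $h$-cofibrations.

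First I would pick a cofibrant replacement $QX\simar X$, then factor $QX\to X\to Y$ as a cofibration $a\colon QX\to Y'$ followed by a level acyclic fibration $Y'\simar Y$, and factor $QX\to X\to Z$ as a cofibration $c\colon QX\to Z'$ followed by a level acyclic fibration $Z'\simar Z$. Since $QX\to X$ is a level equivalence (hence a stable equivalence), $f\colon X\to Z$ is a stable equivalence by hypothesis, and $Z'\to Z$ is a level equivalence, two-out-of-three shows $c$ is a stable equivalence. Setting $W'=Y'\cup_{QX}Z'$ gives a map of pushout squares from $(QX,Y',Z',W')$ to $(X,Y,Z,W)$, and $g'\colon Y'\to W'$ is a cofibration, being a pushout of $c$.

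The key point is that the comparison $W'\to W$ is a level equivalence. This is the gluing lemma (\autoref{prop:technical_cofibrations}, applied one spectrum level at a time), whose hypotheses ask that each of the two pushout squares have a leg that is a level $h$-cofibration: the replacement square does because $a$ and $c$ are cofibrations, and \emph{the original square does precisely because one of $i$ or $f$ is a level $h$-cofibration} --- this is the only place the hypothesis is used, and one checks the cofibration legs can be taken in matching positions (top legs $a$ and $i$, or left legs $c$ and $f$). As $QX\to X$, $Y'\to Y$, $Z'\to Z$ are level equivalences, the gluing lemma yields $W'\simar W$ and $Y'\simar Y$, so it suffices to prove $g'$ is a stable equivalence; then $Y\to W$ is a stable equivalence by two-out-of-three. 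To see $g'$ is a stable equivalence, by \autoref{lem:LES} it is enough that $\L C_B g'$ has vanishing derived homotopy groups. Pushout pasting identifies the strict cofiber $W'/Y'$ with $Z'/QX$, the strict cofiber of $c$. For a level $h$-cofibration $h\colon P\to Q$ the natural map $C_B h\to Q/P$ is a level equivalence --- here one uses only that $P\barsmash I$ fiberwise deformation retracts onto $B$, so $P\barsmash I\simar B$ with no cofibrancy needed --- and $C_B h\simar \L C_B h$ because the arrow-wise cofibrant replacement of $h$ is a level equivalence of arrows between level $h$-cofibrations. Applying this to the cofibration $c$ gives $\L C_B c\simar Z'/QX$, and $\L C_B c$ is stably trivial since $c$ is a stable equivalence (\autoref{lem:LES}); hence $Z'/QX$, and so $W'/Y'$, is stably trivial. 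Applying the same identification to the cofibration $g'$ gives $\L C_B g'\simar W'/Y'$, stably trivial, so the long exact sequence forces $g'$ to induce isomorphisms on all $\R\pi_{n,b}$.

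The main obstacle is the bookkeeping in the middle step: one has to arrange the cofibrant replacement so that the comparison on the \emph{pushouts} $W'\to W$ is still a level equivalence while assuming only that one of the two original legs is a level $h$-cofibration, and the gluing lemma is exactly sharp enough for this --- which is why the hypothesis of the corollary is the right one. The supporting fact that the strict cofiber of a level $h$-cofibration already computes the homotopy cofiber, because $(-)\barsmash I$ is universally contractible, is what lets everything go through without imposing cofibrancy on $X$, $Y$, $Z$ themselves. (In the case where it is $f$ that is a level $h$-cofibration one can bypass the replacement: then $Y\to W$ is itself a level $h$-cofibration with $\L C_B(Y\to W)\simeq W/Y\cong Z/X\simeq \L C_B f$, which is stably trivial.)
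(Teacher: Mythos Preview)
Your proof is correct, but it is more elaborate than necessary. The paper's argument is essentially the parenthetical you wrote at the very end. Concretely, the paper first reduces to the case where $f$ is the level $h$-cofibration: if instead $i$ is the cofibration, factor $f$ as a level $h$-cofibration $X \to Z'$ followed by a level equivalence $Z' \to Z$; since $i$ is a level $h$-cofibration, its pushout $Z' \to Y \cup_X Z'$ is too, and pushing the level equivalence $Z' \to Z$ along it gives a level equivalence $Y \cup_X Z' \to Y \cup_X Z$ by the space-level gluing lemma. So one may assume $f$ is the cofibration. Then $Y \to W$ is also a level $h$-cofibration, the strict cofibers $Z/X$ and $W/Y$ are homeomorphic (pushout pasting), both compute the respective homotopy cofibers, and \autoref{lem:LES} finishes it.

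What you do instead is replace the entire span by a cofibrant one via three factorizations in the level model structure, and then invoke the gluing lemma once for the comparison of pushouts. This is valid and your bookkeeping is fine --- in particular you correctly note that the single hypothesis on $i$ or $f$ is exactly what makes the gluing lemma apply to the comparison $W' \to W$, and your identification $\L C_B h \simeq Q/P$ for a level $h$-cofibration $h$ is justified (the cone $P \barsmash I$ fiberwise deformation retracts to $B$, and the gluing lemma applies using the leg $P \to Q$). But the paper's one-factorization reduction is shorter: once you have the case ``$f$ is a level $h$-cofibration'' in hand (your parenthetical), a single factorization of $f$ handles the other case, with no need to make $X$ or $Y$ cofibrant.
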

\begin{cor}[Gluing lemma]\label{cor:spectra_gluing}
	Any diagram of spectra over $B$
	\[ \xymatrix @R=1.7em{
		Y \ar[d]^-\sim & X \ar[l]_-i \ar[r]^-f \ar[d]^-\sim & Z \ar[d]^-\sim \\
		Y' & X' \ar[l]^{i'} \ar[r]_-{f'} & Z'
	} \]
	in which the vertical maps are stable equivalences and both $i$ and $i'$ are level $h$-cofibrations, induces a stable equivalence of pushout spectra
	\[ Y \cup_X Z \to Y' \cup_{X'} Z' \]
\end{cor}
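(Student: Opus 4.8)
The plan is to recognize both pushouts as the total spaces of cofiber sequences and then conclude by two applications of the natural long exact sequence of \autoref{lem:LES} together with the five lemma.

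First I would record two elementary preliminaries. Since $i$ and $i'$ are level $h$-cofibrations, \autoref{prop:technical_cofibrations}(1), applied one spectrum level at a time, shows that the pushout maps $Z \to Y \cup_X Z$ and $Z' \to Y' \cup_{X'} Z'$ are again level $h$-cofibrations. Second, a routine colimit manipulation — the same one showing that the strict cofiber of a pushout of a cofibration is the cofiber of the original cofibration — produces natural isomorphisms $(Y \cup_X Z)/Z \cong Y/X$ and $(Y' \cup_{X'} Z')/Z' \cong Y'/X'$. Because each of $i$, $i'$, $Z \to Y \cup_X Z$, $Z' \to Y' \cup_{X'} Z'$ is a level $h$-cofibration, in every case the map from the uncorrected homotopy cofiber (the mapping cone) to the strict cofiber is a level equivalence, as in the discussion around \autoref{space_cofiber} and \autoref{lem:LES}; so these strict cofibers carry the correct derived homotopy groups and fit into the long exact sequences of \autoref{lem:LES} associated to those maps.

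The argument then runs in two steps. Applying the naturality of the long exact sequence of \autoref{lem:LES} to the map of level $h$-cofibrations $i \to i'$, the comparison maps $\R\pi_{n,b}(X) \to \R\pi_{n,b}(X')$ and $\R\pi_{n,b}(Y) \to \R\pi_{n,b}(Y')$ are isomorphisms by hypothesis, so the five lemma forces $\R\pi_{n,b}(Y/X) \to \R\pi_{n,b}(Y'/X')$ to be an isomorphism; that is, $Y/X \to Y'/X'$ is a stable equivalence. Applying \autoref{lem:LES} a second time, now to the map of level $h$-cofibrations $(Z \to Y\cup_X Z) \to (Z' \to Y'\cup_{X'}Z')$ and identifying its cofibers with $Y/X$ and $Y'/X'$ via the isomorphisms above, we get a map of long exact sequences in which $\R\pi_{n,b}(Z) \to \R\pi_{n,b}(Z')$ is an isomorphism (hypothesis) and $\R\pi_{n,b}(Y/X) \to \R\pi_{n,b}(Y'/X')$ is an isomorphism (first step). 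One last application of the five lemma shows that $Y\cup_X Z \to Y'\cup_{X'}Z'$ induces isomorphisms on all derived homotopy groups, hence is a stable equivalence.

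There is no deep obstacle: the content sits entirely in \autoref{lem:LES} and in the already-established homotopy invariance of pushouts along $h$-cofibrations at the space level. The only points that need a little care are the bookkeeping items in the second paragraph — that a level $h$-cofibration has uncorrected homotopy cofiber agreeing, on derived homotopy groups, with its strict cofiber, and that the long exact sequence is natural in the map, so that the five lemma genuinely applies to a morphism of cofiber sequences. I would also remark that the first five-lemma step has precisely the shape of \autoref{cor:spectra_left_proper}, so it could instead be phrased as an appeal to left-properness; in either formulation the proof invokes nothing beyond \autoref{lem:LES} and \autoref{prop:technical_cofibrations}.
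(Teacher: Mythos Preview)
Your proof is correct and follows essentially the same route as the paper's. Both arguments observe that $Z \to Y\cup_X Z$ is a level $h$-cofibration with cofiber identified with that of $i$, then run \autoref{lem:LES} and the five lemma twice: once on $i,i'$ to control the cofibers, and once on the cobase-changed maps to control the pushouts. The only cosmetic difference is that the paper phrases the cofiber comparison via $\L C_B$ (mapping cones), while you work with strict cofibers and invoke the level equivalence between the two for level $h$-cofibrations; this is the same content.
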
\index{gluing lemma}

\begin{proof}
	For left properness, if $i$ is the cofibration, we factor $X \to Z$ into a level $h$-cofibration and a level equivalence, and from this reduce to the case where $f$ is the cofibration. Then take mapping cones in the vertical direction. For $X \to Z$ this cone is weakly contractible by \autoref{lem:LES}, but the cones are homeomorphic because the above square is a pushout. Therefore the cone of $Y \to Y \cup_X Z$ is also weakly contractible. By \autoref{lem:LES} again, $Y \to Y \cup_X Z$ is a stable equivalence.
	
	The gluing lemma is known to follow from left-properness by a long diagram-chase. Alternatively, we compare the mapping cones of $i\colon X \to Y$ and $\bar i\colon Z \to Y \cup_X Z$. In the square of mapping cones
	\[ \xymatrix @R=1.7em{
		\L C_B i \ar[d] \ar[r] & \L C_B \bar i \ar[d] \\
		\L C_B i' \ar[r] & \L C_B \bar i',
	} \]
	the horizontal maps are equivalences because they are isomorphisms before $\L$ and the maps $i, \bar i, i', \bar i'$ are all level $h$-cofibrations, hence $\L C_B \simeq C_B$. The left-vertical map is a stable equivalence by one application of \autoref{lem:LES} to $X \to Y$, therefore the right-vertical is a stable equivalence as well. By one more application of \autoref{lem:LES} to $\bar i$ and $\bar i'$, the map of pushouts is a stable equivalence.
\end{proof}

\begin{rmk}\label{fixed_may_sigurdsson_2}
	In \cite{ms}, the corresponding results assume that $i$ is a level $f$-cofibration, which is stricter than a level $h$-cofibration. Furthermore, the arguments employed in \cite[Ch. 5-6]{ms} and the counterexample \cite[6.1.5]{ms} suggest that no further improvement is possible. We would like to expand on \autoref{fixed_may_sigurdsson} by explaining why this approach avoids the difficulty and proves a stronger theorem. One difference is that we are using a symmetric monoidal fibrant replacement functor $P$, allowing us to commute fibrant replacement with the mapping cone.
	
	However, the most significant change is that we are thinking in terms of $\barsmash$ instead of $\sma_B$. If we think in terms of $\sma_B$, we are led to define the uncorrected homotopy cofiber as
	\[ C_B f = (X \sma_B I_B) \cup_{X \sma_B S^0_B} Y \]
	instead of $(X \barsmash I) \cup_{X \barsmash S^0} Y$. Though these two models are homeomorphic, the one with internal smash products suggests that $C_B f$ only preserves equivalences if $X$ and $Y$ are level $f$-cofibrant, even though it also preserves equivalences when they are level $h$-cofibrant. This small difference in assumptions leads to a large difference once we construct the model structure. If the cofibrant objects need to be level $f$-cofibrant, then the usual cells $S^{n-1}_{+B} \to D^n_{+B}$ won't work, we need to further restrict to cells where the map $S^{n-1} \to D^n$ is an $f$-cofibration, in other words the ``$qf$-cells.''\footnote{The language of ``well-grounded model categories'' is used in \cite{ms} to explain what assumptions are needed to get the gluing lemma to work. In that language, the $q$-model structure is not well-grounded \cite[6.1.3]{ms}. However, it actually is well-grounded, using a \emph{different ground structure}, that we obtain from the one in \cite[5.3.6]{ms} by replacing the $f$-cofibrations with $h$-cofibrations. If we examine the second half of the counterexample \cite[6.1.5]{ms} carefully, we see that it does not obstruct the existence of this ground structure, because $X \to Y'$ is not an $h$-cofibration to begin with.}
\end{rmk}

Define the mapping cocone or \textbf{uncorrected homotopy fiber}\index{uncorrected homotopy fiber} of $f\colon X \to Y$ by
\[ F_B f = X \times_{\barF_*(S^0,Y)} \barF_*(I,Y). \]
On each fiber over $B$, this gives the usual homotopy fiber $F_B f_b = X_b \times_{Y_b} F(I,Y_b)$ of the map $f_b$. Note however that it may not preserve level equivalences because the fibers $X_b$ and $Y_b$ could themselves change.

The \textbf{homotopy fiber}\index{homotopy!fiber} is the right-derived functor of $F_B$ using the level equivalences. It is equivalent to $FR^{lv}f$, or $F_B f$ whenever $f$ is a level $q$-fibration or a map between level $q$-fibrant spectra. On the subcategory of spectra whose levels are $h$-cofibrant, it is also equivalent to $F_B Pf$. This is all by the discussion in \autoref{space_fiber}.
\begin{lem}\label{lem:LES2}
	There is a natural long exact sequence
	\[ \xymatrix{
		\ldots \ar[r] & \R\pi_{n,b}(X) \ar[r]^-{f_*} & \R\pi_{n,b}(Y) \ar[r] & \R\pi_{n-1,b}(F_B R^{lv}f) \ar[r] & \R\pi_{n-1,b}(X) \ar[r] & \ldots
	}. \]
	Therefore $\R F f = F_B R^{lv}f$ preserves stable equivalences.
\end{lem}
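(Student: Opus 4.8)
The plan is to dualize the proof of \autoref{lem:LES}, exploiting a feature of homotopy fibers not shared by mapping cones: the uncorrected homotopy fiber of a map between level $q$-fibrant spectra is again level $q$-fibrant, so no cofibrant replacement is needed and the argument is actually cleaner.

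First I would replace $f$ by $R^{lv}f\colon R^{lv}X \to R^{lv}Y$, a map of level $q$-fibrant spectra. For each $b\in B$, the fiber spectra $(R^{lv}X)_b$ and $(R^{lv}Y)_b$ are nonparametrized $q$-fibrant spectra, and by definition their homotopy groups are $\R\pi_{*,b}(X)$ and $\R\pi_{*,b}(Y)$. Since $\barF_*(-,-)$ and fiber products commute with restriction along $\{b\}\hookrightarrow B$, the fiber spectrum of $F_B R^{lv}f$ at $b$ is the uncorrected homotopy fiber of $(R^{lv}X)_b\to (R^{lv}Y)_b$, which is the honest homotopy fiber because the target is $q$-fibrant. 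I would then check that $F_B R^{lv}f$ is itself level $q$-fibrant: writing $\barF_*(S^0,R^{lv}Y)\cong R^{lv}Y$, the map $\barF_*(I,R^{lv}Y)\to \barF_*(S^0,R^{lv}Y)$ is a pullback-hom of the $q$-cofibration $S^0\to I$ with a $q$-fibration, hence a level $q$-fibration (\autoref{lem:pullback_hom_adjunction}, \autoref{ex:homming_spaces_is_right_Quillen}), and $F_B R^{lv}f$ is its pullback along $R^{lv}f$ with both $R^{lv}X$ and $R^{lv}Y$ level $q$-fibrant. Consequently $\pi_{n,b}(F_B R^{lv}f)=\R\pi_{n,b}(F_B R^{lv}f)=\R\pi_{n,b}(\R F f)$.

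With these identifications in hand I would simply import, fiberwise, the standard long exact sequence of the fiber sequence of nonparametrized spectra $(F_B R^{lv}f)_b\to (R^{lv}X)_b\to (R^{lv}Y)_b$ (cf.\ \cite{mmss}), which is natural in $b$ and in $f$; this is exactly the asserted sequence. For the final clause, $\R F f=F_B R^{lv}f$ is a functor on the arrow category, and given a stable equivalence of arrows --- a commuting square whose ``source'' map $X\to X'$ and ``target'' map $Y\to Y'$ are stable equivalences --- the induced map of long exact sequences together with the five lemma forces $F_B R^{lv}f\to F_B R^{lv}f'$ to be an isomorphism on $\R\pi_{*,b}$ for every $b$, i.e.\ a stable equivalence.

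The main obstacle I anticipate is the bookkeeping in the middle step: confirming that forming $F_B$ commutes with passing to a fiber over $b$ and, more delicately, that $F_B R^{lv}f$ is level $q$-fibrant, which is what licenses replacing its derived homotopy groups by its underived ones. Once that is settled, everything is a formal consequence of the nonparametrized long exact sequence and the five lemma, exactly as in \autoref{lem:LES}.
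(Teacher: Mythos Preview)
Your proposal is correct and follows essentially the same approach as the paper's own proof. The paper is terser but makes exactly the same two points: take the nonparametrized long exact sequence for the fibers of $F_B R^{lv}f \to R^{lv}X \to R^{lv}Y$, and check that $F_B R^{lv}f$ is level $q$-fibrant so that its derived homotopy groups coincide with its underived ones; the paper even remarks that this is \emph{easier} than \autoref{lem:LES} precisely because no analogue of the cofibrant-replacement zig-zag is needed.
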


\begin{proof}
	The proof is similar to that of \autoref{lem:LES}, but easier. We take the usual non-parametrized long exact sequence for the fibers of $F_B R^{lv}f \to R^{lv}X \to R^{lv}Y$. We check that $F_B R^{lv}X$ is level $q$-fibrant and therefore $\R\pi_{n,b}(F_B R^{lv}f) \cong \pi_{n,b}(F_B R^{lv}f)$.
\end{proof}

As a result $F_B$ preserves \emph{stable} equivalences when $f$ is a level $q$-fibration or both $X$ and $Y$ are level $q$-fibrant. As in \autoref{level_stable_same_derived}, we can also conclude that $\R F f = F_B R^{lv}f$ agrees with the right-derived functor of $F_B f$ using the \emph{stable} equivalences.

\begin{cor}[Right-properness]\label{cor:spectra_right_proper}
	In a strict pullback square of spectra over $B$
	\[ \xymatrix @R=1.7em{
		Y \times_W Z \ar[r] \ar[d] & Y \ar[d]^-p \\
		Z \ar[r]_-f & W
	} \]
	if $f\colon Z \to W$ is a stable equivalence and either $p$ or $f$ is a level $q$-fibration then $Y \times_W Z \to Y$ is also a stable equivalence.
\end{cor}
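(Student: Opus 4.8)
The plan is to run the argument dual to \autoref{cor:spectra_left_proper}, trading the mapping cone $C_B$ and \autoref{lem:LES} for the uncorrected homotopy fiber $F_B$ and \autoref{lem:LES2}. First I would reduce to the case that $f\colon Z\to W$ is a level $q$-fibration. If instead only $p$ is a level $q$-fibration, factor $f$ through the level model structure (\autoref{prop:level_model_structure}) as a level acyclic $q$-cofibration $Z\to Z'$ followed by a level $q$-fibration $f'\colon Z'\to W$; then $f'$ is a stable equivalence by two-out-of-three, and the given square splits as
\[ \xymatrix @R=1.7em{
	Y\times_W Z \ar[r] \ar[d] & Y\times_W Z' \ar[r] \ar[d] & Y \ar[d]^-p \\
	Z \ar[r] & Z' \ar[r]_-{f'} & W.
} \]
The right-hand square is an instance of the level-$q$-fibration case applied to $f'$. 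In the left-hand square $Y\times_W Z'\to Z'$ is a pullback of $p$, hence a level $q$-fibration (\autoref{prop:q}, fibrations being stable under pullback in a model category), so $Y\times_W Z\to Y\times_W Z'$ is a pullback of the level equivalence $Z\to Z'$ along a level $q$-fibration, and is therefore a level equivalence by right-properness of \autoref{prop:level_model_structure}. Composing the two maps gives the reduction.

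For the base case, with $f$ both a level $q$-fibration and a stable equivalence, write $\bar f\colon Y\times_W Z\to Y$ for the top map; it is again a level $q$-fibration, being a pullback of $f$. For a level $q$-fibration the homotopy fiber is modeled by $F_B$ (as recorded just before \autoref{lem:LES2}), and moreover $F_B$ is modeled by the strict fiber over the basepoint section: over each $b\in B$ this inclusion is the comparison of the strict fiber of a $q$-fibration of spectra with its homotopy fiber, hence a level equivalence. Thus $\R F f\simeq Z\times_W B$ and $\R F\bar f\simeq (Y\times_W Z)\times_Y B$. Since the square is a pullback and maps of retractive spaces carry sections to sections, these two strict fibers are canonically homeomorphic, so $\R F\bar f$ and $\R F f$ have the same derived homotopy groups on every fiber spectrum. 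Because $f$ is a stable equivalence, the long exact sequence of \autoref{lem:LES2} for $f$ gives $\R\pi_{n,b}(F_B R^{lv}f)=0$ for all $n$ and $b$; hence the same vanishing holds for $\bar f$, and the long exact sequence of \autoref{lem:LES2} for $\bar f$ forces $\R\pi_{n,b}(Y\times_W Z)\to\R\pi_{n,b}(Y)$ to be an isomorphism, i.e. $\bar f$ is a stable equivalence.

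The one point needing care — and the natural candidate for the main obstacle — is the identification of $\R F\bar f$ with the strict fiber $(Y\times_W Z)\times_Y B$: one must know that $\bar f$, being a level $q$-fibration over $B$, restricts over each $b\in B$ to a $q$-fibration of non-parametrized spectra, so that the classical comparison of strict and homotopy fibers applies levelwise. This follows from \autoref{lem:f_star_preserves}, since restriction to the fiber over $b$ is the base-change functor $\iota_b^*$ along $\iota_b\colon\{b\}\hookrightarrow B$, which preserves $q$-fibrations. Everything else is formal pullback bookkeeping together with two-out-of-three for stable equivalences.
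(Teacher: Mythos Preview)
Your proof is correct and is precisely the dual of the paper's proof of \autoref{cor:spectra_left_proper}; the paper does not write out a separate argument for right-properness, treating it as the evident dual. The one technical point you flag --- that the strict fiber models the derived homotopy fiber when the map is a level $q$-fibration --- is also asserted by the paper without further elaboration in step~(6) of the proof of \autoref{thm:stable_model_structure}, so your level of detail matches the paper's.
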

\begin{cor}[Dual gluing lemma]
	Any diagram of spectra over $B$
	\[ \xymatrix @R=1.7em{
		Y \ar[d]^-\sim & W \ar@{<-}[l]_-p \ar@{<-}[r]^-f \ar[d]^-\sim & Z \ar[d]^-\sim \\
		Y' & W' \ar@{<-}[l]^{p'} \ar@{<-}[r]_-{f'} & Z'
	} \]
	in which the vertical maps are stable equivalences and both $p$ and $p'$ are level $q$-fibrations, induces a stable equivalence of pullback spectra
	\[ Y \times_W Z \to Y' \times_{W'} Z' \]
\end{cor}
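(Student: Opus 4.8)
The plan is to dualize the argument for the gluing lemma (\autoref{cor:spectra_gluing}) line for line, replacing mapping cones by uncorrected homotopy fibers, pushouts by pullbacks, and \autoref{lem:LES} by \autoref{lem:LES2}. One could also deduce the statement formally from right-properness (\autoref{cor:spectra_right_proper}) by the long diagram chase dual to the one that gets the gluing lemma from left-properness, but the homotopy-fiber argument is shorter and self-contained, so that is the route I would take.

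The key input is the following observation: for a strict pullback square as in the statement, the square \emph{is} a map of arrows $(\bar p\colon Y\times_W Z \to Z) \to (p\colon Y \to W)$, so it induces a natural map of uncorrected homotopy fibers $F_B\bar p \to F_B p$, and I claim this is a level equivalence whenever $p$ is a level $q$-fibration. Indeed $\bar p$, being a levelwise pullback of $p$, is then also a level $q$-fibration, so the inclusions of the strict fibers $(Y\times_W Z)\times_Z 0 \hookrightarrow F_B\bar p$ and $Y\times_W 0 \hookrightarrow F_B p$ are level equivalences (here $0$ is the zero spectrum over $B$); and in a strict pullback square the strict fiber of $\bar p$ agrees on the nose with the strict fiber of $p$, since both are the pullback of $Y\to W$ along $0$. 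Two-out-of-three then gives the claim, and the same applies to the primed square. Since $p$ and $p'$ are level $q$-fibrations we moreover have $\R F p\simeq F_B p$ and $\R F p'\simeq F_B p'$, so the comparison map reads off as an equivalence $\R F\bar p \simeq \R F p$, and likewise for the primes.

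Next I would assemble the commuting square of homotopy fibers
\[ \xymatrix @R=1.7em{ \R F\bar p \ar[d] \ar[r]^-\sim & \R F p \ar[d] \\ \R F\bar p' \ar[r]_-\sim & \R F p', } \]
whose horizontal maps are the level equivalences just produced. Applying \autoref{lem:LES2} with $f = p$ and $f = p'$, together with the hypotheses that $Y\to Y'$ and $W\to W'$ are stable equivalences, shows the right vertical map is a stable equivalence; hence so is the left vertical map $\R F\bar p\to \R F\bar p'$. Finally I would apply \autoref{lem:LES2} once more, this time to $\bar p\colon Y\times_W Z\to Z$ and $\bar p'\colon Y'\times_{W'}Z'\to Z'$: in the resulting map of long exact sequences the maps on $\R\pi_{*,b}$ of the fibers ($\R F\bar p\to \R F\bar p'$, just shown) and of the bases ($Z\to Z'$, by hypothesis) are isomorphisms, so the five lemma forces $Y\times_W Z\to Y'\times_{W'}Z'$ to induce isomorphisms on all $\R\pi_{*,b}$, i.e.\ to be a stable equivalence.

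The main obstacle I anticipate is purely bookkeeping: verifying carefully that the comparison map of homotopy fibers in the second paragraph really is an equivalence — this is where one uses that $\bar p$ inherits the $q$-fibration property from $p$ and that the strict fiber of a map in a pullback square is computed as the strict fiber of the opposite map — and checking that the long exact sequence of \autoref{lem:LES2} is natural in the map of arrows, which is what legitimizes the two five-lemma steps. None of this involves genuinely new ideas beyond those already present in the proof of \autoref{cor:spectra_gluing}.
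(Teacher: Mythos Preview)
Your proposal is correct and is exactly the dual of the paper's proof of \autoref{cor:spectra_gluing}: compare derived homotopy fibers of $p$ and $\bar p$ (which agree because the strict fibers coincide in a pullback and $p,\bar p$ are level $q$-fibrations), use \autoref{lem:LES2} on $p,p'$ to get the vertical equivalence, then use \autoref{lem:LES2} on $\bar p,\bar p'$ with the five lemma. The paper does not write this out separately---it states the dual gluing lemma as a corollary of \autoref{lem:LES2} with the understanding that the argument is the evident dualization---so your write-up is precisely what is intended.
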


\begin{ex}\label{fiber_shift_of_cofiber}
	The natural transformation $F_B f \to \Omega_B C_B f$, defined as in the non-parametrized case (\cite{mmss}), induces an equivalence on the derived functors $(\R F_B)f \simeq (\R\Omega_B)(\L C_B)f$, by the same argument as in \autoref{parametrized_stability}.
\end{ex}

\begin{ex}\label{pushout_equals_pullback}
	One can define a homotopy pushout square of parametrized spectra to be a commuting square such that the induced map from the left-derived pushout to the final vertex is a stable equivalence. It does not matter whether we use level or stable equivalences to derive the pushout, and a square is homotopy pushout iff it is homotopy pullback.
\end{ex}

\beforesubsection
\subsection{The stable model structure}\aftersubsection

We are now ready to prove the existence of the ``$q$-model structure'', \autoref{thm:intro_q}. We often refer to it as the stable model structure, since it is the only one we use that has stable equivalences.

\begin{thm}[Stable model structure]\label{thm:stable_model_structure}
	There is a proper model structure on $\Osp(B)$ and on $\Psp(B)$ whose weak equivalences are the stable equivalences. Each of these model structures is cofibrantly generated by the sets of maps
	\[ \begin{array}{ccrll}
		I &=& \{ \ F_k\left[ S^{n-1}_{+B} \to D^n_{+B} \right] & : n,k \geq 0, & D^n \to B \ \} \\
		J &=& \{ \ F_k\left[ D^n_{+B} \to (D^n \times I)_{+B} \right] &: n,k \geq 0, & (D^n \times I) \to B \ \} \\
		& \cup & \{ \ k_{i,j} \ \square \ \left[ S^{n-1}_{+B} \to D^n_{+B} \right] & : i,j,n \geq 0, & D^n \to B \ \}.
	\end{array} \]
\end{thm}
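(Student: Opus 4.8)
The plan is to apply the recognition theorem for cofibrantly generated model categories, \cite[2.1.19]{hovey_model_cats}, to the sets $I$ and $J$, following the template of \cite{mmss} and \cite{ms}. The class $\mc W$ of stable equivalences is closed under retracts and satisfies two-out-of-three directly from \autoref{stable_equivalence}, since the derived homotopy groups $\R\pi_{n,b}$ are functors into graded groups. The domains of $I$ and $J$ are free spectra on compact retractive spaces (or mapping cylinders of such), hence small relative to level closed inclusions, hence relative to the $I$-cell and $J$-cell complexes, which are built from level closed inclusions by \autoref{prop:free_implies_level}. So everything reduces to two claims: (a) every relative $J$-cell complex is a stable equivalence — it is automatically a free $q$-cofibration, since each generator of $J$ lies in $I\text{-cof}$ (a routine pushout-product computation using \autoref{ex:smashing_spaces_is_left_Quillen} for the new generators) and $I\text{-cof}$ is closed under cells and retracts; and (b) $I\text{-inj} = \mc W \cap J\text{-inj}$.

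For (a) I would first check that each generator of $J$ is a stable equivalence: the generators carried over from \autoref{prop:level_model_structure} are level equivalences, and each $k_{i,j}\square[S^{n-1}_{+B}\to D^n_{+B}]$ is a stable equivalence because $k_{i,j}$ is, by construction, a free $h$-cofibration and a stable equivalence of the kind in \autoref{ex:equivalent_thom_spectra}, while smashing such a map with the $h$-cofibrant space $S^{n-1}_{+B}$ or $D^n_{+B}$ preserves stable equivalences between level $h$-cofibrant spectra by \autoref{ex:smash_with_cell_complex_is_derived}; a short diagram chase using \autoref{cor:spectra_left_proper} and two-out-of-three then upgrades this to the pushout-product. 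Since a relative $J$-cell complex is a transfinite composite of pushouts of these generators \emph{along level $h$-cofibrations}, it is a stable equivalence by left properness (\autoref{cor:spectra_left_proper}) and the colimit lemma for stable equivalences (\autoref{prop:coproduct_colimit_stable_equivalences}). This is the step where the convenient replacement $PQX$ enters decisively, through its role in \autoref{cor:spectra_left_proper}.

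For (b), one inclusion is formal: $I\text{-inj}$ is the class of level acyclic fibrations by \autoref{prop:level_model_structure}, which are level equivalences hence stable, and $I\text{-inj}\subseteq J\text{-inj}$ because $J\subseteq I\text{-cof}$. For the converse I would run the standard adjunction argument, as in \cite[\S 9]{mmss}, identifying $J\text{-inj}$: a map $p\colon X\to Y$ is $J$-injective iff it is a level $q$-fibration and, on each level, the square comparing $X$ and $Y$ with the fiberwise loops of their shifts is a homotopy pullback — equivalently, $\R F_B p$ is a level $\Omega$-spectrum. If in addition $p$ is a stable equivalence, then $\R F_B p$ has vanishing fiberwise homotopy groups by \autoref{lem:LES2}, so, being a level $\Omega$-spectrum with weakly contractible fibers, it is levelwise weakly contractible; hence $p$ is a level equivalence and $p\in I\text{-inj}$. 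This produces the model structure via \cite[2.1.19]{hovey_model_cats}. Left properness is then \autoref{cor:spectra_left_proper} together with the fact that cofibrations are level $h$-cofibrations, and right properness is \autoref{cor:spectra_right_proper} together with the fact that fibrations are level $q$-fibrations. The case of $\Psp(B)$ runs identically and more easily, since free cofibrations already suffice and the semifree machinery of \S\ref{sec:reedy} is not needed (\autoref{rmk:reedy_prespectra}).

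The main obstacle is the stable-equivalence half of step (a): the assertion that stable equivalences are preserved by pushouts along level $h$-cofibrations. This is exactly where the naive ``$q$-model structure'' had stalled in the earlier literature, and it becomes tractable here only because the monoidal fibrant replacement $P$ commutes with mapping cones, which is what makes \autoref{lem:LES}, and hence \autoref{cor:spectra_left_proper}, go through. A secondary, more routine difficulty is the adjunction bookkeeping in (b) needed to pin down $J\text{-inj}$, and with it the fibrant objects, as precisely the level $q$-fibrant $\Omega$-spectra.
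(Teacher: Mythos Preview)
Your outline matches the paper's proof closely: same six conditions from \cite[2.1.19]{hovey_model_cats}, same use of \autoref{cor:spectra_left_proper} for step (a), and the same identification of $J\textup{-inj}$ via \autoref{lem:stable_fibrations}. Your treatment of \autoref{j_equiv} is a slight variant---you invoke \autoref{ex:smash_with_cell_complex_is_derived} to see that $k_{i,j}\barsmash Z_{+B}$ is a stable equivalence and then use left properness for the pushout-product, whereas the paper applies $P$ directly to reduce to fibers---but both arguments are correct and of comparable length.

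There is, however, a real gap in the last step of (b). From ``$\R F_B p$ is a level $\Omega$-spectrum with vanishing derived homotopy groups'' you correctly deduce that each fiber $(F_b)_i$ is weakly contractible, but the conclusion ``hence $p$ is a level equivalence'' does not follow immediately. The strict fiber $(F_b)_i$ is the fiber of the Serre fibration $p_i\colon X_i\to Y_i$ over the basepoint $i(b)\in Y_i$; its contractibility tells you only that $p_i$ is a weak equivalence over the components of $Y_i$ meeting $i(B)$. Components of $Y_i$ disjoint from the basepoint section are not yet controlled. The paper closes this with a short loop trick: since $\R\Omega_B$ only sees the basepoint component, $\R\Omega_B p_i$ \emph{is} a weak equivalence; then the homotopy pullback square \eqref{eq:fibration_of_spectra_means_this_is_a_pullback} (with $j=1$) forces $p_{i-1}$ to be a weak equivalence for every $i$. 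This maneuver is indeed present in \cite[\S 9]{mmss}, so your citation points to the fix, but your explicit sketch omits it and the step as written would fail. Everything else---smallness, properness, the $\Psp(B)$ case---is as in the paper.
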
\index{stable model structure}

\begin{rmk}
	This model structure was not known to exist at the time of \cite{ms}. The ``$qf$-model structure'' of \cite{ms} is similar except that one restricts further to those cells for which the maps $S^{n-1} \to D^n$ are also $f$-cofibrations.
\end{rmk}

In the statement of \autoref{thm:stable_model_structure}, the pushout-product is carried out using the operation of smashing a spectrum over $*$ with a space over $B$, see \autoref{sec:smash_with_space}. The map of non-parametrized spectra $k_{i,j}$ includes the front end of the mapping cylinder $\Cyl_{i,j}$ of the map
\[ \lambda_{i,j}\colon F_{i+j} S^j \ra F_i S^0. \]
As in \autoref{ex:equivalent_thom_spectra}, in sequential spectra $\lambda_{i,j}$ is just a truncation, while in orthogonal spectra it arises from the map of spaces $S^j \to \mathscr J(\R^i,\R^{i+j})$ that identifies $S^j$ with the fiber over the standard embedding $\R^i \to \R^{i+j}$. So $\Cyl_{i,j}$ is defined as the pushout
\begin{equation}\label{eq:expanded_cylinder}
\xymatrix @R=1.7em{
	F_{i+j} S^j \ar@{<->}[r]^-\cong \ar[d]_-{\lambda_{i,j}} & \{1\}_+ \sma F_{i+j} S^j \ar[d]_-{\lambda_{i,j}} \ar[r] \ar@{}[rd]|(.85)*\txt{\huge $\ulcorner$} & I_+  \sma F_{i+j} S^j \ar[d] & \{0\}_+ \sma F_{i+j} S^j \ar[l] \ar[ld]^-{k_{i,j}} \\
	F_i S^0 \ar@{<->}[r]^-\cong & \{1\}_+ \sma F_i S^0 \ar[r] & \Cyl_{i,j}
}
\end{equation}
where all the horizontal maps are all free $q$-cofibrations by \autoref{ex:smashing_spaces_is_left_Quillen}. This implies all the horizontal maps in the diagram below are free $q$-cofibrations, hence so is $k_{i,j}$.
\[ \xymatrix @R=1.7em{
	& \{0,1\}_+ \sma F_{i+j} S^j \ar[d] \ar[r] \ar@{}[rd]|(.85)*\txt{\huge $\ulcorner$} & I_+ \sma F_{i+j} S^j \ar[d] \\
	\{0\}_+ \sma F_{i+j} S^j \ar[r] \ar@/_2em/[rr]_-{k_{i,j}} & \{0\}_+ \sma F_{i+j} S^j \vee \{1\}_+ \sma F_i S^0  \ar[r] & \Cyl_{i,j}
} \]

Before proving \autoref{thm:stable_model_structure} we establish some preliminaries. For any set of maps $I$ in a category, an \textbf{$I$-cell complex} is a sequential composition of pushouts of coproducts of maps in $I$. An \textbf{$I$-injective map} is any map with the right lifting property with respect to $I$. We call the set of such maps $I$-inj. An \textbf{$I$-cofibration} is any map with the left lifting property with respect to $I$-injective maps. 

\begin{prop}\label{j_equiv}
	Each map in $J$ is a stable equivalence.\footnote{The earlier proof of the $q$-model structure (see e.g. \cite{hu_duality}) was nearly complete, but had a gap at this step. As explained in \cite{ms}, the critical missing piece was the left-properness statement in \autoref{cor:spectra_left_proper}.}
\end{prop}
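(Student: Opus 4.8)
The plan is to verify the claim separately for the two families of maps comprising $J$, namely the maps $F_k[D^n_{+B} \to (D^n \times I)_{+B}]$ and the maps $k_{i,j} \square [S^{n-1}_{+B} \to D^n_{+B}]$.

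For the first family, I would observe that the inclusion $D^n_{+B} \to (D^n \times I)_{+B}$ is a fiberwise deformation retract over $B$ (the projection to $D^n_{+B}$ and the obvious homotopies all commute with the structure map to $B$), hence a weak equivalence between $q$-cofibrant objects of $\mc R(B)$. Since $F_k$ is left Quillen for the level model structure of \autoref{prop:level_model_structure}, Ken Brown's lemma shows that $F_k$ sends it to a level equivalence, which is in particular a stable equivalence. (Alternatively one may note directly that $F_k$, being a topologically enriched functor, carries the fiberwise homotopy equivalence to a levelwise fiberwise homotopy equivalence of spectra.)

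For the second family, write $A = \{0\}_+ \sma F_{i+j}S^j$, $B' = \Cyl_{i,j}$, $K = S^{n-1}_{+B}$, $L = D^n_{+B}$, so that the map in question is the canonical map $\beta\colon P \to B' \barsmash L$ out of the pushout $P = (B' \barsmash K) \cup_{A \barsmash K} (A \barsmash L)$. First I would recall from \autoref{ex:equivalent_thom_spectra} (applied to the zero bundle over a point, or by iterating the standard maps $F_{m+1}\Sigma_* Y \to F_m Y$ for $h$-cofibrant $Y$) that $\lambda_{i,j}\colon F_{i+j}S^j \to F_iS^0$ is a stable equivalence; since $\Cyl_{i,j}$ deformation-retracts onto $F_iS^0$ and $k_{i,j}$ followed by this retraction is $\lambda_{i,j}$, two-out-of-three shows $k_{i,j}\colon A \to B'$ is a stable equivalence. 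One also records, from the presentation \eqref{eq:expanded_cylinder}, that $A$ and $B'$ are level $h$-cofibrant, being built from free spectra along level $h$-cofibrations. Then I would run a standard pushout-product argument: $A \barsmash K \to A \barsmash L$ is a level $h$-cofibration (since $K \to L$ is an $h$-cofibration over $B$ and $A$ is level $h$-cofibrant, by \autoref{prop:h_cofibrations_pushout_product}); the maps $A \barsmash K \to B' \barsmash K$ and $A \barsmash L \to B' \barsmash L$ are stable equivalences (since $K$ and $L$ are $h$-cofibrant spaces and $A \to B'$ is a stable equivalence of level $h$-cofibrant spectra, by \autoref{ex:smash_with_cell_complex_is_derived}); left-properness (\autoref{cor:spectra_left_proper}) applied to the pushout square with horizontal level $h$-cofibration $A\barsmash K \to A\barsmash L$ and vertical stable equivalence $A \barsmash K \to B'\barsmash K$ then gives that $A \barsmash L \to P$ is a stable equivalence; and since the composite $A\barsmash L \to P \xrightarrow{\ \beta\ } B'\barsmash L$ is the stable equivalence $A\barsmash L \to B'\barsmash L$, two-out-of-three forces $\beta$ to be a stable equivalence.

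The main obstacle is not any single deep step but the bookkeeping that makes the pushout-product argument legitimate: one must confirm that every spectrum entering the argument ($A$, $B'$, and their external smashes with $K$ and $L$) is level $h$-cofibrant and that the relevant maps are level $h$-cofibrations, so that \autoref{ex:smash_with_cell_complex_is_derived} and \autoref{cor:spectra_left_proper} apply as stated. It is worth emphasizing that this proof consumes the left-properness statement \autoref{cor:spectra_left_proper}, whose own proof relies on the convenient cofibrant-and-fibrant replacement $PQ(-)$ of \autoref{thm:intro_cof_fib}; this is the circle of ideas the footnote to the statement alludes to.
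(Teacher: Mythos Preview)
Your proof is correct and matches the paper's: both dispose of the first family as level equivalences, and for the second family both apply left-properness (\autoref{cor:spectra_left_proper}) to the pushout square defining the pushout-product and finish with 2-out-of-3, reducing everything to the claim that smashing $k_{i,j}$ with $S^{n-1}_{+B}$ or $D^n_{+B}$ gives a stable equivalence. The only cosmetic difference is that you verify this last claim by first observing $k_{i,j}$ is itself a stable equivalence and then invoking \autoref{ex:smash_with_cell_complex_is_derived}, whereas the paper applies $P$ by hand to those smash products and argues fiberwise (in effect invoking left-properness a second time on the pushout presentation of $\Cyl_{i,j}$).
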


\begin{proof}
	For the first set of maps in $J$ this is clear because they are level equivalences. For the second set, we draw the following diagram in which the square is a pushout.
		\[ \xymatrix @R=1.7em{
			F_{i+j} S^j \barsmash S^{n-1}_{+B} \ar[r] \ar[d]  \ar@{}[rd]|(.85)*\txt{\huge $\ulcorner$} & Cyl_{i,j} \barsmash S^{n-1}_{+B} \ar[d] \ar[rd] \\
			F_{i+j} S^j \barsmash D^n_{+B} \ar[r] & Y \ar[r] & Cyl_{i,j} \barsmash D^n_{+B}
		} \]
		Since the left vertical map is a $q$-cofibration, to prove that the map from $Y$ into the final term is a stable equivalence, by \autoref{cor:spectra_left_proper} it suffices to argue that both horizontal maps of the form
		\[ F_{i+j} S^j \barsmash Z_{+B} \to Cyl_{i,j} \barsmash Z_{+B} \]
		are stable equivalences. In each case, the source and target are level $h$-cofibrant so we may apply $P$ to get
		\[ \xymatrix @R=1.7em{
			F_{i+j} [ S^j \barsmash (Z \times_B B^I)_{+B} ] \ar[r]^-\sim & F_{i+j} [ S^j \sma I_+ \barsmash (Z \times_B B^I)_{+B} ] \ar[r]  & Cyl_{i,j} \barsmash (Z \times_B B^I)_{+B} \\
			& F_{i+j} [ S^j \sma \{1\}_+ \barsmash (Z \times_B B^I)_{+B} ] \ar[r]^-\sim \ar[u] & F_{i} [ S^0 \barsmash (Z \times_B B^I)_{+B} ] \ar[u] \\
		} \]
		The first marked equivalence is a level equivalence. The second is a stable equivalence, because on each fiber it is the $\pi_*$-isomorphism $\lambda_{i,j}$ smashed with the well-based space $[(Z \times_B B^I)_b]_+$, see \autoref{ex:smash_with_cell_complex_is_derived}. Since the vertical maps are level $h$-cofibrations, all horizontal maps are therefore stable equivalences by \autoref{cor:spectra_left_proper}, finishing the argument.
\end{proof}

Let $p\colon X \to Y$ be a map of spectra over $B$.

\begin{lem}\label{lem:stable_fibrations}
$p$ is $J$-injective if and only if
		\begin{itemize}
			\item[(1)] each $p_n\colon X_n \to Y_n$ is a $q$-fibration, and either
			\item[(2)] the square \eqref{eq:fibration_of_spectra_means_this_is_a_pullback} is a homotopy pullback, cf. \cite[12.5.6]{ms}, or
			\item[($2'$)] the same square with right-derived loopspaces $\R\Omega_B^j$ is a homotopy pullback.
		\end{itemize}
		\begin{equation}\label{eq:fibration_of_spectra_means_this_is_a_pullback}
		\xymatrix @R=1.7em{
			X_i \ar[r] \ar[d]^-{p_i} & \Omega^j_B X_{i+j} \ar[d]^-{\Omega_B^j p_{i+j}} \\
			Y_i \ar[r] & \Omega^j_B Y_{i+j}
		}
		\end{equation}
\end{lem}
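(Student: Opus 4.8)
The plan is to separate the generating set $J$ of \autoref{thm:stable_model_structure} into its two families — the level-acyclic maps $F_k[D^n_{+B} \to (D^n \times I)_{+B}]$ and the ``stabilizers'' $k_{i,j} \square [S^{n-1}_{+B} \to D^n_{+B}]$ — and detect lifting against each family separately via adjunction, exactly as in \cite[\S9]{mmss} and \cite[12.5.6]{ms}.

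First I would use the adjunction between the free spectrum $F_k$ and evaluation at level $k$: a map $p$ has the right lifting property against every $F_k[D^n_{+B} \to (D^n \times I)_{+B}]$ if and only if each $p_k \colon X_k \to Y_k$ has the right lifting property against every $D^n_{+B} \to (D^n \times I)_{+B}$, which by \autoref{prop:q} says precisely that each $p_k$ is a $q$-fibration. This is condition (1). Next, for the stabilizers I would apply the pullback--hom adjunction of \autoref{lem:pullback_hom_adjunction}: for fixed $i,j$, the map $p$ lifts against $k_{i,j} \square [S^{n-1}_{+B} \to D^n_{+B}]$ for all $n$ and all $D^n \to B$ if and only if the pullback-hom $\Hom_\square(k_{i,j}, p)$ — a map of retractive spaces over $B$ — lifts against every $[S^{n-1}_{+B} \to D^n_{+B}]$, i.e. is an acyclic $q$-fibration.

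The key computation is to identify $\Hom_\square(k_{i,j}, p)$. Using $\barmap_*(F_m S^0, Z) = Z_m$ and $\barmap_*(F_{i+j} S^j, Z) = \Omega^j_B Z_{i+j}$, and the fact that $\Cyl_{i,j}$ is the mapping cylinder of $\lambda_{i,j}\colon F_{i+j}S^j \to F_i S^0$, one sees that $\barmap_*(\Cyl_{i,j}, X)$ is the fiberwise mapping path space of the adjoint structure map $X_i \to \Omega^j_B X_{i+j}$, and hence $\Hom_\square(k_{i,j}, p)$ is exactly the canonical map from that mapping path space to $\Omega^j_B X_{i+j} \times_{\Omega^j_B Y_{i+j}} (\text{mapping path space of } \psi_Y)$. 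Granting (1), $k_{i,j}$ is a free $q$-cofibration and $p$ is a level $q$-fibration, so by the pushout--product/pullback--hom formalism (and $\Omega^j_B$ being right Quillen, hence carrying the fibration $p_{i+j}$ to a fibration) this pullback-hom is automatically a $q$-fibration; it is therefore an acyclic $q$-fibration exactly when the comparison map $X_i \to Y_i \times^h_{\Omega^j_B Y_{i+j}} \Omega^j_B X_{i+j}$ is a weak equivalence, i.e. when the square \eqref{eq:fibration_of_spectra_means_this_is_a_pullback} is a homotopy pullback. Combining the two families gives ``$J$-injective $\iff$ (1) and (2).'' Finally, the equivalence of (2) and (2') is a short observation: given (1), over each $b \in B$ the maps $(p_i)_b$ and $(p_{i+j})_b$ are Serre fibrations, so their strict fibers compute their homotopy fibers; since every topological space is Serre-fibrant, $\Omega^j = \Map_*(S^j,-)$ is homotopy invariant on these fibers, so passing to level-fibrant replacements of $X_{i+j}, Y_{i+j}$ — which replaces $\Omega^j_B$ by $\R\Omega^j_B$ — changes neither side of the comparison up to equivalence. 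I expect the main obstacle to be the middle step: correctly unwinding $\Hom_\square(k_{i,j},p)$ as the mapping-path-space comparison and extracting from it the homotopy-pullback criterion; the two adjunction reductions (steps against the $F_k$-generators and the pullback--hom reduction for the stabilizers) are routine.
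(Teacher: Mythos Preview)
Your argument for the main equivalence ($J$-injective $\iff$ (1) and (2)) is correct and follows essentially the same route as the paper: split $J$ into its two families, handle the first via the $(F_k \dashv \textup{ev}_k)$ adjunction, and handle the stabilizers via the pullback-hom adjunction of \autoref{lem:pullback_hom_adjunction}. Your identification of $\barmap_*(\Cyl_{i,j},X)$ as the fiberwise mapping path space of $X_i \to \Omega^j_B X_{i+j}$ is correct; the paper takes the equivalent shortcut of replacing $\barmap_*(\Cyl_{i,j},-)$ by $\barmap_*(F_i S^0,-)$ directly, using that $F_i S^0 \hookrightarrow \Cyl_{i,j}$ is a homotopy equivalence of spectra (so no fibrancy on $X$ or $Y$ is needed). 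Either way one arrives at the map $X_i \to Y_i \times^h_{\Omega^j_B Y_{i+j}} \Omega^j_B X_{i+j}$.

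The gap is in your argument for (2) $\Leftrightarrow$ ($2'$). You reason pointwise over $b \in B$, invoking that $(p_i)_b$, $(p_{i+j})_b$ are Serre fibrations and that ordinary $\Omega^j$ is homotopy invariant on all based spaces. But the homotopy pullback condition in \eqref{eq:fibration_of_spectra_means_this_is_a_pullback} is about \emph{total spaces}, and a map of retractive spaces over $B$ can be a weak equivalence on every fiber without being a weak equivalence; likewise $\Omega^j_B X_{i+j} \to \R\Omega^j_B X_{i+j}$ is typically not an equivalence, so ``changes neither side up to equivalence'' is not justified by the fiberwise claim. The paper's fix is to work with the fiber of $p_{i+j}$ over the basepoint section $B \to Y_{i+j}$, which is $q$-fibrant \emph{as a retractive space over $B$} because it is a pullback of the $q$-fibration $p_{i+j}$. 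Choosing a level equivalence from $p$ to a level $q$-fibration $p'$ with $Y'$ level $q$-fibrant, the square with verticals $p_{i+j}$, $p'_{i+j}$ is a homotopy pullback; its strict fibers over $B$ are equivalent and $q$-fibrant over $B$, so $\Omega^j_B$ preserves that equivalence. Hence the square with verticals $\Omega^j_B p_{i+j}$, $\Omega^j_B p'_{i+j}$ is again a homotopy pullback, and pasting with \eqref{eq:fibration_of_spectra_means_this_is_a_pullback} shows (2) holds iff the outer rectangle---which is exactly ($2'$)---holds.
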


\begin{proof}
 The adjunction between free spectra and the forgetful functor to spaces implies that $p$ is $I$-injective iff each level $p_n\colon X_n \to Y_n$ is an acyclic $q$-fibration. Moreover, $p$ is $J$-injective iff
		\begin{itemize}
			\item each $p_n$ is just a $q$-fibration (the first set of maps in $J$), and
			\item $\Hom_\square(k_{i,j},p)$ (a map in $\mc R(B)$) is an acyclic $q$-fibration (by \autoref{lem:pullback_hom_adjunction}).
		\end{itemize}
		Since $k_{i,j}$ is always a $q$-cofibration, when $p$ is a level $q$-fibration the map $\Hom_\square(k_{i,j},p)$ is also a $q$-fibration. So we can rearrange the above necessary and sufficient condition for $p$ to be $J$-injective:
		\begin{itemize}
			\item each $p_n$ is a $q$-fibration, and
			\item $\Hom_\square(k_{i,j},p)$ is a weak equivalence.
		\end{itemize}
		It will be convenient to rearrange this one more time. Writing out the definition of $\Hom_\square(k_{i,j},p)$ as
		\begin{equation}\label{eq:horrible_map}
		\barmap_*(\Cyl_{i,j},X) \to \barmap_*(F_{i+j} S^j,X) \times_{\barmap_*(F_{i+j} S^j,Y)} \barmap_*(\Cyl_{i,j},Y),
		\end{equation}
		we see using the level model structure that when $p$ is a level $q$-fibration, the map
		\[ \barmap_*(F_{i+j} S^j,X) \to  \barmap_*(F_{i+j} S^j,Y) \]
		is a $q$-fibration of spaces, hence the pullback is a homotopy pullback. Therefore we can replace the terms of the form $\barmap_*(\Cyl_{i,j},Y)$ with the equivalent terms $\barmap_*(F_i S^0,Y)$\footnote{One might think we need to assume $Y$ is $q$-fibrant for this, but that is not necessary. The inclusion $F_i S^0 \to \Cyl_{i,j}$ is a homotopy equivalence of spectra, in the sense that it has an inverse up to homotopy. The functor $\barmap_*(-,Y)$ preserves this homotopy equivalence with no assumptions on $Y$.}, giving the weakly equivalent map
		\[ \barmap_*(F_{i} S^0,X) \to \barmap_*(F_{i+j} S^j ,X) \times_{\barmap_*(F_{i+j} S^j,Y)} \barmap_*(F_{i} S^0,Y). \]
		Using the universal property of free spectra, this map is homeomorphic to
		\[ X_i \to \Omega_B^j X_{i+j} \times_{\Omega_B^j Y_{i+j}} Y_i. \]
		We also verify that $X_i \to \Omega_B^j X_{i+j}$ is the map we expect by tracing through the composite $F_{i+j} S^j \to F_i S^0 \to X$ at spectrum level $i+j$.
		
		Finally we show that (2) is equivalent to ($2'$) in the presence of (1). We model $\R\Omega_B$ by picking any level equivalence from $p\colon X \to Y$ to another level $q$-fibration $p'\colon X' \to Y'$ in which $Y'$ is level $q$-fibrant. This gives the square below on the left, which is a homotopy pullback. Since its vertical maps are fibrations, their fibers are fibrant over $B$, hence when we take $\Omega^j_B$ they are still equivalent. Therefore the square on the right induces an equivalence on the strict fibers of the vertical maps. Since we already know the vertical maps are fibrations, this implies it is a homotopy pullback square.
		\[ \xymatrix @R=1.7em{
			X_{i+j} \ar@{->>}[d]^-{p_{i+j}} \ar[r]^-\sim & X_{i+j}' \ar@{->>}[d]^-{p_{i+j}'} && \Omega^j_B X_{i+j} \ar@{->>}[d]^-{\Omega_B^j p_{i+j}} \ar[r] & \Omega^j_B X_{i+j}' \ar@{->>}[d]^-{\Omega_B^j p_{i+j}'} \\
			Y_{i+j} \ar[r]^-\sim & Y_{i+j}' && \Omega^j_B Y_{i+j} \ar[r] & \Omega^j_B Y_{i+j}'.
		} \]
		Therefore square \eqref{eq:fibration_of_spectra_means_this_is_a_pullback} is a homotopy pullback with strict $\Omega^j_B$ iff it is a homotopy pullback with derived $\Omega^j_B$.
\end{proof}

\begin{proof}[Proof of \autoref{thm:stable_model_structure}]
	It suffices to check six conditions spelled out in the list below, cf. \cite{hovey_model_cats}. Note that our definition of $I$-cell complex is not the general transfinite one. As a result, when we discuss smallness, we require that for each domain $K$ of a map of $I$, and each $I$-cell complex
	\[ X_0 \ra X_1 \ra \ldots X_{\infty} = \underset{n}\colim\, X_n, \]
	every map $K \to X_\infty$ factors through some $X_n$.\footnote{This simplification is also used in the notion of a ``compactly generated model category'' from \cite{mmss}. But we won't be able to use that notion directly, because the model structures for parametrized spectra that we are interested in only satisfy the ``cofibration hypothesis'' with respect to the category of sequences of unbased spaces (not retractive spaces), and this forgetful functor does not preserve colimits and limits.}
We refer to this as the \textbf{countable smallness condition}.
	\begin{enumerate}
		\item \textbf{$W$ is closed under 2-out-of-3 and retracts.} The stable equivalences are defined by a collection of functors $\R \pi_{n,b}(-)$, so this is automatic.
		
		\item \textbf{$I$ satisfies the countable smallness condition.} A map of parametrized spectra
		\[ F_k S^{n-1}_{+B} \to X \]
		is determined by a map $S^{n-1} \to X_k$ of unbased spaces over $B$. If $X$ is an $I$-cell complex with skeleta $X^{(m)}$, then each map $X^{(m)} \to X^{(m+1)}$ is a free $q$-cofibration, therefore a level $h$-cofibration by \autoref{prop:free_implies_level}, therefore a level closed inclusion. Since the sphere is compact and the quotients $X^{(m)}_k/X^{(m-1)}_k$ are weak Hausdorff, $S^{n-1}$ factors (uniquely in fact) through some skeleton $X^{(m)}_k$.
		
		\item \textbf{$J$ satisfies the countable smallness condition.} We know each map in $J$ is a free $q$-cofibration, so it is also a level closed inclusion. By the point above, any map from $F_k D^n_{+B}$ into a $J$-cell complex therefore factors through some finite stage. If instead we take a map from the pushout
		\[ \xymatrix @R=1.7em{
			F_{i+j} S^j \barsmash F_k S^{n-1}_{+B} \ar[r] \ar[d] & Cyl_{i,j} \barsmash F_k S^{n-1}_{+B} \\
			F_{i+j} S^j \barsmash F_k D^n_{+B}
		} \]
		into a $J$-cell complex, the map on $Cyl_{i,j} \barsmash F_k S^{n-1}_{+B}$ is determined by its restriction to each of the terms in \eqref{eq:expanded_cylinder}, smashed with $F_k S^{n-1}_{+B}$. Using \autoref{barsmash_free}, we rewrite each such term as a free spectrum on a compact retractive space. Therefore each term separately factors through some level $X^{(m)}$. Taking the maximum of the resulting values of $m$, these factorizations are unique, and therefore agree along the maps in \eqref{eq:expanded_cylinder}, so they give a factorization of all of $Cyl_{i,j} \barsmash F_k S^{n-1}_{+B}$ through some finite level of the colimit system. We then repeat the same argument with the rest of the pushout.
		
		\item \textbf{$J$-cell complexes are in $W$ $\cap$ $I$-cof.} We already know that $J$-cell complexes are in $I$-cof. By \autoref{j_equiv}, each map in $J$ is a stable equivalence, and also a level $h$-cofibration (using \autoref{prop:spectra_pushout_product}). Any coproduct of such maps is an equivalence by \autoref{prop:coproduct_colimit_stable_equivalences}, and any pushout is by \autoref{cor:spectra_left_proper}. Then a sequential colimit of such is an equivalence by \autoref{prop:coproduct_colimit_stable_equivalences} again.
		
		\item \textbf{$I$-inj $\subseteq$ $W$ $\cap$ $J$-inj.} Suppose $p$ is $I$-injective. Then it is an acyclic $q$-fibration on every level, hence a level equivalence, hence a stable equivalence. It is certainly at least a $q$-fibration on every level. Furthermore the version of \eqref{eq:fibration_of_spectra_means_this_is_a_pullback} with derived $\Omega^j_B$ has both verticals weak equivalences, hence it is a homotopy pullback square, so by the equivalence of \autoref{lem:stable_fibrations} we have $p \in W \cap J$-inj.
		
		\item \textbf{$W$ $\cap$ $J$-inj $\subseteq$ $I$-inj.} If $p \in W \cap J$-inj, then by \autoref{lem:stable_fibrations}, each $p_i$ is a $q$-fibration and the squares \eqref{eq:fibration_of_spectra_means_this_is_a_pullback} are homotopy pullback squares. We just need to prove that each $p_i$ is also a weak equivalence, using the fact that $p$ is a stable equivalence.
		
		Examine the strict fiber spectrum $F$ of the map $p$, which at each spectrum level is the pullback $B \times_{Y_n} X_n$. Because $p$ is a level $q$-fibration, this is equivalent to the homotopy fiber spectrum. By \autoref{lem:LES2}, $F$ is contractible in the sense that its derived homotopy groups vanish. Since $p$ is a level $q$-fibration, $F$ is level $q$-fibrant, so the homotopy groups of its fiber spectra $F_b$ are also zero. Pulling back the vertical maps of \eqref{eq:fibration_of_spectra_means_this_is_a_pullback} to a point $b \in B$ gives the homotopy pullback square
		\[ \xymatrix{
			(F_b)_i \ar[d] \ar[r]^-\sim & \Omega^j_B (F_b)_{i+j} \ar[d] \\
			\{b\} \ar@{=}[r] & \{b\},
		} \]
		hence each fiber spectrum $F_b$ is a weak $\Omega$-spectrum with vanishing homotopy groups. Therefore each of the levels $(F_b)_i$ is weakly contractible.
		
		Since $p_i\colon X_i \to Y_i$ is a $q$-fibration and its fiber $(F_b)_i$ over $i(b) \in Y_i$ is contractible, it is an equivalence on every component of $Y_i$ containing $i(B)$. Therefore $\R\Omega_B p_i$ is an equivalence on every component. By the homotopy pullback square \eqref{eq:fibration_of_spectra_means_this_is_a_pullback}, this implies $p_{i-1}$ is a weak equivalence, for every value of $i$.
	\end{enumerate}
	Left and right properness follow immediately from \autoref{cor:spectra_left_proper} and \autoref{cor:spectra_right_proper}.
\end{proof}

We say $X$ is \textbf{stably fibrant}\index{stably fibrant} when it is fibrant in the stable model structure. By \autoref{lem:stable_fibrations}, this means $X$ is level $q$-fibrant and $X_n \to \Omega_B X_{n+1}$ is an equivalence for every $n \geq 0$. The stable model structure gives us a stable fibrant replacement functor $R^{st}$.\index{fibrant replacement!$R^{st}$ for spectra}

\begin{lem}
	The suspension spectrum functor
	\[ \Sigma^\infty\colon \mc R(B) \to \Psp(B) \textup{ or } \Osp(B) \]
	is left Quillen. More generally the same is true for the free spectrum functor $F_n$.
\end{lem}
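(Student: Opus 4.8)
The plan is to invoke the standard criterion that a left adjoint between cofibrantly generated model categories is left Quillen as soon as it carries the generating cofibrations to cofibrations and the generating acyclic cofibrations to acyclic cofibrations. First I would recall that $F_n$ is by construction the left adjoint of the evaluation functor $X \mapsto X_n$ (for orthogonal spectra, $F_V$ is left adjoint to $X \mapsto X(V)$), so in particular it preserves all colimits; and since $\Sigma^\infty = F_0$, the statement for general $F_n$ subsumes the one for $\Sigma^\infty$.

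Next I would appeal to the explicit generating sets. The $q$-model structure on $\mc R(B)$ from \autoref{prop:q} is generated by $\{S^{n-1}_{+B}\to D^n_{+B}\}$ and $\{D^n_{+B}\to (D^n\times I)_{+B}\}$. Applying $F_k$ produces exactly the maps $F_k[S^{n-1}_{+B}\to D^n_{+B}]$ and $F_k[D^n_{+B}\to (D^n\times I)_{+B}]$, which by inspection are members of the generating cofibration set $I$ and the generating acyclic cofibration set $J$ of the stable model structure in \autoref{thm:stable_model_structure} (and also of the level model structure in \autoref{prop:level_model_structure}). Hence $F_k$ carries the generating $q$-cofibrations of $\mc R(B)$ to free $q$-cofibrations of spectra, and the generating acyclic $q$-cofibrations to acyclic cofibrations. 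Equivalently, since $F_k$ preserves colimits --- in particular coproducts, pushouts, transfinite compositions, and retracts --- and the cofibrations (resp. acyclic cofibrations) of $\Osp(B)$ and $\Psp(B)$ are closed under these operations, $F_k$ sends every $q$-cofibration (resp. acyclic $q$-cofibration) of $\mc R(B)$ to one of spectra. Therefore $F_k$ is left Quillen for both the level and the stable model structures, with right adjoint $\mathrm{ev}_k$, and taking $k=0$ gives the claim for $\Sigma^\infty$. For a general finite-dimensional $V$ in orthogonal spectra one reduces to $F_n$ with $n=\dim V$ via \autoref{change_universe_equivalence}.

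The main obstacle is essentially absent here: the generating sets of \autoref{thm:stable_model_structure} were deliberately arranged to be $F_k$ applied to the generators of $\mc R(B)$, together with the extra maps $k_{i,j}\,\square\,[S^{n-1}_{+B}\to D^n_{+B}]$ that force stably fibrant spectra to be $\Omega$-spectra. The only point requiring any care is that ``left Quillen'' should be verified against the set $J$ of the \emph{stable} model structure, not merely the level one; this is immediate from the displayed formula for $J$, which visibly contains $F_k[D^n_{+B}\to (D^n\times I)_{+B}]$.
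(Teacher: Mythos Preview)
Your proof is correct and follows the same approach as the paper's one-line argument: verify the left adjoint $F_k$ sends the generating (acyclic) cofibrations of $\mc R(B)$ to (acyclic) cofibrations of spectra, which is immediate since the images lie in the generating sets $I$ and $J$ of \autoref{thm:stable_model_structure}. The paper's proof is simply ``We already know it's a left adjoint, so we just check it preserves generating cofibrations and acyclic cofibrations,'' and you have spelled this out in full.
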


\begin{proof}
	We already know it's a left adjoint, so we just check it preserves generating cofibrations and acyclic cofibrations.
\end{proof}

\begin{prop}\label{prop:pre_equiv_to_orth}
	The forgetful functor $\mathbb U\colon \Osp(B) \to \Psp(B)$ is a right Quillen equivalence.
\end{prop}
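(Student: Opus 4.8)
The plan is to imitate the non-parametrized prolongation argument of \cite{mmss}, reducing every homotopical assertion to the fiber spectra over points of $B$. Write $\mathscr N$ for the sequential indexing category, so that $\Psp(B)$ is the category of enriched $\mathscr N$-diagrams in $\mc R(B)$ and $\mathbb U$ is restriction along the canonical functor $\mathscr N \to \mathscr J$ (the identity on objects $\R^n$, sending $S^{m-n} = \mathscr N(\R^n,\R^m)$ to the fiber of $\mathscr J(\R^n,\R^m)$ over the standard embedding). Being restriction along a functor, $\mathbb U$ preserves all colimits, and it has a left adjoint $\mathbb P$, the enriched left Kan extension, characterized by $\mathbb P(F_n K) \cong F_n K$, i.e.\ $\mathbb P$ carries a sequential free spectrum to the orthogonal free spectrum on the same space.

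First I would check that $(\mathbb P \adj \mathbb U)$ is a Quillen adjunction for the stable model structures of \autoref{thm:stable_model_structure}. The generating cofibrations of $\Psp(B)$ and $\Osp(B)$ both have the form $F_k[S^{n-1}_{+B}\to D^n_{+B}]$, and $\mathbb P$ sends the sequential one to the orthogonal one, so $\mathbb P$ preserves cofibrations and hence $\mathbb U$ preserves acyclic fibrations. For the fibrations, \autoref{lem:stable_fibrations} characterizes a stable fibration $p$ purely in terms of the levels $X(\R^n)$ being $q$-fibrations and the squares \eqref{eq:fibration_of_spectra_means_this_is_a_pullback} being homotopy pullbacks; these conditions involve only the sequential levels and $\Omega_B$, so they are unchanged by $\mathbb U$, and thus $\mathbb U$ preserves stable fibrations and is right Quillen. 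I would also record the stronger fact that makes the rest short: $\mathbb U$ \emph{preserves and reflects all stable equivalences}, since the homotopy groups $\pi_{n,b}$ of a parametrized orthogonal spectrum are by definition computed from the sequential levels of its fiber spectra, and $\mathbb U X \to \mathbb U R^{lv}X$ is a level equivalence onto a level $q$-fibrant sequential spectrum, giving $\R\pi_{n,b}(\mathbb U X) \cong \R\pi_{n,b}(X)$ naturally.

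It then remains, by the criterion of \cite[1.3.16]{hovey_model_cats} together with the previous paragraph, to show that the unit $\eta_X\colon X \to \mathbb U\mathbb P X$ is a stable equivalence for every freely $q$-cofibrant $X \in \Psp(B)$ (the derived unit $X \to \mathbb U R^{st}\mathbb P X$ differs from $\eta_X$ only by the stable equivalence $\mathbb U(\mathbb P X \to R^{st}\mathbb P X)$, which $\mathbb U$ preserves). The class of such $X$ is closed under coproducts, retracts, and pushouts and sequential colimits along free $q$-cofibrations, because both $\mathbb P$ and $\mathbb U$ preserve these colimits and carry free $q$-cofibrations to level $h$-cofibrations; one then invokes the gluing lemma \autoref{cor:spectra_gluing} and \autoref{prop:coproduct_colimit_stable_equivalences}. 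So it suffices to treat $X = F_n A$ with $A$ a $q$-cofibrant retractive space. Using the canonical isomorphisms $F_n A \cong A \barsmash F_n S^0$ in both $\Psp(B)$ and $\Osp(B)$, and the fact that $\mathbb P$ and $\mathbb U$ both commute up to natural isomorphism with smashing a space over $B$ into a spectrum over $*$, the unit $\eta_{F_n A}$ is identified with $\id_A \barsmash \eta_{F_n S^0}$, where $\eta_{F_n S^0}\colon F_n S^0 \to \mathbb U F_n S^0$ is the unit of the non-parametrized adjunction over $*$. Since $A$ is $q$-cofibrant (hence $h$-cofibrant) and the free spectra $F_n S^0$ over $*$ are level $h$-cofibrant, \autoref{ex:smash_with_cell_complex_is_derived} reduces the claim to the classical statement that $F_n S^0 \to \mathbb U F_n S^0$ is a $\pi_*$-isomorphism of ordinary spectra, which is part of \cite{mmss}.

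The main obstacle is not a single deep point but the bookkeeping of this last reduction: confirming that the unit genuinely commutes with the cellular colimits and with smashing against a space (so that the colimit-closure step and the identification $\eta_{F_n A} = \id_A \barsmash \eta_{F_n S^0}$ are legitimate), and that every spectrum occurring is level $h$-cofibrant where the gluing and colimit lemmas demand it. Once the free-spectrum case is imported from \cite{mmss}, everything else is assembled from \autoref{thm:stable_model_structure}, \autoref{lem:stable_fibrations}, \autoref{cor:spectra_gluing}, \autoref{prop:coproduct_colimit_stable_equivalences}, and \autoref{ex:smash_with_cell_complex_is_derived}, exactly as in the non-parametrized theory.
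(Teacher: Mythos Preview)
Your proposal is correct and follows essentially the same approach as the paper: establish the Quillen adjunction via \autoref{lem:stable_fibrations}, observe that $\mathbb U$ preserves and detects stable equivalences, and reduce the unit condition to the free case $X=F_nA$ by inducting up the cell structure.

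The only place you diverge from the paper is the final reduction to the non-parametrized statement. The paper applies the monoidal fibrant replacement $P$ to $F_k Z_{+B}$ and $\mathbb U\mathbb P F_k Z_{+B}$, commutes $P$ with the free spectrum functor, and then restricts to a fiber over $B$, where the map becomes the classical unit of \cite[10.3]{mmss}. You instead use the splitting $F_nA\cong A\barsmash F_nS^0$, commute $\mathbb P$ and $\mathbb U$ past $A\barsmash(-)$, and invoke \autoref{ex:smash_with_cell_complex_is_derived} to transport the stable equivalence $\eta_{F_nS^0}$ across the smash. Both arguments are short and valid; yours is arguably more direct since it avoids introducing $P$, while the paper's version makes the ``check on fibers'' philosophy explicit. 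Either way the substance is the same.
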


\begin{proof}
	The left adjoint of $\mathbb U$ is left Kan extension from $\mathscr N$ to $\mathscr J$, which we denote $\mathbb P$. Concretely, $\mathbb P$ sends free sequential spectra to free orthogonal spectra, and preserves colimits, which determines the rest. See \cite{mmss}.
	
	Since \autoref{lem:stable_fibrations} applies equally well to $\Psp(B)$ and $\Osp(B)$, the forgetful functor $\mathbb U$ preserves fibrations. It also preserves level equivalences and the homotopy groups of the fiber spectra, so it preserves stable equivalences. Therefore $\mathbb U$ is a Quillen right adjoint.
	
	It remains to show this gives an equivalence on the homotopy category. This will follow if the unit map $X \to \mathbb U\mathbb PX$ is stable equivalence when $X$ is an $I$-cell complex. Since $\mathbb P$ is left Quillen, it preserves preserves stable equivalences of $q$-cofibrant spectra. Inducting up the cell complex structure, we reduce to the case where $X$ is of the form $F_k Z_{+B}$. Then we reduce this to the non-parametrized version of the same statement \cite[10.3]{mmss} by applying $P$, commuting it with the free spectrum construction, and then restricting to a fiber over $B$.
\end{proof}

As a first application of the stable model structure, we verify that homotopy cofiber sequences of spectra over $B$ give long exact sequences of maps into a third spectrum $Z$.
\begin{prop}
	If $f\colon X \to Y$ is a map of spectra over $B$ and $Z$ is a spectrum over $B$ then there is a long exact sequence
	\[ \xymatrix{
		\ldots \ar[r] & [\Sigma_B X,Z] \ar[r] & [\L C_B f,Z] \ar[r] & [Y,Z] \ar[r] & [X,Z] \ar[r] & \ldots
	}. \]
	where $[-,-]$ denotes maps in the homotopy category (stable equivalences inverted). It is natural in both $f$ and $Z$.
\end{prop}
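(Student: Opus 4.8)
The plan is to reduce to the case of a cofibration and then run the standard Barratt--Puppe argument inside the stable model structure of \autoref{thm:stable_model_structure}. First, replace $f$ by a stably equivalent cofibration: apply the stable cofibrant replacement to obtain $QX \to QY$, then use the functorial factorization of the stable model structure to write this as a $q$-cofibration $j\colon QX \to C$ followed by an acyclic fibration $C \overset\sim\to QY$. Since $Q$ and $\L C_B$ preserve stable equivalences (\autoref{lem:LES}), and since for the level $h$-cofibration $j$ of $q$-cofibrant spectra the natural map $C_B j \to C/QX := C \cup_{QX} B$ is a stable equivalence (the mapping cone is homotopical on level $h$-cofibrations, see \autoref{space_cofiber} and \autoref{lem:LES}), it is enough to produce the long exact sequence for a $q$-cofibration $j\colon A \to C$ of $q$-cofibrant spectra, with $\L C_B f$ replaced throughout by the strict cofiber $C/A$. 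Everything here is functorial in $f$.

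Next, form the cofiber sequence. Writing $C_B j = C \cup_A (A \barsmash I)$, the map $C \to C_B j$ is the pushout of the $q$-cofibration $A \to A \barsmash I$ along $j$, hence a $q$-cofibration of $q$-cofibrant spectra, and its strict cofiber is $(A \barsmash I)/(A \barsmash S^0) = A \barsmash (I/S^0) \cong \Sigma_B A$. Iterating, we obtain the Puppe sequence
\[ A \xrightarrow{\ j\ } C \longrightarrow C/A \longrightarrow \Sigma_B A \longrightarrow \Sigma_B C \longrightarrow \Sigma_B(C/A) \longrightarrow \cdots \]
in which every consecutive map is, after the identifications just made, of the form $W \to W' \to W'/W$ for a $q$-cofibration $W \to W'$ of $q$-cofibrant spectra, and all the identifications are natural in $j$. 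As usual it is harmless that various reasonable choices could introduce sign ambiguities in the connecting maps; we only need one such natural sequence.

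Now apply $[-,Z] = [-, R^{st}Z]$, morphisms in $\ho\Osp(B)$, to the Puppe sequence. The elementary input is that for a $q$-cofibration $W \to W'$ of $q$-cofibrant spectra and any $Z$, the sequence of pointed sets
\[ [W'/W, Z] \longrightarrow [W', Z] \longrightarrow [W, Z] \]
is exact at the middle term: a morphism $W' \to R^{st}Z$ factors through $W'/W$ in $\ho\Osp(B)$ iff its restriction to $W$ is null, which is the standard coexactness of the mapping-cone construction in a pointed model category (see \cite[Ch.~6]{hovey_model_cats}, and \cite{mmss} for the spectrum-level statement). Splicing these exact sequences over all stages of the Puppe sequence yields the long exact sequence
\[ \cdots \longrightarrow [\Sigma_B A, Z] \longrightarrow [C/A, Z] \longrightarrow [C, Z] \longrightarrow [A, Z], \]
exact at every term to the left of $[C, Z]$; translating back with $A = X$, $C = Y$, $C/A = \L C_B f$ gives the asserted sequence.

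Finally, upgrade to abelian groups and record naturality. By \autoref{parametrized_stability} the unit $X \to \Omega_B\Sigma_B X$ and counit $\Sigma_B\Omega_B X \to X$ become stable equivalences after deriving, so $\L\Sigma_B$ and $\R\Omega_B$ are mutually inverse self-equivalences of $\ho\Osp(B)$; consequently $[W, Z] \cong [\Sigma_B^2 W, \Sigma_B^2 Z]$, which is an abelian group because $\Sigma_B^2 W$ is a double fiberwise suspension, hence a double cogroup object in $\ho\Osp(B)$, and the Puppe maps induce group homomorphisms, so exactness of pointed sets becomes exactness of abelian groups, with the sequence continuing in both directions. Naturality in $Z$ is immediate from bifunctoriality of $[-,-]$ and functoriality of $R^{st}$; naturality in $f$ follows because $Q$, the functorial factorization, $C_B$, $\Sigma_B$, and the Puppe construction are all functorial, so a map $f \to f'$ induces a map of Puppe sequences and hence of long exact sequences. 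The only genuine content is that the non-parametrized Puppe machinery transfers verbatim, which needs exactly the two facts already in hand: $C_B$ is homotopical on level $h$-cofibrant spectra (\autoref{lem:LES}) and $\Sigma_B$ is an equivalence on $\ho\Osp(B)$ (\autoref{parametrized_stability}); there is no further obstacle.
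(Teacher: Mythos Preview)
Your proof is correct and follows essentially the same approach as the paper: cofibrantly replace $f$, stably fibrantly replace $Z$, and then run the standard Barratt--Puppe argument. The paper's own proof is a two-sentence sketch that simply says ``the usual argument applies,'' whereas you have spelled out that usual argument (the iterated cofiber/Puppe construction, coexactness, and the abelian-group upgrade via $\Sigma_B$ being invertible) in detail; there is no substantive difference in strategy.
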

Similar statements apply to double mapping cylinders, homotopy coequalizers, mapping telescopes, together with homotopy fiber sequences and the duals of these constructions. In other words, this $q$-model structure has all of the good properties desired in \cite{ms}.

\begin{proof}
	Applying cofibrant replacement to $X \to Y$ and stable fibrant replacement to $Z$, the above maps in the homotopy category can be described as point-set maps up to homotopy, and $\L C_B f$ can be described as just $C_B f$. Then the usual argument applies.
\end{proof}

\beforesubsection
\subsection{Derived base change and smash product}\label{sec:derived_bc_and_smash}\aftersubsection

In this section we use the stable model structure to prove that pullback, pushforward, and smash product are deformable with respect to the stable equivalences. We then combine this with the results of Section \ref{sec:spectra} to extend the range of deformability, proving \autoref{thm:intro_cof_fib}.

\begin{lem}\label{lem:pullback_stable_Quillen}
	$(f_! \adj f^*)$ is a Quillen pair for the stable model structure.
	If $f\colon A \to B$ is a weak equivalence then it is a Quillen equivalence.
\end{lem}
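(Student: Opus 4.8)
The plan is to treat the two assertions separately. For the Quillen-pair statement, the key observation is that $\Osp(A)$ has the same cofibrations and the same acyclic cofibrations in the level and in the stable model structures, so it suffices to show that $f_!$ carries the generating sets $I$ and $J$ of \autoref{thm:stable_model_structure} for $\Osp(A)$ into the corresponding sets for $\Osp(B)$, and then to invoke that $f_!$, being a left adjoint, preserves pushouts, transfinite compositions, coproducts and retracts. This reduces to three facts: $f_!(D^n_{+A}) \cong D^n_{+B}$ with the composite attaching map $D^n \to A \overset{f}{\to} B$; $f_!$ commutes with the free spectrum functors $F_k$ (as in \autoref{lem:external_smash_and_base_change}); and $f_!$ commutes with smashing by a fixed spectrum over $*$, hence with the pushout-products $k_{i,j}\square(-)$ (using that the first comparison map of \autoref{prop:spectra_external_smash_and_base_change} is always an isomorphism). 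Granting these, $f_!$ applied to $F_k[S^{n-1}_{+A}\to D^n_{+A}]$, to $F_k[D^n_{+A}\to(D^n\times I)_{+A}]$, and to $k_{i,j}\square[S^{n-1}_{+A}\to D^n_{+A}]$ again lands in $I$ or $J$ over $B$, so $(f_!\adj f^*)$ is a Quillen pair.

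For the Quillen-equivalence statement, assume $f$ is a weak equivalence and verify Hovey's criterion \cite{hovey_model_cats}: the right adjoint $f^*$ reflects weak equivalences between fibrant objects, and the derived unit is a weak equivalence on cofibrant objects. The geometric input to isolate first is the action of $f^*$ on fibers: for level $q$-fibrant $Z\in\Osp(B)$ one has $(f^*Z)_a = Z_{f(a)}$ on the nose, and $f^*Z$ is again level $q$-fibrant (\autoref{lem:spectrum_f_preserves}), so $\R\pi_{n,a}(f^*Z)=\pi_{n,f(a)}(Z)=\R\pi_{n,f(a)}(Z)$. Hence $f^*$ preserves stable equivalences between level $q$-fibrant spectra; and it reflects them, because $f$ is surjective on $\pi_0$ (being a weak equivalence) while for a level $q$-fibrant spectrum the groups $\R\pi_{n,b}$ depend only on the path component of $b$ (the fibers of each levelwise Serre fibration $Z_k\to B$ over points of one component are weakly equivalent). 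In particular $f^*$ reflects stable equivalences between stably fibrant spectra, which is the first half of the criterion.

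For the derived unit, I would choose the stable fibrant replacement of $f_!X$ to factor through its level fibrant replacement, $f_!X\to R^{lv}f_!X\to R^{st}(R^{lv}f_!X)$, so that the stable derived unit $X\to f^*R^{st}f_!X$ is the composite of the \emph{level} derived unit $X\to f^*R^{lv}f_!X$ — which is a level, hence stable, equivalence because $(f_!\adj f^*)$ is already a Quillen equivalence for the level model structure (\autoref{lem:spectrum_f_preserves}) — with $f^*$ applied to the stable equivalence $R^{lv}f_!X\to R^{st}(R^{lv}f_!X)$ of level $q$-fibrant spectra, which remains a stable equivalence by the previous paragraph. Both factors are stable equivalences, so the stable derived unit is as well, and Hovey's criterion then gives the Quillen equivalence. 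I do not expect a real obstacle here: the argument is essentially bookkeeping, and the only point where the level and the stable structures could fail to be compatible is controlled entirely by the transparent formula $(f^*Z)_a=Z_{f(a)}$ together with the local constancy over the base of the derived homotopy groups of a level $q$-fibrant spectrum. (One could instead mirror \autoref{spaces_quillen_adj} and \autoref{ex:other_adjunction_spaces} directly, placing a map $f_!X\to Y$ and its adjoint $X\to f^*Y$ into the square built from the cocartesian arrow $X\to f_!X$ and the cartesian arrow $f^*Y\to Y$; the same fiber computations show that both horizontal arrows induce isomorphisms on the relevant $\R\pi_{n,-}$, whence $f_!X\to Y$ is a stable equivalence over $B$ iff its adjoint is a stable equivalence over $A$.)
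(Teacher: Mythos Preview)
Your argument is correct but takes the dual route to the paper's. For the Quillen-pair statement, the paper checks that $f^*$ is right Quillen: since $f^*$ commutes with $\Omega^j_B$ and preserves fibers, the characterization of stable fibrations in \autoref{lem:stable_fibrations} immediately gives that $f^*$ preserves fibrations and acyclic fibrations. You instead verify that $f_!$ carries the generating sets $I$ and $J$ over $A$ into those over $B$, which is equally valid. The paper's route has the side benefit of establishing along the way that $f^*$ preserves \emph{all} stable equivalences between level $q$-fibrant spectra, which is then the only input needed for the Quillen-equivalence step; your route recovers this fact separately in your second paragraph.

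One slip: your claim that ``$\Osp(A)$ has the same cofibrations and the same acyclic cofibrations in the level and in the stable model structures'' is false --- the stable structure has strictly more weak equivalences, hence strictly more acyclic cofibrations (this is exactly why the stable $J$ contains the maps $k_{i,j}\square(\cdots)$). Fortunately you never use this claim; checking that $f_!$ preserves the stable generating sets already suffices, since $f_!$ preserves the pushouts, transfinite compositions, and retracts from which all (acyclic) cofibrations are built.

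For the Quillen equivalence, your argument via Hovey's criterion is a more explicit unpacking of what the paper does in one line. The paper observes that since $f^*$ preserves stable equivalences between level $q$-fibrant spectra, $\R f^*$ may be computed with $R^{lv}$ rather than $R^{st}$; the stable derived unit and counit then coincide with the level ones, which are level equivalences by the space-level result \autoref{spaces_quillen_adj}. Your factorization $X\to f^*R^{lv}f_!X\to f^*R^{st}R^{lv}f_!X$ and your reflection argument make the same point explicit.
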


\begin{proof}
	From \autoref{lem:stable_fibrations}, we see that $f^*$ preserves fibrations and acyclic fibrations, hence it is right Quillen. In fact, it preserves all stable equivalences between level $q$-fibrant spectra, because such equivalences are characterized as the maps that are stable equivalences on each fiber spectrum, and $f^*$ preserves fibers. Therefore $\R f^*$ can be computed by taking a level fibrant replacement, instead of a stable fibrant replacement.
	
	With this change to the definition of $\R f^*$, the derived unit and counit maps for $(f_! \adj f^*)$ become level equivalences when $f$ is a weak equivalence, by \autoref{spaces_quillen_adj}. Therefore $(f_! \adj f^*)$ is a Quillen equivalence in this case.
\end{proof}

\begin{lem}\label{lem:bc_proper}
	If $f$ is an acyclic $q$-cofibration of spaces, then $f_!$ preserves all stable equivalences. Dually, if $f$ is an acyclic $q$-fibration then $f^*$ preserves all stable equivalences.
\end{lem}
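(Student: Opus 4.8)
The plan is to reduce everything to the level model structure, where these facts are easy, by exploiting that the two model structures share the same cofibrations (so the same functorial cofibrant replacement $Q$), and that $f_!$ and $f^*$ on spectra are defined levelwise from the corresponding functors on spaces. Concretely, I would argue as follows.

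First, for the statement about $f_!$: suppose $f\colon A\to B$ is an acyclic $q$-cofibration of spaces. I claim $f_!$ preserves all level equivalences. Indeed, for any spectrum $X$ over $A$, the map $X\to R^{lv}X$ is a level equivalence, and $f_!$ commutes with the level fibrant replacement up to level equivalence because of the way $R^{lv}$ is built out of the small object argument, but more simply: by \autoref{prop:coproduct_colimit_stable_equivalences} and \autoref{cor:spectra_gluing} it is enough to check that $f_!$ takes level equivalences between level $h$-cofibrant spectra to level equivalences, which it does by \autoref{lem:spectrum_f_preserves}. So to prove $f_!$ preserves level equivalences in general, factor an arbitrary map as a level $h$-cofibration followed by a level equivalence and use left properness of the level model structure; alternatively, since $f$ is an acyclic $q$-cofibration, the map of spaces $A\to B$ is a weak equivalence and $f^*$ is part of a Quillen equivalence, but we want the stronger statement. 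The cleanest route: when $f$ is an acyclic $q$-cofibration of spaces, \autoref{lem:f_shriek_preserves} shows $f_!$ preserves $h$-cofibrations and $h$-cofibrant spaces, and for a weak equivalence between \emph{arbitrary} spaces over $A$ one reduces via the mapping cylinder to a weak equivalence that is also an $h$-cofibration, which $f_!$ sends to an $h$-cofibration that is still a weak equivalence because pushout along $f$ (an acyclic cofibration) preserves weak equivalences by left properness of $\mc R(B)$. Applying this levelwise shows $f_!$ preserves all level equivalences of spectra, and hence by \eqref{eq:stable_equivalence_decomposition} together with the observation that $f_!$ commutes with $Q$ (being a left adjoint that sends free cells to free cells, cf.\ \autoref{lem:spectrum_f_preserves}) it preserves all stable equivalences.

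Second, for the statement about $f^*$: suppose $f\colon A\to B$ is an acyclic $q$-fibration of spaces. By \autoref{lem:f_star_preserves}, $f^*$ preserves all weak equivalences of spaces (the last clause, applied with $f$ an $h$-fibration, or directly since an acyclic $q$-fibration is in particular surjective with contractible fibers; in any case an acyclic Serre fibration is a fibration to which that clause's argument applies after noting $f$ is also a weak equivalence). Applying this levelwise, $f^*$ preserves all level equivalences of spectra. Now use \autoref{lem:pullback_stable_Quillen}: $\R f^*$ can be computed by level fibrant replacement $R^{lv}$ rather than stable fibrant replacement. Given a stable equivalence $X\to Y$ of spectra over $B$, form $R^{lv}X\to R^{lv}Y$; this is a stable equivalence of level $q$-fibrant spectra, hence a $\pi_*$-isomorphism on each fiber. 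Since $f^*$ preserves fibers (the fiber of $f^*Z$ at $a\in A$ is the fiber of $Z$ at $f(a)$), $f^*R^{lv}X\to f^*R^{lv}Y$ is again a $\pi_*$-isomorphism on each fiber, and it is level $q$-fibrant by \autoref{lem:stable_fibrations}-type reasoning (pullback preserves level $q$-fibrations), so it is a stable equivalence. Finally $f^*X\to f^*R^{lv}X$ is a level equivalence since $f^*$ preserves level equivalences, so $f^*X\to f^*Y$ is a stable equivalence by the $2$-out-of-$3$ property.

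The main obstacle is the $f_!$ half: unlike $f^*$, the pushforward does not in general preserve level equivalences or fibers, so one genuinely needs the hypothesis that $f$ is an acyclic \emph{cofibration} and must invoke left properness of $\mc R(B)$ (equivalently the gluing lemma \autoref{prop:technical_cofibrations}(2)) to see that pushout along $f$ is homotopical on the relevant class of maps, then propagate this through the free-cofibration cell structure using \autoref{prop:coproduct_colimit_stable_equivalences} and \autoref{cor:spectra_left_proper}. Everything else is a levelwise application of already-established space-level lemmas combined with the characterization of stable equivalences between level-fibrant spectra as fiberwise $\pi_*$-isomorphisms.
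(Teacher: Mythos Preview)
Your argument is essentially the same as the paper's, though considerably more verbose. The paper's proof is just: since $f$ is a $q$-cofibration (hence an $h$-cofibration), the gluing lemma (\autoref{prop:technical_cofibrations}(2)) shows $f_!$ preserves all level equivalences; combined with the fact that $f_!$ preserves stable equivalences between $q$-cofibrant spectra (\autoref{lem:pullback_stable_Quillen}) and that every spectrum is level-equivalent to a $q$-cofibrant one, this gives the result. Your mapping-cylinder reduction and the remark about ``$f_!$ commuting with $Q$'' are unnecessary detours to the same destination.

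One small correction on the $f^*$ half: you invoke the last clause of \autoref{lem:f_star_preserves}, which requires $f$ to be a \emph{Hurewicz} fibration, but an acyclic $q$-fibration need not be one. The clean citation is the dual of \autoref{prop:technical_cofibrations} (stated immediately after it in the paper, for $q$-fibrations as well as $h$-fibrations): pullback along a $q$-fibration is homotopical, so $f^*$ preserves all level equivalences. After that fix, your argument for $f^*$ matches the paper's exactly.
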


\begin{proof}
	Suppose that $f\colon A \to B$ is a weak equivalence and a $q$-cofibration. By \autoref{prop:technical_cofibrations}, the functor $f_!$ preserves all level equivalences. By \autoref{lem:pullback_stable_Quillen}, $f_!$ preserves stable equivalences between freely $q$-cofibrant spectra. But every spectrum is level equivalent to a freely $q$-cofibrant spectrum by \autoref{prop:level_model_structure}, so $f_!$ preserves all stable equivalences. (See \autoref{prop:stably_derived} below for a more detailed version of this argument.)
	
	When $f\colon A \to B$ is a weak equivalence and a $q$-fibration, we use the dual of \autoref{prop:technical_cofibrations} to show that $f^*$ preserves all level equivalences. On the other hand, it clearly preserves fibers, so it preserves stable equivalences between level $q$-fibrant spectra. By the factorizations in \eqref{eq:stable_equivalence_decomposition}, we conclude that $f^*$ preserves all stable equivalences.
\end{proof}

\begin{rmk}
	There is an \textbf{integral model structure}\index{integral model structure!for spectra} on $\Osp$, in which the weak equivalences are the maps $X \to Y$ inducing a stable equivalence $X \simar \R f^*Y$ over a weak equivalence of base spaces $f\colon A \simar B$. This follows from \cite[Thm 3.0.12]{harpaz_prasma_grothendieck}; the previous two lemmas check the hypotheses needed for that theorem to apply. See also \cite{cagne2017bifibrations} for a more general framework, and \cite{hebestreit_sagave_schlichtkrull} for a similar model structure for parametrized symmetric spectra.
	
	As in \autoref{integral_spaces}, there is a second integral model structure on $\Osp$, in which the map of base spaces is required to be an isomorphism, not just a weak equivalence. The homotopy category of this stricter model structure is given by taking homotopy categories fiberwise, by \autoref{prop:deform_bifibration}, so it is more closely related to the individual homotopy categories $\ho\Osp(B)$.
\end{rmk}

\begin{lem}\label{lem:other_stable_adjunction}
	If $f$ is a fiber bundle with base and fiber both cell complexes, $f^*$ is a Quillen left adjoint. If in addition $f$ is a weak equivalence, $f^*$ is a left Quillen equivalence.
\end{lem}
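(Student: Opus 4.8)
The plan is to identify $f^*$ as the left adjoint of the spectrum-level $f_*$ and then verify the two defining properties of a left Quillen functor directly: preservation of cofibrations and of (stably) acyclic cofibrations. For the second assertion I would not try to build an independent argument, but instead leverage the fact, already recorded in \autoref{lem:pullback_stable_Quillen}, that $(f_! \adj f^*)$ is a Quillen equivalence when $f$ is a weak equivalence, together with the observation that $f^*$ is homotopical for the stable equivalences, so its left- and right-derived functors coincide.

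Concretely, first I would note that since $f$ is a fiber bundle whose fiber and base are cell complexes, the right adjoint $f_*$ exists on spectra, constructed level-wise from \autoref{prop:sheafy_pushforward} and commuting with $\Omega_B$ as in \S\ref{base_change_spectra}; in particular $f^*$ is genuinely a left adjoint. It is left Quillen for the level model structure (stated in \S\ref{base_change_spectra}), hence preserves free $q$-cofibrations; since the level and stable model structures are cofibrantly generated by the \emph{same} set $I$ of generating cofibrations (compare \autoref{prop:level_model_structure} with \autoref{thm:stable_model_structure}), they have the same cofibrations, so $f^*$ preserves the cofibrations of the stable model structure. Next I would use that a fiber bundle over a cell complex is a Hurewicz fibration, so \autoref{lem:spectrum_f_preserves} gives that $f^*$ preserves all level equivalences and preserves level $q$-fibrant spectra. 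Since $f^*$ moreover commutes with restriction to the fiber spectrum over any point of the base, and a stable equivalence between level $q$-fibrant spectra is precisely a map inducing $\pi_*$-isomorphisms on all fiber spectra, $f^*$ preserves stable equivalences between level $q$-fibrant spectra. Feeding both facts into the factorization \eqref{eq:stable_equivalence_decomposition} through level-fibrant replacements and applying two-out-of-three, I would conclude that $f^*$ preserves \emph{all} stable equivalences. In particular it preserves stably acyclic cofibrations, so $(f^* \adj f_*)$ is a Quillen adjunction for the stable model structure, which is the first statement.

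For the second statement, suppose in addition that $f$ is a weak equivalence. Because $f^*$ preserves all stable equivalences, both its left-derived functor (as the left adjoint in $(f^* \adj f_*)$) and its right-derived functor (as the right adjoint in $(f_! \adj f^*)$) are canonically isomorphic on homotopy categories to the functor $\ho\Osp(B) \to \ho\Osp(A)$ directly induced by $f^*$. By \autoref{lem:pullback_stable_Quillen} the adjunction $(f_! \adj f^*)$ is a Quillen equivalence, so its derived right adjoint $\R f^*$ is an equivalence of homotopy categories; hence $\L f^*$ is too. By the standard criterion that a Quillen adjunction is a Quillen equivalence as soon as the derived left adjoint is an equivalence of homotopy categories, $(f^* \adj f_*)$ is a Quillen equivalence, i.e.\ $f^*$ is a left Quillen equivalence.

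The step I expect to be the main obstacle — and the one place where the fiber-bundle hypothesis really enters — is the claim that $f^*$ preserves \emph{all} stable equivalences and not merely level equivalences; this rests on $f$ being a Hurewicz fibration (so that $f^*$ is already homotopical at the level-equivalence level) combined with $f^*$ commuting with passage to fibers (so that stable equivalences between level-fibrant spectra are detected, and therefore preserved). Everything after that is formal manipulation of Quillen adjunctions.
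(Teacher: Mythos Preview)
Your proposal is correct and follows essentially the same approach as the paper: construct $f_*$ level-wise, show $f^*$ is left Quillen, and then deduce the Quillen equivalence from the homotopicality of $f^*$ together with \autoref{lem:pullback_stable_Quillen}. The only organizational difference is that the paper verifies ``left Quillen'' by checking the generating (acyclic) cofibrations directly, whereas you first prove $f^*$ preserves \emph{all} stable equivalences and then combine that with preservation of cofibrations; both routes arrive at the same place, and your homotopicality argument is exactly what the paper invokes (more tersely) for the second statement.
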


\begin{proof}
	We construct the right adjoint $f_*$ in spectra by applying the space-level $f_*$ on each level, and using the canonical commutation with $\Omega_B(-)$. Since the fiber of $f$ is a cell complex, $f^*$ sends generating cofibrations to cofibrations, and similarly for generating acyclic cofibrations, therefore $f^*$ is left Quillen. As in \autoref{ex:other_adjunction_spaces}, $f^*$ is homotopical in this case, so its left- and right-derived functors are equivalent, and so the Quillen equivalence part follows from \autoref{lem:pullback_stable_Quillen}.
\end{proof}

\begin{lem}\label{lem:stable_smash_is_left_quillen}
	Each of the external smash product functors
	\begin{align*}
	\mc R(A) \times \Psp(B) &\to \Psp(A \times B), \\
	\qquad \mc R(A) \times \Osp(B) &\to \Osp(A \times B), \\
	\qquad \Osp(A) \times \Osp(B) &\to \Osp(A \times B)
	\end{align*}
	is a left Quillen bifunctor, with respect to the stable model structure on spectra, and the Quillen model structure on spaces.
\end{lem}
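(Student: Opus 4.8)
The plan is to verify the hypotheses of \autoref{prop:quillen_bifunctor_defn}. The right adjoints in both variables of all three functors have already been constructed in \S\ref{sec:smash_with_space} (the external mapping spectra $\barF$ and the external mapping space $\barmap_B$), and $\mc R(A)$, $\Psp(B)$, $\Osp(B)$ are all cofibrantly generated (\autoref{prop:q}, \autoref{thm:stable_model_structure}), so it will be enough to check that a pushout-product of two generating cofibrations is a cofibration, and that a pushout-product of a generating cofibration with a generating acyclic cofibration is an acyclic cofibration. Throughout I will use two elementary identifications that pass to pushout-products because free spectrum functors preserve colimits: $K\barsmash F_kL\cong F_k(K\barsmash L)$ for a space $K$ (from the symmetry of $\barsmash$ on spaces), and \autoref{barsmash_free}, $F_pX\barsmash F_qY\cong F_{p+q}(X\barsmash Y)$, for spectra. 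I will also use that $F_k$ is left Quillen for the stable model structure, which is immediate since it carries the generating (acyclic) cofibrations of $\mc R(B)$ into $I$ (resp.\ $J$).

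First I would treat the two space--spectrum products. A generating cofibration of the stable structure in the spectrum slot is either $F_k\gamma$ with $\gamma$ a space cell over $B$, or of the form $k_{i,j}\,\square\,\gamma$, and similarly for generating acyclic cofibrations. For a pairing of a generator $\alpha$ of $\mc R(A)$ with $F_k\gamma$ the identification above gives $\alpha\,\square\,F_k\gamma\cong F_k(\alpha\,\square\,\gamma)$, and $\alpha\,\square\,\gamma$ is a $q$-cofibration of spaces over $A\times B$ (acyclic if one factor is) by \autoref{ex:smashing_spaces_is_left_Quillen}, so applying the left-Quillen functor $F_k$ settles this case. For the $k_{i,j}\,\square\,\gamma$ generators I would move the factor $k_{i,j}$ (which lives over $\ast$) past $\alpha$ using the associativity and symmetry of $\barsmash$, obtaining $\alpha\,\square\,(k_{i,j}\,\square\,\gamma)\cong k_{i,j}\,\square\,(\alpha\,\square\,\gamma)$ over $A\times B$; since $\alpha\,\square\,\gamma$ (computed by $\barsmash$ of spaces, using $(X_{+A})\barsmash(Y_{+B})\cong(X\times Y)_{+(A\times B)}$ and the boundary-preserving homeomorphism $D^m\times D^n\cong D^{m+n}$) is homeomorphic to a standard cell over $A\times B$, the right-hand side is again one of the generating acyclic cofibrations of $\Osp(A\times B)$ or $\Psp(A\times B)$, hence an acyclic cofibration.

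For the smash product of two orthogonal spectra, a pushout-product of two $I$-generators is a pushout-product of free $q$-cofibrations, so it is a free $q$-cofibration---hence a cofibration---by \autoref{prop:spectra_pushout_product}(1). For a pushout-product of $f=F_p\alpha\in I$ with a $J$-generator: if the generator is $F_k\delta$, I would use $(F_p\alpha)\,\square\,(F_k\delta)\cong F_{p+k}(\alpha\,\square\,\delta)$ together with \autoref{ex:smashing_spaces_is_left_Quillen} to reduce to $F_{p+k}$ of an acyclic $q$-cofibration of spaces; if the generator is $k_{i,j}\,\square\,\gamma$, I would again use associativity and symmetry of $\barsmash$ plus $F_p\alpha\barsmash\gamma\cong F_p(\alpha\barsmash\gamma)$ to rewrite it as $k_{i,j}\,\square\,F_p(\alpha\,\square\,\gamma)$, which is isomorphic to a $J$-generator of $\Osp(A\times B)$ since $\alpha\,\square\,\gamma$ is homeomorphic to a standard cell. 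The remaining case $f\in J$, $g\in I$ is reduced to the previous one by the symmetry isomorphism $X\barsmash Y\cong Y\barsmash X$. This completes the verification.

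The only non-formal inputs are \autoref{ex:smashing_spaces_is_left_Quillen} and \autoref{prop:spectra_pushout_product}(1); the main thing to be careful about is purely organizational --- keeping straight that the isomorphisms $K\barsmash F_kL\cong F_k(K\barsmash L)$ and \autoref{barsmash_free} are genuinely compatible with pushout-products, and that the ``associativity reduction'' of the $k_{i,j}$-type generators lands back exactly in the generating set $J$ rather than merely in the class of acyclic cofibrations. I do not expect a genuine obstacle beyond this bookkeeping.
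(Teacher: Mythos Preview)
Your argument is correct and is exactly the ``straightforward exercise using \autoref{barsmash_free}'' that the paper's one-line proof leaves to the reader; you have simply written out the details.

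One step deserves a sharper justification. In the spectrum--spectrum case, after rewriting $(F_p\alpha)\,\square\,(k_{i,j}\,\square\,\gamma)$ as $k_{i,j}\,\square\,F_p(\alpha\,\square\,\gamma)$, you claim this is a $J$-generator ``since $\alpha\,\square\,\gamma$ is homeomorphic to a standard cell.'' That reason alone is not enough: the $J$-generators are $k_{i',j'}$ pushout-producted with a \emph{space} cell, while $F_p(\alpha\,\square\,\gamma)$ is a free spectrum on a cell for $p>0$. The missing observation is that $k_{i,j}\barsmash F_pS^0 \cong k_{i+p,j}$, which follows by applying \autoref{barsmash_free} to each free-spectrum constituent of the mapping cylinder $\Cyl_{i,j}$ (in particular $\lambda_{i,j}\barsmash F_pS^0 \cong \lambda_{i+p,j}$); then $k_{i,j}\,\square\,F_p\beta \cong (k_{i,j}\barsmash F_pS^0)\,\square\,\beta \cong k_{i+p,j}\,\square\,\beta$ is indeed a $J$-generator. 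Alternatively, you can avoid this identification entirely by noting that $(F_p\alpha)\,\square\,g \cong F_pS^0 \barsmash (\alpha\,\square\,g)$ and that $-\barsmash F_pS^0$ is itself left Quillen for the stable model structure (its right adjoint is the shift $\sh^p$, which visibly preserves the fibrations of \autoref{lem:stable_fibrations}). Either route closes the gap you yourself flagged at the end.
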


\begin{proof}
	This is a straightforward exercise using \autoref{barsmash_free}.
\end{proof}

\begin{rmk}
	Taking $A = *$, this implies that $\Psp(B)$ and $\Osp(B)$ are simplicial model categories.
\end{rmk}

We are now ready to finish the proof of \autoref{thm:intro_cof_fib} from the introduction.
\begin{thm}\label{prop:stably_derived}\hfill
	\vspace{-1em}
	
	\begin{itemize}
		\item $f^*$ preserves all stable equivalences between level $q$-fibrant spectra.
		\item $f_!$ preserves all stable equivalences between level $h$-cofibrant spectra.
		\item $\barsmash$ preserves all stable equivalences between freely $h$-cofibrant spectra.
	\end{itemize}
\end{thm}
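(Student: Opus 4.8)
The plan is to dispatch the three claims in order, the third being where essentially all the work lies.

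\textbf{First claim.} I would first note that a level $q$-fibrant spectrum $Z$ over $B$ has $\R\pi_{n,b}(Z)\cong\pi_{n,b}(Z)$: the map $Z\to R^{lv}Z$ is a level equivalence of level $q$-fibrant spectra, so its pullback along $\{b\}\hookrightarrow B$ is, by \autoref{lem:f_star_preserves}, a level equivalence of fiber spectra, hence a $\pi_*$-isomorphism. Thus a stable equivalence $g\colon X\to Y$ of level $q$-fibrant spectra over $B$ is precisely a map that is a $\pi_*$-isomorphism on every fiber spectrum. By \autoref{lem:spectrum_f_preserves} the functor $f^*$ preserves level $q$-fibrant spectra, and the fiber of $f^*X$ over $a\in A$ is $X_{f(a)}$ with $(f^*g)_a=g_{f(a)}$; hence $f^*g$ is a $\pi_*$-isomorphism on all fiber spectra, so, by the observation above applied over $A$, a stable equivalence.

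\textbf{Second claim.} By \autoref{lem:pullback_stable_Quillen} the pushforward $f_!$ is left Quillen for the stable model structure, hence (Ken Brown's lemma) preserves stable equivalences between freely $q$-cofibrant spectra. Given a stable equivalence $g\colon X\to Y$ of level $h$-cofibrant spectra, apply the functorial freely $q$-cofibrant replacement of \autoref{prop:level_model_structure} to obtain a commuting square with top map $Qg$ and vertical level equivalences $QX\simar X$, $QY\simar Y$. Then $Qg$ is a stable equivalence of freely $q$-cofibrant spectra, so $f_!(Qg)$ is stable; and since the cofibrations of the level model structure are level $h$-cofibrations (\autoref{prop:level_model_structure}), all four corners are level $h$-cofibrant, so \autoref{lem:spectrum_f_preserves} sends the two vertical level equivalences to level equivalences. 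Two-out-of-three for stable equivalences gives that $f_!(g)$ is a stable equivalence.

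\textbf{Third claim.} This is the heart of the matter. Since $\barsmash$ is symmetric monoidal, factoring $g\barsmash h=(\id_{X'}\barsmash h)\circ(g\barsmash\id_Y)$ reduces everything to showing that, for fixed freely $h$-cofibrant $X$ over $A$, the functor $X\barsmash(-)$ sends stable equivalences of freely $h$-cofibrant spectra over $B$ to stable equivalences. The two available inputs are \autoref{lem:stable_smash_is_left_quillen} (by which $\barsmash$ is a left Quillen bifunctor, so $X\barsmash(-)$ preserves stable equivalences of freely $q$-cofibrant spectra when $X$ is freely $q$-cofibrant) and \autoref{prop:spectra_pushout_product}(3) (by which $\barsmash$ preserves level equivalences between freely $h$-cofibrant spectra), and I would interpolate between them using functorial freely $q$-cofibrant replacement. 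First take $X$ freely $q$-cofibrant and $h\colon Y\to Y'$ a stable equivalence of freely $h$-cofibrant spectra; replacing $Y,Y'$ by $QY,QY'$ gives
\[ \xymatrix @R=1.7em @C=3em{
	X\barsmash QY \ar[r]^-{\id_X\barsmash Qh} \ar[d]^-\sim & X\barsmash QY' \ar[d]^-\sim \\
	X\barsmash Y \ar[r]_-{\id_X\barsmash h} & X\barsmash Y',
} \]
whose top arrow is a stable equivalence by \autoref{lem:stable_smash_is_left_quillen} (all inputs freely $q$-cofibrant) and whose verticals are level equivalences by \autoref{prop:spectra_pushout_product}(3) (all inputs freely $h$-cofibrant, since freely $q$-cofibrant spectra are in particular freely $h$-cofibrant), so the bottom arrow is a stable equivalence. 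A second square of the same shape, now with $QX\simar X$ along the verticals and the case just proved along the top, removes the restriction that $X$ be freely $q$-cofibrant — again by \autoref{prop:spectra_pushout_product}(3) and symmetry of $\barsmash$ for the verticals. Feeding this back into the two-variable factorization finishes the argument. The genuine obstacle is exactly this interpolation: the left Quillen bifunctor property only sees freely $q$-cofibrant inputs, \autoref{prop:spectra_pushout_product}(3) only transports \emph{level} equivalences, and a stable equivalence between freely $h$-cofibrant spectra admits no decomposition that stays inside that class — so the two-out-of-three squares above are doing the real work.
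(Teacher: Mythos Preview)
Your proof is correct and follows essentially the same approach as the paper's own argument. The only cosmetic difference is in the third claim: the paper handles both variables at once with a single square
\[ \xymatrix @R=1.5em{
	QX \barsmash QW \ar[d]^-\sim \ar[r]^-\sim & X \barsmash W \ar[d] \\
	QY \barsmash QZ \ar[r]^-\sim & Y \barsmash Z,
} \]
whereas you factor into one-variable problems and run the square twice; this is a trivial reorganization using the same two inputs (\autoref{lem:stable_smash_is_left_quillen} and \autoref{prop:spectra_pushout_product}(3)).
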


\begin{proof}
	This is a simple combination of the stable model structure with the technical work in \autoref{sec:spectra}. The first point was already proven in \autoref{lem:pullback_stable_Quillen}.
	
	For the second point, suppose $X \to Y$ is a stable equivalence of level $h$-cofibrant spectra. Using \autoref{prop:level_model_structure}, we can replace $X$ and $Y$ by level-equivalent, freely $q$-cofibrant spectra $QX$ and $QY$, giving a commuting square
	\[ \xymatrix @R=1.5em{
		f_!(QX) \ar[d]^-\sim \ar[r]^-\sim & f_!X \ar[d] \\
		f_!(QY) \ar[r]^-\sim & f_!Y.
	} \]
	The horizontal maps are level equivalences since $f_!$ preserves level equivalences of level $h$-cofibrant spectra. The left-hand map is a stable equivalence since $f_!$ is left Quillen (\autoref{lem:pullback_stable_Quillen}) and $QX$ and $QY$ are freely $q$-cofibrant. It follows that $f_!X \to f_!Y$ is a stable equivalence.
	
	The third point is argued the same way, by fitting a smash product of two stable equivalences $X \to Y$ and $W \to Z$ of freely $h$-cofibrant spectra into a commuting square
	\[ \xymatrix @R=1.5em{
		QX \barsmash QW \ar[d]^-\sim \ar[r]^-\sim & X \barsmash W \ar[d] \\
		QY \barsmash QZ \ar[r]^-\sim & Y \barsmash Z.
	} \]
	The map on the left is a stable equivalence by \autoref{lem:stable_smash_is_left_quillen}, and the horizontal maps are level equivalences by \autoref{prop:spectra_pushout_product}. It follows that the map on the right is a stable equivalence.
\end{proof}

\begin{rmk}\label{cor:level_derived_equals_stable_derived}
	Each of the functors in \autoref{prop:stably_derived} preserves both stable and level equivalences, after deforming the input by a level equivalence. It follows that the derived functors $\L f_!$, $\R f^*$, and $\barsmash^{\L}$ are the same whether we use level equivalences or stable equivalences to derive them. The same is not true of $f_*$ however, at least if the fiber of $f$ is not a finite complex.
\end{rmk}

\beforesubsection
\subsection{Parametrized homology and cohomology}\aftersubsection

We conclude this section with a brief discussion of homology and cohomology with local coefficients in a spectrum.

If $B$ is a space, $\mc E$ is a parametrized sequential spectrum or orthogonal spectrum over $B$, and $X \to B$ is an unbased space over $B$, we define the twisted homology and cohomology spectra of $X$ with coefficients in $\mc E$ as follows.
\begin{align*}
	H_\bullet(X;\mc E) := (\L r_!)(\R p^*) \mc E, \\
	H^\bullet(X;\mc E) := (\R r_*)(\R p^*) \mc E.
\end{align*}
Here $r\colon X \to *$ and $p\colon X \to B$ are the projections. If $\mc E$ is freely $f$-cofibrant and level $h$-fibrant then the base-change operations are already derived, so we get the simpler formula
\begin{align*}
	H_\bullet(X;\mc E) \simeq r_!p^* \mc E \\
	H^\bullet(X;\mc E) \simeq r_*p^* \mc E.
\end{align*}
In other words, we pull back $\mc E$ to $X$, then either quotient out the basepoint (homology) or take sections (cohomology). Using \eqref{half_smash_internal} and \eqref{half_map_sections}, we can also describe these spectra as
\begin{align*}
	H_\bullet(X;\mc E) &\simeq (\L r_!)(\R\Delta_B^*) (X_{+B} \barsmash^\L \mc E), \\
	H^\bullet(X;\mc E) &\simeq \R \barF_B(X_{+B},\mc E),
\end{align*}
where $r\colon B \to {*}$ and $\Delta_B\colon B \to B \times B$ are 0-fold and 2-fold diagonal maps.

When $\mc E = r^*E = B \times E$ is a trivial bundle of spectra, we get the classical definitions of homology and cohomology spectra:
		\[ H_\bullet(X;E) \simeq X_+ \sma^{\L} E, \qquad H^\bullet(X;E) \simeq \R F(X_+,E). \]

Another common case is $X = B$, where we get
		\[ H_\bullet(B;\mc E) \simeq (\L r_!)\mc E, \qquad H^\bullet(B;\mc E) \simeq (\R r_*)(\mc E). \]

If $\mc A$ is a bundle of abelian groups and $\mc E = H\mc A$ is the associated bundle of Eilenberg-Maclane spectra, then the above definition of $H_*(X;H\mc A)$ is isomorphic to the usual notion of twisted homology $H_*(X;\mc A)$, and similarly for cohomology.

It is possible to characterize these theories by a variant of the Eilenberg-Steenrod axioms for parametrized spaces over $B$. See \cite[20.1]{ms} for more details. In particular, the twisted $K$-theory of \cite{atiyah_segal} is represented by a parametrized spectrum, as discussed in detail in \cite{hebestreit_sagave_schlichtkrull}.

\section{Structures on parametrized spectra}\label{sec:structures}

In this section we give new, more direct constructions of several structures that exist on the homotopy category of parametrized spectra. We first show that the external smash product $\barsmash$ makes the homotopy category $\ho\Osp$ into a symmetric monoidal bifibration (SMBF). A related result is that the smash product over $B$, $\sma_B$, makes the fiber category $\ho\Osp(B)$ symmetric monoidal and the pullback functors $f^*$ strong symmetric monoidal. We then turn to the bicategory $\Ex$ and its fiberwise version $\Ex_{B}$, as discussed in the introduction.

\beforesubsection
\subsection{The SMBF $\ho\Osp$}\aftersubsection

Recall from \autoref{all_spectra} that $\Osp$ is the category of all orthogonal spectra over all base spaces. We form $\ho\Osp$ by inverting the stable equivalences in each fiber category, see \autoref{sec:deriving_SMBFs}.

\begin{thm}\label{thm:spectra_SMBF}
	The homotopy category $\ho\Osp$ forms a symmetric monoidal bifibration (SMBF) over $\cat{Top}$, with Beck-Chevalley squares the homotopy pullback squares of spaces.
\end{thm}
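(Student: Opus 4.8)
The plan is to verify the seven axioms (SM1), (SM2), (BF3), (BF4), (BF5), (SMBF6), (SMBF7) from \autoref{sec:deriving_SMBFs} for the category $\Osp$ with weak equivalences the stable equivalences in each fiber, and then invoke \autoref{prop:deform_bifibration} and \autoref{prop:deform_SMBF} exactly as was done for retractive spaces in \autoref{spaces_are_a_bifibration}. This mirrors the space-level argument almost verbatim, with the convenient cofibrant/fibrant notions of \autoref{thm:intro_cof_fib} playing the role that $f$-cofibrant/$h$-fibrant played for spaces.

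Concretely, I would take the class of ``cofibrant'' objects in each fiber $\Osp(B)$ to be the freely $h$-cofibrant spectra, with cofibrant replacement given by $Q$ from \autoref{prop:level_model_structure} followed, if desired, by $P$; and ``fibrant'' objects to be the level $h$-fibrant spectra, with fibrant replacement $R^{lv}$ or $P$. First, (SM1) and (SM2): $\barsmash$ preserves freely $h$-cofibrant spectra and stable equivalences between them by \autoref{prop:stably_derived} together with \autoref{prop:spectra_pushout_product} (or \autoref{prop:reedy_pushout_product}), and the unit $\Sph = \Sigma^\infty S^0$ is freely cofibrant. Next, (BF3): the pushforwards $f_!$ preserve free cofibrations and stable equivalences between level $h$-cofibrant spectra by \autoref{lem:spectrum_f_preserves} and \autoref{prop:stably_derived}, with the simultaneous choice $Q$ (which is defined on $\Osp$ over all bases). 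Dually (BF4): the pullbacks $f^*$ preserve level $h$-fibrant (even level $q$-fibrant) spectra and stable equivalences between them by \autoref{lem:spectrum_f_preserves} and the first bullet of \autoref{prop:stably_derived}, using the simultaneous level fibrant replacement $R^{lv}$. For (BF5) I would declare the homotopy Beck-Chevalley squares to be the strict pullback squares in which one of the two bottom maps is a Hurewicz fibration: these satisfy Beck-Chevalley in $\Osp$ by applying \autoref{prop:beck_chevalley_spaces} levelwise, and the two composites $f_! g^*$ and $q^* p_!$ are coherently deformable exactly as in \autoref{ex:derived_beck_chevalley} (the $h$-fibration hypothesis is what makes $g^*$ preserve $h$-cofibrations, so a single common subcategory of freely $h$-cofibrant level $h$-fibrant spectra works). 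Finally, (SMBF6) and (SMBF7): here one uses the convenient class more essentially — take $\mathcal A^B$ to be the freely $h$-cofibrant level $h$-fibrant spectra, reached by the composite $PQ$; by \autoref{prop:spectrum_px}, \autoref{prop:spectra_pushout_product}, and \autoref{lem:spectrum_f_preserves} this subcategory is preserved by $\barsmash$ and by every $f_!$ (for (SMBF6)) and by $\barsmash$ and every $f^*$ (for (SMBF7), noting that the convenient class is preserved by pullback by \autoref{lem:spectrum_f_preserves} applied levelwise), and $\barsmash$, $f_!$, $f^*$ all agree with their derived functors there.

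With all seven conditions checked, \autoref{prop:deform_bifibration} gives that $\ho\Osp \to \cat{Top}$ is a bifibration with $(\ho\Osp)^B \cong \ho\Osp(B)$ and with Beck-Chevalley for the chosen squares, and \autoref{prop:deform_SMBF} gives that $\barsmash^\L$ preserves cartesian and cocartesian arrows, so $\ho\Osp$ is an SMBF. To upgrade the Beck-Chevalley class from strict pullbacks along $h$-fibrations to all homotopy pullback squares of spaces, I would invoke \autoref{expand_beck_chevalley}: every homotopy pullback square of spaces is weakly equivalent to a strict pullback along a Hurewicz fibration, and along any weak equivalence of base spaces the functors $\L f_!$ and $\R f^*$ induce equivalences of fiber categories by \autoref{lem:pullback_stable_Quillen}, so the Beck-Chevalley condition transports across. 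I expect the main obstacle to be (SMBF6)/(SMBF7) — specifically, confirming that $PQ$ (or the relevant composite of cofibrant and fibrant replacements) genuinely lands in a single subcategory stable under $\barsmash$ and simultaneously under all the $f_!$'s (resp. $f^*$'s) while still being the place where those functors are homotopical; this is where \autoref{thm:intro_cof_fib} is doing real work and where the analogy with the space-level proof must be tracked carefully rather than quoted. Everything else is a routine translation of \autoref{spaces_are_a_bifibration} from $\mc R$ to $\Osp$.
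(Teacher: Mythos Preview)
Your overall strategy is exactly the paper's: verify (SM1)--(SM2), (BF3)--(BF5), (SMBF6)--(SMBF7) and then invoke \autoref{prop:deform_bifibration}, \autoref{prop:deform_SMBF}, and \autoref{expand_beck_chevalley}. The citations you give for (SM1), (SM2), (BF3), (BF4), (BF5), and the final homotopy-pullback upgrade all match the paper.

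The gap is precisely where you suspected, in (SMBF6)/(SMBF7): your single choice of $\mathcal A^B$ as the freely $h$-cofibrant, level $h$-fibrant spectra does not work for both. For (SMBF6), $f_!$ does not preserve level $h$-fibrant spectra unless $f$ is itself an $h$-fibration (see the last clause of \autoref{lem:spectrum_f_preserves}), so your $\mathcal A$ is not stable under all pushforwards. For (SMBF7), $f^*$ does not preserve freely $h$-cofibrant spectra in general---only freely $f$-cofibrant ones (\autoref{lem:spectrum_f_preserves} again). The paper fixes both issues by using \emph{different} subcategories for the two conditions and by passing from $h$- to $f$-cofibrancy: for (SMBF6) it takes the freely $f$-cofibrant spectra (no fibrancy condition, since $\barsmash$ and $f_!$ are both homotopical there by \autoref{prop:stably_derived}), and for (SMBF7) it takes the freely $f$-cofibrant, level $h$-fibrant spectra (where $f^*$ is now guaranteed to land by \autoref{lem:spectrum_f_preserves}, and $\barsmash$ stays homotopical by \autoref{prop:spectra_pushout_product}(2),(3)). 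Recall that $PQ$ actually produces freely $f$-cofibrant output by \autoref{prop:spectrum_px}, so these subcategories still contain the image of a cofibrant-fibrant replacement. With that correction your proof goes through.
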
\index{SMBF!of parametrized spectra $\ho\Osp$}

\begin{proof}
	We work in (CG). The point-set category $\Osp$ is a symmetric monoidal bifibration with Beck-Chevalley for the strict pullback squares, using \autoref{prop:beck_chevalley_spaces} and \autoref{prop:spectra_external_smash_and_base_change}. To pass to the homotopy category $\ho\Osp$, we check the seven conditions from \autoref{prop:deform_SMBF}.
	\begin{enumerate}
		\item $\barsmash$ preserves freely $h$-cofibrant spectra, and stable equivalences between them, by \autoref{prop:spectra_pushout_product} and \autoref{prop:stably_derived}.
		\item $F_0 S^0$ is freely $h$-cofibrant.
		\item The pushforwards $f_!$ are coherently left-deformable, using for instance the level $h$-cofibrant spectra, by \autoref{prop:stably_derived}.
		\item The pullbacks $f^*$ are coherently right-deformable, using level $q$-fibrant spectra.
		\item For a pullback square along an $h$-fibration, the Beck-Chevalley maps are coherently deformable using \autoref{lem:spectrum_f_preserves} and \autoref{prop:stably_derived}, and the freely $f$-cofibrant, level $h$-fibrant spectra as inputs.
		\item $\barsmash$ and all of the pushforwards $f_!$ are coherently deformable, using for instance the freely $f$-cofibrant spectra, by \autoref{prop:stably_derived}.
		\item $\barsmash$ and all of the pullbacks $f^*$ are coherently deformable using the freely $f$-cofibrant, level $h$-fibrant spectra. This uses \autoref{lem:spectrum_f_preserves}, \autoref{prop:stably_derived}, and especially \autoref{prop:spectra_pushout_product}(3).
	\end{enumerate}
	As in \autoref{expand_beck_chevalley}, since $\L f_!$ is an equivalence when $f$ is a weak equivalence, and every homotopy pullback square is equivalent to a strict pullback along an $h$-fibration, we therefore get Beck-Chevalley for every homotopy pullback square of spaces.
\end{proof}

\begin{rmk}
	This result has been known for some time, see e.g. \cite{shulman_framed_monoidal}, but as pointed out in the proof of \cite[Thm 9.9]{mp1}, it is difficult to piece together the argument without somehow using the monoidal fibrant replacement functor $P$ from \autoref{prop:px_properties}.
\end{rmk}

\autoref{thm:spectra_SMBF} describes a compatibility between the operations $\barsmash^\L$, $\L f_!$, and $\R f^*$ on the homotopy category. They be interchanged by the three canonical commutation isomorphisms
\begin{align*}
\L f_!X \barsmash^{\L} \L g_!Y &\simeq \L(f \times g)_!(X \barsmash^{\L} Y), \\
\R f^*X \barsmash^{\L} \R g^*Y &\simeq \R(f \times g)^*(X \barsmash^{\L} Y), \\
\L g_! \circ \R f^* &\simeq \R p^* \circ \L q_!,
\end{align*}
as in \cite[\S 13.7]{ms}. Note however that \autoref{thm:spectra_SMBF} is somewhat more useful than earlier constructions because it tells us \emph{which map} is the isomorphism. In particular, each of these isomorphisms arises by a universal property in the category $\ho\Osp$. Alternatively, by the proofs of \autoref{prop:deform_bifibration} and \autoref{prop:deform_SMBF}, the same isomorphisms arise by deforming the unique point-set isomorphism, using \autoref{prop:passing_natural_trans_to_derived_functors}.

The same argument shows that the category of sequential spectra $\ho \Psp$ is a bifibration with the same Beck-Chevalley squares. We also get isomorphisms as above that commute pullback and pushforward with the operation that smashes a spectrum with a space.

\begin{ex}
	As in \cite[13.7.4]{ms}, we deduce that suspension spectrum commutes with pullback and pushforward,
	\[ f_!\Sigma^\infty_A X \simeq \Sigma^\infty_B f_!X, \qquad \Sigma^\infty_A f^*Y \simeq f^*\Sigma^\infty_B Y. \]
	It is straightforward to check we have such isomorphisms on the point-set level, and they are unique. Since the functors are coherently deformable (\autoref{prop:passing_natural_trans_to_derived_functors}), this passes to a canonical equivalence of derived functors.
\end{ex}

Let $\Osp^{c} \subseteq \Osp$ be the subcategory of freely $f$-cofibrant spectra. The induced map $\ho \Osp^{c} \to \ho \Osp$ is a fiberwise equivalence of categories over $\cat{Top}$. The proof of \autoref{thm:spectra_SMBF}, in (CG) or in (CGWH), shows that $\ho\Osp^{c}$ is an SMBF as well.

\begin{rmk}\label{cofibrant_SMBF}
The category $\Osp^{c}$ is perhaps the most convenient category of parametrized spectra. In this setting, the smash product and pushforward do not need to be derived, since everything is already cofibrant, and the pullback can be derived using $P$:
\[ \barsmash^{\L} = \barsmash^{\R} = \barsmash, \qquad \L f_! = \R f_! = f_!, \qquad \R f^* = f^* P. \]
This brings the homotopy category suprisingly close to the point-set category. Almost every operation preserves weak equivalences, and the only one that doesn't (pullback) can be derived in a simple way, by a functor $P$ that commutes with smash products, mapping cones, and so on.

Another nice feature of this setting is that we can think of $\barsmash$ and $f_!$ as \emph{right}-deformable, since they are homotopical, and homotopical functors are both left- and right-deformable. By \autoref{lem:spectrum_f_preserves} and \autoref{prop:spectra_pushout_product}, every composite of $\barsmash$, $f_!$, and $f^*$ is coherently right-deformable in the sense \autoref{coherently_left_deformable}, so long as we only take pushforwards $f_!$ along $h$-fibrations.
\end{rmk}

\beforesubsection
\subsection{The symmetric monoidal category $\ho \Osp(B)$}\aftersubsection

Our next task is to treat the internal smash product $\sma_B$ on orthogonal spectra over $B$. This is conceptually more complicated because $\sma_B$ is not left- or right-deformable. It is only a composite of coherently deformable functors. So, its derived functor
\[ \sma_B^{\M} := (\R\Delta_B^*)(\barsmash^{\L}) \]
is only unique if we stipulate that it must arise as a composite of the derived functors for $\barsmash$ and $\Delta_B^*$.

We extend this to a symmetric monoidal structure on the homotopy category, with unit $\Sph_B := \Sph \times B \simeq \R r_B^* \Sph$, in any of the following four ways.
\begin{itemize}
	\item[(1)] The functors obtained by iterating $\sma_B = \Delta_B^* \circ \barsmash$ and $r_B^*$ are coherently deformable, using \autoref{prop:stably_derived}. Therefore the recipe of \autoref{prop:passing_natural_trans_to_derived_functors} passes the symmetric monoidal structure of $\Osp(B)$ up to the homotopy category.
	\item[(2)] Restrict to the equivalent subcategory $\ho\Osp^{c}(B)$ of freely $f$-cofibrant spectra. By \autoref{prop:stably_derived}, $\sma_B$ preserves this subcategory and is right-deformable using $P$. Therefore the recipe of \autoref{prop:passing_natural_trans_to_derived_functors} passes the symmetric monoidal structure of $\Osp(B)^c$ up to the homotopy category.
	\item[(3)] Restrict to the equivalent subcategory $\ho\Osp^{cf}(B)$ of freely $f$-cofibrant, level $h$-fibrant spectra. By \autoref{prop:stably_derived}, $\sma_B$ is homotopical here and sends this subcategory into itself. Therefore the symmetric monoidal structure on $\Osp^{cf}(B)$ that arose from the rigidity theorem, passes immediately to the homotopy category.
	\item[(4)] Formally pull back the symmetric monoidal structure on $\ho \Osp$ along the inclusion $\ho \Osp(B) \subseteq \ho \Osp$, using \autoref{pullback_SMBF}.
\end{itemize}

\begin{prop}\label{lem:two_smashes_over_b_isomorphic}
	These four recipes give canonically isomorphic symmetric monoidal structures on $\ho\Osp(B)$.
\end{prop}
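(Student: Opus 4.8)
The plan is to compare all four recipes on the full subcategory $\Osp^{cf}(B)$ of freely $f$-cofibrant, level $h$-fibrant spectra. The inclusion $\ho\Osp^{cf}(B) \hookrightarrow \ho\Osp(B)$ is an equivalence of categories (a pseudo-inverse is given by $X \mapsto PQX$), and both a symmetric monoidal structure on a category and an isomorphism between two such are detected on any equivalent full subcategory; so it suffices to see that all four recipes, restricted to $\Osp^{cf}(B)$, induce one and the same structure. Here the crucial point is uniqueness: on $\Osp^{cf}(B)$ the functors $\barsmash$, $\Delta_B^*$, $r_B^*$, and $P$ are homotopical and preserve the subcategory, by \autoref{lem:spectrum_f_preserves}, \autoref{prop:reedy_pushout_product}, \autoref{prop:spectrum_px}, and \autoref{prop:stably_derived}; and the functor $(X,Y) \leadsto \Delta_B^*(X \barsmash Y) = (\id_B)_!\Delta_B^*(X \barsmash Y)$ is rigid on $\Osp^{cf}(B) \times \Osp^{cf}(B)$ by \autoref{thm:spectra_rigidity}, since $(\id_B,\Delta_B)\colon B \to B \times (B \times B)$ is injective. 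Hence $\Osp^{cf}(B)$ carries a unique point-set symmetric monoidal structure with product $\sma_B$ and unit $\Sph_B$ — this is recipe (3) — and, $\sma_B$ being homotopical on $\Osp^{cf}(B)$, its coherence isomorphisms descend to $\ho\Osp^{cf}(B)$.

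First I would observe that all four products agree, up to canonical isomorphism, with $\sma_B^{\M} = (\R\Delta_B^*)(\barsmash^{\L})$: for recipes (1), (2), (4) this is by construction, and for recipe (3) it is because on freely $f$-cofibrant, level $h$-fibrant inputs $X \barsmash Y$ is again of this kind (in particular level $q$-fibrant), so that $\barsmash^{\L} = \barsmash$ and $\R\Delta_B^* = \Delta_B^*$ there. Next, for recipes (1) and (2) I would use the fact from \autoref{prop:passing_natural_trans_to_derived_functors} that the derived natural transformation is independent of the choice of $\cat A_0$, and take $\cat A_0 = \Osp^{cf}(B)$, which contains the image of $PQ$ (resp.\ of $P$ on $\Osp^{c}(B)$). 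On this subcategory every functor occurring in the two composites is homotopical, so the canonical maps $\D F_i \leftrightarrow F_i$ are identities and $\D \eta = \eta$; thus the derived associator, unitor, and symmetry of recipes (1) and (2), restricted to $\Osp^{cf}(B)$, are literally the point-set ones of recipe (3).

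For recipe (4) I would appeal to \autoref{prop:homotopy_cartesian_arrows}: a cartesian arrow over $\Delta_B$ whose target lies in $\Osp^{cf}(B)$ (hence is level $q$-fibrant) is represented by the point-set cartesian arrow $\Delta_B^*(X \barsmash Y) \to X \barsmash Y$, so on $\Osp^{cf}(B)$ the construction of the pulled-back coherences in \autoref{pullback_SMBF} applied to $\ho\Osp$ reduces to the same construction applied to the point-set SMBF $\Osp$ in (CG); the structure it yields on $\Osp(B)$ has product $\Delta_B^*(-\barsmash-)$, hence is again recipe (3) by rigidity. Comparing all four on the point-set category $\Osp^{cf}(B)$ and descending to $\ho\Osp^{cf}(B) \simeq \ho\Osp(B)$ then completes the proof. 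The main obstacle will be this last step: tracking the cartesian and co-cartesian arrows that build the coherences inside $\ho\Osp$ and confirming that, on the convenient subcategory, they are the expected point-set arrows, so that recipe (4) is controlled by the same rigid point-set datum as the others. The remaining checks are routine applications of \autoref{prop:passing_natural_trans_to_derived_functors} and \autoref{thm:spectra_rigidity}.
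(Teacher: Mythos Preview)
Your proposal is correct and follows essentially the same approach as the paper: restrict to the equivalent subcategory $\ho\Osp^{cf}(B)$, observe that all the constituent functors are homotopical there so that recipes (1)--(3) agree by inspection of \autoref{prop:passing_natural_trans_to_derived_functors}, and handle recipe (4) by stripping the derived decorations on this subcategory and invoking rigidity (\autoref{thm:spectra_rigidity}) to identify the pulled-back coherences with the internal point-set ones. Your explicit appeal to \autoref{prop:homotopy_cartesian_arrows} for the cartesian-arrow step in recipe (4) is a helpful elaboration of what the paper compresses into ``remove the $\L$ and $\R$ decorations.''
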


\begin{proof}
	An isomorphism of symmetric monoidal structures is determined by what it does on an equivalent subcategory, so we will restrict to $\ho\Osp^{cf}(B)$, the freely $f$-cofibrant level $h$-fibrant spectra. The first, second, and fourth recipes use $\sma_B^\M$ and $\R r_B^* \Sph$ here, while the third recipe uses $\sma_B$ and $r_B^* \Sph$. These admit canonical isomorphisms because of the uniqueness of right-derived and left-derived functors. The work is to check that the associator, unitor, and symmetry isomorphism in the three recipes commute along these isomorphisms. For the first, second, and third recipes, this is true just by examination of \autoref{prop:passing_natural_trans_to_derived_functors}. For the fourth recipe, on this subcategory we may remove the $\L$ and $\R$ decorations from the functors, and then we are left comparing a point-set associator map pulled back from $\ho\Osp^{cf}$, and the point-set associator constructed internally to $\ho\Osp^{cf}(B)$. These must coincide by the rigidity theorem, \autoref{thm:spectra_rigidity}. The argument for the unitor and symmetry isomorphism proceeds the same way.
\end{proof}

\begin{ex}
	Suppose $\xi = V - n$ and $\omega = W - m$ are virtual bundles over $B$. Using \autoref{barsmash_free}, we get a the point-set isomorphism
\[ \Th_B(\xi) \sma_B \Th_B(\omega) := F_n \Th_B(V) \sma_B F_m \Th_B(W) \cong F_{n+m} \Th_B(V \times_B W) = \Th_B(\xi \oplus \omega). \]
Because everything is in $\Osp^{cf}(B)$, by recipe (3) above, we get the same isomorphism on the homotopy category. This is one example of how an obvious point-set isomorphism can be lifted quickly to the homotopy category, when we use convenient notions of cofibrant and fibrant spectra.
\end{ex}

For $f\colon A \to B$, the strict pullback functor $f^*\colon \Osp(B)\to\Osp(A)$ has a unique symmetric monoidal structure, by \autoref{thm:spectra_rigidity}. Each of the above four recipes suggests a way of passing this to a symmetric monoidal structure on the derived pullback $\R f^*$: coherent deformability, restricting to the freely $f$-cofibrant or the freely $f$-cofibrant and level $h$-fibrant spectra, or using the universal property of cartesian arrows in the fibration $\ho\Osp$.
\begin{prop}\label{prop:unique_sm_structure_on_pullback}
	These give identical symmetric monoidal structures on $$\R f^*\colon \ho\Osp(B) \to \ho\Osp(A).$$
\end{prop}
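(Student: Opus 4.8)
The plan is to imitate the proof of \autoref{lem:two_smashes_over_b_isomorphic}. First I would reduce to the equivalent full subcategory $\ho\Osp^{cf}(B)$ of freely $f$-cofibrant, level $h$-fibrant spectra over $B$, and likewise over $A$. By \autoref{lem:spectrum_f_preserves} the strict functor $f^*$ carries freely $f$-cofibrant level $h$-fibrant spectra over $B$ to spectra of the same type over $A$, and by \autoref{prop:stably_derived} it preserves all stable equivalences between level $q$-fibrant (hence level $h$-fibrant) spectra; so on this subcategory $f^*$ is homotopical and $\R f^* \simeq f^*$. Thus each of the four recipes produces a derived functor canonically isomorphic to the restriction of the point-set $f^*$, and by uniqueness of right-derived functors these identifications are mutually compatible. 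What then remains is to compare the symmetric monoidal coherence data carried by each recipe: the structure isomorphism $f^*X \sma_A f^*Y \cong f^*(X \sma_B Y)$ and the unit isomorphism $f^*\Sph_B \cong \Sph_A$, together with the three constraints relating them to the associator, unitor, and symmetry of $\sma_A$ and $\sma_B$.

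Next I would observe that recipes (1), (2), and (3) all produce this data by the canonical recipe of \autoref{prop:passing_natural_trans_to_derived_functors} applied to the point-set strong symmetric monoidal structure on $f^*$, which is itself unique by \autoref{thm:spectra_rigidity}. Recipes (1) and (2) say so directly, using coherent deformability along the level $h$-cofibrant respectively freely $f$-cofibrant spectra; and in recipe (3) the structure is the point-set one restricted to a subcategory on which all the functors are already homotopical, so the zig-zag of \autoref{prop:passing_natural_trans_to_derived_functors} collapses to $\eta$ itself, i.e.\ recipe (3) is exactly that recipe in a degenerate case. Since \autoref{prop:passing_natural_trans_to_derived_functors} respects horizontal and vertical composition and sends identities to identities, the structure maps and all three coherence constraints agree for (1), (2), and (3).

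It then remains to match recipe (4), which manufactures the monoidal structure on $\R f^*$ from the universal property of cartesian arrows in the fibration $\ho\Osp$. Here I would invoke the explicit second argument in the proof of \autoref{prop:deform_SMBF}: the isomorphism $\R f^*X \barsmash^{\L} \R g^*Y \simeq \R(f\times g)^*(X \barsmash^{\L} Y)$ obtained from the cartesian-arrow universal property agrees with the one obtained from \autoref{prop:passing_natural_trans_to_derived_functors}, and the Beck--Chevalley isomorphism commuting $\R\Delta_B^*$ with these maps agrees with its derived counterpart by the diagram of \autoref{fig:derive_BC} in the proof of \autoref{prop:deform_bifibration}. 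Composing $\barsmash$ with $\Delta_B^*$ to form $\sma_B$, these two comparisons assemble to show the monoidal structure on $\R f^*$ from recipe (4) coincides with that from recipe (1). More cheaply: once we restrict to $\ho\Osp^{cf}(B)$ we may drop every $\L$ and $\R$, so recipe (4) produces precisely the point-set strong monoidal structure on $f^*$ pulled back along $\ho\Osp^{cf}(B) \hookrightarrow \ho\Osp^{cf}$; since that point-set structure is unique by \autoref{thm:spectra_rigidity}, it coincides with recipe (3), and the coherence constraints agree for the same reason.

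I expect the main obstacle to be exactly this last identification of recipe (4) with the others: it is phrased through the abstract SMBF machinery rather than a point-set formula, so one genuinely has to trace the naturality and universal-property diagrams in the proofs of \autoref{prop:deform_bifibration} and \autoref{prop:deform_SMBF} to see that the abstractly-defined isomorphism is the derived point-set one. Once that is in place, rigidity (\autoref{thm:spectra_rigidity}) finishes everything, since any two natural isomorphisms between functors assembled from $f^*$, $\barsmash$, and $\Delta_B^*$ that both exist must already be equal.
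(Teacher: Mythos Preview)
Your proposal is correct and follows essentially the same approach as the paper: restrict to the freely $f$-cofibrant, level $h$-fibrant spectra so that all derived decorations can be dropped, observe that recipes (1)--(3) then agree by definition of the canonical derived natural transformation, and for recipe (4) remove the $\L$ and $\R$ decorations and invoke rigidity (\autoref{thm:spectra_rigidity}) to force the remaining point-set isomorphisms to coincide. The paper's proof is terser and skips your intermediate appeal to \autoref{prop:deform_SMBF} and \autoref{fig:derive_BC}, going directly to the ``cheap'' argument you give at the end.
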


\begin{proof}
	We just have to show they give the same maps in the homotopy category
	\[ \R f^*(X) \sma_B^{\M} \R f^*(Y) \sim \R f^*(X \sma_B^{\M} Y), \qquad \R f^*(r_B^* \Sph) \sim r_A^* \Sph. \]
	Technically, the third recipe produces such maps without the $\M$ and $\R$ decorations, so we use the canonical isomorphisms to compare those maps to the maps furnished by the other three recipes. As before, it suffices to restrict to freely $f$-cofibrant, level $h$-fibrant spectra, and then the first and second recipes give the same map as the third, essentially by their definition. The proof that the fourth recipe agrees is also similar to before: we first remove the $\L$ from $\barsmash$, then remove the $\R$ from the pullbacks, then use rigidity to show that the remaining point-set isomorphisms agree.
\end{proof}

\subsection{The SMBF $\ho \Osp_{(B)}$}\label{sec:Osp_B}

Next, we extend the symmetric monoidal structure on $\ho \Osp(B)$ to the larger homotopy category $\ho \Osp_{(B)}$ of all spectra over all spaces over $B$.

\begin{df}\label{ospb}
We define the point-set category $\Osp_{(B)}$ as the pullback of categories
\[ \xymatrix @R=1.7em{
	\Osp_{(B)} \ar[d] \ar[r] & \Osp \ar[d] \\
	\cat{Fib}_B \ar[r] & \cat{Top}
} \]
where $\cat{Fib}_B$ is the category of $h$-fibrations $p\colon E \to B$, mapping to $\cat{Top}$ by retaining the total space and forgetting the fibration. So an object of $\Osp_{(B)}$ is a pair $(p\colon E \to B, X \in \Osp(E))$. The morphisms are computed by forgetting $p$ and taking morphisms in $\Osp$.
\end{df}

By \autoref{pullback_SMBF}, the point-set category $\Osp_{(B)}$ has a symmetric monoidal structure that makes it into an SMBF. We call this product the \textbf{external smash product rel $B$}\index{external!smash rel $B$ $X \barsma{B} Y$}. For $X \in \Osp(D)$ and $Y \in \Osp(E)$, it is defined as
\[ X \barsma{B} Y := \Delta_{D,E}^*(X \barsmash Y), \qquad \Delta_{D,E}\colon D \times_B E \to D \times E. \]
The original symmetric monoidal category $\Osp(B)$ sits inside $\Osp_{(B)}$ as the fiber category over the object $\id_B\colon B \to B$. So this structure extends the one discussed in the previous subsection.

We pass to the homotopy category $\ho\Osp_{(B)}$ by inverting the stable equivalences. The result is also the pullback
\[ \xymatrix @R=1.7em{
	\ho \Osp_{(B)} \ar[d] \ar[r] & \ho \Osp \ar[d] \\
	\cat{Fib}_B \ar[r] & \cat{Top}
} \]
by comparing fiber categories and using \autoref{prop:deform_bifibration}. This homotopy category receives four symmetric monoidal structures, by the four recipes from \autoref{lem:two_smashes_over_b_isomorphic}: coherent deformability of $\barsma{B}$ and its iterates on the whole category, or on the subcategory of freely $f$-cofibrant spectra, passing to freely $f$-cofibrant level $h$-fibrant spectra where $\barsma{B}$ is homotopical, or formally pulling back the symmetric monoidal structure with product $\barsmash^{\L}$ on $\ho\Osp$.

\begin{prop}
	These four symmetric monoidal structures on $\ho \Osp_{(B)}$ are canonically isomorphic, and make $\ho \Osp_{(B)}$ into a symmetric monoidal bifibration (SMBF) over $\cat{Fib}_B$.
\end{prop}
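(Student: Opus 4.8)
The plan is to exhibit $\ho\Osp_{(B)}$ as a pullback of the SMBF $\ho\Osp$ and then apply \autoref{pullback_SMBF}, just as was done at the point-set level. Let $F\colon \cat{Fib}_B \to \cat{Top}$ send an $h$-fibration $p\colon E\to B$ to its total space $E$, so that $\Osp_{(B)} = F^*\Osp$ by \autoref{ospb}. Comparing fiber categories and using \autoref{prop:deform_bifibration}, inverting the stable equivalences fiberwise yields a canonical isomorphism $\ho\Osp_{(B)} \cong F^*(\ho\Osp)$ over $\cat{Fib}_B$. I declare a commuting square in $\cat{Fib}_B$ to be Beck-Chevalley exactly when its underlying square of total spaces is a homotopy pullback of spaces; then $F$ preserves Beck-Chevalley squares by construction, and the fiber over $\id_B$ recovers the symmetric monoidal category discussed around \autoref{prop:unique_sm_structure_on_pullback}.

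The one substantive hypothesis of \autoref{pullback_SMBF} still to check is that for objects $A,E,A',E'$ of $\cat{Fib}_B$ and maps $A\to A'$, $E\to E'$ over $B$, the square
\[
\xymatrix @R=1.7em{
F(A\times_B E)\ar[r]\ar[d] & F(A)\times F(E)\ar[d] \\
F(A'\times_B E')\ar[r] & F(A')\times F(E')
}
\]
is a homotopy pullback in $\cat{Top}$. It is a strict pullback, and it is the top square in a vertical composite whose bottom square has bottom edge the diagonal $\Delta\colon B\to B\times B$ and right edge $A'\times E'\to B\times B$, with outer rectangle having right edge $A\times E\to B\times B$. Both $A\times E\to B\times B$ and $A'\times E'\to B\times B$ are products of $h$-fibrations, hence $h$-fibrations, so the bottom square and the outer rectangle are homotopy pullbacks (pullbacks along $h$-fibrations); by the pasting law for homotopy pullbacks, the top square is one too. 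Then \autoref{pullback_SMBF} gives that $\ho\Osp_{(B)}$ is an SMBF over $\cat{Fib}_B$ with product $\barsma{B}$, unit $\Sph_B$, and Beck-Chevalley on the chosen squares.

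The identification of the four symmetric monoidal structures proceeds exactly as in the proof of \autoref{lem:two_smashes_over_b_isomorphic}. An isomorphism of symmetric monoidal structures is detected on an equivalent subcategory, so I restrict to the freely $f$-cofibrant, level $h$-fibrant spectra over spaces over $B$; there all four recipes compute the product as $\barsma{B}$ itself, which is homotopical on this subcategory by \autoref{prop:stably_derived} and \autoref{prop:spectra_pushout_product}, and the unit as $\Sph_B$, and these data agree up to canonical isomorphism by uniqueness of left- and right-derived functors. For recipes (1)--(3) the associator, unitor, and symmetry isomorphisms match along these identifications by inspection of \autoref{prop:passing_natural_trans_to_derived_functors}; for recipe (4) one drops the $\L$ and $\R$ decorations on this subcategory and invokes the rigidity theorem \autoref{thm:spectra_rigidity} to see that the coherence isomorphisms pulled back from $\ho\Osp$ coincide with those built internally. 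The main obstacle is the homotopy-pullback condition on the product-comparison squares above, together with the (more bookkeeping than difficulty) task of fixing the Beck-Chevalley convention on $\cat{Fib}_B$ so that restriction to $\id_B$ reproduces the earlier constructions; the rest is formal given the machinery already assembled.
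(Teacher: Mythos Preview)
Your proposal is correct and follows essentially the same approach as the paper: apply \autoref{pullback_SMBF} to the pullback $\ho\Osp_{(B)}\cong F^*(\ho\Osp)$ for the SMBF claim, and repeat the argument of \autoref{lem:two_smashes_over_b_isomorphic} verbatim for the comparison of the four structures. The paper's proof is terser and simply asserts that \autoref{pullback_SMBF} applies, whereas you explicitly verify the product-comparison hypothesis (that the square $F(A\times_B E)\to F(A)\times F(E)$ over $F(A'\times_B E')\to F(A')\times F(E')$ is a homotopy pullback) via the pasting argument with $\Delta_B$; this is a genuine check that the paper leaves implicit, and your argument for it is correct.
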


\begin{proof}
	For the first claim, the proof of \autoref{lem:two_smashes_over_b_isomorphic} applies verbatim. The second claim follows by applying \autoref{pullback_SMBF} to the fourth recipe (formally pulling back the symmetric monoidal structure from $\ho\Osp$).
\end{proof}

\begin{rmk}
	If we enlarge the base category of $\Osp_{(B)}$ to be all spaces with maps to $B$, not just fibrations, then by the same argument we would get a symmetric monoidal structure on $\Osp_{(B)}$ and the homotopy category $\ho\Osp_{(B)}$. However it fails to be an SMBF, because the smash product does not preserve all co-cartesian arrows in the homotopy category.
\end{rmk}

\beforesubsection
\subsection{The bicategory $\Ex$}\aftersubsection

Recall that a \textbf{bicategory}\index{bicategory} is a collection of objects or 0-cells, together with morphism categories between them, whose objects are called 1-cells and whose morphisms are called 2-cells. These compose in a way that is associative and unital up to coherent natural isomorphism.
\begin{ex}
	The bimodule bicategory $\mathscr Bimod$ has
\begin{itemize}
	\item 0-cells are rings $A$, $B$, $C$, ...
	\item 1-cells from $A$ to $B$ are $(A,B)$-bimodules $M$,
	\item 2-cells $M \to N$ are homomorphisms of $(A,B)$-bimodules, and
	\item horizontal composition of the $(A,B)$-bimodule $M$ with the $(B,C)$-bimodule $N$ given by the tensor product over $B$, $M \otimes_B N$, which is an $(A,C)$-bimodule.
\end{itemize}
\end{ex}

A \textbf{bicategory with shadow}\index{shadowed bicategory}\index{bicategory with shadow}, or shadowed bicategory \cite{ponto_asterisque}, is a bicategory that also allows circular compositions of 1-cells. These are also sometimes called trace theories, after \cite{kaledin_traces}. To define a bicategory with shadow $\mathscr B$, we need the following data.
\begin{itemize}
	\item A collection of 0-cells $\ob \mathscr B$. We typically label these $A$, $B$, $C$, ...
	\item For each pair of 0-cells $A$, $B$, a category $\mathscr B(A,B)$ whose objects are called 1-cells $X$, $Y$, ... and whose morphisms are called 2-cells $f$, $g$, ...
	\item A category $\mathscr B_{\shad{}}$ called the ``shadow category.''
	\item For every triple of 0-cells $A$, $B$, $C$ a horizontal composition functor $\odot\colon \mathscr B(A,B) \times \mathscr B(B,C) \to \mathscr B(A,C).$
	\item For every 0-cell $A$ a functor $\shad{}\colon \mathscr B(A,A) \to \mathscr B_{\shad{}}$ and a 1-cell $U_A \in \ob\mathscr B(A,A)$.
	\item Natural associator, left unitor, and right unitor isomorphisms
	\[
	\alpha\colon (X \odot Y) \odot Z \cong X \odot (Y \odot Z),
	\qquad
	\lambda\colon U_A \odot X \cong X,
	\qquad
	\rho\colon X \odot U_B \cong X
	\]
	and a natural ``rotator'' isomorphism
	\[ \theta\colon \shad{X \odot Y} \cong \shad{Y \odot X}. \]
\end{itemize}
This data must also satisfy a coherence condition. If we start with any expression for an $n$-fold product of distinct 1-cells, either arranged along a line or a circle, then if we re-arrange this expression using the isomorphisms $\alpha$, $\lambda$, $\rho$, and $\theta$, and eventually come back to the same expression, the composite isomorphism must be the identity.

As in the case of symmetric monoidal categories, it suffices to check this condition for four particular strings of the above isomorphisms. See \cite{benabou} and \cite[4.4.1]{ponto_asterisque} for the four conditions, and \cite[Thm 4.20]{mp3} for the proof that these four conditions imply all of the remaining coherences.

\begin{rmk}\label{rmk:left_deformable_bicategory}
	The bicategories we encounter often come in pairs, a ``point-set'' bicategory $\mathscr B$ and a ``homotopy'' bicategory $\ho\mathscr B$ obtained by inverting certain 2-cells and left-deriving all the bicategory operations. Using the theory from \autoref{sec:composing_comparing} (or simply citing \cite[22.11]{shulman2006homotopy}), we can form $\ho\mathscr B$ from $\mathscr B$ any time there exists a left retraction $Q$ on each category $\mathscr B(A,B)$, landing in a designated subcategory of cofibrant 1-cells, such that
	\begin{itemize}
		\item $\odot$ preserves cofibrant objects,
		\item $\odot$ and $\shad{}$ preserve weak equivalences between cofibrant objects,
		\item $QU_A \odot QX \to U_A \odot QX$ is always a weak equivalence, and
		\item $QX \odot QU_B \to QX \odot U_B$ is always a weak equivalence.
	\end{itemize}
	We say that $\mathscr B$ is a \textbf{left-deformable bicategory}\index{left-deformable!bicategory} when this happens. We can also dualize this to get the notion of a \textbf{right-deformable bicategory}.
\end{rmk}

We may now construct the bicategory of parametrized spectra. We start by defining a bicategory $\mc R/\cat{Top}$ whose 0-cells are topological spaces. The 1-cells from $A$ to $B$ are retractive spaces $X$ over $A \times B$, and 2-cells are maps of retractive spaces. The product of $X \in \mc R(A \times B)$ and $Y \in \mc R(B \times C)$ is defined by the formula
		\[ X \odot Y := (r_B)_!(\Delta_B)^*(X \barsmash Y), \]
as shown in the following diagram.
		\[ \xymatrix @R=1.7em @C=4em{
			X \odot Y \ar[d] & \ar[l] \Delta_B^*(X \barsmash Y) \ar[d] \ar[r] & X \barsmash Y \ar[d] \\
			A \times C & \ar[l]_-{r_B} A \times B \times C \ar[r]^-{\Delta_B} & A \times B \times B \times C
		} \]
We sometimes call this $X \odot^B Y$ if the choice of $B$ is not clear. 
Intuitively, the product is a space over $A \times C$ whose fiber over $(a,c)$ is the sum of $X_{a,b} \sma Y_{b,c}$ over all $b \in B$. Another intuition is that away from the basepoint sections, this is just the fiber product $X \times_B Y$.\index{bicategory!of retractive spaces $\mc R/\cat{Top}$}
The shadow and unit are
\[ \shad{X} := (r_A)_!(\Delta_A)^*X, \qquad U_A := (\Delta_A)_!(r_A)^* S^0 \cong A_{+(A \times A)}. \]

We have the rigidity theorem \autoref{prop:spaces_rigidity} waiting on standby, so as soon as we show that isomorphisms $\alpha$, $\lambda$, $\rho$, and $\theta$ exist, we will know for free that they are unique and coherent. To show these isomorphisms exist, we decompose each of the operations $\odot$, $\shad{}$, and $U_A$ into three pieces. Then we compose several smaller isomorphisms from the SMBF structure on $\mc R$, one for each small square or triangular region in the diagrams below.\footnote{These diagrams were typeset by Kate Ponto.} In fact, this procedure works for any symmetric monoidal bifibration (SMBF), not just $\mc R \to \cat{Top}$.

\begin{thm}\label{thm:SMBF_to_bicategory}\cite{ponto_shulman_indexed}
	For any SMBF $(\mc C,\boxtimes,I)$ over $\bS$, there is a shadowed bicategory $\mc C/\bS$ whose 0-cells are the objects of $\bS$, 1- and 2-cells the objects and morphisms in the fiber category $\mc C^{A \times B}$, shadow category $\mc C^*$, three operations $\odot$, $\shad{}$, $U_A$ defined by
	\[ \begin{array}{rclcrcl}
	\mc C^{A \times B} \times \mc C^{B \times C} & \overset\odot\to & \mc C^{A \times C}
	&& X \odot Y &=& (r_B)_!(\Delta_B)^*(X \boxtimes Y) \\
	\mc C^{A \times A} & \overset{\shad{}}\to & \mc C^{*}
	&& \shad{X} &=& (r_A)_!(\Delta_A)^*X \\
	{*} & \overset{U_A}\to & \mc C^{A \times A}
	&& U_A &=& (\Delta_A)_!(r_A)^* I
	\end{array} \]
	and isomorphisms $\alpha$, $\lambda$, $\rho$, $\theta$ given by \autoref{fig:assoc_indexed}, \autoref{fig:unit_indexed}, and \autoref{fig:shadow_indexed}.
\end{thm}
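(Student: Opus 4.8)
The plan is to build the three structure operations directly from the stated formulas and then assemble the coherence isomorphisms out of the data that is already packaged inside the SMBF $\mc C$. Functoriality is immediate once one notes that each operation is a composite of three pieces: $\odot = (r_B)_! \circ (\Delta_B)^* \circ \boxtimes$ is a bifunctor $\mc C^{A\times B}\times\mc C^{B\times C}\to\mc C^{A\times C}$ because $\boxtimes$, $(\Delta_B)^*$, and $(r_B)_!$ are all functors; $\shad{X} = (r_A)_!(\Delta_A)^* X$ and $U_A = (\Delta_A)_!(r_A)^* I$ are functors of the same shape, the shadow category being the fiber $\mc C^{*}=\Phi^{-1}(*)$ over the terminal object of $\bS$. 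The substantive task is to produce the isomorphisms $\alpha,\lambda,\rho,\theta$. Following Figures~\ref{fig:assoc_indexed}, \ref{fig:unit_indexed}, and \ref{fig:shadow_indexed}, one expands the definition of $\odot$ so that each of $(X\odot Y)\odot Z$ and $X\odot(Y\odot Z)$ (and likewise the two sides of each unit and rotator law) becomes a composite of $\boxtimes$'s, pullbacks $f^*$, and pushforwards $f_!$ applied to $X\boxtimes Y\boxtimes Z$; to pass from one composite to the other one pastes together, cell by cell, (i) the associator, unitor, and symmetry isomorphisms of $(\mc C,\boxtimes,I)$ together with the strict symmetric monoidality of $\Phi$, (ii) the interchange isomorphisms $f_!X\boxtimes g_!Y\cong(f\times g)_!(X\boxtimes Y)$ and $f^*W\boxtimes g^*Z\cong(f\times g)^*(W\boxtimes Z)$ built into the notion of an SMBF (\autoref{df:SMBF}), and (iii) the Beck--Chevalley isomorphisms $f_!g^*\Rightarrow q^*p_!$.

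Along the way one checks that every square in $\bS$ to which Beck--Chevalley is applied actually lies in the chosen Beck--Chevalley class: these are the squares assembled from the diagonal maps $\Delta_A,\Delta_B,\Delta_C$ and the projections $r_A,r_B,r_C$ on products of the relevant $0$-cells, and each is a strict pullback square in the cartesian monoidal category $\bS$; for $\mc C=\mc R\to\cat{Top}$ all pullback squares are Beck--Chevalley by \autoref{prop:beck_chevalley_spaces}, and in general we assume Beck--Chevalley on pullback squares. One also verifies that the units of the various adjunctions and the structure maps defining $f_!$ and $f^*$ fit together as drawn. This is a routine but lengthy diagram chase, done in full in \cite{ponto_shulman_indexed}.

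The main obstacle is the coherence condition: one must show that the four distinguished strings of $\alpha,\lambda,\rho,\theta$ isolated by B\'enabou and in \cite[4.4.1]{ponto_asterisque} each compose to the identity (by \cite[Thm 4.20]{mp3} these four imply all the rest). For a general SMBF this is handled by the decomposition above: each of the four conditions reduces to a diagram built entirely from instances of the SMBF coherence data --- the pentagon, hexagon, and triangle for $\boxtimes$, the naturality and composition-compatibility of the interchange isomorphisms, and the standard compatibilities of Beck--Chevalley isomorphisms with composition and with the monoidal structure --- each of which holds by hypothesis; this is precisely the content of \cite{ponto_shulman_indexed}. In the case $\mc C=\mc R\to\cat{Top}$ one can instead bypass the bookkeeping entirely: every composite functor occurring above is, up to the canonical isomorphisms, of the form $(X_1,\dots,X_n)\leadsto f_!g^*(X_1\barsmash\cdots\barsmash X_n)$ with $(f,g)$ injective, hence rigid by \autoref{prop:spaces_rigidity}, so any diagram of natural isomorphisms between such functors commutes on the nose. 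In particular $\alpha$, $\lambda$, $\rho$, and $\theta$ are the unique isomorphisms of their respective types, and the four coherence identities hold automatically.

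In short: the construction of the data and of $\alpha,\lambda,\rho,\theta$ is formal and works for any SMBF; the only delicate point is coherence, which for the general statement is the diagram chase of \cite{ponto_shulman_indexed} and for $\mc R$ is subsumed by the rigidity theorem.
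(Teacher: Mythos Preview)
Your proposal is correct and follows essentially the same approach as the paper: construct $\alpha,\lambda,\rho,\theta$ by pasting together the SMBF structure isomorphisms cell-by-cell as in the figures, cite \cite{ponto_shulman_indexed} for the general coherence check, and observe that for $\mc R/\cat{Top}$ the rigidity theorem (\autoref{prop:spaces_rigidity}) makes coherence automatic. The paper's own treatment is in fact briefer than yours---it states the theorem, attributes the general coherence to \cite{ponto_shulman_indexed}, and remarks that ``for $\mc R/\cat{Top}$, this is true by rigidity, but it turns out the isomorphisms are coherent even when we don't have such a rigidity theorem.''
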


\begin{figure}[h]
	{
		\begin{tikzcd}[column sep=30pt,row sep=15pt]
			{\begin{pmatrix} \mathscr{C}^{A\times B}
					\\
					\mathscr{C}^{B\times C}
					\\
					\mathscr{C}^{C\times D}
				\end{pmatrix}
			}
			\ar[d,"{\boxtimes \times 1}"]
			\ar[r,"{1 \times \boxtimes}"]
			&
			{\begin{pmatrix} \mathscr{C}^{A\times B}
					\\
					\mathscr{C}^{B\times C\times C\times D}
				\end{pmatrix}
			}\ar[r,"{1 \times \Delta_C^*}"]\ar[d,"\boxtimes"]
			&
			{\begin{pmatrix} \mathscr{C}^{A\times B}
					\\
					\mathscr{C}^{B\times C\times D}
				\end{pmatrix}
			}\ar[r,"{1 \times (r_C)_!}"]\ar[d,"\boxtimes"]
			&
			{\begin{pmatrix} \mathscr{C}^{A\times B}
					\\
					\mathscr{C}^{B\times D}
				\end{pmatrix}
			}\ar[d,"\boxtimes"]
			\\
			{\begin{pmatrix} \mathscr{C}^{A\times B\times B\times C}
					\\
					\mathscr{C}^{C\times D}
				\end{pmatrix}
			}\ar[r,"\boxtimes"]\ar[d,"{\Delta_B^* \times 1}"]
			&\mathscr{C}^{A\times B\times B\times C\times C\times D}\ar[r,"\Delta_C^*"]\ar[d,"\Delta_B^*"]
			&\mathscr{C}^{A\times B\times B\times C\times D}\ar[r,"(r_C)_!"]\ar[d,"\Delta_B^*"]
			&\mathscr{C}^{A\times B\times B\times D}\ar[d,"\Delta_B^*"]
			\\
			{\begin{pmatrix} \mathscr{C}^{A\times B\times C}
					\\
					\mathscr{C}^{C\times D}
				\end{pmatrix}
			}\ar[r,"\boxtimes"]\ar[d,"{(r_B)_! \times 1}"]
			&\mathscr{C}^{A\times B\times C\times C\times D}\ar[r,"\Delta_C^*"]\ar[d,"(r_B)_!"]
			&\mathscr{C}^{A\times B\times  C\times D}\ar[r,"(r_C)_!"]\ar[d,"(r_B)_!"]
			&\mathscr{C}^{A\times B\times D}\ar[d,"(r_B)_!"]
			\\
			{
				\begin{pmatrix} \mathscr{C}^{A\times C}
					\\
					\mathscr{C}^{C\times D}
				\end{pmatrix}
			}\ar[r,"\boxtimes"]
			&
			\mathscr{C}^{A\times C\times C\times D}\ar[r,"\Delta_C^*"]
			&
			\mathscr{C}^{A\times C\times D}\ar[r,"(r_C)_!"]
			&
			\mathscr{C}^{A\times D}
		\end{tikzcd}
	}
	\caption{Associator (products of categories are written vertically in parentheses)}\label{fig:assoc_indexed}
\end{figure}
\begin{figure}[h]
	\begin{tikzcd}[column sep=50pt,row sep=30pt]
		\sC^{A\times B}\times \ast \ar[d,"1 \times I"]\ar[dr,"\cong"] \\
		\sC^{A\times B}\times \sC^\ast \ar[d,"1 \times r_B^*"]\ar[r,"\boxtimes"]
		&\sC^{A\times B}\ar[d,"r_B^*"]\ar[dr,"\cong"]
		\\
		\sC^{A\times B}\times \sC^B\ar[d,"1 \times (\Delta_B)_!"]\ar[r,"\boxtimes"]
		&\sC^{A\times B\times B}\ar[d,"(\Delta_B)_!"]\ar[r,"\Delta_B^*"]
		&\sC^{A\times B}\ar[d,"(\Delta_B)_!"]\ar[dr,"\cong"]
		\\ 
		\sC^{A\times B}\times \sC^{B\times B}\ar[r,"\boxtimes"]
		&
		\sC^{A\times B\times B\times B}\ar[r,"\Delta_B^*"]
		&
		\sC^{A\times B\times B}\ar[r,"(r_B)_!"]
		&\sC^{A\times B}
	\end{tikzcd}
	\caption{Unitor}\label{fig:unit_indexed}
\end{figure}
\begin{figure}[h]
	\begin{tikzcd}[column sep=50pt,row sep=30pt]
		{\mathscr{C}^{A\times B} \times \mathscr{C}^{B\times A}}
		\ar[r,"{\boxtimes}"]
		\ar[d,"{\boxtimes}"]
		&\mathscr{C}^{B\times A\times A\times B}
		\ar[ld,"\simeq"]
		\ar[r,"\Delta_A^*"]
		\ar[d,"\simeq"]
		&\mathscr{C}^{B\times A\times B}
		\ar[r,"(r_A)_!"]
		\ar[d,"\simeq"]
		&\mathscr{C}^{B\times B}\ar[d,"="]
		\\
		\mathscr{C}^{A\times B \times B\times A}
		\ar[d,"\Delta_B^*"]
		\ar[r,"\simeq"]
		&\mathscr{C}^{A\times A \times B\times B}
		\ar[r,"\Delta_A^*"]
		\ar[d,"\Delta_B^*"]
		&\mathscr{C}^{A\times B\times B}
		\ar[r,"(r_A)_!"]
		\ar[d,"\Delta_B^*"]
		&\mathscr{C}^{B\times B}
		\ar[d,"\Delta_B^*"]
		\\
		\mathscr{C}^{A\times B\times A}
		\ar[d,"(r_B)_!"]
		\ar[r,"\simeq"]
		&\mathscr{C}^{A\times A\times B}
		\ar[r,"\Delta_A^*"]
		\ar[d,"(r_B)_!"]
		&\mathscr{C}^{A\times B}
		\ar[r,"(r_A)_!"]
		\ar[d,"(r_B)_!"]
		&\mathscr{C}^{B}
		\ar[d,"(r_B)_!"]
		\\
		\mathscr{C}^{A\times A}
		\ar[r,"="]
		&
		\mathscr{C}^{A\times A}
		\ar[r,"\Delta_A^*"]
		&
		\mathscr{C}^{A}\ar[r,"(r_A)_!"]
		&
		\mathscr{C}^{\ast}
	\end{tikzcd}
	\caption{Rotator}\label{fig:shadow_indexed}
\end{figure}

The content of this theorem is that the isomorphisms $\alpha$, $\lambda$, $\rho$, and $\theta$ are coherent. For $\mc R/\cat{Top}$, this is true by rigidity, but it turns out the isomorphisms are coherent even when we don't have such a rigidity theorem.

Applying \autoref{thm:SMBF_to_bicategory} to the point-set category $\Osp$ of parametrized orthogonal spectra over all base spaces, gives the \textbf{point-set bicategory of parametrized spectra} $\Osp/\cat{Top}$. The 0-cells are spaces, 1-cells from $A$ to $B$ are parametrized orthogonal spectra over $A \times B$, and 2-cells are maps in $\Osp(A \times B)$. The product, unit, and shadow are defined just as we did for $\mc R/\cat{Top}$. By the rigidity theorem, since we know that isomorphisms $\alpha$, $\lambda$, $\rho$, and $\theta$ exist, they are also unique.\footnote{If we use (CGWH) then not all of these maps are isomorphisms, so we have to restrict to the subcategory of freely $i$-cofibrant spectra, or perhaps the further to the subcategory of freely $f$-cofibrant spectra. This does not introduce any difficulty in the rest of this section, so we won't comment on it again.}

\begin{rmk}
	If $K \in \mc R(A \times B)$ is a retractive space and $X \in \Osp(B \times C)$ is a parametrized spectrum, we can also define a product $K \odot X \in \Osp(A \times C)$ by following the above recipe and using the smash product with a space operation $K \barsmash X$ for the external smash product. This is uniquely naturally isomorphic to $\Sigma^\infty K \odot X$. Under this description, the monoidal fibrant replacement functor $P$ on both spaces and spectra is given by the composition product $PX \cong B^I_{+(B \times B)} \odot X$.
\end{rmk}

Applying \autoref{thm:SMBF_to_bicategory} to the homotopy category $\ho\Osp$ gives the \textbf{homotopy bicategory of parametrized spectra} $\ho \Osp/\cat{Top}$, or $\Ex$ for short. In this bicategory, the 0-cells are again topological spaces and the 1-cells are again orthogonal spectra over $A \times B$, but the 2-cells are maps in the homotopy category $\ho \Osp(A \times B)$ with the stable equivalences inverted.\index{bicategory!of parametrized spectra $\Ex$}

The description just given of $\Ex$ is the standard one in the literature, see \cite{ms,shulman_framed_monoidal}. We can now give three alternative, simplified definitions and show they are canonically isomorphic to this standard one. As in \autoref{lem:two_smashes_over_b_isomorphic}, there are four ways to define $\odot$, $\shad{}$, and the rest of the bicategory structure on the homotopy category:
\begin{enumerate}
	\item Define $\odot$ and $\shad{}$, and $U_A$ as composites of left- and right-deformable functors
	\[ X \odot^\M Y := \L(r_B)_!\R(\Delta_B)^*(X \barsmash^\L Y), \qquad \M\shad{X} := \L(r_A)_!\R(\Delta_A)^*X, \]
	\[ U_A = \Sigma^\infty_{+(A \times A)} A = (\Delta_A)_!(r_A)^*\Sph \simeq \L(\Delta_A)_!\R(r_A)^*\Sph. \]
	Define the isomorphisms $\alpha$, $\lambda$, $\rho$, $\theta$ from the point-set level isomorphisms in the bicategory $\Osp/\cat{Top}$, using \autoref{prop:passing_natural_trans_to_derived_functors}. The relevant composites of functors are coherently deformable, using the subcategory of freely $f$-cofibrant, level $h$-fibrant spectra in the source category, and all of its images.\footnote{Note that the coherent deformability condition is slightly complicated to check for the unit isomorphism $U_A \odot X \cong X$. $X$ is freely $f$-cofibrant and level $h$-fibrant, but $U_A \barsmash X$ is only freely $f$-cofibrant. However the map
	\[ (\Delta_A)^*(U_A \barsmash X) \to \R(\Delta_A)^*(U_A \barsmash X) := (\Delta_A)^*(PU_A \barsmash X) \]
	is isomorphic at each spectrum level to
	\[ A \times_A X_n \to A^I \times_A X_n \]
	and is therefore an equivalence.}
	\item Restrict $\Osp/\cat{Top}$ to the freely $f$-cofibrant spectra. Then the resulting bicategory is right-deformable, i.e. the compositions of $\odot$ and $\shad{}$ are coherently right-deformable (see \autoref{cofibrant_SMBF} and \autoref{rmk:left_deformable_bicategory}), for instance by applying $P$ to the inputs.\index{right-deformable!bicategory} Therefore the isomorphisms between their composites on the point-set level pass to the homotopy category.
	\item Restrict each of the categories $\Osp(A \times B)$ to the subcategory $\Osp(A\times B)^{cfu}$ containing the freely $f$-cofibrant level $h$-fibrant spectra, along with the unit $U_A$ if $A = B$. Using \autoref{prop:stably_derived},
	 the operation $\odot$ preserves these objects and weak equivalences between them. Therefore it passes directly to operations and coherent isomorphisms between them on the homotopy categories $\ho\Osp(A \times B)^{cfu}$, giving a bicategory.\footnote{While this recipe is the simplest, we actually don't think of it as giving a shadowed bicategory because $\shad{U_A}$ has the wrong homotopy type. If we really want to use shadows with this recipe, we can, at the cost of removing the strict units $U_A$.}
	\item As above, apply \autoref{thm:SMBF_to_bicategory} to the homotopy category $\ho\Osp$. In other words, pass $\barsmash$, $f^*$, and $f_!$ and the isomorphisms between them to the homotopy category before assembling them together into $\odot$, $U_A$, and $\shad{}$.
\end{enumerate}

\begin{thm}\label{thm:four_bicategories_of_spectra}
	These give canonically isomorphic shadowed bicategory structures on the homotopy categories $\ho\Osp(A \times B)$.
\end{thm}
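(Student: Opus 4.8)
The plan is to reduce everything to a comparison on a conveniently small subcategory, exactly as in the proofs of \autoref{lem:two_smashes_over_b_isomorphic} and \autoref{prop:unique_sm_structure_on_pullback}, and then to invoke the rigidity theorem \autoref{thm:spectra_rigidity} (together with its analogue for spaces \autoref{prop:spaces_rigidity}) to force all the comparison isomorphisms to agree. First I would observe that recipe (1), recipe (2), and recipe (3) each produce a bicategory whose $1$-cells are all of $\ho\Osp(A\times B)$ (or an equivalent subcategory), and that the underlying derived composition functor $\odot$, the derived shadow $\shad{}$, and the unit $U_A$ are, in all four recipes, canonically identified: this is just uniqueness of left- and right-derived functors (\autoref{prop:derived_functors_universal_property}), applied levelwise through the decomposition $\odot = \L(r_B)_! \circ \R(\Delta_B)^* \circ (- \barsmash^\L -)$ and the corresponding decompositions for $\shad{}$ and $U_A$. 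So the only content is that the structural $2$-cell isomorphisms $\alpha$, $\lambda$, $\rho$, $\theta$ of each recipe correspond under these canonical identifications.

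Next I would restrict attention to the equivalent full subcategory $\Osp(A\times B)^{cfu}$ of freely $f$-cofibrant, level $h$-fibrant spectra (with the units $U_A$ thrown in where needed, as in recipe (3)), since an isomorphism of bicategory structures is determined by its restriction to an equivalent sub-bicategory. On this subcategory, \autoref{prop:stably_derived} and \autoref{prop:spectra_pushout_product} tell us that $\barsmash$, all the relevant $f_!$ along $h$-fibrations, and the composites $\Delta_B^*P$ computing $\R\Delta_B^*$ are homotopical, so every $\L$ and $\R$ decoration may be dropped and $\odot$, $\shad{}$, $U_A$ become literal point-set functors landing back in the subcategory. At this point recipes (1), (2), (3) each produce their associator etc.\ by transporting the point-set isomorphisms of $\Osp/\cat{Top}$ — via \autoref{prop:passing_natural_trans_to_derived_functors} for (1), via right-deformability of the cofibrant bicategory for (2), and directly for (3) — and by inspection of the recipe in \autoref{prop:passing_natural_trans_to_derived_functors} these three agree on this subcategory, just as in the proof of \autoref{lem:two_smashes_over_b_isomorphic}. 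Recipe (4), by contrast, assembles $\alpha$, $\lambda$, $\rho$, $\theta$ out of the derived interchange and Beck--Chevalley isomorphisms on $\ho\Osp$ coming from \autoref{thm:spectra_SMBF} and then feeds them through \autoref{thm:SMBF_to_bicategory}; restricted to $\Osp(A\times B)^{cfu}$, these interchange and Beck--Chevalley isomorphisms lose their $\L$/$\R$ decorations and become the unique point-set interchange and Beck--Chevalley isomorphisms, so recipe (4)'s structural $2$-cells become the point-set ones of $\Osp/\cat{Top}$ as well.

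Finally, to close the loop I would note that any two of these point-set associators (respectively unitors, rotators) on $\Osp(A\times B)^{cfu}$ are natural isomorphisms between functors of the form $(X_1,X_2,X_3)\leadsto f_!g^*(X_1\barsmash X_2\barsmash X_3)$ with $(f,g)$ injective (the relevant maps are projections and diagonals of finite products of the base spaces, whose product maps are visibly injective on underlying sets), hence by \autoref{thm:spectra_rigidity} each such functor is rigid and the isomorphism between any two models is unique; therefore the three point-set structural isomorphisms produced by recipes (1)--(4) coincide. Transporting back along the canonical equivalences $\ho\Osp(A\times B)^{cfu}\simeq\ho\Osp(A\times B)$ then gives the canonical isomorphism of the four shadowed bicategory structures, and the coherence axioms need no separate verification since they hold on the point-set level (or by \autoref{thm:SMBF_to_bicategory}) and the recipe of \autoref{prop:passing_natural_trans_to_derived_functors} respects composition and identities. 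The main obstacle I anticipate is purely bookkeeping: checking that the coherent-deformability hypotheses of \autoref{prop:passing_natural_trans_to_derived_functors} genuinely hold for the long composites appearing in the associator and, especially, for the unit isomorphism $U_A\odot X\cong X$, where $U_A\barsmash X$ is only freely $f$-cofibrant and not level $h$-fibrant; this is exactly the subtlety flagged in the footnote to recipe (1), resolved by observing that $(\Delta_A)^*(U_A\barsmash X)\to(\Delta_A)^*(PU_A\barsmash X)$ is levelwise the equivalence $A\times_A X_n\to A^I\times_A X_n$, so the two composites are nonetheless connected by a natural level equivalence and the recipe applies.
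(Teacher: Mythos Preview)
Your proposal is correct and follows essentially the same strategy as the paper: restrict to the freely $f$-cofibrant, level $h$-fibrant spectra, observe that recipes (1)--(3) agree there by inspection of \autoref{prop:passing_natural_trans_to_derived_functors}, and then compare with recipe (4) by stripping the $\L/\R$ decorations.

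The one genuine difference is in how the comparison with recipe (4) is finished. The paper argues region-by-region through \autoref{fig:assoc_indexed}, \autoref{fig:unit_indexed}, \autoref{fig:shadow_indexed}: it rewrites recipe (2)'s associator as a composite of right-derived small-region isomorphisms (by inserting and removing $P$ at each stage), and then invokes the proofs of \autoref{prop:deform_bifibration} and \autoref{prop:deform_SMBF} to identify each such derived region with the corresponding SMBF isomorphism in $\ho\Osp^c$, which is exactly what recipe (4) composes. You instead assert in one step that recipe (4)'s interchange and Beck--Chevalley isomorphisms become the point-set ones on the $cfu$ subcategory, and then appeal to \autoref{thm:spectra_rigidity} to force agreement. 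Your assertion is correct, but its justification is precisely the content of the proofs of \autoref{prop:deform_bifibration} and \autoref{prop:deform_SMBF}, which you should cite; once you do, the rigidity step becomes redundant, since both sides are then literally the point-set associator of $\Osp/\cat{Top}$ as defined via \autoref{thm:SMBF_to_bicategory}. The paper's region-by-region phrasing makes this dependence explicit and also localizes the unitor difficulty: the non-fibrant object appears at a specific spot in \autoref{fig:unit_indexed}, and the paper resolves it with a Beck--Chevalley square showing $(\Delta_B)^*$ is already derived on $(\Delta_B)_!$ of a fibrant input---which, as the paper notes, is a formalization of the same observation you cite from the footnote to recipe (1).
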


In other words, we can pass from the SMBF to the bicategory before or after taking homotopy categories, and the results are canonically isomorphic.

\begin{rmk}
	Description (4) has been the most common definition of $\Ex$ in the literature so far, but (2) and (3) are often more useful on a technical level. For instance, using (3) it is straightforward to prove that $\ho\Osp/\cat{Top}$ has a ``shadowed $n$-Fuller structure'' as defined in \cite{mp1} -- such a structure exists on the point-set category, using the rigidity theorem to check all the coherence conditions. It then passes to the homotopy category because all of the composites of functors that appear are coherently right-deformable. A careful check of this will appear in \cite{mp4}.
	
	This technique should also allow us to prove that $\Ex$ it is a symmetric monoidal bicategory, even though the list of axioms for this structure is incredibly long \cite[4.4-4.8]{stay_compact_closed}. Unfortunately (3) has a drawback that it doesn't extend well to base-change objects, where (2) is better.
\end{rmk}

\begin{proof} of \autoref{thm:four_bicategories_of_spectra}.
	The canonical isomorphisms $\odot \cong \odot'$ and $\shad{} \cong \shad{}'$ arise because in every recipe $\odot$ and $\shad{}$ are defined as composites of deformations of the same functors. So the content of this theorem is that these canonical isomorphisms commute with the associator, unitor, and rotator isomorphisms. When comparing recipes (1), (2), and (3) this is true essentially by the definition of passing an isomorphism of functors to an isomorphism between its derived functors, \autoref{prop:passing_natural_trans_to_derived_functors}. So it remains to compare (2) to (4) on the subcategory of freely $f$-cofibrant spectra.
	
	Examine \autoref{fig:assoc_indexed}, \autoref{fig:unit_indexed}, and \autoref{fig:shadow_indexed}. We derive each of the functors along each little edge by right-deriving the pullbacks with $P$, and leaving the smash products and pushforwards alone. The isomorphism furnished by (2) says, restrict to fibrant inputs at the top-left, then remove the $P$s, and use the unique point-set isomorphism between the outside routes of the diagram on this input. This large isomorphism is the composite of the isomorphisms for the small square and triangular regions arising from the SMBF $\Osp^c$. So we could get the same isomorphism by leaving the $P$s in, removing and then re-inserting them every time we traverse one square or triangle using them. In other words, we compose the right-derived isomorphisms in each little region. By the proofs of \autoref{prop:deform_bifibration} and \autoref{prop:deform_SMBF}, these right-derived isomorphisms agree with the isomorphisms coming from the SMBF structure on $\ho\Osp^{c}$. This leaves us with the isomorphism from recipe (4).
	
	This argument runs beautifully for the associator and rotator but runs into a small hiccup for the unitor: at the bottom of the second column, in $\mathscr{C}^{A\times B\times B\times B}$, the image of our input is not fibrant, hence we are at risk of not being able to remove $P$. However, it comes from a fibrant object in the category $\mathscr{C}^{A \times B \times B}$ just above. Using the proof of \autoref{prop:deform_SMBF}, we get a commuting square in the homotopy category
	\[ \xymatrix @R=1.7em{
		(\Delta_B)_!(\Delta_B)^* \ar[d] \ar[r]^-\cong & (\Delta_B)^*(\Delta_B)_! \ar[d] \\
		(\Delta_B)_!\R(\Delta_B)^* \ar[r]^-\cong & \R(\Delta_B)^*(\Delta_B)_!
	} \]
	where the horizontal maps are Beck-Chevalley isomorphisms. On fibrant inputs, the left vertical is an isomorphism, hence the right vertical is as well. Therefore on $(\Delta_B)_!$ of a fibrant input, $(\Delta_B)^*$ is equivalent to its right-derived functor. (This is essentially a formalization of the observation we made when we defined recipe (1).) The proof can then proceed as in the other cases.
\end{proof}

\begin{rmk}
	The bicategory $\Ex$ is a framed bicategory in the sense of \cite{shulman_framed_monoidal}. Concretely, this means is a pseudofunctor from the category of spaces into $\Ex$, taking every map of spaces $f\colon A \to B$ to a 1-cell
	\[ \bcr{A}{}{B} := \Sigma^\infty_{+(A \times B)} A \]
	from $A$ to $B$. We call these ``base change'' 1-cells, because taking composition product $\odot$ with one has the effect of pulling back along $f$, or pushing forward, depending on which side we take the product. We also get canonical isomorphisms
\[ \bcr{A}{f}{B} \odot \bcr{B}{g}{C} \cong \bcr{A}{g \circ f}{C} \]
\[ \bcr{A}{\id}{A} \cong U_A \]
that satisfy the same coherences as for a monoidal functor. These arise from \autoref{fig:base_change_composition_indexed}, which is formally similar to the definition of the unitor isomorphism from \autoref{fig:unit_indexed}. In fact, we get such base-change 1-cells for any bicategory arising from an SMBF as in \autoref{thm:SMBF_to_bicategory}, but we do not give a proof of this fact here.
\end{rmk}

\begin{figure}[h]
	\begin{tikzcd}[column sep=50pt,row sep=30pt]
		{\ast}
		\ar[d,"I \times I"]
		\ar[rd,"I"]
		\\
		{\mathscr{C}^{\ast} \times \mathscr{C}^{\ast}}
		\ar[r,"{\boxtimes}"]
		\ar[d,"r_A^* \times r_B^*"]
		& \mathscr{C}^{\ast}
		\ar[d,"r_{A \times B}^*"]
		\ar[rd,"r_A^*"]
		\\
		{\mathscr{C}^{A} \times \mathscr{C}^{B}}
		\ar[r,"{\boxtimes}"]
		\ar[d,"{(1,f)_! \times (1,g)_!}"]
		&\mathscr{C}^{A\times B}
		\ar[r,"{(1,f)^*}"]
		\ar[d,"{((1,f)\times(1,g))_!}"]
		&\mathscr{C}^{A}
		\ar[rd,"{(1,g \circ f)_!}"]
		\ar[d,"{(1,f,g \circ f)_!}"]
		\\
		{\mathscr{C}^{A \times B} \times \mathscr{C}^{B \times C}}
		\ar[r,"\boxtimes"]
		&\mathscr{C}^{A\times B \times B\times C}
		\ar[r,"(1\Delta_B1)^*"]
		&\mathscr{C}^{A\times B\times C}
		\ar[r,"(1r_B1)_!"]
		&\mathscr{C}^{A\times C}
	\end{tikzcd}
	\caption{Base change composition 
	}\label{fig:base_change_composition_indexed}
\end{figure}

We do point out that the recipes (1), (2)\index{right-deformable!base-change objects} and (4) from \autoref{thm:four_bicategories_of_spectra} extend in a natural way to base-change objects and their composition isomorphisms, by the same construction that we used for the unit isomorphism.\footnote{Recipe (3) fails because the base-change objects aren't fibrant, and if you include them, tensoring on one side has the effect of pushing forward fibrant spectra, which creates additional non-fibrant spectra.} The same proof then establishes

\begin{prop}\label{prop:base_change_all_isomorphic}
	These three pseudofunctors from spaces into $\Ex$ are canonically isomorphic.
\end{prop}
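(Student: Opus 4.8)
The plan is to run the argument of \autoref{thm:four_bicategories_of_spectra} essentially verbatim, now with the base-change composition diagram \autoref{fig:base_change_composition_indexed} playing the role that the unitor diagram \autoref{fig:unit_indexed} played there. A pseudofunctor from spaces into $\Ex$ is determined on $0$-cells by the identity assignment, on $1$-cells by $f \mapsto \bcr{A}{f}{B} = \Sigma^\infty_{+(A \times B)} A$, and on coherence data by two families of invertible $2$-cells: the composition isomorphism $\bcr{A}{f}{B} \odot \bcr{B}{g}{C} \cong \bcr{A}{g \circ f}{C}$ and the unit isomorphism $\bcr{A}{\id}{A} \cong U_A$. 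All three of recipes (1), (2), (4) agree on $0$-cells and on $1$-cells --- these are literally the same point-set spectra --- so exhibiting an isomorphism of pseudofunctors comes down to checking that the canonical isomorphisms between the composition products $\odot$ (and between the shadow functors $\shad{-}$) supplied by \autoref{thm:four_bicategories_of_spectra} carry the coherence $2$-cells of one recipe onto those of another.

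First I would dispatch the unit isomorphism $\bcr{A}{\id}{A} \cong U_A$. On the point-set level both sides are the single spectrum $\Sigma^\infty_{+(A \times A)} A$, and by \autoref{thm:spectra_rigidity} the only $2$-cell between a rigid functor and itself is the identity; in each recipe the homotopy-level unit isomorphism is obtained by deforming this point-set identity, so the three agree. The composition isomorphism is the substantive case. As noted in the remark preceding this proposition, \autoref{fig:base_change_composition_indexed} is glued together out of small square and triangular regions of the SMBF $\Osp$ --- each a strong-monoidality square for a pullback, a Beck--Chevalley square, or a counit triangle --- in a way that is formally identical to \autoref{fig:unit_indexed}. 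Consequently the comparison of recipes (1) and (2) is immediate from \autoref{prop:passing_natural_trans_to_derived_functors}, exactly as in \autoref{thm:four_bicategories_of_spectra}: both recipes right-derive every edge of \autoref{fig:base_change_composition_indexed} using $P$ on the pullbacks, and in each case the composition isomorphism is the composite of the derived isomorphisms attached to the individual regions.

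To compare recipe (4) with recipe (2) I would again restrict to freely $f$-cofibrant spectra and follow the pattern of \autoref{thm:four_bicategories_of_spectra}: the isomorphism furnished by recipe (2) is obtained by stripping the $P$'s, running the unique point-set isomorphism supplied by the SMBF $\Osp^c$ around \autoref{fig:base_change_composition_indexed}, and this point-set isomorphism is the composite of those attached to the small regions. Re-inserting $P$ across each region and invoking the proofs of \autoref{prop:deform_bifibration} and \autoref{prop:deform_SMBF} identifies these region-by-region derived isomorphisms with the ones coming from the SMBF structure on $\ho\Osp^c$ --- which is precisely recipe (4). The step I expect to be the main obstacle is the same ``hiccup'' met for the unitor: at the stage of \autoref{fig:base_change_composition_indexed} where a pullback along a diagonal must be right-derived, the input need not be fibrant, only a pushforward of a fibrant spectrum. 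Here I would reuse the commuting square in the homotopy category relating $(\Delta)_!(\Delta)^*$ to $(\Delta)^*(\Delta)_!$ via Beck--Chevalley isomorphisms: on a fibrant input the left vertical is an isomorphism, hence so is the right, so on a pushforward of a fibrant spectrum the pullback is equivalent to its right-derived functor, and the argument then proceeds exactly as in the fibrant case. Once this is in place, the coherence $2$-cells of the three pseudofunctors match under the canonical identifications, and the pseudofunctors are canonically isomorphic.
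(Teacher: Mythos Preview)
Your proposal is correct and matches the paper's approach exactly: the paper simply states that ``the same proof then establishes'' this proposition, meaning the argument of \autoref{thm:four_bicategories_of_spectra} carries over verbatim with \autoref{fig:base_change_composition_indexed} playing the role of \autoref{fig:unit_indexed}. You have spelled out in more detail what that entails, including the anticipated ``hiccup'' at the diagonal-pushforward step, which is handled identically to the unitor case.
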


Specifically, along the canonical isomorphisms between the different models for $\odot$ and $U_A$, the composition and unit isomorphisms for these pseudofunctors agree with each other.

\beforesubsection
\subsection{The bicategory $\Ex_B$}\aftersubsection

If we fix a base $B$, then we can run the entirety of the previous section with $\Osp$ replaced by the category $\Osp_{(B)}$ constructed in \autoref{sec:Osp_B}. We first get a point-set bicategory $\Osp_{(B)}/\cat{Fib}_B$. The 0-cells are fibrations $E \to B$, and the 1- and 2-cells from $D$ to $E$ are the objects and morphisms, respectively of the category $\Osp(D \times_B E)$. The product of $X \in \mc R(D \times_B E)$ and $Y \in \mc R(E \times_B F)$ is defined by the formula
\[ X \odot_B Y := (r_E)_!(\Delta_E)^*(X \barsma{B} Y), \]
\[ \xymatrix @R=1.7em @C=4em{
	X \odot_B Y \ar[d] & \ar[l] \Delta_E^*(X \barsma{B} Y) \ar[d] \ar[r] & X \barsma{B} Y \ar[d] \\
	D \times_B F & \ar[l]_-{r_E} D \times_B E \times_B F \ar[r]^-{\Delta_E} & D \times_B E \times_B E \times_B F.
} \]
We sometimes call this $X \odot_B^E Y$ if the choice of $E$ is not clear. Similarly, the shadow and unit are
\[ \shad{X} := (r_E)_!(\Delta_E)^*X, \qquad U_E := (\Delta_E)_!(r_E)^* \Sph_B \cong \Sigma^\infty_{+(E \times_B E)} E. \]
It is helpful to imagine that these are the same constructions as before, carried out over every point of $B$. In particular, the shadow lands in spectra over $B$, and the shadow of $U_E$ is the fiberwise free loop space $\Sigma^\infty_{+B} \Lambda_B E$, where $\Lambda_B E$ consists of those loops in $\Lambda E$ that are contained in a single fiber over $B$. The same figures as before define associator, unitor, and rotator isomorphisms, and these in turn are unique by the rigidity theorem.

We then build the homotopy bicategory $\ho\Osp_{(B)}/\cat{Fib}_B$, sometimes abbreviated as $\Ex_B$, whose 0- and 1-cells are the same but whose 2-cells are morphisms in the homotopy categories $\ho\Osp(D \times_B E)$. There are again four ways to do this, which are canonically isomorphic, by the same proof as above. We also get a system of base-change 1-cells that agree along these canonical isomorphisms.

We conclude with a technical lemma about $\odot$ and $\odot_B$ at the space level that is often used in conjunction with recipe (2) to tell when the composition product agrees with the right-derived composition product.
\begin{lem}\label{lem:derived_circle_product}
	If $A, C, D$ are $h$-fibrant spaces over $B$, then for $X \in \mc R(A \times_B C)$ and $Y \in \mc R(C \times_B D)$ both $f$-cofibrant, the relative composition product $X \odot_B Y$ is $f$-cofibrant and preserves equivalences so long as $X \to C$ or $Y \to C$ is at least a $q$-fibration.
\end{lem}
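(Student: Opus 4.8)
The strategy is to unwind the definition of $\odot_B$ into its three constituent operations and analyze each one using the space-level lemmas already established. Recall that $X \odot_B Y = (r_E)_!(\Delta_E)^*(X \barsma{B} Y)$, and that $\barsma{B}$ is itself $\Delta_{D,E}^*$ applied to an external smash product over the product of base spaces. So the composition product is built out of: an external smash $\barsmash$, a sequence of pullbacks, and a pushforward $(r_E)_!$. First I would establish the $f$-cofibrancy claim. Since $X$ and $Y$ are $f$-cofibrant over $A \times_B C$ and $C \times_B D$ respectively, \autoref{prop:h_cofibrations_pushout_product}(2) (applied with the source maps being $f$-cofibrations out of the zero object) shows $X \barsmash Y$ is $f$-cofibrant. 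Pullbacks preserve $f$-cofibrations by \autoref{lem:f_star_preserves}, so $\Delta_{D,E}^*$ and $\Delta_E^*$ keep us $f$-cofibrant; and $(r_E)_!$ preserves $f$-cofibrations by \autoref{lem:f_shriek_preserves}. Hence $X \odot_B Y$ is $f$-cofibrant.

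For the homotopical statement, the key is to track which pullbacks need to be along fibrations. I would argue in stages. The external smash $X \barsmash Y$ preserves equivalences between $f$-cofibrant (hence $h$-cofibrant) spaces by \autoref{prop:h_cofibrations_pushout_product}(4). The pullback $\Delta_{D,E}^*$ and the first pullback $\Delta_E^*$ are pullbacks along maps \emph{between $h$-fibrant spaces} — this is where the hypothesis that $A$, $C$, $D$ are $h$-fibrant over $B$ enters, since $A \times_B C \times_B D$ and similar fiber products of $h$-fibrant spaces are $h$-fibrant over the appropriate base, and the diagonal $\Delta_E$ is then an $f$-cofibration of $h$-fibrant spaces by \autoref{lem:heath_kamps}. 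But preserving equivalences under $f^*$ requires the \emph{spaces} being pulled back to be $q$-fibrant (or $h$-fibrant), per \autoref{lem:f_star_preserves}. Here the hypothesis that $X \to C$ or $Y \to C$ is a $q$-fibration does the work: if say $X \to A \times_B C$ composed to $C$ is a $q$-fibration, then $X \barsmash Y \to A \times_B C \times_B D$ is a $q$-fibration (external smash of a fibration with anything $f$-cofibrant, using the dual of \autoref{prop:h_cofibrations_pushout_product} or a direct pushout-product argument), and pulling back along $\Delta_E$ preserves equivalences because the target of that pullback is a fibration over the space we're pulling back to. Finally $(r_E)_!$ preserves equivalences between $h$-cofibrant (in particular $f$-cofibrant) spaces by \autoref{lem:f_shriek_preserves}, which applies since everything remains $f$-cofibrant through the pullbacks.

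The main obstacle I expect is the bookkeeping in the middle: verifying that the fiber products of $h$-fibrant spaces over $B$ are $h$-fibrant in the right sense, and — more delicately — checking that the $q$-fibration hypothesis on one of $X \to C$, $Y \to C$ genuinely propagates through the external smash and survives as a fibration-type condition that makes the pullback along $\Delta_E$ homotopical. The cleanest way to handle this is probably to rephrase the whole composite as the restriction to $B$ of the non-relative $\odot$ construction — i.e.\ use that $\barsma{B} = \Delta_{D,E}^*(- \barsmash -)$ and that $\odot_B$ is the pullback-to-$\cat{Fib}_B$ of $\odot$ from \autoref{thm:SMBF_to_bicategory} — and then cite the corresponding non-relative facts fiberwise over $B$. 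Alternatively, one can simply carry out the pushout-product analysis of $\barsmash$ directly, noting that away from basepoint sections $X \odot_B Y$ is just the fiber product $X \times_C Y$, which preserves equivalences precisely when one leg is a fibration; the fibrancy hypotheses on $A, C, D$ guarantee the basepoint-section part contributes nothing problematic since $\Delta_E$ is then an $f$-cofibration and \autoref{prop:clapp} keeps the relevant pushouts well-behaved. I would present the direct argument, as it parallels the proof of \autoref{cor:internal_smash_properties}.
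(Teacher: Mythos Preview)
The ``direct argument'' you sketch at the end is exactly the paper's proof: pull the defining pushout square \eqref{external_smash} for $\barsmash$ back along the diagonal to $A \times_B C \times_B D$, obtaining
\[ \xymatrix @R=1.7em{
    (A \times_B C \times_C Y) \cup_{A \times_B C \times_B D} (X \times_C C \times_B D) \ar[d] \ar[r] & X \times_C Y \ar[d] \\
    A \times_B C \times_B D \ar[r] & \Delta_C^*(X \barsmash Y),
} \]
note that the horizontal maps are $f$-cofibrations so the square is a homotopy pushout and the pushforward to $A \times_B D$ preserves equivalences, and then check the two nontrivial corners: the top-left preserves equivalences because $A \to B$ and $D \to B$ are $h$-fibrations, and $X \times_C Y$ preserves equivalences because one projection to $C$ is a $q$-fibration. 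Your comparison to \autoref{cor:internal_smash_properties} is apt. (Your mention of \autoref{prop:clapp} is slightly off-target here---the paper does not need it; the $f$-cofibrancy of the horizontal maps already makes the square a homotopy pushout.)

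Your longer step-by-step approach, however, has a genuine gap at the pullback stage. The claim that $X \barsmash Y$ (or $X \barsma{B} Y$) is a $q$-fibration over its base, so that $\Delta_C^*$ preserves equivalences via \autoref{lem:f_star_preserves}, is not justified. There is no ``dual of \autoref{prop:h_cofibrations_pushout_product}'' saying that smashing a $q$-fibrant space with an $f$-cofibrant one yields something $q$-fibrant; part (3) of that proposition requires \emph{both} inputs to be $h$-cofibrant \emph{and} $h$-fibrant, whereas here you only have a $q$-fibration of one factor down to $C$, not even to its full base $A \times_B C$. The external smash is formed by collapsing a wedge, and that quotient generally destroys fibration conditions. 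So the step-by-step route does not go through without unwinding the pushout square---which brings you back to the direct argument you correctly chose to present.
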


\begin{proof}
	Applying $\Delta_C^*$ to the square defining the external smash product gives the pushout square
	\[ \xymatrix @R=1.7em{
		(A \times_B C \times_C Y) \cup_{A \times_B C \times_B D} (X \times_C C \times_B D) \ar[d] \ar[r] & X \times_C Y \ar[d] \\
		A \times_B C \times_B D \ar[r] & \Delta_C^*(X \barsmash Y).
	} \]
	The horizontal maps are $f$-cofibrations over $A \times_B C \times_B D$, so pushing forward to $A \times_B D$ will preserve equivalences, so we only have to focus on $\Delta_C^*(X \barsmash Y)$. Since the square is a homotopy pushout square, it suffices to check that the top two terms preserve equivalences. In the top-left, this is because $A \to B$ and $D \to B$ are fibrations. In the top-right, this uses the assumption that $X \to C$ or $Y \to C$ is a $q$-fibration.
\end{proof}

\begin{cor}
	For $X \in \mc R(A \times C)$ and $Y \in \mc R(C \times D)$ both $f$-cofibrant, the composition product $X \odot Y$ is $f$-cofibrant and preserves equivalences so long as $X \to C$ or $Y \to C$ is at least a $q$-fibration. The same applies when $X$ and $Y$ are spectra that are either freely or level $f$-cofibrant, and $X_n \to C$ or $Y_n \to C$ is a $q$-fibration for each $n$.
\end{cor}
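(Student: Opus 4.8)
The plan is to deduce both statements from \autoref{lem:derived_circle_product}. The space-level statement is nothing more than the special case $B = *$: over a point every space is $h$-fibrant (the projection to $*$ trivially has the homotopy lifting property), the fiber products $A \times_* C$, $C \times_* D$, $A \times_* D$ are ordinary products, $\barsma{*}$ is $\barsmash$, and $\odot_*$ is $\odot$, so \autoref{lem:derived_circle_product} applies verbatim and gives the first sentence with no new argument.

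For the spectrum version I would first rewrite the composition product. By definition $X \odot Y = (r_C)_!(\Delta_C)^*(X \barsmash Y)$, where $X \barsmash Y$ is the external smash of spectra, the enriched left Kan extension along $\oplus \colon \mathscr J \sma \mathscr J \to \mathscr J$ of the bispectrum of retractive spaces $\{X(V) \barsmash Y(V')\}$. The functor $(r_C)_!$ is a left adjoint, hence commutes with this Kan extension, and so does $(\Delta_C)^*$: in (CG) it preserves all colimits by \autoref{prop:f_star_colimits}, and in (CGWH) it preserves the colimits along level closed inclusions out of which the Kan extension is built, which suffices since freely (resp.\ level) $f$-cofibrant spectra are level $f$-cofibrant and hence level closed inclusions, exactly as in the proof of \autoref{prop:spectrum_px}. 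Both functors also commute with smashing by the based spaces $\mathscr J(V \oplus V', W)$, by \autoref{lem:external_smash_and_base_change}. Combining these commutations, one would obtain at each level
\[ (X \odot Y)(W) \;\cong\; \int^{(V,V')} \mathscr J(V \oplus V', W) \barsmash \bigl( X(V) \odot Y(V') \bigr), \]
so that $X \odot Y$ is assembled from the space-level composition products $X(V) \odot Y(V')$ by exactly the Kan-extension recipe that builds $\barsmash$ of spectra out of $\barsmash$ of spaces.

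With this identification the argument would be a transcription of the proof of \autoref{prop:reedy_pushout_product}, with every appeal to the space-level pushout-product facts (\autoref{prop:h_cofibrations_pushout_product}) replaced by the space-level part of the present corollary. By that part, each $X(V) \odot Y(V')$ is $f$-cofibrant and, under the hypothesis that $X_n \to C$ or $Y_n \to C$ is a $q$-fibration for every $n$ — equivalently $X(V) \to C$ or $Y(V') \to C$ is a $q$-fibration for all $V$, $V'$, since $X(V) \cong X(\R^{\dim V})$ — it preserves equivalences in each variable. The latching/skeletal analysis of \S\ref{sec:reedy} would then show that the latching maps assembling $X \odot Y$ are $f$-cofibrations and that the colimits along them are homotopical; hence $X \odot Y$ is level $f$-cofibrant (and freely $f$-cofibrant when $X$ and $Y$ are, everything then being built from free spectra), and it carries level equivalences of freely (resp.\ level) $f$-cofibrant spectra to level equivalences.

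The hard part will be the (CGWH) bookkeeping: one must check that the colimits past which $(\Delta_C)^*$ is commuted — the Kan extension defining $\barsmash$ of spectra and the skeletal filtrations of \S\ref{sec:reedy} — are built entirely from closed inclusions, which is precisely why the hypothesis is level $f$-cofibrancy rather than merely level $h$-cofibrancy, and why the $q$-fibration condition is imposed uniformly in $n$. Everything else is a routine translation of \autoref{lem:derived_circle_product} and the machinery of \S\ref{sec:reedy}.
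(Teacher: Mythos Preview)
Your space-level argument is correct and matches the paper's: it is just \autoref{lem:derived_circle_product} with $B=*$.

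For spectra, however, there is a genuine gap. Your Kan-extension rewriting and the subsequent Reedy analysis establish only that $X\odot Y$ carries \emph{level} equivalences to level equivalences. The corollary, as the paper's own proof makes explicit, is about \emph{stable} equivalences: this is what is needed for $\odot$ to agree with its right-derived version in $\Ex_B$. A stable equivalence between freely $f$-cofibrant spectra need not be a level equivalence, so your argument as written does not finish the job. The missing step is exactly the bootstrap pattern of \autoref{prop:stably_derived}: once you know $\odot$ preserves level equivalences under the $q$-fibration hypothesis, replace $X$ and $Y$ by the level-equivalent spectra $PQX$, $PQY$, which are freely $f$-cofibrant and level $h$-fibrant; on that subcategory $\odot = (r_C)_!(\Delta_C)^*(-\barsmash-)$ preserves stable equivalences directly by \autoref{prop:spectra_pushout_product} and \autoref{prop:stably_derived}, since $\barsmash$ then lands in level $h$-fibrant spectra where $(\Delta_C)^*$ is already derived.

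Interestingly, the two approaches are complementary. The paper asserts ``$X\odot Y$ preserves level equivalences under this assumption'' without detailed justification and then does the bootstrap; your Kan-extension identity $(X\odot Y)(W)\cong \int^{(V,V')}\mathscr J(V\oplus V',W)\barsmash(X(V)\odot Y(V'))$ together with the space-level lemma is precisely what supplies that justification. So your work fills in the step the paper glosses over, but you still need the paper's final move to upgrade from level to stable. A secondary caution: for the merely \emph{level} $f$-cofibrant case your appeal to the Reedy machinery of \S\ref{sec:reedy} is shaky, since level $f$-cofibrant does not imply semifreely $f$-cofibrant; the paper sidesteps this by immediately passing to $QX$, which is freely $q$-cofibrant.
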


\begin{proof}
	The space-level statement is immediate. For the spectrum-level statement we argue as in \autoref{prop:stably_derived}: the product $X \odot Y$ preserves level equivalences under this assumption, but up to level equivalence we can make $X$ and $Y$ freely $f$-cofibrant and level $h$-fibrant, at which point $X \odot Y$ preserves stable equivalences by \autoref{prop:stably_derived} and \autoref{prop:spectra_pushout_product}.
\end{proof}

\beforesubsection
\subsection{The Reidemeister trace}\label{sec:trace}\aftersubsection

We briefly explain how the framework developed in this paper gives a cleaner and more robust connection between the bicategory $\Ex$ and fixed-point theory. In particular, we get a new proof that the point-set formula for the Reidemeister trace from e.g. \cite{crabb_james} agrees with the bicategorical-trace definition of Ponto \cite{ponto_asterisque,ponto_shulman_general}. We only give summaries here -- see \cite{ponto_shulman_general} and 
\cite{malkiewich2019parametrized} for more details.

\begin{df}\label{def:dual_in_bicategory}
In a bicategory $\mathscr B$, a 1-cell $X$ from $A$ to $B$ is \textbf{dualizable over $B$} if there is another 1-cell $Y$ from $B$ to $A$ and maps
	\[ c\colon U_A \to X \odot Y \]
	\[ e\colon Y \odot X \to U_B \]
	such that the following two composites are identity maps.
	\[ X \cong U_A \odot X \to X \odot Y \odot X \to X \odot U_B \cong X \]
	\[ Y \cong Y \odot U_A \to Y \odot X \odot Y \to U_B \odot Y \cong Y \]
\end{df}\index{dualizable!in a bicategory}
In this case, the \textbf{trace}\index{trace!in a bicategory} of a map $f\colon X \to X$ is defined as the composite
\[ \xymatrix @R=0.5em @C=3em{
	\shad{U_A} \ar[r]^-{\shad{c}} & \shad{X \odot Y} \ar[r]^-{\shad{f \odot \id}} & \shad{X \odot Y} \ar@{<->}[r]^-{\cong} & \shad{Y \odot X} \ar[r]^-{\shad{e}} & \shad{U_B}.
} \]
More generally, if $Q$ is a 1-cell from $A$ to $A$, and $P$ goes from $B$ to $B$, the \textbf{twisted trace} of a map $f\colon Q \odot M \to M \odot P$ is the composite
\begin{equation}\label{twisted_trace}
\xymatrix @R=0.5em @C=3em{
	\shad{Q} \ar@{<->}[r]^-\cong & \shad{Q \odot U_A} \ar[r]^-{\shad{\id \odot c}} & \shad{Q \odot X \odot Y} \ar[r]^-{\shad{f \odot \id}} & \shad{X \odot P \odot Y} \ar@{<->}[r]^-\cong & \shad{Y \odot X \odot P} \ar[r]^-{\shad{e \odot \id}} & \shad{P}.
}
\end{equation}
For any space $X$, the parametrized spectrum $\bcr{X}{}{*} = \Sigma^\infty_{+(X \times *)} X$ gives a 1-cell in $\Ex$ from $X$ to $*$. It turns out this is dualizable over $X$ when $X$ is a finite cell complex, or more generally when $X$ is finitely dominated.

For any self-map $f\colon X \to X$ of a finitely dominated space $X$, by \autoref{prop:base_change_all_isomorphic} there is a canonical isomorphism of base-change objects
\[ \xymatrix{ \bcr{X}{}{*} \ar[r]^-\cong & \bcr{X}{}{X} \odot \bcr{X}{}{*} } \]
arising from the fact that the composite $X \overset{f}\to X \to *$ agrees with the unique map $X \to *$. The trace of this isomorphism becomes a map in the homotopy category
\[ \xymatrix{ \shad{\Sph} \ar[r]^-{R(f)} & \shad{\Sigma^\infty_{+(X \times X)} X^f} \simeq \Sigma^\infty_+ \Lambda^f X, } \]
from the sphere spectrum $\Sph$ to the suspension spectrum of the twisted free loop space $\Lambda^f X$. We call this map the \textbf{Reidemeister trace} of $f$.

In the special case that $X$ is a Euclidean neighborhood retract (ENR), embedded into $\R^n$ with neighborhood retract $p\colon N \to X$, the coevaluation and evaluation maps of \autoref{def:dual_in_bicategory} have explicit descriptions in terms of Pontryagin-Thom collapse maps. Results of this form are well-known, see for instance \cite[D]{klein2001dualizing}, \cite[18.5]{ms}, and \cite[2.4]{ww1}, and a version for the current framework is spelled out explicitly in \cite{malkiewich2019parametrized}. We therefore get a concrete, explicit formula for the Reidemeister trace, as the $n$-fold desuspension of the map of topological spaces
\begin{equation}\label{point_set_rf}
\xymatrix @R=0em{
		S^n \ar[r] & S^n_\epsilon \sma (\Lambda^f X)_+ \\
		v \ar@{|->}[r] & (v - f(p(v))) \sma \gamma_{f(p(v)),v}.
	}
\end{equation}
Here $\gamma_{f(p(v)),v}$ is defined for $v \in \R^n$ sufficiently close to a fixed point of $f$ in $X \subseteq \R^n$, by taking a straight-line path in $\R^n$ from $v$ to $f(p(v))$ and projecting back to $X$. See \autoref{fig:reidemeister_formula}.

\begin{figure}[h]
	\centering
	\def\svgwidth{0.85\columnwidth}
	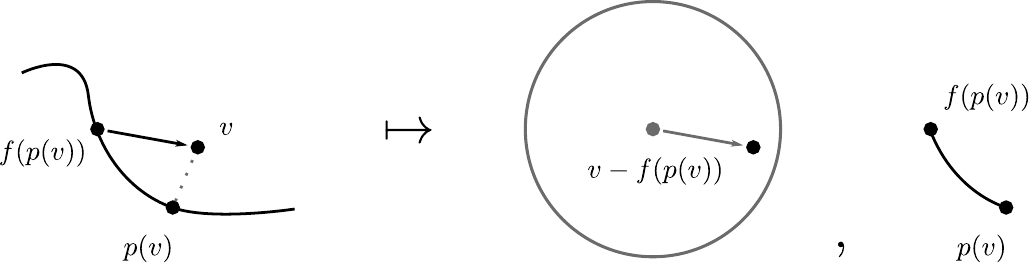
	\caption{A point-set formula for the Reidemeister trace.}\label{fig:reidemeister_formula}
\end{figure}

\begin{rmk}
	The agreement of classical indices like \eqref{point_set_rf} with the abstract trace of \eqref{twisted_trace} was previously carried out by comparing both to the Reidemeister trace as formed in chain complexes, see \cite{ponto_asterisque}. The foundations developed in this paper permit a more direct proof that does not pass through chain complexes: we simply observe that the products in \eqref{twisted_trace} can be formed in the homotopy category by forming them on the point-set level and making the inputs cofibrant and fibrant. Then we check that the resulting formula is \eqref{point_set_rf}. See \cite{malkiewich2019parametrized} for details.
\end{rmk}

\section{Genuine equivariance}\label{sec:G}

In this final section we describe how to insert a compact Lie group $G$ of equivariance everywhere. Almost every theorem remains true, with the same proof, but there are a few places where an extra argument is needed. We say very little about genuine fixed points, geometric fixed points, or multiplicative norm constructions, because these require additional arguments that go beyond the scope of this paper.

\beforesubsection
\subsection{Parametrized spaces}\aftersubsection

Let $G$ be a compact Lie group. A $G$-space is a space $B$ with a left action by $G$, and a map $f\colon A \to B$ of $G$-spaces is equivariant when it commutes with the action of each $g \in G$. We say the map is non-equivariant when the condition does not necessarily hold. A retractive $G$-space over $B$ is a $G$-space $X$ such that the inclusion and projection are both equivariant.

Let $G\mc R(B)$ be the category of retractive $G$-spaces over $B$. The morphisms are equivariant maps commuting with the inclusion and projection. We also have a larger category $G\mc R(B)^\non$, equivalent to $\mc R(B)$, with the same objects but where the morphisms are non-equivariant maps. The mapping spaces of $G\mc R(B)^\non$ have a $G$-action by conjugation,
\[ g(f) := g \circ f \circ g^{-1}, \]
and the $G$-fixed maps are precisely the equivariant ones. Unless otherwise noted, all results use the category $G\mc R(B)$ of equivariant maps.

By convention, we only consider closed subgroups of $G$. If $H \leq G$, the \textbf{$H$-fixed points} $X^H$ is the closed subspace of $X$ on which $hx = x$ for all $h \in H$. This is a retractive space over $B^H$, with an action by the Weyl group $WH = NH/H$. This defines a functor $G\mc R(B) \to WH\mc R(B^H)$.

When defining equivariant weak equivalences and $q$-fibrations, we ask that the equivariant map $f\colon X \to Y$ be a weak equivalence (resp. $q$-fibration) on the fixed points
\[ f^H: X^H \to Y^H \]
for all (closed) $H \leq G$.\index{$q$-fibration!equivariant} For equivariant $h$-fibrations, $h$-cofibrations, and $f$-cofibrations, we ask that the relevant retract or section be $G$-equivariant.\index{$h$-fibration!equivariant}\index{$h$-cofibration!equivariant}\index{$f$-cofibration!equivariant}

\begin{lem}
	The $H$-fixed point functor preserves every notion of equivalence, fibration, and cofibration.
\end{lem}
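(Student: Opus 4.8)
The plan is to treat each notion separately, handing off the various cases to the most direct available characterization. For equivariant weak equivalences and $q$-fibrations, the relevant statement is essentially a tautology: by definition an equivariant map $f\colon X\to Y$ in $G\mc R(B)$ is a weak equivalence (resp.\ $q$-fibration) precisely when $f^K\colon X^K\to Y^K$ is a weak equivalence (resp.\ $q$-fibration) of spaces for every closed subgroup $K\leq G$. So I would first observe that, for a fixed closed $H\leq G$, the closed subgroups $K$ of $WH=NH/H$ correspond to closed subgroups $H\leq K'\leq NH$ of $G$ containing $H$, and that $(X^H)^{K'/H}=X^{K'}$ as retractive spaces over $B^{K'}$. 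Hence the collection of fixed-point conditions defining an equivariant equivalence or $q$-fibration of $WH$-spaces is a subcollection of those defining an equivariant equivalence or $q$-fibration of $G$-spaces, and the claim follows immediately for these two notions.

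For the $h$-fibrations, $h$-cofibrations, and $f$-cofibrations, the point is that each is defined by the existence of a $G$-equivariant retract or section (of the path-space projection, of the inclusion into the mapping cylinder, or of the same inclusion respecting the projection to $B$, respectively). I would argue that applying the $H$-fixed-point functor to such an equivariant retract or section produces a $WH$-equivariant retract or section of the corresponding map built from $X^H$ and $Y^H$. The key small lemma here is that $(-)^H$ commutes with the constructions that appear in these lifting data: it preserves products with $I$, pushouts along closed inclusions such as $(X\times I)\cup_{X\times\{0\}}(Y\times\{0\})$, and the path space $(-)^I$, since these are all limits or colimits along closed inclusions that the fixed-point functor respects (this is where I would invoke the standard behavior of $(-)^H$ on (CGWH) or (CG) spaces, e.g.\ \cite[App A]{lewis_appendix}). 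Given that, a $G$-equivariant retract $r$ restricts to a $WH$-equivariant map on the $H$-fixed points, and one checks it is still a retract of the $H$-fixed version of the original inclusion; the same works verbatim for the fiberwise ($f$-cofibration) case because taking $H$-fixed points is compatible with the projection to $B^H$.

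I expect the only mild obstacle to be bookkeeping: making sure that $(X^H)^{K'/H}$ really is computed correctly and that the $WH$-action on $X^H$ is the one implicitly used in the definition of equivariant equivalence for $WH\mc R(B^H)$. Once that identification is pinned down, every case is a one-line restriction argument. I would therefore organize the write-up as: (i) the fixed-point iteration identity $(X^H)^{K'/H}\cong X^{K'}$; (ii) the weak-equivalence and $q$-fibration cases as immediate corollaries; (iii) a short paragraph noting $(-)^H$ commutes with the relevant (co)limits and hence carries equivariant retracts/sections to $WH$-equivariant ones, dispatching the $h$-fibration, $h$-cofibration, and $f$-cofibration cases. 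No serious calculation is needed.
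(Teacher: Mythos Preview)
The paper states this lemma without proof, treating it as an elementary observation. Your plan is correct and supplies exactly the routine verification the paper omits: the weak-equivalence and $q$-fibration cases follow from the iterated-fixed-point identity $(X^H)^{K'/H}\cong X^{K'}$, while the $h$-fibration, $h$-cofibration, and $f$-cofibration cases follow because $(-)^H$ is a functor that commutes with products with $I$, path spaces, and the relevant pushouts along closed inclusions, so it carries an equivariant retract or section to a $WH$-equivariant one. You might add one clause for closed inclusions ($i$-cofibrations), since ``every notion'' in this section includes those as well; this is immediate because $H$-fixed points of a closed subspace are closed.
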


The remaining technical lemmas, \autoref{prop:technical_cofibrations} through \autoref{lem:closed_inclusion_equalizer}, hold for the equivariant versions of equivalence, fibration, and cofibration. In particular, equivariant $h$-fibrations are preserved by pushout along an equivariant $f$-cofibration (\autoref{prop:clapp}). The weak equivalences and $q$-fibrations form a Quillen model structure on $G\mc R(B)$, and the cofibrations are generated by maps of the form
	\[ \begin{array}{rlll}
	(G/H \times S^{n-1})_{+B} \to (G/H \times D^n)_{+B}, & n,k \geq 0, & H \leq G, & D^n \to B.
\end{array} \]

For an equivariant map $f\colon A \to B$ of base spaces, the pullback $f^*$, pushforward $f_!$, and sheafy pushforward $f_*$ are defined as in \autoref{sec:base_change}. They also define functors on the larger category of nonequivariant maps. On the category of equivariant maps, the Beck-Chevalley isomorphism is equivariant, the adjunction $(f_! \adj f^*)$ is Quillen, and this adjunction is a Quillen equivalence whenever $f\colon A \to B$ is an equivariant weak equivalence.

The base-change functors preserve equivariant cofibrations, fibrations, and weak equivalences under the same conditions as in \autoref{lem:f_star_preserves} and \autoref{lem:f_shriek_preserves}. But the proof of this requires the following additional lemma:
\begin{lem}\label{lem:commute_with_fixed_points}
	$f^*$ commutes with $H$-fixed points for $H \leq G$. $f_!$ commutes with $H$-fixed points (CGWH) always, (CG) on $i$-cofibrant spaces.
\end{lem}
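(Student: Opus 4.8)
The statement breaks into two claims: (a) the pullback $f^*$ commutes with $H$-fixed points for every closed $H \leq G$, and (b) the pushforward $f_!$ commutes with $H$-fixed points, always in (CGWH) and on $i$-cofibrant spaces in (CG). Both are essentially point-set verifications, but the two halves have rather different flavors because $f^*$ is a limit (pullback) and $f_!$ is a colimit (pushout), and fixed points interact cleanly with limits but only conditionally with colimits.

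For claim (a), I would argue directly from the explicit pullback formula $f^*X = A \times_B X$. Given a closed subgroup $H \leq G$, the $G$-action on $f^*X$ is the diagonal action on $A \times_B X$, so its $H$-fixed points are $(A \times_B X)^H = A^H \times_{B^H} X^H$, simply because fixed points commute with finite limits (here a fiber product) in any category of $G$-spaces — the underlying set of the fixed points of a product is the product of the fixed sets, and the topology agrees because closed subspaces of products of compactly generated spaces are again compactly generated. On the other side, the restriction of $f$ to fixed points is $f^H\colon A^H \to B^H$, and $(f^H)^*(X^H) = A^H \times_{B^H} X^H$ by definition. These are visibly the same retractive space over $A^H$, naturally in $X$, which gives the claim. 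This step is routine and I expect no obstacle.

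For claim (b), I would start from the pushout formula $f_!X = X \cup_A B$ and use the fact (recalled in the \autoref{def:all_ret_spaces} discussion, and in the \autoref{prop:f_star_colimits} proof) that in (CGWH) every retractive space is $i$-cofibrant, so $i_X\colon A \to X$ is a closed inclusion, and the pushout $X \cup_A B$ along the pair of maps $(i_X, \id_A)$ is formed as a quotient of $X \amalg B$. The key point is that fixed points commute with this particular pushout: because $A \to X$ is a $G$-equivariant closed inclusion, the $H$-fixed points $(X \cup_A B)^H$ agree with $X^H \cup_{A^H} B^H$. One verifies this first on underlying sets (the quotient only identifies points of $A \subseteq X$ with their images in $B$, and this identification is compatible with taking $H$-fixed subsets since $A^H \subseteq X^H$), and then checks the topologies match, using that a pushout along a closed inclusion in (CGWH)-spaces restricts to the corresponding pushout on closed $H$-fixed subspaces — this is where the weak Hausdorff/closed-inclusion hypothesis does real work, exactly as in the colimit arguments of \autoref{prop:f_star_colimits}. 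In (CG), where $i_X$ need not be a closed inclusion, one imposes $i$-cofibrancy of $X$ to recover the same conclusion. Then $(f_!X)^H = X^H \cup_{A^H} B^H = (f^H)_!(X^H)$, naturally in $X$.

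The main obstacle is the topology-comparison in claim (b): showing that forming the pushout and then taking $H$-fixed points yields the same topology as taking $H$-fixed points first and then forming the pushout. The set-level statement is immediate, but the topological refinement requires knowing that closed $H$-fixed subspaces behave well under pushouts along closed inclusions of (CGWH)-spaces. I would handle this by citing the general behavior of such colimits established in \cite{strickland2009category} and the appendix references \cite{lewis_appendix}, \cite{schwede_global} already invoked in \autoref{sec:spaces}, noting that $(-)^H$ is a right adjoint on the category of $G$-spaces (it has left adjoint $G/H \times_{?} (-)$ in the appropriate sense, or more simply it preserves closed subspaces and limits) combined with the fact that the relevant pushout is computed the same way in $G$-spaces as in spaces with $H$-action. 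Everything else is bookkeeping, and the naturality statements are automatic from the universal properties of the pullback and pushout.
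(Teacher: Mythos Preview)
Your proposal is correct and follows essentially the same approach as the paper. The paper does not give a formal proof but immediately after the lemma states the key observation you identified: $H$-fixed points commute with pushouts along closed inclusions (but not all pushouts, as witnessed by $* \leftarrow \Z/2 \to *$), and in (CGWH) the basepoint section $A \to X$ is always a closed inclusion, while in (CG) this is exactly the $i$-cofibrancy hypothesis. One small slip: the pushout defining $f_!X$ is along $(i_X, f)$, not $(i_X, \id_A)$ as you wrote, but this does not affect the argument.
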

The issue is that $H$-fixed points don't commute with all pushouts, only pushouts along closed inclusions. This is not something that switching between (CG) and (CGWH) solves. It happens even in simple examples like the pushout of
\[ \xymatrix{
	{*} & \ar[l] \Z/2 \ar[r] & {*}.
} \]
However, working in (CGWH) makes all of the basepoint sections $B \to X$ into closed inclusions, so that $f_!$ is always a pushout along a closed inclusion. For this reason and others, working in (CGWH) becomes more convenient in the equivariant theory than it was in the non-equivariant theory.

As a corollary, the monoidal fibrant replacement $PX$ has the same properties as in \autoref{prop:px_properties}. It commutes with $H$-fixed points under the same assumptions as in \autoref{lem:commute_with_fixed_points}.

We define $X \barsmash Y$ as in \autoref{def_external_smash}, and get an isomorphism of spaces over $A^H \times B^H$
\begin{equation}\label{eq:smash_fixed_points}
(X \barsmash Y)^H \cong X^H \barsmash Y^H
\end{equation}
for $i$-cofibrant $X$ and $Y$.

When defining $\barmap_B(Y,Z)$ we use the space of \emph{non-equivariant} maps, with $G$ acting by conjugation, so that $\barmap_B(Y,Z)$ is a $G$-equivariant retractive space over $A$ when $Y \in G\mc R(B)$ and $Z \in G\mc R(A \times B)$. When translating the proofs to the equivariant case, we remember that most of the maps that appear are equivariant, only the ones representing points in a mapping space $\Map(-,-)$ are not. In particular, equivariant maps of retractive spaces $Y' \to Y$ and $Z \to Z'$ induce an equivariant map
\[ \barmap_B(Y,Z) \to \barmap_B(Y',Z'), \]
and we get a bijection both on equivariant and on non-equivariant maps
\[ X \barsmash Y \to Z \quad \textup{ over }A \times B \quad \longleftrightarrow \quad X \to \barmap_B(Y,Z) \quad \textup{ over }A. \]
Letting $S^V$ denote the one-point compactification of an orthogonal $G$-representation $V$, this gives us fiberwise reduced suspension and based loop functors
\[ \Sigma^V_B X := S^V \barsmash X, \quad \Omega^V_B X := \barmap_*(S^V,X) \]
for $X \in G\mc R(B)$. Note that $\barmap_B(Y,Z)$ does not commute with fixed points, there is instead a restriction map
\[ \barmap_B(Y,Z)^H \to \barmap_{B^H}(Y^H,Z^H). \]

The pushout-product and pullback-hom interact with cofibrations, fibrations, and weak equivalences just as in \autoref{prop:h_cofibrations_pushout_product} and \autoref{h_fibrations_pullback_hom}. The relevant variant of \autoref{lem:pullback_hom_adjunction} asks for all the dotted maps to be non-equivariant and all the solid maps to be equivariant. Though, when proving that the pullback-hom preserves weak equivalences, without loss of generality we only have to consider its $G$-fixed points, and then all the maps become $G$-equivariant and the cells do not require an extra $G/H$.

The external smash product commutes with pullbacks and pushforwards as in \autoref{lem:external_smash_and_base_change}, so we get canonical isomorphisms $PX \barsmash PY \cong P(X \barsmash Y)$ as in \autoref{prop:P_strong_monoidal}.

The first significant change in the equivariant theory is that the functor $f_!g^*(X_1 \barsmash \ldots \barsmash X_n)$ is not rigid. A counterexample is given by any span of the form
\[ \xymatrix{ \Z/2 & \ar[l] (\Z/2) \amalg (\Z/2) \ar[r]^-\cong & \Z/2 \times \Z/2. } \]
However, we can organize the non-equivariant rigidity theorem, \autoref{prop:spaces_rigidity}, into the following statement.
\begin{prop}[Rigidity]\label{prop:Gspaces_rigidity}
Suppose $n \geq 0$ and we have maps of $G$-spaces
	\[ \xymatrix{ B & \ar[l]_-f A \ar[r]^-g & C_1 \times \ldots \times C_n } \]
	such that $(f,g)\colon A \to B \times C_1 \times \ldots \times C_n$ is injective.
	
	Then the functor $G\mc R(C_1) \times \ldots \times G\mc R(C_n) \to G\mc R(B)$ given by
	\[ \Phi\colon (X_1,\ldots,X_n) \leadsto f_!g^*(X_1 \barsmash \ldots \barsmash X_n) \]
	has only one automorphism that is natural with respect to the non-equivariant maps. I.e., only one automorphism of this functor extends to an automorphism of functors
	\[ G\mc R(C_1)^\non \times \ldots \times G\mc R(C_n)^\non \to G\mc R(B)^\non. \]\index{rigidity!for $G$-spaces}
\end{prop}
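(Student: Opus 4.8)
The plan is to bootstrap up from the non-equivariant rigidity theorem \autoref{prop:spaces_rigidity}. Let $\eta$ be an automorphism of $\Phi$ which is natural with respect to all non-equivariant maps of retractive $G$-spaces; in particular each $\eta_{\vec X}$ is an \emph{equivariant} self-map of $\Phi(\vec X)$. We must show $\eta = \id$. Throughout I would work in (CGWH), as the equivariant theory does.

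First I would reduce to fixed points. A map of retractive $G$-spaces over $B$ is the identity as soon as each of its restrictions $(-)^H$ to $H$-fixed points is, for every closed $H \leq G$, so it suffices to prove $(\eta_{\vec X})^H = \id$ for all $H$ and $\vec X$. By \eqref{eq:smash_fixed_points}, \autoref{lem:commute_with_fixed_points}, and the fact that all objects here are $i$-cofibrant in (CGWH), there is a natural identification $(\Phi(\vec X))^H \cong \Phi^H((\vec X)^H)$, where $\Phi^H\colon \prod_i WH\mc R(C_i^H) \to WH\mc R(B^H)$ is the functor $f^H_!(g^H)^*(- \barsmash \cdots)$ built from the restricted maps; crucially the product map $(f^H,g^H)$ is still injective, being the restriction of $(f,g)$. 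Thus $(\eta)^H$ is a $WH$-equivariant automorphism of $\Phi^H$, natural with respect to those non-equivariant maps of $WH$-spaces that arise as $(-)^H$ of maps in $\prod_i G\mc R(C_i)^\non$.

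The core argument then imitates the proof of \autoref{prop:spaces_rigidity} one fixed-point level at a time, by induction on the isotropy group $H$ (points of larger isotropy handled first). Fix $H$ and a point $x$ of $\Phi^H((\vec X)^H)$, not on the basepoint section, over $b \in B^H$; lift it along the surjections in that proof to a tuple $(\xi_1,\ldots,\xi_n)$ with $\xi_i \in (X_i)^H$ over a point $c_i \in C_i^H$, together with $a \in A^H$ satisfying $f(a)=b$, $g(a)=(c_1,\ldots,c_n)$. The substitute for the test object $*_{+C_i}$ of \autoref{prop:spaces_rigidity} is the orbit object $Z_i := (G\xi_i)_{+C_i}$, a genuine retractive $G$-space over $C_i$ carrying a $G$-equivariant map $m_i\colon Z_i \to X_i$ (inclusion of the orbit, plus the section) that hits $\xi_i$; as in \autoref{prop:spaces_rigidity} one checks $x$ lies in the image of $\Phi^H(\vec m^H)$, so it is enough to show $(\eta_{\vec Z})^H$ fixes the corresponding point $w \in \Phi^H((\vec Z)^H)$. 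For this I would combine three ingredients. (i) Naturality with respect to the non-equivariant ``collapse'' maps $Z_i \to Z_i$ retracting the part of the orbit lying over the complement of a small neighbourhood of $c_i$ onto the section: letting the neighbourhoods shrink and using that points are closed in (CGWH), this forces $(\eta_{\vec Z})^H(w)$ to have the same images in the $C_i$ of its orbit-coordinates, hence (by injectivity of $(f,g)$) the same $a$-coordinate, so it lies in the fibre of $\prod_i G\xi_i \to \prod_i C_i$ over $(c_1,\ldots,c_n)$, a product of homogeneous spaces of the $G$-stabilizers of $c_i$ and $\xi_i$. (ii) The inductive hypothesis $(\eta_{\vec X})^{H'} = \id$ for all $H' \supsetneq H$, which disposes of the contributions of larger isotropy. (iii) A faithfulness argument in the style of \cite[3.17]{malkiewich_cyclotomic_dx}: the residual ambiguity is an equivariant self-map of a free-orbit stratum which is shown to agree with the identity along every choice of orbit, and since $G$ acts faithfully this pins it to the identity. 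The base case is $H = G$, where every $c_i$ is $G$-fixed, so $*_{+C_i}$ is itself a retractive $G$-space and the proof of \autoref{prop:spaces_rigidity} applies essentially verbatim.

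The main obstacle is precisely ingredient (iii), the free-orbit stratum. The functor $\Phi$ applied to an orbit object need not be rigid at all — this is exactly what the counterexample $\Z/2 \leftarrow (\Z/2)\amalg(\Z/2) \to \Z/2 \times \Z/2$ preceding the statement exploits — so one genuinely cannot avoid naturality with respect to the non-equivariant maps (this is the whole reason for passing to $G\mc R(C_i)^\non$), and the argument must be arranged, following \cite[3.17]{malkiewich_cyclotomic_dx}, so that those maps separate the relevant points; some care is also needed to see that the collapse maps are continuous and that the shrinking neighbourhoods can be chosen compatibly over the (possibly non-discrete) fibre $f^{-1}(b)$.
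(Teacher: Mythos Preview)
The paper gives no separate proof here; it presents the proposition as a direct repackaging of the non-equivariant rigidity theorem (\autoref{prop:spaces_rigidity}), with the words ``we can organize the non-equivariant rigidity theorem into the following statement.'' The intended argument is simply to run that proof verbatim: forget $G$, lift a point $x \in \Phi(\vec X)$ to $(x_1,\ldots,x_n)$, and use naturality with respect to the non-equivariant test maps $m_i\colon *_{+C_i} \to X_i$ to conclude $\eta(x)=x$.

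Your proposal takes a much more elaborate and genuinely different route --- reduction to $H$-fixed points for every closed $H$, induction on isotropy, orbit test objects $(G\xi_i)_{+C_i}$, collapse maps, and a faithfulness step. You have correctly identified a subtlety the paper glosses over: the test object $*_{+C_i}$ is a retractive $G$-space only when $c_i$ is $G$-fixed, so naturality of $\eta$ (assumed only for maps between $G$-spaces) does not literally apply to the $m_i$. However, your workaround has real gaps. Most seriously, for non-discrete $G$ the orbit $G\xi_i$ is connected, so there is no continuous non-equivariant self-map of $(G\xi_i)_{+C_i}$ over $C_i$ that collapses the complement of a proper open set onto the basepoint section --- any such map is discontinuous at the boundary, and ``shrinking the neighbourhoods'' does not repair this. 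The induction on isotropy subgroups is also not obviously well-founded for compact Lie $G$, and step (iii) is only gestured at. For finite $G$ your outline could plausibly be completed, but for positive-dimensional $G$ the collapse-map ingredient fails as written.
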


\begin{rmk}
	Suppose that we also take $G$-fixed points. Then the proof of \autoref{prop:spaces_rigidity} shows that the functor $f_!g^*(X_1 \barsmash \ldots \barsmash X_n)^G$ has a unique automorphism. In (CG), we restrict to $i$-cofibrant spaces so that we don't have to worry about whether we should take $(-)^G$ or $f_!$ first. This result and \autoref{prop:Gspaces_rigidity} can be generalized: for any $H \leq G$, the functor $f_!g^*(X_1 \barsmash \ldots \barsmash X_n)^H$ has only one automorphism that is natural with respect to the $H$-equivariant maps.
\end{rmk} 

\autoref{prop:Gspaces_rigidity} makes $G\mc R$ into a symmetric monoidal category in a canonical way.\footnote{Namely, we choose the associator to be the unique isomorphism that is natural with respect to the non-equivariant maps, and similarly for the other isomorphisms.} This makes $G \mc R$ into an SMBF over the category $G\cat{Top}$ of unbased $G$-spaces, with Beck-Chevalley for the strict pullback squares of $G$-spaces.

We define the internal smash product $X \sma_B Y$ as in \autoref{sec:internal}, making $G\mc R(B)$ into a symmetric monoidal category. In the proof of the third point of \autoref{cor:internal_smash_properties}, we reduce to the non-equivariant case, because all of the constructions commute up to homeomorphism with the fixed points $(-)^H$.

\autoref{sec:derived} is entirely formal, so it applies equally well to the equivariant case. The pushforward $f_!$ and external smash product $\barsmash$ are still left-deformable, while the pullback $f^*$ and external mapping space $\barmap_B(-,-)$ are right-deformable, and we get the same composites of coherently deformable functors as in the nonequivariant case.

\beforesubsection
\subsection{Parametrized spectra}\aftersubsection

In the interest of brevity we only discuss orthogonal spectra. A parametrized orthogonal $G$-spectrum is an object with left $G$-action, in the category of parametrized orthogonal spectra $\Osp$. Equivalently, it consists of a $G$-space $B$, regarded as a $G \times O(n)$-space for every $n$ by having $O(n)$ act trivially, a sequence of equivariant retractive spaces $X_n \in (G \times O(n))\mc R(B)$, and $G$-equivariant bonding maps
\[ \sigma\colon \Sigma_B X_n \to X_{1+n}, \]
such that each composite
\[ \sigma^p\colon S^p \barsmash X_q \to \ldots \to X_{p + q} \]
is $O(p) \times O(q)$-equivariant. Note that the $G$ and $O(n)$ actions on $X_n$ commute. For each $b \in B$ the fiber spectrum $X_b$ is usually not a $G$-spectrum, but rather a spectrum with an action of the stabilizer subgroup $G_b = \{ g \in G : gb = b \}$.\index{orthogonal spectra!equivariant category $G\Osp(B)$}

As in the non-parametrized case \cite[V.1.5]{mandell2002equivariant}, this is equivalent to giving a diagram over the category $\mathscr J_G$, with one object for every finite-dimensional $G$-representation $V \subset \mc U$ in a fixed complete $G$-universe $\mc U$. The morphism $G$-spaces $\mathscr J_G(V,W)$ are defined just as $\mathscr J(V,W)$, but using the \emph{non-equivariant} linear isometric maps $V \to W$. This space inherits a $G$-action by conjugation, making $\mathscr J_G$ into a category enriched in $G$-spaces.\index{orthogonal spectra!equivariant indexing category $\mathscr J_G$}

As in \cite[V.1.5]{mandell2002equivariant}, the category of $\mathscr J_G$-diagrams in $G\mc R(B)^\non$ is equivalent to the category of orthogonal $G$-spectra, by restricting $\mathscr J_G$ to the trivial representations. More concretely, $X(V)$ is isomorphic to $X_n$, with $G$ acting through a homomorphism $G \leq G \times O(n)$ that is the graph of some smooth homomorphism $\rho\colon G \to O(n)$. So $X(V)^H \cong X_n^\Gamma$ for some (closed) subgroup $\Gamma$ of such a graph. We call such a $\Gamma$ a \textbf{graph subgroup} of $G \times O(n)$. In other words, it is a subgroup whose elements are of the form $(g,\rho(g))$ for some fixed homomorphism $\rho\colon G \to O(n)$.\footnote{If we consider instead the larger class of subgroups that come from homomorphisms $H \to O(n)$, we are keeping track of representations that have an $H$-action that might not extend to a $G$-action. This eventually leads to the ``complete'' model structure on $G$-spectra from \cite[App B]{hhr}.}

As in \autoref{sec:spectra}, we can define the free spectrum $F_V A$ on a retractive $G$-space $A$ and a nontrivial $G$-representation $V$. At level $n$ this gives the retractive space $\mathscr J_G(V,\R^n) \barsmash A$. The space $\mathscr J_G(V,\R^n)$ is a $G \times O(n)$-CW complex by an application of Illman's triangulation theorem \cite{illman}.

A \textbf{level equivalence}\index{level equivalence!equivariant} is a map of spectra $X \to Y$ inducing an equivalence $X(V)^H \to Y(V)^H$ for all $G$-representations $V$ and all (closed) subgroups $H \leq G$. Similarly a \textbf{level $q$-fibration}\index{$q$-fibration!equivariant level} is a map for which $X(V)^H \to Y(V)^H$ is a Serre fibration for all $V$ and $H$. Note that $X \to Y$ is a level equivalence or $q$-fibration precisely when $X_n^\Gamma \to Y_n^\Gamma$ is an equivalence or $q$-fibration for every $n \geq 0$ and every graph subgroup $\Gamma \leq G \times O(n)$.

A \textbf{level $h$-fibration}\index{$h$-fibration!equivariant level} is a map for which $X_n \to Y_n$ is a $G \times O(n)$-equivariant $h$-fibration for all $n$. This is strictly stronger than asking for $X(V) \to Y(V)$ to be a $G$-equivariant $h$-fibration for all $V$. In a \textbf{level $f$-cofibration}\index{$f$-cofibration!equivariant level} or \textbf{level $h$-cofibration}\index{$h$-cofibration!equivariant level} we ask that $X_n \to Y_n$ is a $G \times O(n)$-equivariant $f$-cofibration or $h$-cofibration, respectively. Again, this implies that $X(V) \to Y(V)$ is a $G$-equivariant cofibration for all $V$.

Consider all maps of spectra over $B$ of the form $F_V(K \to X)$, where $K \to X$ is an equivariant $f$-cofibration of equivariantly $f$-cofibrant retractive $G$-spaces over $B$. The class of \textbf{free $f$-cofibrations}\index{$f$-cofibration!equivariant free} is the smallest class of maps of spectra containing the above, and closed under pushout, transfinite composition, and retracts. The \textbf{free $h$-cofibrations}\index{$h$-cofibration!equivariant free} are defined similarly, and for \textbf{free $q$-cofibrations}\index{$q$-cofibration!equivariant free} we ask for $K \to X$ to be a relative $G$-cell complex, so that the free $q$-cofibrations are generated by the maps of the form
\[ F_V((G/H \times S^{n-1})_{+B}) \to F_V((G/H \times D^n)_{+B}). \]

\begin{lem}
	Every free cofibration is a level cofibration. The free spectrum functor $F_V$ sends cofibrant spaces to freely cofibrant spectra. It sends $h$-cofibrant, $h$-fibrant spaces to level $h$-fibrant spectra.
\end{lem}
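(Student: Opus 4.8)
The plan is to follow the non-equivariant proofs of \autoref{prop:free_implies_level} and the lemma immediately after it, inserting the compact Lie group $G \times O(n)$ of equivariance at spectrum level $n$ and replacing $\mathscr J$ by $\mathscr J_G$. First, since colimits of spectra are computed levelwise and each class of $(G \times O(n))$-equivariant cofibrations in $(G \times O(n))\mc R(B)$ is closed under pushouts, transfinite compositions, and retracts (the equivariant form of \autoref{prop:technical_cofibrations}(1)), I would reduce ``every free cofibration is a level cofibration'' to the claim that each generator $F_V(K \to X)$ is a level cofibration. At level $n$ this map is $\mathrm{id}_{\mathscr J_G(V,\R^n)} \barsmash (K \to X)$, with $\mathscr J_G(V,\R^n)$ carrying its full $(G \times O(n))$-action and with $K$, $X$ carrying the trivial $O(n)$-action. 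By Illman's triangulation theorem $\mathscr J_G(V,\R^n)$ is a based $(G \times O(n))$-CW complex, so the basepoint inclusion $0 \to \mathscr J_G(V,\R^n)$ is a $(G \times O(n))$-equivariant $q$-cofibration, hence also an $h$-, $f$-, and $i$-cofibration (the last three being automatic since it lies over $*$). As in the non-equivariant argument, $\mathrm{id}_{\mathscr J_G(V,\R^n)} \barsmash (K \to X)$ is precisely the pushout-product $(0 \to \mathscr J_G(V,\R^n)) \square (K \to X)$ over $* \times B = B$, so it is an $h$-, $f$-, or $i$-cofibration by the equivariant form of \autoref{prop:h_cofibrations_pushout_product} applied to the group $G \times O(n)$, and a $q$-cofibration by the equivariant form of \autoref{ex:smashing_spaces_is_left_Quillen}. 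Hence $F_V(K \to X)$ is a level cofibration of the appropriate flavor.

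The second assertion is immediate from the definitions: if $A \in G\mc R(B)$ is cofibrant then $0 \to A$ is a cofibration, $F_V$ takes the zero object to the zero spectrum, and $F_V(0 \to A)$ is by construction a free cofibration, so $F_V A$ is freely cofibrant. For the third assertion I would again argue levelwise: at level $n$, $F_V A = \mathscr J_G(V,\R^n) \barsmash A$, where $\mathscr J_G(V,\R^n)$ is $(G \times O(n))$-equivariantly $h$-cofibrant (being a $(G \times O(n))$-CW complex) and $h$-fibrant (lying over $*$), and where $A$, carrying the trivial $O(n)$-action, remains $h$-cofibrant and $h$-fibrant as a $(G \times O(n))$-equivariant retractive space over $B$ whenever it is $G$-equivariantly so. Then part (3) of the equivariant \autoref{prop:h_cofibrations_pushout_product}, with group $G \times O(n)$, gives that $\mathscr J_G(V,\R^n) \barsmash A$ is $(G \times O(n))$-equivariantly $h$-fibrant, i.e.\ $F_V A$ is level $h$-fibrant.

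The only genuinely new input beyond the non-equivariant proof is that $\mathscr J_G(V,\R^n)$ is a $(G \times O(n))$-CW complex, which is exactly Illman's triangulation theorem as recalled in the excerpt. The remaining obstacle is purely bookkeeping: one must check that the earlier equivariant pushout-product statements apply verbatim when the ambient group is the product $G \times O(n)$ and the $O(n)$-factor acts trivially on the space-level inputs. This causes no difficulty, since $G \times O(n)$ is itself a compact Lie group and the relevant cofibration conditions only constrain the whole-group equivariant retracts and sections, not the precise form of the action.
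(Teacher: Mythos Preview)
Your proposal is correct and follows essentially the same approach as the paper. The paper does not spell out a proof for this equivariant lemma, but the surrounding text makes clear the intended argument is the non-equivariant one (\autoref{prop:free_implies_level} and the lemma after it) with the single new input that $\mathscr J_G(V,\R^n)$ is a based $(G\times O(n))$-CW complex by Illman's theorem; this is exactly what you do, including the identification of the level-$n$ map as the pushout-product $(0\to\mathscr J_G(V,\R^n))\,\square\,(K\to X)$ and the appeal to the equivariant \autoref{prop:h_cofibrations_pushout_product}.
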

	
\begin{prop}[Level model structure]
	The level equivalences, free $q$-cofibrations, and level $q$-fibrations define a proper model structure on $G\Osp(B)$. It is cofibrantly generated by
\[ \resizebox{\textwidth}{!}{
$
\begin{array}{ccrllll}
I &=& \{ \ F_V\left[ (G/H \times S^{n-1})_{+B} \to (G/H \times D^n)_{+B} \right] & : n \geq 0, & V \subset \mc U, & H \leq G, & D^n \to B^H \ \} \\
J &=& \{ \ F_V\left[ (G/H \times D^n)_{+B} \to (G/H \times D^n \times I)_{+B} \right] &: n \geq 0, & V \subset \mc U, & H \leq G, & (D^n \times I) \to B^H \ \}.
\end{array}
$
}
\]
\end{prop}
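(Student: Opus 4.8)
The plan is to verify the hypotheses of the recognition theorem for cofibrantly generated model categories, \cite[2.1.19]{hovey_model_cats}, following the proof of the nonequivariant \autoref{prop:level_model_structure} almost verbatim, but replacing each ingredient by its $G$-equivariant counterpart from the preceding subsection and from the equivariant forms of \autoref{prop:technical_cofibrations}, \autoref{prop:h_cofibrations_pushout_product}, and \autoref{prop:free_implies_level}.

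First I would record how the lifting conditions translate. Using the adjunction between the free spectrum functor $F_V$ and evaluation at $V$, a map $p\colon X \to Y$ of orthogonal $G$-spectra over $B$ is $I$-injective (resp.\ $J$-injective) if and only if for every $V \subset \mc U$ the map $p(V)$ in $G\mc R(B)$ has the right lifting property against the generating cofibrations $(G/H\times S^{n-1})_{+B}\to(G/H\times D^n)_{+B}$ (resp.\ the generating acyclic cofibrations $(G/H\times D^n)_{+B}\to(G/H\times D^n\times I)_{+B}$) of the equivariant Quillen model structure on $G\mc R(B)$; by the usual orbit adjunction this says precisely that $p(V)^H$ is an acyclic Serre fibration (resp.\ a Serre fibration) of retractive spaces over $B^H$ for all $H\leq G$. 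Since $X(V)\cong X_n$ with $G$ acting through the graph of a homomorphism $G\to O(n)$, one has $p(V)^H\cong p_n^\Gamma$ for a graph subgroup $\Gamma\leq G\times O(n)$, and every graph subgroup arises in this way when $\mc U$ is complete; hence ``$I$-injective'' means ``$p$ is a level $q$-fibration and a level equivalence'' and ``$J$-injective'' means ``$p$ is a level $q$-fibration.'' In particular $I\text{-inj} = W\cap J\text{-inj}$, which supplies both hypothesis (3) of \cite[2.1.19]{hovey_model_cats} and the alternative (4), $W\cap J\text{-inj}\subseteq I\text{-inj}$.

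Next I would check the remaining hypotheses. The class $W$ of level equivalences satisfies two-out-of-three and is closed under retracts because it is detected by the homotopy groups of the fixed-point spaces $X(V)^H$. The domains of $I$ and $J$ satisfy the countable smallness condition used in the proof of \autoref{thm:stable_model_structure}: every free $q$-cofibration is a level $h$-cofibration by the equivariant \autoref{prop:free_implies_level}, hence a level closed inclusion, and each such domain is a free spectrum on a compact $G$-space, so any map out of it into a cell complex factors through a finite stage. For $J\text{-cell}\subseteq W\cap I\text{-cof}$: every map in $J$ is $F_V$ applied to the equivariant deformation retract inclusion $(G/H\times D^n)_{+B}\to(G/H\times D^n\times I)_{+B}$, hence a level equivalence and a free (so level) $h$-cofibration; level equivalences are stable under coproducts, under pushouts along level $h$-cofibrations, and under sequential colimits along level $h$-cofibrations, by the equivariant gluing and colimit lemmas applied on each spectrum level and each graph subgroup, so every $J$-cell complex lies in $W$, and it lies in $I\text{-cof}$ since $J\subseteq I\text{-cof}$. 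Applying \cite[2.1.19]{hovey_model_cats} then produces the cofibrantly generated model structure with the stated generators.

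For properness, the cofibrations are free $q$-cofibrations, hence level $h$-cofibrations by the equivariant \autoref{prop:free_implies_level}, so left properness follows by applying the equivariant gluing lemma on each spectrum level and each fixed-point space, and right properness follows dually from its analogue for $q$-fibrations. The only step requiring real care is the bookkeeping in the translation above: identifying the evaluations $X(V)^H$ with the graph-subgroup fixed points $X_n^\Gamma$, so that the generating set $I$ --- which ranges over all $V\subset\mc U$ and all orbit types $G/H$ --- detects exactly the level equivalences that are level $q$-fibrations. Once this identification is in hand the argument is formally identical to the nonequivariant one.
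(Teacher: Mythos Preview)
Your proposal is correct and follows essentially the same approach as the paper. The paper's own proof of the nonequivariant level model structure (\autoref{prop:level_model_structure}) is a one-line reference to \cite[2.1.19]{hovey_model_cats}, and the equivariant statement is presented without a separate proof, the surrounding text indicating that the argument is identical with the evident equivariant substitutions; your write-up simply spells out those substitutions (the free/evaluation adjunction, the graph-subgroup bookkeeping identifying $X(V)^H$ with $X_n^\Gamma$, and the equivariant gluing/colimit lemmas) in more detail than the paper does.
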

	
The pullback $f^*$, pushforward $f_!$, and monoidal fibrant replacement functor $P$ lift to spectra as before by applying them to each level separately. They have the same properties as in \autoref{prop:spectrum_px} and \autoref{lem:spectrum_f_preserves}. We let $G\Osp$ denote the category of orthogonal $G$-spectra over all $G$-spaces, as in \autoref{all_spectra}.

The external smash product functors
\[ \xymatrix @R=0.3em{
	\barsmash\colon G\mc R(A) \times G\Osp(B) \to G\Osp(A \times B) \\
	\barsmash\colon G\Osp(A) \times G\Osp(B) \to G\Osp(A \times B)
} \]
and their right adjoints $\barF_A(K,-)$, $\barmap_B(X,-)$, and $\barF_A(K,-)$ have the same definitions as in \autoref{sec:smash_with_space}, and are unique up to canonical (not unique) isomorphism. When smashing two spectra, we may either left Kan extend along the direct sum map $\mathscr J_G \sma \mathscr J_G \to \mathscr J_G$, or take the non-equivariant smash product that extends along the map $\mathscr J \sma \mathscr J \to \mathscr J$, and give the result the diagonal $G$-action. Both approaches give canonically isomorphic results. The description using $\mathscr J_G$ is useful however for verifying that
\begin{equation}\label{eq:smash_free}
	F_V X \barsmash F_W Y \cong F_{V \oplus W} (X \barsmash Y)
\end{equation}
for nontrivial $G$-representations $V$ and $W$, as in \autoref{barsmash_free}.\footnote{We insist that $V \oplus W$ is a model of the direct sum of $V$ and $W$ that is a subspace of $\mc U$. The choice of subspace does not change the answer, up to canonical isomorphism.} As in \autoref{prop:spectra_external_smash_and_base_change}, the external smash product always commutes with pushforward, and commutes with pullback either in (CG) or in (CGWH) when the spectra are freely $i$-cofibrant.

As in \autoref{prop:Gspaces_rigidity}, the functor $f_!g^*(X_1 \barsmash \ldots \barsmash X_n)$ is not rigid, but it is rigid with respect to non-equivariant maps. The version for spectra is as follows, and has the same proof as \autoref{thm:spectra_rigidity}.
\begin{thm}[Rigidity]\label{thm:Gspectra_rigidity}
	Suppose $n \geq 0$ and we have maps of $G$-spaces $(f,g)$ as in \autoref{prop:Gspaces_rigidity}. Then any functor $G\Osp(C_1) \times \ldots \times G\Osp(C_n) \to G\Osp(B)$ isomorphic to
	\[ \Phi\colon (X_1,\ldots,X_n) \leadsto f_!g^*(X_1 \barsmash \ldots \barsmash X_n) \]
	has a canonical isomorphism to $\Phi$, the one that is natural on the larger category of $G$-spectra and non-equivariant maps $G\Osp(C_1)^\non \times \ldots \times G\Osp(C_n)^\non$.
	
	The same is true on any full subcategory of $\Osp(C_1) \times \ldots \times \Osp(C_n)$ containing all $n$-tuples of the form $(F_{V_1} (*_{+C_1}),\ldots, F_{V_n} (*_{+C_n}))$.\index{rigidity!for $G$-spectra}
\end{thm}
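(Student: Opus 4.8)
The plan is to follow the proof of \autoref{thm:spectra_rigidity} essentially verbatim, feeding in the equivariant space-level rigidity \autoref{prop:Gspaces_rigidity} in place of \autoref{prop:spaces_rigidity}, and being careful about the one point where equivariance forces us to work in the category of non-equivariant maps. As always, it is enough to show that $\Phi$ has a unique self-natural-transformation once we insist that it be natural with respect to the \emph{non}-equivariant maps; i.e. that the extension of $\Phi$ to a functor $G\Osp(C_1)^\non \times \ldots \times G\Osp(C_n)^\non \to G\Osp(B)^\non$ admits only the identity as a natural automorphism. Granting this, any functor isomorphic to $\Phi$ (equivariantly) is isomorphic to it by a unique isomorphism, and the canonical choice is the one that extends over the non-equivariant morphisms.

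First I would use the adjunction between the free spectrum functors $F_V$ and evaluation at $V$, exactly as in the first paragraph of the proof of \autoref{thm:spectra_rigidity}, to conclude that any such automorphism $\eta$ of $\Phi$ is determined by its values on the $n$-tuples $(F_{V_1}(*_{+C_1}), \ldots, F_{V_n}(*_{+C_n}))$ with the free points mapping to chosen $(c_1, \ldots, c_n)$; it therefore suffices to show $\eta$ is the identity on these tuples. This reduction also immediately settles the ``full subcategory'' claim, since it uses only these particular tuples.

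Next I would reduce to the space level. By \eqref{eq:smash_free} and \autoref{barsmash_free} the external smash product of these free spectra is $F_{V_1 \oplus \ldots \oplus V_n}$ of the external smash of the spaces $*_{+C_i}$, and $f_!$, $g^*$, and $F_V$ commute suitably (using \autoref{lem:commute_with_fixed_points}, with no cofibrancy worry since each $*_{+C_i}$ is $i$-cofibrant). By \autoref{prop:Gspaces_rigidity}, applied to the span $(f,g)$, $\eta$ is already trivial on $\Phi$ of the suspension spectra of the tuples $(*_{+C_1}, \ldots, *_{+C_n})$, i.e. on the level-$0$ free spectra. To upgrade this to general $F_{V_i}$, I would run the argument of \cite[3.17]{malkiewich_cyclotomic_dx}: over any point $b \in B$, the endomorphism given by $\eta$ restricts, for the stabilizer $G_b$, to a self-map of a spectrum of the form $F_{V_1 + \ldots + V_n} S^0$, and this self-map agrees with the identity of $F_0 S^0$ after restricting along any chosen point of the sphere $S^{V_1 + \ldots + V_n}$; since the orthogonal group of $V_1 \oplus \ldots \oplus V_n$ acts faithfully on that sphere, the self-map must be the identity. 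Hence $\eta$ is the identity on every tuple of free spectra, completing the argument.

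The main obstacle is bookkeeping rather than a new idea: the space-level rigidity argument pins down $\eta$ on an arbitrary point of the total space of $\Phi(X_1, \ldots, X_n)$ by factoring that point through maps $m_i\colon *_{+C_i} \to X_i$ which need not be $G$-equivariant, so the entire verification must take place in the category with non-equivariant morphisms and only be restricted to the equivariant subcategory at the very end --- this is exactly why the statement of \autoref{thm:Gspectra_rigidity} is phrased in terms of naturality on the categories $G\Osp(-)^\non$. A secondary point to get right is the interaction of $f_!$ with fixed points in (CG), which is why we keep everything $i$-cofibrant throughout; in (CGWH) this is automatic.
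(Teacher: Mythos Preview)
Your proposal is correct and matches the paper's approach exactly: the paper simply states that the result ``has the same proof as \autoref{thm:spectra_rigidity},'' and you have correctly unpacked what that means, including the key equivariant modification of invoking \autoref{prop:Gspaces_rigidity} in place of \autoref{prop:spaces_rigidity} and insisting on naturality with respect to non-equivariant maps. Your additional remarks about $i$-cofibrancy and the stabilizer $G_b$ are reasonable elaborations that the paper leaves implicit.
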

	
\beforesubsection
\subsection{Skeleta and semifree cofibrations}\label{sec:G_reedy}\aftersubsection

The equivariant version of \autoref{sec:reedy} requires some additional lemmas, primarily because the level equivalences are sensitive to the entire $G \times O(n)$-action at level $n$. As a result, we must make the equivariant version of a Reedy cofibration more sensitive to the $O(n)$-action.

Let $O$ be a compact Lie group, such as $O = O(n)$, and let $H$ be a (closed) subgroup. We give $O$ a left $H$-action with $h$ acting by $o \mapsto oh^{-1}$. For a left $H$-space $X$ we let $O \times_H X$ denote the quotient of $O \times X$ by the diagonal left $H$-action.
\begin{lem}\label{lem:induce_equivalences}
	The functor $O \times_H -$ sends $H$-equivalences of unbased spaces to $O$-equivalences.
\end{lem}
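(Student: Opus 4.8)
The plan is to reduce an $O$-equivariant statement about weak homotopy equivalences to the non-equivariant input by analyzing fixed-point subspaces. Since an $O$-equivariant map $Y \to Y'$ of $O$-spaces is an $O$-equivalence precisely when $Y^K \to (Y')^K$ is a weak equivalence for every closed subgroup $K \leq O$, it suffices to understand the $K$-fixed points of $O \times_H X$ for a given $H$-space $X$. The key identification is a homeomorphism
\[
(O \times_H X)^K \;\cong\; \coprod_{[o]} (oHo^{-1} \cap K)\text{-fixed points of }X,
\]
where the disjoint union runs over a set of representatives of the double cosets $K \backslash O / H$ with $oHo^{-1} \supseteq$ (the relevant conjugate of) $K$; more precisely, a point $[o,x] \in O \times_H X$ is $K$-fixed iff for every $k \in K$ there is $h \in H$ with $ko = oh$ and $hx = x$, so the stabilizer condition picks out $x$ lying in the $H$-fixed points of the subgroup $o^{-1}Ko \cap H$. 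First I would write out this double-coset decomposition carefully and observe that each summand is a fixed-point subspace $X^{L}$ for a closed subgroup $L \leq H$, so an $H$-equivalence $X \to X'$ induces a weak equivalence $X^L \to (X')^L$ on each summand.

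\textbf{Main steps.} The steps, in order: (1) Recall/cite that $O$-equivalences are detected on all closed-subgroup fixed points (this is the definition of equivalence in $G\mc R$ specialized to $G = O$, with trivial base). (2) For a fixed closed $K \leq O$, establish the fixed-point formula $(O \times_H X)^K$ as a disjoint union indexed by those double cosets $KoH$ for which $o^{-1}Ko \leq H$, with the summand indexed by $KoH$ being naturally homeomorphic to $X^{o^{-1}Ko}$. This is a standard computation with the orbit $O/H$, using that $O \times_H X \to O/H$ is an $O$-map whose fiber over $eH$ is $X$ with its $H$-action; one uses compactness of $O$ to know the double coset space is discrete-enough (actually $O/H$ is a smooth manifold, and $K$ acts on it, with $(O/H)^K$ a disjoint union of orbit-type pieces — here I would lean on the Lie-group structure so that $(O/H)^K$ really is a nice space, perhaps only a disjoint union of manifolds, but what matters is that the map to it is a fibration-like decomposition over components). (3) Apply the hypothesis: on each summand the map $X^{o^{-1}Ko} \to (X')^{o^{-1}Ko}$ is a weak equivalence because $X \to X'$ is an $H$-equivalence and $o^{-1}Ko$ is a closed subgroup of $H$. (4) Conclude $(O \times_H X)^K \to (O \times_H X')^K$ is a weak equivalence for every $K$, hence $O \times_H X \to O \times_H X'$ is an $O$-equivalence.

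\textbf{Anticipated obstacle.} The main subtlety is step (2): making the fixed-point decomposition of $O \times_H X$ precise and genuinely natural in $X$. The cleanest route is to first treat $X = \text{pt}$, identify $(O/H)^K$ as a disjoint union of homogeneous spaces $\coprod_{[o]} Z_K(o)/(Z_K(o) \cap oHo^{-1})$ over double-coset representatives with $o^{-1}Ko \leq H$ (where $Z_K(o)$ is a suitable centralizer/stabilizer), and then observe that the total space $O \times_H X$ restricted over each such component is, after trivializing the bundle $O \times_H X \to O/H$ locally, a twisted product whose $K$-fixed points are $X^{o^{-1}Ko}$ bundled over the component. One must be slightly careful that $O/H$ need not be a single orbit type and that the "local trivialization" is compatible with taking $K$-fixed points — but since everything in sight is a closed inclusion of compactly generated (weak Hausdorff) spaces and $O$ is a compact Lie group, the fixed-point functor commutes with the relevant (finite, along closed inclusions) colimits, and the argument goes through. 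I expect no real difficulty beyond bookkeeping; an alternative, if one wants to avoid the explicit double-coset formula, is to note that $O \times_H -$ is a left adjoint preserving colimits and $O$-cofibrations, reduce to the case $X = H/L$ for closed $L \leq H$ (by cellular induction on an equivariant CW replacement, using that every $H$-space is $H$-equivalent to an $H$-CW complex and $O \times_H -$ sends $H$-CW to $O$-CW), and then compute $O \times_H (H/L) \cong O/L$ directly — reducing the claim to the evident fact that $O/L \to O/L$, the identity, is an $O$-equivalence, together with the gluing/colimit lemmas for $O$-equivalences along $O$-cofibrations. I would present whichever of these two arguments is shorter once the details are written out, likely the CW-reduction version since it reuses the machinery already developed.
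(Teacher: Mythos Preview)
Your overall strategy---compute $(O\times_H X)^K$ and show it is built from fixed-point spaces $X^L$ for closed $L\leq H$---matches the paper's, but step~(2) as stated is incorrect. The formula
\[
(O\times_H X)^K \;\cong\; \coprod_{[KoH]} X^{o^{-1}Ko}
\]
is the finite-group double-coset formula and fails for compact Lie groups: already for $K=\{e\}$ the left side is $O\times_H X$, which is certainly not a coproduct of copies of $X$. You half-recognize this in the ``anticipated obstacle'' (the pieces should be bundles over components of $(O/H)^K$, not single fibers), but you never actually write down a correct decomposition or explain why a fiberwise weak equivalence of such bundles is a weak equivalence of total spaces. That last step is the real content, and nothing in your outline supplies it.

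The paper's fix is to cite the precise formula from Schwede's book,
\[
(O\times_H X)^K \;\cong\; \coprod_{(\alpha)} Q_\alpha \times_{C(\alpha)} X^{\alpha(K)},
\]
indexed by conjugacy classes of continuous homomorphisms $\alpha\colon K\to H$, where $C(\alpha)\leq H$ is the centralizer of $\alpha(K)$ and $Q_\alpha\leq O$ is a certain subspace. The essential point---which your proposal is missing---is that $C(\alpha)$ acts \emph{freely} on $Q_\alpha$ (since it already acted freely on $O$). Freeness is what lets one conclude: induct over the free $C(\alpha)$-cells of $Q_\alpha$, so that $Q_\alpha\times_{C(\alpha)}(-)$ sends nonequivariant weak equivalences to weak equivalences. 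Your alternative CW-reduction route is also incomplete: it shows $O\times_H-$ preserves $H$-equivalences between $H$-CW complexes (via Whitehead and preservation of homotopies), but to pass back to arbitrary $X$ you would need $O\times_H(\widetilde X)\to O\times_H X$ to be an $O$-equivalence for a CW approximation $\widetilde X\to X$, which is exactly the statement being proved.
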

\begin{proof}
	It suffices to recall the formula for the $K$-fixed points of $O \times_H X$ for a closed subgroup $K \leq O$. By \cite[B.17]{schwede_global} with $G = H$ and $K = K$ applied to $O \times X$,
	\[ (O \times_H X)^K \cong \coprod_{(\alpha)} (O \times X)^\alpha/C(\alpha). \]
	Here the sum runs over conjugacy classes of homomorphisms $\alpha\colon K \to H$, $C(\alpha)$ is the centralizer of $\alpha(K)$ in $H$, and the $(-)^\alpha$ denotes points that are fixed by the subgroup of $K \times H$ that is the graph of $\alpha$. This subgroup is isomorphic to $K$, but acts on $O \times X$ by
	\[ k(o,x) = (ko\alpha(k)^{-1},\alpha(k)x). \]
	Therefore its fixed points are $Q_\alpha \times X^{\alpha(K)}$ where $Q_\alpha \leq O$ is the subgroup of $O$ fixed by the $K$-action $o \mapsto ko\alpha(k)^{-1}$. Since $C(\alpha) \leq H$ acted freely on $O$, its action on the subset $Q_\alpha$ is necessarily still free, so we get
	\[ (O \times_H X)^K \cong \coprod_{(\alpha)} Q_\alpha \times_{C(\alpha)} X^{\alpha(K)}. \]
	An $H$-equivalence in the $X$ variable induces an equivalence on all the subspaces $X^{\alpha(K)}$, which then become equivalences on the above terms by a simple induction on the free $C(\alpha)$-cells of $Q_\alpha$.
\end{proof}

\begin{lem}\label{lem:induce_fibrations}
	The functor $O \times_H -$ sends $H$-equivariant Hurewicz fibrations to $O$-equivariant Hurewicz fibrations.
\end{lem}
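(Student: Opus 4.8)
The plan is to build, out of a chosen $H$-equivariant path-lifting function for $p\colon X\to Y$, an explicit $O$-equivariant path-lifting function for $O\times_H p\colon O\times_H X\to O\times_H Y$. Recall that, in the sense of this paper, an $H$-equivariant map $p$ is an $h$-fibration exactly when the canonical map $X^I\to X\times_Y Y^I$ admits an $H$-equivariant section $\lambda$; so $\lambda(x,\beta)$ is a path in $X$ starting at $x$ and lying over $\beta$, and $\lambda(hx,h\beta)=h\,\lambda(x,\beta)$. We want to produce the analogous $O$-equivariant section $\mu$ for $O\times_H X\to O\times_H Y$.

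The one geometric input needed is that the quotient map $\pi\colon O\to O/H$ is an $(O\times H)$-equivariant $h$-fibration, where $O$ acts on the source by left translation and $H$ by right translation, and on $O/H$ trivially. One way to see this: choose a bi-invariant Riemannian metric on the compact Lie group $O$; the orthogonal complement to the fibers of $\pi$ is a principal $H$-connection that is in addition invariant under left translation by $O$, and parallel transport along paths gives a continuous $(O\times H)$-equivariant section $\Lambda$ of $O^I\to O\times_{O/H}(O/H)^I$. (Equivalently one may invoke the equivariant form of Dold's theorem, $\pi$ being an $(O\times H)$-equivariant locally trivial bundle over the paracompact space $O/H$.) Fix such a $\Lambda$.

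Given $[o_0,x_0]\in O\times_H X$ and a path $\gamma$ in $O\times_H Y$ with $\gamma(0)$ the image of $[o_0,x_0]$, define the lift in four steps. (i) Let $\bar\gamma$ be the projection of $\gamma$ to $O/H$, so $\bar\gamma(0)=o_0H$, and set $\hat\gamma:=\Lambda(o_0,\bar\gamma)$, a path in $O$ covering $\bar\gamma$ with $\hat\gamma(0)=o_0$. (ii) Put $\delta(t):=\hat\gamma(t)^{-1}\cdot\gamma(t)$, using the $O$-action on $O\times_H Y$; since $\hat\gamma$ covers $\bar\gamma$, the path $\delta$ lies in the fiber of $O\times_H Y\to O/H$ over the coset $eH$, which is canonically homeomorphic to $Y$, so $\delta$ names a path $\beta$ in $Y$, and $\delta(0)=o_0^{-1}\cdot[o_0,p(x_0)]=[e,p(x_0)]$ gives $\beta(0)=p(x_0)$. (iii) Set $\epsilon:=\lambda(x_0,\beta)$, a path in $X$ with $\epsilon(0)=x_0$ and $p\epsilon=\beta$. (iv) Set $\tilde\gamma(t):=\hat\gamma(t)\cdot[e,\epsilon(t)]$. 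Then $\tilde\gamma(0)=[o_0,x_0]$, and $(O\times_H p)(\tilde\gamma(t))=\hat\gamma(t)\cdot[e,\beta(t)]=\hat\gamma(t)\cdot\delta(t)=\gamma(t)$, so $\tilde\gamma$ is a lift of $\gamma$ through $[o_0,x_0]$; this defines $\mu$.

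Finally one checks that $\mu$ is well-defined, $O$-equivariant, and continuous. For well-definedness, replacing $(o_0,x_0)$ by $(o_0h^{-1},hx_0)$ replaces $\hat\gamma$ by $\hat\gamma h^{-1}$ (right $H$-equivariance of $\Lambda$), hence $\delta$ by $h\delta$, $\beta$ by $h\beta$, and $\epsilon$ by $h\epsilon$ ($H$-equivariance of $\lambda$), while $\tilde\gamma$ is unchanged because $[e,h\epsilon(t)]=h\cdot[e,\epsilon(t)]$. For $O$-equivariance, acting by $o'\in O$ sends $o_0\mapsto o'o_0$ and $\gamma\mapsto o'\gamma$, so $\hat\gamma\mapsto o'\hat\gamma$ (left $O$-equivariance of $\Lambda$), which leaves $\delta,\beta,\epsilon$ untouched and sends $\tilde\gamma\mapsto o'\tilde\gamma$. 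Continuity is routine, as every ingredient ($\Lambda$, multiplication and inversion in the Lie group $O$, the $O$-actions, the identification of the $eH$-fiber with $Y$, and $\lambda$) is continuous and they compose continuously, the only mild point being that $\delta$ takes values continuously in a closed subspace identified with $Y$, which is automatic under the (CGWH) conventions. The main obstacle is isolating the right variances so that the equivariance of $\Lambda$ for the $(O\times H)$-action and the equivariance of $\lambda$ for the $H$-action combine correctly; once the equivariant path-lifting function for $\pi\colon O\to O/H$ is in hand, the rest is bookkeeping.
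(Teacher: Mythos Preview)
Your main construction (steps (i)--(iv)) is correct and elegant: given the $(O\times H)$-equivariant path-lifting $\Lambda$ for $\pi\colon O\to O/H$, the formula you write down is a valid $O$-equivariant path-lifting for $O\times_H X\to O\times_H Y$, and the well-definedness and equivariance checks are right. This route differs from the paper's. The paper does not assume any path-lifting for $O\to O/H$; instead it uses a slice $D(L)\times H\hookrightarrow O$ to exhibit $O\times_H X\to O\times_H Y$ as an $H$-equivariant fibration locally (for $H$ acting through $H\leq O$), applies equivariant Dold to globalize this (which works because $H$ acts \emph{trivially} on $O/H$, so every open cover is $H$-numerable), and then upgrades from $H$- to $O$-equivariance via an explicit homeomorphism of path-lifting data.

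The gap in your argument is in justifying the ``one geometric input'' $\Lambda$, and both proposed justifications fail. First, parallel transport for an Ehresmann connection is only defined along absolutely continuous paths and is \emph{not} continuous for the $C^0$ topology on paths. For the Hopf fibration $SU(2)\to S^2$ with the bi-invariant connection, the holonomy around a loop is $e^{icA}$ for the enclosed area $A$; letting $\gamma_n$ traverse a circle of radius $1/\sqrt{n}$ exactly $n$ times, one has $\gamma_n\to\text{const}$ in $C^0$ while the holonomy stays at $e^{ic\pi}\neq 1$. So the connection does not produce a continuous section of $O^I\to O\times_{O/H}(O/H)^I$. Second, the equivariant Dold theorem requires a $G$-numerable cover of the base, but for $G=O\times H$ the $O$-factor acts transitively on $O/H$, so every $(O\times H)$-invariant function on $O/H$ is constant and the only $(O\times H)$-numerable cover is the trivial one; Dold then yields nothing.

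The map $\Lambda$ does exist, but establishing it already needs the slice-plus-Dold maneuver the paper carries out: one first gets right-$H$-equivariant path-lifting for $O\to O/H$ (Dold applies here since right $H$ acts trivially on the base), then restricts to paths starting at $eH$ and extends by left $O$-translation. In effect your argument reduces the lemma to its own special case $(X,Y)=(H,\ast)$ with an extra commuting right $H$-action, which is a pleasant reorganization, but that special case still carries the real content.
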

\begin{proof}
	Let $L$ be the tangent space at $1H \in O/H$ with the conjugation $H$-action, and $D(L)$ its unit disc. As in \cite[3.2.1]{schwede_global}, we can pick a smooth embedding $s\colon D(L) \to O$ satisfying $s(hl) = hs(l)h^{-1}$ for all $h \in H$, such that the map $D(L) \times H \to O$, $(l,h) \mapsto s(l)h$, is an embedding whose image is a tubular neighborhood of $H$ in $O$. Note that such a map respects the left and right $H$-multiplications in the following way.
	\[ \begin{array}{rcl}
	l,h &\mapsto & s(l)h \\
	h_1l,h_1h &\mapsto & h_1s(l)h \\
	l,hh_2 &\mapsto & s(l)hh_2
	\end{array} \]
	The map $(D(L) \times H) \times_H X \to O \times_H X$ is a closed inclusion by directly checking the topology. (It is also an open inclusion on the interior of $D(L)$.) This gives a subspace that is $H$-equivariantly homeomorphic to the product $D(L) \times X$ with the diagonal $H$-action.
	
	Since $D(L) \times -$ clearly preserves $H$-equivariant fibrations, this means that for an $H$-equivariant fibration $X \to Y$, $O \times_H Y$ can be covered by $H$-equivariant open sets on which $O \times_H X \to O \times_H Y$ is an $H$-equivariant fibration. Furthermore these sets are equivariantly numerable, meaning that the continuous function to $[0,1]$ that picks them out factors through the $H$-orbits. By \cite[3.2.4]{waner_fibrations}, or the $H$-equivariant version of the usual argument from e.g. \cite[7.4]{concise}, this means that the entire map $O \times_H X \to O \times_H Y$ is an $H$-equivariant fibration .
	
	To prove that it is an $O$-equivariant fibration, consider the path space $(O \times_H Y)^I$, and let $(O \times_H Y,Y)^{(I,0)}$ denote the $H$-invariant subspace of those paths starting in $H \times_H Y$. The $O$-action gives a continuous bijection
	\begin{equation}\label{eq:induce_paths}
		O \times_H (X \times_Y (O \times_H Y,Y)^{(I,0)}) \to (O \times_H X) \times_{(O \times_H Y)} (O \times_H Y)^I.
	\end{equation}
	If we restrict to $D(L) \times H$, the resulting map is isomorphic to
	\[ D(L) \times (X \times_Y (O \times_H Y,Y)^{(I,0)}) \to (D(L) \times X) \times_{(D(L) \times Y)} (O \times_H Y,D(L) \times Y)^{(I,0)} \]
	and given by the formula
	\[ (l,x,\gamma) \mapsto (l,x),(s(l)\gamma), \]
	so it has a continuous inverse $(l,x),\gamma \mapsto (l,x,s(l)^{-1}\gamma)$. Therefore the map \eqref{eq:induce_paths} is open in the interior of $D(L) \times H$. Translating by elements of $O$, we get an open cover on which the map is open, therefore \eqref{eq:induce_paths} is a homeomorphism.
	
	Now take an $H$-equivariant path-lifting function for $O \times_H X \to O \times_H Y$, restrict it to $X \times_Y (O \times_H Y,Y)^{(I,0)}$, and extend to an $O$-equivariant map
	\[ O \times_H (X \times_Y (O \times_H Y,Y)^{(I,0)}) \to (O \times_H X)^I. \]
	Along the homeomorphism \eqref{eq:induce_paths}, this gives an $O$-equivariant path-lifting function for $O \times_H X \to O \times_H Y$.
\end{proof}

The retractive space version of these lemmas follows almost immediately.
\begin{cor}\label{lem:induce_all}
	The functor $O_+ \barsmash_H (-)\colon H\mc R(B) \to O\mc R(B)$ preserves equivariant $h$- and $f$-cofibrations, and on equivariantly $h$-cofibrant spaces it preserves $h$-fibrations and equivalences.
\end{cor}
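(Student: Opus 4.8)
The plan is to reduce the four assertions to the unbased statements \autoref{lem:induce_equivalences} and \autoref{lem:induce_fibrations} together with their easy cofibration analog, and then transport them to retractive spaces over $B$ by presenting $O_+\barsma{H} X$ as a quotient-and-collapse of $O\times_H X$. Throughout, the $H$-action on objects of $H\mc R(B)$ is fiberwise, i.e.\ trivial on $B$, as it is in the applications (where $H\leq O(n)$ acts on spectrum level $n$). First I would settle the cofibration claims directly: $O_+\barsma{H}(-)$ is the composite of $X\mapsto O_+\barsmash X$ (external smash with a fixed based space, a left adjoint) with the $H$-orbit functor on the diagonal action (left adjoint to the trivial-action functor), so it preserves all colimits; it preserves closed inclusions, since the orbit projection of a compact group action on a weak Hausdorff space is a closed map, so a closed invariant subspace descends to a closed subspace; and it commutes with $-\times I$, as $I$ is exponentiable. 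Hence it carries an $H$-equivariant NDR-pair (resp.\ fiberwise NDR-pair) structure witnessing that $K\to X$ is an $h$-cofibration (resp.\ $f$-cofibration) to an $O$-equivariant structure of the same type on $(O_+\barsma{H} X,\ O_+\barsma{H} K)$ — the descended homotopy still respects the projection to $B$ because that projection only sees the $X$-coordinate. (Alternatively one could reduce to single free $H$-cells of $O$ and invoke \autoref{prop:h_cofibrations_pushout_product} and \autoref{cofibration_of_pushouts}, as in the proof of \autoref{smashing_with_free_complex}.) The same reasoning shows $O\times_H(-)$ preserves $H$-equivariant $h$- and $f$-cofibrations of spaces over $B$.

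Next, applying $O\times_H(-)$ levelwise to the half-smash pushout \eqref{external_half_smash} (with $A=*$) and dividing by the diagonal $H$-action presents the total space of $O_+\barsma{H} X$ as the pushout of $O$-spaces over $B$
\[ B \xleftarrow{\mathrm{pr}} (O/H)\times B \xrightarrow{O\times_H i_X} O\times_H X. \]
For equivalences: if $f\colon X\to Y$ is an equivalence of equivariantly $h$-cofibrant retractive $H$-spaces, then $O\times_H f$ is an $O$-equivalence by \autoref{lem:induce_equivalences}, while $O\times_H i_X$ and $O\times_H i_Y$ are $O$-equivariant $h$-cofibrations by the previous paragraph; mapping the pushout span for $X$ to that for $Y$ — the identity on the $(O/H)\times B$ and $B$ corners, and $O\times_H f$ on the last — the equivariant gluing lemma \autoref{prop:technical_cofibrations}, in its version for spaces over $B$, gives that $O_+\barsma{H} f$ is an $O$-equivalence. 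For $h$-fibrant objects: if $X$ is moreover $h$-fibrant, then $O\times_H X\to (O/H)\times B$ is an $O$-equivariant Hurewicz fibration by \autoref{lem:induce_fibrations}, so composing with the bundle projection $(O/H)\times B\to B$ shows $O\times_H X$ is $h$-fibrant over $B$; also $i_X$ is an $H$-equivariant $f$-cofibration by \autoref{lem:heath_kamps}, so $O\times_H i_X$ is an $O$-equivariant $f$-cofibration; since $(O/H)\times B$ and $B$ are $h$-fibrant over $B$ as well, \autoref{prop:clapp} shows the pushout $O_+\barsma{H} X$ is $h$-fibrant.

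The step I expect to be the main obstacle is preservation of $h$-fibrations \emph{as maps} between $h$-cofibrant spaces: collapsing the section $O\times_H i_X(B)$ of the $O$-equivariant Hurewicz fibration $O\times_H X\to O\times_H Y$ need not a priori produce a fibration on the quotient total spaces, because fibrations are not stable under pushout. Here I would, as in the proof of \autoref{lem:induce_fibrations}, choose the path-lifting function for $O\times_H X\to O\times_H Y$ compatibly with the collapse — using an $H$-equivariant NDR neighborhood of the section, furnished by $h$-cofibrancy of $X$, on which the fibration is a product with a disc, patched via \autoref{lem:dold} — so that it descends to a path-lifting function for $O_+\barsma{H} X\to O_+\barsma{H} Y$. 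For the fibrant-object form used in the equivariant analog of \S\ref{sec:reedy} this difficulty does not arise, and only \autoref{prop:clapp} is needed.
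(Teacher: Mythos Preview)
Your proof is correct and follows essentially the same route as the paper: the same pushout square
\[ \xymatrix @R=1.7em{
	O/H \times B \ar[d] \ar[r] & O \times_H X \ar[d] \\
	B \ar[r] & O_+ \barsmash_H X,
} \]
the same argument for cofibrations (the functor commutes with $-\times I$, so the defining retract descends), and the same invocation of \autoref{lem:induce_equivalences} plus the gluing lemma for equivalences, and \autoref{lem:induce_fibrations} plus \autoref{prop:clapp} for fibrant objects. Your explicit use of \autoref{lem:heath_kamps} to upgrade the top horizontal to an $f$-cofibration is exactly what the paper uses implicitly when citing Clapp.

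Your final paragraph is more scrupulous than the paper: the phrase ``preserves $h$-fibrations'' could be read as a statement about maps, but the paper's proof only treats fibrant \emph{objects} (via the pushout and \autoref{prop:clapp}), and that is all that is used downstream in \autoref{eq_smashing_with_free_complex}. So your worry about fibrations-as-maps is legitimate given the wording, but the paper does not address it either and does not need to; you can safely omit that extra argument.
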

\begin{proof}
	Recall the pushout square
	\[ \xymatrix @R=1.7em{
		O/H \times B \ar[d] \ar[r] & O \times_H X \ar[d] \\
		B \ar[r] & O_+ \barsmash_H X.
	} \]
	This construction preserves cofibrations because the orbits commute with $- \times I$, so they preserve the retract that defines an $h$- or $f$-cofibration. On an $h$-cofibrant space, the horizontal maps of the above square are equivariant $h$-cofibrations. Since the term $O \times_H X$ preserves equivariant equivalences and fibrations by the previous two results, for $h$-cofibrant $X$ the pushout $O_+ \barsmash_H X$ does as well.
\end{proof}

From here we can follow the outline of \autoref{sec:reedy}. Only \autoref{smashing_with_free_complex} has a substantially different proof. Let $O$ and $O'$ be compact Lie groups, such as $O = O(n)$ and $O' = O(m)$ with $m \geq n$. They will always act trivially on the base spaces.

\begin{prop}\label{eq_smashing_with_free_complex}\hfill
	\vspace{-1em}
	
	\begin{itemize}
		\item If $f: K \to L$ is an $O$-free $O \times O'$-cell complex of based spaces and $g: X \to Y$ is a $G \times O$-equivariant $h$-cofibration of spaces over $B$ then $f \square g$ constructed by $(- \barsmash -)_{O}$ is a $G \times O'$-equivariant $h$-cofibration.
		\item The same is true for $f$-cofibrations or closed inclusions.
		\item If $L$ is any finite $O$-free based $O \times O'$-cell complex, and $X$ is any $G \times O$-equivariantly $h$-cofibrant and $h$-fibrant retractive space, then $(L \barsmash X)_O$ is $G \times O'$-equivariantly $h$-fibrant.
		\item If $L$ is any finite $O$-free based $O \times O'$-cell complex, and $g: X \to Y$ is any $G \times O$-equivariant equivalence of $G \times O$-$h$-cofibrant spaces over $B$, then $(\id_L \barsmash g)_{O}$ is a $G \times O'$-equivariant weak equivalence.
	\end{itemize}
\end{prop}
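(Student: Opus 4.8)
The plan is to run the proof of \autoref{smashing_with_free_complex} essentially verbatim, with the induction lemmas \autoref{lem:induce_equivalences}, \autoref{lem:induce_fibrations}, \autoref{lem:induce_all} doing the work that the elementary identity $((O\times D)_+\barsmash Y)_O\cong D_+\barsmash Y$ did there. The one genuinely new ingredient is the equivariant replacement for that identity. If $\Gamma\leq O\times O'$ is a subgroup with $\Gamma\cap(O\times 1)=1$, then $\Gamma$ is the graph of a homomorphism $\rho\colon H'\to O$ with $H'=\mathrm{pr}_{O'}(\Gamma)\leq O'$, and for any retractive $G\times O$-space $X$ over $B$ there is a natural isomorphism of $G\times O'$-equivariant retractive spaces over $B$
\[ \big(((O\times O')/\Gamma)_+\barsmash X\big)_O\ \cong\ O'_+\barsmash_{H'}\,\mathrm{res}_{G\times H'}X, \]
where $H'$ is included into $G\times O$ via $\rho$ in the $O$-slot. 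This is proven exactly as in the non-equivariant case, by untwisting the free $O$-action on $(O\times O')/\Gamma$ and comparing the pushout square \eqref{external_half_smash} on both sides; the non-equivariant statement is the special case $O'=1$. Since every cell of an $O$-free $O\times O'$-cell complex has the form $((O\times O')/\Gamma)_+\sma(S^{d-1}_+\to D^d_+)$ with $O$ acting freely on $(O\times O')/\Gamma$, this isomorphism converts any construction built from such a cell via $(-\barsmash -)_O$ into a pushout-product in $\barsmash$ (in the $D^d$ direction) followed by $O'_+\barsmash_{H'}(-)$ applied to a restriction along $G\times H'\hookrightarrow G\times O$.

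For (1) and (2): the class of pairs $(f,g)$ for which $f\square g$, formed with $(-\barsmash -)_O$, is a $G\times O'$-equivariant $h$-cofibration (resp.\ $f$-cofibration, closed inclusion) is closed under pushouts, transfinite compositions, and retracts in the $f$ variable by \autoref{pushout_product_lemmas}, so it suffices to treat $f$ a single $O$-free $O\times O'$-cell. There, using the isomorphism above, $f\square g$ is identified with $O'_+\barsmash_{H'}\mathrm{res}_{G\times H'}(f'\square g)$, where $f'=(S^{d-1}_+\to D^d_+)$ and $f'\square g$ is formed with $\barsmash$. By the $G\times O$-equivariant form of \autoref{prop:h_cofibrations_pushout_product}, $f'\square g$ is a $G\times O$-equivariant $h$-cofibration (resp.\ $f$-cofibration, closed inclusion); a $G\times O$-equivariant retract/section is in particular $G\times H'$-equivariant, so restricting and then applying \autoref{lem:induce_all} (with $G$ carried along) yields the $G\times O'$-equivariant conclusion.

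For (3) and (4): induct on the cells of $L$, with base case $L=\ast$, where $(L\barsmash X)_O$ is the zero object (and $g$ acts by the identity on it). When $L$ is obtained from $L''$ by attaching a cell $((O\times O')/\Gamma)_+\sma(S^{d-1}_+\to D^d_+)$, apply $(-\barsmash X)_O$ (resp.\ $(-\barsmash -)_O$ to $g$) to the attaching pushout square. The attaching maps are $h$-cofibrations by (1)--(2), so the square (resp.\ the map of squares) is a homotopy pushout (resp.\ a map of homotopy pushouts) by the $G\times O'$-equivariant versions of \autoref{prop:clapp} and \autoref{prop:technical_cofibrations}. Using the isomorphism above, the cell terms are $O'_+\barsmash_{H'}\mathrm{res}_{G\times H'}(D^d_+\barsmash X)$ and $O'_+\barsmash_{H'}\mathrm{res}_{G\times H'}(S^{d-1}_+\barsmash X)$; since $D^d_+$ and $S^{d-1}_+$ are $h$-cofibrant and $h$-fibrant over $\ast$, \autoref{prop:h_cofibrations_pushout_product} makes $D^d_+\barsmash X$ and $S^{d-1}_+\barsmash X$ $G\times O$-equivariantly $h$-cofibrant and $h$-fibrant (resp.\ makes $D^d_+\barsmash g$ and $S^{d-1}_+\barsmash g$ equivalences of $h$-cofibrant spaces), and then \autoref{lem:induce_all} gives $h$-fibrancy (resp.\ a $G\times O'$-equivalence) on the cell terms. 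Combining this with the inductive hypothesis on the $L''$-term and feeding everything through the homotopy pushout square finishes the induction, exactly as in \autoref{smashing_with_free_complex}; in (4) finiteness of $L$ guarantees the induction terminates.

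The main obstacle — and the reason \autoref{lem:induce_equivalences}--\autoref{lem:induce_all} were isolated and proved first — is precisely establishing the untwisting isomorphism and confirming that the induction functors $O'_+\barsmash_{H'}(-)$ carry the full burden; once one has that, every step of the classical argument transplants without change. The only other point needing attention is bookkeeping of the two groups: that restriction along $G\times H'\hookrightarrow G\times O$ preserves the relevant cofibration/fibration data (immediate, since the defining retracts and sections are $G\times O$-equivariant, hence $G\times H'$-equivariant), and that the residual $G\times O'$-action on $(L\barsmash X)_O$ matches the one produced by the isomorphism.
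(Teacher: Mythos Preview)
Your proof is correct and follows essentially the same route as the paper: both isolate the key untwisting isomorphism $\big(((O\times O')/\Gamma)_+\barsmash X\big)_O \cong O'_+\barsmash_{H'}\mathrm{res}_{G\times H'}X$ (the paper writes it with the disc factor pulled out, as $D_+\barsmash(O'\barsmash_H Y)$, and derives it by an explicit sequence of pushout squares rather than the appeal to \eqref{external_half_smash} you sketch), then reduce (1)--(2) to a single cell and feed the result through \autoref{lem:induce_all} applied to $G\times O' \geq G\times H'$, and run the same cell-by-cell induction for (3)--(4). The only difference is level of detail in establishing the isomorphism; the strategy and all the invocations of \autoref{prop:h_cofibrations_pushout_product}, \autoref{lem:induce_all}, and \autoref{prop:clapp} match.
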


\begin{proof}
	An $O$-free $O \times O'$-cell complex is built out of cells of the form $(O \times O')/\rho_H \times D^n$, where $H \leq O'$ is closed, $\rho\colon H \to O$ is a homomorphism, and $\rho_H \leq O \times O'$ is the image of $H$ under the homomorphism $h \mapsto (\rho(h),h)$. So if $D$ is an unbased space with no group action, and $Y$ is a $G \times O$-equivariant $h$-cofibrant retractive space, we form the pushout square
	\[ \xymatrix @R=1.7em{
		O \times O' \times D \times B \ar[d] \ar[r] & O \times O' \times D \times Y \ar[d] \\
		B \ar[r] & (O \times O' \times D)_+ \barsmash Y.
	} \]
	in which the top horizontal is a $G \times O \times O'$-equivariant $h$-cofibration. Quotienting by the diagonal left $O$-action on the $O$ and $Y$, and the diagonal right $H$-action on $O$ (through $\rho$) and $O'$, gives a new pushout square along a $G \times O'$-equivariant $h$-cofibration
	\[ \xymatrix @R=1.7em{
		O'/H \times D \times B \ar[d] \ar[r] & D \times (O' \times_H Y) \ar[d] \\
		B \ar[r] & ((O \times O')/\rho_H \times D)_+ \barsmash_O Y
	} \]
	where the $H$-action on $Y$ in the upper-right term is through the homomorphism $\rho$. Comparing to the pushout squares
	\[ \xymatrix @R=1.7em{
		D \times O'/H \times B \ar[d] \ar[r] & D \times (O' \times_H Y) \ar[d] \\
		D \times B \ar[d] \ar[r] & D \times (O' \barsmash_H Y) \ar[d] \\
		B \ar[r] & D_+ \barsmash (O' \barsmash_H Y)
	} \]
	gives a natural isomorphism of $G \times O'$-equivariantly $h$-cofibrant spaces
	\[ ((O \times O')/\rho_H \times D)_+ \barsmash_O Y \cong D_+ \barsmash (O' \barsmash_H Y). \]
	We note that any $G \times O$-equivariant property of $Y$ becomes a $G \times H$-equivariant property along $\rho\colon H \to O$, and that
	\[ (G \times O') \barsmash_{G \times H} Y \cong O' \barsmash_H Y \]
	as $G \times O'$-spaces, hence this becomes a $G \times O'$-equivariant property of $O' \barsmash_H Y$ by \autoref{lem:induce_all}.
	
	The first two claims follow immediately from this analysis because as in \autoref{smashing_with_free_complex} if suffices to assume that $f$ is a single cell
	\[ [(O \times O')/\rho_H \times S^{n-1}]_+ \to [(O \times O')/\rho_H \times D^n]_+. \]
	For the fourth statement we induct on the skeleta of $L$. If $L$ is obtained from $K$ by pushout along a single cell, then $g$ gives a map of two pushout squares of the form
	\[ \xymatrix @R=1.7em{
		S^{d-1}_+ \barsmash O' \barsmash_H (-) \ar[r] \ar[d] & D^d_+ \barsmash O' \barsmash_H (-) \ar[d] \\
		(K \barsmash (-))_O \ar[r] & (L \barsmash (-))_O.
	} \]
	The horizontal maps are $G \times O'$-cofibrations by the first part, so these squares are $G \times O'$-equivariantly homotopy pushout squares. The map $g$ induces $G \times O'$-equivalences on the top-left and top-right by the above analysis, so we can do the induction and conclude $(\id_{L'} \barsmash f)_O$ is an equivalence for all skeleta $L'$ of $L$, and therefore for $L$ itself. The third statement is proven by the same induction, with just one pushout square of the above form for each cell of $L$.
\end{proof}

We define semifree spectra $\mc G_n A$ as in \autoref{sec:reedy}, where $A$ has a $G \times O(n)$-action. It turns out that a semifree spectrum on a nontrivial representation $V$ is always isomorphic to a semifree spectrum on a trivial representation, so it is unnecessary to define $\mc G_V A$ for general $V$. We then define skeleta and latching maps as in \autoref{sec:reedy}. The $n$th latching map $L_n X \to X_n$ is now a $G \times O(n)$-equivariant map.

\begin{df}
	A map of spectra $X \to Y$ over $B$ is a \textbf{semifree} or \textbf{Reedy $h$-cofibration} if each relative latching map $L_n Y \cup_{L_n X} X_n \to Y_n$ is a $G \times O(n)$-equivariant $h$-cofibration. A semifree $f$-cofibration is defined similarly.\index{$f$-cofibration!equivariant semifree}\index{$h$-cofibration!equivariant semifree}
\end{df}

As in \autoref{reedy_preserved}, the semifree cofibrations are closed under pushouts, compositions, sequential colimits, and retracts. Using the formula
	\[ F_V A \cong \mc G_n (\mathscr J_G(V,\R^n) \barsmash A) \]
for $n = \dim V$, we check that every generating free cofibration is a semifree cofibration. Therefore every free cofibration is also a semifree cofibration.

The remaining arguments in \autoref{sec:reedy} are unchanged, keeping in mind that the notion of cofibration, fibration, or weak equivalence at spectrum level $n$ is always the $G \times O(n)$-equivariant one. We conclude the equivariant version of \autoref{prop:reedy_pushout_product}. We will only record its specialization back to free spectra, the equivariant analogue of \autoref{prop:spectra_pushout_product}.

\begin{thm}\label{eq_prop:spectra_pushout_product}\hfill
	\vspace{-1em}
	
	\begin{enumerate}
		\item Let $f: K \to X$ and $g: L \to Y$ be free $h$-cofibrations of orthogonal $G$-spectra over $A$ and $B$, respectively. Then $f \square g$, constructed using $\barsmash$, is a free $h$-cofibration. The same is true for free $f$-cofibrations, free $q$-cofibrations, and free closed inclusions.
		\item If $X$ and $Y$ are freely $h$-cofibrant and level $h$-fibrant then $X \barsmash Y$ is level $h$-fibrant.
		\item If $X$ is freely $h$-cofibrant and $g: Y \to Y'$ is a level equivalence of freely $h$-cofibrant spectra then $\id_X \barsmash g$ is a level equivalence.
	\end{enumerate}
\end{thm}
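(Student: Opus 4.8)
The plan is to first prove the $G$-equivariant analogue of \autoref{prop:reedy_pushout_product}, carrying out the arguments of \autoref{sec:reedy} with every ``cofibration,'' ``fibration,'' and ``weak equivalence'' at spectrum level $n$ read as the $G \times O(n)$-equivariant notion, and then to specialize back to free spectra to obtain the three stated claims. For claim (1), I would argue exactly as in the proof of \autoref{prop:reedy_pushout_product}: by \autoref{prop:map_skeleta_pushout} and \autoref{pushout_product_lemmas}, the class of pairs $(f,g)$ with $f \square g$ a semifree $h$-cofibration is closed under pushout, transfinite composition, and retract, so it is enough to treat $f = \mc G_m(\phi)$ and $g = \mc G_n(\gamma)$ with $\phi$ a $G \times O(m)$-equivariant and $\gamma$ a $G \times O(n)$-equivariant $h$-cofibration of ex-spaces. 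Using the equivariant form of \autoref{smash_two_semi_free} and the fact that the semifree spectrum functor preserves colimits, one gets $f \square g \cong \mc G_{m+n}\bigl(O(m+n)_+ \barsmash_{O(m) \times O(n)} (\phi \square \gamma)\bigr)$, so the claim reduces to showing this induced pushout-product is a $G \times O(m+n)$-equivariant $h$-cofibration. Since $O(m+n)$ is $O(m) \times O(n)$-free and triangulable by Illman's theorem \cite{illman}, this follows from one application of \autoref{eq_smashing_with_free_complex} and one of \autoref{prop:h_cofibrations_pushout_product}, and the same reduction handles $f$-cofibrations and closed inclusions.

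For claims (2) and (3) I would transcribe the skeletal inductions of \autoref{sec:reedy}, replacing $\mathscr J(\R^n,\R^m)$ by $\mathscr J_G(\R^n,\R^m)$ --- which is $O(n)$-free and a $G \times O(n) \times O(m)$-cell complex, again by \cite{illman} --- and replacing \autoref{smashing_with_free_complex} at every step by \autoref{eq_smashing_with_free_complex}. Concretely: the equivariant versions of the statements that skeleta of a semifree cofibration are level cofibrations, that skeleta of a level equivalence of semifreely $h$-cofibrant spectra are level equivalences, and that skeleta of a semifreely $h$-cofibrant level $h$-fibrant spectrum are level $h$-fibrant all follow by induction on $n$ from \autoref{prop:skeleta_pushout}, \autoref{prop:map_skeleta_pushout}, \autoref{eq_smashing_with_free_complex}, and the equivariant form of \autoref{prop:clapp}. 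These yield the equivariant analogues of \autoref{cor:latching_objects_derived} and \autoref{cor:latching_objects_fibrant}; claim (2) then follows by the same skeletal pushout argument together with \autoref{prop:clapp}, and claim (3) follows first for semifree spectra on acyclic cofibrations via \autoref{eq_smashing_with_free_complex}, and then for an arbitrary semifree cofibration against a level equivalence by the Ken Brown's lemma argument, factoring through the equivariant level model structure. Specializing semifree to free via the isomorphism \eqref{eq:smash_free} then gives the three claims in the form stated.

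The main obstacle is not really in this argument: because \autoref{eq_smashing_with_free_complex} --- and the delicate induced-space lemmas it rests on, in particular that $O \times_H (-)$ preserves equivariant Hurewicz fibrations --- is already available, the skeletal machinery transfers mechanically. The genuine work is the group-theoretic bookkeeping: one must consistently use the $G \times O(n)$-equivariant notions at level $n$, verify that the reductions in \autoref{prop:map_skeleta_pushout} and \autoref{smash_two_semi_free} turn $O(m) \times O(n)$-free input into $O(m+n)$-free $G \times O(m+n)$-cells, and check that each latching and relative latching map carries exactly the $G \times O(n)$-action under which \autoref{eq_smashing_with_free_complex} is invoked. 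The only input genuinely new relative to \autoref{sec:reedy} is the $O(n)$-freeness of $\mathscr J_G(\R^n,\R^m)$ as a $G \times O(n) \times O(m)$-cell complex, which is precisely the hypothesis \autoref{eq_smashing_with_free_complex} demands and which Illman's triangulation theorem supplies.
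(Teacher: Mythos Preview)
Your proposal is essentially the paper's approach: transcribe \autoref{sec:reedy} with $G\times O(n)$-equivariant notions at level $n$, using \autoref{eq_smashing_with_free_complex} in place of \autoref{smashing_with_free_complex}, and specialize. For (2) and (3) this is exactly right and matches the paper verbatim.

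For (1), however, your detour through semifree generators $\mc G_m(\phi)$, $\mc G_n(\gamma)$ only yields that $f\square g$ is a \emph{semifree} $h$-cofibration, whereas the statement asserts it is a \emph{free} $h$-cofibration. Your closing remark about ``specializing semifree to free via \eqref{eq:smash_free}'' does not repair this: the isomorphism \eqref{eq:smash_free} does not convert semifree cofibrations into free ones. The correct argument for (1) is the direct one from \autoref{prop:spectra_pushout_product}(1): reduce by \autoref{pushout_product_lemmas} to the case $f=F_V(\phi)$, $g=F_W(\gamma)$ with $\phi,\gamma$ equivariant $h$-cofibrations of retractive spaces, and then \eqref{eq:smash_free} gives $f\square g\cong F_{V\oplus W}(\phi\square\gamma)$, which is a free $h$-cofibration by the equivariant \autoref{prop:h_cofibrations_pushout_product}. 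The semifree machinery is needed only to supply (2) and (3).
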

	
\beforesubsection
\subsection{Stable equivalences}\aftersubsection

For each parametrized orthogonal $G$-spectrum $X \in G\Osp(B)$, the \textbf{homotopy groups}\index{homotopy!groups} are the genuine stable homotopy groups of the $G_b$-equivariant fiber spectrum $X_b$, for $b \in B$ and $n \in \Z$. For each (closed) subgroup $H \leq G_b$ these are defined as
\begin{equation}\label{eq:genuine_homotopy_groups}
\pi_{k,b}^H(X) := \begin{cases}\quad
\underset{V \subset \mathcal U}\colim\, \pi_k([\Omega^V X_b(V)]^H)  &\text{if } k \geq 0 \\
\underset{V \subset \mathcal U,\ \R^{|k|} \subset V}\colim\, \pi_0([\Omega^{V-\R^{|k|}} X_b(V)]^H)  &\text{if } k < 0.\end{cases}
\end{equation}
where $V$ runs over all $G_b$-representations in $\mc U$ and their inclusions in $\mc U$. On level $q$-fibrant spectra, every level equivalence induces an equivariant equivalence on the spaces $X_b(V)$, hence an isomorphism on these groups. So the right-derived homotopy groups $\R \pi_{n,b}(X)$ can be defined from these by replacing the spectrum $X$ by one that is level $q$-fibrant.

The stable equivalences are the maps inducing isomorphisms on these homotopy groups. These are generated under 2-out-of-3 by the level equivalences and the maps of level $q$-fibrant spectra that are stable equivalences on each fiber.\index{stable equivalence} The examples (\autoref{ex:equivalent_thom_spectra} through \autoref{parametrized_stability}) have the same properties in the equivariant case, using \cite[III.3]{mandell2002equivariant} in the place of \cite[\S 7]{mmss}.

Most of the proofs in this section use the formal behavior of the various kinds of cofibrations and fibrations, and so the equivariant versions have the same proofs, verbatim. One might worry about the $i$-cofibrancy hypothesis on commuting $f_!$, $\barsmash$, or $P$ with $H$-fixed points, but that was already used to establish the earlier lemmas concerning weak equivalences and cofibrant spaces, so it does not need to be invoked again.

\begin{thm}[Stable model structure]\label{eq_thm:stable_model_structure}
	There is a proper model structure on $G\Osp(B)$ whose weak equivalences are the stable equivalences, and is cofibrantly generated by the sets of maps
	\[ \resizebox{\textwidth}{!}{$
		\begin{array}{ccrllll}
	I &=& \{ \ F_V\left[ (G/H \times S^{n-1})_{+B} \to (G/H \times D^n)_{+B} \right] & : n \geq 0, & V \subset \mc U, & H \leq G, & D^n \to B^H \ \} \\
	J &=& \{ \ F_V\left[ (G/H \times D^n)_{+B} \to (G/H \times D^n \times I)_{+B} \right] &: n \geq 0, & V \subset \mc U,  & H \leq G, & (D^n \times I) \to B^H \ \} \\
	& \cup & \{ \ k_{V,W} \ \square \ \left[ (G/H \times S^{n-1})_{+B} \to (G/H \times D^n)_{+B} \right] & : n \geq 0, & V,W \subset \mc U, & H \leq G, & D^n \to B^H \ \}.
	\end{array}
	$} \]
\end{thm}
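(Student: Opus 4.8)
The plan is to verify the six conditions of \cite[2.1.19]{hovey_model_cats}, exactly as in the proof of \autoref{thm:stable_model_structure}, replacing each nonequivariant ingredient by the genuine equivariant counterpart established in the preceding subsections. Condition (1), closure of the stable equivalences under 2-out-of-3 and retracts, is automatic since they are cut out by the functors $\R\pi^H_{n,b}$. Conditions (2) and (3), the countable smallness of $I$ and $J$, go through verbatim: every free cofibration is a level closed inclusion by the equivariant level model structure, so the compact domains $(G/H \times S^{n-1})_{+B}$, and the compact constituents of the expanded cylinder \eqref{eq:expanded_cylinder} rewritten as free spectra on compact retractive $G$-spaces via \eqref{eq:smash_free}, factor through a finite stage of any $I$- or $J$-cell complex. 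Left and right properness will follow at the end from the equivariant versions of \autoref{cor:spectra_left_proper} and \autoref{cor:spectra_right_proper}, whose proofs, being formal, are unchanged.

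The two ingredients that carry genuine equivariant content are the analogues of \autoref{j_equiv} and \autoref{lem:stable_fibrations}. For the first, I would run the same diagram chase used for \autoref{j_equiv}: apply $P$ to make all levels fibrations, use that $P$ commutes with $\Sigma_B$, $F_V$, and the mapping cone, and invoke equivariant left-properness to reduce to showing that $\lambda_{V,W}\colon F_{V\oplus W}S^W \to F_V S^0$ becomes a stable equivalence after smashing with a fiberwise well-based $G$-space; this follows because $\lambda_{V,W}$ is a $\pi^H_*$-isomorphism in genuine $G$-spectra for every pair of representations $V,W \subset \mc U$ (\cite[III.3]{mandell2002equivariant}, cf.\ the equivariant forms of \autoref{ex:equivalent_thom_spectra} and \autoref{ex:smash_with_cell_complex_is_derived}). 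For the fibration characterization, I would prove that a map $p\colon X \to Y$ is $J$-injective iff each $p(V)$ is a $q$-fibration on all fixed points and, for every pair of $G$-representations $V$ and $W$, the square
\[ \xymatrix @R=1.7em{ X(V) \ar[r] \ar[d] & \Omega^W_B X(V\oplus W) \ar[d] \\ Y(V) \ar[r] & \Omega^W_B Y(V\oplus W) } \]
is a homotopy pullback (equivalently, the same square with $\R\Omega^W_B$). The argument is the same rearrangement of lifting problems along \autoref{lem:pullback_hom_adjunction}, using that $k_{V,W}$ is a free $q$-cofibration and that $F_V S^0 \to \Cyl_{V,W}$ is a homotopy equivalence of spectra. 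The point to keep in mind here, and this is the main place the equivariant proof differs, is that the pullback condition must be imposed for all representations $V \subset V\oplus W$, not just the trivial ones, and that the relevant fiberwise fixed points are taken over the stabilizers $G_b$, so one works with $G_b$-representations cofinal in the restriction of the complete universe $\mc U$.

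Granting these two lemmas, conditions (4)--(6) are then formal and follow the nonequivariant template. $J$-cell complexes lie in $I$-cof by construction and in $W$ because each map of $J$ is a stable equivalence and a level $h$-cofibration (by the equivariant \autoref{j_equiv} and \autoref{eq_prop:spectra_pushout_product}), so their coproducts, pushouts, and sequential colimits remain stable equivalences by the equivariant \autoref{prop:coproduct_colimit_stable_equivalences} and equivariant left-properness. An $I$-injective map is a level acyclic $q$-fibration on all fixed points, hence a level equivalence, hence a stable equivalence, and it satisfies the $\R\Omega^W_B$-pullback condition trivially, so $I\text{-inj} \subseteq W \cap J\text{-inj}$. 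For the reverse inclusion $W \cap J\text{-inj} \subseteq I\text{-inj}$, which I expect to be the main obstacle, I would pass to the strict fiber spectrum $F$ of $p$ (levelwise the pullback $B \times_{Y(V)} X(V)$), note it is equivalent to the homotopy fiber since $p$ is a level $q$-fibration, apply the equivariant version of \autoref{lem:LES2} to get $\R\pi^H_{n,b}(F) = 0$ for all $H$ and $b$, deduce from the $J$-injectivity squares that each $G_b$-fiber spectrum $F_b$ is a genuine $\Omega$-$G_b$-spectrum with vanishing homotopy groups and hence levelwise $G_b$-equivariantly contractible, and conclude that each $p(V)$ is an equivariant weak equivalence exactly as in the proof of \autoref{thm:stable_model_structure}. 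The delicate bookkeeping is precisely this passage from $G$ to the stabilizers $G_b$ together with the need to test the genuine $\Omega$-spectrum condition on all representation spheres; once that is set up, the remainder is a transcription of the nonequivariant argument.
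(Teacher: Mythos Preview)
Your proposal is correct and follows essentially the same approach as the paper: verify the six conditions of \cite[2.1.19]{hovey_model_cats} by transcribing the proof of \autoref{thm:stable_model_structure}, with the genuine equivariant content concentrated in the analogues of \autoref{j_equiv} and \autoref{lem:stable_fibrations} and in the condition (6) argument where one shows the fiber $F_b$ is a genuine $\Omega$-$G_b$-spectrum with vanishing homotopy groups. The one place where your write-up is slightly looser than the paper is the smallness conditions (2)--(3): you say they ``go through verbatim,'' but the paper flags a small modification, namely that equivariant maps out of $G/H \times D$ correspond to nonequivariant maps $D \to (-)^H$, and one must use that $H$-fixed points commute with pushouts and sequential colimits along level $h$-cofibrations to run the compactness argument; this is the step that justifies the factorization through a finite stage in the equivariant setting.
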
\index{stable model structure}
The map $k_{V,W}$ is the inclusion of the front end of the mapping cylinder $\Cyl_{V,W}$ for the map
\[ \lambda_{V,W}\colon F_{V \oplus W} S^W \ra F_V S^0 \]
adjoint to the equivariant map of spaces $S^W \to \mathscr J_G(V,V \oplus W)$ that identifies $S^W$ with the fiber over the standard embedding $V \to V \oplus W$. As in \autoref{thm:stable_model_structure}, this is a free $q$-cofibration, and it is a stable equivalence by \cite[4.5]{mandell2002equivariant}. The analogue of \autoref{lem:stable_fibrations} is that the fibrations ($J$-injective maps) are the maps that give equivariant $q$-fibrations at every level $V$, and for which every one of the squares
\begin{equation}\label{eq_eq:fibration_of_spectra_means_this_is_a_pullback}\index{stably fibrant}
\xymatrix @R=1.7em{
	X_V \ar[r] \ar[d]^-{p_V} & \Omega^W_B X_{V\oplus W} \ar[d]^-{\Omega_B^W p_{V\oplus W}} \\
	Y_V \ar[r] & \Omega^W_B Y_{V\oplus W}
}
\end{equation}
\noindent
is an equivariant homotopy pullback square (i.e. a homotopy pullback on the $H$-fixed points for all $H \leq G$).

The proof of \autoref{eq_thm:stable_model_structure} is the same as \autoref{thm:stable_model_structure}, with the following modifications. When proving the smallness condition, we use the fact that equivariant maps out of $G/H \times D$ correspond to maps from $D$ into the $H$-fixed points, and that $H$-fixed points preserve pushouts and sequential colimits along levelwise $h$-cofibrations. When proving condition (6), we show that $F_b$ is a weak $\Omega$-spectrum in the sense that its adjoint structure maps are equivariant weak equivalences, before taking $H$-fixed points and concluding that the maps in the colimit system in \eqref{eq:genuine_homotopy_groups} are isomorphisms. Since the colimit is zero, the individual groups are also zero, hence $F_b$ is levelwise weakly contractible.

The remaining arguments from \autoref{sec:derived_bc_and_smash} and \autoref{sec:structures} are formal enough that no modifications are needed to the proofs. For simplicity we often restrict the category $G\Osp_{(B)}$ to base spaces $B$ with trivial $G$-action, but the proofs still work even if the $G$-action is nontrivial. In particular:

\begin{thm}\label{thm:G_spectra_SMBF}
	The homotopy category $\ho G\Osp$ of all parametrized orthogonal $G$-spectra forms a symmetric monoidal bifibration (SMBF) over the category $G\cat{Top}$ of unbased $G$-spaces, with Beck-Chevalley for each equivariant homotopy pullback square of $G$-spaces.
\end{thm}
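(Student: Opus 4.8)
The plan is to follow the proof of \autoref{thm:spectra_SMBF} essentially verbatim, replacing every cited ingredient by its equivariant counterpart. We work in (CGWH), where the basepoint sections are closed inclusions, so that $f_!$, $\barsmash$, and $P$ all commute with the fixed-point functors $(-)^H$ (see \autoref{lem:commute_with_fixed_points} and \eqref{eq:smash_fixed_points}); this is what guarantees that the point-set commutation isomorphisms are genuine isomorphisms and that the derived functors interact correctly with the genuine homotopy groups \eqref{eq:genuine_homotopy_groups}. First I would record that the point-set category $G\Osp$ is a symmetric monoidal bifibration over $G\cat{Top}$ with Beck-Chevalley for the strict pullback squares of $G$-spaces: this uses the equivariant Beck-Chevalley isomorphism together with the equivariant analogue of \autoref{prop:spectra_external_smash_and_base_change}, and the coherence of the symmetric monoidal structure comes from the rigidity theorem \autoref{thm:Gspectra_rigidity}.

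Next I would verify the seven conditions (SM1), (SM2), (BF3), (BF4), (BF5), (SMBF6), (SMBF7) of \autoref{prop:deform_SMBF}, using the equivariant stable model structure \autoref{eq_thm:stable_model_structure}. Concretely: $\barsmash$ preserves freely $h$-cofibrant $G$-spectra and stable equivalences between them, by \autoref{eq_prop:spectra_pushout_product} and the equivariant analogue of \autoref{prop:stably_derived}, and $F_0 S^0$ is freely $h$-cofibrant, giving (SM1) and (SM2); the pushforwards $f_!$ are coherently left-deformable on the level $h$-cofibrant $G$-spectra and the pullbacks $f^*$ coherently right-deformable on the level $q$-fibrant $G$-spectra, giving (BF3) and (BF4); Beck-Chevalley along an equivariant $h$-fibration is coherently deformable on the freely $f$-cofibrant, level $h$-fibrant $G$-spectra, using the equivariant analogues of \autoref{lem:spectrum_f_preserves} and \autoref{prop:stably_derived}, giving (BF5); and $\barsmash$ is coherently deformable together with all the $f_!$ on the freely $f$-cofibrant $G$-spectra, and together with all the $f^*$ on the freely $f$-cofibrant level $h$-fibrant $G$-spectra, this last point again relying on part (3) of \autoref{eq_prop:spectra_pushout_product}, giving (SMBF6) and (SMBF7). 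Then \autoref{prop:deform_bifibration} and \autoref{prop:deform_SMBF} produce the SMBF structure on $\ho G\Osp$ over $G\cat{Top}$ with Beck-Chevalley for the strict pullbacks along equivariant $h$-fibrations.

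Finally I would enlarge the class of Beck-Chevalley squares via \autoref{expand_beck_chevalley}: the equivariant analogue of \autoref{lem:pullback_stable_Quillen} shows that $\L f_!$ and $\R f^*$ induce equivalences on the fiber categories when $f$ is an equivariant weak equivalence, and every equivariant homotopy pullback square of $G$-spaces is equivariantly weakly equivalent to a strict pullback square one leg of which is an equivariant $h$-fibration (by the equivariant mapping-path-space factorization). This gives Beck-Chevalley for all equivariant homotopy pullback squares, as claimed.

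I expect the main obstacle to be bookkeeping rather than new mathematics: the one place the equivariant argument genuinely diverges is in confirming that the commutation isomorphisms $\R f^* X \barsmash^{\L} \R g^* Y \simeq \R(f\times g)^*(X \barsmash^{\L} Y)$ and $PX \barsmash PY \cong P(X \barsmash Y)$ remain isomorphisms in the genuine setting. This is exactly where the (CGWH) hypothesis and the freely $i$-cofibrant restriction coming from the equivariant analogue of \autoref{prop:spectra_external_smash_and_base_change} are needed; since the seven conditions are phrased throughout in terms of freely $f$-cofibrant (hence freely $i$-cofibrant) $G$-spectra, this costs nothing, but it is the step that requires the most care.
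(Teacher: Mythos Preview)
Your proposal is correct and follows essentially the same route as the paper, which simply says ``The proof is the same as in \autoref{thm:spectra_SMBF}.'' The one noteworthy difference is that you work in (CGWH) whereas the non-equivariant proof of \autoref{thm:spectra_SMBF} is written in (CG); this is harmless since, as you observe, the seven conditions only ever concern freely $f$-cofibrant (hence freely $i$-cofibrant) spectra, so the restriction imposed by the equivariant analogue of \autoref{prop:spectra_external_smash_and_base_change} in (CGWH) costs nothing, and indeed the paper itself remarks that (CGWH) becomes the more convenient convention in the equivariant setting.
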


\begin{proof}
	The proof is the same as in \autoref{thm:spectra_SMBF}. Note that this method is much cleaner than the one in \cite[Thm 9.9]{mp1}.
\end{proof}

\begin{thm}\label{thm:GEx}
	The homotopy categories $\ho G\Osp(A \times B)$ together form a bicategory with shadow $G\Ex$ in a canonical way.
\end{thm}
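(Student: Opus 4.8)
The plan is to run the construction of Theorem~\ref{thm:four_bicategories_of_spectra} one more time, with the equivariant symmetric monoidal bifibration of Theorem~\ref{thm:G_spectra_SMBF} in place of $\ho\Osp$. First I would apply Theorem~\ref{thm:SMBF_to_bicategory} directly to the SMBF $(\ho G\Osp,\barsmash^{\L},\Sph)$ over $G\cat{Top}$. This produces, for free, a shadowed bicategory $\ho G\Osp/G\cat{Top}$, which we name $G\Ex$, whose $0$-cells are $G$-spaces, whose $1$- and $2$-cells are the objects and morphisms of the fiber categories $(\ho G\Osp)^{A\times B}$, and whose operations are
\[ X \odot Y = \L(r_B)_!\,\R(\Delta_B)^*(X \barsmash^{\L} Y), \qquad \shad{X} = \L(r_A)_!\,\R(\Delta_A)^*X, \qquad U_A = (\Delta_A)_!(r_A)^*\Sph. \]
By the equivariant analogue of Theorem~\ref{prop:deform_bifibration}, whose hypotheses were exactly the deformability statements checked in the proof of Theorem~\ref{thm:G_spectra_SMBF}, we have $(\ho G\Osp)^{A\times B}\cong\ho(G\Osp(A\times B))$, so the $1$- and $2$-cells of $G\Ex$ from $A$ to $B$ are precisely the objects and morphisms of $\ho G\Osp(A\times B)$, as the theorem claims.

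Next I would spell out what ``canonical'' means by repeating the four-recipe discussion preceding Theorem~\ref{thm:four_bicategories_of_spectra}. All four recipes make sense equivariantly: (1) derive $\odot$, $\shad{}$, $U_A$ as composites of coherently deformable functors and apply Proposition~\ref{prop:passing_natural_trans_to_derived_functors}; (2) restrict the point-set bicategory $G\Osp/G\cat{Top}$ to freely $f$-cofibrant $G$-spectra, where $\odot$ and $\shad{}$ are right-deformable via $P$; (3) restrict further to freely $f$-cofibrant, level $h$-fibrant $G$-spectra (plus the strict units), where $\odot$ is homotopical by Theorem~\ref{eq_prop:spectra_pushout_product} and the equivariant analogue of Theorem~\ref{prop:stably_derived}; or (4) apply Theorem~\ref{thm:SMBF_to_bicategory} to $\ho G\Osp$ as above. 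The argument of Theorem~\ref{thm:four_bicategories_of_spectra} identifying these structures is formal, provided the relevant composites of $\barsmash$, $f^*$, $f_!$ are coherently deformable using the convenient classes of $G$-spectra; those deformability facts are the ones established in proving Theorem~\ref{thm:G_spectra_SMBF}, together with the equivariant pushout-product results of \S\ref{sec:G_reedy}.

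The one genuine departure from the non-equivariant argument, and the step I expect to be the main obstacle, is the failure of strict rigidity. In the non-equivariant case the coherence (and hence uniqueness) of $\alpha$, $\lambda$, $\rho$, $\theta$ on $\Osp/\cat{Top}$ followed from Proposition~\ref{prop:spaces_rigidity} and Theorem~\ref{thm:spectra_rigidity}, but by the counterexample following \autoref{lem:commute_with_fixed_points} the functor $f_!g^*(X_1\barsmash\cdots\barsmash X_n)$ has nontrivial equivariant automorphisms, so that form of rigidity is unavailable. The fix is to invoke the weaker statement of Theorem~\ref{thm:Gspectra_rigidity}: these functors remain rigid with respect to the larger categories $G\Osp(C_i)^{\non}$ of $G$-spectra and non-equivariant maps. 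Thus the structure isomorphisms of the point-set bicategory $G\Osp/G\cat{Top}$ are the canonical ones furnished by Theorem~\ref{thm:Gspectra_rigidity} (natural with respect to non-equivariant maps), and the shadowed-bicategory coherence axioms hold because they may be checked after forgetting equivariance, where Theorem~\ref{thm:SMBF_to_bicategory} applies to the SMBF of nonequivariant spectra. The comparison of the four recipes then proceeds exactly as in Theorem~\ref{thm:four_bicategories_of_spectra}, with the single extra bookkeeping point that each application of rigidity must carry along the fact that the identified point-set isomorphism is natural for non-equivariant maps, so that deriving it via Proposition~\ref{prop:passing_natural_trans_to_derived_functors} still yields the intended equivalence.
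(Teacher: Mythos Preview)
Your proposal is correct and follows the same route as the paper, which simply says ``the proof is the same as in \autoref{thm:four_bicategories_of_spectra}''; you have just spelled out what that means. One small simplification: you don't need to verify the point-set bicategory coherences by forgetting equivariance, since the paper has already established that $G\Osp$ (restricted to freely $i$-cofibrant spectra in (CGWH)) is an SMBF over $G\cat{Top}$, so \autoref{thm:SMBF_to_bicategory} applies to it directly---the weak rigidity of \autoref{thm:Gspectra_rigidity} is only needed to single out the canonical structure isomorphisms and to compare recipes, exactly as you say.
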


\begin{proof}
	The proof is the same as in \autoref{thm:four_bicategories_of_spectra}.
\end{proof}

We rely on this formulation of the foundations in our work in equivariant fixed point theory, see e.g. \cite{mp1}.


\bibliographystyle{amsalpha}
\bibliography{streamlined}{}

\end{document}